\newcommand{\bn}[1]{\llbracket #1 \rrbracket}
\newcommand{\foamt}{\textbf{Foam}_{3}}
\newcommand{\llambda}{\overline{\lambda}}
\newcommand{\ii}{\underline{\textbf{\textit{i}}}}
\newcommand{\jj}{\underline{\textbf{\textit{j}}}}
\newcommand{\kom}{\textbf{Kom}}
\newcommand{\glcat}{\mathcal{U}(\mathfrak{gl}_n)}
\newcommand{\Ugl}{\dot{\textbf U}(\mathfrak{gl}_n)}
\DeclareMathOperator{\Mor}{Mor}
\DeclareMathOperator{\Mat}{Mat}
\DeclareMathOperator{\End}{End}
\DeclareMathOperator{\HomGL}{Hom_{\glcat}}
\DeclareMathOperator{\HOMGL}{HOM_{\glcat}}
\newcommand{\bN}{\mathbb{N}}
\newcommand{\bZ}{\mathbb{Z}}
\newcommand{\bQ}{\mathbb{Q}}
\newcommand{\bR}{\mathbb{R}}
\newcommand{\bC}{\mathbb{C}}
\DeclareMathOperator{\Tor}{Tor}
\DeclareMathOperator{\Kh}{Kh}
\newcommand{\sseq}{{\rm SiSeq}}
\newcommand{\onel}{{\textbf 1}_{\lambda}}
\newcommand{\onelp}{{\textbf 1}_{\lambda'}}
\newcommand{\refequal}[1]{\xy {\ar@{=}^{#1}
(-1,0)*{};(1,0)*{}};
\endxy}
\newcommand{\U}{\dot{{\bf U}}(\mathfrak{sl}_n)}
\newcommand{\UZ}{\dot{\bf U}^{\mathbb{Z}}}
\newcommand{\Ucat}{\mathcal{U}(\mathfrak{sl}_n)}
\newcommand{\Scat}{\mathcal{S}}
\newcommand{\UcatD}{\dot{\mathcal U}(\mathfrak{sl}_n)}
\newcommand{\ScatD}{\dot{\Scat}}
\newcommand{\qbin}[2]{
\left[
 \begin{array}{c}
 #1 \\
 #2 \\
 \end{array}
 \right]
}
\DeclareMathOperator{\Ob}{Ob}
\DeclareMathOperator{\Cb}{\textbf{Cb}}
\DeclareMathOperator{\mat}{\textbf{Mat}}
\DeclareMathOperator{\Arr}{Mor}
\DeclareMathOperator{\cob}{\textbf{Cob}^2}
\DeclareMathOperator{\ucob}{u\textbf{Cob}^2}
\DeclareMathOperator{\ukob}{\textbf{Kob}_b(\emptyset)}
\DeclareMathOperator{\ukobk}{\textbf{Kob}_b(k)}
\DeclareMathOperator{\RMOD}{R-\textbf{Mod}}
\DeclareMathOperator{\AMOD}{A-\textbf{Mod}}
\DeclareMathOperator{\ApMOD}{A-p\textbf{Mod}}
\DeclareMathOperator{\MOD}{\textbf{Mod}}
\DeclareMathOperator{\CAT}{\textbf{Cat}}
\DeclareMathOperator{\COB}{\textbf{Cob}}
\DeclareMathOperator{\AGR}{\textbf{Abel}}
\DeclareMathOperator{\TOSP}{\textbf{Top}}
\DeclareMathOperator{\KAR}{\textbf{Kar}}
\DeclareMathOperator{\MON}{\textbf{Mon}}
\DeclareMathOperator{\BMOD}{\textbf{BiMod}}
\DeclareMathOperator{\SK}{Sk}
\DeclareMathOperator{\Kom}{\textbf{Kom}}
\newtheorem{prop}{Proposition}[subsection]
\newtheorem{thm}[prop]{Theorem}
\newtheorem{lem}[prop]{Lemma}
\newtheorem{cor}[prop]{Corollary}
\theoremstyle{remark}
\newtheorem{rem}[prop]{Remark}
\newtheorem*{nota}{Notation}
\theoremstyle{remark}
\newtheorem{ex}[prop]{\textbf{Example}}
\newtheorem{question}[prop]{Question}
\theoremstyle{remark}
\theoremstyle{definition}
\newtheorem{defn}[prop]{Definition}
\numberwithin{equation}{subsection}
\newcommand{\figins}[3] 
{\raisebox{#1pt}{\includegraphics[height=#2 in]{res/figs/section22/#3}}}
\newcommand{\figwhins}[4] 
{\raisebox{#1pt}{\includegraphics[height=#2 in, width=#3 in]{res/figs/section22/#4}}}
\newcommand{\jpg}[2]{{\hspace{-3pt}\begin{array}{c}%
  \raisebox{-2.5pt}{\includegraphics[width=#1]{res/figs/topcom/#2.eps}}%
\end{array}\hspace{-3pt}}}
\newcommand{\Ver}{\mathrm{Vert}}
\newcommand{\F}{\mathcal{F}}
\newcommand{\backoverslash}{\xy
 (0,0)*{\includegraphics[height=.02\textheight]{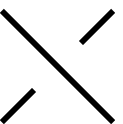}};
 \endxy}
\newcommand{\slashoverback}{\xy
 (0,0)*{\includegraphics[height=.02\textheight]{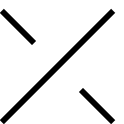}};
 \endxy}
\newcommand{\smoothing}{\xy
 (0,0)*{\includegraphics[height=.02\textheight]{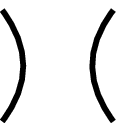}};
 \endxy}
\newcommand{\hsmoothing}{\xy
 (0,0)*{\includegraphics[height=.02\textheight]{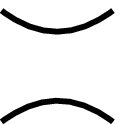}};
 \endxy}
\newcommand{\virtual}{\xy
 (0,0)*{\includegraphics[height=.02\textheight]{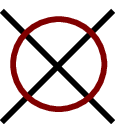}};
 \endxy}
 \newcommand{\overcrossing}{\xy
 (0,0)*{\includegraphics[height=.02\textheight]{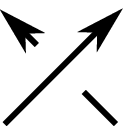}};
 \endxy}
\newcommand{\undercrossing}{\xy
 (0,0)*{\includegraphics[height=.02\textheight]{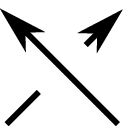}};
 \endxy}
 \newcommand{\osmoothing}{\xy
 (0,0)*{\includegraphics[height=.02\textheight]{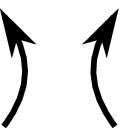}};
 \endxy}
\newcommand{\virtualoriented}{\xy
 (0,0)*{\includegraphics[height=.02\textheight]{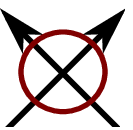}};
 \endxy}
\newcommand{\ttrivalent}{\xy
 (0,0)*{\includegraphics[height=.02\textheight]{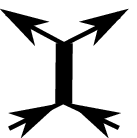}};
 \endxy}
\newcommand{\uu}{\xy
 (0,0)*{\includegraphics[height=.02\textheight]{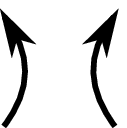}};
 \endxy}
\newcommand{\dd}{\xy
 (0,0)*{\includegraphics[height=.02\textheight]{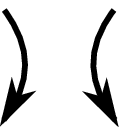}};
 \endxy}
\newcommand{\du}{\xy
 (0,0)*{\includegraphics[height=.02\textheight]{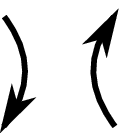}};
 \endxy}
\newcommand{\ud}{\xy
 (0,0)*{\includegraphics[height=.02\textheight]{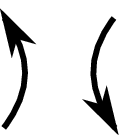}};
 \endxy}
 \newcommand{\lel}{\xy
 (0,0)*{\includegraphics[height=.02\textheight, angle=90]{res/figs/orsmoothings/uu.eps}};
 \endxy}
\newcommand{\rir}{\xy
 (0,0)*{\includegraphics[height=.02\textheight, angle=90]{res/figs/orsmoothings/dd.eps}};
 \endxy}
\newcommand{\ril}{\xy
 (0,0)*{\includegraphics[height=.02\textheight, angle=90]{res/figs/orsmoothings/du.eps}};
 \endxy}
\newcommand{\ler}{\xy
 (0,0)*{\includegraphics[height=.02\textheight, angle=90]{res/figs/orsmoothings/ud.eps}};
 \endxy}
\newcommand{\uup}{\xy
 (0,0)*{\includegraphics[height=.025\textheight]{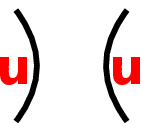}};
 \endxy}
\newcommand{\uupp}{\xy
 (0,0)*{\includegraphics[width=.025\textheight]{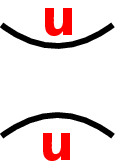}};
 \endxy}
\newcommand{\udp}{\xy
 (0,0)*{\includegraphics[height=.025\textheight]{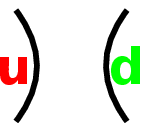}};
 \endxy}
\newcommand{\udpp}{\xy
 (0,0)*{\includegraphics[width=.025\textheight]{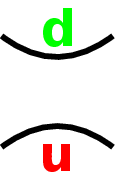}};
 \endxy}
\newcommand{\dup}{\xy
 (0,0)*{\includegraphics[height=.025\textheight]{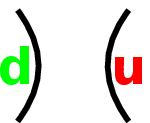}};
 \endxy}
\newcommand{\dupp}{\xy
 (0,0)*{\includegraphics[width=.025\textheight]{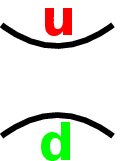}};
 \endxy}
\newcommand{\ddp}{\xy
 (0,0)*{\includegraphics[height=.025\textheight]{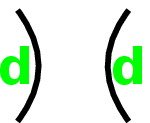}};
 \endxy}
\newcommand{\ddpp}{\xy
 (0,0)*{\includegraphics[width=.025\textheight]{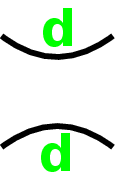}};
 \endxy}
\newcommand{\none}{\xy
 (0,0)*{\includegraphics[height=.03\textheight]{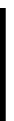}};
 \endxy}
\newcommand{\down}{\xy
 (0,0)*{\includegraphics[height=.03\textheight]{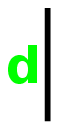}};
 \endxy}
\newcommand{\up}{\xy
 (0,0)*{\includegraphics[height=.03\textheight]{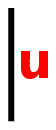}};
 \endxy}
\newcommand{\smoothinga}{\xy
 (0,0)*{\includegraphics[height=.025\textheight]{res/figs/intro/smoothing.eps}};
 \endxy}
\newcommand{\hsmoothinga}{\xy
 (0,0)*{\includegraphics[height=.025\textheight]{res/figs/intro/hsmoothing.eps}};
 \endxy} 
 \newcounter{In}
\def\In{\stepcounter{In}\vskip 6pt \par\noindent\scriptsize
  In[\theIn]$:=$\tt
  \catcode`\#=12 \catcode`\&=12 \catcode`\~=12 \catcode`\%=12
  \catcode`\$=12 \catcode`\^=12 \catcode`\_=12
  \obeylines\obeyspaces}
\def\Out{\vskip 6pt \par\noindent\scriptsize\parindent=12pt
  Out[\theIn]$=$\tt
  \catcode`\#=12 \catcode`\&=12 \catcode`\~=12 \catcode`\%=12
  \catcode`\$=12 \catcode`\^=12 \catcode`\_=12 \catcode`\{=12
  \catcode`\}=12 \obeyspaces
}
\title{}
\author{}
\date{Last compiled \today}
\begin{document}

\thispagestyle{empty}

\begin{center}
\vspace*{15mm}

{\Large \textbf{Categorification and applications in topology and representation theory}}
\end{center}
\vspace{10mm}

\begin{center}
{\large \textbf{Dissertation}}
\end{center}

\vspace{5mm}

\begin{center}
zur Erlangung des mathematisch-naturwissenschaftlichen Doktorgrades
\end{center}

\vspace{3mm}

\begin{center}
{\large ``\emph{Doctor rerum naturalium}''}
\end{center}

\vspace{3mm}

\begin{center}
der Georg-August-Universit\"at G\"ottingen
\end{center}

\vspace{3mm}
\begin{center}
im Promotionsprogramm der PhD School of Mathematical Science (SMS)\\
der Georg-August University School of Science (GAUSS)
\end{center}
\vspace{3mm}

\begin{center}
{\small vorgelegt von}
\end{center}

\vspace{3mm}

\begin{center}
Daniel Tubbenhauer
\end{center}

\vspace{3mm}

\begin{center}
{\small aus G\"ottingen}
\end{center}

\vspace{95mm}
\begin{center}
Georg-August-Universit\"at G\"ottingen, G\"ottingen, Germany, 2013
\end{center}

\newpage

\thispagestyle{empty}
\phantom{}
\vspace{15mm}

\begin{flushleft}
\textbf{Betreungsausschuss:}
\end{flushleft}

\vspace{0.1mm}

\begin{flushleft}
Referent 1: Prof. Dr. Thomas Schick
\end{flushleft}

\vspace{0.1mm}

\begin{flushleft}
{\small Mathematisches Institut, Georg-August-Universit\"at G\"ottingen}
\end{flushleft}

\vspace{2mm}

\begin{flushleft}
Referent 2: Dr. Marco Ari\"en Mackaaij
\end{flushleft}

\vspace{0.1mm}

\begin{flushleft}
{\small Departamento de Matem\'{a}tica, FCT, Universidade do Algarve}
\end{flushleft}

\vspace{8mm}

\begin{flushleft}
\textbf{Mitglieder der Pr\"ufungskommission:}
\end{flushleft}

\vspace{2mm}

\begin{flushleft}
Dr. Marco Ari\"en Mackaaij
\end{flushleft}

\vspace{0.1mm}
\begin{flushleft}
{\small Departamento de Matem\'{a}tica, FCT, Universidade do Algarve}
\end{flushleft}

\vspace{2mm}

\begin{flushleft}
Prof. Dr. Gerlind Plonka-Hoch
\end{flushleft}

\vspace{0.1mm}
\begin{flushleft}
{\small Institut f\"ur Num. und Angew. Mathematik, Georg-August-Universit\"at G\"ottingen}
\end{flushleft}

\vspace{2mm}

\begin{flushleft}
Prof. Dr. Karl-Henning Rehren
\end{flushleft}

\vspace{0.1mm}
\begin{flushleft}
{\small Institut f\"ur Theoretische Physik, Georg-August-Universit\"at G\"ottingen}
\end{flushleft}

\vspace{2mm}

\begin{flushleft}
Prof. Dr. Thomas Schick
\end{flushleft}

\vspace{0.1mm}
\begin{flushleft}
{\small Mathematisches Institut, Georg-August-Universit\"at G\"ottingen}
\end{flushleft}

\vspace{2mm}

\begin{flushleft}
Prof. Dr. Anja Sturm
\end{flushleft}

\vspace{0.1mm}
\begin{flushleft}
{\small Institut f\"ur Mathematische Stochastik, Georg-August-Universit\"at G\"ottingen}
\end{flushleft}

\vspace{2mm}

\begin{flushleft}
Prof. Dr. Chenchang Zhu 
\end{flushleft}

\vspace{0.1mm}
\begin{flushleft}
{\small Mathematisches Institut, Georg-August-Universit\"at G\"ottingen}
\end{flushleft}

\vspace{4mm}

\begin{flushleft}
\textbf{Tag der m\"undlichen Pr\"ufung:} 02.07.2013
\end{flushleft}

\newpage
\clearpage
\begin{abstract}
This thesis splits into two major parts. The connection between the two parts is the notion of ``categorification'' which we shortly explain/recall in the introduction.
\vskip0.5cm
In the first part of this thesis we extend Bar-Natan's cobordism based categorification of the Jones polynomial to virtual links. Our topological complex allows a direct extension of the classical Khovanov complex ($h=t=0$), the variant of Lee ($h=0,t=1$) and other classical link homologies. We show that our construction allows, over rings of characteristic $2$, extensions with no classical analogon, e.g. Bar-Natan's $\bZ/2$-link homology can be extended in two non-equivalent ways.

Our construction is computable in the sense that one can write a computer program to perform calculations, e.g. we have written a MATHEMATICA based program.

Moreover, we give a classification of all unoriented TQFTs which can be used to define virtual link homologies from our topological construction. Furthermore, we prove that our extension is combinatorial and has semi-local properties. We use the semi-local properties to prove an application, i.e. we give a discussion of Lee's degeneration of virtual homology.
\vskip0.5cm
In the second part of this thesis (which is based on joint work with Mackaay and Pan) we use Kuperberg's $\mathfrak{sl}_3$ webs and Khovanov's 
$\mathfrak{sl}_3$ foams to define a new algebra $K_S$, which we call the 
$\mathfrak{sl}_3$ web algebra. It is the $\mathfrak{sl}_3$ analogue of 
Khovanov's arc algebra $H_n$. 

We prove that $K_S$ is a graded symmetric Frobenius algebra. 
Furthermore, we categorify an instance of $q$-skew Howe duality, 
which allows us to prove that $K_S$ is Morita equivalent to 
a certain cyclotomic KLR-algebra. This allows us to determine 
the split Grothendieck group $K^{\oplus}_0(K_S)$, to show that its center is 
isomorphic to the cohomology ring of a certain Spaltenstein variety, 
and to prove that $K_S$ is a graded cellular algebra. 
\end{abstract}
\maketitle
%
%
\newpage

\paragraph*{\textbf{Acknowledgements}}
\vskip0.5cm
Many thanks to my two great advisors Thomas Schick and Marco Mackaay who read the thesis from beginning to end (although it is so long) and whose detailed(!) criticism, comments and suggestions greatly improved the presentation.
\vskip0.5cm
Moreover, I especially like to thank Weiwei Pan. Not just that she spend so much time together with me working on the ``web algebra project'', but also her style and philosophy of mathematical exposition has greatly influenced me while she stayed in G\"ottingen.   
\vskip0.5cm
For the first part of this thesis the author thanks Vassily Manturov for corrections and many helpful comments. Moreover, I wish to thank Aaron Kaestner and Louis Kauffman for suggestions and helpful comments, hopefully helping me to write a faster computer program for calculations in future work.
\vskip0.5cm
For the second part of this thesis, I wish to thank Jonathan Brundan, Joel Kamnitzer, Mikhail Khovanov and Ben Webster for helpful exchanges of emails, some of which will hopefully bear fruit in future publications on this topic.  In particular, I thank Mikhail Khovanov for spotting a crucial mistake in 
a previous version of the joint paper with Mackaay and Pan. Moreover, I thank Ben Webster for 
suggesting to use $q$-skew Howe duality in order to relate 
$K_S$ to a cyclotomic KLR-algebra.
\vskip0.5cm
I have also benefited from numerous mathematical papers and books. A lot of them have deeply influenced my personal viewpoints. Especially, I wish to thank Aaron Lauda whose papers had a great impact on my personal style.
\vskip0.5cm
Special thanks to the Courant Research Center ``Higher Order Structures'', the Mathematisches Institute der Georg-August-Universit\"at G\"ottingen and 
the Graduiertenkolleg 1493 in G\"{o}ttingen for sponsoring me in many ways over the years here in G\"ottingen. I thank the University of the Algarve and the Instituto Superior 
T\'{e}cnico for sponsoring three research visits.
\vskip0.5cm
I am indebted to a number of people who where very generous with their time and support, on a mathematical and non-mathematical level. One of the biggest pleasures of writing is getting criticism and input from other - special thanks to all of you!
\vskip0.5cm
Last but not least I would like to thank all my friends, colleagues and my family. Although I do not know how many of you will ever read this thesis but without you it would not have been worth writing it.
\vskip0.5cm
Which leaves open the question of what my personal contribution to this thesis is.
\newpage
  
\tableofcontents

\newpage
\section{Introduction}
\subsection{Categorification}\label{sec-intro}
The notion \textit{categorification} was introduced by Crane in~\cite{cr} based on an earlier work together with Frenkel in~\cite{cf}. We will start by explaining the basic idea in the present section. Forced to reduce this introduction to one sentence, the author would choose:
\[
\text{Interesting integers are shadows of richer structures in categories.}
\]
We try to give an informal introduction in this section. More details can be found in Section~\ref{sec-tech} or for the main parts of this thesis in the Sections~\ref{sec-introa},~\ref{sec-vkhsum} and Sections~\ref{sec-introb},~\ref{sec-websum} respectively. 

The basic idea can be seen as follows. Take a \textit{``set-based''} structure $S$ and try to find a \textit{``category-based''} structure $\mathcal C$ such that $S$ is just a shadow of the category $\mathcal C$. If the category $\mathcal C$ is chosen in a ``good'' way, then one has an explanation of facts about the structure $S$ in a categorical language, that is certain facts in $S$ can be explained as special instances of natural constructions.

As an example, consider the following categorification of the natural numbers $S=\bN$. We take $\mathcal C=$\textbf{FinVec}${}_K$ for a fixed field $K$, i.e. objects are finite dimensional $K$-vector spaces $V,W,\dots$ and morphisms are $K$-linear maps $f\colon V\to W$ between them. Note that the set of isomorphism classes of its objects, i.e. the \textit{skeleton} of $\mathcal C$, is isomorphic to the natural numbers $\bN$ with $0$, since finite dimensional vector spaces are isomorphic iff they have the same dimension. We call this \textit{decategorification}. To be more precise, the category $\mathcal C$ gives a functor $\mathrm{decat}\colon \mathrm{DECAT}(\mathcal C)\to \mathcal \bN$, where $\mathrm{DECAT}(\mathcal C)$ denotes the isomorphism classes of objects.

Since categorification can be seen as \textit{``remembering'' or ``inventing''} information and decategorification is more like \textit{``forgetting'' or ``identifying''} structure which is easier, it is convenient to study the latter in more detail, e.g., if we change the decategorification to be the Grothendieck group $K_0(\mathcal C)$ of the abelian category $\mathcal C$, then we can say that the category $\mathcal C$ is a \textit{categorification} of the integers since $K_0(\mathcal C)=\bZ$. Hence, we can say that the category \textbf{FinVec}${}_K$ is a \textit{categorification} of $\bN$ with \textit{decategorification=dim} or that \textbf{FinVec}${}_K$ is a \textit{categorification} of $\bZ$ with \textit{decategorification=$K_0$}.

We make the following observations. Analogous statements are also true for the Grothendieck group decategorification.
\begin{itemize}
\item Much information is lost if we only consider $\bN$, i.e. we can only say that two objects are isomorphic instead of \textit{how} they are isomorphic.
\item The extra structure of the natural numbers is decoded in the category \textbf{FinVec}${}_K$, e.g.:
\begin{itemize}
\item The product and coproduct in \textbf{FinVec}${}_K$ is the direct sum $\oplus$ and the category comes with a monoidal structure called tensor product $\otimes_K$ and they \textit{categorify} addition and multiplication, i.e. we have $\dim(V\oplus W)=\dim V+\dim W$ and we also have $\dim(V\otimes_K W)=\dim V\cdot\dim W$.
\item The category has $0$ as a zero object and $K$ as an identity for the monoidal structure and we have $V\oplus 0\simeq V$ and $V\otimes_K K\simeq V$, i.e. we can \textit{categorify} the identities.
\item We have $V\hookrightarrow W$ iff $\dim V\leq \dim W$ and $V\twoheadrightarrow W$ iff $\dim V\geq \dim W$, i.e. injections and surjections \textit{categorify} the order relation.
\end{itemize}
\end{itemize}
Moreover, one can write down the categorified statements of each of following properties and one can show that all the isomorphisms are natural.
\begin{itemize}
\item Addition and multiplication are associative and commutative.
\item Multiplication distributes over addition.
\item Addition and multiplication preserve order.
\end{itemize}
Note that categorification is \textit{not} unique, e.g. we can also go from the category $\mathcal C=$\textbf{FinVec}${}_K$ to $\Kom_b(\mathcal C)$, i.e. the \textit{category of bounded chain complexes} of finite dimensional $K$-vector spaces. The decategorification changes to $\chi$, that is taking the Euler characteristic of a complex. As we explain now, this approach leads to a construction that can \textit{also} be called a categorification of $\bZ$.

If we lift $m,n\in\bN$ to the two $K$-vector spaces $V,W$ with dimensions $\dim V=m,\dim W=n$, then the difference $m-n$ lifts to the complex
\[
\begin{xy}
  \xymatrix{
     0\ar[r] & V\ar[r]^d & W\ar[r] & 0,
  }
\end{xy}
\]
for any linear map $d$ and $V$ in even homology degree. More generally, if we lift $m,n$ to complexes $C,D$ with $\chi(C)=m,\chi(D)=n$, then we can lift $m-n$ to $\Gamma(f)$ for any map $f\colon C\to D$ between complexes, where $\Gamma$ denotes the cone. As before, some of the basic properties of the integers $\bZ$ can be lifted to the category $\Kom_b(\mathcal C)$.

This construction is not artificial, i.e. the \textit{Betti-numbers} of a reasonable topological space $X$ can be categorified using \textit{homology groups} $H_k(X,\bZ)$ and the \textit{Euler characteristic} $\chi(X)$ of a reasonable topological space can be categorified using \textit{chain complexes} $(C(X),c_*)$ - an observation which goes back to Noether and Hopf in the 1920's in G\"ottingen. Although of course they never called it categorification. We note the following observations.
\begin{itemize}
\item The space $H_k(X,\bZ)$ is a graded abelian group, while the Betti-number is just a number. More information of the space $X$ is encoded. Again, homomorphisms between the groups tell \textit{how} some groups are related.
\item Singular homology works for all topological spaces. And while the Euler characteristic is only defined (in its initially, naive formulation) for spaces with finite CW-decomposition, the homological Euler characteristic can be defined for a bigger class of spaces.
\item The homology extends to a functor and provides information about continuous maps as well.
\item More sophisticated constructions like multiplication in cohomology provide even more information.
\end{itemize}
Another example in this spirit that we consider in more detail in this thesis is the so-called \textit{categorification of the Jones (or $\mathfrak{sl}_2$) polynomial} from Khovanov~\cite{kh1}. We follow the normalisation used by Bar-Natan in~\cite{bn1}. Let $L_D$ be a diagram of an oriented link. We denote the number of positive crossings by $n_+$ and the number of negative crossings by $n_-$ as shown in the figures below respectively.
\[
n_+=\text{number of crossings }\xy
 (0,0)*{\includegraphics[height=.03\textheight]{res/figs/intro/positive.eps}};
 \endxy\;\;\;\;\;n_-=\text{number of crossings }\xy
 (0,0)*{\includegraphics[height=.03\textheight]{res/figs/intro/negative.eps}};
 \endxy
\]
The \textit{bracket polynomial} of the diagram $L_D$ (without orientations) is a polynomial $\langle L_D\rangle\in\mathbb{Z}[q,q^{-1}]$ given by the rules.
\begin{itemize}
\item $\langle \emptyset\rangle=1$ (normalisation).
\item $\langle\xy
 (0,0)*{\includegraphics[height=.03\textheight]{res/figs/intro/crossing.eps}};
 \endxy\rangle=\langle\xy
 (0,0)*{\includegraphics[height=.03\textheight]{res/figs/intro/smoothing.eps}};
 \endxy\rangle-q\langle\xy
 (0,0)*{\includegraphics[height=.03\textheight]{res/figs/intro/hsmoothing.eps}};
 \endxy\rangle$ (recursion step 1).
\item $\langle \bigcirc\amalg L_D\rangle=(q+q^{-1})\langle L_D\rangle$ (recursion step 2).
\end{itemize}
Then the \textit{Kauffman polynomial} $K(L_D)$ of the oriented diagram $L_D$ is defined by a shift and the \textit{Jones polynomial} $J(L_D)$ by a renormalisation, i.e. by
\[
K(L_D)=(-1)^{n_-}q^{n_+-2n_-}\langle L_D\rangle\text{ and }K(L_D)=(q+q^{-1})J(L_D).
\]
It is well-known that the Jones polynomial is uniquely determined by the property $J(\bigcirc)=1$, where $\bigcirc$ denotes the trivial diagram, and the so-called \textit{$\mathfrak{sl}_2$ skein relations}
\[
q^2J\left(\xy
 (0,0)*{\includegraphics[height=.03\textheight]{res/figs/intro/positive.eps}};
 \endxy\right)-q^{-2}J\left(\xy
 (0,0)*{\includegraphics[height=.03\textheight]{res/figs/intro/negative.eps}};
 \endxy\right)=(q+q^{-1})J\left(\xy
 (0,0)*{\includegraphics[height=.03\textheight]{res/figs/intro/osmoothing.eps}};
 \endxy\right).
\]
Khovanov's idea given in~\cite{kh1} or as explained by Bar-Natan in~\cite{bn1} is based on the idea from the categorification of the Euler characteristic $\chi(X)$ explained above, i.e. if one can categorify a number in $\chi(X)\in\bZ$ using chain complexes, then one can try to categorify a polynomial in $J(L_D)\in\mathbb{Z}[q,q^{-1}]$ using chain complexes of \textit{graded vector spaces} (note that it works over $\bZ$ as well - Khovanov's original work uses $\bZ[c]$ with $c$ of degree two).

In particular, if $V$ denotes a two dimensional $\bQ$-vector space with a basis element $v_+$ of degree $1$ and a basis element $v_-$ of degree $-1$ (the graded dimension is $q+q^{-1}$), then Khovanov categorifies the normalisation and the recursion-step 2 conditions from above as
\[
\llbracket \emptyset \rrbracket=0\to \bQ\to 0,\;\;\text{ and }\;\;\llbracket \bigcirc\amalg L_D \rrbracket=V\otimes_{\bQ}\llbracket L_D\rrbracket,
\]
where $\llbracket \cdot\rrbracket$ takes values in the category of chain complexes of finite dimensional, graded $\bQ$-vector spaces. Let $\Gamma(\cdot)$ again denote the cone complex. To categorify the recursion-step 1 condition Khovanov propose the rule
\[
\left\llbracket\xy
 (0,0)*{\includegraphics[height=.03\textheight]{res/figs/intro/crossing.eps}};
 \endxy\right\rrbracket=\Gamma\left(0\to\left\llbracket\xy
 (0,0)*{\includegraphics[height=.03\textheight]{res/figs/intro/smoothing.eps}};
 \endxy\right\rrbracket\stackrel{d}{\to}\left\llbracket\xy
 (0,0)*{\includegraphics[height=.03\textheight]{res/figs/intro/hsmoothing.eps}};
 \endxy\right\rrbracket\to 0\right).
\]
Of course, the differential $d$ is a main ingredient here. Details can be for example found in~\cite{bn1}. Note that the shift from~\cite{bn1} is already included in the usage of the cone. Indeed, the appearance of chain complexes and the rule above suggest an alternative construction by actions of functors on certain categories. Details can be found for example in the work of Stroppel~\cite{st1}.

It is worth noting that again the terminology is that Khovanov has given \textit{ONE} categorification of the Jones polynomial and not \textit{THE} categorification, e.g. a different categorification is the so-called \textit{odd Khovanov homology} as described by Ozsv\'{a}th, Rasmussen and Szab\'{o} in~\cite{ors}.

Notice that one can ask the following question. Given an additive category $\mathcal C$, then one can go to the category of bounded complexes over $\mathcal C$ denoted by $\Kom_b(\mathcal C)$. Now the two approaches above suggest that we have two notions of \textit{``natural'' decategorification}.
\begin{itemize}
\item One can take the \textit{Euler characteristic as decategorification}. This can be viewed as a sum of elements in the split Grothendieck group $K_0^{\oplus}(\mathcal C)$ of the additive category $\mathcal C$.
\item The category $\Kom_b(\mathcal C)$ is triangulated and one can therefore take its \textit{Grothendieck group} $K^{\Delta}_0(\Kom_b(\mathcal C))$ \textit{as decategorification}.
\end{itemize}
The obvious question is how these two approaches are \textit{related}, i.e. given this setting, then how are $K_0^{\oplus}(\mathcal C)$ and $K^{\Delta}_0(\Kom_b(\mathcal C))$ related? The answer is known: The corresponding groups are isomorphic, see Rose~\cite{rose} for example. In particular, the two examples of categorification that are discussed in Section~\ref{sec-vkh} and Section~\ref{sec-web} follow the ``same idea'' of \textit{decategorification}.

We provide a list of other interesting examples. This list is far from being complete. Much more can be found in the work of Baez and Dolan~\cite{bd1} and~\cite{bd2} for examples that are related to more combinatorial parts of categorification or Crane and Yetter~\cite{cy} and Khovanov, Mazorchuk and Stroppel~\cite{kms} for examples from algebraic categorification.
\begin{itemize}
\item Khovanov's construction can be extended to a categorification of the HOMFLY-PT polynomial, e.g. in~\cite{kr1}. Moreover, some applications of Khovanov's categorification are listed below.
\begin{itemize}
\item It is functorial and provides a strictly stronger invariant. 
\item Kronheimer and Mrowka showed in~\cite{kmr}, by comparing Khovanov homology to Knot Floer homology, that Khovanov homology detects the unknot. This is still an open question for the Jones polynomial.
\item Rasmussen obtained his famous invariant by comparing Khovanov homology to a variation of it. He used it to give a combinatorial proof of the Milnor conjecture, see~\cite{ra}. Note that he in~\cite{ra2} also gives a way to detect exotic $\bR^4$ from his approach.
\end{itemize}
\item Floer homology can be seen as a categorification of the Casson invariant of a manifold. Floer homology is again ``better'' than the Casson invariant, e.g. it is possible to construct a $3+1$ dimensional Topological Quantum Field Theory (TQFT) which for closed four dimensional manifolds gives Donaldson's invariants.
\item Knot Floer homology can be seen as a categorification of the Alexander-Conway knot invariant, see for example~\cite{os}.
\item The Grothendieck group decategorification from above provides another source of examples. Namely, the categorification of certain quantum algebras which have bases with interesting
positive integrality properties. For example, Khovanov and Lauda~\cite{kl1}, and independently Rouquier~\cite{rou}, categorified the quantum Kac-Moody algebras with their canonical bases.
\item The so-called Soergel category $\mathcal S$ can be seen in the same vein as a categorification of the Hecke algebras in the sense that the split Grothendieck group gives the Hecke algebras. We note that Soergel's construction shows that Kazhdan-Lusztig bases have positive integrality properties, see~\cite{soe} and related publications.
\item Ariki gave in~\cite{ariki} a remarkable categorification of all finite dimensional, irreducible representation of $\mathfrak{sl}_m$ for \textit{all} $m$ as well as a categorification of integrable, irreducible representations of the affine version $\widehat{\mathfrak{sl}}_n$. In short, he identified the Grothendieck group of blocks of so-called Ariki-Koike cyclotomic Hecke algebras with weight spaces of such representations in such a way that direct summands of induction and restriction functors between cyclotomic Hecke algebras for $n,n+1$ act on the $K_0$ as the $e_i,f_i$ of $\mathfrak{sl}_m$.
\item In Conformal Field Theory (CFT) researchers study fusion algebras, e.g. the Verlinde algebra. Examples of categorifications of such algebras are known, e.g. using categories connected to the representation theory of quantum groups at roots of unity~\cite{kh6}, and contain more information than these algebras, e.g. the $R$-matrix and the quantum $6j$-symbols.
\item The Witten genus of certain moduli spaces can be seen as an element of $\bZ[[q]]$. It can be realised using elliptic cohomology, see~\cite{ahs} and related papers.
\end{itemize}
This thesis deals with two different instances of categorification. The first was given by the author in~\cite{tub1},~\cite{tub2} and the second by the author in joint work with Mackaay and Pan in~\cite{mpt}.

In Section~\ref{sec-vkh}, with its own introduction in Section~\ref{sec-introa}, the author explains the first part of his thesis, i.e. a categorification of the \textit{virtual} Jones polynomial. That is an extended version of Khovanov's construction explained above that works for so-called \textit{virtual} links, i.e. links that are embedded in a thickened $\Sigma_g$ for an orientable surface of genus $g$.

In Section~\ref{sec-web}, with its own introduction in Section~\ref{sec-introb}, the author explains the second part of his thesis, i.e. the construction of a new algebra, called \textit{web algebra}, providing a connection between \textit{categorified} link invariants in the spirit of Khovanov and \textit{categorified} Reshetikhin-Turaev invariants.

Moreover, we have collected some technical (but ``well-known'') facts in Section~\ref{sec-tech}.
\newpage
\subsection{Virtual Khovanov homology}\label{sec-introa}
This part of the introduction is intended to explain the first part of the thesis, i.e. Section~\ref{sec-vkh}, which is based on two preprints~\cite{tub1} and~\cite{tub2} of the author. A summary of the construction and results of Section~\ref{sec-vkh} is given later, i.e. in Section~\ref{sec-vkhsum}. 
\vskip0.5cm
In Section~\ref{sec-vkh} we consider \textit{virtual link diagrams} $L_D$, i.e. planar graphs of valency four where every vertex is either an overcrossing $\slashoverback$, an undercrossing $\backoverslash$ or a virtual crossing $\virtual$, which is marked with a circle. We also allow circles, i.e. closed edges without any vertices.

We call the crossings $\slashoverback$ and $\backoverslash$ \textit{classical crossings} or just \textit{crossings}. For a virtual link diagram $L_D$ we define the \textit{mirror image} $\overline L_D$ of $L_D$ by switching all classical crossings from an overcrossing to an undercrossing and vice versa.

A \textit{virtual link} $L$ is an equivalence class of virtual link diagrams modulo planar isotopies and \textit{generalised Reidemeister moves}, see Figure~\ref{figureintroa-1}.
\begin{figure}[ht]
  \centering
     \includegraphics[scale=0.55]{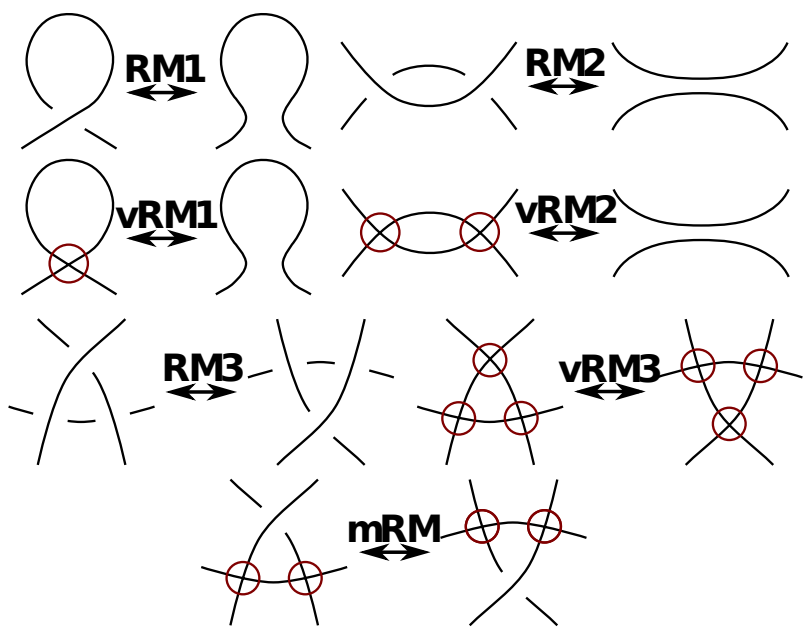}
  \caption{The generalised Reidemeister moves are the moves pictured plus mirror images.}
  \label{figureintroa-1}
\end{figure}

We call the moves RM1, RM2 and RM3 the \textit{classical Reidemeister moves}, the moves vRM1, vRM2 and vRM3 the \textit{virtual Reidemeister moves} and the move mRM the \textit{mixed Reidemeister move}. We call a virtual link diagram $L_D$ \textit{classical} if all crossings of $L_D$ are classical crossings. Furthermore, we say a that virtual link $L$ is \textit{classical}, if the set $L$ contains a classical link diagram.

The notions of an \textit{oriented} virtual link diagram and of an \textit{oriented} virtual link are defined analogously. The latter modulo isotopies and \textit{oriented} generalised Reidemeister moves. Note that an \textit{oriented virtual link diagram} is a diagram together with a choice of an orientation of the diagram such that every crossing is of the form $\overcrossing$, $\undercrossing$ or $\virtualoriented$. Furthermore, we use the short hand notations c- and v- for everything that starts with classical or virtual, e.g. c-knot means classical knot and v-crossing means virtual crossing. 
\vskip0.5cm
Virtual links are an essential part of modern knot theory and were proposed by Kauffman in~\cite{ka3}. They arise from the study of links which are embedded in a thickened $\Sigma_g$ for an orientable surface $\Sigma_g$. These links were studied by Jaeger, Kauffman and Saleur in~\cite{jks}. Note that for c-links the surface is $\Sigma_g=S^2$, i.e. v-links are a generalisation of c-links and they should for example have analogous ``applications'' in quantum physics.

From this perception v-links are a combinatorial interpretation of projections on $\Sigma_g$. It is well-known that two v-link diagrams are equivalent iff their corresponding surface embeddings are \textit{stable equivalent}, i.e. equal modulo:
\begin{itemize}
\item The Reidemeister moves RM1, RM2 and RM3 and isotopies.
\item Adding/removing handles which do not affect the link diagram.
\item Homeomorphisms of surfaces.
\end{itemize}
For a sketch of the proof see Kauffman~\cite{ka1}. For an example see Figure~\ref{figureintroa-2}.
\begin{figure}[ht]
  \centering
     \includegraphics[scale=0.5]{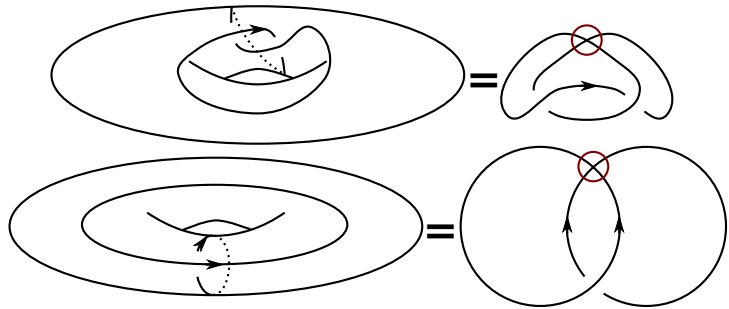}
  \caption{Two knot diagrams on a torus. The first virtual knot is called the \textit{virtual trefoil}.}
  \label{figureintroa-2}
\end{figure}

We are also interested in \textit{virtual tangle diagrams} and \textit{virtual tangles}. The first ones are graphs embedded in a disk $D^2$ such that each vertex is either one valent or four valent. The four valent vertices are, as before, labelled with an \textit{overcrossing} $\slashoverback$, an \textit{undercrossing} $\backoverslash$ or a \textit{virtual crossing} $\virtual$. The one valent vertices are part of the boundary of $D^2$ and we call them \textit{boundary points} and a virtual tangle diagram with $k$ one valent vertices a virtual tangle diagram with \textit{$k$-boundary points}.

A virtual tangle with $k$-boundary points is an equivalence class of virtual tangle diagrams with $k$-boundary points modulo the generalised Reidemeister moves and boundary preserving isotopies. We note that all of the moves in Figure~\ref{figureintroa-1} can be seen as virtual tangle diagrams. Examples are given later, e.g. in Section~\ref{sec-vkhcat}. As before, the notions of \textit{oriented} virtual tangle diagrams and \textit{oriented} virtual tangles can be defined analogously, but modulo \textit{oriented} generalised Reidemeister moves and boundary preserving isotopies.

If the reader is unfamiliar with the notion v-link or v-tangle, we refer to some introductory papers of Kauffman and Manturov, e.g.~\cite{ka2} and~\cite{kama}, and the references therein.
\vskip0.5cm
Suppose one has a crossing $c$ in a diagram of a v-link (or an oriented v-link). We call a substitution of a crossing as shown in Figure~\ref{figureintroa-3} a \textit{resolution} of the crossing $c$.
\begin{figure}[ht]
  \centering
     \includegraphics[scale=0.8]{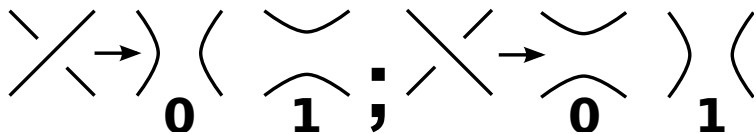}
  \caption{The two possible resolutions of a crossing called \textit{0-resolution} and \textit{1-resolution}.}
  \label{figureintroa-3}
\end{figure}

Furthermore, if we have a v-link diagram $L_D$, a \textit{resolution} of the v-link diagram $L_D$ is a diagram where all crossings of $L_D$ are replaced by one of the two resolutions from Figure~\ref{figureintroa-3}. We use the same notions for v-tangle diagrams.
\vskip0.5cm
One of the greatest developments in modern knot theory was the discovery of \textit{Khovanov homology} by Khovanov in his famous paper~\cite{kh1} (Bar-Natan gave an exposition of Khovanov's construction in~\cite{bn1}). As explained above, Khovanov homology is a categorification of the Jones polynomial in the sense that the graded Euler characteristic of the \textit{Khovanov complex}, which we call the \textit{classical Khovanov complex}, is the Jones polynomial (up to normalisation). 

Recall that the Jones polynomial is known to be related to various parts of modern mathematics and physics, e.g. it origin lies in the study of von Neumann algebras. We note that the Jones polynomial can be extended to v-links in a rather straightforward way, see e.g.~\cite{ka1}. We call this extension the \textit{virtual Jones polynomial} or \textit{virtual $\mathfrak{sl}_2$ polynomial}. 

As a categorification, Khovanov homology reflects these connections on a ``higher level''. Moreover, the Khovanov homology of c-links is strictly stronger than its decategorification, e.g. see~\cite{bn1}. Another great development was the \textit{topological interpretation} of the Khovanov complex by Bar-Natan in~\cite{bn2}. This topological interpretation is a generalisation of the classical Khovanov complex for c-links and one of its modifications has functorial properties~\cite{cmw}. He constructed a \textit{topological complex} whose chain groups are formal direct sums of c-link resolutions and whose differentials are formal matrices of cobordisms between these resolutions.

Bar-Natan's construction modulo chain homotopy and the \textit{local relations} $S,T,4Tu$, also called \textit{Bar-Natan relations}, see Figure~\ref{figureintroa-4}, is an invariant of c-links.
\begin{figure}[ht]
  \centering
     \includegraphics[scale=0.55]{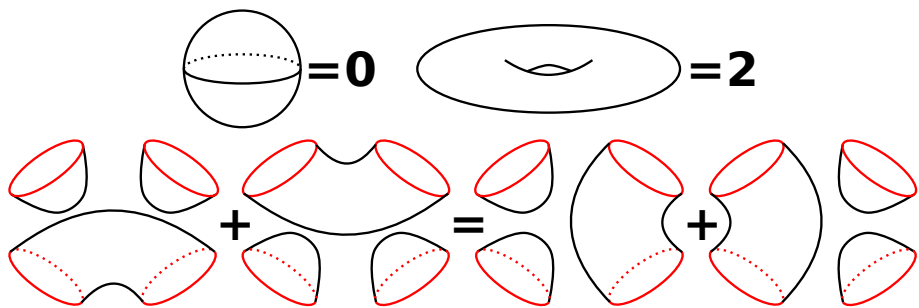}
  \caption{The local relations. A cobordism that contains a sphere $S$ should be zero, a cobordism that contains a torus $T$ should be two times the cobordism without the torus and the four tubes relation.}
  \label{figureintroa-4}
\end{figure}

It is possible with this construction to classify all TQFTs which can be used to define c-link homologies from this approach, see~\cite{kh2}. Moreover, it is algorithmic, i.e. computable in less than exponential time (depending on the number of crossings of a given diagram), see~\cite{bn3}. So it is only natural to search for such a topological categorification of the virtual Jones polynomial.

An algebraic categorification of the virtual Jones polynomial over the ring $\bZ/2$ is rather straightforward and was done by Manturov in~\cite{ma1}. Moreover, he also published a version over the integers $\bZ$ later in~\cite{ma2}. A topological categorification was done by Turaev and Turner in~\cite{tutu}, but their version does not generalise Khovanov homology, since their complex is not bi-graded. Another problem with their version is that it is not clear how to \textit{compute} the homology.

The author gave a topological categorification which generalises the version of Turaev and Turner in the sense that a restriction of the version given in~\cite{tub1} gives the topological complex of Turaev and Turner, another restriction gives a bi-graded complex that agrees with the Khovanov complex for c-links and another restriction gives the so-called \textit{Lee complex}, i.e. a variant of the Khovanov complex that can be used to define the \textit{Rasmussen invariant} of a c-knot, see~\cite{ra}, which is also not included in the version of Turaev and Turner. Moreover, the version given in~\cite{tub1} is computable and also strictly stronger than the virtual Jones polynomial.

Another restriction of the construction from~\cite{tub1} gives a different version than the one given by Manturov~\cite{ma2} in the sense that we conjecture it to be strictly stronger than his version. Moreover, in~\cite{tub2}, the author extended the construction to v-tangles in a ``good way'', something that is not known for Manturov's construction.

To be more precise, the categorification extends from c-tangles to v-tangles in a trivial way (by setting open saddles to be zero). This has an obvious disadvantage, i.e. it is neither a ``good'' invariant of v-tangles nor can it be used to calculate bigger complexes by ``tensoring'' smaller pieces. We give a local notion that is a strong invariant of v-tangles and allows ``tensoring''.

It is worth noting that the construction for v-links is more difficult (combinatorial) than the classical case. That is a reason why in~\cite{tub1} the Bar-Natan approach was not extended to v-tangles. In this thesis, i.e. in Section~\ref{sec-vkh}, we combine the preprints~\cite{tub1} and~\cite{tub2} in one text.
\vskip0.5cm
The author conjectures that the whole construction can be used as a ``blueprint'' for a \textit{categorification of the virtual $\mathfrak{sl}_3$ polynomial} (as explained in Section~\ref{sec-introb}), since Khovanov published in~\cite{kh3} a categorification of the classical $\mathfrak{sl}_3$ polynomial using foams, a special type of singular cobordisms.

Moreover, the author conjectures that it can also be used as a ``blueprint'' for a \textit{categorification of the virtual $\mathfrak{sl}_n$ polynomial}, if one can find a way to avoid the so-called Kapustin-Li formula used by Mackaay, Sto\v{s}i\'{c} and Vaz in~\cite{msv} to give a foam based categorification of the classical $\mathfrak{sl}_n$ polynomial.

Furthermore, the author wants to point out that a \textit{virtual analogue} of the constructions explained in Section~\ref{sec-introb} could be interesting and would be based on the constructions of the author given in his preprints~\cite{tub1},~\cite{tub2} or Section~\ref{sec-vkh}, but has not been done yet.
\subsection{The $\mathfrak{sl}_3$ web algebra}\label{sec-introb}
This part of the introduction is intended to explain the second part of the thesis, i.e. Section~\ref{sec-web}. Note that the results in Section~\ref{sec-web} are based on a preprint of the author together with Mackaay and Pan, see~\cite{mpt}.

We note that, because the results of the Section~\ref{sec-web} are based on joint work, the only things that we have changed is the introductory part given here, a summary of the results given in Section~\ref{sec-websum}, a part about future work in Section~\ref{sec-webend} and the appendix in~\cite{mpt} is now Section~\ref{sec-appendix}. Furthermore, we have also done some (small) notation changes to make the notation consistent with the other sections of this thesis, e.g. the thesis is in British English.

I have also added an isotopy invariant basis obtained from work together with Mackaay and Pan of which we hope that it has ``nice'' (i.e. we hope that it is cellular) properties given in Section~\ref{sec-webbase}.
\vskip0.5cm
We already mentioned the Jones polynomial in Section~\ref{sec-intro} and Section~\ref{sec-introa}. Shortly after Jones announced his discovery, several mathematicians found a generalisation of his construction, which is nowadays called \textit{HOMFLY polynomial}, named after the discoverers Hoste, Ocneanu, Millett, Freyd, Lickorish and Yetter~\cite{homfly}, or \textit{HOMFLY-PT polynomial}, recognising independent contributions of Przytycki and Traczyk.

All these polynomials can be explained using so-called \textit{Skein theory}, which has a completely combinatorial nature. Given an oriented diagram of a c-link $L_D$ (we note that this also works for v-links, e.g. see~\cite{ka1}), the HOMFLY polynomial $P(L_D)$ is a polynomial in $\bZ[a^{\pm 1},b^{\pm 1}]$ given by the following recursive rules.
\begin{itemize}
\item $P(\mathrm{unknot})=1$, where $\mathrm{unknot}$ should be any diagram of the unknot (normalisation).
\item $aP(L_+)-a^{-1}P(L_-)=bP(L_0)$, where $L_+=\overcrossing$, $L_-=\undercrossing$ and $L_0=\osmoothing$ should replace the corresponding parts of the diagram (recursion rule).
\item If $L^{\prime}_D,L^{\prime\prime}_D$ are two link diagrams, then the polynomial for the split union $L_D$ is given by $P(L_D)=\frac{-(a+a^{-1})}{b}P(L^{\prime}_D)P(L^{\prime\prime}_D)$ (union).
\end{itemize}
We note that the polynomial is uniquely determined by these rules and is an invariant of the link.

Hence, in the mid of the 1980s, new knot polynomials were discovered. They were used to solve open and old problems in knot theory in a very simple fashion. And they are related to different parts of modern mathematics, like operator algebras, Hopf algebras, Lie algebras, Chern-Simons theory, conformal field theory etc. Moreover, the Skein theory is combinatorial and makes it ``easy'' to compute these invariants.
\vskip0.5cm
If you have something obviously interesting, one wants to know how this fits into a \textit{``bigger picture''} and not just \textit{``that''} something is true.

An ``explanation'' how to obtain these invariants in terms of \textit{representation theory of quantum groups} was given around 1990 by Reshetikhin and Turaev in~\cite{rt}. To be more precise, they gave an explicit construction that works roughly in the following way.
\begin{itemize}
\item Start by colouring the strings of a tangle diagram in Morse position with irreducible representations $V_i$ of quantum groups.
\item Then at the bottom and top of the tangle diagram $T_D$ one has a tensor product of these $V_i$.
\item Then one associates certain intertwiners to cups, caps and crossings and composition gives an intertwiner $P_{T_D}$ between the bottom and top tensors. This is an invariant.
\item In the special case of link diagrams $L_D$ the intertwiner is a map $P_{L_D}\colon \bC(q)\to\bC(q)$ and can be seen as a polynomial $P_{L_D}(1)$. Note that this polynomial is in fact in $\bZ[q,q^{-1}]$, i.e. it has integer coefficients, and note that Reshetikhin and Turaev's construction restricted to the \textit{invariant tensors} gives the same link invariant.
\end{itemize}
For example, if we consider the substitution $a=q^n,b=q-q^{-1}$ with $n>1$ for the HOMFLY polynomial, then we can obtain these polynomials using the \textit{fundamental representations} of $U_q(\mathfrak{sl}_n)$ as colours. Note that the case $n=2$ gives the Jones polynomial, hence the name $\mathfrak{sl}_2$ polynomial. Much more details can be found for example in~\cite{tur}.
\vskip0.5cm
A connection between these two pictures, i.e. the combinatorial and the one from representation theory, is given by the theory of \textit{$\mathfrak{sl}_n$ webs}, where an $\mathfrak{sl}_n$ web is a graphical presentation of intertwiners between fundamental representations of the corresponding quantum groups $U_q(\mathfrak{sl}_n)$. In particular, in the case $n=2$ the calculus of these webs can be described by the \textit{Temperley-Lieb algebra} and in the case $n=3$ by a graphical calculus formulated by Kuperberg in~\cite{kup} using oriented trivalent graphs. Generalisations of these for all $n>3$ have recently been found, see Cautis, Kamnitzer and Morrison~\cite{ckm} and the references therein.

Let us assume for simplicity that $n=3$ and we restrict to the fundamental representation $V_+$ and its dual $V_-=V_+\wedge V_+$. Then Reshetikhin and Turaev's construction from above assigns to every sign sequence $S=(s_1,\dots,s_m),s_k\in\{+,-\}$ (a boundary of such webs is such a sequence) a tensor product of these representations $V_S=V_{s_1}\otimes\cdots\otimes V_{s_m}$ and to each tangle diagram an intertwiner. Hence, since intertwiners maps invariant tensors to invariant tensors, one can restrict to the space $\mathrm{Inv}_{U_q(\mathfrak{sl}_3)}(V_S)$ for a lot of purposes. Note that the general case, that is arbitrary tensors, is harder and work in progress.
\vskip0.5cm
Recall that Murakami, Ohtsuki and Yamada gave in~\cite{moy} a variant of the skein calculus known as \textit{MOY relations} or as \textit{MOY calculus}. The HOMFLY polynomial $P_n(\cdot)$ with the substitution $a=q^n$,$b=q-q^{-1}$ with $n>1$ from above can also be calculated by the following recursive rules. Notice that these rules can be drastically simplified using Kauffman's calculus if $n=2$. Recall that $[m]=\frac{q^m-q^{-m}}{q-q^{-1}}$ denotes the \textit{quantum integer}.
\begin{itemize}
\item $P_n(\overcrossing)=q^{n-1}P_n(\osmoothing)-q^{n}P_n(\ttrivalent)$ (recursion rule 1).
\item $P_n(\undercrossing)=q^{1-n}P_n(\osmoothing)-q^{-n}P_n(\ttrivalent)$ (recursion rule 2).
\item The \textit{circle removal}
\[
\xy(0,0)*{\includegraphics[width=20px]{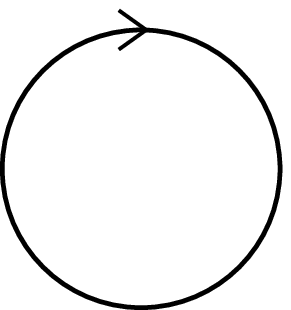}}\endxy\;\; =\;\; [n].
\]
\item The \textit{two digon removals}
\[
\xy(0,0)*{\includegraphics[width=70px, angle=-90]{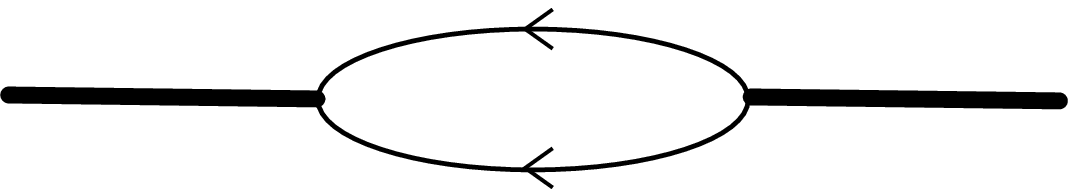}}\endxy\;\; =[2]\cdot\xy(0,0)*{\includegraphics[width=70px, angle=-90]{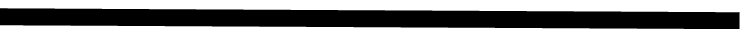}}\endxy\;\;\text{ and }\;\;\xy(0,0)*{\includegraphics[width=70px, angle=90]{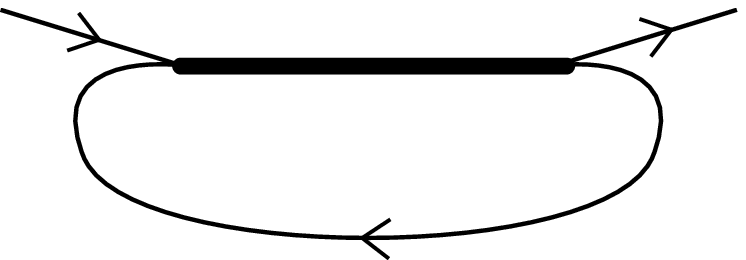}}\endxy\;\; =[n-1]\cdot\xy(0,0)*{\includegraphics[width=70px, angle=90]{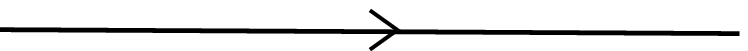}}\endxy
\]
\item The \textit{first square removal}
\[
\xy(0,0)*{\includegraphics[width=50px]{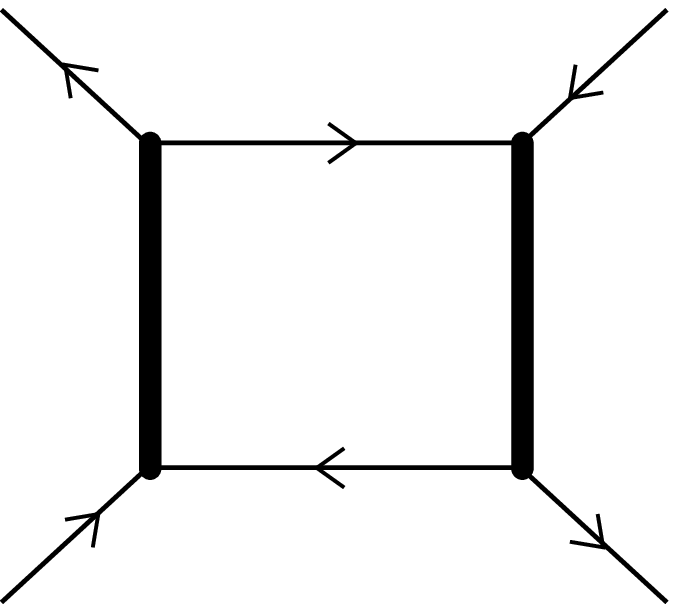}}\endxy\;\; =[n-2]\cdot\xy(0,0)*{\includegraphics[height=45px]{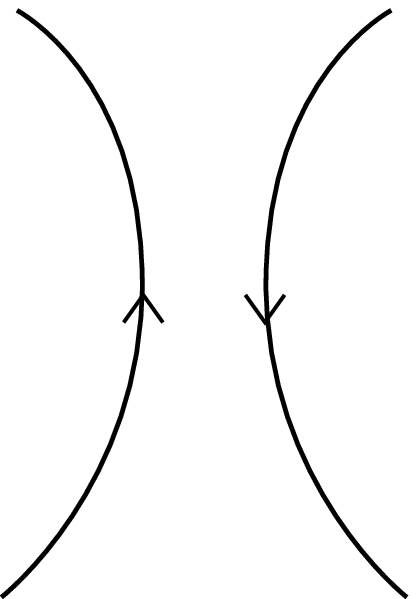}}\endxy+\xy(0,0)*{\includegraphics[width=45px]{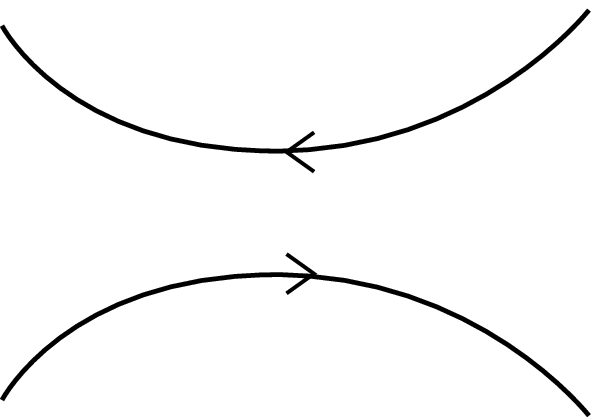}}\endxy
\]
\item The \textit{second square removal}
\[
\xy(0,0)*{\includegraphics[width=65px, angle=90]{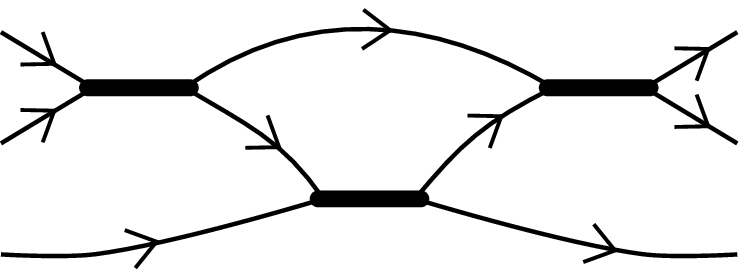}}\endxy\;\;+\;\;\xy(0,0)*{\includegraphics[width=65px, angle=90]{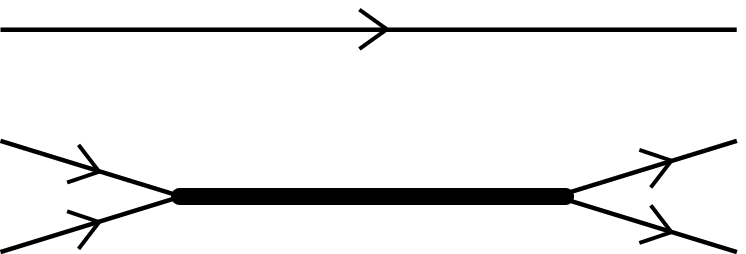}}\endxy\;\;=\;\;
\xy(0,0)*{\includegraphics[width=65px, angle=90]{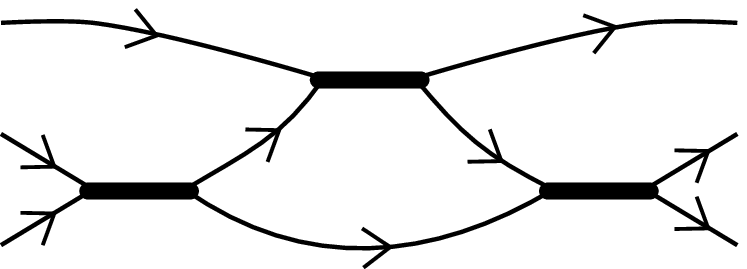}}\endxy\;\;+\;\;\xy(0,0)*{\includegraphics[width=65px, angle=90]{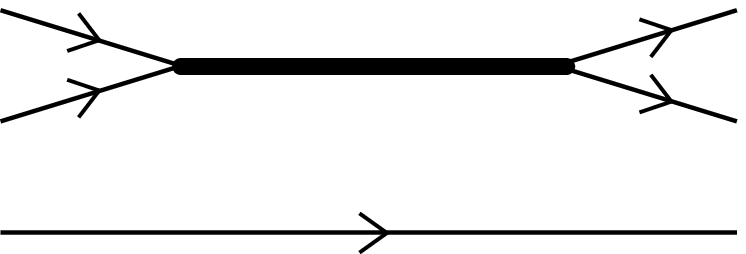}}\endxy\;\; 
\]
\end{itemize}
Note that in the case $n=3$ the two digon removals are the same, the first square removal does not contain any quantum integers any more and can replace the second square removal, i.e. they simplify to the so-called \textit{Kuperberg relations}.

To summarise, we give the following diagram, the \textit{classical, uncategorified} picture.
\begin{center}
\begin{minipage}{0.49\linewidth}
\begin{xy}
  \xymatrix{
      \mathfrak{sl}_n\text{-webs}\phantom{s} \ar@{<->}[rr]^{\text{Intertwiners}\phantom{sgs}} \ar[rd]|{Kauffman, Kuperberg, MOY}  &     &  U_q(\mathfrak{sl}_n)\text{-Tensors}\phantom{sg} \ar[dl]|{Reshetikhin, Turaev}  \\
                             &  \mathfrak{sl}_n\text{-knot polynomials}  &
  }
\end{xy}
\end{minipage}
\end{center}
\vskip0.5cm
Kuperberg showed in~\cite{kup} that the web space $W_S$ of $\mathfrak{sl}_3$ webs with boundary $S$ is isomorphic to the space of invariant tensors $\mathrm{Inv}_{U_q(\mathfrak{sl}_3)}(V_S)$ mentioned above. Without giving the details here, by so-called \textit{$q$-skew Howe duality}, which we explain in Section~\ref{sec-webhowea}, this implies that
\[
V_{(3^k)}\cong\bigoplus_SW_S,
\]
where $V_{(3^k)}$ is the irreducible $U_q(\mathfrak{gl}_{3k})$ representation of highest weight $\lambda=(3^k)$ and, by restriction, this gives rise to a $U_q(\mathfrak{sl}_{3k})$ representation.

Hence, we get the \textit{``Howe dual picture''} to the one from above (we note that the case $m>3$ or the case with arbitrary representations is work in progress).
\begin{center}
\begin{minipage}{0.49\linewidth}
\begin{xy}
  \xymatrix{
      \mathfrak{sl}_3\text{-webs}\phantom{s} \ar@{<->}[rr]^{\text{Howe duality}\phantom{sgs}} \ar[rd]|{Kauffman, Kuperberg, MOY\phantom{....}}  &     &  U_q(\mathfrak{sl}_n)\text{-Irreducibles}\phantom{sg} \ar[dl]|{\phantom{....}Lusztig, Cautis, Kamnitzer, Licata}  \\
                             &  \mathfrak{sl}_3\text{-knot polynomials}  &
  }
\end{xy}
\end{minipage}
\end{center}
\vskip0.5cm
What about the ``categorified world'' now? Recall that we explained in Section~\ref{sec-intro} how Khovanov in~\cite{kh1} gave a \textit{categorification} of the Jones polynomial. Shortly after his breakthrough, he together with Rozansky in~\cite{kr1} and~\cite{kr2}, gave a \textit{categorification of the $\mathfrak{sl}_n$ polynomial} using matrix factorisations. Others, like Khovanov in~\cite{kh3}, Mackaay, Sto\v{s}i\'{c} and Vaz, see~\cite{mv1},~\cite{mv2} and~\cite{msv} gave a \textit{categorification} based on foams in the spirit of Bar-Natan~\cite{bn2}. These homologies are highly interesting and studied from different viewpoints nowadays.

Other approaches are due to Mazorchuk and Stroppel in~\cite{mast} using sophisticated techniques and constructions in category $\mathcal O$, i.e. techniques from of representation theory, and another due to Cautis and Kamnitzer in~\cite{caka} using constructions from algebraic geometry.

As before, one wants to know how all of this fits into a \textit{``bigger picture''} and not just \textit{``that''} something is true. That is one of the reasons people started to look for \textit{categorifications of quantum groups}, i.e. if a $\mathfrak{g}$-invariant can be obtained by studying the representation category of $U_q(\mathfrak{g})$ in the spirit of Reshetikhin and Turaev, there categorifications should be obtained by studying some kind of \textit{``$2$-representation category''} of the categorification associated to $U_q(\mathfrak{g})$. In particular, the ``Khovanov like homologies'' should be obtained in this way.

Indeed, such an approach follows from Webster~\cite{web1} and~\cite{web2}. We note that his work utilises a connection to the picture like $\mathfrak{sl}_n$ categorifications indirectly using Mazorchuk and Stroppel's work.

To summarise, we give the following diagram, the \textit{categorified} picture, the picture we still want to understand.
\begin{center}
\begin{minipage}{0.49\linewidth}
\begin{xy}
  \xymatrix{
      \mathfrak{sl}_n\text{-foams} \ar@{<->}[rr]^{\textbf{???}\phantom{sgsgs}} \ar[rd]|{Khovanov, Khovanov-Rozansky}  &     &  \mathfrak{sl}_n\text{-string diagrams} \ar[dl]|{Webster}  \\
                             &  \mathfrak{sl}_n\text{-knot homologies}  &
  }
\end{xy}
\end{minipage}
\end{center}
\vskip0.5cm
Let us briefly explain instead what we can say about the ``Howe dual picture''. We defined in~\cite{mpt} the $\mathfrak{sl}_3$ analogue of Khovanov's arc algebras $H_n$, introduced in~\cite{kh4}. We call them \textit{web algebras} and denote them by $K_S$, where $S$ is a sign string, i.e. a string of $+$ and $-$ signs which correspond to the two fundamental representations of $U_q(\mathfrak{sl}_3)$. Khovanov uses in his paper so-called \textit{arc diagrams}, which give a diagrammatic presentation of the representation theory of $U_q(\mathfrak{sl}_2)$. These diagrams are related to the Kauffman calculus for the Jones polynomial mentioned above.

Since we defined an $\mathfrak{sl}_3$ analogue, we use the Kuperberg webs, introduced by Kuperberg in~\cite{kup}, mentioned above. These webs give a diagrammatic presentation of the representation theory of $U_q(\mathfrak{sl}_3)$. And of course, instead of $\mathfrak{sl}_2$ cobordisms, which Bar-Natan used in~\cite{bn2} to give his formulation of Khovanov's categorification, we use Khovanov's~\cite{kh3} $\mathfrak{sl}_3$ foams.

To be more precise, in Section~\ref{sec-webhoweb}, we show the following. Let $V_S=V_{s_1}\otimes\cdots\otimes V_{s_n}$, where $V_+$ is the basic $U_q(\mathfrak{sl}_3)$ representation and $V_-$ its dual. Kuperberg~\cite{kup} proved, as indicated above, that $W_S$, the space of $\mathfrak{sl}_3$ webs whose boundary is determined by $S$, is isomorphic to 
$\mathrm{Inv}_{U_q(\mathfrak{sl}_3)}(V_S)$, the space of invariant tensors in $V_S$. Our algebra can be seen as a \textit{categorification} of this, i.e. we show 
\[
K^{\oplus}_0\left(K_S\text{-}\mathrm{p\textbf{Mod}}_{\mathrm{gr}}\right)\cong W_S^{\mathbb{Z}},
\]
for any $S$. Here $K_0$ denotes the Grothendieck group and the superscript $\mathbb{Z}$ denotes the integral form and $K_S\text{-}\mathrm{p\textbf{Mod}}$ the category of finite dimensional, projective $K_S$-modules.

In order to obtain this result, we have categorified an instance of the $q$-skew Howe duality mentioned above, as we explain in Section~\ref{sec-webhoweb}. Without giving the details here, we get the \textit{``categorified Howe dual picture''} (the general case is again work in progress).
\begin{center}
\begin{minipage}{0.49\linewidth}
\begin{xy}
  \xymatrix{
      \mathfrak{sl}_3\text{-foams} \ar@{<->}[rr]^{\text{Howe 2-duality}\phantom{sgsgs}} \ar[rd]|{Khovanov, Khovanov-Rozansky}  &     &  \mathfrak{sl}_n-\text{cyl. KRL algebras} \ar[dl]|{Chuang, Rouquier}  \\
                             &  \mathfrak{sl}_3\text{-knot homologies}  &
  }
\end{xy}
\end{minipage}
\end{center}
\vskip0.5cm
But this is only one reason to study these \textit{web algebras}. Since the Jones polynomial and the $\mathfrak{sl}_n$ polynomial in general are known to be related to different branches of modern mathematics, the categorifications should reflect these connections on a \textit{higher level} and one has possibly more sophisticated connections. We explain some connections of our work in the following.
\vskip0.5cm
As we showed in our paper, see Section~\ref{sec-webcenter}, the center of the algebra $K_S$ is graded isomorphic to the cohomology ring of a certain \textit{Spaltenstein variety} $X^{\lambda}_{\mu}$, an interesting variety from combinatorial, algebraic geometry. To be more precise, if one has a nilpotent endomorphism $N\colon\bC^m\to\bC^m$, then the classical \textit{Springer fiber} is the variety given by the flags fixed under $N$. Generalising to partial flags gives the Spaltenstein varieties, introduced by Spaltenstein~\cite{spa}. Their geometry is still not well understood.

A related aspect is the following. In~\cite{fkk}, Fontaine, Kamnitzer and Kuperberg study spiders from the viewpoint of algebraic geometry. For $\mathfrak{sl}_3$ these 
spiders are exactly the webs that we study.

Given a sign string $S$, the so-called \textit{Satake fiber} $F(S)$, denoted 
$F(\overrightarrow{\lambda})$ in~\cite{fkk}, is isomorphic to the Spaltenstein variety $X^{\lambda}_{\mu}$ mentioned above. Moreover, given a web $w$ with boundary corresponding to $S$, Fontaine, Kamnitzer and Kuperberg also define a variety $Q(D(w))$. They call it the \textit{web variety}. A question asked by Kamnitzer is how their work is related to ours. We give a more detailed description of his question later in Section~\ref{sec-websum}.
\vskip0.5cm
Another connection is given in Section~\ref{sec-webhoweb}, i.e. we show that our algebra is \textit{Morita equivalent} (it has the ``same'' representation theory), as a certain cyclotomic Khovanov-Lauda-Rouquier algebra $R_{(3^k)}$. By Brundan and Kleshchev graded isomorphism given in \cite{bk1}, we obtain that our algebra has the ``same'' representation theory as certain so-called \textit{cyclotomic Hecke algebras}. These algebras, introduced by Ariki and Koike~\cite{ak} and independently by Brou\'{e} and Malle~\cite{bm}, are generalisations of Hecke algebras, i.e. \textit{quantised} versions of the group ring of symmetric groups $S_m$, in the sense that the Hecke algebras are cyclotomic Hecke algebras of level one. One amazing aspect about these algebras is that they contain the Hecke algebras of type $A$ and $B$ as special cases and they are therefore useful to study the \textit{modular} representation theory of finite classical groups of Lie type. They are studied by varies mathematicians nowadays. An introduction to these algebras can be found for example in a lecture notes of Ariki~\cite{ari}.
\vskip0.5cm
It is worth noting that, in the study of the representation theory of $\mathfrak{sl}_n$, the case $n=3$ can be seen as a \textit{blueprint} how to tackle the case $n>3$, while the $n=2$ case seems to be ``too special'' to generalise. Let us explain why we expect something similar in our case for the results in Section~\ref{sec-web}, although the combinatorics get quite hard for $n>3$. 

For any string $S=(s_1,\ldots,s_n)$, such that $1\leq s_i\leq n-1$, Fontaine, generalising work of Westbury~\cite{wes}, constructs in~\cite{fon} a $\mathfrak{sl}_n$ web basis $B_S^n$ by generalising Khovanov and Kuperberg's $\mathfrak{sl}_3$ growth algorithm~\cite{kk}. To any $w\in B_S^n$, one can associate the coloured Khovanov-Rozansky matrix factorization $M_w$, as defined by Wu~\cite{wu} and Yonezawa~\cite{yo}. For any $u,v\in B_S^n$, one can then define 
\[
{}_uK^n_v=\mathrm{Ext}(M_u,M_v).
\]
The multiplication in 
\[
K_S^n=\bigoplus_{u,v\in B_S^n} {}_uK^n_v
\] 
is induced by the composition of homomorphisms of matrix factorizations. Note that for $\mathfrak{sl}_3$, the definition using matrix factorizations indeed gives an algebra isomorphic to $K_S$, as follows from the equivalence between matrix factorizations and foams for $\mathfrak{sl}_3$ proved in~\cite{mv2}. While the author writes this thesis, Mackaay and Yonezawa are preparing a paper~\cite{my} on the $\mathfrak{sl}_n$ web algebra following the ideas explained above.
\vskip0.5cm
The author notes that a \textit{virtual version} of Khovanov's arc algebra, our web algebra or even versions for $n>3$ would be also interesting to study. But this is not done yet.

More details concerning our paper~\cite{mpt} are summarised in Section~\ref{sec-websum}.
\newpage
\section{Virtual Khovanov homology}\label{sec-vkh}
\subsection{A brief summary}\label{sec-vkhsum}
Let us give a brief summary of the constructions in Section~\ref{sec-vkh}. We will assume that the reader is not completely unfamiliar with the notion of the classical Khovanov complex as mentioned before in~\ref{sec-intro}, e.g. the construction of the Khovanov cube (more about cubes in Section~\ref{sec-techcube}) based on so-called \textit{resolutions of crossings} as shown in Figure~\ref{figureintroa-3}. There are many good introductions to classical Khovanov homology, e.g. a nice exposition of the classical Khovanov homology can be found in Bar-Natan's paper~\cite{bn1}. Note that this section is based on two preprints~\cite{tub1} and~\cite{tub2} of the author. The summary is informal. We hope to demonstrate that the main ideas of the construction are easy, e.g. the construction is given by an algorithm, general, e.g. it extends all the ``classical'' homologies, but if one works over a ring $R$ of characteristic $2$, then, by setting $\theta\neq 0$, one obtains ``non-classical'' homologies, and has other nice properties, e.g. it has, up to a sign, functorial properties.
\vskip0.5cm
Let $a$ be a word in the alphabet $\{0,1\}$. We denote by $\gamma_a$ the resolution of a v-link diagram $L_D$ with $|a|$ crossings, where the $i$-th crossing of $L_D$ is resolved $a_i\in\{0,1\}$ as indicated in Figure~\ref{figureintroa-3}. Beware that we \textit{only} resolve classical crossings. We denote the number of v-circles, that is closed circles with only v-crossings, in the resolution $\gamma_a$ by $|\gamma_a|$.

Moreover, suppose we have two words $a,b$ with $a_k=b_k$ for $k=1,\dots,|a|=|b|,k\neq i$ and $a_i=0,b_i=1$. Then we call $S\colon\gamma_a\to\gamma_b$ a \textit{(formal) saddle} between the resolutions.

Furthermore, suppose we have a v-link diagram $L_D$ with at least two crossings $c_1,c_2$. We call a quadruple $F=(\gamma_{00},\gamma_{01},\gamma_{10},\gamma_{11})$ of four resolutions of the v-diagram $L_D$ a \textit{face} of the diagram $L_D$, if in all four resolutions $\gamma_{00},\gamma_{01},\gamma_{10},\gamma_{11}$ all crossings of $L_D$ are resolved in the same way except that $c_1$ in resolved $i$ and $c_2$ is resolved $j$ in $\gamma_{ij}$ (with $i,j\in\{0,1\}$). Furthermore, there should be an oriented arrow from $\gamma_{ij}$ to $\gamma_{kl}$ if $i=j=0$ and $k=0,l=1$ or $k=1,l=0$ or if $i=0,j=1$ and $k=l=1$ or if $i=1,j=0$ and $k=l=1$. That is faces look like
\[
\xymatrix{
 & \gamma_{01}\ar[rd]^{S_{*1}} &\\
 \gamma_{00}\ar[rd]_{S_{*0}}\ar[ru]^{S_{0*}} &  & \gamma_{11},\\
 & \gamma_{10}\ar[ru]_{S_{1*}} &}
\]
where the * for the saddles should indicate the change $0\to 1$. 

We also consider \textit{algebraic faces} of a resolution. That is the same as above, but we replace $\gamma_a$ with $\bigotimes_n A$, if $\gamma_a$ has $n$ components. Here $A$ is an $R$-module and $R$ is a commutative, unital ring.

Moreover, recall that the differential in the classical Khovanov complex consists of a multiplication $m\colon A\otimes A\to A$ and a comultiplication $\Delta\colon A\to A\otimes A$ for the $R$-algebra $A=R[X]/(X^2)$ with gradings $\deg 1=1,\deg X=-1$. The comultiplication $\Delta$ is given by
\[
\Delta\colon A\to A\otimes A;\begin{cases}

  1\mapsto 1\otimes X+X\otimes 1,\\
  X\mapsto X\otimes X.
\end{cases}
\]
The problem in the case of v-links is the emergence of a new map. This happens, because it is possible for v-links that a saddle $S\colon\gamma_a\to\gamma_b$ between two resolutions does not change the number of v-circles, i.e. $|\gamma_a|=|\gamma_b|$. This is a difference between c-links and v-links, i.e. in the first case one always has $|\gamma_a|=|\gamma_b|+1$ or $|\gamma_b|+1=|\gamma_a|$.

So in the algebraic complex we need a new map $\cdot\theta\colon A\rightarrow A$ together with the classical multiplication and comultiplication $m\colon A\otimes A\rightarrow A$ and $\Delta\colon A\rightarrow A\otimes A$. As we will see later the only possible way to extend the classical Khovanov complex to v-links is to set $\theta=0$ (for $R=\bZ$). But then a face could look like (maybe with extra signs).
\begin{equation}\label{probcube}
\begin{gathered}
\begin{xy}
  \xymatrix{
 & A\otimes A\ar[rd]^{m} &\\
 A\ar[ru]^{\Delta}\ar[rd]_{\cdot\theta} &  & A.\\
 & A\ar[ru]_{\cdot\theta} &}
\end{xy}
\end{gathered}
\end{equation}
We call such a face a \textit{problematic face}. With $\theta=0$ and the classical $\Delta,m$, this face does not commute (for $R=\bZ$). Therefore, there is no straightforward extension of the Khovanov complex to v-links. Moreover, in the cobordism based construction of the classical Khovanov complex, there is no corresponding cobordism for $\theta$.
\vskip0.5cm
To solve these problems we consider a certain category called $\ucob_R(\emptyset)$, i.e. a category of (possible non-orientable) cobordisms with boundary decorations $\{+,-\}$. Roughly, a punctured M\"obius strip plays the role of $\theta$ and the decorations keep track of how (orientation preserving or reversing) the surfaces are glued together. Hence, in our category we have different (co)multiplications, depending on the different decorations. Furthermore, in order to get the right signs, one has to use constructions related to $\wedge$-products (sometimes called skew-products). Note that this is rather surprising, since such constructions are not needed for Khovanov homology  in the c-case. And furthermore, such constructions are in the c-case related to so-called \textit{odd} Khovanov homology. But we show that in fact our construction agrees for c-links with the (even) Khovanov homology (see Theorem~\ref{thm-classic}).

The following table summarises the connection between the classical and the virtual case.
\begin{center}
\begin{tabular}{|c|c|c|}
\hline $\phantom{.}$ & \textbf{Classical} & \textbf{Virtual} \\ 
\hline \textbf{Objects} & c-link resolutions & v-link resolutions \\ 
\hline \textbf{Morphisms} & Orientable cobordisms & Possible non-orientable cobordisms \\ 
\hline \textbf{Cobordisms} & Embedded & Immersed \\ 
\hline \textbf{Decorations} & None & $+,-$ at the boundary \\ 
\hline \textbf{Signs} & Usual & Related to $\wedge$-products \\ 
\hline 
\end{tabular} 
\end{center}
Hence, a main point in the construction of the virtual Khovanov complex is to say which saddles, i.e. morphisms, are orientable and which are non-orientable, how to place the decorations and how to place the signs. This is roughly done in the following way.
\begin{itemize}
\item Every saddle either splits one circle (orientable, called \textit{comultiplication}, denoted $\Delta$. See Figure~\ref{figure1-1} - fourth column), glues two circles (orientable, called \textit{multiplication}, denoted $m$. See Figure~\ref{figure1-1} - fifth column) or does not change the number of circles at all (non-orientable, called \textit{M\"obius cobordism}, denoted $\theta$. See Figure~\ref{figure1-1} - last morphism).
\item Every saddle $S$ can be locally denoted (up to a rotation) by a formal symbol $S\colon\smoothing\rightarrow\hsmoothing$ (both smoothings are neighbourhoods of the crossing). The \textit{glueing numbers}, i.e. the decorations, are now spread by \textit{choosing a formal orientation} for the resolution. We note that the construction will not depend on the choice.
\item After all resolutions have an orientation, a saddle $S$ could for example be of the form $S\colon\du\rightarrow\ler$. This is the \textit{standard form}, i.e. in this case all glueing number will be $+$.
\item Now spread the decorations as follows. Every boundary component gets a $+$ iff the orientation is as in the standard case and a $-$ otherwise. The \textit{degenerated} cases (everything non-alternating), e.g. $S\colon\dd\rightarrow\rir$, are the non-orientable surfaces and do not get any decorations. Compare to Table~\ref{tab-deco} in Definition~\ref{defn-deco}.
\item The signs are spread based on a numbering of the v-circles in the resolutions and on a special x-marker for the crossings. Note that without the x-marker one main lemma, i.e. Lemma~\ref{lem-virtualisation}, would not work.
\end{itemize}
Or summarised in Figure~\ref{figure0-big}. The complex below is the complex of a trivial v-link diagram.
\begin{figure}[ht]
  \centering
     \includegraphics[scale=0.525]{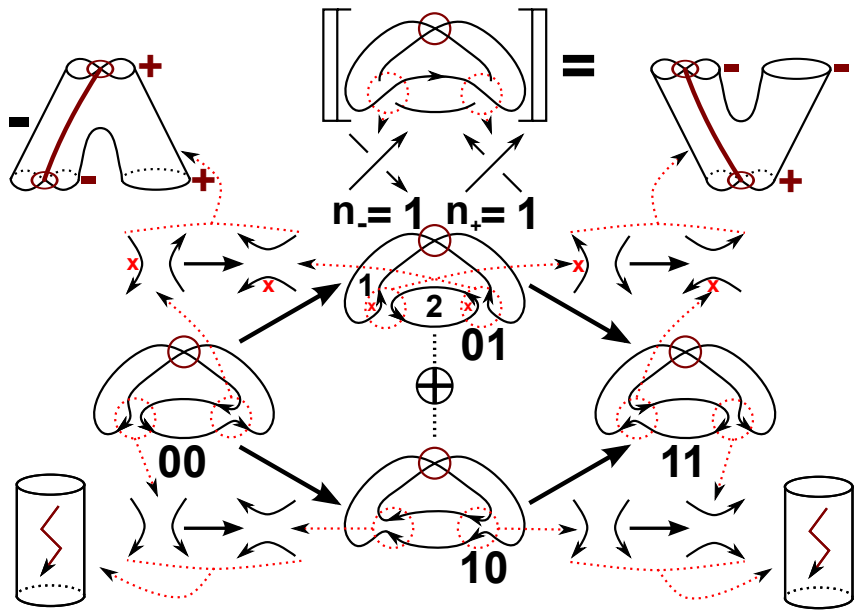}
  \caption{The virtual Khovanov complex of the unknot.}\label{figure0-big}
\end{figure}

To construct the virtual Khovanov complex for v-tangles we need to extend these notions in such a way that they still work for ``open'' cobordisms. A first generalisation is easy, i.e. we will still use immersed, possible non-orientable surfaces with decorations, but we allow vertical boundary components, e.g. the three \textit{v-Reidemeister cobordisms vRM1, vRM2 and vRM3} in Figure~\ref{figure0-reide}.
\begin{figure}[ht]
  \centering
     \includegraphics[scale=0.7]{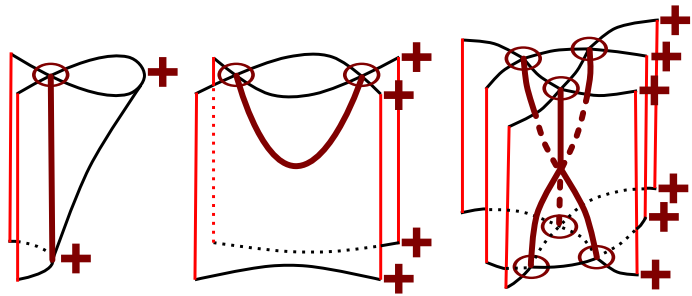}
  \caption{The virtual Reidemeister cobordisms.}
  \label{figure0-reide}
\end{figure}

One main point is the question what to do with the \textit{``open'' saddles}, i.e. saddles with no closed boundary. A possible solution is to define them to be zero. But this has two major problems. First the loss of information is big and second we would not have local properties as in the classical case (``tensoring'' of smaller parts), since an open saddle can, after closing some of his boundary circles, become either $m$, $\Delta$ or $\theta$. See Figure~\ref{figure0-order}. 
\begin{figure}[ht]
  \centering
     \includegraphics[scale=0.42]{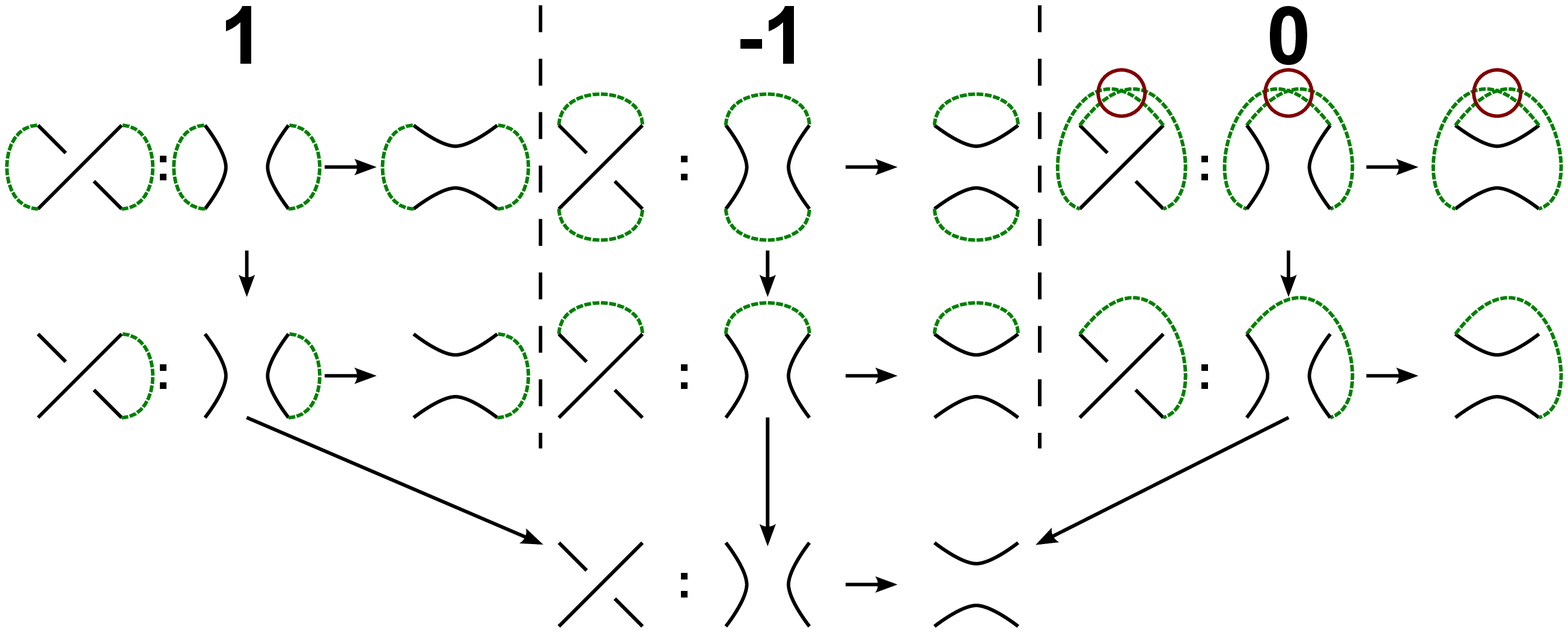}
  \caption{All of the closed cases give rise to the unclosed.}
  \label{figure0-order}
\end{figure}

Hence, an information mod 3 is missing. We therefore consider morphisms with an \textit{indicator}, i.e. an element of the set $\{0,+1,-1\}$. Then, after taking care of some technical difficulties, the concept extends from c-tangles to v-tangles in a suitable way. That is, we can ``tensor'' smaller pieces together as indicated in the Figure~\ref{figure0-main}.
\begin{figure}[ht]
\centering
     \includegraphics[scale=0.462]{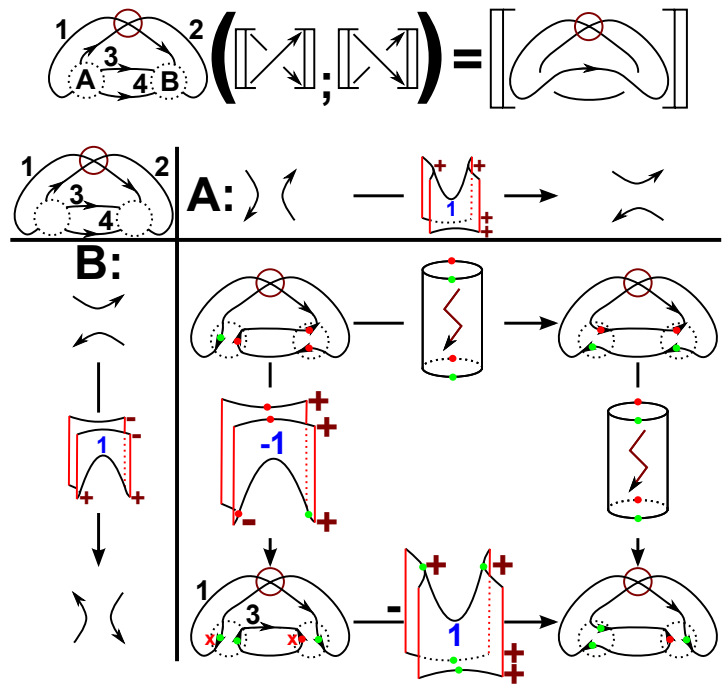}
  \caption{After we have fixed an orientation/numbering of the circuit diagram, we only have to compare whether the local orientations match (green) or mismatch (red) and compose if necessary with $\Phi^-_+$ (red). Iff we have a double mismatch at the top and bottom, then we add a bolt symbol.}
  \label{figure0-main}
\end{figure}

It should be noted that there are some technical points that make our construction only semi-local (a disadvantage that arises from the fact that ``non-orientability'' is not a local property). Note that indicators, if necessary, are pictured on the surfaces.
\vskip0.5cm
The outline of the Section~\ref{sec-vkh} is as follows.
\begin{itemize}
\item In Section~\ref{sec-vkhcat} we define the category of (possible non-orientable) cobordisms with boundary decorations. First in the ``closed'' case in Definition~\ref{defn-category} and then more general in the ``open'' case in Definition~\ref{defn-category3}. We also proof/recall some basic facts in Section~\ref{sec-vkhcat}.
\item In Section~\ref{sec-vkhcom} we define~\ref{defn-topcomplex} the virtual Khovanov complex for v-links. It is a v-link invariant (Theorem~\ref{thm-geoinvarianz}) and agrees with the construction in the c-case (Theorem~\ref{thm-classic}). There are two important things about the construction. The first is that there are many choices in the definition of the virtual complex, but we show in~\ref{lem-commutativeindependence} that different choices give isomorphic complexes. Second, it is not clear that the complex is a well-defined chain complex, but we show this fact in Theorem~\ref{theo-facescommute} and Corollary~\ref{cor-chaincomplex}. In order to show that the construction gives a well-defined chain complex we have to use a ``trick'', i.e. we use a move called \textit{virtualisation}, as shown in Figure~\ref{figure0-virt}, to reduce the question whether the faces of the virtual Khovanov cube are anticommutative to a finite and small number of so-called \textit{basic faces} (see Figure~\ref{figure-basic}).
\begin{figure}[ht]
\centering
     \includegraphics[scale=0.6]{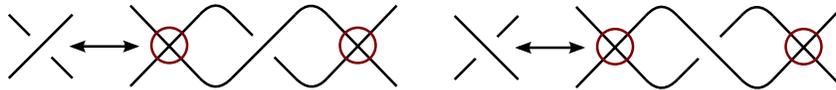}
  \caption{The virtualisation of a crossing.}
  \label{figure0-virt}
\end{figure}
\item In Section~\ref{sec-vkhfa} we show that our constructions can be compared to so-called \textit{skew-extended Frobenius algebras}~\ref{thm-tututheo}. With this we are able to classify all possible v-link homologies from our approach~\ref{thm-class}. We note that all the classical homologies are included. And we can therefore show in Corollary~\ref{cor-khovhomocat} that our construction is a categorification of the virtual Jones polynomial.
\item The Sections~\ref{sec-vkhtan} and~\ref{sec-vkhca} are analogues of the earlier sections, but for v-tangles.
\item The Section~\ref{sec-vkhapp} uses that our construction is semi-local~\ref{thm-semiloc}. As a result, we can still show that Lee's variant of Khovanov homology is in some sense degenerated~\ref{thm-leedeg}. This fact is one of the main ingredients to define Rasmussen's invariant in the classical case.
\item The Section~\ref{sec-vkhpro} gives some calculation results with a MATHEMATICA program written by the author. It is worth noting that we give examples of v-links with seven crossings which can not be distinguished by the virtual Jones polynomial, but by virtual Khovanov homology.
\item We have collected some open questions in the final Section~\ref{sec-vkhend}. 
\end{itemize}
\subsubsection{Notation}\label{notation}
We call $\smoothing$ the \textit{$0$-} and $\hsmoothing$ the \textit{$1$-resolution} of the crossing $\slashoverback$ for a given v-link diagram $L_D$ or v-tangle diagram $T^k_D$. For an oriented v-link diagram $L_D$ or v-tangle diagram $T^k_D$ we call $\overcrossing$ a \textit{positive} and $\undercrossing$ a \textit{negative} crossing. The \textit{number of positive crossings} is denoted by $n_+$ and the \textit{number of negative crossings} is denoted by $n_-$.
\vskip0.5cm
For a given v-link diagram $L_D$ or v-tangle diagram $T^k_D$ with $n$-numbered crossings we define a collection of closed curves and open strings $\gamma_a$ in the following way. Let $a$ be a word of length $n$ in the alphabet $\left\{0,1\right\}$. Then $\gamma_a$ is the collection of closed curves and open strings which arise, when one performs a $a_i$-resolution at the $i$-th crossing for all $i=1,\dots,n$. We call such a collection $\gamma_a$ the \textit{$a$-th resolution} of $L_D$ or $T^k_D$. All appearing v-circles should be numbered with consecutive numbers from $1,\dots,k_a$ in these resolutions, where $k_a$ is the total number of v-circles of the resolution $\gamma_a$.

We can choose an orientation for the different components of $\gamma_a$. We call such a $\gamma_a$ an \textit{orientated resolution}, i.e. every v-crossing of the resolution $\gamma_a$ should look like $\virtualoriented$. Then a local neighbourhood of a $0,1$-resolved crossing could for example look like $\uu$. We call these neighbourhoods \textit{oriented crossing resolutions}.

If we ignore orientations, then there are $2^n$ different resolutions $\gamma_a$ of $L_D$ or $T^k_D$. We say a resolution has length $m$ if it contains exactly $m$ 1-letters. That is $m=\sum_{i=1}^na_i$.

For two resolutions $\gamma_a$ and $\gamma_{a'}$ with $a_r=0$ and $a'_r=1$ for one fixed $r$ and $a_i=a'_i$ for $i\neq r$ we define a \textit{saddle between the resolutions $S$}. This means: Choose a small (no other crossing, classical or virtual, should be involved) neighbourhood $N$ of the $r$-th crossing and define a cobordism between $\gamma_a$ and $\gamma_{a'}$ to be the identity outside of $N$ and a saddle inside of $N$. Note that we, by a slight abuse of notation, call these cobordisms saddles although they contain in general some cylinder components.
\vskip0.5cm
From now on we consider \textit{faces} $F=(\gamma_{00},\gamma_{01},\gamma_{10},\gamma_{11})$ of four resolutions, as mentioned above, always \textit{together with the saddles} between the resolutions. We denote the saddles for example by $S_{0*}\colon\gamma_{00}\to\gamma_{01}$, where the position of the $*$ indicates the change $0\to 1$.

It should be noted that any v-link or v-tangle diagram should be oriented in the usual sense. But with a slight abuse of notation, we will suppress this orientation throughout the whole Section~\ref{sec-vkh}, since the afore mentioned oriented resolutions are main ingredients of our construction and easy to confuse with the usual orientations. Recall that these usual orientations are needed for the shifts in homology gradings, see for example~\cite{bn1}.
\vskip0.5cm
Sometimes we need a so-called \textit{spanning tree argument}, i.e. choose a spanning tree of a cube (as in Figure~\ref{figure0-spantree}) and change e.g. orientations of resolutions such that the edges of the tree change in a suitable way, starting at the rightmost leafs, then remove them and repeat. Notice that two cubes together with a chain map between them form again a bigger cube. It is worth noting that most of the spanning tree arguments work out in the end because of certain preconditions, e.g. the anticommutativity of faces.
\begin{figure}[ht]
  \centering
     \includegraphics[width=0.775\linewidth]{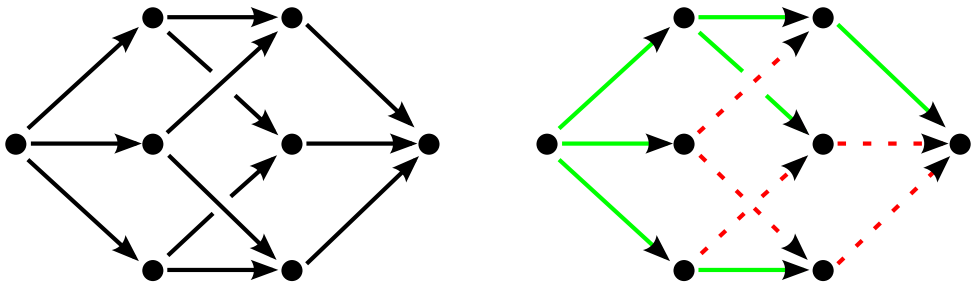}
  \caption{A Khovanov cube and a spanning tree of the cube (green edges).}
  \label{figure0-spantree}
\end{figure}

Moreover, we have collected some facts from homological algebra that we need in Section~\ref{sec-vkh} in the Section~\ref{sec-techhomalg} and in Section~\ref{sec-techcube}.
\subsection{The topological category}\label{sec-vkhcat}
\subsubsection{The topological category for v-links}
In this section we describe our topological category which we call $\ucob_R(\emptyset)$. This is a category of cobordisms between v-link resolutions in the spirit of Bar-Natan~\cite{bn2}, but we admit that the cobordisms are non-orientable as in~\cite{tutu}.
\vskip0.5cm
The basic idea of the construction is that the usual pantsup- and pantsdown-cobordisms do not satisfy the relation $m\circ\Delta=\theta^2$. But we need this relation for the face from~\ref{probcube}. This is the case, because we need an extra information for v-links, namely \textit{how} two cobordisms are glued together.

To deal with this problem, we decorate the boundary components of a cobordism with a formal sign $+,-$. With this construction $m_i\circ\Delta_j$ is sometimes $=\theta^2$ and sometimes $\neq\theta^2$, depending on $i,j=1,\dots,8$. The first case will occur iff $m_i\circ\Delta_j$ is a non-orientable surface.

One main idea of this construction is the usage of a cobordism $\Phi^-_+$ between two circles \textit{different} from the identity $\mathrm{id}^+_+$. See Figure~\ref{figure-idphi}.
\begin{figure}[ht]
  	 \xy
  	 (-40,0)*{\phantom{.}};
     (40,0)*{\includegraphics[scale=0.5]{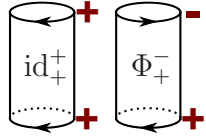}};
     (32,0)*{\mathrm{id}^+_+};
     (46,0)*{\Phi^-_+};
     \endxy
  \centering
  \caption{Glueing the boundary together as indicated can not be done without immersion in the case on the right.}
  \label{figure-idphi}
\end{figure}

Furthermore, we need relations between the decorated cobordisms. One of these relations identifies all boundary preserving homeomorphic cobordisms if their boundary decorations are all equal or are all different (up to a sign). Moreover, some of the standard relations of the category $\cob_R(\emptyset)$ (see for example in the book of Kock~\cite{ko}) should hold. We denote the category with the extra signs by $\ucob_R(\emptyset)$ and the category without the extra signs by $\ucob_R(\emptyset)^*$. Therefore, there will be two different cylinders in these categories.

Note that most of the constructions are easier for $\ucob_R(\emptyset)^*$ than for $\ucob_R(\emptyset)$. That is why we will only focus on the latter category and hope the reader does not have to many difficulties to do similar constructions for $\ucob_R(\emptyset)^*$ while reading this section. 
\vskip0.5cm
At the end of this section we will prove some basic relations (Lemma~\ref{lem-basiscalculations}) between the generators of our category. We also characterise the cobordisms of the face~\ref{probcube} (Proposition~\ref{prop-nonorientablefaces}).

It should be noted that, in order to extend the construction to v-tangle diagrams, we need some more extra notions. We will define them after Definition~\ref{defn-category} in an extra subsection in Definition~\ref{defn-category3} to avoid to many notions at once.
\vskip0.5cm
We start with the following definition. Beware that we consider v-circles as objects and cobordisms together with decorations. We denote the decorations by $+,-$ and illustrate them next to boundary components. Here $R$ denotes a commutative, unital ring of arbitrary characteristic.
\begin{defn}\label{defn-category}\textbf{(The category of cobordisms with boundary decorations)} We describe the category \textit{$\ucob_R(\emptyset)$} in six steps. Note that our category is $R-$pre-additive\footnote{Sometimes also called $R$-category, i.e. the set of morphisms form a $R$-module and composition is $R$-linear.}. The symbol $\amalg$ denotes the disjoint union.

\textbf{The objects:}

The \textit{objects} \textit{$\Ob(\ucob_R(\emptyset)$}) are disjoint unions of numbered \textit{v-circles}. We denote the objects by $\mathcal O=\amalg_{i\in I}\mathcal O_i$. Here $\mathcal O_i$ are the v-circles and $I$ is a finite, ordered index set. Note that, by a slight abuse of notation, we denote the objects by $\mathcal O$ to point out that the category can be seen as a $2$-category (but it is inconvenient for our purpose). The objects of the category are equivalence (modulo \textit{planar isotopies}) classes of four-valent graphs.

\textbf{The generators:}

The \textit{generators} of $\Mor(\ucob_R(\emptyset))$ are the eight cobordisms from Figure~\ref{figure1-1} plus topological equivalent cobordisms, but with all other possible boundary decorations (we do not picture them because one can obtain them using the ones shown after taking the relations below into account). Every orientable generator has a decoration from the set $\{+,-\}$ at the boundary components. We call these decorations the \textit{glueing number} (of the corresponding boundary component).
\begin{figure}[ht]
  	 \xy
  	 (-40,0)*{\phantom{.}};
     (40,0)*{\includegraphics[scale=0.5]{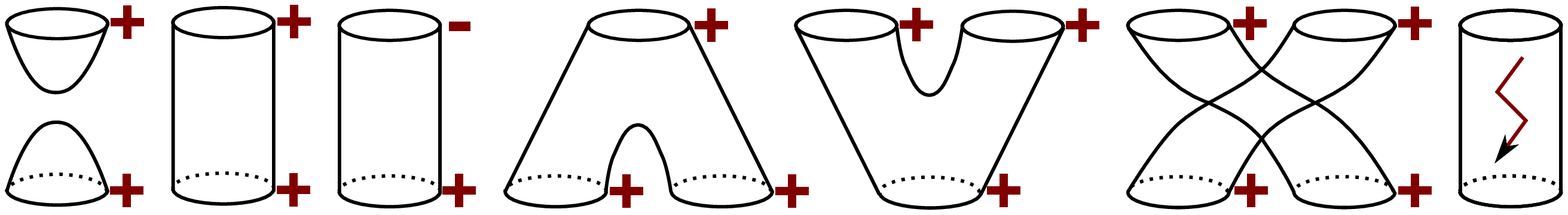}};
     (-27.75,-3.8)*{\varepsilon^+};
     (-27.75,3.8)*{\iota_+};
     (-7.15,-11)*{\mathrm{id}^+_+};
     (7.25,-11)*{\Phi^-_+};
     (54.5,-11)*{m^{++}_+};
     (30,-11)*{\Delta_{++}^+};
     (82.5,-11)*{\tau^{++}_{++}};
     (101.5,-11)*{\theta};
     \endxy
  \centering
  \caption{The generators of the set of morphisms. The cobordism on the right is the M\"obius cobordism, i.e. a two times punctured projective plane.}
  \label{figure1-1}
\end{figure}

We consider these cobordisms up to boundary preserving homeomorphisms (as abstract surfaces). Hence, between circles with v-crossings the (not pictured) generators are the same up to boundary preserving homeomorphisms, but immersed into $\bR^2\times[-1,1]$.
\vskip0.25cm
The eight cobordisms are (from left to right): a \textit{cap-cobordism} and a \textit{cup-cobordism} between the empty set and one circle and vice versa. Both are homeomorphic to a disc $D^2$ and both have a positive glueing number. We denote them by $\iota_+$ and $\varepsilon^+$ respectively.

Two \textit{cylinders} from one circle to one circle. The first has two positive glueing numbers and we denote this cobordism by $\mathrm{id}^+_+$. The second has a negative upper glueing number and a positive lower glueing number and we denote it by $\Phi^-_+$.

A \textit{multiplication-} and a \textit{comultiplication-cobordism} with only positive glueing numbers. Both are homeomorphic to a three times punctured $S^2$. We denote them by $m^+_{++}$ and $\Delta^+_{++}$.

A \textit{permutation-cobordism} between two upper and two lower boundary circles with only positive glueing numbers. We denote it by $\tau^{++}_{++}$.

A \textit{two times punctured projective plane}, also called \textit{M\"obius cobordism}. This cobordism is not orientable, hence it has no glueing numbers. We denote it by $\theta$.
\vskip0.25cm
The composition of the generators is given by glueing them together along their common boundary. In all pictures the upper cobordism is the $C$ in the composition $C^{\prime}\circ C$. The decorations are not changing at all (except that we remove the decorations if any connected component is non-orientable) \textit{before} taking the relations as in the equations~\ref{eq-combrel12},~\ref{eq-combrel3},~\ref{eq-commasss},~\ref{eq-unitrel},~\ref{eq-permrel1},~\ref{eq-permrel2} and~\ref{eq-frobandco} into account. Formally, \textit{before} taking quotients, the composition of the generators also needs internal decorations to remember if the generators where glued together alternating, i.e. minus to plus or plus to minus, or non-alternating. But after taking the quotients as indicated, these internal decorations are not needed any more. Hence, we suppress these internal decorations to avoid a too messy notation.

The reader should keep the informal slogan ``Composition with $\Phi^-_+$ changes the decoration'' in mind. 

\textbf{The morphisms:}

The \textit{morphisms} \textit{$\Mor(\ucob_R(\emptyset))$} are cobordisms between the objects in the following way. Note that we call a morphism non-orientable if any of its connected components is non-orientable.

We identify the collection of numbered v-circles with circles immersed into $\bR^2$. Given two objects $\mathcal O_1,\mathcal O_2$ with $k_1,k_2$ numbered v-circles, a morphism $C\colon\mathcal O_1\to\mathcal O_2$ is a surface immersed in $\bR^2\times[-1,1]$ whose boundary lies only in $\bR^2\times\{-1,1\}$ and is the disjoint union of the $k_1$ numbered v-circles from $\mathcal O_1$ in $\bR^2\times\{1\}$ and the disjoint union of the $k_2$ numbered v-circles from $\mathcal O_2$ in $\bR^2\times\{-1\}$. The morphisms are generated (as abstract surfaces) by the generators from above. It is worth noting that all possible boundary decorations can occur.

\textbf{The decorations:}

Given a $C\colon\mathcal O_1\to\mathcal O_2$ in $\Mor(\ucob_R(\emptyset))$, let us say that the v-circles of $\mathcal O_1$ are numbered from $1,\dots,k$ and the v-circles of $\mathcal O_2$ are numbered from $k+1,\dots,l$.

Every orientable cobordism has a decoration on the $i$-th boundary circle. This decoration is an element of the set $\{+,-\}$. We call this decoration of the $i$-th boundary component the \textit{$i$-th glueing number} of the cobordism.

Hence, the morphisms of the category are pairs $(C,w)$. Here $C\colon\mathcal O_1\to\mathcal O_2$ is a cobordism from $\mathcal O_1$ to $\mathcal O_2$ immersed in $\bR^2\times[-1,1]$ and $w$ is a string of length $l$ in such a way that the $i$-th letter of $w$ is the $i$-th glueing number of the cobordism or $w=0$ if the cobordism is non-orientable.

\textbf{Short hand notation:}

We denote a orientable, connected morphism $C$ by $C^{u}_{l}$. Here $u,l$ are words in the alphabet $\{+,-\}$ in such a way that the $i$-th character of $u$ (of $l$) is the glueing number of the $i$-th circle of the upper (of the lower) boundary. The construction above ensures that this notation is always possible. Therefore, we denote an arbitrary orientable morphism $(C,w)$ by
\[
C=C^{u_1}_{l_1}\amalg\cdots\amalg C^{u_k}_{l_k}.
\]
Here $C^{u_i}_{l_i}$ are its connected components and $u_i,l_i$ are words in $\{+,-\}$. For a non-orientable morphism we do not need any boundary decorations.

\textbf{The relations:}

There are two different types of relations, namely \textit{topological relations} and \textit{combinatorial relations}. The latter relations are described by the glueing numbers and the glueing of the cobordisms. The relations between the morphisms are the relations pictured below, i.e. the three \textit{combinatorial}~\ref{eq-combrel12} for the orientable and~\ref{eq-combrel3} for non-orientable cobordisms, \textit{commutativity} and \textit{cocommutativity} relations~\ref{eq-commasss}, \textit{associativity} and \textit{coassociativity} relations~\ref{eq-commasss}, \textit{unit and counit} relations~\ref{eq-unitrel}, \textit{permutation} relations~\ref{eq-permrel1} and~\ref{eq-permrel2}, a \textit{Frobenius relation} and the \textit{torus and M\"obius} relations~\ref{eq-frobandco} and different \textit{commutation} relations. Latter ones are not pictured, but all of them should hold with a plus sign. If the reader is unfamiliar with these relations, then we refer to the book of Kock~\cite{ko} and hope that it should be clear how to translate his pictures to our context (by adding some decorations). 
\vskip0.5cm
Beware that we have pictured several relations in some figures at once. We have separated them by a thick line.

Moreover, some of the relations contain several cases at once, e.g. in the right part of Equation~\ref{eq-frobandco}. In those cases it should be read: If the conditions around the equality sign are satisfied, then the equality holds.
\vskip0.5cm
The first combinatorial relations are
\begin{align}\label{eq-combrel12}
\xy(0,0)*{\includegraphics[scale=0.25]{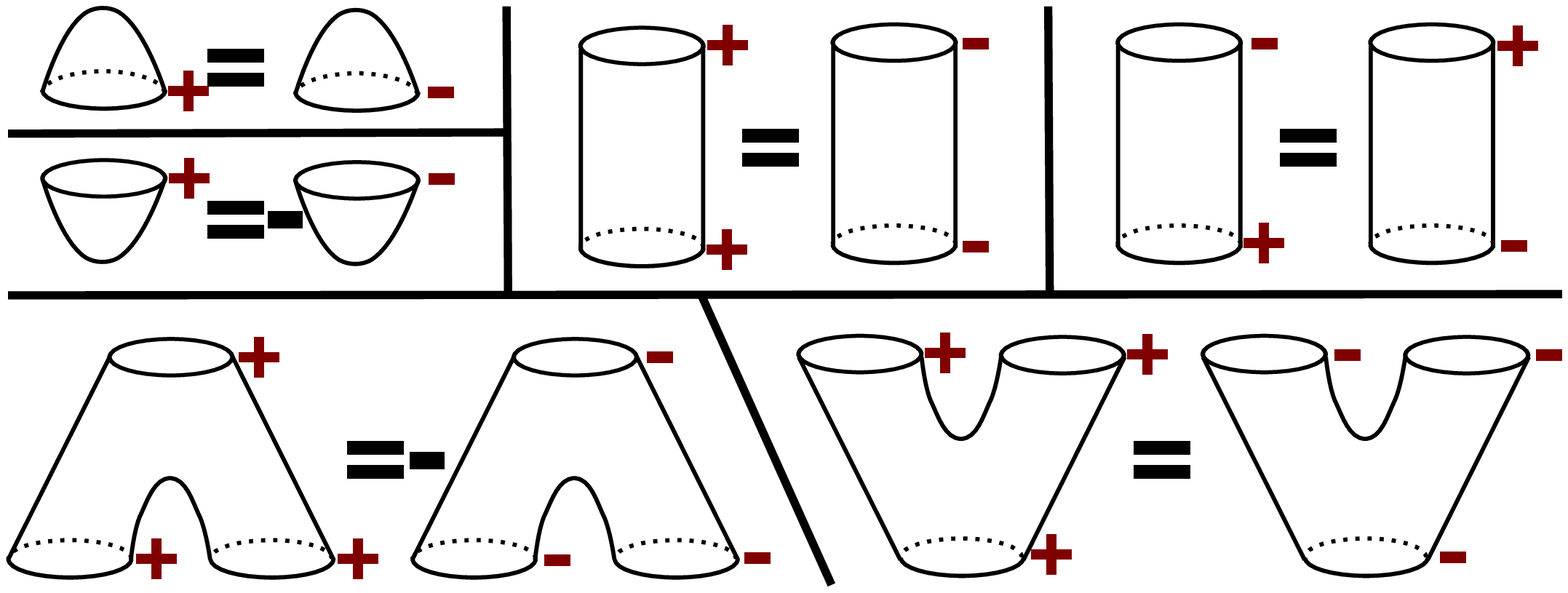}};\endxy\;\;\;\;\;\;\;\;\xy(0,0)*{\includegraphics[scale=0.25]{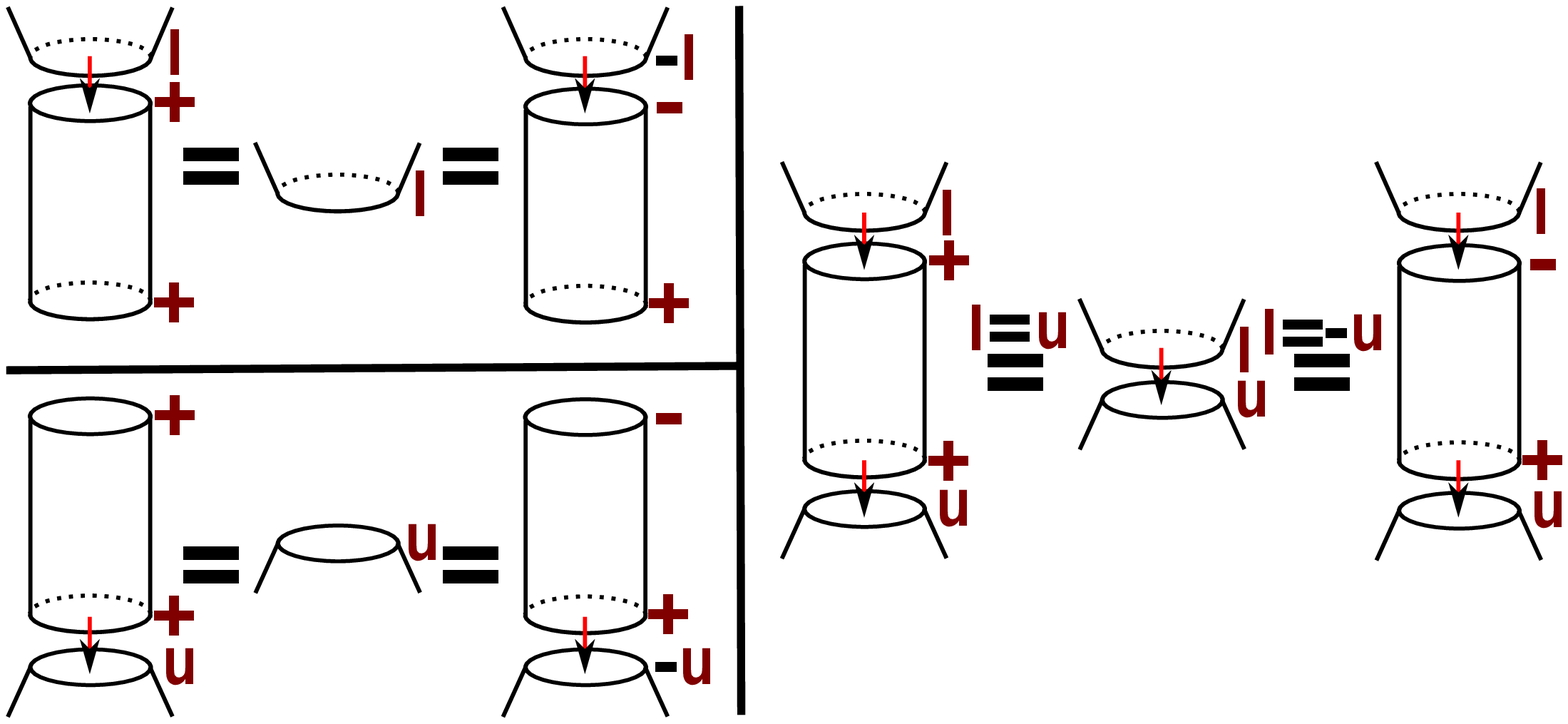}};\endxy
\end{align}
and the third for the non-orientable cobordisms is
\begin{align}\label{eq-combrel3}
\xy(0,0)*{\includegraphics[scale=0.25]{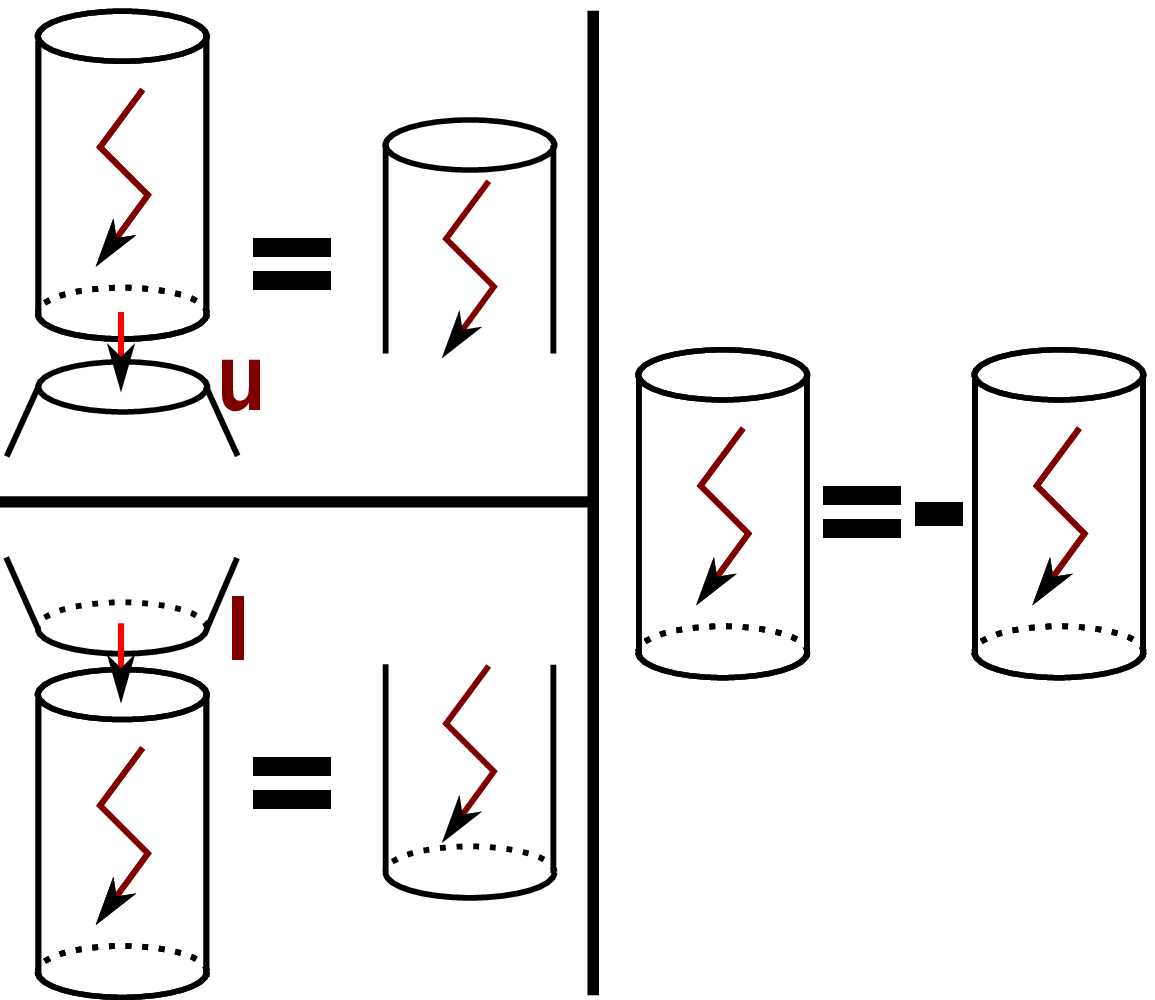}};\endxy
\end{align}
Note that the relation~\ref{eq-combrel3} above is not the same as $\theta=0$, since we work over rings of arbitrary characteristic. The (co)commutativity and (co)associativity relations are
\begin{align}\label{eq-commasss}
\xy(0,0)*{\includegraphics[scale=0.25]{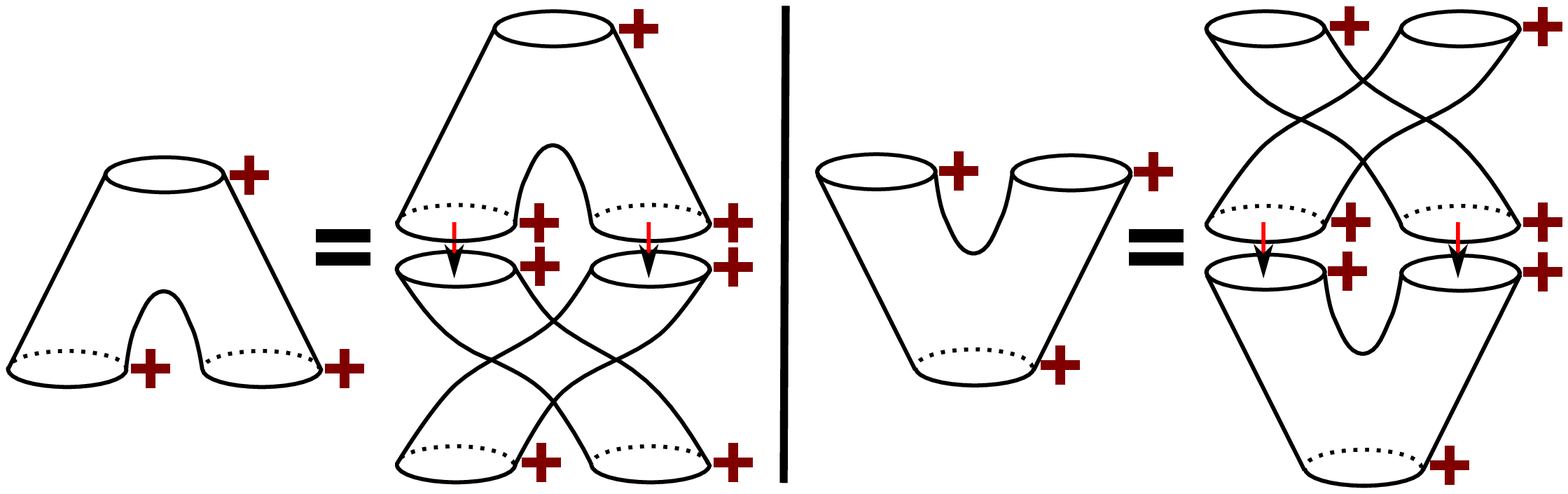}};\endxy\;\;\;\;\;\;\;\;\xy(0,0)*{\includegraphics[scale=0.25]{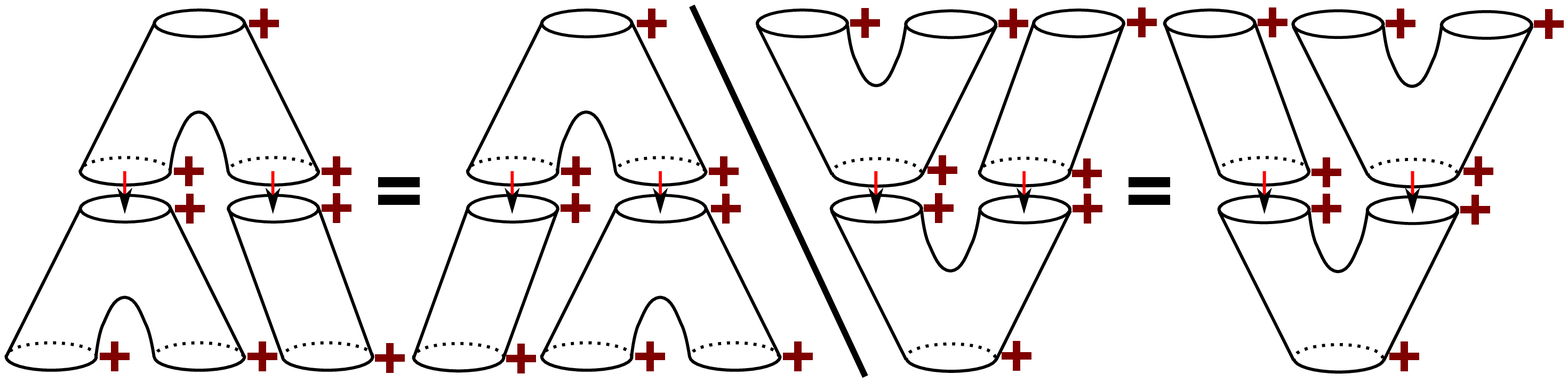}};\endxy
\end{align}
and the (co)unit relations are
\begin{align}\label{eq-unitrel}
\xy(0,0)*{\includegraphics[scale=0.25]{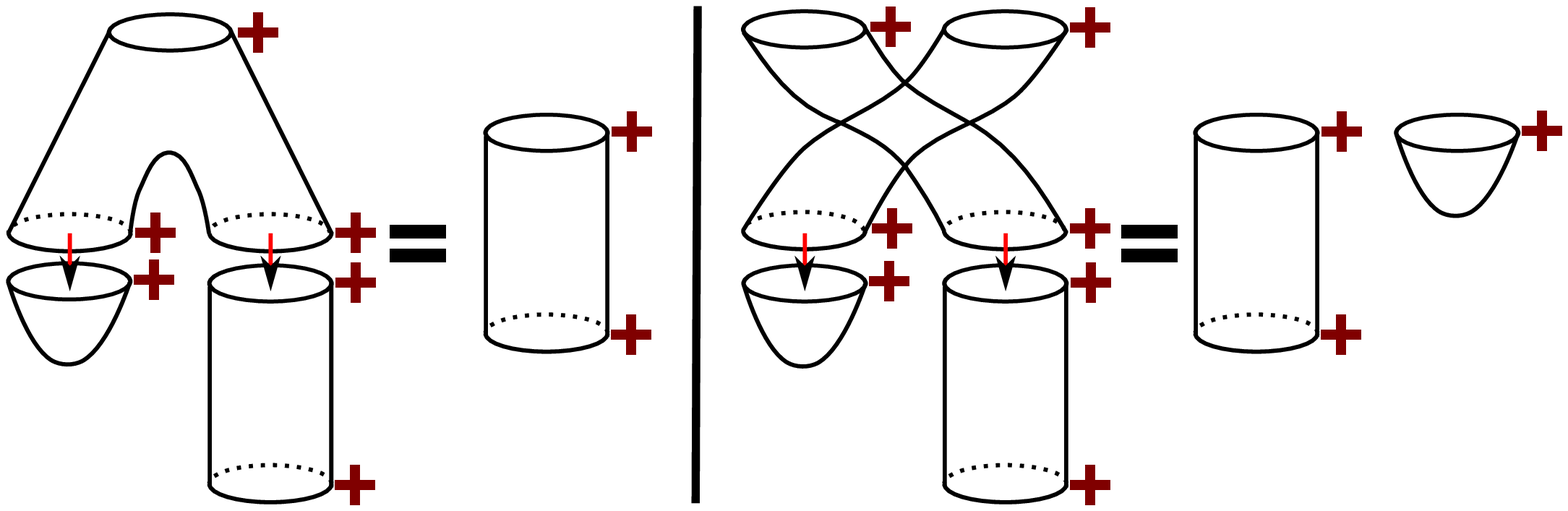}};\endxy\;\;\;\;\;\;\;\;\xy(0,0)*{\includegraphics[scale=0.25]{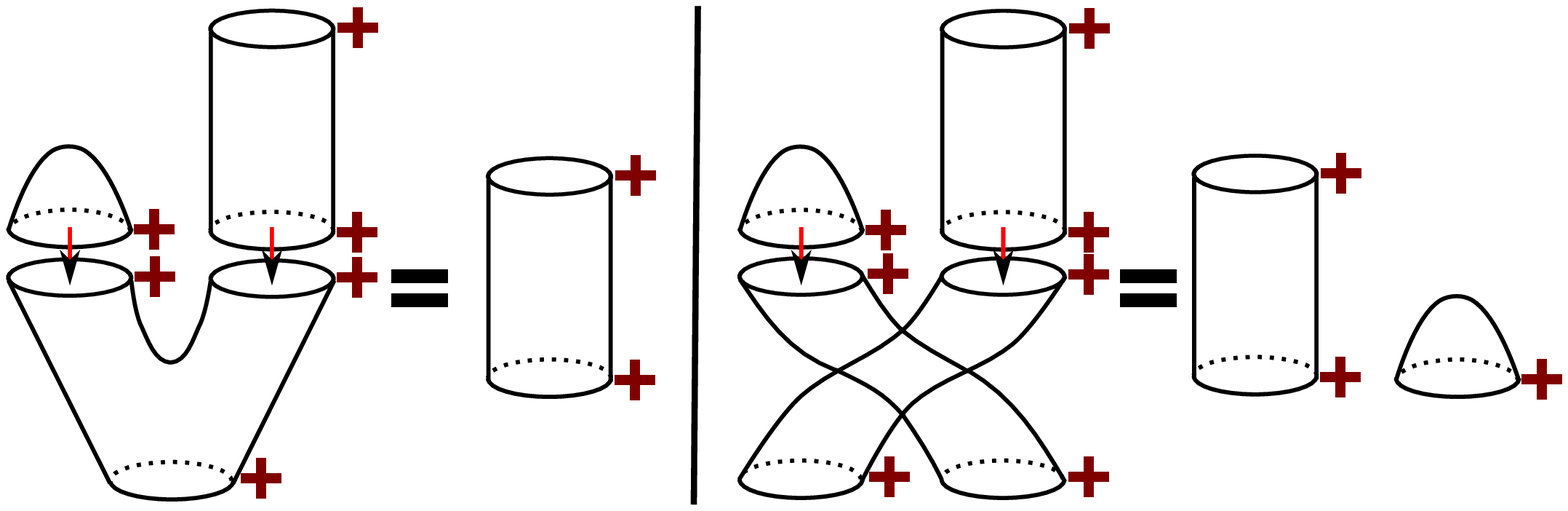}};\endxy
\end{align}
The first and second permutation relations are
\begin{align}\label{eq-permrel1}
\xy(0,0)*{\includegraphics[scale=0.25]{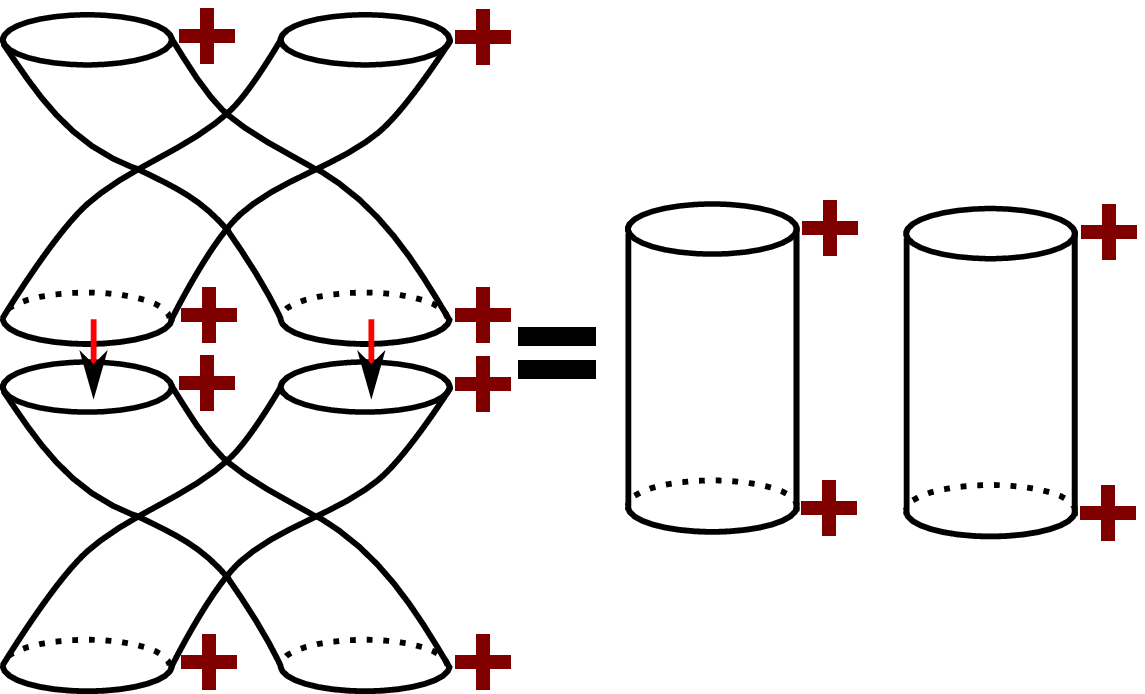}};\endxy\;\;\;\;\;\;\;\;\xy(0,0)*{\includegraphics[scale=0.25]{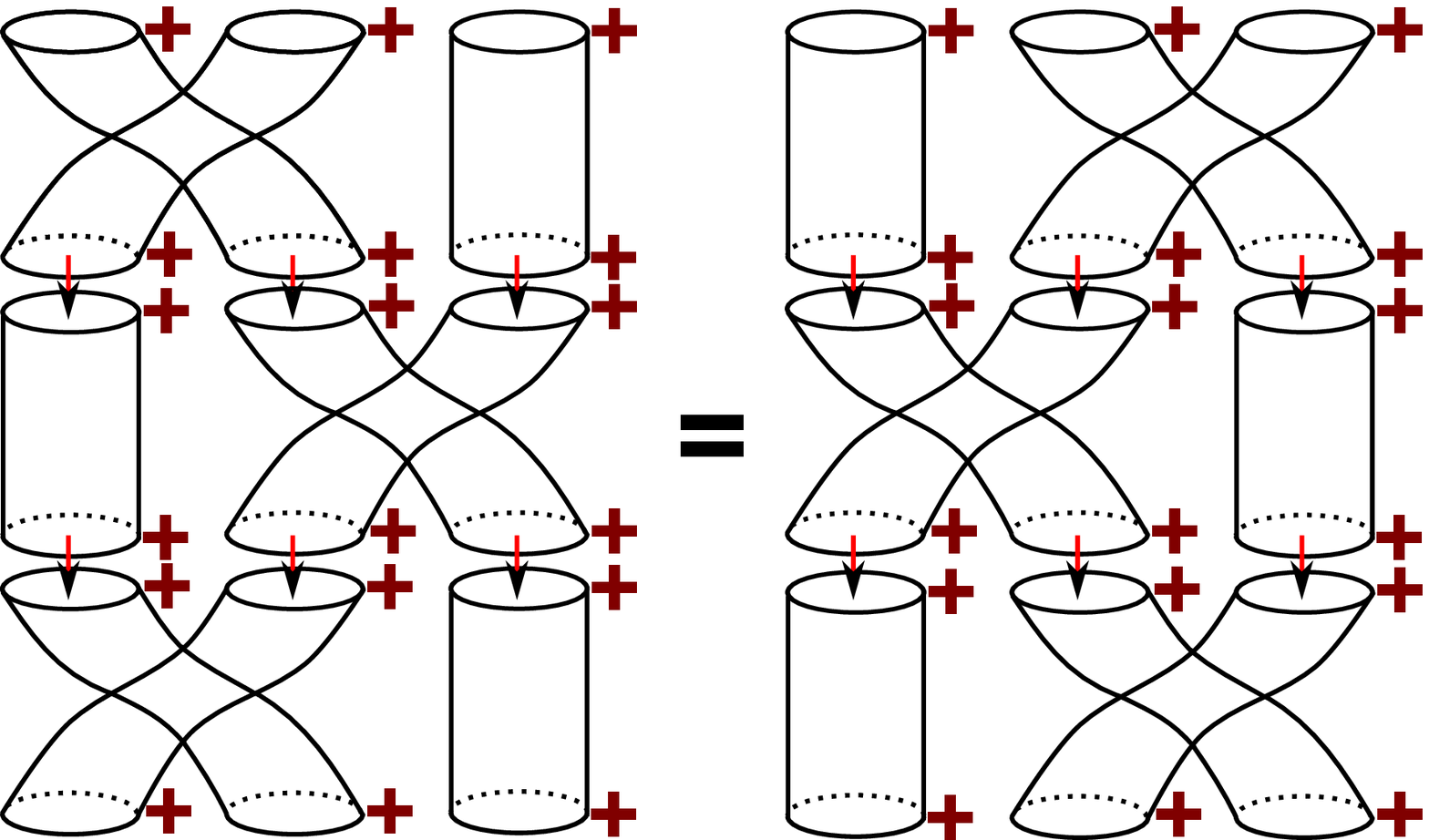}};\endxy
\end{align}
while the third permutation relation is
\begin{align}\label{eq-permrel2}
\xy(0,0)*{\includegraphics[scale=0.3]{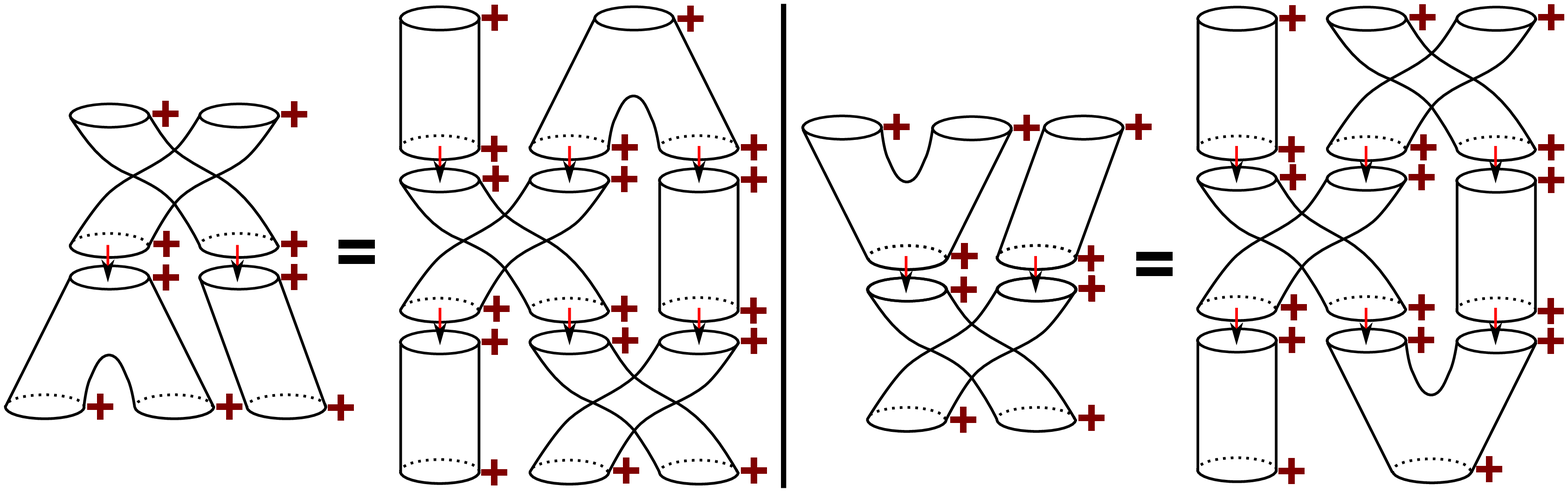}};\endxy
\end{align}
The important \textit{Frobenius, torus and M\"obius} relations are
\begin{align}\label{eq-frobandco}
\xy(0,0)*{\includegraphics[scale=0.25]{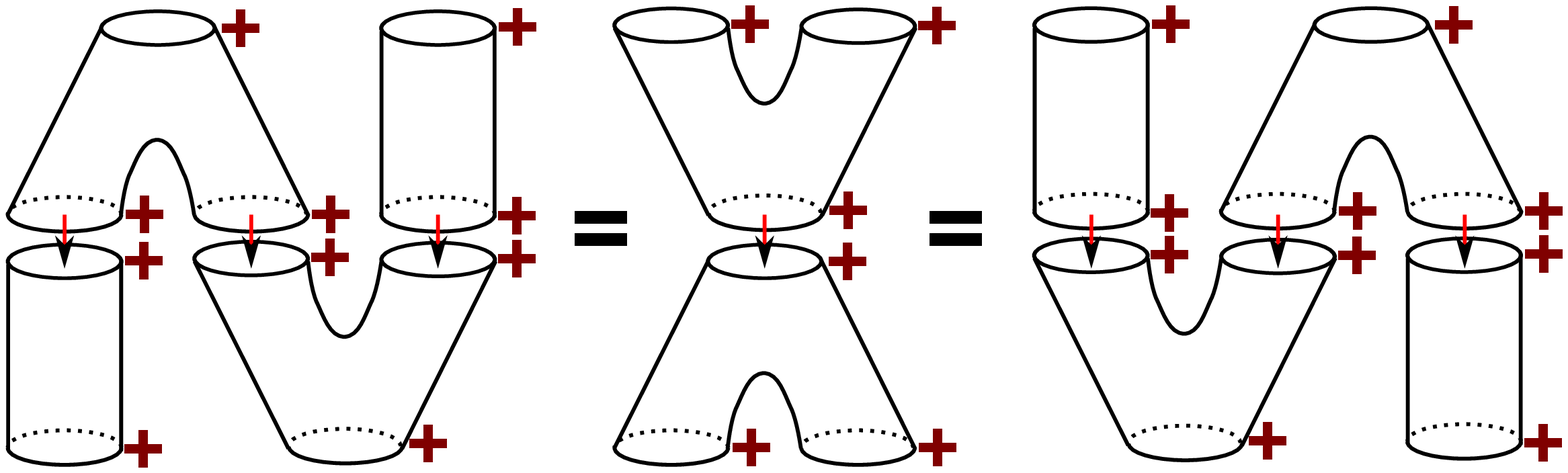}};\endxy\;\;\;\;\;\;\;\;\xy(0,0)*{\includegraphics[scale=0.3]{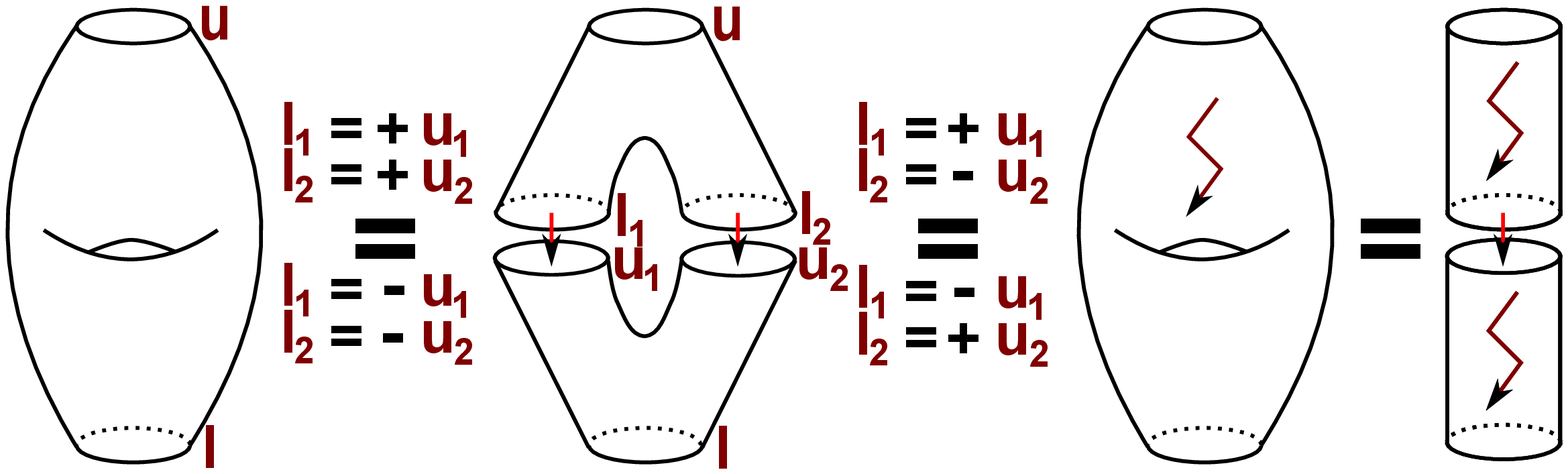}};\endxy
\end{align}
A $u$ or a $l$ means an arbitrary glueing number and $-u,-l$ are the glueing numbers $u$ or $l$ multiplied by $-1$. Furthermore, the bolt represent a non-orientable surfaces and not illustrated parts are arbitrary.

It follows from these relations, that the cobordism $\amalg_{i\in I}\mathrm{id}^+_+$ is the identity morphism between $|I|$ v-circles. The cobordism $\Phi^-_+$ changes the boundary decoration of a morphism. Hence, the category above contains all possibilities for the decorations of the boundary components.

The category $\ucob_R(\emptyset)^*$ is the same as above, but without all minus signs in the relations (we mean ``honest'' minus signs, i.e. the minus-decorations are still in use).
\vskip0.5cm
Both categories are strict monoidal categories (compare to~\ref{defn-monoidal}), since we are working with isotopy classes of cobordisms. The monoidal structure is be induced by the disjoint union $\amalg$. Moreover, both categories are symmetric. Note that they can be seen as $2$-categories, as explained in Example~\ref{ex-2cat}, but it is more convenient to see them as monoidal $1$-category.
\end{defn}
It is worth noting that the rest of this section can also be done for the category $\ucob_R(\emptyset)^*$ by dropping all the corresponding minus signs.

As in~\cite{bn2}, we define the category $\mat(\mathcal C)$ to be the \textit{category of formal matrices} over a pre-additive category $\mathcal C$, i.e. the objects $\Ob(\mat(\mathcal C))$ are ordered, formal direct sums of the objects $\Ob(\mathcal C)$ and the morphisms $\Mor(\mat(\mathcal C))$ are matrices of morphisms $\Mor(\mathcal C)$. The composition is defined by the standard matrix multiplication. This category is sometimes called the \textit{additive closure} of the pre-additive category $\mathcal C$.

Furthermore, as before in Section~\ref{sec-intro}, we define the category $\kom_b(\mathcal C)$ to be the \textit{category of formal, bounded chain complexes} over a pre-additive category $\mathcal C$. Denote the category modulo formal chain homotopy by $\kom_b(\mathcal C)^h$. More about such categories is collected in Section~\ref{sec-techhomalg}.

Furthermore, we define $\ucob_R(\emptyset)^l$, which has the same objects as the category $\ucob_R(\emptyset)$, but morphisms modulo the local relations from Figure~\ref{figureintroa-4}. We make the following definition.
\begin{defn}\label{defn-category2}
We denote by $\ukob_R$ the category $\kom_b(\mat(\ucob_R(\emptyset)))$. Here our objects are formal, bounded chain complexes of formal direct sums of the category of (possible non-orientable) cobordisms with boundary decorations. We define $\ukob_R^h$ to be the category $\ukob_R$ modulo formal chain homotopy. Furthermore, we define $\ukob_R^l$ and $\ukob_R^{hl}$ in the obvious way. The notations $\ucob_R(\emptyset)^{(l)}$ or $\ukob_R^{(h)(l)}$ mean that we consider all possible cases, namely with or without a $h$ and with or without a $l$.
\end{defn}
\vskip0.25cm
One effective way of calculation in $\ucob_R(\emptyset)$ is the usage of the \textit{Euler characteristic}\footnote{Here we consider our morphisms as surfaces.}. It is well-known that the Euler characteristic is invariant under homotopies and that it satisfies 
\[
\chi(C_2\circ C_1)=\chi(C_1)+\chi(C_2)-\chi(\mathcal O_2)\;\;\text{ and }\;\;\chi(C_1\amalg C_2)=\chi(C_1)+\chi(C_2)
\]
for any two cobordisms $C_1\colon\mathcal O_1\to\mathcal O_2$ and $C_2\colon\mathcal O_2\to\mathcal O_3$. Because the objects of $\ucob_R(\emptyset)$ are disjoint unions of v-circles, we note the following lemmata.
\begin{lem}\label{cor-euler1}
The Euler characteristic satisfies $\chi(C_1\circ C_2)=\chi(C_1)+\chi(C_2)$ for all morphisms $C_1,C_2$ of the category $\ucob_R(\emptyset)$.\qed
\end{lem}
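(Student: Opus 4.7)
The plan is to reduce the statement to the general inclusion–exclusion formula for Euler characteristic of a pushout (which the author already invokes in the sentences preceding the lemma) and then observe that the correction term vanishes because the intermediate boundary is a disjoint union of circles.

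More precisely, I would argue as follows. By definition of composition in $\ucob_R(\emptyset)$, if $C_1\colon\mathcal O_1\to\mathcal O_2$ and $C_2\colon\mathcal O_2\to\mathcal O_3$ are morphisms, then $C_2\circ C_1$ is obtained as the pushout of $C_1$ and $C_2$ along the shared boundary $\mathcal O_2$, glued inside $\bR^2\times[-1,1]$. The additivity formula recalled just above the lemma gives
\[
\chi(C_2\circ C_1)=\chi(C_1)+\chi(C_2)-\chi(\mathcal O_2).
\]
Since $\Ob(\ucob_R(\emptyset))$ consists of disjoint unions of v-circles, $\mathcal O_2$ is a (finite) disjoint union of circles. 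Each circle is a closed $1$-manifold homotopy equivalent to $S^1$, so $\chi(S^1)=0$, and by additivity of the Euler characteristic on disjoint unions we get $\chi(\mathcal O_2)=0$.

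Substituting this into the formula yields $\chi(C_2\circ C_1)=\chi(C_1)+\chi(C_2)$, which is exactly the claim (after relabelling $C_1\leftrightarrow C_2$ to match the statement). I do not anticipate any obstacle: the only thing to verify beyond the cited general formula is that objects of $\ucob_R(\emptyset)$ are $1$-manifolds of Euler characteristic zero, which is immediate from the definition. Non-orientability of the cobordisms plays no role here, since Euler characteristic is defined for arbitrary (possibly non-orientable) compact surfaces and behaves additively under boundary-glueing just as in the orientable case; likewise the boundary decorations $\pm$ are irrelevant because they do not affect the underlying topological type.
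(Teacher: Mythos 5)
Your argument is exactly the one the paper intends: the inclusion--exclusion formula $\chi(C_2\circ C_1)=\chi(C_1)+\chi(C_2)-\chi(\mathcal O_2)$ stated in the sentence preceding the lemma, combined with the observation that $\mathcal O_2$ is a disjoint union of circles and hence has vanishing Euler characteristic. The paper leaves this unsaid (marking the lemma with \qed), so your write-up simply makes the intended reasoning explicit.
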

\begin{lem}\label{cor-euler2}
The generators of the category $\ucob_R(\emptyset)$ satisfy $\chi(\mathrm{id}^+_+)=\chi(\mathrm{id}^-_-)=0$ and $\chi(\Phi^-_+)=\chi(\Phi^+_-)=0$ and $\chi(\Delta^+_{++})=\chi(m^{++}_+)=\chi(\theta)=-1$. The composition of a cobordism $C$ with $\mathrm{id}^+_+$ or $\Phi^-_+$ does not change $\chi(C)$. \qed
\end{lem}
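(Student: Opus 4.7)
The plan is to verify each computation directly from the topological description of the generators given in Definition~\ref{defn-category}, using standard facts about the Euler characteristic of surfaces with boundary and the additivity formula already recorded in Lemma~\ref{cor-euler1}.

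First I would handle the two cylinders $\mathrm{id}^+_+$ and $\Phi^-_+$ (and their sign variants). As abstract surfaces these are both homeomorphic to $S^1\times[0,1]$: the decorations $+,-$ encode only how the boundary is glued, not the underlying topology, so the difference between $\mathrm{id}^+_+$ and $\Phi^-_+$ is invisible to $\chi$. Since $\chi(S^1\times[0,1])=\chi(S^1)\cdot\chi([0,1])=0\cdot 1=0$, we conclude $\chi(\mathrm{id}^\pm_\pm)=\chi(\Phi^{\mp}_{\pm})=0$.

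Next I would compute the Euler characteristics of $\Delta^+_{++}$, $m^{++}_+$, and $\theta$. The cobordisms $\Delta^+_{++}$ and $m^{++}_+$ are, as stated in the list of generators, each homeomorphic to a three times punctured $2$-sphere; hence $\chi=\chi(S^2)-3=2-3=-1$. The M\"obius cobordism $\theta$ is a twice-punctured $\bR P^2$; since $\chi(\bR P^2)=1$, removing two open discs gives $\chi(\theta)=1-2=-1$. This finishes the list of numerical values.

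For the last assertion, I would simply invoke Lemma~\ref{cor-euler1}: for any cobordism $C$ composable with $\mathrm{id}^+_+$ or $\Phi^-_+$ we have
\[
\chi(C\circ \mathrm{id}^+_+)=\chi(C)+\chi(\mathrm{id}^+_+)=\chi(C)+0=\chi(C),
\]
and similarly $\chi(C\circ\Phi^-_+)=\chi(C)$, and the same on the other side. No step here is a serious obstacle; the only thing to be a little careful about is that the category is built from isotopy classes of \emph{immersed} surfaces with boundary decorations, so one must observe that $\chi$ depends only on the underlying abstract surface and is blind to both the immersion into $\bR^2\times[-1,1]$ and the decorations, which is already implicit in Lemma~\ref{cor-euler1}.
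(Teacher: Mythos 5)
Your proposal is correct, and since the paper marks this lemma with \qed (no written proof), your computation supplies exactly the routine verification the paper leaves implicit: read off each generator's underlying abstract surface type from Definition~\ref{defn-category}, compute $\chi$ from the standard formula for punctured closed surfaces, note that decorations and the immersion do not affect $\chi$, and invoke Lemma~\ref{cor-euler1} for the composition claim.
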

It is worth noting that the Lemmata~\ref{cor-euler1} and~\ref{cor-euler2} ensure that the category $\ucob_R(\emptyset)$ can be seen as a \textit{graded category}, that is the grading of morphisms is the Euler characteristic. Recall that a saddle between v-circles is a saddle for a certain neighbourhood and the identity outside of it.
\begin{lem}\label{cor-saddleclass}
All saddles are homeomorphic to the following three cobordisms (and some extra cylinders for not affected components). Hence, after decorating the boundary components, we get nine different possibilities, if we fix the decorations of the cylinders to be $+$.
\begin{itemize}
\item[(a)] A two times punctured projective plane $\theta=\mathbb{RP}^2_2$ iff the saddle has two boundary circles.
\item[(b)] A pantsup-morphism $m$ iff the saddle is a cobordism from two circles to one circle.
\item[(c)] A pantsdown-morphism $\Delta$ iff the saddle is a cobordism from one circle to two circles.
\end{itemize}
\end{lem}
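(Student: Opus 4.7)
The plan is to analyse the local topology of a saddle. By the definition in Section~\ref{notation}, a saddle $S\colon\gamma_{a}\to\gamma_{a'}$ is, inside a small neighbourhood $N$ of the resolved crossing, a band (1-handle) attached to the two local arcs that appear after resolution, and is the identity (a disjoint union of product cylinders) outside $N$. Consequently the topological type of $S$ is determined entirely by how the two feet of this band plug into the global picture of $\gamma_{a}$.

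First I would check whether the two local arcs inside $N$ belong to the same v-circle of $\gamma_{a}$ or to two distinct ones. If they belong to two distinct v-circles, then attaching a 1-handle between two disjoint circles merges them into a single circle, and the non-cylinder part of $S$ is a three-times punctured $S^{2}$, which is the pair-of-pants multiplication $m$ of item (b). If they belong to a single v-circle, there are two sub-cases, according to whether the band is attached in an orientation-preserving or orientation-reversing fashion relative to a chosen local orientation of that circle. The orientation-preserving sub-case cuts the circle into two and produces the comultiplication $\Delta$ of item (c), while the orientation-reversing sub-case produces a 1-handle with a half-twist and yields a twice-punctured projective plane $\theta$ of item (a); in the latter case the number of v-circles is preserved, which matches the characterising property that the saddle has two boundary circles.

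These three cases are exhaustive, because the effect of a 1-handle attachment on a 1-manifold depends only on (i) whether the two feet lie on the same component and (ii), if so, on the relative orientation of the two feet; no further data enter. The subtle point, which I expect to be the main obstacle, is to verify that the orientation-reversing case really produces a non-orientable surface rather than an orientable one in disguise: this follows because any candidate orientation of the glued surface would have to reverse itself when transported across the twisted band, so no global orientation exists, forcing the surface to be a twice-punctured $\mathbb{RP}^{2}$. The point to emphasise is that this third possibility genuinely occurs in the v-setting precisely because the v-crossings of $\gamma_{a}$ can force incompatible local orientations on the two sides of the band, whereas for a classical planar resolution this never happens and only the cases (b) and (c) survive.

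Finally, for the count, fix the decorations on all the extra product-cylinder components to be $+$; then the only remaining data are the glueing numbers attached to the boundary components of the non-cylinder part of $S$. In the orientable cases $m$ and $\Delta$ the three boundary components each carry a label in $\{+,-\}$, but the combinatorial relations in~\ref{eq-combrel12}, together with the fact that composing with $\Phi^{-}_{+}$ changes a decoration on a cylinder, identify labellings that differ by an even number of sign flips. This leaves four inequivalent decorated versions of $m$ and four of $\Delta$, together with the single undecorated non-orientable $\theta$, giving the asserted total of $4+4+1=9$ possibilities.
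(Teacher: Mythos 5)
Your proof is correct, but it takes a genuinely different route from the paper's. The paper's own argument is very short: it observes that the non-cylinder part of any saddle has Euler characteristic $\chi = -1$, and then appeals to the classification of compact surfaces with boundary. Knowing $\chi = -1$ and whether the saddle has two or three boundary circles pins down the homeomorphism type immediately — in particular, an orientable surface with two boundary circles always has even $\chi$, so the two-boundary-circle case is forced to be non-orientable, giving the twice-punctured $\mathbb{RP}^2$ for free. Your argument instead analyses the 1-handle attachment directly, splitting into cases according to whether the two feet of the band lie on one v-circle or two, and, in the former case, whether the band is attached orientation-compatibly or not. This is more work but makes the geometry explicit, and it explains concretely why case (a) only arises in the virtual setting (incompatible local orientations forced by v-crossings), something the Euler-characteristic argument leaves implicit.

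One small imprecision in your count of nine decorated saddles: the identification among the eight decorated versions of $m$ (resp.\ $\Delta$) comes from flipping \emph{all three} boundary signs at once — see Lemma~\ref{lem-basiscalculations}(b),(d), e.g.\ $m^{--}_+ = m^{++}_-$ and $\Delta^+_{--} = -\Delta^-_{++}$ — which is an odd number of flips, not ``an even number of sign flips''. Flipping only two of the three signs is \emph{not} an identification. The resulting orbit count $8/2 = 4$ each for $m$ and $\Delta$, plus one undecorated $\theta$, still gives $4 + 4 + 1 = 9$, so the conclusion stands.
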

\begin{proof}
We note that an open saddle $S$ has $\chi(S)=-1$. Hence, after closing its boundary components, we get the statement.
\end{proof}
Now we deduce some basic relations between the basic cobordisms. Afterwards, we prove a proposition which is a key point for the understanding of the problematic face from~\ref{probcube}. Note the difference between the relations (b),(c) and (d),(e). Moreover, (k) and (l) are also very important.
\begin{lem}\label{lem-basiscalculations}
The following rules hold.
\begin{itemize}
\item[(a)] $\Phi^-_+\circ\Phi^-_+=\mathrm{id}^+_+\circ\mathrm{id}^+_+=\mathrm{id}^+_+$, $\tau^{++}_{++}\circ\tau^{++}_{++}=\mathrm{id}^{++}_{++}$.
\item[(b)] $(\Phi^-_+\amalg\Phi^-_+)\circ\Delta^+_{++}=\Delta^+_{--}=-\Delta^-_{++}=-\Delta^+_{++}\circ\Phi^-_+$.
\item[(c)] $(\Phi^-_+\amalg\mathrm{id}^+_+)\circ\Delta^+_{++}=\Delta^+_{-+}=-\Delta^-_{+-}=-(\mathrm{id}^+_+\amalg\,\Phi^-_+)\circ\Delta^+_{++}\circ\Phi^-_+$.
\item[(d)] $m^{++}_+\circ(\Phi^-_+\amalg\Phi^-_+)=m^{--}_+=m^{++}_-=\Phi^-_+\circ m^{++}_+$.
\item[(e)] $m^{++}_+\circ(\Phi^-_+\amalg\mathrm{id}^+_+)=m^{-+}_+=m^{+-}_-=\Phi^+_+\circ m^{++}_+\circ(\mathrm{id}^+_+\amalg\,\Phi^-_+)$.
\item[(f)] $m^{++}_+\circ\Delta^+_{++}=(\mathrm{id}^+_+\amalg\Delta^+_{++})\circ(m^{++}_+\amalg\mathrm{id}^+_+)$ (Frobenius relation).
\item[(g)] $m^{++}_{+}\circ(m^{++}_+\amalg\,\mathrm{id}^+_+) =m^{++}_{+}\circ(\mathrm{id}^+_+\amalg\,m^{++}_+)$ (associativity relation).
\item[(h)] $(\Delta^+_{++}\amalg\,\mathrm{id}^+_+)\circ\Delta^+_{++}=(\mathrm{id}^+_+\amalg\,\Delta^+_{++})\circ\Delta^+_{++}$ (associativity relation).
\item[(i)] $m^{++}_+\circ\tau^{++}_{++}\circ(\Phi^-_+\amalg\mathrm{id}^+_+)=m^{+-}_+$ (first permutation $\Phi$ relation).
\item[(j)] $(\Phi^-_+\amalg\mathrm{id}^+_+)\circ\tau^{++}_{++}\circ\Delta^+_{++}=\Delta^+_{+-}$ (second permutation $\Phi$ relation).
\item[(k)] $\theta\circ\Phi^-_+=\Phi^-_+\circ\theta=\theta$, $\theta=-\theta$ ($\theta$ relations).
\item[(l)] $\mathcal K=\theta^2$. Here $\mathcal K$ is a two times punctured Klein bottle.
\end{itemize}
\end{lem}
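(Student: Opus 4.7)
The plan is to verify each of the twelve identities (a)--(l) directly from the defining relations of $\ucob_R(\emptyset)$ recorded in Definition~\ref{defn-category}. Nearly every equality reduces to a combination of three ingredients: reading off what the composite of generators looks like topologically, applying the combinatorial relations \eqref{eq-combrel12} or \eqref{eq-combrel3} to rewrite decorations (together with the informal slogan ``composition with $\Phi^-_+$ flips one decoration''), and invoking the (co)associativity, (co)unit, permutation or Frobenius relations \eqref{eq-commasss}--\eqref{eq-frobandco} to identify composites of underlying surfaces.

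Several parts are essentially axiomatic. For (a), $\Phi^-_+\circ\Phi^-_+$ is a cylinder whose net decoration is $(+,+)$ and hence equals $\mathrm{id}^+_+$ by \eqref{eq-combrel12}, while $\tau^{++}_{++}\circ\tau^{++}_{++}=\mathrm{id}^{++}_{++}$ is a standard permutation relation from \eqref{eq-permrel1}. Identities (f), (g), (h) are direct restatements of the Frobenius and (co)associativity relations in \eqref{eq-commasss} and \eqref{eq-frobandco}. For (k), glueing the orientable cylinder $\Phi^-_+$ onto $\theta$ produces a surface boundary-preserving-homeomorphic to $\theta$ and, being non-orientable, carries no decorations at all, so both compositions reduce to $\theta$; the equality $\theta=-\theta$ is precisely \eqref{eq-combrel3}. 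For (l), Lemmata~\ref{cor-euler1} and~\ref{cor-euler2} give $\chi(\theta^2)=-2$, and $\theta^2$ is a closed non-orientable surface with two boundary circles, hence the twice-punctured Klein bottle $\mathcal K$; the topological content of \eqref{eq-frobandco} then identifies $\mathcal K$ with $\theta^2$ in $\ucob_R(\emptyset)$.

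The heart of the lemma lies in (b)--(e), (i), (j). For each such equation the strategy is the same: track how precomposing or postcomposing a generator with $\Phi^-_+$ flips the glueing number on the corresponding boundary component, and then use the sign conventions built into \eqref{eq-combrel12} to identify cobordisms that differ by a simultaneous flip of decorations with their signed counterparts. For example, in (b) the first equality just records the flips caused by $(\Phi^-_+\amalg\Phi^-_+)$ acting on the two outputs of $\Delta^+_{++}$, the second is the sign produced by \eqref{eq-combrel12} when reversing all three decorations of an orientable comultiplication, and the third rewrites $\Delta^-_{++}$ as $\Delta^+_{++}\circ\Phi^-_+$. Parts (c), (d), (e) are handled analogously with different boundary components flipped. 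For (i) and (j), the third permutation relation \eqref{eq-permrel2} lets $\tau$ absorb or release $\Phi^-_+$, after which (d)--(e) or (b)--(c) finish the computation. The main obstacle is therefore not topological but bookkeeping: one must apply \eqref{eq-combrel12} with the correct sign rule at each step so that the combination of decoration flips and signs reproduces the stated identities consistently across all twelve cases.
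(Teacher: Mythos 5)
Your proposal is correct and takes the same route as the paper, which simply observes that all twelve identities follow directly from the relations in Definition~\ref{defn-category} (together with the Euler-characteristic bookkeeping of Lemmata~\ref{cor-euler1} and~\ref{cor-euler2} for the topological identifications in part (l)) and omits the verifications. Your elaboration of the decoration/sign bookkeeping is a faithful expansion of the argument the paper compresses into one sentence.
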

\begin{proof}
Most of the equations follow directly from the relations in Definition~\ref{defn-category} above. The rest are easy to check and therefore omitted.
\end{proof}
The following example illustrates that some cobordisms are in fact isomorphisms.
\begin{ex}\label{ex-vrmremoval}
The two cylinders $\mathrm{id}^+_+,\Phi^-_+$ are the only isomorphisms between two equal objects. Let us denote $\mathcal O_1$ and $\mathcal O_2$ two objects which differs only though a finite sequence of the virtual Reidemeister moves. The vRM-cobordisms from Figure~\ref{figure0-reide} induces isomorphisms $C\colon\mathcal O_1\to\mathcal O_2$. To see this we mention that the three cobordisms are isomorphisms, i.e. there inverses are the cobordisms which we obtain by turning the pictures upside down (use statement (a) of Lemma~\ref{lem-basiscalculations}).
\end{ex}
\begin{prop}\label{prop-nonorientablefaces}\textbf{(Non-orientable faces)}
Let $\Delta^{u}_{l_1l_2}$ and $m^{u'_1u'_2}_{l'}$ be the surfaces from Figure~\ref{figure1-1}. Then the following is equivalent.
\begin{itemize}
 \item[(a)] $m^{u'_1u'_2}_{l'}\circ\Delta^{u}_{l_1l_2}=\mathcal K$. Here $\mathcal K$ is a two times punctured Klein bottle.
 \item[(b)] $l_1=u'_1$ and $l_2=-u'_2$ or $l_1=-u'_1$ and $l_2=u'_2$.
\end{itemize}
Otherwise $m^{u'_1u'_2}_{l'}\circ\Delta^{u}_{l_1l_2}$ is a two times punctured torus $\mathcal T$. We call this the \textit{M\"obius relation}.
\end{prop}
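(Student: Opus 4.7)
The plan is a two-step argument: first classify the topological type of the composition via the Euler characteristic, then identify which decoration patterns yield orientation-preserving versus orientation-reversing glueings at the middle circles.

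First I would apply Lemmas~\ref{cor-euler1} and~\ref{cor-euler2} to compute
$$
\chi\bigl(m^{u'_1 u'_2}_{l'} \circ \Delta^{u}_{l_1 l_2}\bigr) \;=\; \chi(m) + \chi(\Delta) \;=\; -2.
$$
The composition is a connected surface with exactly two boundary circles, so capping off yields a closed surface of Euler characteristic $0$. By the classification of closed surfaces it is therefore either a twice-punctured torus $\mathcal{T}$ or a twice-punctured Klein bottle $\mathcal{K}$; in particular the outer decorations $u, l'$ cannot affect the topological type, and the entire question reduces to determining orientability in terms of $l_1, l_2, u'_1, u'_2$.

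Next I would reduce to a standard middle form. Using Lemma~\ref{lem-basiscalculations}(a)--(e) together with the combinatorial relation~\ref{eq-combrel12}, the outer decorations $u, l'$ and any mismatches between $l_i$ and $u'_i$ can be migrated into $\Phi^-_+$ cylinders placed either on the outer boundary (where they only relabel decorations without changing the topological type) or in the middle of the composition. Up to outer $\Phi^-_+$ factors and an overall sign, the composition is thus equal to
$$
m^{++}_{+} \circ (X_1 \amalg X_2) \circ \Delta^{+}_{++},
$$
where $X_i \in \{\mathrm{id}^+_+, \Phi^-_+\}$ is $\mathrm{id}^+_+$ exactly when $l_i = u'_i$, and $\Phi^-_+$ exactly when $l_i = -u'_i$.

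Then I would analyse the four middle configurations. When $X_1 = X_2$, Lemma~\ref{lem-basiscalculations}(b) and (d) let me pull both cylinders out to the outer boundary, reducing to the standard composition $m^{++}_{+} \circ \Delta^{+}_{++}$, which equals $\mathcal{T}$ by the torus relation of Equation~\ref{eq-frobandco}. When $X_1 \neq X_2$, the two legs of $\Delta$ are glued to the two legs of $m$ with opposite orientations; a loop through both middle circles reverses the normal, so the composition is non-orientable, and by the first step it must equal $\mathcal{K}$. A purely combinatorial verification proceeds by cutting the composition along one of the middle circles, reassembling using the Frobenius relation and Lemma~\ref{lem-basiscalculations}(l) to match it with $\theta^2 = \mathcal{K}$ directly. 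Finally, the condition $X_1 \neq X_2$ translates exactly into the disjunction in~(b), and all other decoration patterns fall into the $X_1 = X_2$ case and give $\mathcal{T}$.

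The main obstacle is the non-orientability step: rather than appealing informally to normal bundles, I would want a clean in-category identification of the asymmetric composition with $\mathcal{K}$. The cut-and-paste strategy via $\mathcal{K} = \theta^2$ (Lemma~\ref{lem-basiscalculations}(l)) combined with the torus/M\"obius relation in~\ref{eq-frobandco} seems to be the cleanest route, since it stays entirely within the relations of $\ucob_R(\emptyset)$.
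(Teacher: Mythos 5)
Your proposal matches the paper's proof in its essentials: compute $\chi(C)=-2$ via Lemmas~\ref{cor-euler1} and~\ref{cor-euler2}, observe that a connected surface with two boundary circles and this Euler characteristic is either a twice-punctured torus or Klein bottle, then invoke the torus/M\"obius relation of Equation~\ref{eq-frobandco} to decide which. The paper's proof is a two-sentence sketch that leaves the decoration bookkeeping implicit inside that relation; your reduction to the middle cylinders $X_1, X_2\in\{\mathrm{id}^+_+,\Phi^-_+\}$ and the observation that $X_1\ne X_2$ is exactly condition~(b) is the correct unpacking of what ``follows from the torus and M\"obius relations'' means.
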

\begin{proof}
Let us call $C$ the composition $C=m^{u'_1u'_2}_{l'_1}\circ\Delta^{u_1}_{l_1l_2}$. A quick computation shows $\chi(C)=-2$. Because $C$ has two boundary components, $C$ is either a 2-times punctured torus or a 2-times punctured Klein bottle and the statement follows from the torus and M\"obius relations in~\ref{eq-frobandco}.
\end{proof}
\subsubsection{The topological category for v-tangles}
In this part of Section~\ref{sec-vkhcat} we extend the notions above such that they can be used for v-tangles as well. As explained in Section~\ref{sec-vkhsum}, the most important difference is the usage of an extra decoration which we call the \textit{indicator}. The rest is (almost) the same as above. Again all definitions and statements can be done for an analogue of the category $\ucob_R(\emptyset)^*$. First we define/recall the notion of a \textit{virtual tangle (diagram)}, called \textit{v-tangle (diagram)}.
\begin{defn}\label{defn-vtangle}(\textbf{Virtual tangles}) A \textit{virtual tangle diagram with $k\in\bN$ boundary points} $T^k_D$ is a planar graph embedded in a disk $D^2$. This planar graph is a collection of \textit{usual vertices} and $k$-\textit{boundary vertices}. We also allow circles, i.e. closed edges without any vertices.

The usual vertices are all of valency four. Any of these vertices is either an overcrossing $\slashoverback$ or an undercrossing $\backoverslash$ or a virtual crossing $\virtual$. Latter is marked with a circle. The boundary vertices are of valency one and are part of the boundary of $D^2$.

As before, we call the crossings $\slashoverback$ and $\backoverslash$ \textit{classical crossings} or just \textit{crossings} and a virtual tangle diagram without virtual crossings a \textit{classical tangle diagram}.

A \textit{virtual tangle with $k\in\bN$ boundary points} $T^k$ is an equivalence class of virtual tangle diagrams $T^k_D$ module boundary preserving isotopies and \textit{generalised Reidemeister moves}.

We call a virtual tangle $T^k$ \textit{classical} if the set $T^k$ contains a classical tangle diagram. A v-string is a string starting and ending at the boundary without classical crossings. Moreover, we call a v-circle/v-string without virtual crossings a \textit{c-circle/c-string}.

The \textit{closure of a v-tangle diagram with *-marker} $\mathrm{Cl}(T^k_D)$ is a v-link diagram which is constructed by capping of neighbouring boundary points (starting from a fixed point marked with the *-marker and going counterclockwise) without creating new virtual crossings. For an example see Figure~\ref{figure1-2}.

There are exactly two, maybe not equivalent, closures of any v-tangle diagram. In the figure below the two closures are pictured using green edges.
\begin{figure}[ht]
  \centering
     \includegraphics[scale=0.45]{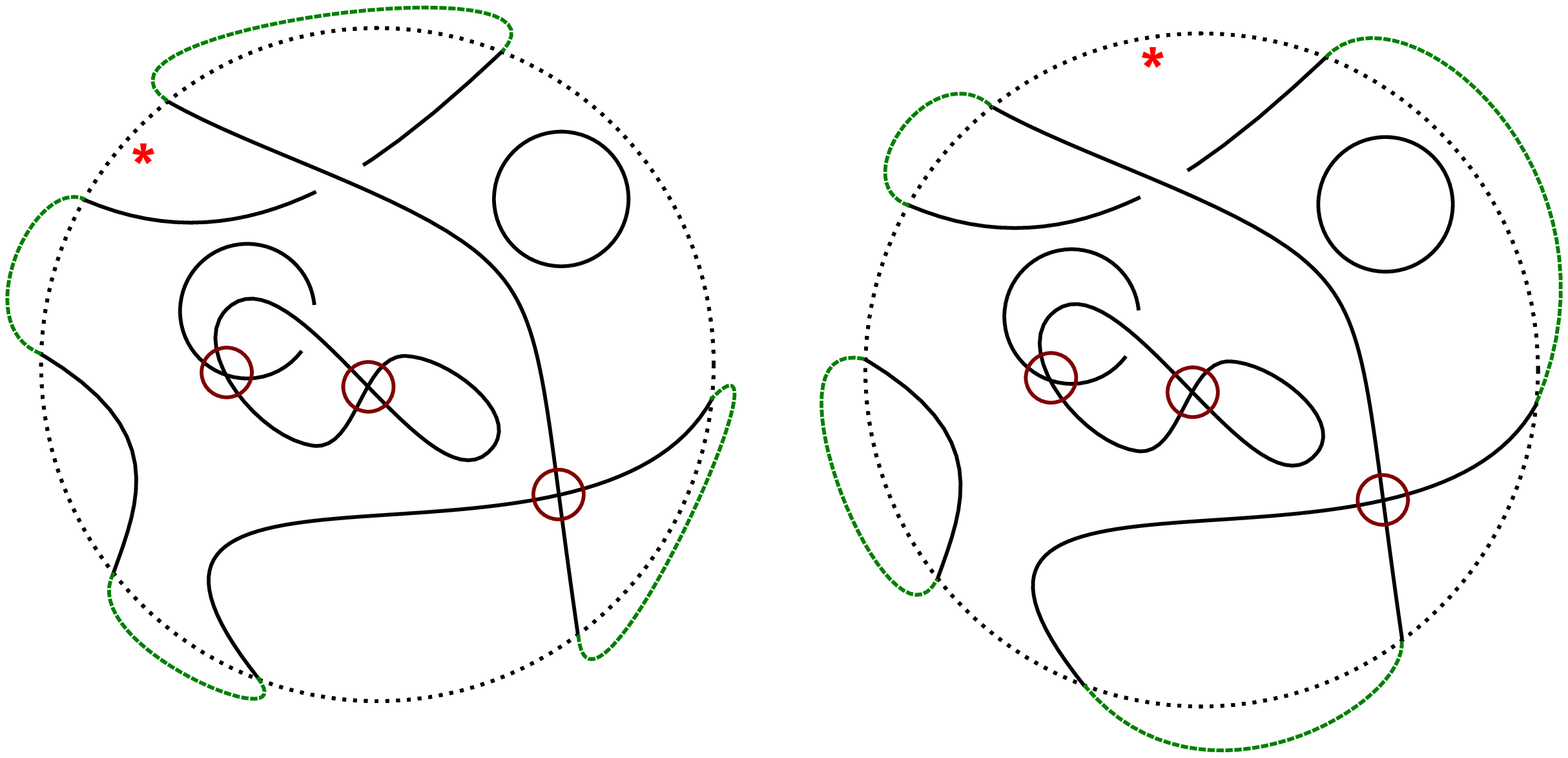}
  \caption{A *-marked v-tangle and two different closures.}
  \label{figure1-2}
\end{figure}

The notions of an \textit{oriented} virtual tangle diagram and of an \textit{oriented} virtual tangle are defined analogue (see also Section~\ref{sec-introa}). The latter modulo \textit{oriented} generalised Reidemeister moves and boundary preserving isotopies. From now on every v-tangle (diagram) is oriented. But we suppress this notion to avoid confusion with other (more important) notations.
\end{defn}
We define the category of \textit{open cobordisms with boundary decorations}. It is almost the same as in Definition~\ref{defn-category}, but the corresponding cobordisms could be open, i.e. they could have vertical boundary components, and are decorated with an extra information, i.e. a number in the set $\{0,+1,-1\}$ (exactly one, even for non-connected cobordisms). We picture the number $0$ as a bolt.
\begin{defn}\label{defn-category3}(\textbf{The category of open cobordisms with boundary decorations}) Let $k\in\bN$ and let $R$ be a commutative and unital ring. the category is $R-$pre-additive. The symbol $\amalg$ denotes the disjoint union.

\textbf{The objects:}

The \textit{objects} \textit{$\Ob(\ucob_R(k))$} are numbered v-tangle diagrams with $k$ boundary points without classical crossings. We denote the objects as $\mathcal O=\coprod_{i\in I}\mathcal O_i$. Here $\mathcal O_i$ are the v-circles or v-strings and $I$ is a finite, ordered index set. The objects of the category are equivalence (modulo \textit{boudary preserving, planar isotopies}) classes of four-valent graphs.

\textbf{The generators:}

The \textit{generators} of $\Mor(\ucob_R(k))$ are the cobordisms in Figure~\ref{figure1-3}. The cobordisms pictured are all between c-circles or c-strings. As before, we do not picture all the other possibilities, but we include them in the list of generators. 
\begin{figure}[ht]
	 \xy
  	 (-40,0)*{\phantom{.}};
     (40,0)*{\includegraphics[scale=0.5]{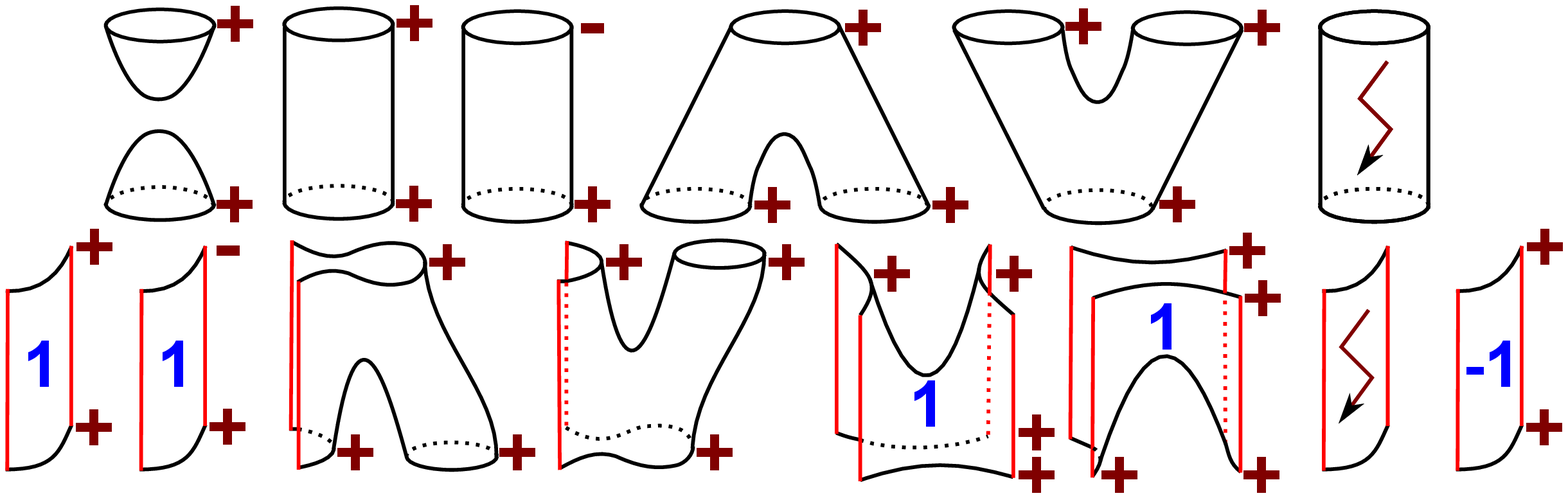}};
     (-15,7.8)*{\varepsilon^+};
     (-15,13.8)*{\iota_+};
     (6,21.5)*{\mathrm{id}^+_+};
     (19.5,21.5)*{\Phi^-_+};
     (67,21.5)*{m^{++}_+};
     (42.5,21.5)*{\Delta_{++}^+};
     (86.8,21.5)*{\theta};
     (-18.5,-20)*{\mathrm{id}(1)^+_+};
     (-7.5,-20)*{\Phi(1)^-_+};
     (9,-20)*{S^{+}_{++}};
     (30,-20)*{S^{++}_{+}};
     (71,-20)*{S^{++}_{++}};
     (52.5,-20)*{S^{++}_{++}};
     (85.5,-20)*{\theta};
     (98.5,-20)*{\mathrm{id}(-1)^+_+};
     \endxy
  \centering
  \caption{The generators for the set of morphisms.}
  \label{figure1-3}
\end{figure}

Every generator has a decoration from the set $\{0,+1,-1\}$. We call this decoration the \textit{indicator} of the cobordism. If no indicator is pictured, then it is $+1$. Indicators behave multiplicative.

Every generator with a decoration $\{+1,-1\}$ has extra decorations from the set $\{+,-\}$ at every horizontal boundary component. We call these decorations the \textit{glueing numbers} of the cobordism. The vertical boundary components are pictured in red.

We consider these cobordisms up to boundary preserving homeomorphisms (as abstract surfaces). Hence, between circles or strings with v-crossings the generators are the same up to boundary preserving homeomorphisms, but immersed into $D^2\times[-1,1]$.

We denote the different generators (from left to right; top row first) by $\iota_+$ and $\varepsilon^+$, $\mathrm{id}^+_+$ and $\Phi^-_+$, $\Delta^+_{++}$, $m^{++}_+$ and $\theta$, $\mathrm{id}(1)^+_+$ and $\Phi(1)^-_+$, $S^{+}_{++}$ and $S^{++}_{+}$, $S(1)^{++}_{++}$, $\theta$ and $\mathrm{id}(-1)^+_+$.

The composition of the generators formally needs again internal decorations to remember how they where glued together. But again we suppress them and hope the reader does not get confused. Moreover, as before, cobordisms with a $0$-indicator do not have any boundary decorations, i.e. they are deleted after glueing.

\textbf{The morphisms:}

The \textit{morphisms} \textit{$\Mor(\ucob_R(k))$} are cobordisms between the objects in the following way. We identify the collection of numbered v-circles/v-strings with circles/strings immersed into $D^2$.

Given two objects $\mathcal O_1,\mathcal O_2$ with $k_1,k_2\in\bN$ numbered v-circles or v-strings, then a morphism $\mathcal C\colon\mathcal O_1\to\mathcal O_2$ is a surface immersed in $D^2\times[-1,1]$ whose non-vertical boundary lies only in $D^2\times\{-1,1\}$ and is the disjoint union of the $k_1$ numbered v-circles or v-strings from $\mathcal O_1$ in $D^2\times\{1\}$ and the disjoint union of the $k_2$ numbered v-circles or v-strings from $\mathcal O_2$ in $D^2\times\{-1\}$. The morphisms are generated (as abstract surfaces) by the generators from above (see Figure~\ref{figure1-3}).

\textbf{The decorations:}

Every morphism has an \textit{indicator} from the set $\{0,+1,-1\}$.

Moreover, every morphism $C\colon\mathcal O_1\to\mathcal O_2$ in $\Mor(\ucob_R(k))$ is a cobordism between the numbered v-circles or v-strings of $\mathcal O_1$ and $\mathcal O_2$. Let us say that the v-circles or v-strings of $\mathcal O_1$ are numbered $i\in\{1,\dots,l_1\}$ and the v-circles or v-strings of $\mathcal O_2$ are numbered for $i\in\{l_1+1,\dots,l_2\}$.

Every cobordism with $+1,-1$ as an indicator has a decoration on the $i$-th boundary circle. This decoration is an element of the set $\{+,-\}$. We call the decoration of the $i$-th boundary component the \textit{$i$-th glueing number} of the cobordism.

Hence, the morphisms of the category are pairs $(C,w)$. Here $C\colon\mathcal O_1\to\mathcal O_2$ is a cobordism from $\mathcal O_1$ to $\mathcal O_2$ immersed into $D^2\times[-1,1]$ and $w$ is a string of length $l_2$ in such a way that the $i$-th letter of $w$ is the $i$-th glueing number of the cobordism and the last letter is the indicator or $w=0$ if the cobordism has $0$ as an indicator.

\textbf{Short hand notation:}

We denote a morphism $C$ with an indicator from $\{+1,-1\}$ which is a connected surfaces by $C^{u}_{l}(\mathrm{in})$. Here $u,l$ are words in the alphabet $\{+,-\}$ in such a way that the $i$-th character of $u$ (of $l$) is the glueing number of the $i$-th circle of the upper (of the lower) boundary. The number $\mathrm{in}$ is the indicator. The construction above ensures that this notation is always possible. Therefore we denote an arbitrary morphism as before by ($C^{u_i}_{l_i}$ are its connected components and $u_i,l_i$ are words in $\{+,-\}$)
\[
C(\pm 1)=(C^{u_1}_{l_1}\amalg\cdots\amalg C^{u_{k}}_{l_{k}})(\pm 1).
\]
For a morphism with $0$ as indicator we do not need any boundary decorations. With a slight abuse of notation, we denote all these cobordisms as the non-orientable cobordisms $\theta$.

\textbf{The relations:}

There are different relations between the cobordisms, namely \textit{topological relations} and \textit{combinatorial relations}. The latter relations are described by the glueing numbers and indicators of the cobordisms and the glueing of the cobordisms. The topological relations are not pictured but it should be clear how they should work. Moreover, we have only pictured the most important new relations below, but there should hold analogously relations as in Definition~\ref{defn-category}. The reader should read these relations is the same vein as before.

The most interesting new relations are the three combinatorial
\begin{align}\label{eq-combrel123}
\xy(0,0)*{\includegraphics[scale=0.25]{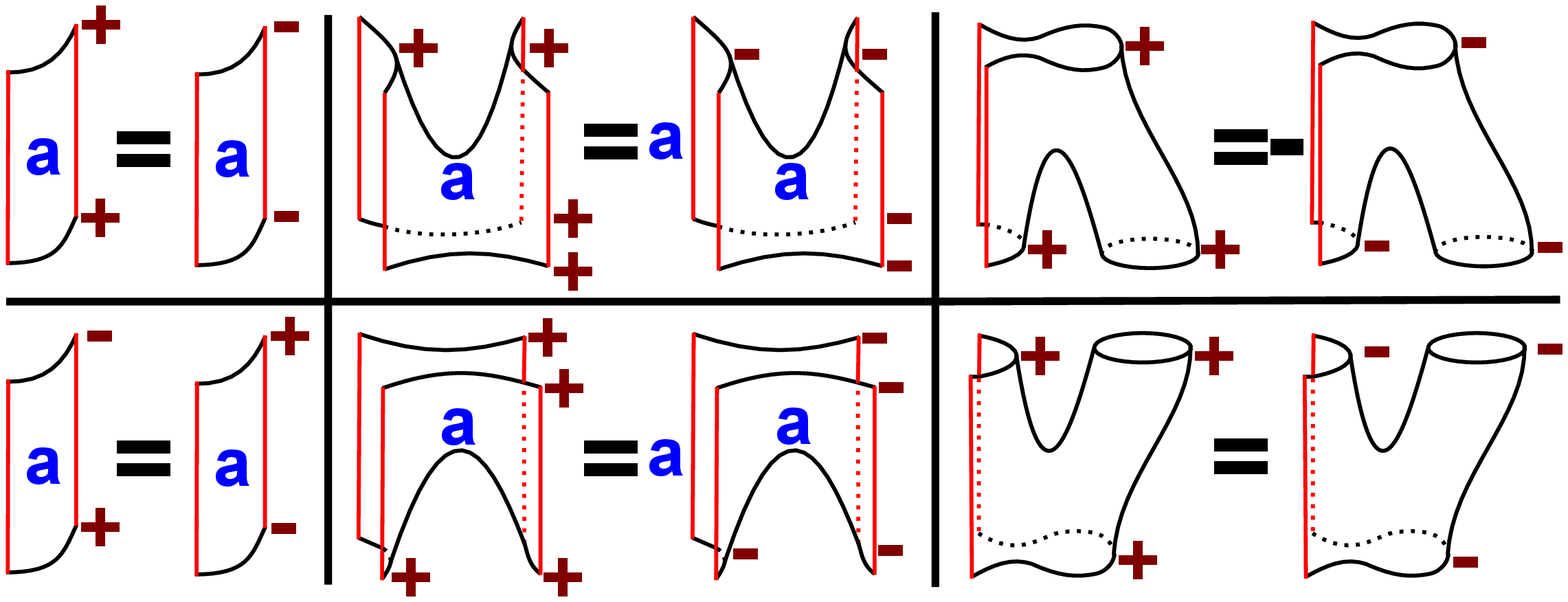}};\endxy\;\;\;\;\;\;\;\xy(0,0)*{\includegraphics[scale=0.25]{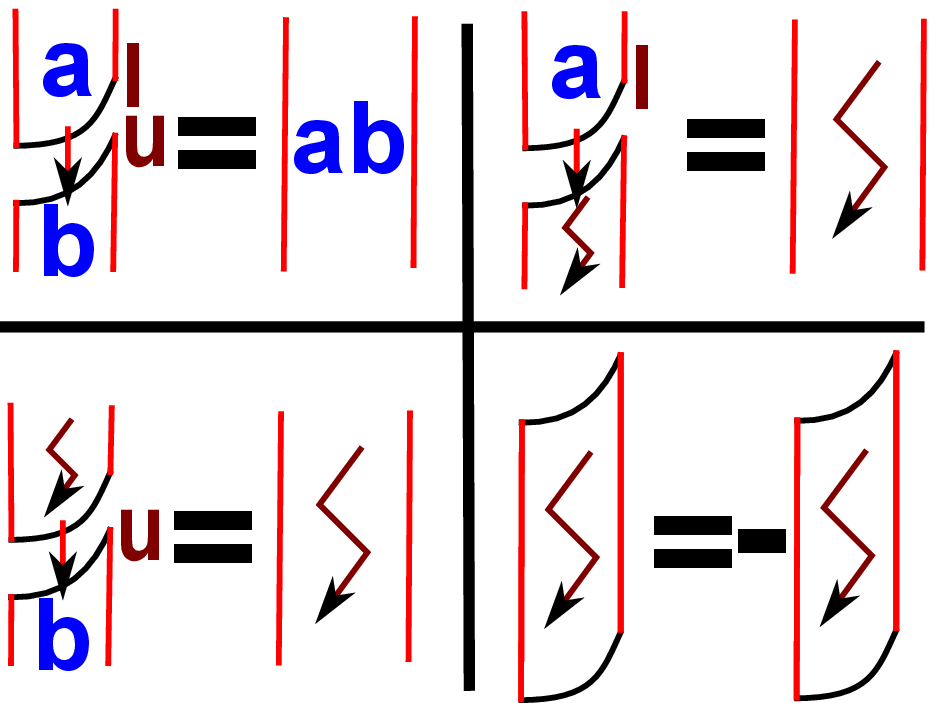}};\endxy\;\;\;\;\;\;\xy(0,0)*{\includegraphics[scale=0.25]{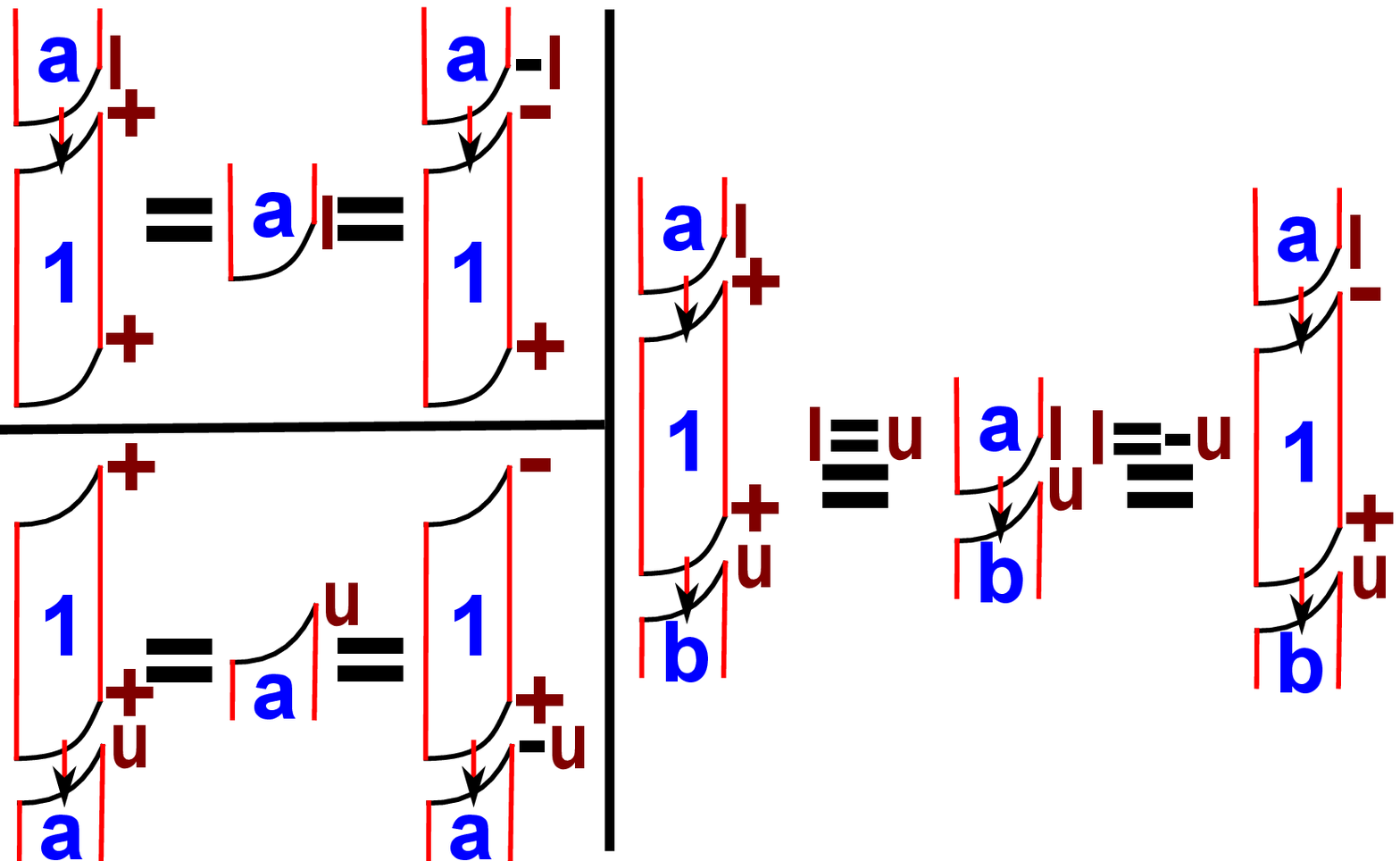}};\endxy
\end{align}
and the \textit{open} M\"obius relations (the glueing in these three cases is given by the glueing numbers, i.e. if there is an odd number of different glueing numbers, then the indicator is $0$ and just the product otherwise).
\begin{align}\label{eq-tanmoe}
\xy(0,0)*{\includegraphics[scale=0.325]{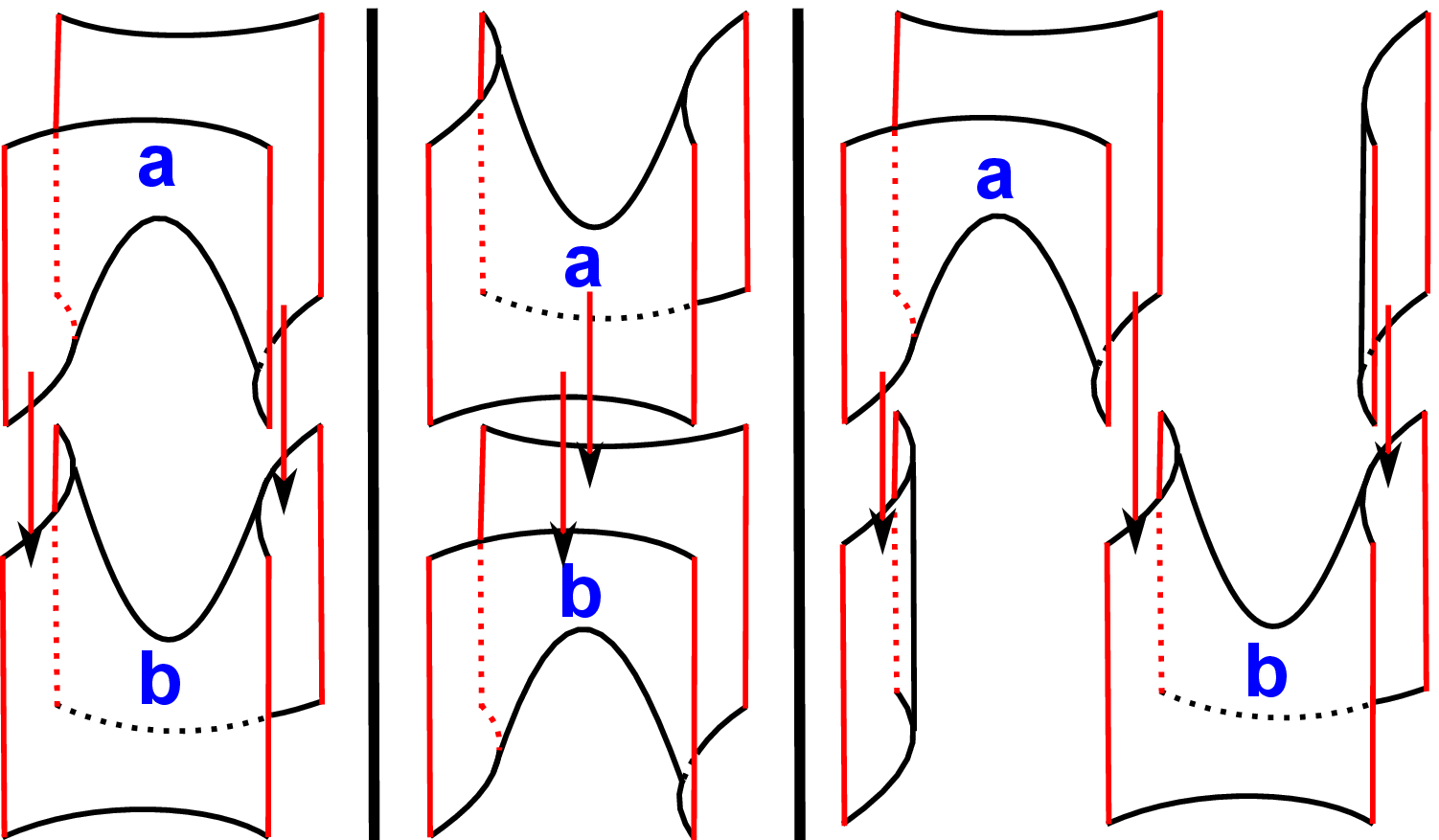}};\endxy
\end{align}
\vskip0.5cm
We define the category $\ucob_R(\omega)$ to be the category whose objects are $\bigcup_{k\in\bN}\Ob(\ucob_R(k))$ and whose morphisms are $\bigcup_{k\in\bN}\Mor(\ucob_R(k))$. Moreover, it should be clear how to convert the Definition~\ref{defn-category2} to the open case. Note that this category is also graded, but the degree function has to be a little bit more complicated (since glueing with boundary behaves different), that is the degree of a cobordism $C\colon\mathcal O_1\to\mathcal O_2$ is given by
\[
\mathrm{deg}(C)=\chi(C)-\frac{b}{2},\;\;\text{ where }b\text{ equals the number of vertical boundary components}.
\]
The reader should check that this definition makes the category graded, that is the degree of a composition is the degree of the sum of its factors.
\end{defn}
\vskip0.5cm
Note the following collection of formulas that follow from the relations. Recall that $\Phi^-_+$ and $\Phi(1)^-_+$ change the decorations and that $\theta$ and $\mathrm{id}(-1)^+_+$ change the indicators. With a slight abuse of notation, we suppress to write $\amalg$ if it is not necessary, i.e. for the indicator changes. Moreover, since $\Phi^-_+$ and $\Phi(1)^-_+$ satisfy similar formulas, we only write down the equations for $\Phi^-_+$ and hope that it is clear how the others look like.
\begin{lem}\label{lem-calc}
Let $\mathcal O,\mathcal O'$ be two objects in $\ucob_R(k)$. Let $C\colon\mathcal O\to\mathcal O'$ be a morphism that is connected, has $\mathrm{in}\in\{0,+1,-1\}$ as an indicator and $u$ and $l$ as decorated boundary strings. Then we have the following identities. We write $ C=C^u_l(\mathrm{in})$ as a short hand notation if the indicators and glueing numbers do not matter. It is worth noting that the signs in (d) are important.
\begin{itemize}
\item[(a)] $C\circ\mathrm{id}(-1)^+_+=\mathrm{id}(-1)^+_+\circ C$ (indicator changes commute).
\item[(b)] $C\circ\theta=\theta\circ C$ ($\theta$ commutes).
\item[(c)] $C(0)\circ\Phi^-_+=\Phi^-_+\circ C(0)$ (first decoration commutation relation).
\item[(d)] Let $u',l'$ denote the decoration change at the corresponding positions of the words $u,l$. Then we have \begin{align*}
C(\pm 1)^u_l\circ(\mathrm{id}^+_+\amalg\dots\amalg\Phi^-_+\amalg\dots\amalg\mathrm{id}^+_+) &=C(\pm 1)^{u'}_l=\pm C(\pm 1)^u_{l'}\\ &=\pm(\Phi^-_+\amalg\dots\amalg\mathrm{id}^+_+\amalg\dots\amalg\Phi^-_+)\circ C(\pm 1)^u_l
\end{align*}(second decoration commutation relation).
\end{itemize}
\end{lem}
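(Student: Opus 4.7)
The plan is to verify each of the four identities directly from the defining relations of $\ucob_R(k)$, reducing to the generators in Figure~\ref{figure1-3} and then extending to arbitrary connected $C$ by decomposing it into a composition/disjoint union of generators. Since indicators multiply and decorations behave locally (at each boundary circle/string independently), it suffices to understand how the ``scalar'' morphisms $\mathrm{id}(-1)^+_+$, $\theta$, and $\Phi^-_+$ (or $\Phi(1)^-_+$) slide past each generator. Throughout, I would use the same conventions as in Lemma~\ref{lem-basiscalculations}, whose analogues in the open setting are forced by the relations~\ref{eq-combrel123} and~\ref{eq-tanmoe}.

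For (a) I would argue that $\mathrm{id}(-1)^+_+$ is a single-component cylinder whose only effect is to multiply the indicator of the composed morphism by $-1$, as dictated by the second relation of~\ref{eq-combrel123}. Since indicators are multiplicative and glueing numbers on the cylinder $\mathrm{id}(-1)^+_+$ are $+,+$ on either side, composing with it on the top versus on the bottom (at any of the boundary components of $C$, summed over the $\amalg$-factors as appropriate) gives the same morphism, namely $C$ with its indicator negated. For (b), note that $\theta$ is non-orientable with indicator zero, so both $C\circ\theta$ and $\theta\circ C$ are non-orientable with indicator zero. As abstract surfaces they are homeomorphic after applying the relations in~\ref{eq-combrel3} (or its open analogue~\ref{eq-combrel123}) together with the torus/M\"obius relations, so they represent the same morphism in $\ucob_R(k)$.

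Identity (c) is nearly a tautology once (b) is in place: by the convention that $0$-indicator cobordisms carry no boundary decorations, both $C(0)\circ\Phi^-_+$ and $\Phi^-_+\circ C(0)$ are presented as the same underlying (non-orientable, indicator-zero) immersed surface, since the $\Phi^-_+$ cylinder is absorbed into the non-orientable part under the relations~\ref{eq-combrel123}. More formally, $\Phi^-_+\circ\Phi^-_+=\mathrm{id}^+_+$ (the open analogue of Lemma~\ref{lem-basiscalculations}(a)) lets one cancel any residual decoration changes after crossing the $0$-indicator barrier.

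The main obstacle is identity (d), because it is the source of the signs in the whole construction. The plan is to check the three equalities on each generator and then extend by composition. The middle equality $C(\pm 1)^{u'}_l=\pm C(\pm 1)^u_{l'}$ is forced by the relations in~\ref{eq-frobandco} and~\ref{eq-tanmoe}: changing a single decoration on the upper boundary of $C$ produces a sign exactly when the resulting cobordism is non-orientable versus orientable in a controlled way, and the bookkeeping is identical to Lemma~\ref{lem-basiscalculations}(b)--(e) applied componentwise. The outer two equalities say that $\Phi^-_+$ on a single boundary circle on the lower side of $C$ has the same effect as changing the decoration on the corresponding upper circle, and symmetrically that $\Phi^-_+$ on the upper side can be moved to $\Phi^-_+$'s on all the opposite-side circles attached to the same connected component, at the cost of a sign. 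For each generator ($\iota_+,\varepsilon^+,\mathrm{id}^+_+,\Delta^+_{++},m^{++}_+$, the open saddles $S^{+}_{++},S^{++}_{+},S^{++}_{++}$, and the identity generators with indicator $\pm 1$) this is a direct calculation using~\ref{eq-combrel12},~\ref{eq-combrel123}, and the permutation and Frobenius relations. The delicate part is keeping track of \emph{which} boundary circles of $C$ are connected through $C$, since a single $\Phi^-_+$ on one side corresponds to $\Phi^-_+$ on every circle of the opposite side sharing the same connected component; here the decomposition $C=\coprod_i C^{u_i}_{l_i}$ makes this explicit and reduces the claim to the connected case, where the sign is determined by whether the number of simultaneous decoration flips is odd or even.
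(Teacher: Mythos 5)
Your proposal is correct and takes the same route the paper does, namely reducing everything to the defining relations of Definition~\ref{defn-category3}; the paper's own proof is literally the one-liner ``Everything follows by a straightforward usage of the relations in Definition~\ref{defn-category3},'' so your write-up simply supplies the details the paper elides. The strategy of checking on generators and extending along compositions, with indicators handled multiplicatively and glueing numbers handled componentwise via the analogues of Lemma~\ref{lem-basiscalculations}(b)--(e), is exactly the intended computation. Two small remarks: first, since the lemma already hypothesizes that $C$ is connected, the detour through the decomposition $C=\coprod_i C^{u_i}_{l_i}$ at the end of your treatment of (d) is unnecessary; second, be careful with the paper's picture convention that in $C'\circ C$ it is $C$ (the \emph{right} factor) that sits at the \emph{upper} boundary, so in $C\circ(\cdots\amalg\Phi^-_+\amalg\cdots)$ the $\Phi^-_+$ attaches at the upper boundary of $C$ and directly changes $u$ to $u'$ — your phrase ``on the lower side of $C$'' appears to invert this, though your subsequent sign bookkeeping via $\varepsilon^+=-\varepsilon^-$ and the Frobenius/permutation relations is the right mechanism and would still land on the correct statement.
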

\begin{proof}
Everything follows by a straightforward usage of the relations in Definition~\ref{defn-category3}.
\end{proof}
\subsection{The topological complex for virtual links}\label{sec-vkhcom}
We note that the present section splits into three part, i.e. we define the virtual Khovanov complex first and we show that it is an invariant of v-links that agrees with the classical Khovanov complex for c-links. We have collected the more technical points, e.g. it is not clear why Definition~\ref{defn-topcomplex} gives a well-defined chain complex independent of all involved choices, in the last part. It is rather technical and the reader may skip it on the first reading.
\subsubsection*{The definition of the complex}
In the present section we define the topological complex which we call the \textit{virtual Khovanov complex} $\bn{L_D}$ of an oriented v-link diagram $L_D$. This complex is an element of our category $\ukob_R$.

By Lemma~\ref{cor-saddleclass} we know that every saddle cobordism $S$ is homeomorphic to $\theta$, $m$ or $\Delta$ (disjoint union with cylinders for all v-cycles not affected by the saddle). We need extra information for the last two cases. We call these extra information the \textit{sign of the saddle} and the \textit{decoration of the saddle} (see Definitions~\ref{defn-sign} and~\ref{defn-deco}). 
\begin{defn}\label{defn-sign}(\textbf{The sign of a saddle}) We always want to read off signs or decorations for crossings that look like $\slashoverback$, but for a crossing $c$ in a general position there are two ways to rotate $c$ until it looks like $\slashoverback$ (which we call the \textit{standard position}). Since the sign depends on the two possibilities (see bottom row of Figure~\ref{figure-xmarker}), we choose an \textit{x-marker} as in Figure~\ref{figure-xmarker} for every crossing of $L_D$ and rotate the crossing in such a way that the markers match. 
\begin{figure}[ht]
  \centering
     \includegraphics[scale=0.8]{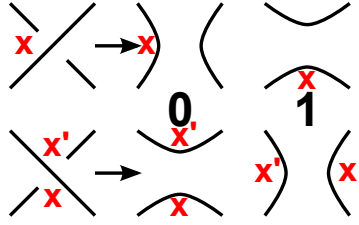}
  \caption{Top: The x-marker for a crossing in the standard position. Bottom: Two possible choices (one denoted by $x^{\prime}$) for a crossing not in the standard position.}
  \label{figure-xmarker}
\end{figure}

We can say now that every orientable saddle $S$ can be viewed in a unique way as a formal symbol $S\colon\smoothing\rightarrow\hsmoothing$. Then the saddle $S$ carries an extra sign determined in the following way.
\begin{itemize}
\item Recall that the v-circles of any resolution are numbered. Moreover, the x-marker for the resolutions in the source and target of $S$ should be at the position indicated in the top row of Figure~\ref{figure-xmarker}.
\item For a saddle $S\colon\gamma_a\to\gamma_{b}$ we denote the numbered v-circles of $\gamma_a,\gamma_{b}$ by $a_1,\dots, a_{k_a}$ and $b_1,\dots,b_{k_b}$ and the v-circles with the x-marker by $a^x_{i},b^x_{j}$.
\item Since the saddle $S$ is orientable, it either splits one v-circle or merges two v-circles. Hence, the two strings in the resolutions $\smoothing$ or $\hsmoothing$ are only different either in the target or in the source of $S$ and we denote the second affected v-circle by $b^y_{j^{\prime}}$ for a split and $a^y_{i^{\prime}}$ for a merge.
\item Then there exists two permutation $\sigma_1,\sigma_2$ for $S$, one for the source and one for the target, such that all $a_k\notin\{a^x_{i},a^y_{i^{\prime}}\}$ and all $b_{k^{\prime}}\notin\{b^x_{j},b^y_{j^{\prime}}\}$ are \textit{ordered ascending after} the (also ordered) $a^x_{i},a^y_{i^{\prime}}$ and $b^x_{j},b^y_{j^{\prime}}$.
\item Then we define the saddle sign $\mathrm{sgn}(S)$ by
\[
\mathrm{sgn}(S)=\mathrm{sgn}(\sigma_1)\cdot\mathrm{sgn}(\sigma_2).
\]
\end{itemize}
For completeness, we define the sign of a non-orientable saddle to be $0$. The \textit{sign $\mathrm{sgn}(F)$ of a face $F$} is then defined by the product of all the saddle signs of the saddles of $F$.
\end{defn}
\begin{ex}\label{ex-sign}
If we have a saddle $S$ between four v-circles numbered $u_1,u_2,u_3,u_4$ and three v-circles $l_1,l_2,l_3$ and the upper x-marker is on the v-circle number $2$ and the lower is on number $3$ and the second string of the upper part is number $1$, then the sign of $S$ is calculated by the product of the signs of the following two permutations.
\[
\sigma_1\colon(u_1,u_2,u_3,u_4)\mapsto (u_2,u_1,u_3,u_4)\;\;\text{and}\;\;\sigma_2\colon(l_1,l_2,l_3)\mapsto (l_3,l_1,l_2).
\]
\end{ex}
\vskip0.5cm
Before we can define the virtual Khovanov complex we need to define the saddle decorations.
\begin{defn}\label{defn-deco}(\textbf{Saddle decorations})
By Lemma~\ref{cor-saddleclass} again, we only have to define the decorations in three different cases. First choose an x-marker as in Definition~\ref{defn-sign} for all crossings and choose orientations for the two resolutions $\gamma_a,\gamma_{a^{\prime}}$. We say the formal saddle of the form
\[
S^{++}_{++}\colon\du\to\ler
\]
is the \textit{standard oriented saddle}. Moreover, every saddle looks locally like the standard oriented saddle, but with possible different orientations. Now we spread the decorations as follows.
\begin{itemize}
\item The non-orientable saddles do not get any extra decorations. It should be noted that locally non-alternating saddles, e.g. $S\colon \dd\to\lel$, are always non-orientable and vice versa.
\item The orientable saddles get a $+$ decoration at strings where the orientations agree and a $-$ where they disagree (after rotating it to the standard position defined above).
\item All cylinders of $S$ are $\mathrm{id}^+_+$ iff the corresponding unchanged v-circles of $\gamma_a$ and $\gamma_{a^{\prime}}$ have the same orientation and a $\Phi^-_+$ otherwise. 
\end{itemize}
To summarise we give the following table (we also give a way to denote the decorations for the saddles). We suppress the cylinders in the Table~\ref{tab-deco}, but we note that the last point of the list above, i.e. the decorations of the cylinders, is important and can not be avoided in our context.

In the Table~\ref{tab-deco} below we write $m,\Delta$ for the corresponding saddles $S$.
\begin{table}[ht]
\begin{center}
\begin{tabular}{|c|c||c|c|}
\hline String & Comultiplication & String & Multiplication \\ 
\hline $\du\to\ler$ & $\Delta^{+}_{++}$ & $\du\to\ler$ & $m_{+}^{++}$ \\ 
\hline $\du\to\rir$ & $\Delta^+_{-+}=\Phi_1\circ\Delta^+_{++}$ & $\uu\to\ler$ & $m_{+}^{-+}=m_{+}^{++}\circ\Phi_1$ \\ 
\hline $\du\to\lel$ & $\Delta^+_{+-}=\Phi_2\circ\Delta^+_{++}$ & $\dd\to\ler$ & $m_{+}^{+-}\circ\Phi_2$ \\ 
\hline $\du\to\ril$ & $\Delta^+_{--}=\Phi_{12}\circ\Delta^+_{++}$ & $\ud\to\ler$ & $m_{+}^{--}\circ\Phi_{12}$ \\ 
\hline $\ud\to\ler$ & $\Delta^-_{++}=\Delta^+_{++}\circ\Phi^-_+$ & $\du\to\ril$ & $\Phi^-_+\circ m_{-}^{++}$ \\ 
\hline $\ud\to\rir$ & $\Delta^-_{-+}=\Phi_1\circ\Delta^+_{++}\circ\Phi^-_+$ & $\uu\to\ril$ & $\Phi^-_+\circ m_{-}^{-+}\circ\Phi_1$ \\ 
\hline $\ud\to\lel$ & $\Delta^-_{+-}=\Phi_2\circ\Delta^+_{++}\circ\Phi^-_+$ & $\dd\to\ril$ & $\Phi^-_+\circ m_{-}^{+-}\circ\Phi_2$ \\ 
\hline $\ud\to\ril$ & $\Delta^-_{--}=\Phi_{12}\circ\Delta^+_{++}\circ\Phi^-_+$ & $\ud\to\ril$ & $\Phi^-_+\circ m_{-}^{--}\circ\Phi_{12}$ \\ 
\hline 
\end{tabular}
\caption{The decorations are spread based on the local orientations.}\label{tab-deco}
\end{center}
\end{table}
\end{defn}
At this point we are finally able to define the \textit{virtual Khovanov complex}. We call this complex \textit{the topological complex}.
\begin{defn}\label{defn-topcomplex}(\textbf{The topological complex}) For a v-link diagram $L_D$ with $n$ ordered crossings we define \textit{the topological complex} $\bn{L_D}$ as follows. We choose an x-marker for every crossing.
\begin{itemize}
\item For $i=0,\dots,n$ the $i-n_-$ \textit{chain module} is the formal direct sum of all $\gamma_a$ of length $i$. We consider the resolutions as elements of $\Ob(\ucob_R(\emptyset))$.
\item There are only morphisms between the chain modules of length $i$ and $i+1$.
\item If two words $a,a^{\prime}$ differ only in exactly one letter and $a_r=0$ and $a'_r=1$, then there is a morphism between $\gamma_a$ and $\gamma_{a'}$. Otherwise all morphisms between components of length $i$ and $i+1$ are zero.
\item This morphism $S$ is a \textit{saddle} between $\gamma_a$ and $\gamma_{a'}$.
\item We consider \textit{numbered} and \textit{oriented resolutions} (we choose them) and the saddles carry the \textit{saddle sings} and \textit{decorations} from the Definitions~\ref{defn-sign} and~\ref{defn-deco}.
\item We consider the saddles $S$ as elements of $\Mor(\ucob_R(\emptyset))$ where we interprete the saddle signs as scalars in $R$ and the saddle decorations as the corresponding boundary decorations.
\end{itemize}
\end{defn}
\begin{rem}\label{rem-topcom}
At this point it is not clear why we can choose the numbering of the crossings, the numbering of the v-circles, the x-markers and the orientation of the resolutions. Furthermore, it is not clear why this complex is a well-defined chain complex.

But we show in Lemma~\ref{lem-commutativeindependence} that the complex is independent of these choices, i.e. if $\bn{L_D}_1$ and $\bn{L_D}_2$ are well-defined chain complexes with different choices, then they are equal up to chain isomorphisms. Moreover, we show in Theorem~\ref{theo-facescommute} and Corollary~\ref{cor-chaincomplex} that the complex is indeed a well-defined chain complex. Hence, we see that
\[
\bn{L_D}\in\Ob(\ukob_R).
\]
For an example see Figure~\ref{figure0-big}. This figure shows the virtual Khovanov complex of a v-diagram of the unknot.
\end{rem}
\subsubsection*{The invariance}
There is a way to represent the topological complex of a v-link diagram $L_D$ as a cone of two v-links diagrams $L^0_D,L^1_D$. Here one fixed crossing of $L_D$ is resolved $0$ in $L^0_D$ and $1$ in $L^1_D$. Note that the cone construction, as explained in Definition~\ref{defn-cone}, works in our setting.

It should be noted that there is a saddle between any two resolutions that are resolved equal at all the other crossings of $L^0_D$ and $L^1_D$. This induces a chain map (as explained in the proof below) between the topological complex of $L^0_D$ and $L^1_D$. We denote this chain map by $\varphi\colon\bn{L^0_D}\to \bn{L^1_D}$.
\begin{lem}\label{lem-cone}
Let $L_D$ be a v-link diagram and let $c$ be a crossing of $L_D$. Let $L^0_D$ be the v-link where the crossing $c$ is resolved 0 and let $L^1_D$ be the v-link where the crossing $c$ is resolved 1. Then we have
\[
\bn{L_D}=\Gamma(\bn{L^0_D}\xrightarrow{\varphi}\bn{L^1_D}).
\]
\end{lem}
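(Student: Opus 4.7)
The plan is to put the distinguished crossing $c$ in the last (the $n$-th) position of the crossing ordering of $L_D$; by Lemma~\ref{lem-commutativeindependence}, which is invoked in Remark~\ref{rem-topcom}, this change of ordering does not affect the isomorphism class of $\bn{L_D}$. Having done this, every resolution word $a\in\{0,1\}^n$ of $L_D$ factors as a prefix $a'\in\{0,1\}^{n-1}$ (resolving the other crossings) and a final letter $a_n\in\{0,1\}$ (resolving $c$). By construction the resolutions $\gamma_{(a',0)}$ of $L_D$ are in natural bijection with the resolutions of $L^0_D$, and the $\gamma_{(a',1)}$ with those of $L^1_D$; moreover one may choose numberings of v-circles, x-markers on all crossings other than $c$, and orientations of the resolved strands consistently across the three diagrams.

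First I would match chain modules. In homological degree $k$, a direct inspection of Definition~\ref{defn-topcomplex} shows that $\bn{L_D}^k$ splits as $\bn{L^0_D}^k\oplus\bn{L^1_D}^{k-1}$, since a length-$i$ word ending in $1$ corresponds to a length-$(i-1)$ resolution of $L^1_D$. This is exactly the module structure of the cone $\Gamma(\varphi)$ as recorded in Definition~\ref{defn-cone}. Next I would match differentials. A nonzero saddle in $\bn{L_D}$ either changes a letter of the prefix $a'$ while fixing $a_n$, or changes $a_n$ itself. Saddles of the first kind with $a_n=0$ give the differential of $\bn{L^0_D}$, those with $a_n=1$ give the differential of $\bn{L^1_D}$, and saddles of the second kind, collected over all prefixes $a'$, define $\varphi\colon\bn{L^0_D}\to\bn{L^1_D}$ degree-wise as a formal matrix of saddles.

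A key point is that the sign $\mathrm{sgn}(S)$ of Definition~\ref{defn-sign} and the boundary decorations of Definition~\ref{defn-deco} are both \emph{local} data: they depend only on the x-marker at the affected crossing, on the two v-circles the saddle touches, and on the orientations of the passive cylinders. Since choosing $a_n$ simply appends an inactive local picture in a neighbourhood of $c$, neither the permutations $\sigma_1,\sigma_2$ nor the cylinder decorations change when passing from $L_D$ to $L^0_D$ or $L^1_D$. This ensures that our identification of the three classes of saddles really does reproduce the differentials of $\bn{L^0_D}$ and $\bn{L^1_D}$ and a well-defined map $\varphi$.

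The main obstacle is to verify that $\varphi$ is a chain map and that the total differential of $\bn{L_D}$ reassembles into the cone differential \emph{with the correct signs}. This reduces to inspecting every face of the Khovanov cube for which $c$ is one of the two varying crossings: each such face has two edges lying entirely in $\bn{L^0_D}$ or $\bn{L^1_D}$, and two $\varphi$-type edges obtained by saddling at $c$. Theorem~\ref{theo-facescommute} and Corollary~\ref{cor-chaincomplex}, i.e.\ the anticommutativity of faces in $\bn{L_D}$, therefore translate precisely into $d^1\circ\varphi+\varphi\circ d^0=0$, which is the chain map condition in the cone's sign convention. Because $c$ has been placed last, the extra sign produced by the resolution length appears uniformly on the $\bn{L^1_D}$ summand, matching the homological shift in $\Gamma(\varphi)$ exactly; no further bookkeeping is required. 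Putting these identifications together yields $\bn{L_D}=\Gamma(\bn{L^0_D}\xrightarrow{\varphi}\bn{L^1_D})$.
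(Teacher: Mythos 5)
Your proposal is correct and follows essentially the same route as the paper's proof: decompose the cube along the last crossing $c$ to match chain modules and differentials of the cone, note that the orientations of the resolutions of $L^0_D$ and $L^1_D$ supply the decorations on the saddles constituting $\varphi$, and then use the anticommutativity of faces (Theorem~\ref{theo-facescommute}) together with Lemma~\ref{lem-commutativeindependence} to conclude that $\varphi$ is a chain map and that the identifications are independent of the choices made. The paper's proof is terser and simply asserts the analogy with the classical case, but the underlying argument is the one you spell out.
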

\begin{proof}
The proof is analogously to the proof for the classical Khovanov complex. The only new thing to prove is the fact that the map $\varphi$, which resolves the crossing, induces a chain map. This is true because we can take the induced orientation (from the orientations of the resolutions of $L^0_D$ and $L^1_D$) of the strings of $\varphi$. This gives us the glueing numbers for the morphisms of $\varphi$. Here we need the Lemma~\ref{lem-commutativeindependence} to ensure that all faces anticommute.
\end{proof}
\begin{ex}\label{ex-cone}
Let $L_D$ be the v-diagram of the unknot from Figure~\ref{figure0-big}. Then we have
\[
\bn{L_D}=\Gamma(\varphi\colon\bn{\jpg{12mm}{unknot0}}\to\bn{\jpg{12mm}{unknot1}})=\Gamma(\varphi\colon L^0_D\to L^1_D).
\]
If we choose the orientation for the resolutions for the chain complexes $L^0_D,L^1_D$ to be the ones from Figure~\ref{figure0-big}, then the map $\varphi$ is of the form $\varphi=(\theta,m^{--}_+)$.
\end{ex}
As a short hand notation we only picture a certain part of a v-link diagram. The rest of the diagram can be arbitrary. Now we state the main theorem of this section.
\begin{thm}\label{thm-geoinvarianz}(\textbf{The topological complex is an invariant})
Let $L_D,L^{\prime}_D$ be two v-link diagrams which differs only through a finite sequence of isotopies and generalised Reidemeister moves. Then the complexes $\bn{L_D}$ and $\bn{L^{\prime}_D}$ are equal in $\ukob^{hl}_R$.
\end{thm}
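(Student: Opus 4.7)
The plan is to verify invariance under each move in the generalised Reidemeister family separately, splitting the eight moves (including the mixed move and mirrors) into groups of increasing difficulty, and to use Lemma~\ref{lem-cone} throughout to reduce each check to a small, local piece of the cube. Before starting, I would invoke Lemma~\ref{lem-commutativeindependence} to fix, once and for all, convenient numberings, x-markers and orientations of resolutions in the local region affected by the move; this is legitimate because the complex only depends on these choices up to chain isomorphism.

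The three virtual moves vRM1, vRM2, vRM3 and the mixed move mRM do not change any classical crossing, so the shape of the resolution cube is the same on both sides of the move; only the shape of the v-strings/v-circles in each resolution differs. By Example~\ref{ex-vrmremoval}, the vRM-cobordisms of Figure~\ref{figure0-reide} assemble, resolution by resolution, into isomorphisms whose inverses are obtained by turning the pictures upside down (using Lemma~\ref{lem-basiscalculations}(a)). To promote this pointwise collection of isomorphisms to a chain isomorphism of cubes, I would check that each face commutes strictly: the saddles at classical crossings and the vRM-cobordism lie in disjoint regions of $\bR^2\times[-1,1]$, hence commute on the nose as abstract cobordisms, and Lemma~\ref{lem-calc}(c)-(d) together with the sign conventions of Definitions~\ref{defn-sign} and~\ref{defn-deco} guarantee that glueing numbers and saddle signs are preserved. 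No local relation is needed in this case; the equality already holds in $\ukob_R$.

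For the classical moves RM1, RM2, RM3 the standard Bar-Natan strategy adapts. Using Lemma~\ref{lem-cone} iteratively, I would expand $\bn{L_D}$ as a mapping cone (or iterated cone) on the one, two or three crossings involved. For RM1, the twisted crossing produces a two-term complex in which one component contains a disc that can be capped off; exhibiting explicit chain maps in both directions and a chain homotopy whose verification reduces, after Lemma~\ref{lem-basiscalculations}, to the $S$ relation. For RM2, the four-corner cube contains an edge given by a cylinder which, after choosing orientations making its decoration $\mathrm{id}^+_+$, is an isomorphism; Gauss elimination of this isomorphism collapses the cube to the trivial complex on two untangled strands, with the remaining error killed by a chain homotopy whose non-trivial term is the $T$ relation. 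For RM3, the argument of Bar-Natan (two applications of RM2 interpolated by a chain homotopy built from Frobenius reassociation, Lemma~\ref{lem-basiscalculations}(f), and the $4Tu$ relation) transports verbatim.

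The main obstacle is bookkeeping of the extra data that distinguishes our category from Bar-Natan's: the saddle signs from Definition~\ref{defn-sign} and the boundary decorations from Definition~\ref{defn-deco}, together with the possible $\Phi^-_+$-cylinders that appear in each resolution of a non-classical part of the diagram. The key technical point is that, after rechoosing orientations via Lemma~\ref{lem-commutativeindependence} so that every cylinder in the local region of the move is $\mathrm{id}^+_+$ and every saddle sits in ``standard form'', the classical Bar-Natan chain maps and chain homotopies carry through unchanged; the global sign and decoration data on the rest of the diagram is then transported by Lemma~\ref{lem-calc}(d), which precisely says that pushing $\Phi^-_+$'s through saddles at worst introduces a global sign that matches the saddle-sign convention. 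Verifying this compatibility move-by-move, and then noting that a sequence of moves composes, yields the claimed equality in $\ukob^{hl}_R$.
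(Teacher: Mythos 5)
Your proposal follows the same broad strategy as the paper's proof: fix choices via Lemma~\ref{lem-commutativeindependence}, handle the virtual moves by the vRM-cobordism isomorphisms, and run a Bar-Natan-style argument on the classical moves using the cone decomposition. Two points deserve attention.

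First, there is a genuine gap in how you propose to promote the local chain homotopies to the full complex. You write that "the classical Bar-Natan chain maps and chain homotopies carry through unchanged" after normalising decorations, and you invoke Gauss elimination for RM2. But the paper explicitly flags that Bar-Natan's proof leans on the \emph{locality} of his construction -- verify on a small tangle, then tensor with the rest of the diagram -- and this does not transport to the virtual setting because the boundary decorations are fixed by the global resolution, not by the local picture. The paper works around this by verifying the strong-deformation-retract conditions $G\circ F=\mathrm{id}$ and $h\circ F=0$ on the local piece and then appealing to Proposition~\ref{prop-sdr} to transfer the equivalence across the cone to the whole complex. Your Gauss elimination is the right idea in spirit, but you would still need to argue that the "isomorphic entry" of the differential you want to cancel remains an isomorphism once the rest of the cube (with its decorations and saddle signs) is glued in, and that the induced homotopy assembles consistently across the rest of the cube. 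That is precisely what Proposition~\ref{prop-sdr} packages; without it or an equivalent argument, the step from the local picture to $\ukob^{hl}_R$ is not justified.

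Second, a more minor issue: you cite Lemma~\ref{lem-calc}(c)--(d) for pushing $\Phi^-_+$ through saddles. That lemma concerns the open category $\ucob_R(k)$ with indicators, which is the v-tangle machinery; for closed v-link diagrams the right reference is Lemma~\ref{lem-basiscalculations}(b)--(e). The content is analogous, but using the tangle lemma here is a slight category error. Aside from these two points, the structure of your argument -- cone decomposition via Lemma~\ref{lem-cone}, RM1/RM2 with explicit maps reducing to the local relations, RM3 built from RM2, and vRM1--3/mRM via the vRM-cobordism isomorphisms together with Lemma~\ref{lem-virtualisation} -- matches the paper's proof.
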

\begin{proof}
We have to check invariance under the generalised Reidemeister moves from Figure~\ref{figureintroa-1}. We follow the original proof of Bar-Natan in~\cite{bn2} with some differences. The main differences are the following.
\begin{itemize}
\item[(1)] We have to ensure that our cobordisms have the adequate decorations. For this we number the v-circles in a way that the pictured v-circles have the lowest numbers and we use the orientations given below. It should be noted that Lemma~\ref{lem-commutativeindependence} ensures that we can use this numbering and orientations without problems. We mention that we do not care about the saddle signs to maintain readability because they only affect the anticommutativity of the faces. Hence, after adding some extra signs, the entire arguments work analogously.
\item[(2)] We have to check that the glueing of the cobordisms we give below works out correctly. This is a straightforward calculation using the relations in Lemma~\ref{lem-basiscalculations}.
\item[(3)] The proof of Bar-Natan uses the local properties of his construction. This is not so easy in our case. To avoid it we use some of the technical tools from homological algebra, i.e. Proposition~\ref{prop-sdr}.
\item[(4)] We have to check extra moves, i.e. the virtual Reidemeister moves vRM1, vRM2 and vRM3 and the mixed one mRM.
\end{itemize}
Recall that we have to use the Bar-Natan relations from Figure~\ref{figureintroa-4} here. Note that the Bar-Natan relations do not contain any boundary components. Therefore we do not need extra decorations for them. Because of this we can take the same chain maps as Bar-Natan (the cobordisms are the identity outside of the pictures). Furthermore, the whole construction is in $\ukob_R$.
\vskip0.5cm
The outline of the proof is as follows. For the RM1 and RM2 moves one has to show that the given maps induces chain homotopies, using the rules from Definition~\ref{defn-category} and Lemma~\ref{lem-basiscalculations} and the cone construction from Definition~\ref{defn-cone}. We note that we have to use the Proposition~\ref{prop-sdr} to get the required statement for the RM1 and RM2 moves. Then the RM3 move follows with the cone construction form the RM2 move. The vRM1, vRM2 and vRM3 moves follow from their properties explained in Example~\ref{ex-vrmremoval}. Finally, the invariance under the mRM move can be obtained by an instance of Proposition~\ref{prop-sdr}.
\vskip0.5cm
We consider oriented v-link diagrams. Thus, there are a lot of cases to check. But all cases for the RM1 and RM2 moves are analogously to the cases shown below, i.e. one case for the RM1 move and three cases for the RM2 move. Note that the mirror images work similar.

The case for the RM1 move is pictured below. For the RM2 move we show that the virtual Khovanov complexes of
\[
\bn{\jpg{8mm}{RM2-1-a}}\text{ and }\bn{\jpg{8mm}{RM2-a}}\;\;\;\;\;\;\bn{\jpg{8mm}{RM2-1-b}}\text{ and }\bn{\jpg{8mm}{RM2-b}}
\]
are chain homotopic. Here both cases contain two different subcases. For the left case the upper left string can be connected to the upper right or to the lower left. For the other case the upper left string can be connected to the lower right or to the upper right. But the last case is analogously to the first. So we only consider the first three cases.

For the RM1 move we only have to resolve one crossing in the left picture and no crossing in the right. We choose the orientation in such a way that the saddle is a multiplication of the form $\ud\rightarrow\ril$. Thus it is the multiplication $m^{--}_-=m^{++}_+$.

For the RM2 move we have to resolve two crossings in the left picture and no crossing in the right. For the first two cases we choose the orientation in such a way that the corresponding saddles are of the form $\du\rightarrow\ler$ for the left crossing and of the form $\ler\rightarrow\du$ for the right crossing. Hence, we only have $\Delta^+_{++}=-\Delta^-_{--}$ and $m^{--}_-=m^{++}_+$ saddles in the possible complexes.

For the third case we choose the orientation in such a way that the corresponding saddles are of the form $\uu\rightarrow\ril$ or $\uu\rightarrow\rir$ for the left crossing and of the form $\ler\rightarrow\du$ or $\rir\rightarrow\uu$ for the right crossing. Hence, we only have $m^{-+}_-$, $\theta$, $\Delta^-_{--}$ and $\theta$ saddles in the possible complexes.

We give the required chain maps $F,G$ and the homotopy $h$. Note our abuse of notation, that is we denote the chain maps and homotopies and their parts with the same symbols. Moreover, the degree zero components are the leftmost non-trivial in the RM1 case and the middle non-trivial in the RM2 case.

One can prove that these maps are chain maps and that $F\circ G$ and $G\circ F$ are chain homotopic to the identity using the same arguments as Bar-Natan in~\cite{bn2} and the relations from Lemma~\ref{lem-basiscalculations}. We suppress the notation $\Gamma(\cdot)$ in the following. For the RM1 move we have
\[
\begin{xy}
  \xymatrix{
  \bn{\jpg{8mm}{RM1-1}}: &  \bn{\jpg{8mm}{RM1-1}} \ar[rr]^0\ar@<2pt>[dd]^{F=\jpg{11mm}{RM1-F1}-\jpg{11mm}{RM1-F2}}   &  &   0\ar@<2pt>[dd]^0  \\
  & & & \\
  \bn{\jpg{8mm}{RM1}}: &  \bn{\jpg{8mm}{RM1-0}} \ar[rr]_{m^{++}_{+}}\ar@<2pt>[uu]^{G=\jpg{11mm}{RM1-G}}           &  &   \bn{\jpg{8mm}{RM1-1}}\ar@<2pt>[uu]^0.   
  }
\end{xy}
\]
We also need to give an extra chain homotopy $h$. It is the one from below.
\[
h\colon\bn{\jpg{8mm}{RM1-1}}\to\bn{\jpg{8mm}{RM1-0}},\;\;h=-\jpg{11mm}{RM1-h}.
\]
An important observation is now that $G\circ F=\mathrm{id}$ and $h\circ F=0$. Beware our abuse of notation here, i.e. the parts of the homotopy $h$ and the chain map $F$ that can be composed are $0$. Thus, we are in the situation of Definition~\ref{defn-sdr} and can use Proposition~\ref{prop-sdr} to get
\[
\Gamma(\bn{\jpg{8mm}{RM1-1}})\simeq_h\Gamma(\bn{\jpg{8mm}{RM1}}).
\]
For the RM2 move the first two cases are
\[
\begin{xy}
  \xymatrix{
  \bn{\jpg{8mm}{RM2-1-a}}: &  0 \ar[rr]^0\ar@<2pt>[dd]^{0}   &  &   \bn{\jpg{8mm}{RM2-1-a}}\ar@<2pt>[dd]^{F=\begin{pmatrix}-\jpg{12mm}{RM2-G-a} & \Phi^-_+\end{pmatrix}}\ar[rr]^0 & & 0\ar@<2pt>[dd]^{0} \\
  & & & & & \\
  \bn{\jpg{8mm}{RM2-a}}: &  \bn{\jpg{8mm}{RM2-00-a}} \ar[rr]_/-1.0em/{d^{-1}}\ar@<2pt>[uu]^{0}           &  & \bn{\jpg{8mm}{RM2-01-a}}\oplus\bn{\jpg{8mm}{RM2-10-a}}\ar[rr]_/1.0em/{d^{0}}\ar@<2pt>[uu]^{G=\begin{pmatrix}\jpg{12mm}{RM2-F-a} & \Phi^-_+ \end{pmatrix}^T} & & \bn{\jpg{8mm}{RM2-11-a}}\ar@<2pt>[uu]^0.   
  }
\end{xy}
\]
Here the differentials are either $d^{-1}=\begin{pmatrix}\Delta^-_{--} & \Delta^+_{++}\end{pmatrix}^T$ and $d^0=\begin{pmatrix}m^{++}_+ & m^{++}_+\end{pmatrix}$ in the second case or $d^{-1}=\begin{pmatrix}\mathrm{id}^+_+\amalg\,\Delta^-_{--} & m^{++}_+\end{pmatrix}^T$ and $d^0=\begin{pmatrix}m^{++}_+\amalg\,\mathrm{id}^+_+ & \Delta^-_{--}\end{pmatrix}$ in the first case. We can follow the proof of Bar-Natan again. Therefore, we need to give a chain homotopy. This chain homotopy is
\[
\begin{matrix}
h^{-1}\colon\bn{\jpg{8mm}{RM2-01-a}}\oplus\bn{\jpg{8mm}{RM2-10-a}}\to\bn{\jpg{8mm}{RM2-00-a}},\;h^{-1}=\begin{pmatrix}-\jpg{11mm}{RM2-h1} & 0 \end{pmatrix},\\h^0\colon\bn{\jpg{8mm}{RM2-11-a}}\to\bn{\jpg{8mm}{RM2-01-a}}\oplus\bn{\jpg{8mm}{RM2-10-a}},\;h^0=\begin{pmatrix}-\jpg{11mm}{RM2-h2} & 0 \end{pmatrix}^T.
\end{matrix}
\]
For the RM2 move the last case is
\[
\begin{xy}
  \xymatrix{
  \bn{\jpg{8mm}{RM2-1-b}}: &  0 \ar[rr]^0\ar@<2pt>[dd]^{0}   &  &   \bn{\jpg{8mm}{RM2-1-b}}\ar@<2pt>[dd]^{F=\begin{pmatrix}-\jpg{12mm}{RM2-G-b} & \mathrm{id}^+_+\end{pmatrix}}\ar[rr]^0 & & 0\ar@<2pt>[dd]^{0} \\
  & & & & & \\
  \bn{\jpg{8mm}{RM2-b}}: &  \bn{\jpg{8mm}{RM2-00-b}} \ar[rr]_/-1.0em/{d^{-1}}\ar@<2pt>[uu]^{0}           &  & \bn{\jpg{8mm}{RM2-01-b}}\oplus\bn{\jpg{8mm}{RM2-10-b}}\ar[rr]_/1.0em/{d^{0}}\ar@<2pt>[uu]^{G=\begin{pmatrix}\jpg{12mm}{RM2-F-b} & \mathrm{id}^+_+ \end{pmatrix}^T} & & \bn{\jpg{8mm}{RM2-11-b}}\ar@<2pt>[uu]^0.   
  }
\end{xy}
\]
Here the differentials are either $d^{-1}=\begin{pmatrix}\Delta^-_{--} & \theta\end{pmatrix}^T$ and $d^0=\begin{pmatrix}m^{-+}_- & -\theta\end{pmatrix}$. Furthermore, saddles of the maps $F,G$ are also $\theta$ saddles. Hence, we do not need any decorations for them. The chain homotopy is defined by
\[
\begin{matrix}
h^{-1}\colon\bn{\jpg{8mm}{RM2-01-b}}\oplus\bn{\jpg{8mm}{RM2-10-b}}\to\bn{\jpg{8mm}{RM2-00-b}},\;h^{-1}=\begin{pmatrix}-\jpg{11mm}{RM2-h1} & 0 \end{pmatrix},\\h^0\colon\bn{\jpg{8mm}{RM2-11-b}}\to\bn{\jpg{8mm}{RM2-01-b}}\oplus\bn{\jpg{8mm}{RM2-10-b}},\;h^0=\begin{pmatrix}-\jpg{11mm}{RM2-h2} & 0 \end{pmatrix}^T.
\end{matrix}
\]
In all the cases it is easy to check that the given maps $F,G$ are chain homotopies. Furthermore, $G$ satisfies the conditions of a strong deformation retract, i.e. $G\circ F=\mathrm{id}$, $F\circ G=h^0\circ d^0+d^{-1}\circ h^{-1}$ and $h\circ F=0$. With the help of Proposition~\ref{prop-sdr} we get
\[
\bn{\jpg{8mm}{RM2-1-a}}\simeq_h\bn{\jpg{8mm}{RM2-a}}\;\;\;\text{and}\;\;\;\bn{\jpg{8mm}{RM2-1-b}}\simeq_h\bn{\jpg{8mm}{RM2-b}}.
\]
Because of this we can follow the proof of Bar-Natan again to show the invariance under the RM3 move. We skip this because this time it is completely analogously to the proof of Bar-Natan (with the maps from above).
\vskip0.5cm
The invariance under the virtual Reidemeister moves vRM1, vRM2 and vRM3 follow from Lemma~\ref{lem-virtualisation}. Therefore, the only move left is the mixed Reidemeister move mRM. We have
\[
\bn{\jpg{8mm}{mRM}}=\Gamma(\bn{\jpg{8mm}{mRM-0}}\xrightarrow{\varphi}\bn{\jpg{8mm}{mRM-1}})
\]
and
\[
\bn{\jpg{8mm}{mRM2}}=\Gamma(\bn{\jpg{8mm}{mRM2-0}}\xrightarrow{\varphi'}\bn{\jpg{8mm}{mRM2-1}}).
\]
There is a vRM2 move in both rightmost parts of the cones. This move can be resolved. Hence, the complex changes only up to an isomorphism (see Lemma~\ref{lem-virtualisation}). Therefore, we have
\[
\bn{\jpg{8mm}{mRM}}\simeq\Gamma(\bn{\jpg{8mm}{mRM-0}}\xrightarrow{\varphi}\bn{\jpg{8mm}{mRM-12}})
\]
and
\[
\bn{\jpg{8mm}{mRM2}}\simeq\Gamma(\bn{\jpg{8mm}{mRM2-0}}\xrightarrow{\varphi'}\bn{\jpg{8mm}{mRM-12}}).
\]
Thus, we see that the left and right parts of the cones are equal complexes. Hence, the complexes of two v-links diagrams which differ only through a mRM move are isomorphic. This finish the proof, because with the obvious chain homotopy $h=0$, isomorphisms induced by the v-Reidemeister cobordisms and Proposition~\ref{prop-sdr} again gives the desired
\[
\bn{\jpg{8mm}{mRM}}\simeq_h\bn{\jpg{8mm}{mRM2}}.
\] 
\end{proof}
A question which arises from Theorem~\ref{thm-geoinvarianz} is if the topological complex yields any new information for c-links (compared to the classical Khovanov complex). The following theorem answers this question negative, i.e. the complex from Definition~\ref{defn-topcomplex} is the classical complex up to chain isomorphisms. It should be noted that Theorem~\ref{thm-geoinvarianz} and Theorem~\ref{thm-classic} imply that our construction can be seen as an extension of Bar-Natans cobordism based complex to v-links. 

To see this we mention that the cobordisms $m^{++}_+,\Delta^+_{++}$ have the same behaviour as the classical (co)multiplications. Therefore, let $\bn{L_D}_c$ denote the classical Khovanov complex, i.e. every pantsup- or pantsdown-cobordisms are of the form $m^{++}_+,\Delta^+_{++}$ and we add the usual extra signs (e.g. see~\cite{bn1} or~\cite{kh1}). Beware that this complex is in general \textit{not} a chain complex for an arbitrary v-link diagram $L_D$. But it is indeed a chain complex for any c-link diagram, i.e. a diagram without v-crossings.
\begin{thm}\label{thm-classic}
Let $L_D$ be a c-link diagram. Then $\bn{L_D}$ and $\bn{L_D}_c$ are chain isomorphic.
\end{thm}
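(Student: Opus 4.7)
The plan is to construct an explicit chain isomorphism $\Psi \colon \bn{L_D}_c \to \bn{L_D}$ by pre- and post-composing each chain module with appropriate $\Phi^-_+$ cylinders, and then to check that all sign discrepancies cancel. The key structural simplification for a c-link diagram is that every resolution $\gamma_a$ consists purely of planar, embedded circles (no v-crossings appear anywhere in the cube), hence by Lemma~\ref{cor-saddleclass} no saddle in $\bn{L_D}$ can be a M\"obius cobordism $\theta$: every non-trivial differential component is either a multiplication $m$ or a comultiplication $\Delta$, matching the shape of the differentials in $\bn{L_D}_c$.

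By Lemma~\ref{lem-commutativeindependence} (independence of choices), I may fix any convenient numbering of crossings and v-circles, any x-markers, and any orientations of the resolutions. I will make the following choice: for each resolution $\gamma_a$ pick an orientation on each circle, and then the Table~\ref{tab-deco} from Definition~\ref{defn-deco} prescribes, for each saddle $S\colon\gamma_a\to\gamma_{a'}$, a decoration on each boundary circle and on each unaffected cylinder. Now define $\Psi$ on the chain module supported at $\gamma_a$ to be the composition of $\Phi^-_+$ on every circle whose standing orientation disagrees with the classical convention, times the scalar $\mathrm{sgn}(\sigma_a)$ where $\sigma_a$ is the permutation used in Definition~\ref{defn-sign} for this $\gamma_a$. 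Since $\Phi^-_+\circ\Phi^-_+=\mathrm{id}^+_+$ (Lemma~\ref{lem-basiscalculations}(a)), $\Psi$ is a degreewise isomorphism.

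To see that $\Psi$ is a chain map, fix a saddle $S\colon\gamma_a\to\gamma_{a'}$. In $\bn{L_D}_c$ this saddle contributes $\pm m^{++}_{+}$ or $\pm\Delta^{+}_{++}$ with the classical Khovanov sign $\varepsilon_c(a,r) = (-1)^{\sum_{i<r}a_i}$ (where $r$ is the crossing being flipped). In $\bn{L_D}$ the same saddle contributes $\mathrm{sgn}(S)$ times a decorated $m$ or $\Delta$, and on each circle the decoration is $+$ or $-$ according to the standard-position rule. Using the identities of Lemma~\ref{lem-basiscalculations}(b)--(e), each $\Phi^-_+$ inserted by $\Psi$ on an input or output boundary either converts a boundary $+$ into a $-$ freely (when the resulting cobordism already has the right decoration on both sides), or does so at the cost of a global $-1$ (items (b), (d) with the single-$\Phi$ configurations). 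A bookkeeping of these sign contributions against the decoration pattern produced by Definition~\ref{defn-deco} shows that the total sign introduced by $\Psi$ on the saddle $S$ equals $\mathrm{sgn}(S)/\varepsilon_c(a,r)$, so $\Psi_{a'}\circ (\varepsilon_c(a,r)\cdot m^{++}_+) = (\mathrm{sgn}(S)\cdot S)\circ \Psi_a$.

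The main obstacle is the sign reconciliation in the last step: verifying that the permutation-based sign $\mathrm{sgn}(\sigma_1)\mathrm{sgn}(\sigma_2)$ from Definition~\ref{defn-sign} combined with the signs produced by the $\Phi^-_+$-moves reproduces exactly $\varepsilon_c(a,r)$. I would carry this out by induction on the number of circles: for a single split or merge the identity is immediate from Lemma~\ref{lem-basiscalculations}(b)--(e); the inductive step uses that inserting an additional un-affected circle changes both $\sigma_1,\sigma_2$ by the same transposition and simultaneously adds one $\Phi$-decorated cylinder, so the contributions cancel. Since $\bn{L_D}$ is a well-defined chain complex by Theorem~\ref{theo-facescommute} and $\bn{L_D}_c$ is the classical Khovanov complex (known to be a chain complex for c-links), this degreewise isomorphism that commutes with the differentials is the desired chain isomorphism.
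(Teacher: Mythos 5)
Your overall strategy---build an explicit degreewise isomorphism by inserting $\Phi^-_+$'s to correct the decorations and multiplying by scalars to correct the signs---is the same idea that drives the paper's proof, but your execution has two genuine gaps.

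First, the scalar $\mathrm{sgn}(\sigma_a)$ you attach to the chain group at $\gamma_a$ is not well-defined. Definition~\ref{defn-sign} assigns a \emph{pair} of permutations $\sigma_1,\sigma_2$ to each \emph{saddle}, one for its source and one for its target; there is no permutation canonically attached to a single resolution $\gamma_a$. Likewise ``the classical convention'' for orientations is not defined: in $\bn{L_D}_c$ circles carry no orientations at all. Your map $\Psi$ is therefore not actually specified.

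Second, the sign bookkeeping in your inductive step is incorrect. You claim that inserting an unaffected circle ``changes both $\sigma_1,\sigma_2$ by the same transposition and simultaneously adds one $\Phi$-decorated cylinder, so the contributions cancel.'' But by Definition~\ref{defn-deco}, a cylinder on an unaffected circle is $\Phi^-_+$ only when that circle's orientation differs between $\gamma_a$ and $\gamma_{a'}$, which need not happen; and the effect of renumbering one extra circle on $\sigma_1$ and $\sigma_2$ depends on where its number sits relative to the x-marked circles, and is not in general a single transposition affecting both equally. Since you concede that this reconciliation of $\mathrm{sgn}(\sigma_1)\mathrm{sgn}(\sigma_2)$ against $\varepsilon_c(a,r)$ is ``the main obstacle,'' the argument does not close.

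The paper avoids both issues by a spanning-tree argument rather than an explicit formula: using Lemma~\ref{lem-commutativeindependence}, one re-orients circles along a spanning tree of the cube (starting at the rightmost leaves), each change being a chain isomorphism, until every saddle is $m^{++}_+$ or $\Delta^+_{++}$ (here the identity $m^{++}_+ = m^{--}_-$ is the crucial freedom); one then repeats the pass changing numberings to fix signs, which works because every face carries an odd number of minus signs once the $\Delta^-_{--}=-\Delta^+_{++}$ contribution is included. The net effect is the coboundary correction you were reaching for, but it is produced algorithmically without ever writing a closed formula for the scalars. If you want to keep your ``direct'' presentation, you should drop the attempted formula $\mathrm{sgn}(\sigma_a)$, observe instead that the edge-wise sign discrepancy $\mathrm{sgn}(S)\cdot(\text{decoration signs})/\varepsilon_c(a,r)$ is a $\{\pm 1\}$-valued $1$-cocycle on the cube (because both complexes have anticommuting faces), and invoke contractibility of the cube to produce the scalars $\epsilon_a$ abstractly.
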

\begin{proof}
Because $L_D$ does not contain any v-crossing, the complex has no $\theta$-saddles. Moreover, every circle is a c-circle. Hence, we can orient them $+$ or $-$, i.e. counterclockwise or clockwise. We choose any numbering for the circles.

Because every circle is oriented clockwise or counterclockwise, every saddle $S$ is of the form $\ud\rightarrow\ril$ or $\du\rightarrow\ler$. Hence, every saddle is of the form $m^{++}_+=m^{--}_-$, $\Delta^+_{++}$ or $\Delta^-_{--}$. Thus, these maps are the classical maps (up to a sign).

We prove the theorem by a spanning tree argument, i.e. choose such a spanning tree. Start at the rightmost leaves and reorient the circles in such a way that the maps which belongs to the edges in the tree are the classical maps $m^{++}_+$ or $\Delta^+_{++}$. This is possible because we can use $m^{++}_+=m^{--}_-$ here. We do this until we reach the end.

We repeat the process rearranging the numbering in such a way that the corresponding maps have the same sign as in the classical Khovanov complex. This is possible because every face has an odd number of minus signs (if we count the sign from the relation $\Delta^-_{--}=-\Delta^+_{++}$).

Note that such rearranging does not affect the anticommutativity because of Lemma~\ref{lem-commutativeindependence}. Hence, after we reach the end every saddle is the classical saddle together with the classical sign. The change of orientations/numberings does not change the complex because of Lemma~\ref{lem-commutativeindependence}. This finishes the proof. 
\end{proof}
\begin{rem}\label{rem-gradings}
We could use the Euler characteristic to introduce the structure of a graded category on $\ucob_R(\emptyset)$ (and hence on $\ukob_R$).

The differentials in the topological complex from Definition~\ref{defn-topcomplex} have all $\deg=0$ (after a grade shift), because their Euler-characteristic is -1 (see Lemma~\ref{cor-euler2}). Then it is easy to prove that the topological complex is a v-link invariant under graded homotopy.
\end{rem}
\begin{rem}\label{rem-tutu}
If one does the same construction as above in the category $\ucob_R(\emptyset)^*$, then the whole construction becomes easier in the following sense. First one does not need to work with the saddle signs any more, i.e. the conplex will be a well-defined chain complex if one uses the same signs as in the classical case. Furthermore, most of the constructions and arguments to ensure that everything is a well-defined chain complex are not necessary or trivial, e.g. most parts of the next subsection are ``obviously'' true, and the rest of this section can be proven completely analogously. This construction leads us to an equivalent of the construction of Turaev and Turner~\cite{tutu}. Note that this version does not generalise the classical Khovanov homology. In order to get a bi-graded complex one seems to need a construction related to $\wedge$-products.
\end{rem}
\subsubsection*{The technical points of the construction}
In this subsection we give the arguments why the topological complex is well-defined and independent of all choices involved.

The following lemma ensures that we can choose the x-marker and the order of the v-circles in the resolutions without changing the total number of signs mod $2$ for all faces.
\begin{lem}\label{lem-xmarker}
Let $F$ be a face of the v-link diagram $L_D$ for a fixed choice of x-markers and orders for the v-circles of the resolutions of $L_D$. Let $F^{\prime},F^{\prime\prime}$ denote the same face, but $F^{\prime}$ with a different choice of x-markers and $F^{\prime\prime}$ with a different choice for the orderings. Then
\[
\mathrm{sgn}(F)=\mathrm{sgn}(F^{\prime})=\mathrm{sgn}(F^{\prime\prime}).
\]
\end{lem}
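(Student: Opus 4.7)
The two equalities $\mathrm{sgn}(F)=\mathrm{sgn}(F')$ and $\mathrm{sgn}(F)=\mathrm{sgn}(F'')$ will be established separately; in both cases the strategy is to reduce to an elementary change and to verify that the four saddle signs of $F$ multiply to the same value after the change. The product structure of $\mathrm{sgn}(F)$ means we only have to check pairwise cancellation of the induced sign flips.

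For the x-marker claim, any global change of x-markers factors as a composition of flips at individual crossings, so fix a single crossing $c$ and flip its x-marker. Exactly two of the four saddles of $F$ resolve $c$, namely the two parallel edges of the face labelled by $c$. Each of these saddles has a unique side --- the source if $S$ is a merge at $c$, the target if $S$ is a split at $c$ --- on which the x-marker distinguishes between two incident circles, determining which plays the role of $a^x$ (or $b^x$) versus $a^y$ (or $b^y$) in the ordered list used to define $\sigma_1$ or $\sigma_2$. The x-marker flip interchanges these two roles, which is a single transposition in the ordered list and multiplies the corresponding permutation sign by $-1$. Hence each of the two affected saddle signs acquires a factor of $-1$, and the two factors cancel in the product defining $\mathrm{sgn}(F)$.

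For the ordering claim, it suffices to consider the swap of two adjacent labels $k,k+1$ in a single resolution $\gamma_\epsilon$ of $F$, since every reordering factors as a product of such adjacent transpositions. Such a swap affects only the (at most two) saddles of $F$ incident to $\gamma_\epsilon$, through their source or target permutations. By direct inspection, the sign of a permutation $\sigma$ attached to an affected subset $A$ (or $B$) changes by $-1$ precisely when $\{k,k+1\}$ meets $A$ in exactly one element, as well as in the borderline case $A=\{k,k+1\}$ where the order within $(a^x,a^y)$ gets reversed. A finite case analysis then checks that, across the affected saddles of $F$, the number of sign flips is always even, so that the product $\mathrm{sgn}(F)$ is preserved; the few configurations in which a naive count would produce an odd number of flips always force one of the saddles of $F$ to be non-orientable, in which case both $\mathrm{sgn}(F)$ and $\mathrm{sgn}(F'')$ vanish.

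The main obstacle lies in the ordering part. While the x-marker argument is essentially local at a crossing and exhibits cancellation in a transparent pair, the ordering case requires tracking how the transposition $\{k,k+1\}$ interacts with the affected subsets $A_{*0}$ and $A_{0*}$ at a common source resolution (and their analogues on the target side). The crucial combinatorial input is that the joint structure of these subsets at a face --- governed by the two resolved crossings and the saddle-type compatibilities between them --- is constrained just enough to guarantee pairwise cancellation of sign flips, or else to force a non-orientable saddle that trivialises both signs at once.
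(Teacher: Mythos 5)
Your reduction to a single x-marker flip or a single adjacent transposition of labels matches the paper's strategy, and your x-marker argument is essentially the paper's: the two saddles of $F$ that resolve the crossing $c$ each pick up a transposition of $(a^x,a^y)$ on the side with two affected circles, so the two sign flips cancel in the product.

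The divergence, and the gap, is in the ordering part. The paper's own proof simply asserts that swapping two consecutive numbers in $\gamma_{00}$ flips $\mathrm{sgn}(\sigma_1)$ for \emph{both} $S_{0*}$ and $S_{*0}$ unconditionally (and analogously for the other three resolutions), and stops there. You correctly observe that, reading Definition~\ref{defn-sign} as written (x-marked circle, then the second special circle, then the rest ascending by label), the parity of $\sigma_1$ flips precisely when $\{k,k+1\}$ meets that saddle's set of special labels, a condition that is \emph{not} automatic. That is a genuinely sharper observation than the paper's two-line claim. But you then retreat to an unproved ``finite case analysis'' plus an escape clause about non-orientable saddles, and stop. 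This is where the argument is incomplete: if $\gamma_{00}$ contains a circle incident to neither of the two crossings, one can swap that circle's label with a label that is special to only one of $S_{0*},S_{*0}$, producing an odd number of flips while both saddles remain orientable, so neither your parity claim nor your escape clause applies. To close the gap you would need either to establish a convention for $\sigma_1$ under which the flip really is unconditional (so that $\sigma'_1 = \tau\sigma_1$ for the label-transposition $\tau$, which is what the paper's argument silently uses), or to carry out an argument that handles bystander circles explicitly; as it stands you have correctly identified the subtlety the paper glosses over but have not resolved it.
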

\begin{proof}
It is sufficient to show the statement if we only change one x-marker of one crossing $c$ or the numbers of only two v-circles with consecutive numbers in a fixed resolution $\gamma_a$. Moreover, the statement is clear if $c$ or $\gamma_a$ does not affect $F$ at all or one of the saddles is non-orientable.

Note that a change of the x-marker of $c$ effects exactly two saddles $S,S^{\prime}$ of $F$ and for both the number $\mathrm{sgn}(S),\mathrm{sgn}(S^{\prime})$ changes since, by definition, we demand that in the definition of the permutations $\sigma_1,\sigma_2$ from Definition~\ref{defn-sign} the two corresponding v-circles are ordered. Hence, the total change for the face is 0 mod 2.

If the numbering of the two v-circles changes in $\gamma_{00}$, then the sign of the permutation $\sigma_1$ changes for both saddles $S_{0*},S_{*0}$ but no changes for $S_{1*},S_{*1}$. Analogously for the $\gamma_{11}$ case.

In contrast, if the numbering of the two v-circles changes in $\gamma_{01}$, then the sign of the permutation $\sigma_1$ and $\sigma_2$ changes for $S_{*1}$ and $S_{0*}$ respectively, but no changes for $S_{*0},S_{1*}$. Analogously for the $\gamma_{10}$ case. Hence, no change for the face mod 2.
\end{proof}
\begin{lem}\label{lem-commutativeindependence}
Let $L_D$ be a v-link diagram and let $\bn{L_D}_1$ be its topological complex from Definition~\ref{defn-topcomplex} with arbitrary orientations for the resolutions. Let $\bn{L_D}_2$ be the complex with the same orientations for the resolutions except for one circle $c$ in one resolution $\gamma_a$. If a face $F_1$ from $\bn{L_D}_1$ is anticommutative, then the corresponding face $F_2$ from $\bn{L_D}_2$ is also anticommutative.

Moreover, if $\bn{L_D}_1$ is a well-defined chain complex, then it is isomorphic to $\bn{L_D}_2$, which is also a well-defined chain complex.

The same statement is true if the difference between the two complexes is the numbering of the crossings, the choice of the x-marker, rotations/isotopies of the v-link diagram or the fixed numbering of the v-circles in the resolutions.
\end{lem}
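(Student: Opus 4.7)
The plan is to treat the four listed types of change separately, exhibiting in each case an explicit chain isomorphism $\bn{L_D}_1 \to \bn{L_D}_2$ built from the generators of $\ucob_R(\emptyset)$. The substantive case is the orientation change; the other three reduce quickly, either to Lemma~\ref{lem-xmarker} or to the fact that morphisms of $\ucob_R(\emptyset)$ are taken up to boundary-preserving homeomorphism.

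For the orientation change of a circle $c$ in a resolution $\gamma_a$, I would first prove the face-by-face anticommutativity claim. Fix a face $F=(\gamma_{00},\gamma_{01},\gamma_{10},\gamma_{11})$. If $\gamma_a$ is not a corner of $F$, nothing is altered. If $\gamma_a$ is a corner, say $\gamma_{00}$ by symmetry, then precisely the two saddles $S_{0*}$ and $S_{*0}$ are modified at their $c$-boundary, while $S_{*1}$ and $S_{1*}$ are untouched. By Definition~\ref{defn-deco}, each modification either swaps an $\mathrm{id}^+_+$-cylinder for a $\Phi^-_+$-cylinder at $c$ (if $c$ is a spectator to the relevant crossing) or flips a single source decoration on the active $c$-boundary. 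Using the identities of Lemma~\ref{lem-basiscalculations}(b)-(e), each such modification amounts to pre-composing the old saddle by $\Phi^-_+$ acting on the $c$-component, up to a scalar $\delta \in \{\pm 1\}$ read directly off the relevant identity. A short case analysis on (spectator vs.\ active) $\times$ (source vs.\ target) will then show that the scalars picked up by the two composed paths $S_{*1}\circ S_{0*}$ and $S_{1*}\circ S_{*0}$ agree, so anticommutativity of $F_1$ passes to $F_2$.

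Having preserved anticommutativity face-by-face, the well-definedness of $\bn{L_D}_2$ as a chain complex follows from that of $\bn{L_D}_1$. I would then write an explicit chain isomorphism $\psi \colon \bn{L_D}_1 \to \bn{L_D}_2$ by setting $\psi_{\gamma_{a'}}$ to be $\epsilon_{a'}$ times ($\Phi^-_+$ on the component $c$ and $\mathrm{id}^+_+$ on all other components) if $a'=a$, and $\epsilon_{a'}\cdot\mathrm{id}$ otherwise, for appropriate signs $\epsilon_{a'} \in \{\pm 1\}$. The involutivity $\Phi^-_+ \circ \Phi^-_+ = \mathrm{id}^+_+$ from Lemma~\ref{lem-basiscalculations}(a) makes each $\psi_{\gamma_{a'}}$ invertible, and the chain-map equation for $\psi$ translates into the relations $\epsilon_{a''}\epsilon_{a'}^{-1} = \delta_S$ along every edge $S\colon\gamma_{a'}\to\gamma_{a''}$ of the cube. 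These relations admit a consistent solution precisely because all faces of the cube commute up to the expected signs, which is exactly what the previous step guarantees. For the three remaining changes the argument is easier: a change of x-marker or of the ordering of v-circles inside one resolution leaves every decorated cobordism unchanged as a morphism and only permutes individual saddle signs, and Lemma~\ref{lem-xmarker} says the product of saddle signs around any face is unaffected, so the identity serves as the chain isomorphism; renumbering the crossings of $L_D$ merely relabels the coordinate axes of the Khovanov cube and yields a canonical permutation isomorphism; a planar rotation or isotopy produces the same objects and morphisms in $\ucob_R(\emptyset)$ by definition of that category.

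The hard part will be the sign bookkeeping in the orientation case: depending on whether $c$ is spectator or active in each affected saddle, and on whether the decoration flips are at the source or the target, a different identity of Lemma~\ref{lem-basiscalculations} applies (for instance $\Delta^-_{++}=\Delta^+_{++}\circ\Phi^-_+$ contributes $\delta=+1$, whereas a combined flip like $\Delta^+_{--}=-\Delta^-_{++}$ would contribute $\delta=-1$), and one must check carefully that the resulting $\delta$'s combine coherently along each face. The reason this succeeds is that the categorical relations of Definition~\ref{defn-category} were set up precisely so that any local change of orientation produces sign changes that match along the two composed edges of any face of the cube.
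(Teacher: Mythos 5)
Your proposal is correct and follows essentially the same route as the paper: each affected saddle is recognized as the old one composed with $\Phi^-_+$ on the $c$-component, face-by-face anticommutativity is checked directly, and the chain isomorphism is built from $\Phi^-_+$ with sign corrections, while the remaining three changes reduce to Lemma~\ref{lem-xmarker} or the definitions. One simplification you can observe: the scalars $\delta$ you worry about are always $+1$ when flipping a single circle's orientation at a single vertex (the sign-bearing identities like $\Delta^+_{--}=-\Delta^-_{++}$ reconcile two simultaneous decoration changes, not one), and since the saddle signs of Definition~\ref{defn-sign} do not depend on orientations, all your $\epsilon_{a'}$ may be taken to be $+1$.
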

\begin{proof}
Assume that the face $F_1$ is anticommutative. Then the different orientations of the circle $c$ correspond to a composition of all morphisms of the face $F_2$ with this circle as a boundary component with $\Phi^-_+$.

Hence, the face $F_2$ is also anticommutative, because both outgoing (or incoming) morphisms of $F_2$ are composed with an extra $\Phi^-_+$ if the circle is in the first (or last) resolution of the faces. If it is in one of the middle resolutions, then we have to use the relation $\Phi^-_+\circ\Phi^-_+=\mathrm{id}^+_+$ from Lemma~\ref{lem-basiscalculations}. Note that it is important for this argument to work that cylinders between differently oriented v-circles are $\Phi^-_+$, as in Definition~\ref{defn-topcomplex}.

Thus, if the first complex is a well-defined chain complex, then the same is true for the second. The isomorphism is induced (using a spanning tree construction) by the isomorphism $\Phi^-_+$.

The second statement is true because the numbering of the crossings does not affect the cobordisms at all. Hence, the argument can be shown analogously to the classical case (see for example~\cite{kh1}), but it should be noted that our way of spreading signs does not depend on this ordering.

On the third point: That anticommutative faces stay anticommutative, if one changes between the two possible choices in Definition~\ref{defn-sign}, is part of Lemma~\ref{lem-xmarker}. The chain isomorphism is induced (using a spanning tree construction) by a sign permutation.

The penultimate statement follows directly from the definition of the saddle sign and decorations, while the latter statement is also part of Lemma~\ref{lem-xmarker} with the isomorphisms again induced (using a spanning tree construction) by a sign permutation.
\end{proof}
\begin{lem}\label{lem-virtualisation}
Let $L_D,L^{\prime}_D$ be v-link diagrams which differs only by a virtualisation of one crossing $c$. If a face $F$ is anticommutative in $\bn{L_D}$, then the corresponding face $F^{\prime}$ is anticommutative in $\bn{L^{\prime}_D}$. 

Moreover, if $\bn{L_D}$ is a well-defined chain complex, then it is isomorphic to $\bn{L^{\prime}_D}$, which is also a well-defined chain complex.

The same statement is true if $L_D$ and $L^{\prime}_D$ differs only by a vRM1, vRM2, vRM3 or mRM move.
\end{lem}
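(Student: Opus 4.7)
The plan is to produce, in parallel with the argument of Lemma~\ref{lem-commutativeindependence}, an explicit chain isomorphism $\Psi\colon\bn{L_D}\to\bn{L'_D}$ built out of vRM-cobordisms, from which the preservation of anticommutativity will follow by transport.

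First I would analyse the combinatorics of the virtualisation. Let $c$ be the classical crossing of $L_D$ that is virtualised, producing in $L'_D$ a classical crossing $c'$ (of opposite over/under type) flanked by two virtual crossings. For every word $a\in\{0,1\}^n$ indexing the resolutions of the remaining classical crossings, the resolution $\gamma_a$ of $L_D$ and the corresponding resolution $\gamma'_a$ of $L'_D$ differ only inside a small disk around the virtualisation: in $\gamma'_a$ the two extra v-crossings can be removed by a pair of vRM2 moves. By Example~\ref{ex-vrmremoval} each such move is realised by an isomorphism in $\mat(\ucob_R(\emptyset))$ built from $\mathrm{id}^+_+$ and $\Phi^-_+$ cylinders, so we obtain a family of isomorphisms $\Psi_a\colon\gamma_a\to\gamma'_a$.

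Next I would show that the family $\{\Psi_a\}$ assembles into a chain map. For adjacent words $a,b$ differing only in a letter $r$, one must check that $\Psi_b\circ S_{a\to b}=\pm\, S'_{a\to b}\circ\Psi_a$, where $S_{a\to b},S'_{a\to b}$ are the corresponding saddles. If $r\neq c$, the saddle is supported away from the virtualisation disk, so it commutes with $\Psi_a,\Psi_b$ up to the decoration commutation rules of Lemma~\ref{lem-calc}(c),(d), with the sign absorbed by a final global sign adjustment supplied by Lemma~\ref{lem-commutativeindependence}. If $r=c$, a local finite case analysis inside the virtualisation disk compares the two saddles with the vRM-isomorphisms on either side; the key observation is that the vRM-cobordism re-identifies the oriented crossing resolution of $c'$ with that of $c$, so up to composition with $\Phi^-_+$ the two compositions agree. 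Since each $\Psi_a$ is invertible, $\Psi$ is then an isomorphism of chain complexes whenever $\bn{L_D}$ is a well-defined chain complex.

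Anticommutativity preservation for a single face $F=(\gamma_{00},\gamma_{01},\gamma_{10},\gamma_{11})$ follows at once from the preceding step: the four $\Psi_{ij}$ build a commutative prism over $F$ and the corresponding face $F'$ of $\bn{L'_D}$, so the sum of the two compositions around $F'$ is the $\Psi$-conjugate of the sum around $F$; vanishing of one is equivalent to vanishing of the other. For the vRM1, vRM2, vRM3 and mRM cases, Example~\ref{ex-vrmremoval} already supplies the local isomorphisms and the argument above applies verbatim, with the mRM case requiring in addition the observation that a classical crossing sliding past a virtual crossing induces only a relabelling that is accommodated by Lemma~\ref{lem-commutativeindependence}.

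The hard part is the local saddle-versus-vRM computation at the virtualised crossing. This is where the x-marker convention from Definition~\ref{defn-sign} and the glueing-number assignment of Definition~\ref{defn-deco} must be tracked carefully: virtualisation flips the over/under at the site of $c'$, which changes the sense of the oriented crossing resolution, and the resulting change of glueing numbers must be shown to match exactly the $\Phi^-_+$-factors that appear in the vRM-isomorphism. The analysis is finite and purely local, but it is the one place where a genuine computation (rather than formal manipulation) is required; once carried out, the remaining global argument is the same spanning-tree argument already used in the proof of Lemma~\ref{lem-commutativeindependence}.
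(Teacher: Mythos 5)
Your overall plan — build local isomorphisms $\Psi_a$ from vRM-cobordisms and $\Phi^-_+$ cylinders and then transport anticommutativity by conjugation — has the right shape, and it is in fact what the paper uses to produce the chain isomorphism at the end of its proof. The problem is that all of the lemma's actual content lives in what you have labelled ``the hard part'' and deferred: verifying that the $\Psi_a$'s intertwine the \emph{signed, decorated} saddles at the virtualised crossing $c$. Until you do that computation, you have no commutative prism and hence no conclusion about $F'$; the statement you want to prove would then be assumed, not established. Note also that you cannot lean on a ``final global sign adjustment supplied by Lemma~\ref{lem-commutativeindependence}'' until you know the signs differ by a \emph{single} global factor, which is exactly what the local check must establish.

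The local check is nontrivial for specific reasons your proposal does not surface. Virtualisation rotates the x-marker at $c$ (Figure~\ref{figure-virt2}), so the oriented crossing resolution at $c$ \emph{always} flips. By Table~\ref{tab-deco} together with Equation~\ref{eq-combrel12}, such a flip produces an extra $(-1)$ for a comultiplication saddle but \emph{not} for a multiplication saddle, because $m^{--}_-=m^{++}_+$ while $\Delta^-_{--}=-\Delta^+_{++}$; this asymmetry is the heart of the argument and is entirely absent from your proposal. Independently, the formal saddle sign of Definition~\ref{defn-sign} changes precisely when the two strands at $c$ lie on distinct v-circles. One then needs the case analysis (both saddles at $c$ are multiplications, both are comultiplications, or one of each) showing the total number of extra signs is even mod~2; in the mixed case the sign-flip of $m$ occurs exactly when the sign-flip of $\Delta$ does not, which is not something one would guess without doing the count. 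You also need to dispatch the non-orientable case up front: if a saddle at $c$ is a $\theta$-cobordism, anticommutativity is immediate from Equation~\ref{eq-combrel3} and Proposition~\ref{prop-nonorientablefaces}, and none of the decoration bookkeeping is needed. Finally, for the vRM1/vRM2/vRM3/mRM cases you are overcomplicating: those moves happen away from the saddles, so neither local orientations nor saddle signs change, and the face is literally unchanged; no appeal to Lemma~\ref{lem-commutativeindependence} or to ``relabelling'' is needed there.
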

\begin{proof}
The statement about anticommutativity is clear, if one of the saddles which belongs to the crossing $c$ is non-orientable. This is true because of the relations from Equation~\ref{eq-combrel3} and Proposition~\ref{prop-nonorientablefaces}. Thus, we can assume that both saddles are orientable. Furthermore, it is clear that the two composition of the saddles are boundary preserving homeomorphic after the virtualisation. Hence, the only thing we have to ensure is that the decorations and signs work out correctly.

We use the Lemma~\ref{lem-commutativeindependence} here, i.e. we can choose the orientations and the numberings in such a way that the saddles which do not belong to the crossing $c$ have the same local orientations and numberings. We observe the following. The sign and the local orientations of a saddle can only change if the saddle belongs to the crossing $c$, i.e. the local orientations always changes (see Figure~\ref{figure-virt2}) and the sign changes precisely if the two strings in the bottom picture of Figure~\ref{figure-virt2} are part of two different v-circles.
\begin{figure}[ht]
  \centering
     \includegraphics[scale=0.75]{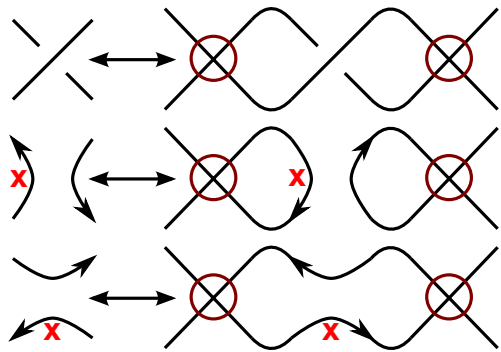}
  \caption{The behaviour of the x-marker and orientations under virtualisation.}
  \label{figure-virt2}
\end{figure}

A change of the local orientations multiplies an extra sign for comultiplication, but no extra sign for multiplication. This follows from the Table~\ref{tab-deco} and the relations from Equation~\ref{eq-combrel12}. Hence, the anticommutativity still holds if the two saddles which belong to the crossing $c$ are both multiplications or comultiplications, because their decorations and signs change in the same way.

If one is a multiplication and one is a comultiplication, then we have two cases, i.e. the multiplication gets an extra sign or not. The comultiplication always gets an extra sign because the local orientations change. But the multiplication will change its saddle sign iff the comultiplication does not change its saddle sign. Hence, the number of extra signs does not change modulo $2$. This ensures that the faces stays anticommutative.

That the face $F^{\prime}$ stays anticommutative after a vRM1, vRM2, vRM3 or mRM move follows because neither the local orientations nor the signs of any cobordism changes. Thus, all decorations and signs are the same.

The chain isomorphisms are induced by the vRM-cobordisms shown in Figure~\ref{figure0-reide}, morphisms of type $\Phi^-_+$ and identities. Recall that all these cobordisms are isomorphisms in our category.
\end{proof}
For the proof of the next lemma we refer the reader to the paper~\cite{ma2}. We call faces of the following type the \textit{basic (non-)orientable faces}.
\begin{figure}[ht]
  \begin{minipage}[c]{6,9cm}
	\centering
	\includegraphics[scale=0.35]{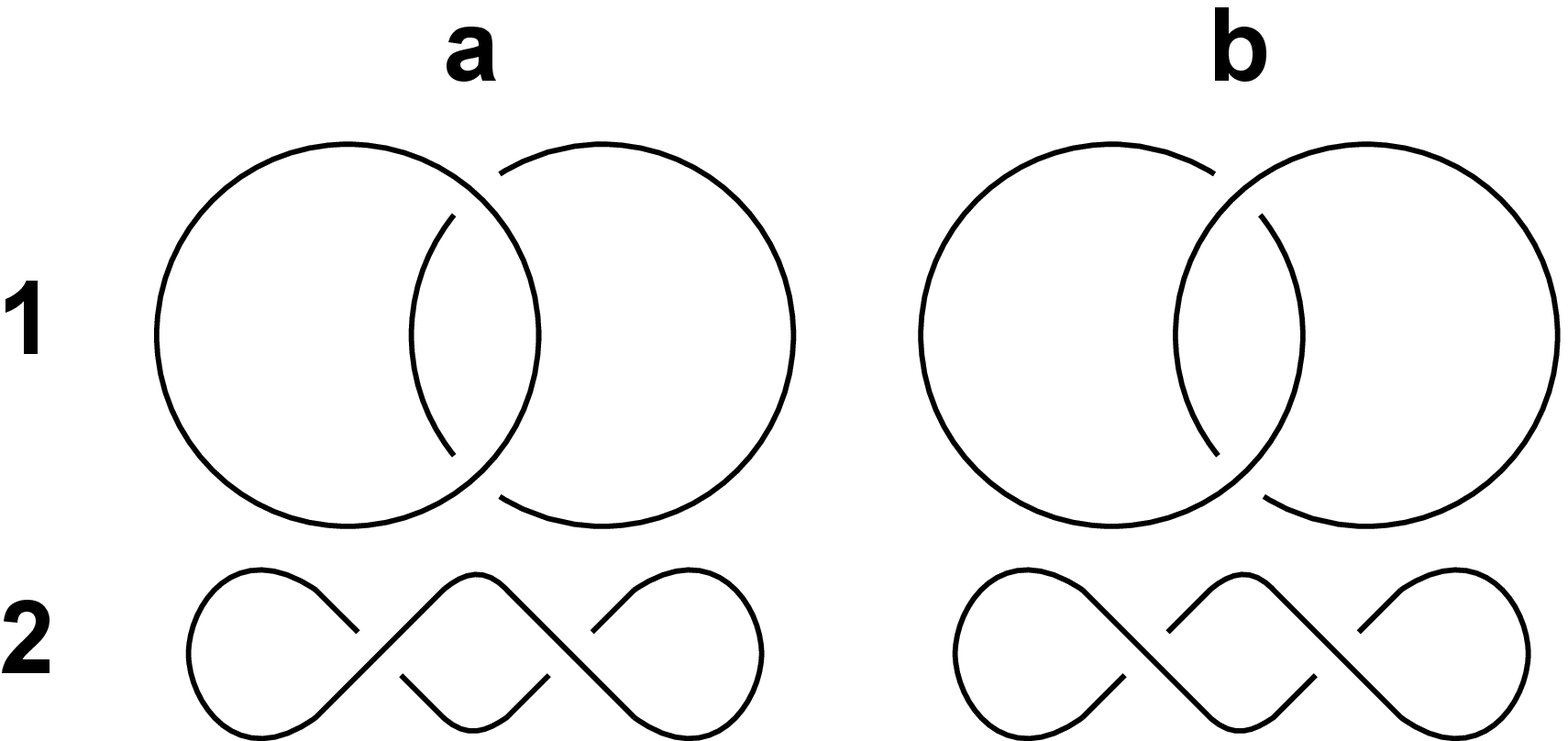}
\end{minipage}
\begin{minipage}[c]{6,9cm}
	\centering
	\includegraphics[scale=0.30]{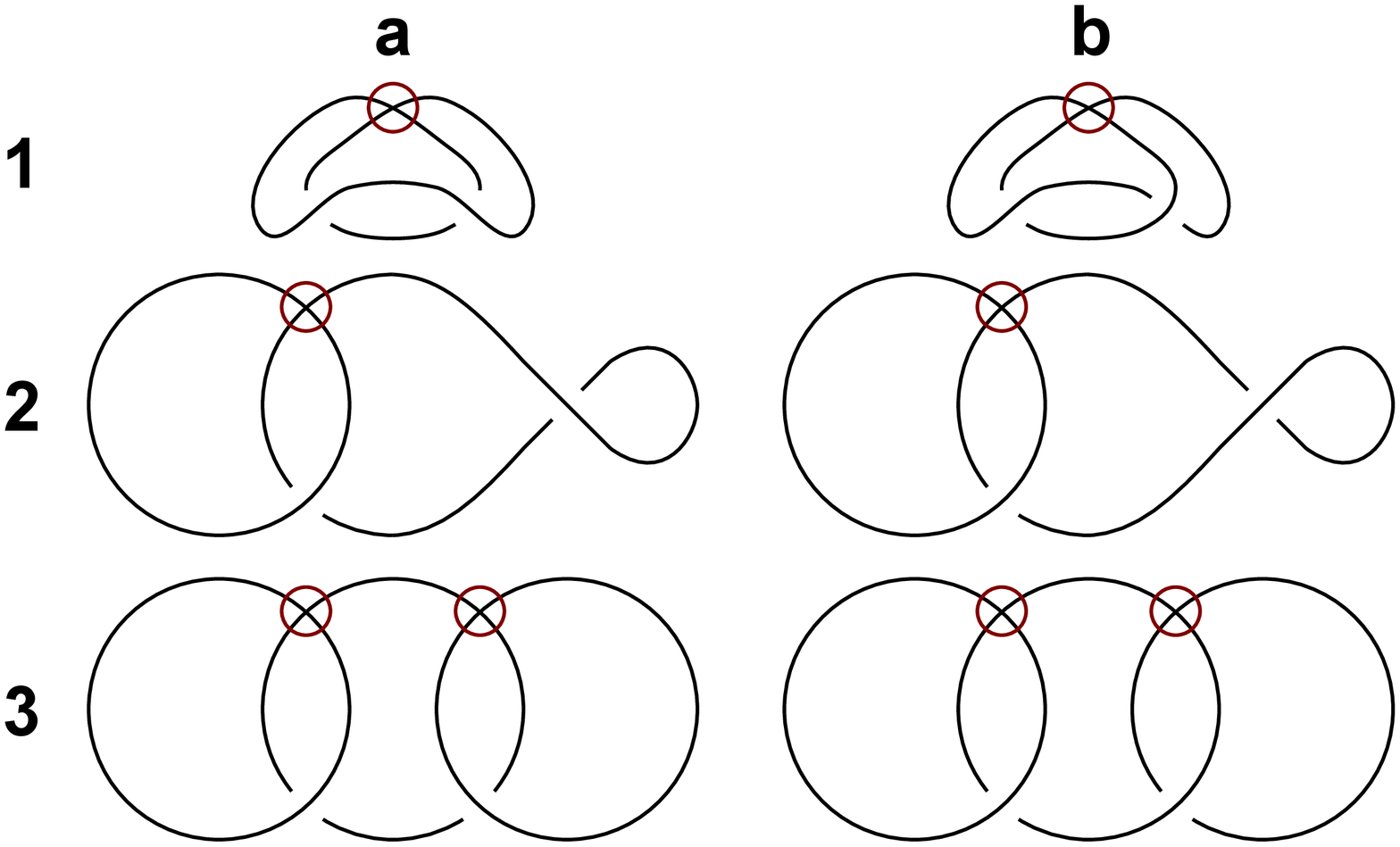}
\end{minipage}
\caption{Left: The basic orientable faces. Right: The basic non-orientable faces.}\label{figure-basic}
\end{figure} 
\begin{lem}\label{lem-basicfaces}
Let $L_D$ be a v-link diagram. Then $L_D$ can be reduced by a finite sequence of isotopies, vRM1, vRM2, vRM3, mRM moves and virtualisations to a v-link diagram $L^{\prime}_D$ in such a way that a fixed connected face of $L^{\prime}_D$ is isotopic to one of the basic faces from the Figure~\ref{figure-basic} (or to one of their mirror images) up to vRM1, vRM2, vRM3 moves on the face itself.\qed
\end{lem}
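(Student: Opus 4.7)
The plan is to reduce the ambient v-link diagram $L_D$ around the two distinguished crossings $c_1, c_2$ (which together determine the face $F$) to a standard local form, using the fact that all the allowed moves either leave the complex unchanged up to chain isomorphism (in the case of v-moves) or do not change the isomorphism type of the face up to the invariance already proved (in the case of virtualisation, by Lemma~\ref{lem-virtualisation}). First I would apply virtualisation to every classical crossing of $L_D$ other than $c_1$ and $c_2$. This makes the resulting diagram $L'_D$ purely classical in a small neighbourhood of the face $F$ and purely virtual outside of it.

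Next I would use the vRM1, vRM2, vRM3 moves and, crucially, the mixed Reidemeister move mRM to push all remaining virtual crossings away from $c_1$ and $c_2$ and to simplify the eight arcs emerging from the two classical crossings. The mRM move is essential here because it is exactly what allows a virtual crossing to slide past a classical strand; combined with the virtual Reidemeister moves, it implies that a purely virtual tangle between a fixed set of endpoints is determined up to equivalence by the induced pairing of the endpoints. Consequently, after these moves the local picture of the face $F'$ is determined combinatorially by (i) the crossing signs at $c_1,c_2$ (further reducible by virtualising $c_1$ or $c_2$ themselves) and (ii) the cyclic matching of the eight endpoints of $c_1,c_2$.

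The final step is a finite enumeration: one lists all possible matchings of the eight endpoints (up to the obvious symmetries of switching $c_1\leftrightarrow c_2$ and of the permissible vRM1, vRM2, vRM3 moves on the face), and checks that each such matching is planar-isotopic, after possibly one more application of the allowed moves, to one of the basic orientable or non-orientable faces drawn in Figure~\ref{figure-basic} (or a mirror image). The non-orientable cases arise precisely when the matching forces at least one of the four saddles of $F'$ to be a M\"obius cobordism $\theta$, i.e.\ when the saddle has two boundary circles, as classified in Lemma~\ref{cor-saddleclass}.

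The main obstacle is the bookkeeping of this combinatorial enumeration: one must be confident that no pairing pattern has been overlooked, and one must verify that the mRM and vRM moves really do suffice to bring every such pattern into one of the pictured standard forms. This is precisely the step that we inherit from Manturov's work in~\cite{ma2}, and where the classification is carried out case by case; I would not reproduce the full case analysis here but instead refer to that source for the detailed verification.
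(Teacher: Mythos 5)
The first step of your proposal rests on a misreading of what ``virtualisation'' means in this paper. In Figure~\ref{figure0-virt} (and in Manturov's convention) virtualisation of a crossing replaces the crossing by the \emph{opposite} classical crossing conjugated by two virtual crossings. The crossing remains classical. Consequently applying virtualisation to every classical crossing other than $c_1,c_2$ does \emph{not} make $L'_D$ purely virtual outside a neighbourhood of the face; it leaves all the other classical crossings in place, and in fact it \emph{adds} virtual crossings to the picture. If instead you intended the literal replacement of a classical crossing by a virtual one, that is a different operation (not among the allowed moves in the lemma statement) and changes the v-link type, so it is also not available here. Either way, the premise on which your second step (``a purely virtual tangle between a fixed set of endpoints is determined by the induced pairing'') is to be applied does not hold as claimed.

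The operation you actually want is not a move on the diagram at all: once the face $F$ is fixed, the other crossings of $L_D$ are already resolved at a fixed value and hence appear in the face's four resolutions only as arcs, never as crossings. So there is nothing to virtualise away; the local picture near $c_1,c_2$ together with the arcs connecting their eight endpoints already determines the face. From there, your second and third steps — use vRM1--vRM3 and mRM to simplify the connecting arcs down to the induced pairing, then enumerate pairings (with virtualisation now used, where helpful, on $c_1$ and $c_2$ themselves to adjust their local type), and match each against Figure~\ref{figure-basic} — are the right idea. As for the enumeration itself, the paper also does not carry it out; it simply cites Manturov~\cite{ma2} and gives no proof. Your final deferral to~\cite{ma2} is therefore consistent with the source, but the sketch as written cannot begin until the incorrect first step is replaced by the observation about resolved crossings.
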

Note that these lemmata allow us to check arbitrary orientations on the basic faces with arbitrary numbering of crossings and components.
\begin{prop}\label{prop-standardfaces}
Let $L_D$ be a v-link diagram with a diagram which is isotopic to one of the projections from Figure~\ref{figure-basic}. Then $\bn{L_D}$ is a chain complex, i.e. the basic faces are anticommutative. Moreover, disjoint faces, i.e. faces such that the corresponding four-valent graph is unconnected, are always anticommutative.
\end{prop}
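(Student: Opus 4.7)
\begin{pf}[Proof plan]
The plan is to prove the two claims separately, starting with the easier disjoint case. If the four-valent graph of the face is disconnected, the two crossings $c_1,c_2$ that are being resolved lie in disjoint regions of the diagram. I would choose an orientation of every resolution (this is allowed by Lemma~\ref{lem-commutativeindependence}) so that the two saddles look identical no matter in which order one performs them: in particular the local orientations of $S_{0*}$ and $S_{*0}$ do not depend on whether the other crossing has already been resolved. Thus the underlying cobordisms $S_{*1}\circ S_{0*}$ and $S_{1*}\circ S_{*0}$ are boundary-preserving homeomorphic, and their decorations agree by Definition~\ref{defn-deco}. Anticommutativity then reduces to a sign computation: by Definition~\ref{defn-sign} the saddle sign is determined by two permutations bringing the marked v-circles to the front, and going around the face the two composed permutations differ by exactly one transposition, yielding the required factor of $-1$.

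For the connected basic faces I would proceed by explicit case analysis over the faces listed in Figure~\ref{figure-basic}. Lemma~\ref{lem-commutativeindependence} lets me choose an orientation for every resolution of the face, and Lemma~\ref{lem-xmarker} lets me choose x-markers and v-circle numberings that minimise bookkeeping. For each basic face I would compute $S_{*1}\circ S_{0*}$ and $S_{1*}\circ S_{*0}$ up to the relations of $\ucob_R(\emptyset)$, using the generating cobordisms from Figure~\ref{figure1-1} and the identities gathered in Lemma~\ref{lem-basiscalculations}. In the orientable cases, the two compositions are either both of the form $(\mathrm{id}\amalg\Delta)\circ m$ vs.\ $m\circ(\mathrm{id}\amalg\Delta)$ (handled by the Frobenius relation), both of the form $m\circ(m\amalg\mathrm{id})$ vs.\ $m\circ(\mathrm{id}\amalg m)$ (associativity), both of the form $(\Delta\amalg\mathrm{id})\circ\Delta$ vs.\ $(\mathrm{id}\amalg\Delta)\circ\Delta$ (coassociativity), or they become equal after applying commutativity or the permutation-$\Phi$ relations~(i),(j) of Lemma~\ref{lem-basiscalculations}. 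In each case the underlying cobordisms coincide, and one reads off that the decorations match.

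The genuinely new ingredient, and the step I expect to be the main obstacle, is the non-orientable basic faces (the right half of Figure~\ref{figure-basic}). Here at least one saddle is $\theta$, and the compositions $S_{*1}\circ S_{0*}$, $S_{1*}\circ S_{*0}$ are compositions of a $\theta$ with an $m$ or a $\Delta$, or of $m\circ\Delta$ with ``wrong'' decorations producing either a Klein bottle $\mathcal K$ or a torus $\mathcal T$. I would apply Proposition~\ref{prop-nonorientablefaces} to decide, in terms of the induced boundary decorations $(u,l_1,l_2)$ around the face, whether each composition yields $\mathcal K=\theta^2$ (by Lemma~\ref{lem-basiscalculations}(l)) or $\mathcal T=2\cdot\mathrm{id}$ (via the torus relation in~\eqref{eq-frobandco}). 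A careful orientation choice makes the two compositions of the same topological type, so they agree as cobordisms; then the $\theta$ relations in Lemma~\ref{lem-basiscalculations}(k), namely $\theta=-\theta$ and $\theta\circ\Phi^-_+=\theta$, together with the decoration-commutation of Lemma~\ref{lem-calc}(d), absorb any decoration discrepancy.

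Finally, once both compositions represent the same element of $\Mor(\ucob_R(\emptyset))$, I would count saddle signs. As in Example~\ref{ex-sign}, the product $\mathrm{sgn}(S_{*1})\mathrm{sgn}(S_{0*})$ differs from $\mathrm{sgn}(S_{1*})\mathrm{sgn}(S_{*0})$ by the sign of a single transposition coming from the relative order in which the two x-marked v-circles appear in the intermediate resolutions; this yields $\mathrm{sgn}(F)=-1$ in every case, which is exactly the anticommutativity statement. Combined with the disjoint case handled above, this completes the proof.
\end{pf}
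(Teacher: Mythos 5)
Your overall structure (reduce via Lemma~\ref{lem-commutativeindependence}, fix orientations/markers, then case-check the basic faces, with disjoint and non-orientable faces handled separately) matches the paper's proof, and your handling of the non-orientable cases via Proposition~\ref{prop-nonorientablefaces} and the $\theta$-relations is the right idea. However, the final paragraph contains a genuine gap: you claim that in every case the two compositions agree as morphisms in $\ucob_R(\emptyset)$ and the anticommuting $-1$ is then produced by the saddle signs, coming ``from a single transposition''. This is not what happens for the orientable face of type 1a. There the two compositions are $\Delta^{++}_+\circ m^{-}_{--}$ and $\Delta^{--}_-\circ m^{+}_{++}$, which are \emph{not} equal: the decoration relation~\eqref{eq-combrel12} (equivalently $\Delta^-_{--}=-\Delta^+_{++}$, part (b) of Lemma~\ref{lem-basiscalculations}) forces $\Delta^{--}_-\circ m^+_{++}=-\Delta^{++}_+\circ m^-_{--}$. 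Correspondingly, the paper observes that face 1a is the \emph{only} orientable basic face with an \emph{even} number of saddle signs; the permutation signs contribute $+1$, and the $-1$ needed for anticommutativity comes entirely from the decoration identity. Your uniform claim $\mathrm{sgn}(F)=-1$ would actually force that face to \emph{commute}, giving the wrong answer. What must be checked is that the total number of minus signs — saddle permutation signs \emph{and} any signs from the decoration relations such as $\Delta^-_{--}=-\Delta^+_{++}$ or $\theta=-\theta$ — is odd, and this parity depends on the face type.

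A smaller slip: you invoke Lemma~\ref{lem-calc}(d), but that lemma belongs to the tangle category $\ucob_R(k)$ with indicators, not to $\ucob_R(\emptyset)$ which is what is used here; the relevant decoration-commutation identities are items (b)--(e) of Lemma~\ref{lem-basiscalculations}. Also, for disjoint faces the paper is more cautious than you are: it only asserts that the total number of saddle signs is odd (and that faces with non-orientable saddles are handled by $\theta=-\theta$), not that the difference is exactly one transposition — given that each saddle sign is a product of two permutation signs over four saddles, the ``single transposition'' picture is not generally accurate, though the parity claim may well be.
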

\begin{proof}
Because of Lemma~\ref{lem-commutativeindependence}, we only need to check that the faces are anticommutative for orientations of the resolutions of our choice with an arbitrary numbering. Then we are left with three different cases, i.e. the v-link diagram of $L_D$ is orientable, i.e. all saddles are orientable, or the face is non-orientable, i.e. two or four of the saddles are non-orientable or the face is disjoint.

For the first case we see that every resolution contains only c-circles. We prove the anticommutativity of the corresponding face for the following orientations of the resolutions. All appearing circles are numbered in ascending order from left to right or outside to inside. Moreover, the position of the x-marker does not affect our argument and we suppress it in this proof.

Because every resolution contains only c-circles, we choose a positive orientation for the circles except for the two nested circles that appear in two resolution of a face of type 1a or 1b. This is a clockwise orientation for all the non-nested circles and a counterclockwise orientation for the two nested circles. Hence, all appearing cylinders are identities.

It follows from this convention that every $0$-resolution (or $1$-resolution) $\smoothing$ of a crossing $\slashoverback$ (or a crossing $\backoverslash$) is of the form $\du$ and every $1$-resolution (or $0$-resolution) $\smoothing$ of a crossing $\slashoverback$ (or a crossing $\backoverslash$) is of the form $\ler$. Moreover, the only face with an even number of saddle signs is of type 1a.

All we need to do is compare these local orientations with the ones from Table~\ref{tab-deco}. We see that we have to check (indicated by the $\overset{!}{=}$) the following equations.
\begin{itemize}
\item $\Delta^{++}_+\circ m^-_{--}\overset{!}{=}-\Delta^{--}_-\circ m^+_{++}$ (face of type 1a).
\item $m^{++}_+\circ\Delta^+_{++}\overset{!}{=}m^{++}_+\circ\Delta^+_{++}$ (face of type 1b).
\item $(\Delta^+_{++}\amalg\,\mathrm{id}^+_+)\circ(\mathrm{id}^+_+\amalg\,m^{++}_+)\overset{!}{=}m^{++}_+\circ\Delta^{++}_+$ (face of type 2a).
\item $m^{++}_{+}\circ(m^{++}_-\amalg\,\mathrm{id}^+_+) =m^{++}_{+}\circ(\mathrm{id}^+_+\amalg\,m^{++}_+)$ (face of type 2b).
\end{itemize}
Most of these equations are easy to calculate. The reader should check that the cobordisms on the left and the right side of every equation are homeomorphic (using Proposition~\ref{prop-nonorientablefaces} and Lemma~\ref{lem-basiscalculations}).

Furthermore, the second equation is clear and the other three follows easy using the result of Lemma~\ref{lem-basiscalculations}. Hence, they are all anticommutative because only the first face has an even number of saddle signs.

The non-orientable faces of type 1b, 2a, 2b, 3a and 3b are easy to check. One can use the Euler characteristic here and the relations in Equation~\ref{eq-combrel3}.

The non-orientable face of type 1a is the face from~\ref{probcube}. Here we have to use Proposition~\ref{prop-nonorientablefaces}. We get two $\theta$-cobordisms and a $\Delta$- and a $m$-cobordism. Because of the relations in Equation~\ref{eq-combrel3} we can ignore the saddle signs.

Again we can choose an orientation for the resolutions. We can do this for example in the following way (compare to Figure~\ref{figure0-big}).
\begin{itemize}
 \item The first M\"obius strips are $\theta\colon\dd\rightarrow\lel$ and $\theta\colon\dd\rightarrow\rir$.
 \item The pantsdown is $\Delta^+_{-+}\colon\du\rightarrow\lel$ and the pantsup is $m^{--}_+\colon\ud\rightarrow\ler$.
\end{itemize}
We use Proposition~\ref{prop-nonorientablefaces} to see that this face is anticommutative.

The reader should check that all disjoint faces with only orientable saddles have an odd number of saddle signs. The disjoint faces with two or four non-orientable saddles anticommute because of the relations in Equation~\ref{eq-combrel3} and (k) of Lemma~\ref{lem-basiscalculations}.
\end{proof}
This proposition leads us to an important theorem and an easy corollary.
\begin{thm}\label{theo-facescommute}(\textbf{Faces commute}) Let $L_D$ be a v-link diagram. Let $\bn{L_D}$ be the complex from Definition~\ref{defn-topcomplex} with arbitrary possible choices. Then every face of the complex $\bn{L_D}$ is anticommutative.
\end{thm}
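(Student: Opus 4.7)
The plan is to reduce the anticommutativity of an arbitrary face to a finite list of already verified cases, using the invariance lemmas stated just above to transfer the property back. The argument splits according to whether the face is connected (as a four-valent graph) or disjoint; the disjoint case is handled directly by the last sentence of Proposition~\ref{prop-standardfaces}, so the substantive work lies in the connected case.

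First I would fix a face $F$ of $\bn{L_D}$, together with its two distinguished crossings $c_1,c_2$. By Lemma~\ref{lem-basicfaces}, after a finite sequence of isotopies, virtualisations, and vRM1, vRM2, vRM3, mRM moves, I obtain a v-link diagram $L'_D$ in which the face $F'$ associated to $c_1,c_2$ is isotopic (up to vRM moves performed on $F'$ itself) to one of the basic orientable or non-orientable faces of Figure~\ref{figure-basic}, or to a mirror image thereof. Proposition~\ref{prop-standardfaces} establishes that all of these basic faces are anticommutative, which provides the base case of the argument.

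Next I would propagate anticommutativity backwards from $L'_D$ to $L_D$. Lemma~\ref{lem-virtualisation} states that if a face is anticommutative after one of virtualisation, vRM1, vRM2, vRM3 or mRM, then the corresponding face was already anticommutative before; applying this lemma to each move in the reduction of Lemma~\ref{lem-basicfaces} in reverse order transports the anticommutativity of $F'$ to the corresponding face in the starting diagram. The remaining freedom, namely the choice of orientations of the resolutions, of the numbering of crossings and v-circles, of the x-markers, and of planar isotopy representatives, is exactly what Lemma~\ref{lem-commutativeindependence} was designed to absorb: each of these choices is shown there to preserve anticommutativity face by face. Combining these two lemmas finishes the connected case, and the disjoint case then yields the theorem. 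The ensuing Corollary~\ref{cor-chaincomplex} (that $\bn{L_D}$ is an honest chain complex) is obtained by observing that composition of consecutive differentials decomposes into the contributions from all faces, which now all vanish by anticommutativity.

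The main obstacle I anticipate is purely bookkeeping: ensuring that Lemma~\ref{lem-basicfaces}'s reduction really leaves the chosen face $F$ unchanged up to controlled local modifications, so that the two saddles $S_{0*},S_{*0}$ and $S_{1*},S_{*1}$ of $F$ correspond unambiguously to the two saddles of the basic model face, and checking that the sequence of moves never merges $F$ with a neighbouring face or alters which circles are affected by each saddle. Once this correspondence is tracked carefully, so that Lemmata~\ref{lem-virtualisation} and~\ref{lem-commutativeindependence} can be applied locally to $F$ at each step, no further calculation is required because Proposition~\ref{prop-standardfaces} already exhausts the finitely many local pictures.
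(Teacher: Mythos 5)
Your proposal is correct and follows exactly the same route as the paper: the paper's proof is a one-line citation of Proposition~\ref{prop-standardfaces} together with Lemmata~\ref{lem-commutativeindependence},~\ref{lem-virtualisation} and~\ref{lem-basicfaces}, which you have simply unpacked into the same reduction-and-transport argument, including the separate handling of disjoint faces.
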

\begin{proof}
This is a direct consequence of the Proposition~\ref{prop-standardfaces} and the three Lemmata~\ref{lem-commutativeindependence},~\ref{lem-virtualisation} and~\ref{lem-basicfaces}.
\end{proof}
\begin{cor}\label{cor-chaincomplex}
The complex $\bn{L_D}$ is a chain complex. Thus, it is an object in the category $\ukob_R$.\qed 
\end{cor}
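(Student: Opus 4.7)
The plan is to deduce the corollary directly from Theorem~\ref{theo-facescommute}, since the only remaining content is the standard argument that pairwise anticommutativity of two-dimensional faces of a cube forces the total differential to square to zero.

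First I would unpack the definition of the differential $d^{i}$ between the chain module at homological height $i-n_-$ and the one at height $i+1-n_-$. By Definition~\ref{defn-topcomplex}, for resolutions $\gamma_a$ of length $i$ and $\gamma_{a'}$ of length $i+1$, the matrix entry of $d^{i}$ is the saddle $S\colon\gamma_a\to\gamma_{a'}$ (with its saddle sign and decoration) if $a,a'$ differ in exactly one letter with $a_r=0$, $a'_r=1$, and is zero otherwise. So $d^{i+1}\circ d^{i}$ has matrix entry from $\gamma_a$ (length $i$) to $\gamma_c$ (length $i+2$) equal to a sum over all intermediate $\gamma_b$ of length $i+1$ of the composed saddles.

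Next I would observe that only two words $a,c$ of lengths $i,i+2$ that differ in exactly two positions (say $r<s$) can yield a nonzero composition: in all other cases every pair of saddles between them vanishes. When $a$ and $c$ do differ in exactly those two positions, the sum collapses to two terms, one for each intermediate resolution $\gamma_b$ obtained by flipping the $r$-th letter first or the $s$-th letter first. Together with $\gamma_a,\gamma_c$, these two $\gamma_b$'s form exactly a face $F$ in the sense introduced in Section~\ref{sec-vkhsum}, and the two composed saddles are precisely the two directed paths around $F$. By Theorem~\ref{theo-facescommute}, the face $F$ is anticommutative, so the two contributions cancel and the $(\gamma_a,\gamma_c)$-entry of $d^{i+1}\circ d^{i}$ vanishes. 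Summing over all pairs $(a,c)$ yields $d^{i+1}\circ d^{i}=0$ for every $i$.

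Finally I would remark that independence of the construction from the auxiliary choices (numbering of crossings and v-circles, choice of x-markers, choice of orientations on resolutions) is already guaranteed by Lemma~\ref{lem-commutativeindependence}, so the resulting chain complex is well-defined up to canonical chain isomorphism. Since the chain modules are formal direct sums in $\mat(\ucob_R(\emptyset))$ and the differentials are formal matrices of morphisms in $\ucob_R(\emptyset)$ with $d^2=0$, the complex $\bn{L_D}$ is by definition an object of $\kom_b(\mat(\ucob_R(\emptyset)))=\ukob_R$. There is no real obstacle here; all the work was done in proving Theorem~\ref{theo-facescommute}, and the only thing to be careful about is checking that ``$(a,c)$-entries with $a,c$ differing in fewer than two letters'' genuinely vanish, which is immediate from the definition of the differential.
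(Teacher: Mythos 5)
Your proof is correct and takes the same route the paper intends: the corollary is stated with a $\qed$ immediately after Theorem~\ref{theo-facescommute}, signalling that it follows directly from the anticommutativity of all faces by the standard ``decompose $d^{i+1}\circ d^{i}$ into face contributions'' argument that you spell out. You merely make explicit what the paper leaves implicit, including the correct observation that only pairs of words differing in exactly two positions can contribute.
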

\subsection{Skew-extended Frobenius algebras}\label{sec-vkhfa}
We note that this section has three subsections. We construct the ``algebraic'' complex of a v-link diagram $L_D$ in the first subsection. It is an invariant of virtual links $L$, i.e. modulo the generalised Reidemeister moves from Figure~\ref{figure0-reide}. It should be noted that the notion of ``homology'' makes sense for the algebraic complex.

We describe the relation between uTQFTs and skew-extended Frobenius algebras in the second subsection. A relation of this kind was discovered by Turaev and Turner~\cite{tutu} for extended Frobenius algebras and the functors they use. Even though our construction is different, their ideas can be used in our context too. This is the main part of Theorem~\ref{thm-tututheo}. But our uTQFT correspond to \textit{skew-extended} Frobenius algebras, i.e. the map $\Phi$ is a skew-involution rather than an involution.

In the last subsection we are able to classify all aspherical uTQFTs which can be used to define v-links invariants, see Theorem~\ref{thm-class}. It is worth noting that we get an invariant for v-links which is an extension of the Khovanov complex for $R=\bZ$ or $R=\bQ$, see Corollary~\ref{cor-khovhomo}. Note that this includes that our construction can be seen as a categorification of the virtual Jones polynomial, see Corollary~\ref{cor-khovhomocat}. Moreover, we also get extension for other classical link homologies, see e.g. the Corollaries~\ref{cor-leekhovhomo} and~\ref{cor-bnhomo}.
\subsubsection*{The algebraic complex}
We denote any v-link diagram of the unknot with the symbol $\bigcirc$. Furthermore, we view v-circles, i.e. v-links without classical crossings, as disjoint circles immersed into $\bR^2$. Recall that $R$ is always a unital, commutative ring of arbitrary characteristic.
\begin{defn}\label{defn-utqft}(\textbf{uTQFT}) A \textit{(1+1)-dimensional unoriented TQFT} $\mathcal F$ (we call this a \textit{uTQFT}) is a strict, symmetric, covariant, $R$-pre-additive functor
\[
\mathcal F\colon\ucob_R(\emptyset)\rightarrow\RMOD.
\]
Here $\mathcal F(\bigcirc)$ is a finitely generated, free $R$-module. Let $\mathcal O,\mathcal O^{\prime}$ be two homeomorphic objects from $\ucob_R(\emptyset)$. Then $\mathcal F(\mathcal O)=\mathcal F(\mathcal O^{\prime})$ should hold. The functor $\mathcal F$ should also satisfy the following axioms.
\begin{itemize}
\item[(1)] Let $\mathcal O,\mathcal O^{\prime}$ be two disjoint objects in $\Ob(\ucob_R(\emptyset))$. Then there exists a natural (with respect to homeomorphisms) isomorphism between $\mathcal F(\mathcal O\amalg \mathcal O^{\prime})$ and $\mathcal F(\mathcal O)\otimes\mathcal F(\mathcal O^{\prime})$.
\item[(2)] The functor satisfies $\mathcal F(\emptyset)=R$.
\item[(3)] For a cobordism $C \colon \mathcal O \to \mathcal O^{\prime}\in\Mor(\ucob_R(\emptyset))$ the homomorphism $\mathcal F(C)$ is natural with respect to homeomorphisms of cobordisms.
\item[(4)] Let the cobordism $C\colon \mathcal O \to \mathcal O^{\prime}\in\Mor(\ucob_R(\emptyset))$ be a disjoint union of the two cobordisms $C_{1,2}$. Then $\mathcal F(C)=\mathcal F(C_1)\otimes\mathcal F(C_2)$ under the identification from axiom (1).
\end{itemize}
Two uTQFTs $\mathcal{F,F^{\prime}}$ are called \textit{isomorphic} if for each object of $\mathcal O\in\Ob(\ucob_R(\emptyset))$ there is an isomorphism $\mathcal F(\mathcal O)\rightarrow\mathcal F^{\prime}(\mathcal O)$, natural with respect to homeomorphisms of the objects and homeomorphisms of cobordisms, multiplicative with respect to disjoint union and the isomorphism assigned to $\emptyset$ is the identity morphism. 
\end{defn}
\begin{rem}
There are several things about the definition.
\begin{itemize}
\item Recall that our category is $R$-pre-additive. A uTQFT is a $R$-pre-additive functor. So we can extend this to a functor
\[
\mathcal F\colon\ukob_R\rightarrow\kom_b(\mat(\RMOD)),
\]
i.e. for every formal chain complex $(C_*,d_*)$ of objects of $\ucob_R(\emptyset)$, i.e. v-circles, the object $\mathcal F((C_*,d_*))$ is a chain complex of $R$-modules and for every formal chain map $f\colon (C_*,d_*)\to (C^{\prime}_*,d^{\prime}_*)$ of possible non-orientable, decorated cobordisms the morphism $\mathcal F(f)$ is a chain map of $R$-module homomorphisms.
\item A uTQFT $\mathcal F$ is a covariant functor. Hence, we see that $\mathcal F(\mathrm{id}^+_+)=\mathrm{id}$. Furthermore it is symmetric and hence $\mathcal F(\tau^{++}_{++})=\tau$. Here $\tau$ denotes the canonical permutation.
\item The permutation $\tau^{++}_{++}$ is natural. So we can assume that $A\otimes B$ and $B\otimes A$ are equal and not merely isomorphic.
\item For the definition of natural (converted to our setting) we refer the reader to~\cite{tutu}.
\end{itemize}
\end{rem}
\begin{defn}\label{defn-alcomplex}(\textbf{Algebraic complex}) Let $L_D$ be a v-link diagram. Then the \textit{algebraic complex of $L_D$} induced by the uTQFT $\mathcal F$ is the complex $\mathcal F(\llbracket L_D\rrbracket)$. 
\end{defn}
We prove the following important result. Here $L_D,L^{\prime}_D$ are a v-link diagrams. The proof is a direct consequence of Theorem~\ref{thm-geoinvarianz}.
\begin{thm}\label{thm-algcomplex}(\textbf{The algebraic complex is an invariant}) Let $\mathcal F$ a uTQFT which satisfies the Bar-Natan-relations of Figure~\ref{figureintroa-4}. Then the algebraic complex $\mathcal F(\llbracket L_D\rrbracket)$ is a v-link invariant in the following sense.

For two equivalent (up to the generalised Reidemeister moves) v-link diagrams $L_D,L^{\prime}_D$ the two chain complexes $\mathcal F(\llbracket L_D\rrbracket)$ and $\mathcal F(\llbracket L^{\prime}_D\rrbracket)$ are equal up to chain homotopy.\qed
\end{thm}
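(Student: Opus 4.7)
The plan is to reduce the statement to Theorem~\ref{thm-geoinvarianz} by showing that any uTQFT $\mathcal F$ satisfying the Bar-Natan relations extends to a functor that takes chain homotopy equivalences in $\ukob_R^{hl}$ to chain homotopy equivalences of $R$-module chain complexes. Once this functoriality is in place, the theorem is immediate: two equivalent v-link diagrams $L_D, L^{\prime}_D$ give rise, by Theorem~\ref{thm-geoinvarianz}, to topological complexes $\bn{L_D}, \bn{L^{\prime}_D}$ that are equal in $\ukob_R^{hl}$, and applying $\mathcal F$ preserves this equality.

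First I would verify that $\mathcal F$ extends to a well-defined functor $\mathcal F\colon \ukob_R^{hl} \to \kom_b(\mat(\RMOD))^h$. The extension to $\kom_b(\mat(\ucob_R(\emptyset)))$ is automatic from the $R$-pre-additive structure: formal direct sums go to direct sums of $R$-modules, formal matrices of cobordisms go to matrices of $R$-module homomorphisms, and since $\mathcal F$ respects composition and $R$-linear combinations, formal chain maps go to honest chain maps and $d\circ d = 0$ is preserved. The assumption that $\mathcal F$ satisfies the Bar-Natan relations from Figure~\ref{figureintroa-4} means exactly that $\mathcal F$ descends to the quotient category $\ucob_R(\emptyset)^l$, hence to $\ukob_R^l$.

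Next I would check that $\mathcal F$ sends formal chain homotopies to chain homotopies in $\kom_b(\RMOD)$. If $f,g\colon (C_*,d_*)\to (C^{\prime}_*,d^{\prime}_*)$ are formal chain maps in $\ukob_R^l$ with a formal chain homotopy $h$ satisfying $f-g = d^{\prime}\circ h + h\circ d$ (as a relation between morphisms in $\mat(\ucob_R(\emptyset)^l)$), then applying $\mathcal F$ and using $R$-linearity plus functoriality gives $\mathcal F(f)-\mathcal F(g) = \mathcal F(d^{\prime})\circ \mathcal F(h) + \mathcal F(h)\circ \mathcal F(d)$, so $\mathcal F(h)$ is a chain homotopy between $\mathcal F(f)$ and $\mathcal F(g)$. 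Consequently $\mathcal F$ induces a functor $\ukob_R^{hl}\to \kom_b(\RMOD)^h$ that sends chain homotopy equivalences to chain homotopy equivalences.

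Finally, Theorem~\ref{thm-geoinvarianz} tells us that $\bn{L_D} = \bn{L^{\prime}_D}$ in $\ukob_R^{hl}$, i.e.\ the two complexes are chain homotopy equivalent modulo the Bar-Natan relations. Applying the extended functor $\mathcal F$ yields the desired chain homotopy equivalence $\mathcal F(\bn{L_D}) \simeq \mathcal F(\bn{L^{\prime}_D})$ in $\kom_b(\RMOD)^h$. There is essentially no main obstacle here; the only genuine content is the bookkeeping of verifying that the Bar-Natan relations are precisely what is needed for $\mathcal F$ to factor through the quotient used in Theorem~\ref{thm-geoinvarianz}, everything else being purely formal from the $R$-pre-additive functoriality of $\mathcal F$.
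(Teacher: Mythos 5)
Your proof is correct and follows precisely the approach the paper intends: the paper marks the theorem with \qed and remarks just above the proof that "The proof is a direct consequence of Theorem~\ref{thm-geoinvarianz}", with the remark after Definition~\ref{defn-utqft} already having observed that the $R$-pre-additive functor $\mathcal F$ extends to $\ukob_R\to\kom_b(\mat(\RMOD))$. Your write-up simply spells out the bookkeeping (Bar-Natan relations give descent to the quotient, functoriality preserves chain homotopies) that the paper leaves implicit.
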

This theorem allows us to speak of \textit{the algebraic complex} $\mathcal F(\llbracket L\rrbracket)$ of any oriented v-link $L$. Furthermore, the category $\RMOD$ is abelian. Hence, the category $\kom_b(\mat(\RMOD))$ is also an abelian category. So unlike in the category $\ukob_R$, we have the notion of homology. We denote the homology of the algebraic chain complex by $H(\mathcal F(\bn L))$.
\subsubsection*{Skew-extended Frobenius algebras and uTQFTs}
We continue with the definition of an algebra that we call a \textit{skew-extended Frobenius algebra}. For a $R$-bialgebra $A$ with comultiplication $\Delta$ and counit $\varepsilon$ we call an $R$-algebra homomorphism $\Phi\colon A\to A$ a \textit{skew-involution} if it satisfies the following.
\begin{itemize}
\item[(a)] $\Phi^2=\mathrm{id}$ (involution).
\item[(b)] $(\Phi\otimes\Phi)\circ\Delta\circ\Phi=-\Delta$ and $\varepsilon\circ\Phi=-\varepsilon$ (skew-property).
\end{itemize}
\begin{defn}\label{defn-exfrob}(\textbf{Skew-extended Frobenius algebras}) A \textit{Frobenius algebra $A$ over $R$} is a unital, commutative algebra over $R$ which is projective and of finite type (as a $R$-module), together with a module homomorphism $\varepsilon\colon A\to R$, such that the bilinear form $\left\langle \cdot,\cdot\right\rangle$ defined by $\left\langle a,b\right\rangle=\varepsilon(ab)$ for all $a,b\in A$ is non-degenerate.

An \textit{skew-extended Frobenius algebra $A$ over $R$} is a Frobenius algebra together with a skew-involution of Frobenius algebras $\Phi\colon A\to A$ and an element $\theta\in A$ which satisfy the two equations below. Note that one can use $\varepsilon$ to define a comultiplication $\Delta$.
\begin{itemize}
\item[(1)] $\Phi(\theta a)=\theta a=\theta\Phi(a)$ for all $a\in A$.
\item[(2)] $(m\circ(\Phi\otimes\mathrm{id})\circ\Delta)(1)=\theta^2$. 
\end{itemize}
\end{defn}
\begin{nota}
Because of $1\in A$, we can define $\iota\colon R\to A$ by $1\mapsto 1$. We can write a Frobenius algebra uniquely as $\mathcal F=(R,A,\varepsilon,\Delta)$. Moreover, we can write such a skew-extended Frobenius algebra $\mathcal F$ uniquely as $\mathcal F=(R,A,\varepsilon,\Delta,\Phi,\theta)$.
\end{nota}
\begin{defn}\label{defn-isofrobenius}
Two skew-extended Frobenius algebras, denoted $\mathcal F_1=(R,A,\varepsilon,\Delta,\Phi,\theta)$ and $\mathcal F_2=(R,A^{\prime},\varepsilon^{\prime},\Delta^{\prime},\Phi^{\prime},\theta^{\prime})$, are called \textit{isomorphic} if there exists an isomorphism of Frobenius algebras $f\colon A\rightarrow A^{\prime}$, which satisfies $f(\theta)=\theta^{\prime}$ and $f\circ\Phi=\Phi^{\prime}\circ f$.

We call a Frobenius algebra \textit{aspherical} if $\varepsilon(\iota(1))=0$. Furthermore, we say it is a \textit{rank2-Frobenius algebra} if $A\cong 1\cdot R\oplus X\cdot R$ as $R$-modules.
\end{defn}
\begin{rem}\label{rem-categoryfrob}
The map $\varepsilon$ is called the \textit{counit} of $A$. It can be used to define a \textit{comultiplication} $\Delta\colon A\to A\otimes A$. We will call $m\colon A\otimes A\rightarrow A$ the \textit{multiplication} of $A$. The coproduct and the product make the two diagrams
\[
\begin{xy}
  \xymatrix{
 A\otimes A\ar[r]^{\mathrm{id}\otimes\Delta}\ar[d]_{\Delta\otimes\mathrm{id}}\ar[dr]|/.3em/{\Delta\circ m} & A\otimes A\otimes A\ar[d]^{m\otimes\mathrm{id}} & & A\ar[dr]|{\mathrm{id}}\ar[r]^{\Delta}\ar[d]_{\Delta} & A\otimes A\ar[d]^{\mathrm{id}\otimes\varepsilon}\\
 A\otimes A\otimes A\ar[r]_{\mathrm{id}\otimes m} & A\otimes A  & & A\otimes A\ar[r]_{\varepsilon\otimes\mathrm{id}} & A}
\end{xy}
\]
commutative. In a skew-extended Frobenius algebra the skew-involution $\Phi$ and the element $\theta$ make the two diagrams
\[
\begin{xy}
  \xymatrix{
& A\ar[rd]^{\Phi} & & & & A\otimes A\ar[rd]^{m^{\prime}} & \\
A\ar[ru]^{\cdot\theta}\ar[rr]_{\cdot\theta} & & A & & A\ar[ru]^{\Delta}\ar[rd]_{\cdot\theta} &  & A\\
 & & & & & A\ar[ru]_{\cdot\theta} & }
\end{xy}
\]
commutative (it is easy to check that the two equations from Definition~\ref{defn-exfrob} already imply the equation $(m\circ(\phi\otimes\mathrm{id})\circ\Delta)(a)=\theta^2 a$ for all $a\in A$). Here the map $\cdot\theta\colon A\to A$ is the multiplication with $\theta$ and the map $m^{\prime}\colon A\otimes A\to A$ is the map $(\Phi\otimes\mathrm{id})\circ m$.

We recognise that the lower right diagram is the problematic face from~\ref{probcube}. So the second equation from Definition~\ref{defn-exfrob} is a key point in the definition. 
\end{rem}
The following theorem is inspired by a corresponding theorem in~\cite{tutu}.
\begin{thm}\label{thm-tututheo}
The isomorphism classes of (1+1)-dimensional uTQFTs over $R$ are in bijective correspondence with the isomorphism classes of skew-extended Frobenius algebras over $R$.
\end{thm}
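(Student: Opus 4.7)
\begin{pf}[Proof plan]
The plan is to set up two mutually inverse assignments on isomorphism classes: one sending a uTQFT $\mathcal F$ to a skew-extended Frobenius algebra and one going the other way, and then to verify that both assignments respect the natural notions of isomorphism. Given $\mathcal F$, I would define $A=\mathcal F(\bigcirc)$ and read off structure maps from the generators of $\ucob_R(\emptyset)$: set $\iota=\mathcal F(\iota_+)$, $\varepsilon=\mathcal F(\varepsilon^+)$, $m=\mathcal F(m^{++}_+)$, $\Delta=\mathcal F(\Delta^+_{++})$, $\Phi=\mathcal F(\Phi^-_+)$ and $\theta=\mathcal F(\theta)(1)$. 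The monoidal axiom (1) of Definition~\ref{defn-utqft} turns composition of cobordisms into composition of module maps after a canonical identification, so every topological identity among the generators descends to an algebraic identity among these maps.

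The classical Frobenius-algebra part proceeds exactly as in the oriented $(1{+}1)$-TQFT story of Kock~\cite{ko}: commutativity and associativity of $m$, cocommutativity and coassociativity of $\Delta$, the unit/counit axioms and the Frobenius relation~\eqref{eq-frobandco} are the images under $\mathcal F$ of the corresponding topological relations in Equations~\ref{eq-commasss},~\ref{eq-unitrel},~\ref{eq-permrel1},~\ref{eq-permrel2},~\ref{eq-frobandco}. Non-degeneracy of $\langle\cdot,\cdot\rangle=\varepsilon\circ m$ follows from the snake/zig-zag identity encoded in $\iota$ and $\varepsilon$, together with freeness and finite type of $\mathcal F(\bigcirc)$. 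For the extra data, the relation $\Phi^-_+\circ\Phi^-_+=\mathrm{id}^+_+$ of~\eqref{eq-combrel12} gives $\Phi^2=\mathrm{id}$, and the identities (b)--(e) of Lemma~\ref{lem-basiscalculations}, which carry honest minus signs in $\ucob_R(\emptyset)$, translate into $(\Phi\otimes\Phi)\circ\Delta\circ\Phi=-\Delta$ and $\varepsilon\circ\Phi=-\varepsilon$, i.e.\ the skew-property of $\Phi$. The $\theta$-relations~(k) of Lemma~\ref{lem-basiscalculations} give $\Phi(\theta a)=\theta a=\theta\Phi(a)$, and the crucial second condition $m\circ(\Phi\otimes\mathrm{id})\circ\Delta(1)=\theta^2$ is precisely Proposition~\ref{prop-nonorientablefaces} (the M\"obius/Klein bottle identification) applied to $\mathcal F$.

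Going back, given a skew-extended Frobenius algebra $(R,A,\varepsilon,\Delta,\Phi,\theta)$, I would define $\mathcal F$ on objects by $\mathcal F(\amalg_{i\in I}\mathcal O_i)=A^{\otimes|I|}$ and on the generators of Figure~\ref{figure1-1} by the prescribed algebraic data, with the non-standard decorations produced by pre-/post-composition with $\Phi$ according to the ``composition with $\Phi^-_+$ changes the decoration'' slogan of Definition~\ref{defn-category}. Well-definedness reduces to checking that every defining relation in~\eqref{eq-combrel12}--\eqref{eq-frobandco} is satisfied by the algebraic analogues: the involution, (co)associativity, (co)commutativity, unit/counit, Frobenius and permutation relations come from the Frobenius algebra axioms, while the sign-bearing relations and the torus/M\"obius relation~\eqref{eq-frobandco} follow from the skew-property of $\Phi$ and the two conditions of Definition~\ref{defn-exfrob}. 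Extending by disjoint union and formal matrix composition yields an $R$-pre-additive, symmetric monoidal functor.

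Finally, the two constructions are mutually inverse essentially by definition: starting from $\mathcal F$, rebuilding $\mathcal F'$ from the associated algebra recovers $\mathcal F$ on generators and hence everywhere, and starting from an algebra one evidently recovers the same data. The notion of isomorphism matches on both sides because a natural module isomorphism $\mathcal F(\bigcirc)\to\mathcal F'(\bigcirc)$ commuting with the generator maps is exactly a Frobenius algebra isomorphism intertwining $\Phi,\Phi'$ and sending $\theta$ to $\theta'$, cf.\ Definition~\ref{defn-isofrobenius}. The main obstacle I anticipate is bookkeeping the signs: verifying that the minus signs appearing in the combinatorial relations of~\eqref{eq-combrel12} and in Lemma~\ref{lem-basiscalculations}(b)--(e),(k) are consistent with a single algebraic skew-involution $\Phi$, and that the M\"obius condition coming out of Proposition~\ref{prop-nonorientablefaces} is exactly the identity $m\circ(\Phi\otimes\mathrm{id})\circ\Delta(1)=\theta^2$ and not some variant; handling these signs uniformly, rather than case-by-case through Table~\ref{tab-deco}, is where care will be required.
\end{pf}
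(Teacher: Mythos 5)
Your plan follows the same route as the paper's proof: read the structure maps off the generators of $\ucob_R(\emptyset)$ to build a skew-extended Frobenius algebra, and conversely build the functor from the algebraic data, checking that the relations in Definition~\ref{defn-category} and Lemma~\ref{lem-basiscalculations} correspond to the skew-extended Frobenius axioms. The only point where the paper is more careful than your sketch is the reverse direction: rather than verifying decomposition-independence directly from the relations, it first passes to the \emph{underlying classical} TQFT of the underlying Frobenius algebra (which is known to be well-defined by Kock's theorem) and then adjusts boundary decorations via $\Phi$ and non-orientable connected sums via $\cdot\theta$, using the two axioms of Definition~\ref{defn-exfrob} to show these adjustments are consistent.
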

\begin{proof}
First let us consider a uTQFT $\mathcal F$ over $R$. We describe a way to get a skew-extended Frobenius algebra from it. Let us denote this algebra by $(R,A,\varepsilon,\Delta,\Phi,\theta)$.

We take $A=\mathcal F(\bigcirc)$ as our underlying $R$-module. Next we need a skew-involution $\Phi\colon A\to A$. We take the cylinder from Figure~\ref{figure1-1}. Set $\Phi=\mathcal F(\Phi^-_+)$.

The unit $\iota$ should be $\mathcal F(\iota_+)$. There is no further choice because $\iota_+=\iota_-$. The counit should be $\mathcal F(\varepsilon^+)$. Here we have a choice because $\varepsilon^+\neq\varepsilon^-$. But because of $\varepsilon^+=-\varepsilon^-$, both choices lead to isomorphic algebras.

Now we need a multiplication $m$ and a comultiplication $\Delta$. One may suspect, that we have different choices for either of them, namely the eight $m^{\pm\pm}_{\pm},\Delta^{\pm}_{\pm\pm}$. But the relations of a Frobenius algebra only allow one option. We discuss this now. It should be noted that the computations below can be done using Lemma~\ref{lem-basiscalculations}.
\begin{itemize}
\item The lower boundary components of $\Delta^u_{l_1l_2}$ must have the same glueing numbers as the boundary component of $\varepsilon^+$ because $\mathcal F(\varepsilon^+)$ should be the counit.
\item Because of the relation $\varepsilon\circ m\circ(\mathrm{id}\otimes\iota)=\varepsilon=\varepsilon\circ m\circ(\iota\otimes\mathrm{id})$, the lower boundary of $m^{u_1u_2}_l$ must have the same glueing number as the boundary component of $\varepsilon^+$. The same is true for the upper boundary (this means we need $m^{++}_+=m^{--}_-$).
\item Because of the relation $(\mathrm{id}\otimes\,m)\circ(\Delta\otimes\mathrm{id})=\Delta\circ m=(m\otimes\mathrm{id})\circ(\mathrm{id}\otimes\Delta)$, the $m^{u_1u_2}_{l}$ must have the same glueing number on the lower boundary as the upper boundary of $\Delta^u_{l_1l_2}$ (the reader should check that this is the only possible choice for the glueing numbers for $m^{u_1u_2}_{l}$ and $\Delta^u_{l_1l_2}$).
\end{itemize}
Therefore, we have $\mathcal F(\iota_+)=\iota$, $\mathcal F(\varepsilon^+)=\varepsilon$, $\mathcal F(m^{++}_+)=m$ and $\mathcal F(\Delta^+_{++})=\Delta$.

The last piece missing is the element $\theta\in A$. Consider a two times punctured projective plane $\mathbb{RP}^2_2$ (a punctured M\"obius strip). This is $\theta$ in our notation.

Then $\theta\circ\iota_+\colon\emptyset\to\bigcirc$ is a punctured projective plane (hence a M\"obius strip). Set $\theta=\mathcal F(\theta\circ\iota_+)(1)$. Because of the definition, this is an element of $\mathcal F(\bigcirc)=A$.

We have to prove the equations needed for a skew-extended Frobenius algebra, i.e. that $\iota$ is a unit, $\varepsilon$ is a counit, $\Phi$ is a skew-involution, $m$ ($\Delta$) is a (co)multiplication and the commutativity of the faces from Remark~\ref{rem-categoryfrob}.

This is a straightforward verification bases on the relations from Lemma~\ref{lem-basiscalculations} (we omit it here). This shows that every uTQFT has an underlying skew-extended Frobenius algebra.
\vskip0.5cm
For the other direction, i.e. if we assume that we have a skew-extended Frobenius algebra, we note that this algebra has an underlying ``classical'' Frobenius algebra. Therefore we get a TQFT $\mathcal F^{\prime}$ from this underlying Frobenius algebra. We want to use this TQFT to define a uTQFT $\mathcal F$. The TQFT $\mathcal F^{\prime}$ is a covariant functor
\[
\mathcal F^{\prime}\colon\cob_R(\emptyset)\rightarrow\RMOD.
\]
Let $\mathcal O$ be an object in $\ucob_R(\emptyset)$. This object gives us (modulo homeomorphisms) a corresponding object $\mathcal O^{\prime}$ in $\cob_R(\emptyset)$. We set $\mathcal F(\mathcal O)=\mathcal F^{\prime}(\mathcal O^{\prime})$. This assignment clearly satisfies that $\mathcal F(\bigcirc)$ is a finitely generated, free $R$-module and $\mathcal F(\mathcal O_1)=\mathcal F(\mathcal O_2)$ for two homeomorphic objects $\mathcal O_1,\mathcal O_2$.

Moreover, because $\mathcal F^{\prime}$ is a TQFT, this satisfies the first two axioms from our Definition~\ref{defn-utqft}. Now we need to define $\mathcal F(\mathcal C)$ for morphisms from $\ucob_R(\emptyset)$.
\vskip0.5cm
First we assume that $\mathcal C\colon\mathcal O_1\to\mathcal O_2$ is orientable and connected. Then we have a corresponding morphism in $\cob_R(\emptyset)$, i.e. the same without the boundary decorations, which we will denote by $\mathcal C^{\prime}\colon\mathcal O^{\prime}_1\to\mathcal O^{\prime}_2$.

We denote the cap-, cup-, pantsup- and pantsdown-cobordisms in the category $\cob_R(\emptyset)$ by $\iota,\varepsilon,m$ and $\Delta$ respectively. Let us define
\[
\mathcal F(\iota_+)=\mathcal F^{\prime}(\iota),\mathcal F(\varepsilon^+)=\mathcal F^{\prime}(\varepsilon),\mathcal F(m^{++}_+)=\mathcal F^{\prime}(m),\mathcal F(\Delta^+_{++})=\mathcal F^{\prime}(\Delta)\text{ and }\mathcal F(\Phi^-_+)=\Phi.
\]
The map $\Phi$ is the skew-involution in the skew-extended Frobenius algebra. Thus, we can define $\mathcal F(\mathcal C)$ in the following way. We decompose $\mathcal C^{\prime}$ into the basic pieces $\iota,\varepsilon,m,\Delta$. Then $\mathcal F^{\prime}(\mathcal C^{\prime})$ is independent of this decomposition because $\mathcal F^{\prime}$ is a TQFT. If we use the same decomposition for $\mathcal C$ (under the identification from above), we get a cobordism $\tilde{\mathcal C}$. For this cobordism we can define $\mathcal F(\tilde{\mathcal C})$. We see that we only have to change some of the boundary decorations of $\tilde{\mathcal C}$ to obtain $\mathcal C$. Hence, we have
\[
\mathcal C=\mathcal C_1\circ\tilde{\mathcal C}\circ\mathcal C_2,
\]
where $\mathcal C_1,\mathcal C_2$ are cylinders of the type $\mathrm{id}^+_+$ or $\Phi^-_+$. Hence, we can define
\[
\mathcal F(\mathcal C)=\mathcal F(\mathcal C_1)\circ\mathcal F(\tilde{\mathcal C})\circ\mathcal F(\mathcal C_2).
\]
That this is also independent of the decomposition follows from the fact that $\mathrm{id}^+_+,\Phi^-_+$ and the corresponding maps in the skew-extended Frobenius algebra are (skew-)involutions and a ``level-by-level''\footnote{One can for example verify the statement by induction on the number of generators in the decomposition.} change of decorations using the relations in Lemma~\ref{lem-basiscalculations}. Moreover, for a non-connected, orientable cobordism $\mathcal C$ we extend the definition from above multiplicatively.
\vskip0.5cm
For a non-orientable, connected cobordism $\mathcal C$ we have to define $\mathcal F(\theta)=\cdot\theta$ first. Here the map $\cdot\theta\colon A\to A$ is the multiplication with the element $\theta$ in our skew-extended Frobenius algebra. Hence, if we decompose $\mathcal C=\mathcal C_{or}\#n\mathbb{RP}^2$ into a (non-decorated) orientable part $\mathcal C_{or}$ and $n$-times a projective plane we define
\[
\mathcal F(\mathcal C)=\theta^n\mathcal F^{\prime}(\mathcal C_{or}).
\]
This is again independent of the decomposition of $\mathcal C_{or}$, because of the first relation in a skew-extended Frobenius algebra, namely $\Phi(\theta a)=\theta a=\theta\Phi(a)$ for all $a\in A$. Furthermore, it is independent from the decomposition $\mathcal C=\mathcal C_{or}\#n\mathbb{RP}^2$, because if we replace a $2-\mathbb{RP}^2$ with a torus $\mathcal T$, we see that $\mathcal F(\mathcal C_{or})$ is multiplied by a factor $(m\circ(\Phi\otimes\mathrm{id})\circ\Delta)(1)\theta^{n-2}$. Hence, using the second relation of the skew-extended Frobenius algebra, we get
\begin{align*}
\mathcal F(\mathcal C_{or}\#n\mathbb{RP}^2)&=(\theta^n)\mathcal F^{\prime}(\mathcal C_{or})=\theta^{n-2}(m\circ(\Phi\otimes\mathrm{id})\circ\Delta)(1)\mathcal F^{\prime}(\mathcal C_{or})=\mathcal F(\mathcal C_{or}\#\mathcal T\#(n-2)\mathbb{RP}^2).
\end{align*}
For a non-connected, non-orientable cobordism $\mathcal C$ we extend the definition from above multiplicatively. Hence, we only have to show the remaining axioms from the Definition~\ref{defn-utqft}. The reader should check these axioms (one could follow the end of the proof in~\cite{tutu}).
\end{proof}
\subsubsection*{Classification of v-link homologies}
From now on we use the notions uTQFT and skew-extended Frobenius algebra interchangeably.
\begin{prop}\label{cor-univalg}(\textbf{The universal skew-extended Frobenius algebra}) Every aspherical rank2-uTQFT comes from the rank2-uTQFT $\mathcal F_U=(R_U,A_U,\varepsilon_U,\Delta_U,\Phi_U,\theta_U)$ through base change. Here the ring $R_U$ is $R_U=\bZ[a,a^{-1},\alpha,\beta,\gamma,t]/\mathcal I$ with $\mathcal I$ is the ideal generated by the relations (we use the notation $h=a^{-1}\gamma-\alpha^2-\beta^2 t$ here)
\[
\alpha\gamma=\beta\gamma=2\alpha=2\beta=a^2\beta^2 h=0.
\]
Furthermore, the algebra is $A_U=R_U[X]/(X^2=t+ahX)$, the element $\theta_U\in R_U$ is given by $\theta_U=\alpha+\beta\cdot X$ and the maps will be the ones from Table~\ref{tabular-matrixtable}. The table is the following.
\vskip0.15cm
\begin{table}[ht]
\begin{center}
\begin{tabular}{|c||c|}
\hline
$\iota_U\colon R\to A,\;1\mapsto 1.$ & $\Phi_U\colon A\to A,\;\begin{cases}1\mapsto 1,\\X\mapsto \gamma-X.\end{cases}$\\
\hline
$\varepsilon_U\colon A\to R,\;1\mapsto 0,\,X\mapsto a.$ & $\cdot\theta_U\colon A\to A,\;\begin{cases}1\mapsto \alpha+\beta\cdot X,\\X\mapsto \beta t+(\alpha+a\beta h) \cdot X.\end{cases}$\\
\hline
\multicolumn{2}{|c|}{$m_U\colon A\otimes A\to A,\;\begin{cases}1\otimes1\mapsto 1,\,1\otimes X\mapsto X,\\X\otimes 1\mapsto X,\,X\otimes X\mapsto t+h\cdot X.\end{cases}$}\\
\hline
\multicolumn{2}{|c|}{$\Delta_U\colon A\to A\otimes A,\;\begin{cases}1\mapsto -h\cdot1\otimes 1+a^{-1}(1\otimes X+X\otimes 1),\\X\mapsto a^{-1}t\cdot 1\otimes 1+a^{-1}\cdot X\otimes X.\end{cases}$}\\
\hline
\end{tabular}
\caption{The maps for the generators from Figure~\ref{figure1-1}.}\label{tabular-matrixtable}
\end{center}
\end{table}
\end{prop}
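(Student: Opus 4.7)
By Theorem~\ref{thm-tututheo} the classification of aspherical rank-2 uTQFTs over $R$ is equivalent to the classification of aspherical rank-2 skew-extended Frobenius algebras $(R,A,\varepsilon,\Delta,\Phi,\theta)$; my plan is to show every such algebra arises from $\mathcal{F}_U$ via a unique ring homomorphism $R_U\to R$. Fix a basis $\{1,X\}$ of $A$ and set $a=\varepsilon(X)$; asphericity gives $\varepsilon(1)=0$, so the Frobenius pairing has matrix $\bigl(\begin{smallmatrix} 0 & a \\ a & \varepsilon(X^2)\end{smallmatrix}\bigr)$ with determinant $-a^2$, forcing $a\in R^{\times}$. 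Since $A$ has rank two and is commutative, $X^2=t+hX$ for unique $t,h\in R$, pinning down the multiplication to $m_U$; the Frobenius bimodule identity $\Delta(xy)=(x\otimes 1)\Delta(y)=\Delta(x)(1\otimes y)$ combined with the counit axiom then pins down $\Delta$ to the form in Table~\ref{tabular-matrixtable}.

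I next extract $\Phi$ and $\theta$. Since $\Phi$ is a unital algebra map, write $\Phi(X)=c+dX$; the skew counit relation $\varepsilon\circ\Phi=-\varepsilon$ evaluated at $X$ gives $da=-a$, and invertibility of $a$ forces $d=-1$. Setting $\gamma:=c$ yields $\Phi(X)=\gamma-X$, and the remaining skew-involution axioms together with $\Phi(X^2)=\Phi(X)^2$ reduce to $\gamma(\gamma-h)=0$. Writing $\theta=\alpha+\beta X$, condition~(1) of Definition~\ref{defn-exfrob} applied to the element $y=1$ gives $\Phi(\theta)=\theta$, which yields $2\beta=0$ and $\beta\gamma=0$ by matching the $1$- and $X$-components; applied to $y=X$ and combined with the previous relations, it forces $2\alpha=0$ and $\alpha\gamma=0$.

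Condition~(2), namely $(m\circ(\Phi\otimes\mathrm{id})\circ\Delta)(1)=\theta^2$ — the algebraic incarnation of the M\"obius relation of Proposition~\ref{prop-nonorientablefaces} — supplies the last ingredients. I expand the left-hand side using the explicit $\Delta(1)$ and $\Phi(X)=\gamma-X$, and the right-hand side as $\theta^2=(\alpha^2+\beta^2 t)+\beta^2 h X$ (using $2\alpha\beta=0$); matching coefficients of $1$ and of $X$ yields respectively the defining equation $h=a^{-1}\gamma-\alpha^2-\beta^2 t$ and the single remaining relation $a^2\beta^2 h=0$. Thus $(a,a^{-1},\alpha,\beta,\gamma,t)$ defines a unique ring map $R_U\to R$ under which the original algebra is the base change of $\mathcal{F}_U$. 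The converse — that $\mathcal{F}_U$ really is a skew-extended Frobenius algebra over $R_U$ — is a direct verification using exactly the ideal relations in $\mathcal{I}$. The main obstacle I expect is the bookkeeping in this last computation: I must organise the expansion of condition~(2) so that the torsion and absorption relations $2\alpha=2\beta=\alpha\gamma=\beta\gamma=0$ derived from condition~(1) kill exactly the cross-terms that would otherwise impose spurious further constraints, leaving only the stated formula for $h$ and the single relation $a^2\beta^2 h=0$.
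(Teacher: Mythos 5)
Your proof is correct and follows essentially the same route as the paper: reduce to skew-extended Frobenius algebras via Theorem~\ref{thm-tututheo}, pin down the underlying rank-2 aspherical Frobenius structure, read off $\Phi(X)=\gamma-X$ from the skew-counit relation, extract $\alpha\gamma=\beta\gamma=2\alpha=2\beta=0$ from condition~(1), and obtain $h=a^{-1}\gamma-\alpha^2-\beta^2 t$ together with $\beta^2 h=0$ (equivalently $a^2\beta^2 h=0$, as $a$ is a unit) from condition~(2). The only difference is cosmetic -- you argue invertibility of $a$ via the determinant of the Frobenius pairing rather than via $(\varepsilon\otimes\mathrm{id})\circ\Delta=\mathrm{id}$ -- and you inherit a small internal inconsistency of the paper itself, namely that the table entry $m_U(X\otimes X)=t+h\cdot X$ should read $t+ah\cdot X$ to match $A_U=R_U[X]/(X^2=t+ahX)$ and the displayed $\Delta_U$; this does not affect the resulting ideal since $a\in R_U^{\times}$.
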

\begin{proof}
We start by showing that the data given above give rise to a skew-extended Frobenius algebra, i.e. the satisfy the axioms given in Definition~\ref{defn-exfrob}. Note that the algebra $A_U$ is certainly a rank2-algebra over $R_U$, $\theta_U=\alpha+\beta X\in A_U$ and $(\varepsilon_U\circ\iota_U(1))=0$.

Moreover, it satisfies the axioms of an aspherical Frobenius algebra, since it, forgetting the new structure, coincides with the classical one given in~\cite{kh2}.

A direct computation verifies that $\Phi_U$ is a skew-involution, i.e.
\[
\Phi_U\circ \Phi_U=\mathrm{id_{A_U}},\;(\Phi_U\otimes\Phi_U)\circ\Delta_U\circ\Phi_U=-\Delta_U \;\text{and}\;\varepsilon_U\circ\Phi_U=-\varepsilon_U.
\]
Furthermore, a direct computation shows that $\cdot\theta_U$ and $\Phi_U$ also satisfy the axioms (a) and (b) from Definition~\ref{defn-exfrob}, i.e. the whole data is an aspherical rank2-uTQFT. 
\vskip0.5cm
Now assume that we have a given aspherical rank2-uTQFT $\mathcal F=(R,A,\varepsilon,\Delta,\Phi,\theta)$.

First we observe that a skew-extended Frobenius algebra $A$ has an underlying Frobenius algebra of rank two. Hence, $\iota$ has to be of the given form. Because it is also aspherical, i.e. $\varepsilon(\iota(1))=0$, we see that $\varepsilon(1)=1$ and $\varepsilon(X)=a\cdot 1$. The element $a\in R$ is invertible because of the relation
\[
(\varepsilon\otimes\mathrm{id})\circ\Delta=\mathrm{id}=(\mathrm{id}\otimes\text{ }\varepsilon)\circ\Delta.
\]
It is known (e.g.~\cite{kh2}) that such an algebra is of the form $A=R[X]/(X^2=t+ahX)$ with multiplication $m$ and comultiplication $\Delta$ from the table~\ref{tabular-matrixtable} above.
\vskip0.5cm
Next we look at the new structure. Because $\theta$ is an element of $A\cong 1\cdot R\oplus X\cdot R$ we find $\alpha,\beta\in R$ such that $\theta=\alpha+\beta X$. Using the multiplication we see that $X^2=t+ah\cdot X$. So an easy calculation shows that $\theta\cdot X=\beta t+(\alpha+a\beta h)X$ which gives us the map $\cdot\theta$ as above.

Because the map $\Phi\colon A\to A$ is not only $R$-linear, but also a skew-involution, we get $\Phi(1)=1$ and with $\varepsilon\circ\Phi=-\varepsilon$ we get $\Phi(X)=\gamma-X$. Using the first relation of a skew-extended Frobenius algebra we get the relations $\alpha\gamma=\beta\gamma=2\beta=0$ and $2(\alpha+a\beta h)=2\alpha=0$.

Using the second relation of a skew-extended Frobenius algebra, namely
\[
m\circ(\Phi\amalg\mathrm{id_{A}})\circ\Delta=(\cdot\theta)^2,
\]
we get the last two relations $ah=\gamma-a\alpha^2-a\beta^2t$ and $a^2\beta^2 h=0$.

These are all relations we get from the axioms of an aspherical rank2-uTQFT, i.e. any other axiom will also lead to one of these relations.
\end{proof}
\begin{rem}\label{rem-tutuextension}
The reader familiar with the paper of Turaev and Turner~\cite{tutu} will recognise that our \textit{universal skew-extended Frobenius algebra} $\mathcal F_U$ is different from the one from Turaev and Turner. But this is an advantage (see Corollary~\ref{cor-khovhomo}).

As mentioned before in the Remark~\ref{rem-tutu}, the version of Turaev and Turner can be obtained from our concept too. The difference again are the relations $\varepsilon^+=-\varepsilon^-$ and $\Delta^+_{++}=-\Delta^-_{--}$. This forces $\mathcal \Phi=F(\Phi^-_+)$ from the proof above to send $X\mapsto\gamma -X$ instead of $X\mapsto\gamma +X$ (but over $R$ with $\mathrm{char}(R)=2$ they coincide).
\end{rem}
The next corollary allows us to characterise the uTQFTs which lead to v-link homology.
\begin{cor}
\label{cor-bnraretrue}
Every aspherical rank2-uTQFT $\mathcal F$ satisfy the local relations from Figure~\ref{figureintroa-4}.
\end{cor}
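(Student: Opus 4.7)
The plan is to leverage Proposition~\ref{cor-univalg}: since every aspherical rank2-uTQFT is obtained from $\mathcal{F}_U=(R_U,A_U,\varepsilon_U,\Delta_U,\Phi_U,\theta_U)$ by base change, and base change preserves relations in $\RMOD$, it suffices to verify the three Bar-Natan relations $S$, $T$ and $4Tu$ for $\mathcal{F}_U$. The key structural observation is that each of $S$, $T$, $4Tu$ is pictured on closed (or tube-like) \emph{orientable} cobordisms with only positive glueing numbers; hence the extra data $\Phi_U$ and $\theta_U$ play no role, and the verification reduces to a computation inside the underlying classical Frobenius algebra $A_U = R_U[X]/(X^2 = t+ahX)$. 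This underlying algebra is precisely the universal aspherical rank~$2$ Frobenius algebra from Khovanov's classification~\cite{kh2}, so the result should ultimately be a translation of his computation into our setting.

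The first two relations can be checked directly from Table~\ref{tabular-matrixtable}. For the sphere $S$, a sphere decomposes as $\varepsilon^+\circ\iota_+$, and the aspherical axiom gives $\varepsilon_U\circ\iota_U(1) = \varepsilon_U(1) = 0$. For the torus $T$, a torus decomposes as $\varepsilon^+\circ m^{++}_+\circ\Delta^+_{++}\circ\iota_+$, and one computes
\[
\Delta_U(1) = -h\cdot 1\otimes 1 + a^{-1}(1\otimes X + X\otimes 1),\qquad m_U(\Delta_U(1)) = -h + 2a^{-1}X,
\]
whence $\varepsilon_U\circ m_U\circ\Delta_U(1) = 2a^{-1}\varepsilon_U(X) = 2$, as required. (The usual sign conventions in Figure~\ref{figureintroa-4} should match once one absorbs them into the boundary decorations using Lemma~\ref{lem-basiscalculations}.)

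The main obstacle is the four-tubes relation $4Tu$, since it involves a four-ended cobordism rather than a closed surface. The plan is to translate $4Tu$ into the algebraic statement that, for all $x_1,x_2,x_3,x_4\in A_U$,
\[
\sum_{i} \Delta_U(1)'_i\cdot x_1\cdot x_2 \;\otimes\; \Delta_U(1)''_i\cdot x_3\cdot x_4 \;=\; \text{(symmetric sum over the four pairings)},
\]
where $\Delta_U(1)=\sum_i\Delta_U(1)'_i\otimes\Delta_U(1)''_i$; this encodes the fact that neck-cutting along each of the four tubes produces the same element of $A_U^{\otimes 4}$. Since $A_U$ is free of rank~$2$ over $R_U$ with basis $\{1,X\}$, it is enough to verify this identity on the $16$ basis tensors $X^{i_1}\otimes X^{i_2}\otimes X^{i_3}\otimes X^{i_4}$; by $S_4$-symmetry of the relation one reduces to a handful of essentially distinct cases, which are then checked by plugging in the formulas for $m_U$ and $\Delta_U$ from Table~\ref{tabular-matrixtable} and using $X^2 = t + ahX$. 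The computation is the rank-$2$ neck-cutting identity, already implicit in~\cite{kh2}, and is routine once the base cases are fixed.

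Finally I would note that this corollary, together with Theorem~\ref{thm-algcomplex}, allows one to pass from $\ukob_R$ to $\ukob_R^l$ without losing information, which is the genuine use of the statement in the sequel.
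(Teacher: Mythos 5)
Your overall approach matches the paper's: reduce to $\mathcal F_U$ via Proposition~\ref{cor-univalg}, then verify $S$, $T$ and $4Tu$ by explicit computation in $A_U$. Your computations for $S$ and $T$ are correct.

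However, your translation of $4Tu$ is garbled. The displayed formula produces an element of $A_U\otimes A_U$, whereas the correct reformulation lives in $A_U^{\otimes 4}$: with $\Delta_{ij}\colon A\to A^{\otimes 4}$ the map that places $\Delta(a)$ in positions $i,j$ and $a$ elsewhere, the $4Tu$ relation is the single identity
\[
\Delta_{12}\circ\iota \;+\; \Delta_{34}\circ\iota \;=\; \Delta_{13}\circ\iota \;+\; \Delta_{24}\circ\iota
\]
in $A_U^{\otimes 4}$, which is precisely the form given in the paper's proof. Your paraphrase that ``neck-cutting along each of the four tubes produces the same element of $A_U^{\otimes 4}$'' is also not what $4Tu$ asserts: the four terms $\Delta_{ij}\circ\iota(1)$ are pairwise distinct (the $X$'s sit in different slots), and only the indicated sums coincide. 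Once the identity is stated correctly, it is a single computation at $\iota(1)=1$; the global statement for arbitrary cobordisms containing the local $4Tu$ picture then follows from axiom~(4), so checking $16$ basis tensors is unnecessary. With that correction your argument is complete and agrees with the paper.
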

\begin{proof}
View a sphere $S^2$ as a cobordism $S^2\colon\emptyset\rightarrow\emptyset$. Then $\mathcal F(S^2)=\mathcal F(\varepsilon^+)\circ\mathcal F(\iota_+)$. So we calculate $\mathcal F(S^2)=0$. Because of the axiom (4) from Definition~\ref{defn-utqft}, this is true for every cobordism with a sphere. Analogously view a torus $\mathcal T$ as a cobordism $\mathcal T\colon\emptyset\rightarrow\emptyset$. Thus, it is of the form $\mathcal F(\mathcal T)=\mathcal F(\varepsilon^+)\circ \mathcal F(m^{++}_+)\circ\mathcal F(\Delta^+_{++})\circ\mathcal F(\iota_+)$. An easy calculation with the maps of Table~\ref{tabular-matrixtable} shows, that $\mathcal F(\mathcal T)=2$. Because of the axiom (4), this is true for every cobordism with a torus.

The \textit{4Tu}-relation is algebraical just the formula
\[
\Delta_{12}\circ\iota+\Delta_{34}\circ\iota=\Delta_{13}\circ\iota+\Delta_{24}\circ\iota.
\]
Here $\Delta_{ij}:A\rightarrow A\otimes A\otimes A\otimes A$ is the map which sends an element $a\in A$ to an element $a_1\otimes a_2\otimes a_3\otimes a_4$ with $a_k=a$ for $k\neq i,j$ and $a_i,a_j$ the first respectively the second tensor factor of $\Delta(a)$ (see Figure~\ref{figure-4tu}).
\begin{figure}[ht]
 \centering
    \includegraphics[scale=0.525]{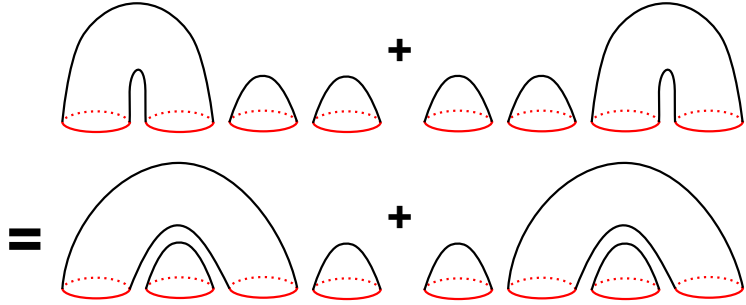}
 \caption{The relation $\Delta_{12}\circ\iota+\Delta_{34}\circ\iota=\Delta_{13}\circ\iota+\Delta_{24}\circ\iota.$}
 \label{figure-4tu}
\end{figure}

That this relation is true is also an easy calculation. Again axiom (4) gives us the global statement. Because this is true for the universal skew-extended Frobenius algebra $\mathcal F_U$, we get the statement for all aspherical rank2-uTQFTs from the Proposition~\ref{cor-univalg}.
\end{proof}
Because with an aspherical, rank2 skew-extended Frobenius algebra we can define a corresponding rank2-uTQFT which satisfies the Bar-Natan relations, we note the following two corollaries.
\begin{cor}\label{cor-invariant}
Every aspherical, rank2 uTQFT can be used to define a v-link invariant.\qed
\end{cor}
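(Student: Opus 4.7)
The plan is to simply combine two previously established results. By Corollary~\ref{cor-bnraretrue}, every aspherical rank2-uTQFT $\mathcal F$ satisfies the Bar-Natan local relations from Figure~\ref{figureintroa-4}, that is, it annihilates any cobordism containing a sphere, multiplies by $2$ on any cobordism containing a torus, and respects the $4Tu$-relation. This is exactly the hypothesis needed to invoke Theorem~\ref{thm-algcomplex}.

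Therefore, given such a uTQFT $\mathcal F$, I would first form the algebraic complex $\mathcal F(\bn{L_D})$ of a v-link diagram $L_D$ by applying the $R$-pre-additive extension of $\mathcal F$ to the topological complex $\bn{L_D}\in\Ob(\ukob_R)$ from Definition~\ref{defn-topcomplex}. By Corollary~\ref{cor-chaincomplex} this is a well-defined chain complex of $R$-modules.

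Since $\mathcal F$ satisfies the Bar-Natan relations by Corollary~\ref{cor-bnraretrue}, it descends to a functor on the quotient category $\ukob_R^l$ (in fact $\ukob_R^{hl}$), so that two v-link diagrams $L_D,L^{\prime}_D$ that differ by a finite sequence of isotopies and generalised Reidemeister moves produce chain homotopy equivalent complexes $\mathcal F(\bn{L_D})\simeq\mathcal F(\bn{L^{\prime}_D})$ by Theorem~\ref{thm-algcomplex}. In particular, the homology $H(\mathcal F(\bn L))$ is a well-defined invariant of the oriented v-link $L$. There is no real obstacle here, the content of the corollary is entirely contained in the two preceding results; the only thing worth remarking is that one uses $\mathcal F$ in its extended form $\ukob_R\to\kom_b(\mat(\RMOD))$, whose existence is immediate from $R$-pre-additivity together with the axiom that disjoint unions of cobordisms go to tensor products. \qed
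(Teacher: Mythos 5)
Your proposal is correct and follows exactly the same route as the paper: the author's own (implicit, \qed-marked) argument is precisely the combination of Corollary~\ref{cor-bnraretrue} with Theorem~\ref{thm-algcomplex}, as the surrounding text makes clear. You have simply spelled out the steps in a bit more detail, including the observation that $\mathcal F$ extends functorially from $\ukob_R$ to $\kom_b(\mat(\RMOD))$ by $R$-pre-additivity.
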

\begin{cor}\label{cor-khovhomo}(\textbf{The virtual Khovanov complex}) The above construction enables one to extend the Khovanov complex ($R_{Kh}=\bZ,A_{Kh}=\bZ[X]/(X^2=0,t=h=0)$) from c-links to v-links by setting $\alpha=\beta=\gamma=0$ and $a=1$.\qed
\end{cor}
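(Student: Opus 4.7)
The proof plan is essentially a verification that the specialisation is well-defined, followed by an invocation of earlier results. First I would check that the substitution $\alpha=\beta=\gamma=0$, $a=1$ is compatible with the ideal $\mathcal I$ of the universal ring $R_U$: the five generating relations $\alpha\gamma=\beta\gamma=2\alpha=2\beta=a^2\beta^2 h=0$ all hold trivially under this assignment, so the quotient is well-defined and $R_U$ specialises to $\bZ[t]$ (or to $\bZ$ if we further set $t=0$, as the statement $t=h=0$ in the corollary suggests).

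Next I would compute what the structure in Proposition~\ref{cor-univalg} becomes under this specialisation. We have $h=a^{-1}\gamma-\alpha^2-\beta^2 t = 0$, so the algebra becomes $A=\bZ[X]/(X^2-t)$ (and for $t=0$ this is exactly the classical Khovanov Frobenius algebra $\bZ[X]/(X^2)$). The element $\theta=\alpha+\beta X$ collapses to $0$, the skew-involution $\Phi$ sends $1\mapsto 1$, $X\mapsto -X$, and inspection of Table~\ref{tabular-matrixtable} shows that the maps $\iota,\varepsilon,m,\Delta$ reduce precisely to the usual unit, counit, multiplication and comultiplication of Khovanov's Frobenius algebra with the standard normalisation ($\varepsilon(1)=0$, $\varepsilon(X)=1$, $\Delta(1)=1\otimes X+X\otimes 1$, $\Delta(X)=X\otimes X$).

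From here the result follows by combining the machinery already established. The specialised data is an aspherical rank2 uTQFT (this is automatic from Proposition~\ref{cor-univalg} since all relations continue to hold), and by Corollary~\ref{cor-bnraretrue} it automatically satisfies the Bar-Natan relations of Figure~\ref{figureintroa-4}. Hence Theorem~\ref{thm-algcomplex} applies, so the algebraic complex $\mathcal F(\bn{L_D})$ is a well-defined v-link invariant. Finally, Theorem~\ref{thm-classic} guarantees that for any classical link diagram $L_D$ the topological complex $\bn{L_D}$ is chain isomorphic to Bar-Natan's classical complex $\bn{L_D}_c$; applying the specialised TQFT therefore yields exactly the classical Khovanov complex on c-links, so our construction genuinely extends it to the virtual setting.

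There is no real obstacle in this proof — everything has been arranged by the earlier sections so that the corollary amounts to a parameter substitution together with the assembly of Proposition~\ref{cor-univalg}, Corollary~\ref{cor-bnraretrue}, Theorem~\ref{thm-algcomplex} and Theorem~\ref{thm-classic}. The only point that requires any genuine care is verifying that after specialisation one really recovers Khovanov's (even) Frobenius algebra and not, say, the odd variant: this is guaranteed by the sign convention $\Phi(X)=\gamma-X$ (rather than $\gamma+X$) noted in Remark~\ref{rem-tutuextension}, and is the reason the author departs from Turaev–Turner's normalisation in Proposition~\ref{cor-univalg}.
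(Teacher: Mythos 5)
Your proposal is correct and matches what the paper leaves implicit: the corollary carries a bare \verb|\qed| because the authors regard it as an immediate specialisation of Proposition~\ref{cor-univalg}, with invariance supplied by Corollary~\ref{cor-bnraretrue} and Theorem~\ref{thm-algcomplex}, and agreement on c-links by Theorem~\ref{thm-classic} (equivalently Proposition~\ref{prop-classiccomplex}). Your explicit verification that the substitution kills the ideal, forces $h=0$ and $\theta=0$, and recovers the standard Frobenius structure with $\Phi(X)=-X$ is exactly the unwritten check the \verb|\qed| stands for, and your remark about the sign convention of $\Phi$ distinguishing this from the Turaev--Turner normalisation is the one genuinely non-obvious point.
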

From now on we denote by $\Kh (L)=\mathcal F_{Kh}(\bn L)$ the \textit{virtual Khovanov complex} of a v-link $L$ and by $H(\Kh (L))$ its homology.
\begin{rem}
It is possible to introduce gradings (by setting $\deg 1=1$ and $\deg X=-1$) for the complex from Corollary~\ref{cor-khovhomo}. This is true because the map $\cdot\theta=0$. In fact this is the only possibility where we can introduce gradings, because all maps in the Khovanov complex must decrease the grading by one. And this is only possible if $\cdot\theta\colon A\rightarrow A$ is equal zero.
\end{rem}
\begin{cor}\label{cor-khovhomocat}(\textbf{Categorification of the virtual Jones polynomial}) The virtual Khovanov complex~\ref{cor-khovhomo} is a categorification of the virtual Jones polynomial in the sense that its graded Euler characteristic gives the polynomial.
\end{cor}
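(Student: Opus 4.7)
The plan is to reduce the statement to a straightforward dimension count, exactly as in the classical case, once one observes that the graded Euler characteristic only depends on the graded ranks of the chain modules and not on the differentials or on the decorations/signs appearing in the virtual refinement.

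First I would unpack what the algebraic complex $\Kh(L_D)=\mathcal F_{Kh}(\bn{L_D})$ looks like after applying the Khovanov uTQFT from Corollary~\ref{cor-khovhomo}. By Definition~\ref{defn-topcomplex}, the chain module in homological degree $r$ is the formal direct sum of the resolutions $\gamma_a$ of length $|a|=r+n_-$, and each such resolution is sent by $\mathcal F_{Kh}$ to $A_{Kh}^{\otimes |\gamma_a|}$ where $A_{Kh}=\bZ[X]/(X^2)$ carries the grading $\deg 1=1,\deg X=-1$, together with an internal degree shift of $|a|$ (the usual Khovanov shift) and a global grading shift of $n_+-2n_-$ coming from the writhe compensation. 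The differentials (saddles of type $m$, $\Delta$ and $\cdot\theta=0$) play no role for the Euler characteristic, so the decorations and signs introduced to make the virtual construction work are invisible at the level of $\chi_q$.

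Next I would compute the graded Euler characteristic directly. Since the graded dimension of $A_{Kh}^{\otimes k}$ is $(q+q^{-1})^k$, one obtains
\begin{equation*}
\chi_q(\Kh(L_D)) \;=\; (-1)^{n_-}q^{n_+-2n_-}\sum_{a\in\{0,1\}^n} (-q)^{|a|}(q+q^{-1})^{|\gamma_a|}.
\end{equation*}
The inner sum is precisely the state sum expansion of the Kauffman bracket $\langle L_D\rangle$ defined in Section~\ref{sec-intro}, applied now to a virtual diagram; the three defining rules $\langle\emptyset\rangle=1$, $\langle\slashoverback\rangle=\langle\smoothing\rangle-q\langle\hsmoothing\rangle$ and $\langle\bigcirc\amalg L_D\rangle=(q+q^{-1})\langle L_D\rangle$ extend verbatim to virtual diagrams because the recursion is local at a classical crossing and $|\gamma_a|$ counts the number of v-circles regardless of any virtual crossings present. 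Combining the two identities gives $\chi_q(\Kh(L_D))=K(L_D)=(q+q^{-1})J(L_D)$.

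Finally I would justify that this computation is well defined on virtual link equivalence classes. By Theorem~\ref{thm-algcomplex}, the complex $\Kh(L_D)$ is invariant up to chain homotopy under the generalised Reidemeister moves, and the graded Euler characteristic is a chain homotopy invariant, so it descends to a well-defined function of the virtual link $L$. Dividing by $(q+q^{-1})$ (equivalently, reducing by the basepoint unknot, whose complex is $A_{Kh}$ with graded dimension $q+q^{-1}$) produces the virtual Jones polynomial $J(L)$. The only potential subtlety is to check that the homological and internal grading shifts in $\mathcal F_{Kh}(\bn{L_D})$ match those used in the normalisation $K(L_D)=(-1)^{n_-}q^{n_+-2n_-}\langle L_D\rangle$; this is a bookkeeping exercise identical to the classical case, since $n_+$ and $n_-$ are determined by the oriented diagram and unaffected by virtual crossings. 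No genuinely new step is required, and I do not expect any real obstacle: the whole point of the construction is that setting $\theta=0$ collapses the extra virtual structure precisely to the level needed to recover the classical bracket recursion.
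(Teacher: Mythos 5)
Your proof is correct, but it takes a genuinely different route from the one in the thesis. The paper's own argument is very short: it invokes the fact that the (virtual) Jones polynomial is uniquely characterised by its normalisation on the unknot together with the $\mathfrak{sl}_2$ skein relation, and then notes that the graded Euler characteristic of $\Kh(\cdot)$ satisfies these axioms (the skein relation at a classical crossing follows from the cone decomposition of Lemma~\ref{lem-cone}, and additivity of $\chi_q$ on cones). You instead compute $\chi_q(\Kh(L_D))$ explicitly from the definition of the topological complex: you observe that $\chi_q$ sees only the graded ranks $A_{Kh}^{\otimes|\gamma_a|}$ of the chain modules, not the differentials or the decorations, and match the resulting signed state sum term-by-term to the Kauffman bracket expansion $\langle L_D\rangle=\sum_a(-q)^{|a|}(q+q^{-1})^{|\gamma_a|}$, then apply the writhe normalisation. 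The computation and the grading bookkeeping check out, including the $(-1)^{n_-}$ sign coming from the homological shift $|a|-n_-$. The paper's argument is shorter and more structural, piggy-backing on the uniqueness characterisation and the already-proved cone lemma; your argument is more elementary and self-contained, and it makes completely explicit why the additional structure introduced in the virtual setting (decorations, signs, $\theta$) is invisible at the level of $\chi_q$. One small remark on normalisation: the statement ``gives the polynomial'' in the corollary should be read up to the usual factor of $(q+q^{-1})$, exactly as you note at the end; your computation lands on the Kauffman polynomial $K(L_D)=(q+q^{-1})J(L_D)$, which is consistent with the normalisation conventions used elsewhere in the paper.
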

\begin{proof}
The classical Jones polynomial is uniquely determined by the skein-relations. The same is true for the virtual Jones polynomial, see for example~\cite{ka1}. One can now easily check that the virtual Khovanov complex $\Kh(\cdot)$ satisfies these relations.
\end{proof}
There is also an extension of the Khovanov-Lee complex (see her paper~\cite{lee}) and two different extensions of Bar-Natan's variant ($R=\bZ/2,h=1,t=0$) (see his paper~\cite{bn2}).
\begin{cor}\label{cor-leekhovhomo}(\textbf{The virtual Khovanov-Lee complex}) The above construction enables us to extend the Khovanov-Lee complex ($R_{Lee}=\bZ$ and $A_{Lee}=\bZ[X]/(X^2=0,t=1,h=0)$) from c-links to v-links by setting $\alpha=\beta=\gamma=0$ and $a=1$.\qed
\end{cor}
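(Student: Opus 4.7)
The plan is to verify this corollary essentially by checking that the given parameter specialization lies in the locus cut out by the relations defining the universal skew-extended Frobenius algebra $\mathcal F_U$ of Proposition~\ref{cor-univalg}, and then identifying the resulting structure on orientable cobordisms with the classical Khovanov-Lee Frobenius algebra. First I would substitute $\alpha=\beta=\gamma=0$ and $a=1$ into the relations
\[
\alpha\gamma=\beta\gamma=2\alpha=2\beta=a^2\beta^2 h=0
\]
generating the ideal $\mathcal I\subset\bZ[a,a^{-1},\alpha,\beta,\gamma,t]$ and note they are all trivially satisfied, so the specialization defines a valid base change $R_U\to\bZ$ (with $t$ sent to $1$ and $h$ forced to $0$ by the relation $h=a^{-1}\gamma-\alpha^2-\beta^2t=0-0-0=0$). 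This produces an aspherical rank2 skew-extended Frobenius algebra with underlying ring $R_{Lee}=\bZ$ and underlying algebra $A_{Lee}=\bZ[X]/(X^2=1)$ (which is what one obtains from $X^2=t+ahX$ under the specialization).

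Next I would feed the specialization into the maps in Table~\ref{tabular-matrixtable}. The element $\theta=\alpha+\beta\cdot X$ becomes $0$, so $\mathcal F(\theta)=\cdot\theta=0$ and every non-orientable cobordism is annihilated. The skew-involution $\Phi$ becomes $1\mapsto 1,\,X\mapsto-X$, which is the expected sign change associated with reversing boundary orientation. The multiplication specialises to the classical Lee multiplication $m(X\otimes X)=1$ and the comultiplication to $\Delta(1)=1\otimes X+X\otimes 1$ and $\Delta(X)=1\otimes 1+X\otimes X$, which are exactly the structure maps of the Khovanov-Lee Frobenius algebra.

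Having exhibited a valid aspherical rank2-uTQFT $\mathcal F_{Lee}$, Corollary~\ref{cor-bnraretrue} tells us the Bar-Natan relations hold automatically, so by Theorem~\ref{thm-algcomplex} the algebraic complex $\mathcal F_{Lee}(\bn{L_D})$ is a v-link invariant. Finally, to see that this genuinely \emph{extends} the classical Khovanov-Lee complex, I would invoke Theorem~\ref{thm-classic}: for any c-link diagram $L_D$ the topological complex $\bn{L_D}$ is chain isomorphic to $\bn{L_D}_c$, and a spanning tree argument (as in the proof of Theorem~\ref{thm-classic}) allows us to choose orientations so that every saddle in the complex has decoration $(^{++}_{+})$ or $(^{+}_{++})$ and every cylinder is $\mathrm{id}^+_+$. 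On such a representative the functor $\mathcal F_{Lee}$ returns precisely the classical Lee multiplication, comultiplication, and signs, producing the standard Khovanov-Lee complex.

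The only subtle point — and the step that would require the most care — is checking that the choice $\gamma=0$ together with $\Phi(X)=-X$ does not interfere with the reduction to the classical complex on c-links, since $\Phi$ is not the identity. This is handled by the observation above that for c-links we can arrange all cylinders to be $\mathrm{id}^+_+$ rather than $\Phi^-_+$, and by Lemma~\ref{lem-commutativeindependence} the complex is independent of that choice of orientations up to chain isomorphism; thus $\Phi$ never needs to be evaluated on non-trivial generators when restricting to c-links. The rest is a direct verification.
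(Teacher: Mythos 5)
Your proof is correct and takes essentially the same route the paper intends: the corollary carries a \qed because it is meant to follow immediately from Proposition~\ref{cor-univalg}, Corollary~\ref{cor-invariant}, and the classicality results (Theorem~\ref{thm-classic} / Proposition~\ref{prop-classiccomplex}), which is exactly what you invoke after verifying the specialization. Two minor remarks: the displayed $A_{Lee}=\bZ[X]/(X^2=0,t=1,h=0)$ in the statement is a typographical slip — with $t=1$, $h=0$, $a=1$ the ring is $\bZ[X]/(X^2-1)$, which you correctly identify (and which the paper itself writes as $R[X]/(X^2=1)$ in Section~\ref{sec-vkhapp}); and note that the corollary's ``setting $\alpha=\beta=\gamma=0$ and $a=1$'' tacitly also sends $t\mapsto 1$ as part of the base change, a point you handle correctly even though the paper glosses over it.
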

\begin{cor}\label{cor-bnhomo}(\textbf{The virtual Khovanov-Bar-Natan complex}) The above construction enables us to extend Bar Natan's variant of Khovanov homology (this is the Frobenius algebra over the field $R_{BN}=\bZ/2$ with $A_{BN}=\bZ/2[X]/(X^2=0,t=0,h=1)$) from c-links to v-links in two different ways by setting $\alpha=\beta=0$ and $\gamma=a=1$ or by setting $\beta=\gamma=0$ and $\alpha=a=1$. The two extensions are non-isomorphic skew-extended Frobenius algebras.
\end{cor}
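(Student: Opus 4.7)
The plan is to verify, for each of the two parameter choices, that it satisfies all the defining relations of the universal ring $R_U$ from Proposition~\ref{cor-univalg}, so that both specializations yield honest aspherical rank2 skew-extended Frobenius algebras. Recalling that $h = a^{-1}\gamma - \alpha^2 - \beta^2 t$ and working over $R_{BN} = \bZ/2$ with $t=0$ and target $h=1$, for the first choice $\alpha=\beta=0,\ \gamma=a=1$ one gets $h = 1\cdot 1 - 0 - 0 = 1$ as required, and the relations $\alpha\gamma = \beta\gamma = 2\alpha = 2\beta = a^2\beta^2 h = 0$ all hold trivially. For the second choice $\beta=\gamma=0,\ \alpha=a=1$ one gets $h = 1\cdot 0 - 1 - 0 = -1 = 1$ in $\bZ/2$, and again all of $\alpha\gamma, \beta\gamma, 2\alpha, 2\beta, a^2\beta^2 h$ vanish. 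In both cases the underlying Frobenius algebra structure coincides with Bar-Natan's $A_{BN} = \bZ/2[X]/(X^2=hX) = \bZ/2[X]/(X^2=X)$, so by Corollary~\ref{cor-invariant} each specialization defines a v-link invariant extending Bar-Natan's construction.

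Next I would read off the new pieces of structure $\theta$ and $\Phi$ from the formulas in Table~\ref{tabular-matrixtable}. For the first extension $\mathcal F_1$ we have $\theta_1 = \alpha + \beta X = 0$ and $\Phi_1(1) = 1,\ \Phi_1(X) = \gamma - X = 1 + X$ (in characteristic $2$), so $\Phi_1 \neq \mathrm{id}$. For the second extension $\mathcal F_2$ we have $\theta_2 = \alpha + \beta X = 1$ and $\Phi_2(1) = 1,\ \Phi_2(X) = \gamma - X = -X = X$, so $\Phi_2 = \mathrm{id}_{A}$.

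To conclude non-isomorphism, suppose for contradiction that $f\colon A_{BN} \to A_{BN}$ is an isomorphism of skew-extended Frobenius algebras (as in Definition~\ref{defn-isofrobenius}) from $\mathcal F_1$ to $\mathcal F_2$. Then $f$ is in particular an $R_{BN}$-linear algebra map, hence $f(0) = 0$; but the defining condition $f(\theta_1) = \theta_2$ forces $0 = f(0) = 1$, which is absurd. Alternatively one can argue from the skew-involutions: $f \circ \Phi_1 = \Phi_2 \circ f = f$ would force $\Phi_1 = \mathrm{id}_{A}$, contradicting $\Phi_1(X) = 1 + X$. Either argument shows $\mathcal F_1 \not\cong \mathcal F_2$.

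There is no real obstacle here — the content of the corollary is just the verification of the relations and a one-line comparison of the new data $(\theta,\Phi)$ — so the whole proof is essentially a short calculation followed by the non-isomorphism remark. The only mild subtlety worth flagging is the sign collapse in characteristic $2$, which is precisely what makes room for the second (non-classical) extension: in characteristic zero the relation $2\alpha = 0$ would force $\alpha = 0$ and kill the choice $\alpha=a=1$.
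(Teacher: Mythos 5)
Your proposal is correct and takes essentially the same approach as the paper: the invariance claim is delegated to Corollary~\ref{cor-invariant}, and the non-isomorphism follows from comparing $\theta$ (which is $0$ in one case and $1$ in the other) against the requirement $f(\theta)=\theta'$ from Definition~\ref{defn-isofrobenius}. Your extra checks (verifying the relations of $R_U$ in characteristic $2$ and noting the alternative argument via $\Phi_1\neq\mathrm{id}=\Phi_2$) are sound and just flesh out what the paper leaves implicit.
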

\begin{proof}
That these two skew-extended Frobenius algebras can be used as v-link homologies follows from Corollary~\ref{cor-invariant}. To see that they are non-isomorphic skew-extended Frobenius algebras we note that $\theta=0$ in the first case and $\theta=1$ in the second case. Because any isomorphism of skew-extended Frobenius algebras satisfies $f(1)=1$ and $f(\theta)=\theta'$, they are not isomorphic.
\end{proof}
We denote these three extensions by $\mathcal F_{Lee} (L)=\mathcal F_{Lee}(\bn L)$, $\mathcal F_{BN1}(L)=\mathcal F_{BN1}(\bn L)$ and $\mathcal F_{BN2}(L)=\mathcal F_{BN2}(\bn L)$ respectively.
\begin{prop}\label{prop-classiccomplex}
Let $L_D$ be a c-link diagram and let $\mathcal F$ be an aspherical rank2-uTQFT. Then the complex $\mathcal F(\bn{L_D})$  is the classical Khovanov complex (up to chain isomorphisms) which is obtained by using the underlying TQFT $\mathcal F^{\prime}$ of $\mathcal F$.

An similar statement is true for the Khovanov-Lee complex and the two different versions of the Khovanov-Bar-Natan complex.
\end{prop}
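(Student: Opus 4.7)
The plan is to reduce this essentially to Theorem~\ref{thm-classic} together with the functoriality of the uTQFT. First I would invoke Theorem~\ref{thm-classic}: for a c-link diagram $L_D$ the topological complex $\bn{L_D}$ is chain isomorphic inside $\ukob_R$ to the purely classical complex $\bn{L_D}_c$, whose differentials consist only of the $m^{++}_+$ and $\Delta^{+}_{++}$ cobordisms equipped with the standard Khovanov signs (and no $\theta$-saddles, since there are no v-crossings). Because an aspherical rank2-uTQFT $\mathcal F$ is in particular an $R$-pre-additive functor from $\ucob_R(\emptyset)$ to $\RMOD$, it extends to a functor $\ukob_R \to \kom_b(\mat(\RMOD))$ that preserves chain isomorphisms, so $\mathcal F(\bn{L_D}) \cong \mathcal F(\bn{L_D}_c)$.

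Next I would identify $\mathcal F(\bn{L_D}_c)$ with the classical Khovanov complex built from the underlying Frobenius algebra $\mathcal F'$. By the proof of Theorem~\ref{thm-tututheo}, the assignment $\mathcal F \mapsto (R,A,\varepsilon,\Delta,\Phi,\theta)$ is constructed precisely by setting $\mathcal F(\iota_+) = \iota$, $\mathcal F(\varepsilon^+) = \varepsilon$, $\mathcal F(m^{++}_+) = m$, $\mathcal F(\Delta^{+}_{++}) = \Delta$, i.e.\ the generators with all-positive decorations realise the operations of the underlying Frobenius algebra $\mathcal F'$. Since every saddle in $\bn{L_D}_c$ is of one of these two forms with classical Khovanov signs, applying $\mathcal F$ yields exactly the complex one writes down using $\mathcal F'$ in the standard cobordism-based description of classical Khovanov homology (as in~\cite{bn1},~\cite{bn2}).

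The main step where one must be careful is the first one: Theorem~\ref{thm-classic} identifies $\bn{L_D}$ with $\bn{L_D}_c$ only after a spanning-tree argument that rearranges orientations and numberings of the resolution circles, conjugating by cylinders of type $\Phi^-_+$ and by sign permutations (Lemmas~\ref{lem-commutativeindependence} and~\ref{lem-xmarker}). Under the functor these conjugations become conjugations by $\mathcal F(\Phi^-_+) = \Phi$, i.e.\ by the skew-involution of the Frobenius algebra, composed with appropriate sign flips. One needs to check that the resulting complex is genuinely isomorphic (not merely chain homotopic) to the classical Khovanov complex, which is immediate because $\Phi$ is an isomorphism of the Frobenius algebra and the sign rearrangements are invertible; hence the whole spanning-tree procedure descends through $\mathcal F$ to an honest chain isomorphism.

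Finally, the statement for $\mathcal F_{Lee}$, $\mathcal F_{BN1}$ and $\mathcal F_{BN2}$ is an immediate consequence, since each of these is by Corollaries~\ref{cor-leekhovhomo} and~\ref{cor-bnhomo} an aspherical rank2-uTQFT, and the argument above makes no further use of the specific parameters $(\alpha,\beta,\gamma,a,h,t)$ beyond the identification $\mathcal F(m^{++}_+) = m$, $\mathcal F(\Delta^{+}_{++}) = \Delta$ coming from the underlying Frobenius structure. Thus in each case $\mathcal F(\bn{L_D})$ is chain isomorphic to the corresponding classical Khovanov, Khovanov--Lee, or Bar-Natan complex attached to the underlying TQFT $\mathcal F'$.
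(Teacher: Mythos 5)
Your proof is correct and follows exactly the approach the paper has in mind: the paper's proof is the one-liner ``This is just the algebraic version of Theorem~\ref{thm-classic},'' and you have simply spelled out what that means, namely that the chain isomorphism $\bn{L_D}\cong\bn{L_D}_c$ from Theorem~\ref{thm-classic} is preserved by the $R$-pre-additive functor $\mathcal F$, and that the image of $\bn{L_D}_c$ under $\mathcal F$ is by construction the classical complex built from the underlying Frobenius algebra $\mathcal F'$.
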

\begin{proof}
This is just the algebraic version of Theorem~\ref{thm-classic}.
\end{proof}
It is worth noting that, if $L$ is a c-link, then these three (and any other of the possibilities) are the classical complexes (up to chain homotopies) due to Theorem~\ref{thm-algcomplex}. Moreover, it should be noted that Corollary~\ref{cor-bnhomo} and Proposition~\ref{prop-classiccomplex} include that a v-link diagram $L_D$ with
\[
H(\mathcal F_{BN1}(L_D))_*\not\cong H(\mathcal F_{BN2}(L_D))_*
\]
can not be a v-diagram of a c-link, since a c-link diagram does not need the map $\cdot\theta$.
\vskip0.5cm
Because Khovanov showed (see~\cite{kh2}) that every TQFT which respects the first Reidemeister move must have an underlying $R$-module $A\cong 1\cdot R\oplus X\cdot R$ for an element $X\in A$, we also get the following theorem.
\begin{thm}\label{thm-class}(\textbf{Classification of aspherical uTQFTs}) The following statements are equivalent for an aspherical uTQFT.
\begin{itemize}
\item[(a)] It respects the first Reidemeister move RM1.
\item[(b)] It is a rank2-uTQFT.
\item[(c)] It can be obtained from the one of Proposition~\ref{cor-univalg}.
\item[(d)] It can be used as a v-link invariant.
\end{itemize} 
\end{thm}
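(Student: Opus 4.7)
The plan is to prove the four conditions equivalent via the cycle
\[
(c) \Rightarrow (b) \Rightarrow (d) \Rightarrow (a) \Rightarrow (b) \Rightarrow (c).
\]
The implications $(c) \Rightarrow (b)$ and $(b) \Rightarrow (c)$ reduce directly to Proposition~\ref{cor-univalg}: one direction is the content of that proposition, while the other is essentially tautological since $A_U = R_U[X]/(X^2 = t + ahX) \cong R_U \cdot 1 \oplus R_U \cdot X$ is of rank two, and this property is preserved under base change along any ring homomorphism $R_U \to R$.

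For $(b) \Rightarrow (d)$, I would chain together the previous results. By Corollary~\ref{cor-bnraretrue}, every aspherical rank2-uTQFT satisfies the Bar-Natan relations $S$, $T$, $4Tu$ of Figure~\ref{figureintroa-4}. But that is precisely the hypothesis of Theorem~\ref{thm-algcomplex}, which then tells us that the algebraic complex $\mathcal{F}(\bn{L_D})$ is invariant (up to chain homotopy) under all generalised Reidemeister moves, hence defines a v-link invariant. The implication $(d) \Rightarrow (a)$ is then almost tautological: RM1 is one of the generalised Reidemeister moves, so any v-link invariant must in particular be RM1-invariant.

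The real content of the theorem lies in $(a) \Rightarrow (b)$. Here I would invoke Khovanov's classification theorem from~\cite{kh2}, explicitly cited just above the statement. The strategy is to extract from an aspherical uTQFT $\mathcal{F}$ an underlying ``classical'' Frobenius algebra by restricting attention to orientable cobordisms with all $+$ glueing numbers: this gives a functor $\mathcal{F}' \colon \cob_R(\emptyset) \to \RMOD$ exactly as in the second half of the proof of Theorem~\ref{thm-tututheo}. Since RM1-invariance of $\mathcal{F}$ restricts to RM1-invariance of $\mathcal{F}'$, Khovanov's result forces the underlying $R$-module $A = \mathcal{F}(\bigcirc) = \mathcal{F}'(\bigcirc)$ to be of the form $A \cong R \cdot 1 \oplus R \cdot X$. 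This is exactly the rank-two condition required in (b).

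The main obstacle is the last step: one must verify that the reduction to the orientable subcategory is sound, i.e.\ that the non-orientable generator $\theta$ and the decoration-changing cylinder $\Phi^{-}_{+}$ do not obstruct the application of Khovanov's theorem. The former is harmless because $\theta$ does not appear in any RM1 cobordism, and the latter is handled by the relation $\Phi^{-}_{+} \circ \Phi^{-}_{+} = \mathrm{id}^+_+$ (Lemma~\ref{lem-basiscalculations}(a)), which makes $\Phi^{-}_{+}$ an isomorphism and hence forces $\mathcal{F}(\Phi^{-}_{+})$ to be an involution on the already-determined module $A$. Once this compatibility is checked, no further computation is required, and the four conditions collapse to one.
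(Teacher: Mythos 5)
Your proof is correct and uses the same three ingredients as the paper's own argument: Khovanov's classification from~\cite{kh2} for the implication $(a)\Rightarrow(b)$, Proposition~\ref{cor-univalg} for $(b)\Leftrightarrow(c)$, and the chain Corollary~\ref{cor-bnraretrue} $\to$ Theorem~\ref{thm-algcomplex} (which is exactly the content the paper packages as Corollary~\ref{cor-invariant}) for $(b)\Rightarrow(d)$, with $(d)\Rightarrow(a)$ trivial. The paper organises the implications in the cleaner single cycle $(a)\Rightarrow(b)\Rightarrow(c)\Rightarrow(d)\Rightarrow(a)$, whereas your route visits $(b)$ twice; that is slightly redundant (you effectively prove both $(b)\Rightarrow(c)$ and $(c)\Rightarrow(b)$) but it does establish the equivalence. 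You supply more justification than the paper on the step $(a)\Rightarrow(b)$, where the paper only says ``this was done by Khovanov and stays true'': your remark that the non-orientable generator $\theta$ never occurs in an RM1 cobordism, and that $\Phi^-_+$ is an involution and hence does not enlarge the module, is exactly the content needed to see that Khovanov's argument passes through to the uTQFT setting (for maximal precision one could also invoke Theorem~\ref{thm-classic}, which identifies the virtual complex of a crossing-free neighbourhood with the classical one). One minor slip: the extraction of the underlying Frobenius algebra from a uTQFT is the \emph{first} half of the proof of Theorem~\ref{thm-tututheo}, not the second; the second half goes the other way, from a skew-extended Frobenius algebra to a uTQFT.
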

With the work already done the proof is simple.
\begin{proof}
(a)$\Rightarrow$(b): This was done by Khovanov and stays true.

(b)$\Rightarrow$(c): This is just the Proposition~\ref{cor-univalg}.

(c)$\Rightarrow$(d): This is the Corollary~\ref{cor-invariant}.

(d)$\Rightarrow$(a): This is clear.
\end{proof}
\begin{rem}\label{rem-mant}
We conjecture that Manturov's $\bZ$-version~\cite{ma2} is a strictly weaker invariant than our extension of the Khovanov complex~\ref{cor-khovhomo} in the following sense. A v-link with ``lots'' of classical crossings is likely to have ``lots'' of faces of type 1b or its mirror image (see~\ref{figure-basic}). We call these faces the \textit{virtual trefoil faces}. In our construction the two multiplications (or comultiplications for the mirror image) are not the same, i.e. they have different boundary decorations as pictured for example in Figure~\ref{figure-big2}, since we take extra information of this face in account. In contrast, in Manturov's version they are just the same maps. It is worth noting that we use the extra information explicit in Section~\ref{sec-vkhapp}.
\end{rem}
\begin{rem}\label{rem-otherrelations}
At this state it is a fair question to ask why we use the relations (1) from Equation~\ref{eq-combrel12} (or the one without the signs for the variant of Turaev and Turner) for our cobordisms, i.e. why do we assume that $\Delta^{+}_{++}$ changes its sign under conjugation with $\Phi^-_+$ and not $m^{++}_+$ (or neither of them changes its sign for the variant of Turaev and Turner).

So what happens if we assume that $m^{++}_+$ changes its sign under conjugation with $\Phi^-_+$ (or both)? One can repeat the whole construction from Section~\ref{sec-vkhcat}, Section~\ref{sec-vkhcom} and this Section~\ref{sec-vkhfa} for these cases too. But this do not lead to anything new, i.e. if we assume that $m^{++}_+$ changes its sign, then we get an equivalent to the construction above and if we assume that both of them changes their signs, then we get an equivalent to the variant of Turaev and Turner again.
\end{rem}
\begin{rem}\label{rem-newthings}
Note that the classification of Theorem~\ref{thm-class} and the Table~\ref{tabular-matrixtable} include non-classical invariants. To be more precise, if we work for example over $R=\bQ$, then the relations force us to set $\theta=0$. But if we work over $R=\bZ/2$, then we have different choices for $\theta$. It should be noted, since c-links do not require the map $\cdot\theta$, these invariants can not appear in the classical setting. Note that in both of Manturov's versions~\cite{ma2} and~\cite{ma1} he sets $\theta=0$. 
\end{rem}
\subsection{The topological complex for virtual tangles}\label{sec-vkhtan}
We will define the \textit{topological complex of a v-tangle diagram} $T^k_D$ in this section. For this construction we use our notations for the \textit{saddle decorations} and \textit{saddle signs} of v-link diagrams $L_D$ from Section~\ref{sec-vkhcom}. Recall that a crucial ingredient for the construction of the topological complex were the \textit{decorations} of the saddles. Note that we work in a slightly different category now, i.e. the one from Definition~\ref{defn-category3}. Hence, we need signs, glueing numbers and indicators.

It is worth noting that the idea how to solve the problems that come with the observation summarised in Figure~\ref{figure0-order} in a non-trivial way (that is we do not define open saddles to be zero) is the following. Take the signs and decorations of a closure of the v-tangle diagram, since we already defined how to spread them for v-link diagrams in a ``good'' way. Note that this convention makes it easy to show analogous statements as in Section~\ref{sec-vkhcom}.
\vskip0.5cm
We note that this section has two subsection. We define the topological complex of a v-tangle diagram with a *-marker and show that it is v-tangle invariant in the first part, i.e. in Definition~\ref{defn-geocomplex} and Theorem~\ref{thm-invarianz}. In the second part we discuss how the position of the *-marker has influence on the topological complex. We can show~\ref{thm-marker} that in general the position of the *-marker gives rise to two different v-tangle invariances, but it agrees with the classical construction for c-tangles.
\subsubsection*{The topological complex for virtual tangles}
We start by explaining how we are going to extend the important notions of saddle sign and decorations to v-tangle diagrams.

Recall that $T^k_D$, as in Definition~\ref{defn-vtangle}, should denote a v-tangle diagram with $k\in\bN$ boundary points. Moreover, such diagrams should always have a *-marker on the boundary and let $\mathrm{Cl}(T^k_D)$ be the closure of the diagram. Recall that such diagrams come with x-markers.
\begin{defn}\label{defn-deco2}(\textbf{``Open'' saddle decorations})
Let $T^k_D$ be a v-tangle diagram with a *-marker on the boundary and let $\mathrm{Cl}(T^k_D)$ be the closure of the diagram. The \textit{saddle decorations} of the saddles of $T^k_D$ should be the ones induced by the saddle decorations of the closure. To be more precise.
\begin{itemize}
\item[(a)] The \textit{signs} of the saddles of $T^k_D$ should be the same as the signs of the corresponding saddles of $\mathrm{Cl}(T^k_D)$ as defined in Definition~\ref{defn-sign}.
\item[(b)] The \textit{indicators} of the saddles should be obtained from the corresponding saddles of $\mathrm{Cl}(T^k_D)$ as follows.
\begin{itemize}
\item Every orientable surface should carry an indicator $+1$ iff the number of upper boundary components of the saddle is two and a $-1$ iff the number is one.
\item Every non-orientable saddle gets a $0$ as an indicator.
\end{itemize}
\item[(c)] The \textit{glueing numbers} of the saddles of $T^k_D$ should be the same as the glueing numbers of the corresponding saddles of $\mathrm{Cl}(T^k_D)$ as defined in Definition~\ref{defn-deco}.
\end{itemize}
Note that saddles with a $0$-indicator do not have any boundary decorations. Everything together, i.e. boundary decorations, the saddle sign and the indicator, is called the \textit{saddle decorations} of $S$.
\end{defn}
Beware again that many choices are involved. But they do not change the complex up to chain isomorphisms as we show in Lemma~\ref{lem-everythingfine} in an analogon of Lemma~\ref{lem-commutativeindependence}.
\begin{defn}\label{defn-geocomplex}(Topological complex for v-tangles) For a v-tangle diagram $T^k_D$ with a *-marker on the boundary and with $n$ ordered crossings we define \textit{the topological complex} $\bn{T^k_D}$ as follows.
\begin{itemize}
\item For $i\in\{0,\dots,n\}$ the $i-n_-$ \textit{chain module} is the formal direct sum of all resolutions $\gamma_a$ of length $i$.
\item There are only morphisms between the chain modules of length $i$ and $i+1$.
\item If two words $a,a^{\prime}$ differ only in exactly one letter and $a_r=0$ and $a^{\prime}_r=1$, then there is a morphism between $\gamma_a$ and $\gamma_{a^{\prime}}$. Otherwise all morphisms between components of length $i$ and $i+1$ are zero.
\item This morphism is a \textit{saddle} between $\gamma_a$ and $\gamma_{a^{\prime}}$.
\item The saddles should carry the \textit{saddle decorations} from Definition~\ref{defn-deco2}.
\end{itemize}
\end{defn}
We note again that it is not clear at this point why we can choose the numbering of the crossings, the numbering of the v-circles and the orientation of the resolutions of the closure. Furthermore, it is not clear why this complex is a well-defined chain complex. But we show in Lemma~\ref{lem-everythingfine} that the complex is independent of these choices, i.e. if $\bn{L_D}_1$ and $\bn{L_D}_2$ are well-defined chain complexes with different choices, then they are equal up to chain isomorphisms. The same lemma ensures that the complex is a well-defined chain complex.

Another point that is worth mentioning is that the signs in our construction, in contrast to the classical Khovanov homology, do not depend on the order of the crossings of the diagram.
\vskip0.5cm
Beware that the position of the *-marker is important for v-tangle diagrams. But Theorem~\ref{thm-marker} ensures that the position is not important for c-tangles and v-links.

If it does not matter which of the possible two different chain complexes is which, i.e. it is just important that they could be different, then we denote them by $\bn{T^k_D}^*$ and $\bn{T^k_D}_*$ for a given v-tangle diagram $T^k_D$ without a chosen *-marker position. 
 
For an example see Figure~\ref{figure-tanglecomplex}. This figure shows the virtual Khovanov complex of a v-tangle diagram with two different *-marker positions. The vertical arrow between them indicates that they are (in this case) chain isomorphic. It is worth noting at this point that, as we show in Theorem~\ref{thm-marker}, they are always isomorphic if the diagram is a c-tangle diagram (as the two diagrams in the figure below).
\begin{figure}[ht]
  \centering
     \includegraphics[scale=0.425]{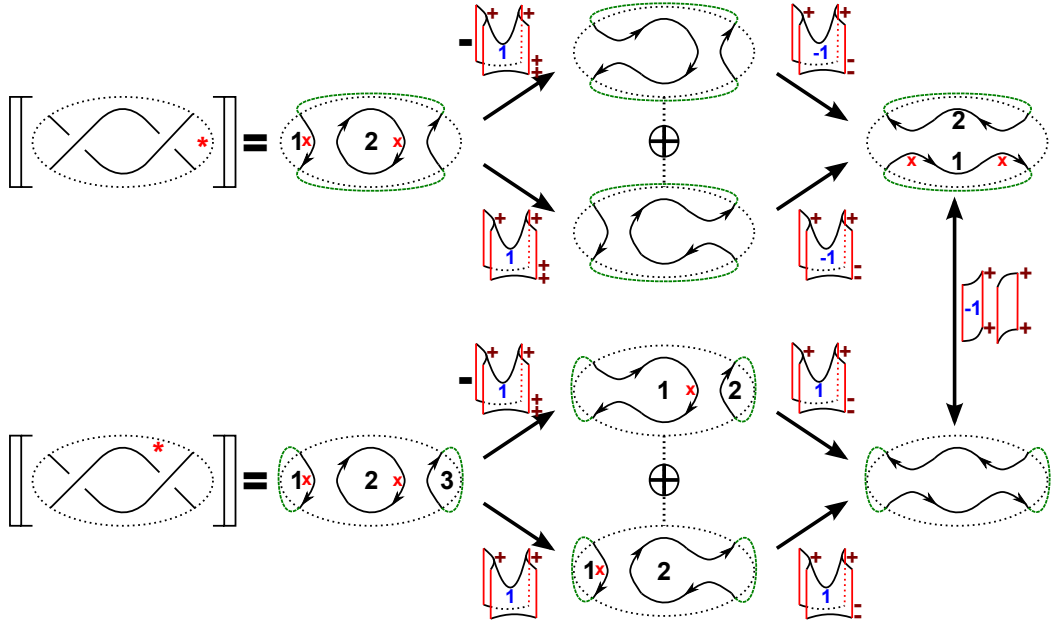}
  \caption{The complex of the same v-tangles with different *-marker positions. The two complexes are (in this case) isomorphic.}
  \label{figure-tanglecomplex}
\end{figure}
\begin{lem}\label{lem-everythingfine}
Let $T^k_D$ be a v-tangle diagram with a *-marker and let $\bn{T^k_D}_1$ be its topological complex from Definition~\ref{defn-geocomplex} with arbitrary orientations for the resolutions of the closure. Let $\bn{T^k_D}_2$ be the complex with the same orientations for the resolutions except for one circle $c$ in one resolution $\gamma_a$. If a face $F_1$ from $\bn{T^k_D}_1$ is anticommutative, then the corresponding face $F_2$ from $\bn{T^k_D}_2$ is also anticommutative.

Moreover, if $\bn{T^k_D}_1$ is a well-defined chain complex, then it is isomorphic to $\bn{T^k_D}_2$, which is also a well-defined chain complex.

The same statement is true if the difference between the two complexes is the numbering of the crossings, the choice of the x-marker for the calculation of the saddle signs or the fixed numbering of the v-circles of the closure. Moreover, the same is true for any rotations/isotopies of the v-tangle diagram.
\end{lem}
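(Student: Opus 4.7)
The plan is to reduce every assertion of the lemma to its closed counterpart in Lemma~\ref{lem-commutativeindependence} (and Lemma~\ref{lem-xmarker}), using the fact that, by Definition~\ref{defn-deco2}, the signs and glueing numbers of the saddles of $\bn{T^k_D}$ are by construction the signs and glueing numbers of the corresponding saddles of $\bn{\mathrm{Cl}(T^k_D)}$. The only new datum on the tangle side is the indicator, and, again by Definition~\ref{defn-deco2}, the indicator of an orientable saddle depends only on the combinatorial type of the saddle (one versus two upper boundary components), which is unaffected by any of the choices listed in the statement. So all the delicate sign and decoration bookkeeping is inherited from the closure, and the indicator layer is simply transported.

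First I would handle the case of an orientation change on a single component $c$ of one resolution $\gamma_a$. The component $c$ sits inside a unique closed component $\tilde c$ of $\mathrm{Cl}(\gamma_a)$, obtained by capping $c$ off with boundary-avoiding arcs. Reorienting $c$ corresponds to reorienting $\tilde c$ while leaving every other component of every other resolution of the closure untouched. By Lemma~\ref{lem-commutativeindependence} applied to the closure, this preserves anticommutativity of every face. Back in $\bn{T^k_D}$, this change manifests itself as pre- or post-composition of each affected saddle with a cylinder of the form $\Phi^-_+$ on closed boundary components of the tangle lying on $c$ and of the form $\Phi(1)^-_+$ on the open vertical boundary components of $c$; on all unaffected components one has $\mathrm{id}^+_+$ or $\mathrm{id}(\pm 1)^+_+$. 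By Lemma~\ref{lem-calc}(d), these cylinders satisfy exactly the same commutation-with-sign rules as in the closed case, so the local face computation transports verbatim. Assembling these local cylinders via a spanning-tree argument on the tangle cube produces a chain isomorphism $\bn{T^k_D}_1 \xrightarrow{\sim} \bn{T^k_D}_2$; each cylinder involved is invertible by Lemma~\ref{lem-basiscalculations}(a) and the discussion in Example~\ref{ex-vrmremoval}, so the global map is an isomorphism of complexes.

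For the remaining items I would argue analogously. Renumbering the crossings does not change any cobordism at all, exactly as in the closed case. For a change of x-marker at one crossing, or a swap of two consecutive v-circle labels in one resolution of the closure, Lemma~\ref{lem-xmarker} tells us that the total sign contribution to every face of $\bn{\mathrm{Cl}(T^k_D)}$ is preserved modulo~$2$; since the cobordisms themselves and the indicators are unchanged, the same sign count controls the faces of $\bn{T^k_D}$, and the induced chain isomorphism is a diagonal matrix of $\pm 1$'s and identities. Invariance under boundary-preserving planar isotopies of $T^k_D$ is built into the definition of $\ucob_R(k)$ itself, since morphisms are considered up to boundary-preserving homeomorphism and objects up to boundary-preserving planar isotopy.

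The main obstacle I anticipate is purely bookkeeping: when $c$ is an open v-string rather than a closed v-circle, the closed component $\tilde c$ of $\mathrm{Cl}(\gamma_a)$ containing $c$ may absorb several v-strings of the tangle together with the capping arcs, so a reorientation of $c$ alone is not quite a reorientation of $\tilde c$; one must verify that the extra cylinders $\Phi(1)^-_+$ installed on the open boundary of tangle saddles exactly realise the missing reorientations of the capping arcs under the closure operation. Once this dictionary between open boundary cylinders on the tangle side and ``capping'' cylinders on the closure side is pinned down, the conclusion of the lemma is a direct consequence of Lemma~\ref{lem-commutativeindependence} and Lemma~\ref{lem-xmarker}, with the indicator layer remaining passive throughout.
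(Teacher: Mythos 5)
Your proposal is correct and follows essentially the same route as the paper's own (very terse) proof: reduce everything to the closed-link statements of Lemma~\ref{lem-commutativeindependence} and Lemma~\ref{lem-xmarker} via the closure $\mathrm{Cl}(T^k_D)$, and observe that the one genuinely new decoration on the tangle side --- the indicator --- is determined by the fixed $*$-marker alone and is therefore passive under all of the listed choices. The paper compresses all of your explicit bookkeeping (the dictionary $\Phi^-_+ \leftrightarrow \Phi(1)^-_+$ between closed and open cylinders, Lemma~\ref{lem-calc}(d), and the spanning-tree construction of the chain isomorphism) into the single remark that ``the relations from Definition~\ref{defn-category3} are built in such a way that the open cases behave as the closed ones,'' so your version makes that hand-wave concrete. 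The one small caveat: the circle $c$ in the statement is a v-circle of a resolution of the \emph{closure}, so the worry in your last paragraph --- that reorienting a tangle v-string is not quite reorienting the corresponding closure component --- does not actually arise, because the reorientation is specified directly on the closure; the dictionary you anticipate having to verify is already built into Definition~\ref{defn-deco2}.
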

\begin{proof}
For v-tangle diagrams $T^k_D$ with $k=0$ the statement is the same as the corresponding statements in Lemma~\ref{lem-commutativeindependence} and Corollary~\ref{cor-chaincomplex}. Recall that the trick is to reduce all faces through a finite sequence of vRM1, vRM2, vRM3 and mRM moves in Figure~\ref{figure0-reide} and virtualisations from Figure~\ref{figure0-virt} to a finite number of different possible faces. Then one does a case-by-case check.

Because the saddles in the two chain complexes are topological the same, we only have to worry about the decorations. But the decorations are spread based on the closure of the v-tangle diagram and the relations from Definition~\ref{defn-category3} are build in such a way that the open cases behave as the closed ones.

Hence, we can use the statement for $k=0$ to finish the proof, since the only possible differences for $k>0$ are the indicators, but they only depend on the *-marker.
\end{proof}
In the same vein as in Section~\ref{sec-vkhcom} we obtain the following Corollary.
\begin{cor}\label{cor-chaincomplex2}
The complex $\bn{T^k_D}$ is a chain complex. Thus, it is an object in the category $\ukobk_R$.\qed 
\end{cor}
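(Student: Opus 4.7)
The plan is to mirror the strategy used for v-links in Theorem~\ref{theo-facescommute} and Corollary~\ref{cor-chaincomplex}, reducing the question $d\circ d=0$ to a finite case check on basic faces, then invoking Lemma~\ref{lem-everythingfine} to propagate the result through all choices. Concretely, I need to verify that, for every pair of composable saddles in $\bn{T^k_D}$, the two paths around each two-dimensional face anticommute, so that the chain-module decomposition by resolution-length gives a bona fide differential in $\ukobk_R$.

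First I would apply the v-tangle analogue of Lemma~\ref{lem-basicfaces}: using virtualisations (Figure~\ref{figure0-virt}) together with the vRM1, vRM2, vRM3 and mRM moves, any connected face of $\bn{T^k_D}$ can be isotoped and virtualised until it looks, up to isotopy and vRM moves on the face itself, like one of the basic faces of Figure~\ref{figure-basic} -- only now some of the ambient v-circles may be open v-strings. Lemma~\ref{lem-virtualisation} carries over verbatim in the open setting, because the saddle decorations of $T^k_D$ are by Definition~\ref{defn-deco2} pulled back from the closure $\mathrm{Cl}(T^k_D)$, and the v-link invariance already holds there. Disjoint faces are handled precisely as in Proposition~\ref{prop-standardfaces}, since indicators and decorations behave multiplicatively under disjoint union of open cobordisms.

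Next I would verify anticommutativity for each basic face in the open setting, exactly as in Proposition~\ref{prop-standardfaces}. The new ingredient is that saddles are now the open generators $S^{+}_{++}$, $S^{++}_{+}$, $\theta$ and $\mathrm{id}(\pm 1)^{+}_{+}$ from Figure~\ref{figure1-3}, equipped with indicators from $\{0,+1,-1\}$. By Lemma~\ref{lem-calc}, together with the open M\"obius relation~\ref{eq-tanmoe} and the combinatorial relations~\ref{eq-combrel123}, compositions of open saddles satisfy exactly the same identities as their closed analogues once the indicators are tracked multiplicatively. In particular, the two composites $S_{*1}\circ S_{0*}$ and $S_{1*}\circ S_{*0}$ around any face carry identical indicator products, so after cancelling indicators the anticommutativity reduces to the already-verified closed-face identity from Proposition~\ref{prop-standardfaces} applied to the closure $\mathrm{Cl}(T^k_D)$. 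Lemma~\ref{lem-everythingfine} then transports anticommutativity from the preferred choice of numbering, x-markers and orientations back to arbitrary choices, so $d\circ d=0$ holds globally and $\bn{T^k_D}\in\Ob(\ukobk_R)$.

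The main obstacle I anticipate is making sure the indicator bookkeeping is consistent around every face, in particular for the ``problematic'' non-orientable face analogous to~\ref{probcube} in which two of the four saddles carry indicator $0$. Here one cannot just import the closed proof, and one must invoke the open M\"obius relation~\ref{eq-tanmoe} directly to identify the two composites as $\theta$-type cobordisms with matching signs. A secondary but purely bookkeeping concern is that the $\Phi^{-}_{+}$-type decoration changes introduced by non-alternating local orientations (as in Table~\ref{tab-deco}) still cancel correctly in pairs around each face; this follows exactly as in Lemma~\ref{lem-commutativeindependence}, using statement (d) of Lemma~\ref{lem-calc} in place of (b) of Lemma~\ref{lem-basiscalculations}.
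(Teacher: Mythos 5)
Your proposal is correct and matches the paper's approach: the corollary is presented as immediate from Lemma~\ref{lem-everythingfine}, whose proof does precisely what you describe --- it reduces face anticommutativity in $\bn{T^k_D}$ to the already-proven v-link case (Theorem~\ref{theo-facescommute} and Corollary~\ref{cor-chaincomplex}) by observing that saddle decorations are pulled back from the closure $\mathrm{Cl}(T^k_D)$ and that the relations in $\ucob_R(k)$ are engineered so that the open generators behave like their closed analogues. The extra direct verification of basic open faces you sketch via Lemma~\ref{lem-calc} and the open M\"obius relation~\ref{eq-tanmoe} is sound but becomes redundant once the closure reduction is invoked.
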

Hence, we can speak of \textit{the} topological complex $\bn{T^k_D}$ of the v-tangle diagram with a *-marker. The complex is by Corollary~\ref{cor-chaincomplex2} a well-defined chain complex.
\vskip0.5cm
The next theorem is very important but the proof itself is almost equal to the proof of Theorem~\ref{thm-geoinvarianz}. Therefore, we skip the details.
\begin{thm}\label{thm-invarianz}
Let $T^k_D,T'^k_D$ be two v-tangle diagrams with the same *-marker position which differ only through a finite sequence of isotopies and generalised Reidemeister moves. Then the complexes $\bn{T^k_D}$ and $\bn{T'^k_D}$ are equal in $\ukobk^{hl}_R$.
\end{thm}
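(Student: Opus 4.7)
The plan is to mimic the proof of Theorem~\ref{thm-geoinvarianz} case-by-case, leveraging the fact that the saddle decorations of $T^k_D$ were defined via the closure $\mathrm{Cl}(T^k_D)$. Since both diagrams $T^k_D$ and $T'^k_D$ share the same $*$-marker position, the generalised Reidemeister moves performed on the tangle induce the \textit{same} moves on the closures (away from the $*$-marker arcs). Thus, the local chain maps $F, G$ and homotopies $h$ exhibited for RM1, RM2 in the proof of Theorem~\ref{thm-geoinvarianz} can be reused verbatim, with the understanding that any vertical boundary strings appearing are covered by extra identity cylinders $\mathrm{id}(\pm 1)^+_+$ and that each connected piece of a local cobordism now carries an indicator in $\{0,+1,-1\}$ (inherited from the decorations on the closure).

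First I would verify RM1 and RM2 exactly as before, substituting the open versions of $m^{++}_+$, $\Delta^+_{++}$, $\Phi^-_+$ and $\iota_+$ in the category $\ucob_R(k)$, and checking that the relations~\ref{eq-combrel123} together with the open M\"obius relations~\ref{eq-tanmoe} yield the same algebraic identities $G\circ F=\mathrm{id}$ and $F\circ G\sim\mathrm{id}$ via the candidate strong deformation retract data of Proposition~\ref{prop-sdr}. The point is that Lemma~\ref{lem-calc} provides, in the open setting, the exact analogues of the commutation and permutation relations of Lemma~\ref{lem-basiscalculations} needed to push $\Phi^-_+$ through compositions. Then RM3 follows from RM2 by the standard cone argument; and vRM1, vRM2, vRM3 follow from the fact (analogue of Example~\ref{ex-vrmremoval}) that the open vRM-cobordisms are isomorphisms in $\ucob_R(k)$.

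The mRM case is handled as for v-links: write both sides as cones along a saddle, simplify the rightmost resolution by the open vRM2 isomorphism, and observe that the two cones become identified along isomorphic targets, so Proposition~\ref{prop-sdr} with trivial homotopy gives the equivalence. Throughout, the analogue of Lemma~\ref{lem-virtualisation} for tangles with a fixed $*$-marker is needed, which is the content of Lemma~\ref{lem-basicfaces} applied to the closure together with the decoration inheritance of Definition~\ref{defn-deco2}; this reduces all verifications on anticommutativity to the basic faces treated in Proposition~\ref{prop-standardfaces}.

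The main obstacle is that ``non-orientability'' of a saddle, and hence the assignment of indicator $0$, depends on the closure rather than only on local data of $T^k_D$. Consequently, when a local move is performed inside $T^k_D$, one must check that the orientability pattern of the \textit{closure} is unchanged away from the move, so that the indicators of saddles outside the move coincide before and after. This is true because the $*$-marker is fixed and the generalised Reidemeister moves are performed in a disk disjoint from the capping arcs, so the closure changes only by the same local move and orientability is a purely local property once one orients a single representative resolution. Granting this, all decorations, indicators and saddle signs match and the chain maps/homotopies from the v-link proof go through, yielding the desired equality in $\ukobk^{hl}_R$.
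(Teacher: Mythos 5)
Your proof is correct and follows essentially the same route as the paper: reuse the chain maps and homotopies from the v-link case (Theorem~\ref{thm-geoinvarianz}), with Lemma~\ref{lem-everythingfine} handling the freedom in orientations and numberings and the indicators inherited from the closure via Definition~\ref{defn-deco2}. The paper's own proof is terser, merely remarking that one can copy the arguments and that the chain homotopies must carry $+1$ indicators; your elaboration of the non-orientability subtlety is precisely the verification the paper is alluding to.
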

\begin{proof}
We can copy the arguments of Theorem~\ref{thm-geoinvarianz}. The Lemma~\ref{lem-everythingfine} guarantees that we can choose the numbering and orientations without changing anything up to chain isomorphisms.

Beware that the chain homotopies in~\ref{thm-geoinvarianz} should all carry $+1$ as an indicator. Again, one can check that the involved chain homotopies satisfy the condition of a strong deformation retract.
\end{proof}
\subsubsection*{The *-marker and the classical complex}
We need some notions now. Note that they seem to be ad-hoc, but the main motivation is that in general the position of the *-marker is important. But to recover at least some local properties, as discussed in Section~\ref{sec-vkhca}, we need to identify basic parts of v-tangle diagrams such that the two complexes are isomorphic.
\vskip0.5cm
Let $T^k_D$ denote a v-tangle diagram. We call a part of $T^k_D$ a \textit{connected part} if it is connected as the four-valent graph by ignoring the v-crossings. We call a connected part of a v-tangle diagram \textit{fully internal} if it is not adjacent to the boundary. See Figure~\ref{figure-internal}. The left v-tangle diagram has one connected part, which is not fully internal, and the right v-tangle diagram has two connected parts, one fully internal and one not fully internal.
\begin{figure}[ht]
  \centering
     \includegraphics[scale=0.56]{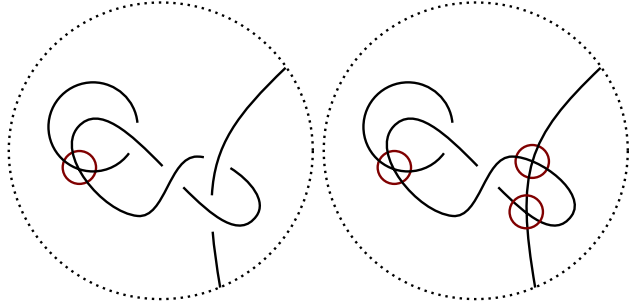}
  \caption{The left v-tangle diagram is not fully internal, but the right diagram has a fully internal component (the two internal v-circles).}
  \label{figure-internal}
\end{figure}

A v-crossing is called \textit{negligible} if it is part of a fully internal component, e.g. all v-crossings of the right v-tangle diagram in Figure~\ref{figure-internal} are negligible. Note that, by convention, negligible v-crossings are never part (for all resolutions) of any string that touches the boundary.

We call a v-tangle diagram $T^k_D$ \textit{nice} if there is a finite sequence of vRM1, vRM2, vRM3 and mRM moves and virtualisations such that every v-crossing is negligible, e.g. every v-link diagram is nice and every c-tangle diagram is nice.
\vskip0.1cm
An example of a not nice v-tangle diagram is shown in Figure~\ref{figure-notnice}. Note that the complexes are not chain homotopic.
\begin{figure}[ht]
  \centering
     \includegraphics[scale=0.5]{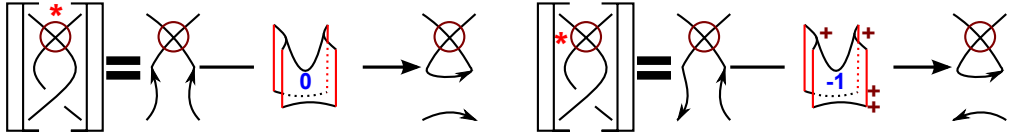}
  \caption{A counterexample. The diagram is not a nice v-tangle diagram.}
  \label{figure-notnice}
\end{figure}

We note that for a v-tangle diagram $T^k_D$ the chain complexes $\bn{T^k_D}^*$ and $\bn{T^k_D}_*$ are ``almost'' the same, i.e. they have the same vertices, but possible different edges (which are still in the same positions). The next lemma makes the observation precise.

It is worth noting that the Khovanov cube of a v-tangle diagram with $n$ crossings has $2^{n-1}n$ saddles. We number these saddles and the numbering in the lemma below should be the same for the two complexes.
\begin{lem}\label{lem-factorbyisos}
Let $T^k_D$ be a v-tangle diagram with $n$ crossings. Let ${}_iS(\mathrm{in})^*$ and ${}_iS(\mathrm{in})_*$ denote the numbered saddles of $\bn{T^k_D}^*$ and of $\bn{T^k_D}_*$. If $T^k_D$ is a nice v-tangle diagram, then we have for all $i=1,\dots,2^{n-1}n$ a factorisation of the form ${}_iS(\mathrm{in})^*=\alpha\circ {}_iS(\mathrm{in})_*\circ\beta$ for two invertible cobordisms $\alpha,\beta$.
\end{lem}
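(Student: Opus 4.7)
My plan is to compare the decorations attached to ${}_iS(\mathrm{in})^*$ and ${}_iS(\mathrm{in})_*$ componentwise and to realise every discrepancy as pre- or post-composition with one of the invertible cylinder morphisms $\Phi^-_+$, $\Phi(1)^-_+$, $\mathrm{id}(-1)^+_+$ (invertibility is supplied by Lemma~\ref{lem-basiscalculations}(a) and its open analogue, noting that $\mathrm{id}(-1)$ is involutive because indicators multiply and $(-1)^2=1$). Both saddles arise from the same open saddle in $T^k_D$, so as abstract immersed surfaces in $\ucob_R(k)$ they coincide; by Definition~\ref{defn-deco2} only the saddle sign, the glueing numbers on the horizontal boundary components, and the indicator can differ between the two complexes, since all three are read off from the two different closures.

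The ``nice'' hypothesis enters twofold. First, by Theorem~\ref{thm-invarianz} together with Lemma~\ref{lem-virtualisation}, I may replace $T^k_D$ by an equivalent diagram in which every v-crossing is negligible; the corresponding chain isomorphism is built from invertible cobordisms (vRM-cobordisms and $\Phi^-_+$), and it therefore transports any per-saddle factorisation of the reduced diagram back to $T^k_D$. Second, for such a reduced diagram both closures $\mathrm{Cl}^*(T^k_D)$ and $\mathrm{Cl}_*(T^k_D)$ agree on every fully internal v-component and differ only in how the boundary-adjacent c-strings are joined up. In particular, the orientability type of each saddle is the same in both closures, so either both indicators lie in $\{+1,-1\}$ or both equal $0$ — this is precisely what rules out the pathology exhibited by Figure~\ref{figure-notnice}.

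In the orientable case I then proceed locally: for every horizontal boundary component on which the two closures induce opposite orientations I insert $\Phi^-_+$ (for a v-circle) or $\Phi(1)^-_+$ (for a v-string) into $\alpha$ or $\beta$ at that component, which by Lemma~\ref{lem-calc}(d) flips the corresponding glueing number at the cost of an explicit sign; if the two indicators disagree I additionally precompose $\beta$ with $\mathrm{id}(-1)^+_+$ on one strand. Any residual sign — including the discrepancy between the two values of $\mathrm{sgn}(S)$, which may arise from different v-circle orderings and x-marker positions in the two closures — combines to a single invertible scalar $\pm 1$ that I absorb as a scalar multiple of $\mathrm{id}$ in $\alpha$. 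In the non-orientable case the indicator is $0$ and no glueing numbers are present, so only the global $\pm 1$ scalar is needed. The main obstacle will be the book-keeping: verifying that these independent local flips assemble into genuinely coherent invertible morphisms $\alpha,\beta$ in $\ucob_R(k)$, and that the commutation of the various $\Phi$-type and $\mathrm{id}(-1)$ cylinders past the different connected components of the saddle (via the relations in Lemma~\ref{lem-calc}(c)(d)) does not introduce further signs beyond those tracked above.
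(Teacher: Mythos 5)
Your proposal is correct and follows essentially the same strategy as the paper: reduce to negligible v-crossings, observe that among the decoration cylinders only the one with $0$-indicator is non-invertible, and use niceness to show both closures assign indicator $0$ to exactly the same saddles, so all remaining discrepancies (glueing numbers, indicator $\pm 1$, saddle signs) are absorbable into invertible $\alpha,\beta$.

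Two small comments. First, your reduction step is more roundabout than the paper's: you transport a per-saddle factorisation through a chain isomorphism, whereas the paper simply observes directly that vRM/mRM moves and virtualisations do not change any saddle's indicator, so the claim for the reduced diagram already implies it for $T^k_D$; moreover, Theorem~\ref{thm-invarianz} as stated yields a homotopy equivalence, not a chain isomorphism, so the transport argument should rest on the v-tangle analogue of Lemma~\ref{lem-virtualisation} alone (the paper sidesteps this entirely with its direct observation). Second, your central claim ``the orientability type of each saddle is the same in both closures'' is stated but its justification is slightly compressed: the precise reason, made explicit in the paper via the rightmost case of Figure~\ref{figure0-order}, is that a saddle can become non-orientable only if a v-crossing sits on one of the strands being connected, and in a nice diagram no v-crossing lies on a boundary-adjacent string or on a closure arc, so the closure cannot toggle orientability. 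With those two points sharpened, the argument matches the paper's.
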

\begin{proof}
It is clear that the saddles are topological equivalent. So we only need to consider the decorations. The main point is the following observation. Of the four outer (two on both sides) cobordisms in the bottom row of Figure~\ref{figure1-3}, i.e. $\mathrm{id}(1)^+_+$, $\Phi(1)^-_+$, $\mathrm{id}(0)$ and $\mathrm{id}(-1)^+_+$, only the third is not invertible. The first is the identity, the second and fourth are their own inverses. The third is not invertible because the $0$-indicator can not be changed to a $\pm 1$-indicator.

Note that neither the vRM1, vRM2, vRM3 and mRM moves nor a virtualisation change the indicator of a saddle cobordism. Hence, it is sufficient to show the statement for a v-tangle diagram with only negligible v-crossings. From the observation above it is enough to show that every saddle gets a $0$-indicator in one closure iff it gets a $0$-indicator in the other closure.

The only possible way that a saddle gets an indicator from $\{+1,-1\}$ for one closure and a $0$-indicator for the other closure is the rightmost case in Figure~\ref{figure0-order}. But for this case the existence of a non-negligible v-crossing is necessary. Hence, we get the statement.    
\end{proof}
\begin{prop}\label{prop-nicechainiso}
Let $T^k_D$ be a v-tangle diagram. If $T^k_D$ is nice, then $\bn{T^k_D}^*$ and $\bn{T^k_D}_*$ are chain isomorphic.
\end{prop}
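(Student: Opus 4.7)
The plan is to combine Theorem~\ref{thm-invarianz} with Lemma~\ref{lem-factorbyisos} and a spanning tree argument on the Khovanov cube. Since $T^k_D$ is nice, by definition there is a finite sequence of vRM1, vRM2, vRM3 and mRM moves plus virtualisations that make every v-crossing negligible. By Theorem~\ref{thm-invarianz} (which applies to both $\bn{T^k_D}^*$ and $\bn{T^k_D}_*$ separately, for each of the two *-marker positions), the chain isomorphism class of both complexes is preserved along this sequence. Thus we may assume from the outset that every v-crossing of $T^k_D$ is negligible; this places us exactly in the hypothesis of Lemma~\ref{lem-factorbyisos}.

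First I would exploit the fact that both complexes have literally the same underlying chain modules: at each vertex $\gamma_a$ of the cube, the resolution is the same planar diagram, independently of which closure we use. What differs is only the boundary decorations (and possibly sign) on certain cylinders appearing as identity components, and these differences are controlled by the $\Phi^-_+$ and $\Phi(1)^-_+$ cobordisms. Indeed, by Lemma~\ref{lem-factorbyisos} each corresponding pair of saddles satisfies ${}_iS(\mathrm{in})^* = \alpha_i\circ{}_iS(\mathrm{in})_*\circ\beta_i$, where $\alpha_i,\beta_i$ are disjoint unions of $\mathrm{id}(1)^+_+$, $\Phi(1)^-_+$, $\mathrm{id}^+_+$ and $\Phi^-_+$ on the v-circles and v-strings not affected by the saddle (together with a possible overall sign).

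Next I would build the chain map $\Psi\colon \bn{T^k_D}^*\to \bn{T^k_D}_*$ vertex by vertex via a spanning tree of the cube, exactly as in the spanning tree arguments used throughout Section~\ref{sec-vkhcom} (e.g.\ in the proof of Theorem~\ref{thm-classic} and Lemma~\ref{lem-commutativeindependence}). Fix a root vertex $\gamma_{\mathbf 0}$, set $\Psi_{\gamma_{\mathbf 0}}=\mathrm{id}$, and walk along the tree; at each edge the factorisation forces $\Psi_{\gamma_b}=\alpha_i\circ \Psi_{\gamma_a}\circ\beta_i^{-1}$, which is again a disjoint union of $\mathrm{id}$'s and $\Phi$'s (possibly with a sign), hence invertible. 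By construction, every face of the cube that contains only tree edges commutes; to get commutativity across the remaining (non-tree) edges one uses that both $\bn{T^k_D}^*$ and $\bn{T^k_D}_*$ are well-defined chain complexes by Corollary~\ref{cor-chaincomplex2}, so all their faces are anticommutative, and the $\Phi$-composition relations of Lemma~\ref{lem-calc}(c), (d) then force the non-tree faces of $\Psi$ to commute as well. Invertibility of $\Psi$ is automatic, since each $\Psi_{\gamma_a}$ is a disjoint union of invertible cylinders.

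The main obstacle I anticipate is verifying commutativity on the non-tree faces: the $\Phi$-cobordisms carry sign changes (item (d) of Lemma~\ref{lem-calc}), so one must show that the signs introduced along two different tree paths between the same pair of vertices cancel. The key observation needed here is that this is precisely the content of the anticommutativity of the faces combined with the fact that, for a nice v-tangle, the ``decoration discrepancy'' between the two closures comes from re-orienting certain boundary-adjacent components—exactly the situation controlled by Lemma~\ref{lem-everythingfine}. Once this propagation of signs is made precise, the chain isomorphism drops out.
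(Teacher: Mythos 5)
Your overall strategy is the one the paper uses: apply Lemma~\ref{lem-factorbyisos} to factor each pair of corresponding saddles by invertible cobordisms, then propagate an explicit chain isomorphism through the Khovanov cube via a spanning tree, appealing to Lemma~\ref{lem-everythingfine} and the anticommutativity of faces to close up the non-tree edges. The paper organizes the spanning-tree pass in three stages (first normalize orientations, then indicators, then signs) rather than building one map $\Psi$ in a single pass, but this is a presentational difference.

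The opening reduction step is the weak point, for two reasons. First, it is logically unnecessary: the hypothesis of Lemma~\ref{lem-factorbyisos} is precisely that $T^k_D$ is nice, so you are already in its scope without any preprocessing. The reduction to ``all v-crossings negligible'' lives \emph{inside} the proof of that lemma, not at the level of the proposition. Second, and more seriously, Theorem~\ref{thm-invarianz} does not justify your claim that the ``chain isomorphism class of both complexes is preserved along this sequence''; that theorem only gives equality in $\ukobk^{hl}_R$, i.e.\ up to chain \emph{homotopy} and local relations, which is strictly weaker than chain isomorphism. The moves you use (vRM1--3, mRM, virtualisation) do in fact preserve chain isomorphism type, but the relevant justification is the tangle analogue of Lemma~\ref{lem-virtualisation}, not Theorem~\ref{thm-invarianz}. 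Since the step is superfluous you can simply delete it; if you keep it, the citation must be corrected, or the argument downgrades to chain homotopy equivalence and no longer proves the stated proposition.

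One more small caution: the formula $\Psi_{\gamma_b}=\alpha_i\circ \Psi_{\gamma_a}\circ\beta_i^{-1}$ does not typecheck as written, since $\Psi_{\gamma_a}$ and $\Psi_{\gamma_b}$ are morphisms of distinct objects $\gamma_a$ and $\gamma_b$. What the chain-map condition actually forces, given ${}_iS^*=\alpha\circ{}_iS_*\circ\beta$ and a previously chosen $\Psi_{\gamma_a}$, is a commutation relation $\Psi_{\gamma_b}\circ\alpha\circ {}_iS_*\circ\beta={}_iS_*\circ\Psi_{\gamma_a}$, which you solve using Lemma~\ref{lem-calc}(d) to slide the $\Phi$-cylinders past ${}_iS_*$. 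Your verbal description of the step is fine; the displayed formula should be replaced by that commutation relation.
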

\begin{proof}
Let $T^k_D$ be a nice v-tangle diagram. Then Lemma~\ref{lem-factorbyisos} ensures that every saddle is the same, up to isomorphisms, in $\bn{T^k_D}^*$ and $\bn{T^k_D}_*$. Furthermore, Lemma~\ref{lem-everythingfine} ensures that both are well-defined chain complexes. Hence, the number of signs of every face is odd (also counting the ones from the decorations).

Thus, we can use a spanning tree argument to construct the chain isomorphism explicit, i.e. start at the rightmost leafs of a spanning tree of the Khovanov cube and change the orientations of the resolutions at the corresponding vertices such that the unique outgoing edges of the tree has the same decorations in both cases (Lemma~\ref{lem-everythingfine} ensures that nothing changes modulo chain isomorphisms). Continue along the vertices of the spanning tree, but remove already visited leafs. This construction generates a chain isomorphism.

Next repeat the whole process, but change the indicators and afterwards the signs. It is worth noting that Lemma~\ref{lem-everythingfine} ensures that the two processes will never run into ambiguities or problems and Lemma~\ref{lem-factorbyisos} ensures that they will generate chain isomorphisms.

The chain isomorphism that we need is the composition of the three isomorphisms constructed before. See for example Figure~\ref{figure-tanglecomplex}. 
\end{proof}
\begin{thm}\label{thm-marker}(\textbf{Two different chain complexes}) Let $T^k_D$ be a v-tangle diagram with two different *-marker positions. Let $\bn{T^k_D}^*$ and $\bn{T^k_D}_*$ be the topological complex from Definition~\ref{defn-geocomplex} for the two positions. Then the two complexes are equal in $\ukobk^{hl}_R$ if the v-tangle has $k=0$ or is a c-tangle.
\end{thm}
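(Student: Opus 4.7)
The plan is to dispatch the two cases separately, both reducing very quickly to material already established in the excerpt. For the case $k=0$, I would observe that a v-tangle diagram with no boundary points is simply a v-link diagram, so the notion of a \text{*}-marker on the boundary is vacuous: in the closure procedure from Definition~\ref{defn-vtangle} there are no pairs of boundary points to cap off, hence $\mathrm{Cl}(T^0_D)=T^0_D$ regardless of where one pretends to place the marker. Since all saddle decorations (signs, glueing numbers, and indicators) in Definition~\ref{defn-deco2} are read off from the closure, they coincide on the nose for the two choices of marker. Therefore $\bn{T^0_D}^{*}$ and $\bn{T^0_D}_{*}$ are literally the same object of $\ukob_R$, and a fortiori equal in $\ukob_R^{hl}$.

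For the second case, let $T^k_D$ be a c-tangle diagram. Since $T^k_D$ contains no virtual crossings at all, the condition that every v-crossing be negligible is satisfied vacuously, so $T^k_D$ is a nice v-tangle diagram in the sense introduced just before Lemma~\ref{lem-factorbyisos}. The hypothesis of Proposition~\ref{prop-nicechainiso} is therefore met, and that proposition yields a chain isomorphism $\bn{T^k_D}^{*}\cong\bn{T^k_D}_{*}$ in $\ukobk_R$, which in particular identifies them in $\ukobk_R^{hl}$.

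Thus in either case the identification follows directly from what has already been proven. The only genuinely non-trivial content is hidden in the earlier statements: the vacuous-indicator bookkeeping of Lemma~\ref{lem-factorbyisos} and the spanning-tree argument of Proposition~\ref{prop-nicechainiso} (which in turn rely on Lemma~\ref{lem-everythingfine}). I do not expect any technical obstacle here, because the proof reduces to checking that our two cases fall inside the hypotheses of results already established; the conceptual obstacle was really in setting up ``niceness'' correctly so as to catch exactly these configurations while excluding genuinely pathological examples such as the one pictured in Figure~\ref{figure-notnice}.
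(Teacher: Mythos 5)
Your treatment of the $k=0$ case is correct and in fact cleaner than what the paper does: you observe that the closure of a $0$-boundary-point diagram is the diagram itself, so the decorations of Definition~\ref{defn-deco2} genuinely do not see the \text{*}-marker, and the two complexes coincide on the nose. The paper instead routes through Proposition~\ref{prop-nicechainiso} (noting that every v-link diagram is nice and hence the two complexes are chain isomorphic), which is a weaker but still sufficient conclusion. Either way the case is fine.

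Your treatment of the second case has a genuine gap, though. You write ``let $T^k_D$ be a c-tangle diagram,'' i.e.\ you silently strengthen the hypothesis to ``the \emph{diagram} has no virtual crossings.'' But the theorem only assumes that the v-\emph{tangle} represented by $T^k_D$ is classical (recall Definition~\ref{defn-vtangle}: a v-tangle is classical if its equivalence class \emph{contains} a classical diagram, not that every representative is classical). Thus the given $T^k_D$ may very well carry v-crossings, and ``every v-crossing is negligible'' is then not automatic --- you would have to argue that $T^k_D$ is nice, which is exactly the content you are trying to avoid. The missing step, which the paper supplies, is: because the underlying tangle is classical, choose a diagram $T'^k_D$ with no virtual crossings representing the same tangle with the same \text{*}-marker; by Theorem~\ref{thm-invarianz}, $\bn{T^k_D}^{*}\simeq_h\bn{T'^k_D}^{*}$ and $\bn{T^k_D}_{*}\simeq_h\bn{T'^k_D}_{*}$ in $\ukobk^{hl}_R$, for each of the two \text{*}-marker positions separately. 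Only then does your argument (that $T'^k_D$ is trivially nice, so Proposition~\ref{prop-nicechainiso} gives $\bn{T'^k_D}^{*}\cong\bn{T'^k_D}_{*}$) apply, and composing the identifications finishes the claim. Without that replacement via the invariance theorem, your argument only establishes the theorem for the proper subclass of diagrams that are already free of virtual crossings.
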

\begin{proof}
We can use the Proposition~\ref{prop-nicechainiso} above for a v-tangle diagram with $k=0$. Moreover, we can choose a diagram without virtual crossing for a c-tangle without changing anything up to chain homotopies, because of Theorem~\ref{thm-geoinvarianz}. Then we can use the Proposition~\ref{prop-nicechainiso} again.
\end{proof}
\begin{rem}\label{rem-disums}
Note that the whole construction can be done with an \textit{arbitrary} closure of a v-tangle diagram, i.e. cap of in any possible way without creating new c- or v-crossings. The direct sum of all possibilities is then a v-tangle invariant. Or one can even allow v-crossings and take direct sums over all possibilities again. But since both is inconvenient for our purpose, we do not discuss it in detail here. 
\end{rem}
\begin{rem}\label{rem-gradings2}
Again, we could use the Euler characteristic to introduce the structure of a grading on $\ucob_R(k)$ (and hence on $\ukobk_R$). The differentials in the topological complex from Definition~\ref{defn-geocomplex} have all $\deg=0$ (after a grade shift), because their Euler characteristic is $-1$. Then it is easy to prove that the topological complex is a v-tangle invariant under graded homotopy.
\end{rem}
\subsection{Circuit algebras}\label{sec-vkhca}
In the present section we describe the notion of a \textit{circuit algebra}. A circuit algebra is almost the same as a planar algebra, but we allow virtual crossings.

Planar algebras were introduced by Jones~\cite{vj} to study subfactors. In our setting, they were for example studied by Bar-Natan in the case of classical Khovanov homology~\cite{bn2}. Hence, we can use most of his constructions in our context, too. A crucial difference is that we need to \textit{decorate} our \textit{circuit diagrams}. This is necessary because our cobordisms are also decorated.
\vskip0.25cm
We start the section with the definition of a (decorated) circuit diagram. In the whole section every v-tangle diagram should have a *-marker. We call a v-tangle diagram \textit{decorated} if it has an orientation, a number (same numbers are allowed), one coloured (green and red) dot  for each of its v-circle/v-string and we call a cobordisms \textit{decorated} if it has gluing numbers and an indicator. In the following we use the notion $\omega^*$ to illustrate that we consider all possibilities for $k\in\bN$ together.

\begin{defn}\label{defn-circuit diagram}
Let $D^2_o$ denote a disk embedded into $\bR^2$, the so-called \textit{outside disk}. Let $\mathrm{I}_k$ denote disks $D^2$ embedded into $\bR^2$ such that for all $k\in\{0,\dots,m-1\}$ the disk $\mathrm{I}_k$ is also embedded into $D^2_o$ without touching the boundary of $D^2_o$, i.e. $\mathrm{I}_k\subset D^2_o\subset \bR^2$, $\mathrm{I}_k\cap\mathrm{I}_{k^{\prime}}=\emptyset$ for $k\neq k^{\prime}$ and $\mathrm{I}_k\cap\partial D^2_o=\emptyset$. We denote $\mathcal D_m=D^2_o-(\mathrm{I}_0\cup\cdots\cup\mathrm{I}_{m-1})$. These $\mathrm{I}_k$ are called \textit{input disks}.

A \textit{circuit diagram with $m$ input disks} $\mathcal{CD}_m$ is a planar graph embedded into $\mathcal D_m$ with only vertices of valency one and four in such a way that every vertex of valency one is in $\partial\mathcal D_m$ and every vertex of valency four is in $\mathrm{Int}(\mathcal D_m)$. All vertices of valency four are marked with a v-crossing. Again we allow circles, i.e. closed edges without any vertices. A \textit{*-marked circuit diagram} is the same, but with $m+1$ extra *-marker for every boundary component of $\mathcal D_m$. Moreover, we call the vertices at $\partial D^2_o$ the \textit{outer boundary points}.

See for example Figure~\ref{figure-deccircuit}, i.e. the figure shows a *-marked (decorated) circuit diagram with three input disks.
\begin{figure}[ht]
  \centering
     \includegraphics[scale=0.45]{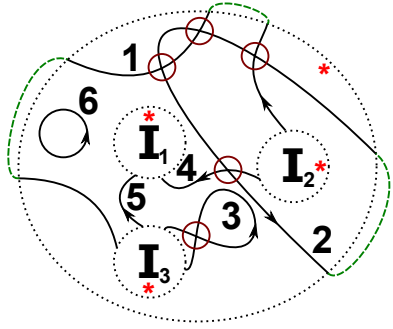}
  \caption{A decorated circuit diagram with three input disks.}
  \label{figure-deccircuit}
\end{figure}
\end{defn}

A \textit{closure} of a *-marked circuit diagram with $m$ input disks $\mathrm{Cl}(\mathcal{CD}_m)$ is a circuit diagram with $m$ input disks and without any outer boundary points which is constructed from $\mathcal{CD}_m$ by capping of neighbouring strings starting from the \textit{outer} *-marker and proceeding counterclockwise. Note that we only cap of the outside disk and not the small inside disks.

A \textit{decoration} for a *-marked circuit diagram is a tuple of a numbering and an orientation of the strings of the diagram in such a way that its also a numbering and orientation of the closure. We call a circuit diagram together with a decoration a \textit{decorated circuit diagram}. See for example Figure~\ref{figure-deccircuit}. The decoration of the circuit diagram in this figure is also a decoration for the closure (the diagram together with the green lines).
\vskip0.5cm
We can realise the definition of a \textit{(decorated) circuit algebra} with these notions. Recall that our v-tangle diagrams should always be oriented with the usual orientations but we suppress these again to maintain readability.
\begin{defn}\label{defn-circuit algebra}(\textbf{Circuit algebra}) Let $\mathrm{T^{\prime}}(k)$ be the set of (decorated) v-tangle diagrams with $k$ boundary points and a *-marker and let $\mathrm{T}(k)$ denote the quotient by boundary preserving isotopies and generalised Reidemeister moves.

Furthermore, let $\mathcal{CD}_m$ denote a (decorated) circuit diagram with $m$ input disks and $k^{\prime}$ outer boundary points in such a way that the $j$-th input disk has $k_j$ numbered boundary points. 

Because $\mathcal{CD}_m$ has no c-crossings, this induces operations
\[
\mathcal{CD}_m\colon \mathrm{T^{\prime}}(k_0)\times\cdots\times \mathrm{T^{\prime}}(k_{m-1})\to \mathrm{T^{\prime}}(k^{\prime})\text{ and }\mathcal{CD}_m\colon \mathrm{T}(k_0)\times\cdots\times \mathrm{T}(k_{m-1})\to \mathrm{T}(k^{\prime})
\]
by placing the $i$-th v-tangle diagram from $\mathrm{T^{(\prime)}}(k_i)$ in the $i$-th boundary component of $\mathcal{CD}_m$, i.e. glue the v-tangle inside in such a way that the *-markers match. See the right side of Figure~\ref{figure-tanglealg}. 

There is an identity operation on $\mathrm{T^{(\prime)}}(k)$ (it is of the form $\jpg{5mm}{circuitid}$) and the operations are compatible in a natural way (``associative''). We call a set of sets $\mathfrak C(\omega^*)$ with operations $\mathcal{CD}_m$ as above a \textit{circuit algebra}, provided that the identity and associativity from above hold.

If the operators and elements are decorated, first with numbers and orientations and latter with any kind of suitable decorations, then we call a set of sets $\mathfrak C(\omega^*)$ as before a \textit{decorated circuit algebra}. Note that in this case we have to define how the decorations change after glueing, e.g. we can run into ambiguities.
\end{defn}
We should note that Figure~\ref{figure-tanglealg} below also illustrates how a v-tangle diagram induces a decorated circuit diagram (choices for the decorations are involved).
\begin{figure}[ht]
  \centering
     \includegraphics[scale=0.45]{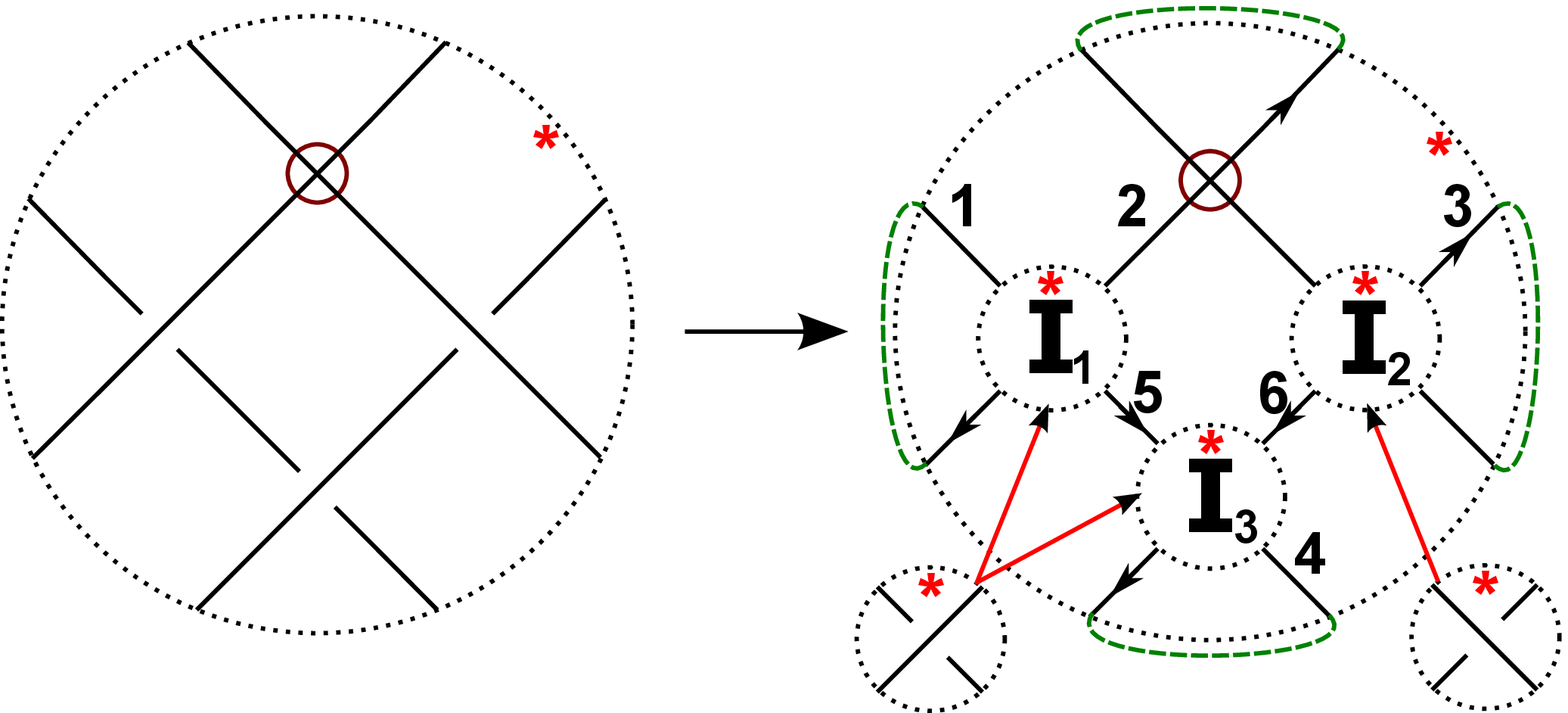}
  \caption{A decorated circuit diagram induced by a v-tangle.}
  \label{figure-tanglealg}
\end{figure}
\vskip0.15cm
Here are some examples. The reader may also check the corresponding section in~\cite{bn2}.
\begin{ex}\label{ex-circuit}
The first example is the set $\Ob(\ucob(\omega^{*}))$ from Definition~\ref{defn-category}, i.e. v-tangles diagrams with $k\in\bN$ boundary points, an extra *-marker, but without c-crossings. This is a sub-circuit algebra of the circuit algebra that allows c-crossings.

But we want to view it as a decorated circuit algebra, denoted by $\Ob_d(\ucob(\omega^{*}))$, i.e. the elements are \textit{decorated} v-tangle diagram (all possible decorations). We have to define the operations in more detail now, since we can run into ambiguities, see top row of Figure~\ref{figure-gluedeco}.
\begin{figure}[ht]
  \centering
     \includegraphics[scale=0.6]{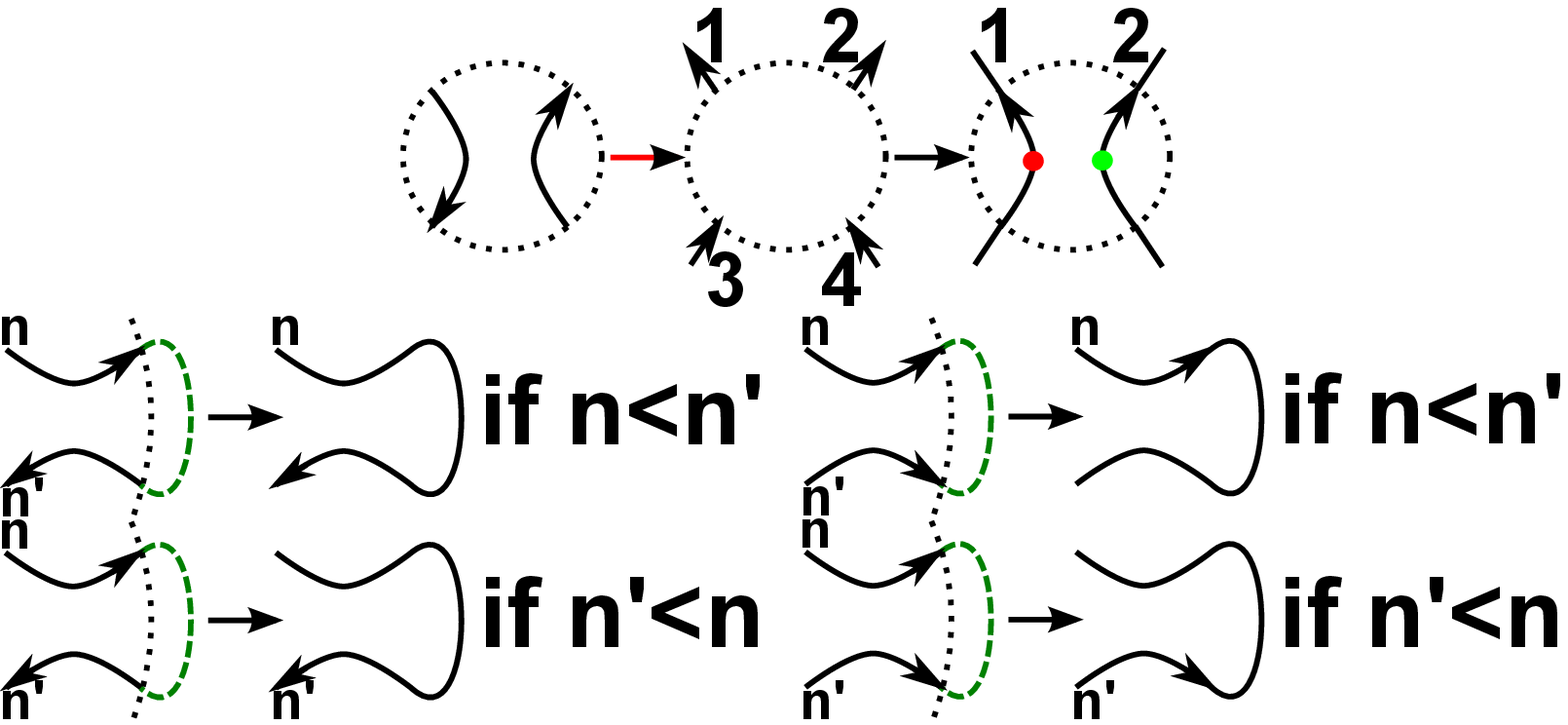}
  \caption{The operation in the decorated circuit algebra.}
  \label{figure-gluedeco}
\end{figure}

First, we can run into ambiguities if the decorations of the operator(s) that are glued together do not match. In this case we define the new decoration based on the rule \textit{``lower first''}, i.e. the new number is the lower and the new orientation is the one from the lower numbered string. See the two lower rows of Figure~\ref{figure-gluedeco}. Not all four cases are pictured, but we hope that it should be clear how the other two work. Moreover, it is worth noting that the order of these local steps does not affect the end result, since, by construction, the lowest number of all strings that are connected and its orientation will always determine the output. 

Furthermore, if we glue a decorated v-tangle diagram in an input disk, then we run into ambiguities if the shared decorations, i.e. the orientations and numbers, do not match. In this case we change the decorations of the v-tangle diagram (as above). We add in a \textit{red dot $r$} if we have to change the orientation and a \textit{green dot $g$} otherwise. This is pictured in the top row of Figure~\ref{figure-gluedeco}. As above, in order to make it well-defined, one has to allow the dots to change stepwise, i.e. one uses the ``multiplication'' rules $g\cdot g=g=r\cdot r$ and $g\cdot r=r=r\cdot g$ for dots on the same string.

The reader should check that this gives rise to a (well-defined!) decorated circuit algebra.

Another important example is the whole collection $\Mor_d(\ucob(\omega^{*}))$ from Definition~\ref{defn-category}, i.e. decorated cobordisms (all possible decorations) with $k\in\bN$ vertical boundary lines and an extra *-marker. We want to view this example as a decorated circuit algebra again.

Hence, we have to define the operations. The most important point is the question how to handle the decorations again, because it should be clear how to glue a cobordism with $m$ vertical boundary lines into $\mathcal{CD}_m\times [-1,1]$. This time we have to define the behaviour of two decorations, i.e. the glueing numbers and indicators. The glueing numbers are treated as the orientations before, i.e. use the ``lower-first'' rule. The indicators (recall that they are just numbers of $\{0,+1,-1\}$) are multiplied. Recall that a cobordism with a $0$-indicator does not get any glueing numbers. We simply remove them in this case. To be more precise, we note make the following definition of the operation of $\mathcal{CD}_m$ on $\Ob_d(\ucob(\omega^{*}))$ and $\Mor_d(\ucob(\omega^{*}))$ (compare to Figure~\ref{figure-gluedeco}).
\begin{itemize}
\item A cobordism with a $+$ glueing number (or $-$) is composed with $\Phi^-_+$ iff the decorated v-tangle diagram (short: diagram) gets a red dot (or green) at the corresponding position.
\item A cobordism is composed with a $0$-indicator surface iff the strings of the diagram get identified at the bottom and top resolution at the corresponding position.
\item A cobordism with a $1$-indicator is composed with a $1$/$-1$-indicator surface iff the strings of the diagram get identified at the bottom/top resolution at the corresponding position.
\item A cobordism with a $-1$-indicator is composed with a $1$/$-1$-indicator surface iff the strings of the diagram get identified at the top/bottom resolution at the corresponding position.
\end{itemize} 
These rules define a new decoration for the new cobordism. The reader should check again that this gives rise to a (well-defined!) decorated circuit algebra (to see this we note that everything behaves multiplicative as the elements of $\{+1,-1\}\cong\bZ/2\bZ$ or has a $0$-indicator).
\end{ex}
\vskip0.1cm
We summarise the notions in a definition. Recall that v-tangle diagrams are decorated with orientations, numbers and coloured dots and cobordisms have glueing numbers and an indicator.
\begin{defn}\label{defn-dotcalculus}(\textbf{Dot-calculus}) Let $\mathcal{CD}_m$ denote a decorated circuit diagram with $m$ input disks and $k'$ outer boundary points in such a way that the $j$-th input disk has $k_j$ numbered boundary points. Then $\mathcal{CD}_m$ induces an associative and unital (as above) operation on decorated v-tangle diagrams (with a corresponding number of boundary points) by the ``lower first''-rule, i.e. if the orientation does not match, then the lower number induces the new orientation. Put a red dot $r$ on every string that has its orientation changed and a green dot $g$ otherwise (two dots on the same v-string are multiplied by the convention $g=1,r=-1$). We call this the \textit{v-tangle dot-calculus}.
\vskip0.15cm
Moreover, $\mathcal{CD}_m$ induces an associative and unital (as above) operation on decorated cobordisms (with a corresponding number of boundary lines) by the ``lower first''-rule, i.e. if the orientation does not match, then the lower number induces the new orientation. Put a red dot on every string that has its orientation changed and a green dot otherwise and compose the corresponding boundary with a $+$ glueing number (or $-$) with $\Phi^-_+$ iff the string has a red dot (or a green dot), multiply indicators via identity surfaces with corresponding indicators $+1$/$-1$ iff the v-tangle numbers get identified at the bottom/top (or vice versa for surfaces with a $-1$-indicator) resolution at the corresponding position, multiply with an $0$-identity iff in both resolutions the strings are identified (do everything repeatedly using the rules as explained above). We call this the \textit{dot-calculus}. The reader should compare the notions above with Figure~\ref{figure0-main}.
\end{defn}
\vskip0.15cm
Recall that we assume that all v-tangle diagrams have already a fixed *-marker. A v-tangle diagram $T^k_D$ gives rise to a decorated circuit diagram $\mathcal{CD}_{T^k_D}$ as already illustrated in Figure~\ref{figure-tanglealg}. If the diagram has $m$ crossings, denoted by $cr_1,\dots,cr_m$, then we choose a neighbourhood of the $cr_i$ without any other crossings and a *-marker for all $cr_i$. We obtain by this procedure $m$ v-tangle diagrams with one crossings and four boundary components, denoted by an abuse of notation by $cr_1,\dots,cr_m$, and we call these crossing diagrams \textit{associated} to $T^k_D$.
\begin{defn}\label{defn-assocomp}
Let $T^k_D$ be a v-tangle diagram with $m$ crossings and let $\mathcal{CD}_{T^k_D}$ and $cr_1,\dots,cr_m$ be its associated decorated diagram and crossings. Then the \textit{tensored complex}
\[
\mathcal{CD}_{T^k_D}(cr_1,\dots,cr_m)=(C_*,c_*)
\]
is defined as follows. Let $(C_j,c_j)$ with $j\in\{1,\dots,m\}$ be the topological complex of the $cr_j$ and defined in Definition~\ref{defn-geocomplex} such that the unique saddle is of the form $c_j\colon\smoothing\to\hsmoothing$ for any suitable orientation (without a sign). Let $\alpha_i,\beta_i$ denote the compositions of the morphisms that we compose after applying the circuit diagram on cobordisms (see Example~\ref{ex-circuit} and Definition~\ref{defn-dotcalculus} above), i.e. the red dots induce a composition with $\Phi^-_+$ (or with a $0$-identity surface in the degenerated case) and a change in the numbering induces a composition with a cobordisms that changes indicators.

Therefore, we denote the operation of $\mathcal{CD}_{T^k_D}$ on cobordisms, i.e. the dot-calculus, by $\alpha\circ\mathcal{CD}_{T^k_D}\circ\beta$ to illustrate the difference to the classical case. We skip this notion for the objects to maintain readability. The $i$-th chain module is
\[
C^i=\bigoplus_{i=j_0+\dots+j_{m-1}}\mathcal{CD}_{T^k_D}(C_0^{j_0},\dots,C_{m-1}^{j_{m-1}})
\]
and the differentials are
\[
c|_{\mathcal{CD}_{T^k_D}(C_0^{j_0},\dots,C_{m-1}^{j_{m-1}})}=\sum_{i=0}^{m-1}\alpha\circ\mathcal{CD}_{T^k_D}(\mathrm{Id}_{C_0^{j_0}},\dots,c_i,\dots,\mathrm{Id}_{C_{m-1}^{j_{m-1}}})\circ\beta.
\]
Note that this complex does not have any extra signs and will in general not be a chain complex.
\end{defn}
\vskip0.15cm
We call a Khovanov cube of type p, if all its faces are commutative up to a unit of $R$, and a projectivisation of such a cube is given by identifying morphisms up to units. We denote the latter usually with a superscript $P$. Details are in Section~\ref{sec-techcube}.

It should be noted that the choice of the *-markers in the definition of $\mathcal{CD}_{T^k_D}(cr_1,\dots,cr_m)$ or the choice of the decorations for $\mathcal{CD}_{T^k_D}$ is not important for our purpose (and we will suppress the difference). To be more precise, we give the following lemma. We should note that it is not clear at this point why the complexes are $m$ cubes of type p. But we show it in Theorem~\ref{thm-pcomplex} below.
\begin{lem}\label{lem-pcomplex}
Let $\mathcal{CD}_{T^k_D}(cr_1,\dots,cr_m)$ and $\mathcal{CD}^{\prime}_{T^k_D}(cr_1,\dots,cr_m)$ denote two different choices for the *-markers of the $cr_j$. Then the two complexes are equal.

Moreover, if the difference between $\mathcal{CD}_{T^k_D}(cr_1,\dots,cr_m)$ and $\mathcal{CD}^{\prime}_{T^k_D}(cr_1,\dots,cr_m)$ is the choice of decorations, either for the circuit diagram or for the saddles of the complexes of $cr_j$, then the two complexes are isomorphic as $m$ cubes of type p.
\end{lem}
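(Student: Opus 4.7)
The plan is to prove each assertion in turn, exploiting the two-layered structure of the tensored complex. In the local layer the one-crossing complexes $(C_j,c_j)$ are, by the ``suitable orientation'' clause of Definition~\ref{defn-assocomp}, normalised so that the unique saddle $c_j$ appears in its standard form $\smoothing\to\hsmoothing$ without an extra sign. In the global layer the dot-calculus compositions $\alpha,\beta$ insert $\Phi^-_+$ cylinders and indicator-changing surfaces according to the decorations of the circuit diagram $\mathcal{CD}_{T^k_D}$. The local *-marker on $cr_j$ enters only through the glueing numbers and indicator carried by $c_j$, while $\alpha,\beta$ depend only on the global decoration via the ``lower first'' rule. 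Any reshuffling between these two layers should cancel.

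For the first statement, I first observe that the chain modules of $\mathcal{CD}_{T^k_D}(cr_1,\dots,cr_m)$ and $\mathcal{CD}^{\prime}_{T^k_D}(cr_1,\dots,cr_m)$ depend only on the formal resolutions and are literally equal. For the differentials, the task is to track how the local *-marker on $cr_j$ determines the closure-induced glueing numbers and indicator on $c_j$ via Definition~\ref{defn-deco2}, and then to show that the $\alpha_i,\beta_i$ compensate predictably. The tool is the decoration-commutation relation given in parts (c) and (d) of Lemma~\ref{lem-calc}, combined with the commutativity of indicator changes in parts (a) and (b) of the same lemma. Since the local *-marker only redistributes a bounded number of $\Phi^-_+$ cylinders and $0,\pm 1$ indicator surfaces between the local saddle and the global insertions, the composed morphism $\alpha\circ\mathcal{CD}_{T^k_D}(\dots,c_j,\dots)\circ\beta$ will come out invariant. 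Hence the two tensored complexes coincide on the nose.

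For the second statement, I follow a spanning tree argument in the spirit of Lemma~\ref{lem-commutativeindependence}, Lemma~\ref{lem-everythingfine} and Proposition~\ref{prop-nicechainiso}. A change of orientation on a string in $\mathcal{CD}_{T^k_D}$ or on a v-circle in some resolution of $C_j$ corresponds to composing the adjacent morphisms with $\Phi^-_+$; a coloured-dot flip has the same effect; a renumbering of strings or v-circles alters only saddle signs. Starting at the leaves of a spanning tree of the Khovanov cube and working inward, I absorb each discrepancy by the appropriate cylinder or sign permutation. This builds an isomorphism between the two cubes face by face. Because every face of the tensored complex commutes up to a unit of $R$ (the cube-of-type-p property, to be established in Theorem~\ref{thm-pcomplex}), this face-by-face isomorphism descends to an isomorphism of projectivised $m$-cubes of type p.

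The hard part will be the bookkeeping in the second paragraph, where one must reconcile the two sources of decorations and verify that the decoration-commutation rules of Lemma~\ref{lem-calc} are strong enough to absorb all local *-marker differences without residual signs or stray indicator changes. The non-orientable case requires separate attention: any glueing number on $c_j$ gets erased and the $0$-indicator dominates, so one invokes the open M\"obius relation of Equation~\ref{eq-tanmoe} together with the generator-level relations of Equation~\ref{eq-combrel123}. Once these verifications are made, the remaining steps are routine case-checks.
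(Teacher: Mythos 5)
The paper's one-line proof of the first statement rests on two observations you don't quite isolate. First, the saddle $c_j$ in the one-crossing complex $(C_j,c_j)$ \emph{always} carries an indicator $+1$ or $-1$, never $0$: any closure of a single classical crossing (four boundary points, no v-crossings) necessarily produces a merge or a split, never a $\theta$-saddle. This is the load-bearing fact, because a $0$-indicator is absorbing under the multiplicative rule for indicators and can never be compensated by composing with $\mathrm{id}(-1)^+_+$, whereas a $\pm 1$ flip is always invertible. Second, the composed morphism $\alpha\circ\mathcal{CD}_{T^k_D}(\dots,c_j,\dots)\circ\beta$ carries an indicator and decorations that depend only on how strings are connected in the closure of $T^k_D$ --- data unaffected by any local *-marker on $cr_j$. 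Both observations together give on-the-nose equality directly, with no commutation-relation bookkeeping.

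Your first paragraph instead routes through the commutation relations of Lemma~\ref{lem-calc}. This has a hidden snag: part~(d) introduces a sign when a $\Phi^-_+$ is slid through a $\pm 1$-indicator cobordism from the lower to the upper boundary, so a blind application of (a)--(d) would only promise equality up to a unit of $R$, not the literal equality the lemma asserts. You would need to argue separately that the signs cancel, and the cleanest way to do that is exactly the topological observation the paper uses. Moreover, your phrase about redistributing ``$0,\pm 1$ indicator surfaces'' between the local saddle and the global insertions suggests you are allowing for $c_j$ to acquire a $0$-indicator under some *-marker choice --- it cannot, and this is precisely what makes the first statement go through. Your closing remark that ``the non-orientable case requires separate attention'' for $c_j$ is therefore spurious: $\theta$-cobordisms and $0$-indicators can only enter through $\alpha,\beta$, and those depend only on the global closure of $T^k_D$, not on the $cr_j$-markers. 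Your second paragraph, the spanning-tree argument for the decoration-change statement, does match the paper's approach, which simply cites Lemma~\ref{lem-everythingfine}.
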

\begin{proof}
This is the case because the $c_j$ has always an indicator $+1,-1$ in the definition of the complex $(C_j,c_j)$ and never a $0$-indicator. Moreover, the result depends only on the position of the *-marker for $T^k_D$, since the involved operations only depend how strings are connected.

The second statement can be verified analogously as in Lemma~\ref{lem-everythingfine}.
\end{proof}
By a slight abuse of notation, we denote the topological complex by $\bn{T^k_D}$, although some choices are involved (but they do ``not matter'', see Lemma~\ref{lem-everythingfine}).
\begin{thm}\label{thm-pcomplex}(\textbf{Semi-locality I}) Let $T^k_D$ be a v-tangle diagram with $m$ crossings. Let $\bn{T^k_D}$ be (one of) its topological complex(es) from Definition~\ref{defn-geocomplex} and let $\mathcal{CD}_{T^k_D}(cr_1,\dots,cr_m)$ be its tensored complex from Definition~\ref{defn-assocomp}. Then $\mathcal{CD}_{T^k_D}(cr_1,\dots,cr_m)$ is a $m$-cube of type p and
\[
\mathcal{CD}_{T^k_D}(cr_1,\dots,cr_m)=\bn{T^k_D}^P
\]
for a suitable choice of orientations for the resolutions of $\bn{T^k_D}$.
\end{thm}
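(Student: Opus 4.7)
The plan is to exhibit an explicit identification between the vertices, edges and sign/decoration data of the two cubes, and then invoke the projectivisation to absorb the remaining sign ambiguities.

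First I would identify the chain modules. For each word $a\in\{0,1\}^m$, the summand $\mathcal{CD}_{T^k_D}(C_0^{a_0},\dots,C_{m-1}^{a_{m-1}})$ is, as an object of $\ucob_R(k)$, obtained by plugging the $a_i$-resolution of $cr_i$ into the $i$-th input disk of $\mathcal{CD}_{T^k_D}$. This produces precisely the resolution $\gamma_a$ of $T^k_D$, so at the level of underlying v-tangle diagrams the two cubes have the same vertices. The numbering and orientation of v-circles of $\gamma_a$ produced by the dot-calculus may differ from the canonical one used to build $\bn{T^k_D}$, but by Lemma~\ref{lem-everythingfine} we are free to rechoose those on the $\bn{T^k_D}$ side without changing the complex up to isomorphism. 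I therefore fix the orientations of the resolutions of $\bn{T^k_D}$ to be exactly those induced by the dot-calculus from a fixed standard orientation on each $cr_i$.

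Next I would compare the edges. Both cubes have a single nontrivial differential component between $\gamma_a$ and $\gamma_b$ whenever $b$ is obtained from $a$ by flipping one letter $a_i=0\to 1$. On the tensored side this component is $\alpha\circ\mathcal{CD}_{T^k_D}(\mathrm{Id},\dots,c_i,\dots,\mathrm{Id})\circ\beta$, where $c_i$ is the purely local saddle (carrying no sign) and $\alpha,\beta$ are compositions of $\Phi^-_+$'s and of indicator-changing cylinders $\mathrm{id}(\pm 1)^+_+$, $\mathrm{id}(0)$ dictated by the red/green dots, by the orientation mismatches, and by the identification-at-top-versus-bottom rule of Definition~\ref{defn-dotcalculus}. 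On the $\bn{T^k_D}$ side, the corresponding component is the global saddle $S$ carrying a sign $\mathrm{sgn}(S)\in\{\pm 1\}$, glueing numbers from Definition~\ref{defn-deco}, and an indicator from Definition~\ref{defn-deco2}. The crux is:
\begin{equation*}
\alpha\circ\mathcal{CD}_{T^k_D}(\mathrm{Id},\dots,c_i,\dots,\mathrm{Id})\circ\beta \;=\; u\cdot S \quad\text{with}\quad u\in\{\pm 1\}.
\end{equation*}
This factors into three independent checks: (i) the underlying cobordism obtained by the dot-calculus is homeomorphic to $S$ (by the Euler characteristic dichotomy of Lemma~\ref{cor-saddleclass}, since $c_i$ together with identity strips closed up by $\mathcal{CD}_{T^k_D}$ is precisely how $S$ sits inside the closure); (ii) the glueing numbers match, which follows from the rules in Lemma~\ref{lem-basiscalculations}(b)--(e) expressing how $\Phi^-_+$ commutes through $m$ and $\Delta$ exactly as prescribed in Table~\ref{tab-deco}; (iii) the indicator matches, because the $\pm 1$/$0$ indicator of $S$ is determined by whether the two strings at $cr_i$ are identified at the top, bottom, or neither, which is exactly what the indicator-multiplication rule of Definition~\ref{defn-dotcalculus} detects.

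Once edges are matched up to units, I would invoke Theorem~\ref{theo-facescommute}, which tells us that every face of $\bn{T^k_D}$ anticommutes on the nose, hence commutes up to a unit in the projectivisation $\bn{T^k_D}^P$. Transporting this via the unit-level equality above shows that every face of $\mathcal{CD}_{T^k_D}(cr_1,\dots,cr_m)$ also commutes up to a unit of $R$, which is the definition of a cube of type p, and that the two cubes become equal after projectivisation. The main obstacle is genuinely the sign/decoration bookkeeping in step (ii)--(iii): one must run through the case distinctions of Table~\ref{tab-deco} combined with the degenerated (non-orientable) configurations, and check that the local dot-calculus outputs agree with the global closure-based recipe. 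The semi-local rather than local nature of our construction is reflected exactly here: non-orientability depends globally on how the input disks are wired together, so step (iii) cannot be reduced further and must be verified by the exhaustive correspondence between the rules of Definition~\ref{defn-dotcalculus} and the decoration rules of Definition~\ref{defn-deco2}.
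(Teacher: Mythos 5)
Your proof is correct and follows essentially the same route as the paper's: match the vertices via the dot-calculus, show the edges agree up to units of $R$ by checking the underlying cobordism, the glueing numbers, and the indicators separately, and then transport the anticommutativity of Theorem~\ref{theo-facescommute} to conclude type~p after projectivisation. The only presentational difference is that you make the three edge-level checks (homeomorphism of underlying surfaces, glueing numbers via Lemma~\ref{lem-basiscalculations} and Table~\ref{tab-deco}, indicator agreement) into explicitly factored steps, whereas the paper states them more compactly; you should also state, as the paper does, that it is crucial that the local saddles $c_i$ always start with indicators in $\{+1,-1\}$ and never $0$, since a $0$-indicator cannot be produced or changed by the cylinders $\alpha,\beta$ — your step (iii) relies on this silently.
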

\begin{proof}
This is true because the dot-calculus is exactly build in such a way that the resulting saddles have some glueing numbers induced by a suitable choice of orientations of the resolutions. To be more precise, it is clear that the construction from Definition~\ref{defn-assocomp} gives rise to a $m$-cube as explained in Section~\ref{sec-techcube}.

Moreover, since we do not spread any formal signs in the construction from Definition~\ref{defn-assocomp}, the only thing we can expect is that the corresponding cube will be of type p, i.e. faces commute up to a sign. So we only have to care that the glueing numbers and indicators work out as claimed.

That the glueing numbers work out follows from the definition of the dot-calculus, since the orientation of the lowest numbered string will always determine the result and the decorations of the circuit diagram are also decorations of the closure, i.e. we can use Theorem~\ref{theo-facescommute} to see that the glueing numbers work out as claimed (up to a formal sign).

Moreover, the indicators of the saddles are spread based on a topological information, namely how certain strings are connected in the closure of the diagram $T^k_D$. Hence, since we have fixed the *-marker positions, these indicators are the same for $\mathcal{CD}_{T^k_D}(cr_1,\dots,cr_m)$ and any of the $\bn{T^k_D}$. Note that it is important that the indicators at the beginning are all $+1,-1$, since we can not change a $0$ using the conventions above.

This proves the statement, since there is a choice of orientations of the resolutions such that all saddles of $\mathcal{CD}_{T^k_D}(cr_1,\dots,cr_m)$ and $\bn{T^k_D}$ are equal up to a sign. 
\end{proof}
Given a Khovanov cube, then an \textit{edges assignment (with signs)} of this cube is a choice of extra signs for some of the saddles. We denote such an assignment using $\epsilon$ as a superscript, see Definition~\ref{defn-edge}.
\begin{cor}\label{cor-pcomplex}
There is an edge assignment such that $\mathcal{CD}^{\epsilon}_{T^k_D}(cr_1,\dots,cr_m)$ is a chain complex. Moreover, there is a chain isomorphism between $\mathcal{CD}^{\epsilon}_{T^k_D}(cr_1,\dots,cr_m)$ and $\bn{T^k_D}$ (for all possible choices involved in the definition of latter).  
\end{cor}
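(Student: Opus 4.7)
The plan is to derive the corollary as a direct consequence of Theorem~\ref{thm-pcomplex}, using the general machinery for cubes of type p collected in Section~\ref{sec-techcube}. By that theorem, for a suitable choice of orientations on the resolutions, the tensored complex $\mathcal{CD}_{T^k_D}(cr_1,\dots,cr_m)$ coincides with the projectivisation $\bn{T^k_D}^P$, i.e.\ it is an $m$-cube whose faces commute only up to units of $R$. To promote such a type-p cube to an honest chain complex one needs only to attach a sign $\epsilon_e\in\{\pm 1\}$ to each edge $e$ so that every $2$-face picks up an additional factor $-1$; this is precisely the notion of an edge assignment with signs.

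First I would recall the topological complex $\bn{T^k_D}$: by Corollary~\ref{cor-chaincomplex2} this is already a well-defined chain complex, so every one of its $2$-faces is strictly anticommutative. Under the identification $\mathcal{CD}_{T^k_D}(cr_1,\dots,cr_m)=\bn{T^k_D}^P$ of Theorem~\ref{thm-pcomplex}, the signs built into the saddles of $\bn{T^k_D}$ become precisely an edge assignment $\epsilon$ on the tensored cube. Reading off $\epsilon_e$ as the saddle sign (in the sense of Definition~\ref{defn-sign}) attached to the edge $e$ then yields an edge assignment for which every face of $\mathcal{CD}^{\epsilon}_{T^k_D}(cr_1,\dots,cr_m)$ anticommutes; hence the signed cube is a genuine chain complex.

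For the second statement, the chain isomorphism is essentially built along the way. Once the signs of $\epsilon$ agree, edge by edge, with those already present in $\bn{T^k_D}$, the identity map on objects and saddles (at the level of the projectivisation) lifts to an isomorphism of chain complexes. If the construction of $\bn{T^k_D}$ involves other choices — choice of numbering of crossings or v-circles, of x-markers, of orientations of resolutions, or of the closure's decorations — then Lemma~\ref{lem-everythingfine} (and, at the level of the tensored complex, Lemma~\ref{lem-pcomplex}) provides canonical isomorphisms between the different models; composing with these isomorphisms produces the desired chain isomorphism for every possible choice.

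The main obstacle I expect is bookkeeping rather than conceptual: one must verify that the edge assignment extracted from the saddle signs of $\bn{T^k_D}$ really does make the tensored cube anticommutative, and this hinges on the compatibility of the dot-calculus of Definition~\ref{defn-dotcalculus} with the decoration and sign conventions of Definitions~\ref{defn-sign} and~\ref{defn-deco2}. The spanning-tree argument already used in the proof of Proposition~\ref{prop-nicechainiso} is the natural tool to handle this, as it allows one to propagate the signs coherently across the cube and to absorb the various ambiguities coming from different admissible choices into explicit invertible cobordisms $\Phi^-_+$, identity indicator changes, and sign permutations.
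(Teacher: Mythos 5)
Your proposal is correct and follows essentially the same route as the paper's own proof: the first claim comes from combining Theorem~\ref{thm-pcomplex} with the anticommutativity of the faces of $\bn{T^k_D}$ (the paper cites Theorem~\ref{theo-facescommute} together with the general Lemma~\ref{lem-edge}; you effectively unpack Lemma~\ref{lem-edge} directly by reading the edge assignment off the saddle signs, and cite the tangle version Corollary~\ref{cor-chaincomplex2}, which is if anything the more precise reference), and the second claim from Lemma~\ref{lem-everythingfine}. The only cosmetic difference is that you spell out the sign bookkeeping that the paper delegates to Lemma~\ref{lem-edge}.
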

\begin{proof}
The first statement follows from Theorem~\ref{thm-pcomplex}, Theorem~\ref{theo-facescommute} and Lemma~\ref{lem-edge}. The second from Lemma~\ref{lem-everythingfine}.
\end{proof}
We note that Theorem~\ref{thm-pcomplex} and Corollary~\ref{cor-pcomplex} allows us to be ``sloppy'' when it comes to signs.
\vskip0.5cm
It is a natural question if one can generalise the statement of Theorem~\ref{thm-pcomplex}, since in the classical case one can allow arbitrary c-tangle diagrams as inputs. In fact, we do not know the answer in general. The main problem is that ``non-orientablity'' is not a local property.

We can make an analogously definition as in Definition~\ref{defn-assocomp}, but we allow the $cr_1,\dots,cr_m$ to be \textit{not nice} v-tangle diagrams with one crossing. We denote them by $cr^{\prime}_1,\dots,cr^{\prime}_m$ to illustrate the difference and we call the corresponding complex \textit{generalised tensored complex}. An example is shown in Figure~\ref{figure-counter}. Even this slight generalisation has unsatisfying properties.
\begin{thm}\label{thm-semiloc}(\textbf{Semi-locality II}) Let $T^k_D$ be a v-tangle diagram with $m$ crossings. And let
\[
\mathcal{CD}_{T^k_D}(cr^{\prime}_1,\dots,cr^{\prime}_m)=(C_*,c_*)
\]
be its generalised tensored complex.
\begin{itemize}
\item[(a)] The complex $(C_*,c_*)$ is a complex of type p, i.e. faces commute up to a unit of $R$.
\item[(b)] Let $\bn{T^k_D}$ denote (one of) its topological complex. Then we do not have a suitable choice for $\bn{T^k_D}$ in general such that
\[
\mathcal{CD}_{T^k_D}(cr^{\prime}_1,\dots,cr^{\prime}_m)=\bn{T^k_D}^P.
\]
\item[(c)] The complexes $(C_*,c_*)$ and $\bn{T^k_D}$ are not p-homotopic (see Definition~\ref{defn-chainhomotopy2}) in general.
\end{itemize}
\end{thm}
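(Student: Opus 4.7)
My plan is to prove (a) by adapting the argument of Theorem~\ref{thm-pcomplex} face by face. Each face of the generalised tensored complex $(C_*, c_*)$ involves exactly two of the inputs $cr'_{i_1}, cr'_{i_2}$, so commutativity up to a unit is a purely local question concerning a pair of crossings. The dot-calculus from Definition~\ref{defn-dotcalculus} is built so that glueing numbers propagate by the ``lower first'' rule, which is precisely what happens in $\bn{T^k_D}$ when we reorient the involved v-circles via $\Phi^-_+$. Hence, for each face, composing with the additional $\Phi^{\pm}_{\pm}$ and indicator-changing surfaces produced by the circuit algebra changes the two paths around the face by the same units, up to a possible overall sign. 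This proves (a).

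For (b), the obstruction is the indicator. Recall that the indicator of a saddle in $\bn{T^k_D}$ is $0$ precisely when that saddle is non-orientable, and non-orientability is determined by how the strings of the resolution close up into v-circles in the closure of the full diagram $T^k_D$. In contrast, the indicator of the analogous saddle inside the complex of an individual piece $cr'_j$ depends on the closure of $cr'_j$ alone. When $cr'_j$ is not nice, these two closures differ in a way that can flip orientability: a saddle whose local type is a M\"obius cobordism inside $cr'_j$ may become a multiplication or comultiplication once the remaining inputs are plugged in, or vice versa. As observed in the proof of Lemma~\ref{lem-factorbyisos}, the $0$-indicator cobordism is not invertible, so no choice of orientation for the resolutions of $\bn{T^k_D}$ can make the two complexes literally agree after projectivisation.

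The hard part will be (c). Unlike (b), p-homotopy allows us to adjust by units and by homotopies built from invertible cobordisms, so the mere fact that some indicators differ is not immediately enough. My plan is to isolate a p-homotopy invariant: to each object and morphism in a cube of type p, assign the ``pattern'' of $0$-indicators on its edges (that is, the subset of saddles that are non-orientable). Composition of any morphism with a $0$-indicator cobordism again has indicator $0$, while invertible cobordisms in $\ucob_R(k)$ are generated by $\mathrm{id}^{\pm}_{\pm}$ and $\Phi^-_+$ (see Example~\ref{ex-vrmremoval}), none of which change the indicator. Hence this pattern is invariant under p-isomorphism, and a suitable refinement is invariant under p-homotopy. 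Exhibiting the counterexample from Figure~\ref{figure-counter}, computing both patterns, and verifying they disagree then completes (c). The main technical obstacle I anticipate is pinning down the right p-homotopy invariant so that the informal ``$0$-indicator count'' is genuinely preserved; this should follow from the fact that the $0$-indicator morphisms form a two-sided ideal with respect to composition and disjoint union, so any chain map and chain homotopy between the two complexes must respect the indicator pattern.
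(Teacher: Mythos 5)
Your treatment of (a) and (b) follows the paper's own route. For (a) you reduce to Theorem~\ref{thm-pcomplex}; note that the $0$-indicator case is actually the easier one, since such saddles carry no glueing numbers, which is exactly what the paper uses. For (b) you correctly locate the obstruction in the multiplicativity of indicators together with the fact that the $0$-indicator identity surface is the unique non-invertible cobordism singled out in the proof of Lemma~\ref{lem-factorbyisos}. One small imprecision: the only problematic direction is a saddle that is non-orientable in the closure of $cr'_j$ but orientable in $\mathrm{Cl}(T^k_D)$; the converse mismatch is absorbed automatically by the dot-calculus, since glueing a $\pm 1$-indicator saddle onto a $0$-indicator identity surface yields indicator $0$ on both sides of the comparison.

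In (c) there is a genuine gap, and you have half-noticed it yourself. The ``pattern of $0$-indicators on edges'' is an invariant of the cube up to p-isomorphism, as you argue, but it is not a priori a p-homotopy invariant of the associated complex: Definition~\ref{defn-chainhomotopy2} is a statement about the complex, not the cube, and a p-homotopy can insert or cancel contractible summands whose edge-patterns differ. Your observation that the $0$-indicator morphisms form a two-sided ideal $I$ in $\Mor(\ucob_R(k))$ is exactly the right tool, but the invariant it yields is not a pattern on edges; it is the p-homotopy type of the image of the complex in the quotient category $\ucob_R(k)/I$. Passage to the quotient is an $R$-linear functor and therefore carries p-homotopy equivalences to p-homotopy equivalences; for the diagram of Figure~\ref{figure-counter} the image of the generalised tensored complex has a vanishing differential at the non-nice crossing, the image of $\bn{T^k_D}$ does not, and the two quotient complexes are visibly not p-homotopic. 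The paper's own proof of (c) is terse --- it points to the figure and asserts that ``we can not change the $0$-indicator'' --- so your instinct to formalize is sound; the quotient-by-$I$ step is what closes the argument.
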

\begin{proof}
(a) This statement can be verified analogously to Theorem~\ref{thm-pcomplex}, since, if the corresponding saddles have an $+1,-1$ indicator, as in Theorem~\ref{thm-pcomplex}, then one can copy the arguments from before. If it has a $0$-indicator, then the arguments are even easier to verify, since we do not need any decorations in this case.
\vskip0.5cm
(b)+(c) This is true, because a surfaces with a $0$-indicator can not be changed to a surfaces with a $\pm 1$-indicator, since indicators behave multiplicatively. For an explicit example see Figure~\ref{figure-counter}, i.e. the two complexes are not p-homotopy equivalent, since we can not change the $0$-indicator.
\begin{figure}[ht]
  \centering
     \includegraphics[scale=0.41]{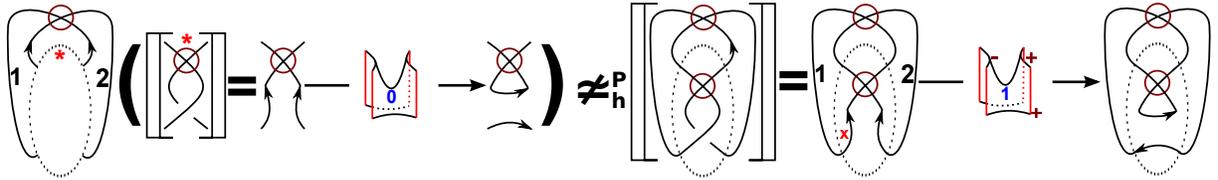}
  \caption{A counterexample. The diagram is not a nice v-tangle diagram.}
  \label{figure-counter}
\end{figure}

Note that this includes that no choice will make them equal as complexes of type p.
\end{proof}
\vskip0.15cm
It should be noted again that the whole discussion in this section could be done with oriented (in the usual sense) v-tangle diagrams and oriented (decorated) circuit algebras. But to maintain readability we only refer to~\cite{bn2}, i.e. the reader can adopt the notions there and use them in our context without any difficulties.
\begin{rem}\label{rem-mod2}
It should be noted that the constructions presented in this section can be extended relatively easily to work in an even better way if one works over rings of characteristic $2$, e.g. over the ring $R=\bZ/2\bZ$.

This is the case because all appearing problems are in some sense ``sign problems''. If one works over $R=\bZ/2\bZ$, then, for example, the indicators are not necessary and most constructions will work analogously to the classical case (see~\cite{bn2}).
\end{rem}
\begin{rem}\label{rem-fastcalc}
One application of the local construction in the classical case is a way to calculated the classical Khovanov homology of a c-link with $n$ crossings in approximately $2^{\sqrt{n}}$ instead of $2^n$ of the ``brute force method'', see~\cite{bn3}. In the view of Theorem~\ref{thm-semiloc}, one has to be very careful if one tries to copy the method given in~\cite{bn3}. Another idea is missing. 
\end{rem}
\subsection{An application: Degeneration of Lee's variant}\label{sec-vkhapp}
This section splits into three subsections. We explain the main motivations in the first and we are going to show that some facts about the classical Khovanov-Lee link homology are still true in the context of v-links (e.g. see Theorem~\ref{thm-leedeg}) in the last subsection. In order to do so, we identify the two generators with so-called non-alternating resolution~\ref{thm-nonalternating} in the second subsection. We note that these correspond to colourings in the c-case.
\vskip0.5cm
The approach (we follow~\cite{bnsm}) to show that the degeneration is still true is the following. First we define two orthogonal idempotents in our category, which we call \textit{down and up}. Then we can go to the \textit{Karoubi envelope} of our category, denoted by $\KAR(\ukobk_R)$.

The idea of the Karoubi envelope is to find a ``completion'' of a category such that every idempotent splits. It is named after the french mathematician Karoubi, but it already appears in an earlier work by Freyd~\cite{pf}. Note that it is sometimes called \textit{idempotent completion}. Then we show that the topological complex of a simple crossing (as a v-tangle), if considered in $\KAR(\ukob_R(k))$, is homotopy equivalent to a very simple complex with only $0$-morphisms. After that we use the semi-local constructions from Section~\ref{sec-vkhca} to finish the proof.

In the whole section let $R$ denote a commutative and unital ring such that $2$ is invertible, e.g. $R=\bZ\left[\frac{1}{2}\right]$. Moreover, throughout the whole section, we denote the topological complex by $\bn{\cdot}$ and its algebraic version by $\mathcal F(\bn{\cdot})$ or short by $\mathcal F(\cdot)$, e.g. we denote Lee's version by
\[
\mathcal F_{\mathrm{Lee}}(\cdot)=\mathcal F(\bn{\cdot}_{\mathrm{Lee}}).
\]
Note that, in order for the signs to work out correct, we have to fix x-marker positions. In the whole section we, by convention, say that the x-marker for $\slashoverback$ is at the left side and for $\backoverslash$ is on the top.

Moreover, recall that the topological picture of Lee's variant is given by the \textit{dot-relations} in Figure~\ref{figure-dotrel} with $t=1$, while the graded case of the Khovanov complex is $t=0$. Recall (see~\cite{bn1}) that $\frac{1}{2}\in R$ allows us to use the \textit{dot-relation} in Figure~\ref{figure-dotrel} instead of the local relations of Figure~\ref{figureintroa-4}. We give an example of the Lee complex of a v-knot in Example~\ref{ex-leedeg}.
\begin{figure}[ht]
  \centering
     \includegraphics[width=0.55\linewidth]{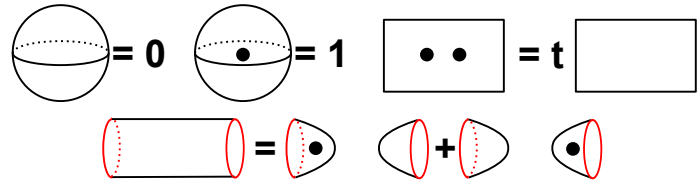}
  \caption{The dot-relations. A dot is a short hand notation for $\frac{1}{2}$-times a handle.}
  \label{figure-dotrel}
\end{figure}

\subsubsection*{Main observations}
Recall (see~\cite{lee} for the classical and~\ref{cor-khovhomo} for the virtual case) that \textit{Lee's variant} for v-links is given by the filtered algebra $A=A_{\mathrm{Lee}}=R[X]/(X^2=1)$ and the following maps.
\[
m^{++}_+\colon A\otimes A\to A,\;\begin{cases}1\otimes 1\phantom{.}\mapsto 1, & X\otimes X\mapsto 1, \\ 1\otimes X\mapsto X, & X\otimes 1\phantom{.}\mapsto X\end{cases}
\]
for the multiplication and
\[
\Delta_{++}^+\colon A\to A\otimes A,\;\begin{cases}1\phantom{.}\mapsto 1\otimes X+X\otimes 1,\\ X\mapsto 1\otimes 1+X\otimes X\end{cases},\;\theta\colon A\to A,\; \begin{cases}1\phantom{.}\mapsto 0,\\ X\mapsto 0\end{cases}
\]
for the comultiplication and $\cdot\theta$. Furthermore, the very important map $\Phi^-_+$ given by
\[
\Phi^-_+\colon A\to A,\;\begin{cases}1\phantom{.}\mapsto 1,\\ X\mapsto -X.\end{cases}
\]

Lee's variant has a remarkable property in the classical case, i.e. Lee showed that her variant just ``counts'' the number of components of the c-link, i.e. she showed that (for $R=\bQ$) the homology of a $n$-component link $L$ is
\[
H(\mathcal F_{\mathrm{Lee}}(L))\cong\bigoplus_{2^n}\bQ.
\]
So on the first hand this seems to be a ``boring'' invariant. But Rasmussen~\cite{ra} used this degeneration in a masterfully way to define the \textit{Rasmussen invariant} of a c-knot (as in Section~\ref{sec-intro}).

Therefore, a natural question is if this degeneration of Lee's variant is still true for v-links. In this section we show that this is indeed the case. It is worth noting that this is an unexpected result, since $\theta=0$ for $2^{-1}\in R$ (see the relations in Definition~\ref{defn-category2}). Hence, there are ``tons'' of $0$-morphisms in the complex. But these $0$-morphisms also come with isomorphisms ``in a lot of'' cases.

The following example for the Lee complex of a v-knot is a blueprint of this effect. It is very important, as indicated in Example~\ref{ex-leedeg} below, that our construction keeps track of the \textit{extra information} how the cobordisms are glued together depending on the orientations of the v-circle diagrams in the resolutions. We note that, even though the orientations can be read of locally, this information has some ``global character''.
\begin{ex}\label{ex-leedeg}
Consider the diagram of the virtual trefoil $L_D$ given in Figure~\ref{figure-big2}. In this example the number of negative crossings is zero, i.e. the leftmost object is the $0$-degree part.
\begin{figure}[ht]
  \centering
     \includegraphics[width=0.575\linewidth]{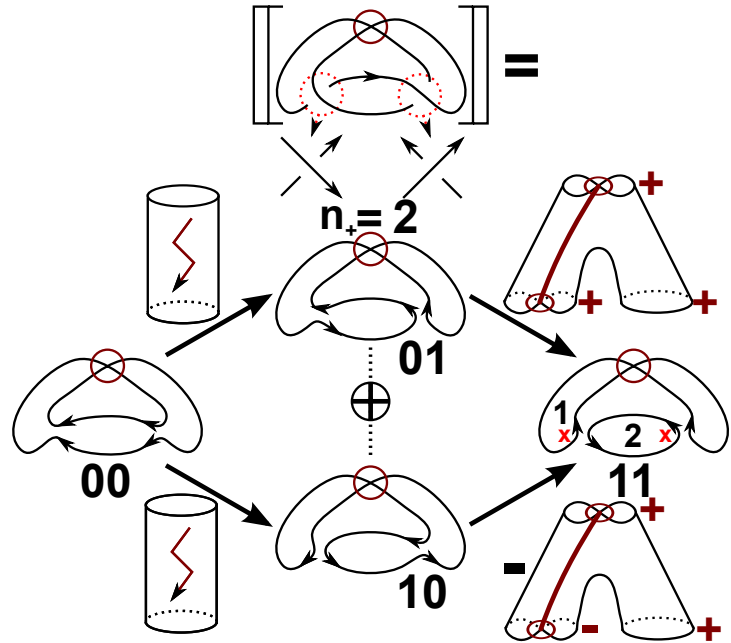}
  \caption{The Lee complex of the v-trefoil. We note that the first map is a $0$-morphism, but the second is an isomorphism.}
  \label{figure-big2}
\end{figure}

Let us consider $R=\bQ$. Then $\theta=0$ and therefore the first two maps are $0$-morphisms. But note that the two right morphisms are not the same, i.e. one is $\Delta^+_{++}$ and the other is $\Delta^+_{-+}$. So on the algebraic level we get, using the maps from before, the following complex, if we fix $B_1=\{1,X\}$ as a basis for $A$ and $B_2=\{1\otimes 1,1\otimes X,X\otimes 1,X\otimes X\}$ for $A\otimes A$.
\[
\begin{xy}
  \xymatrix{
      A\ar[rrr]^{\begin{pmatrix}
      0 & 0 \\
      0 & 0 \\
      0 & 0 \\
      0 & 0 
      \end{pmatrix}} &  & & A\oplus A\ar[rrr]^{\begin{pmatrix}
      0 & 1 & 0 & -1\\
      1 & 0 & 1 & 0\\
      1 & 0 & -1 & 0\\
      0 & 1 & 0 & 1
      \end{pmatrix}} & & & A\otimes A.
      }
\end{xy}
\]
An easy calculations shows that the second matrix is an isomorphism. Hence, the homology of the virtual trefoil is only non-trivial for $k=0$, i.e.
\[
H_k(\mathcal F_{\mathrm{Lee}}(L_D))=\begin{cases}\bQ\oplus\bQ, & \text{if}\;k=0,\\ 0, &\text{else}.\end{cases}
\]
Another example is the v-knot in Figure~\ref{figure-rasexample}, e.g. with the pictured orientation and numbering of the circles from left to right, the three outgoing morphisms from resolution $000$ to $001$, $010$ and $100$ are (up to, in this case, not important signs) the morphisms $m^{+-}_-$, $m^{++}_+$ and $m^{--}_-$, i.e. one alternating and two non-alternating. Hence, the kernel is trivial. The reader should check that the rest also works out in the same fashion as before.
\end{ex}
\subsubsection*{Non-alternating resolutions}
We prove the following interesting result about the number of decorations of v-link resolutions with the ``colours'' down and up. Note that we call an oriented resolution $\mathrm{Re}$ of a v-link diagram \textit{non-alternating} if it is of the form $\uu$ or $\dd$ at the corresponding positions of the saddles. Recall that all the v-link diagrams should be oriented and that such a diagram with $n\in\bN_{>0}$ components has $2^n$ different orientations $\mathrm{Or}_1,\dots,\mathrm{Or}_{2^n}$.

We note that one can also colour the resolutions with ``honest'' colours, say red and green, in such a way that the colour changes at every v-crossing. We call this a \textit{colouring of a v-link resolution} if at the corresponding saddle-position the colours are different, i.e. (red,green) or (green,red). The reader should compare this with the coloured dots in Figure~\ref{figure0-main}.
\begin{thm}\label{thm-nonalternating}(\textbf{Non-alternating resolutions}) Let $L_D$ denote a v-link diagram with $n\in\bN_{>0}$ components. There are bijections of sets
\begin{align*}
\{\mathrm{Or}\mid \mathrm{Or}\;\text{is an orientation of}\;L_D\}&\simeq\{\mathrm{Re}\mid \mathrm{Re}\;\text{is a non-alternating resolution of}\;L_D\}\\
&\simeq\{\mathrm{Co}\mid \mathrm{Co}\,\text{is a coloured resolution of}\;L_D\}.
\end{align*}
If $L_D$ is a v-knot diagram, i.e. $n=1$, then the two non-alternating resolutions are in homology degree $0$. A similar statement holds for the coloured resolutions.
\end{thm}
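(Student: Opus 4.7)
The plan is to construct explicit mutually inverse maps between orientations and non-alternating resolutions, verify they descend along components, and then transport everything to the coloured setting. The forward map is straightforward: given an orientation $\mathrm{Or}$ of $L_D$, at each classical crossing the two strands carry local directions, and exactly one of the two possible local smoothings produces a picture of the form $\uu$ or $\dd$ (while the other produces $\ud$ or $\du$). Performing this unique smoothing at every classical crossing yields a resolution $\mathrm{Re}(\mathrm{Or})$; its v-circles inherit orientations from $\mathrm{Or}$, and by construction the local picture at each saddle site is non-alternating. Because the choice of local smoothing depends only on the pair of local directions, this gives a well-defined map from orientations to non-alternating resolutions.

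For the inverse, I would start from a non-alternating resolution $\mathrm{Re}$ together with its induced orientation on v-circles. Each arc of $\mathrm{Re}$ is the same underlying arc as in $L_D$ (resolution only reconnects arcs at classical crossings), so the v-circle orientation of $\mathrm{Re}$ transports to an orientation of every arc of $L_D$. The main obstacle is verifying consistency along the components of $L_D$: two arcs meeting at a classical crossing on the same strand of $L_D$ may lie on different v-circles of $\mathrm{Re}$, and I must check that their orientations still piece together into a strand passing through the crossing. This is precisely where the non-alternating hypothesis is used: the $\uu$ or $\dd$ local picture forces the two resolution-arcs at each saddle to carry matching directions, which in turn forces the four arcs meeting at the original crossing to fit together coherently (one in/one out on each strand). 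Propagation across v-crossings is automatic since v-crossings impose no interaction on orientations. Hence the assignment of arc-orientations descends to a global orientation $\mathrm{Or}(\mathrm{Re})$ of $L_D$, and $\mathrm{Or} \mapsto \mathrm{Re}(\mathrm{Or})$ and $\mathrm{Re} \mapsto \mathrm{Or}(\mathrm{Re})$ are easily seen to be mutually inverse.

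For the bijection with coloured resolutions, I would proceed analogously: an orientation of $L_D$ induces a two-colouring of its arcs (assign red and green according to whether the oriented arc points ``in'' or ``out'' of a fixed reference configuration at each saddle), and the non-alternating orientation condition translates directly into the condition that the two colours at each saddle site are different. The same well-definedness argument as above shows the colouring extends consistently to all v-circles, with colours switching at every v-crossing (since v-crossings swap the roles of the two strands locally). For the grading statement in the case $n=1$, I would observe that among the two orientations of a v-knot diagram, one is the given orientation and the other its reverse; the non-alternating resolution associated to the given orientation is, by inspection of the sign conventions for $n_+$ and $n_-$, the resolution that is $0$ at each positive crossing and $1$ at each negative crossing. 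This resolution has exactly $n_-$ one-smoothings, and hence after the normalisation shift of $-n_-$ sits in homology degree zero; since reversing the orientation of a v-knot does not change the crossing signs, the other non-alternating resolution also lies in degree zero, and the analogous statement for colourings follows from the bijection.
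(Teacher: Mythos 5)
Your proposal follows essentially the same route as the paper: both directions of the bijection between orientations and non-alternating resolutions are obtained by the local replacement of oriented crossings by $\uu$/$\dd$ smoothings and its inverse, and the degree-zero statement comes from observing that all positive crossings get the $0$-resolution and all negative crossings the $1$-resolution. The only place you are noticeably vaguer than the paper is the bijection with coloured resolutions: the paper gives a concrete construction using a checker-board colouring of the diagram's faces together with the right-hand rule (string colour equals face colour on the thumb side), which makes the well-definedness and the ``colour changes at every v-crossing'' condition transparent, whereas your ``in or out of a fixed reference configuration'' is not an explicit rule and would need to be pinned down before the claimed well-definedness argument could actually be carried out.
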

\begin{proof}
With a slight abuse of notation let us denote the first two sets by $\mathrm{Or}$ and $\mathrm{Re}$. To show the existence of a bijection we construct an explicit map $f\colon\mathrm{Or}\to\mathrm{Re}$ and its inverse.

Given an orientation $\mathrm{Or}$ of the v-link diagram $L_D$, the map $f$ should assign the resolution $\mathrm{Re}$ which is obtained by replacing every oriented crossing of the form $\overcrossing$ and $\undercrossing$ with $\uu$ (and the same for rotations). This is clearly an injection.

Now, given a non-alternating resolution $\mathrm{Re}$, we assign to it an orientation of $L_D$ in the following way. At any non-alternating part of the form $\uu$ and $\dd$ replace the non-alternating part with the corresponding oriented crossing $\overcrossing$ and $\undercrossing$ (or a rotation in the $\dd$ case).

Note that both maps are well-defined and that these two maps are clearly inverses for a v-knot diagram. Moreover, the corresponding non-alternating resolutions are in homology degree $0$, since all $n_+$-crossings are resolved $0$ and all $n_-$-crossings are resolved $1$ in this procedure.   

To see the second bijection use a checker-board colouring of the v-link diagram. Then start at any point of the non-alternating resolution and use the right-hand rule, i.e. the index finger follows the orientation and the string should get the colour of the face on the side of the thumb. As above, one checks that all $n_+$-crossings are resolved $0$ and all $n_-$-crossings are resolved $1$.
\end{proof}
\begin{cor}\label{cor-numbernonalt}
Let $L_D$ be a v-link diagram with $n$ components. Then it has $2^n$ non-alternating resolutions.
\end{cor}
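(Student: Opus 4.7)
The plan is to invoke Theorem~\ref{thm-nonalternating} directly. That theorem furnishes a bijection of sets
\[
\{\mathrm{Or}\mid \mathrm{Or}\;\text{is an orientation of}\;L_D\}\;\simeq\;\{\mathrm{Re}\mid \mathrm{Re}\;\text{is a non-alternating resolution of}\;L_D\},
\]
so the corollary is reduced to counting the orientations on the left-hand side.

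Next I would observe that the underlying (unoriented) object has $n$ connected components, and that the choice of an orientation of $L_D$ amounts to independently choosing one of two possible orientations on each component (v-crossings do not constrain orientation choices since they are not genuine crossings, and classical crossings preserve the component structure by the usual four-valent graph decomposition used throughout Section~\ref{sec-vkh}). Consequently the set of orientations has cardinality $2^n$.

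Combining the two steps yields $|\{\text{non-alternating resolutions of }L_D\}|=2^n$, which is exactly the claim. The only conceivable obstacle is the component-counting step, but this is purely combinatorial and uses nothing beyond the definition of a v-link diagram as a four-valent planar graph (ignoring v-crossings, as in Definition~\ref{defn-vtangle}); no additional argument is required beyond the bijection of Theorem~\ref{thm-nonalternating}.
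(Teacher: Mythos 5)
Your proof is correct and follows the same route as the paper: the paper likewise counts the $2^n$ orientations of the $n$-component diagram and applies the bijection of Theorem~\ref{thm-nonalternating}. You merely spell out the component-counting step slightly more than the paper does.
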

\begin{proof}
Such a diagram has $2^n$ possible orientations. Then the bijection of Theorem~\ref{thm-nonalternating} finishes the proof.
\end{proof}
\begin{ex}\label{ex-nonalt}
Let $L_D$ be the v-knot diagram in Figure~\ref{figure-rasexample}.
\begin{figure}[ht]
  \centering
     \includegraphics[width=0.55\linewidth]{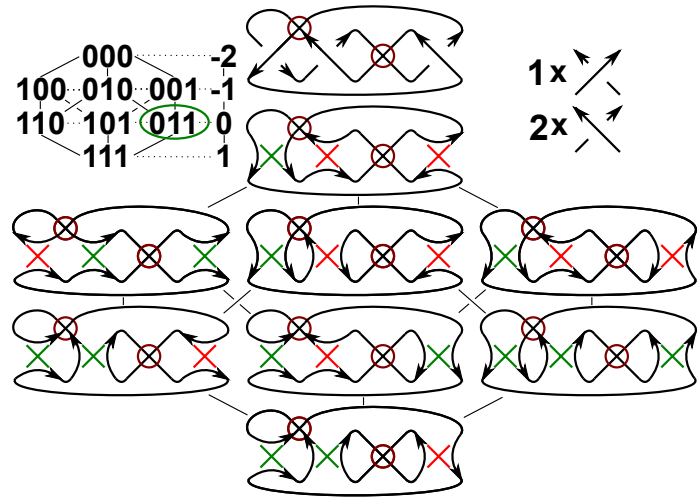}
  \caption{There are exactly two non-alternating resolutions, i.e. the one pictured and the one with all orientations reversed.}
  \label{figure-rasexample}
\end{figure}

Then only the $011$ resolution of the v-knot diagram allows a non-alternating resolution. Moreover, the orientation of the diagram induces this non-alternating resolution by replacing the three crossings with $\dd,\uu$ and $\uu$. The other orientation induces the non-alternating resolution $\uu,\dd$ and $\dd$. Note that, by construction, these resolutions are in homology degree $0$. A computation as in Example~\ref{ex-leedeg} shows that these two non-alternating resolutions give the only two generators of the homology, i.e.
\[
H_k(\mathcal F_{\mathrm{Lee}}(L_D))=\begin{cases}\bQ\oplus\bQ, & \text{if}\;k=0,\\ 0, &\text{else}.\end{cases}
\]
\end{ex}
\subsubsection*{Degeneration}
We start by recalling the motivation, definition and some basic properties of the \textit{Karoubi envelope} of a pre-additive category $\mathcal C$. We denote the envelope as before by $\KAR(\mathcal C)$.

For any category the notion of an idempotent morphisms, i.e. an arrow with $e\circ e=e$, makes sense. Moreover, in a pre-additive category the notion $\mathrm{id}-e$ also makes sense. A classical trick in modern algebra is to use an idempotent, e.g. in $\mathrm{End}_K(V)$ for a given $K$-vector space $V$, to split the algebra into
\[
\mathrm{End}_K(V)\cong \mathrm{im}(e)\oplus\mathrm{im}(\mathrm{id}-e).
\]
Hence, it is a natural question to ask if on can ``split'', given an idempotent $e$, an object of a category $\mathcal O$ in the same way, i.e.
\[
\mathcal O\cong \mathrm{im}(e)\oplus \mathrm{im}(\mathrm{id}-e).
\]
The main problem is that the notion of an ``image'' of an arrow could possibly not exist in an arbitrary category. The Karoubi envelope is an extension of a category such that for a given idempotent $e$ the notions $\mathrm{im}(e)$ makes sense. Therefore, one can ``split'' a given object in the Karoubi envelope that could be indecomposable in the category itself.
\begin{defn}\label{defn-karoubi}
Let $\mathcal C$ be a category and let $e,e^{\prime}\colon\mathcal O\to\mathcal O$ denote idempotents in $\mathrm{Mor}(\mathcal C)$. The \textit{Karoubi envelope of $\mathcal C$}, denoted $\KAR(\mathcal C)$, is the following category.
\begin{itemize}
\item Objects are ordered pairs $(\mathcal O,e)$ of an object $\mathcal O$ and an idempotent $e$ of $\mathcal C$.
\item Morphisms $f\colon(\mathcal O,e)\to(\mathcal O',e')$ are all arrows $f\colon\mathcal O\to\mathcal O'$ of $\mathcal C$ such that the equation $f=f\circ e=e'\circ f$ holds.
\item Compositions are defined in the obvious way. The identity of an object is $e$ itself.
\end{itemize}
It is straightforward to check that this is indeed a category. We denote an object $(\mathcal O,e)$ by $\mathrm{im}(e)$, the \textit{image} of the idempotent $e$. Moreover, we identify the objects of $\mathcal C$ with their image via the embedding functor
\[
\iota\colon\mathcal C\to\KAR(\mathcal C),\;\mathcal O\mapsto (\mathcal O,\mathrm{id}).
\] 
\end{defn}
Note that, if $\mathcal C$ is pre-additive, then $\mathrm{id}-e$ is also an idempotent and, under the identification above, we can finally write
\[
\mathcal O\cong \mathrm{im}(e)\oplus \mathrm{im}(\mathrm{id}-e).
\]
The following proposition is well-known (see e.g.~\cite{bnsm}). The proposition allows us to shift the problem if two chain complexes are homotopy equivalent to the Karoubi envelope. Recall that $\kom(\mathcal C)$ denotes the category of formal chain complexes.
\begin{prop}\label{prop-karoubi}
Let $(C,c),(D,d)$ be two objects, i.e. formal chain complexes, of $\kom(\mathcal C)$. If the two objects are homotopy equivalent in $\kom(\KAR(\mathcal C))$, then the two objects are also homotopy equivalent in $\kom(\mathcal C)$.\qed
\end{prop}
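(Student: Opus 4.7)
The plan is to argue that the canonical embedding $\iota\colon\mathcal C\hookrightarrow\KAR(\mathcal C)$ is fully faithful and composition-preserving in a very strict sense, so that the data of a homotopy equivalence lives naturally in $\mathcal C$ itself once one restricts attention to objects in the image of $\iota$. First I would verify the fully faithfulness: for any two objects $\mathcal O,\mathcal O'$ of $\mathcal C$, a morphism $(\mathcal O,\mathrm{id})\to(\mathcal O',\mathrm{id})$ in $\KAR(\mathcal C)$ is by definition an arrow $f\colon\mathcal O\to\mathcal O'$ with $f=f\circ\mathrm{id}=\mathrm{id}\circ f$, a condition that is automatic. Thus $\Hom_{\KAR(\mathcal C)}((\mathcal O,\mathrm{id}),(\mathcal O',\mathrm{id}))=\Hom_{\mathcal C}(\mathcal O,\mathcal O')$ as sets, and composition in $\KAR(\mathcal C)$ between such objects agrees on the nose with composition in $\mathcal C$. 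Consequently $\iota$ extends termwise to a fully faithful functor $\iota_*\colon\kom(\mathcal C)\to\kom(\KAR(\mathcal C))$ sending $(C,c)$ to the complex whose $n$-th term is $(C_n,\mathrm{id})$ and whose differential is $c$ itself.

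Next, I would unpack what a homotopy equivalence $\iota_*(C)\simeq\iota_*(D)$ in $\kom(\KAR(\mathcal C))$ concretely amounts to. Such an equivalence consists of chain maps $F\colon\iota_*(C)\to\iota_*(D)$ and $G\colon\iota_*(D)\to\iota_*(C)$ together with homotopies $h_1,h_2$ satisfying $G\circ F-\mathrm{id}=c\circ h_1+h_1\circ c$ and $F\circ G-\mathrm{id}=d\circ h_2+h_2\circ d$. Each component $F_n,G_n,(h_1)_n,(h_2)_n$ is an element of a hom-set of $\KAR(\mathcal C)$ between objects of the form $(\mathcal O,\mathrm{id})$ and $(\mathcal O',\mathrm{id})$, hence by the first step it is literally an element of the corresponding hom-set in $\mathcal C$. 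The chain-map and homotopy identities are equations among compositions of these components, and since composition in $\KAR(\mathcal C)$ restricted to the image of $\iota$ is composition in $\mathcal C$, the identities hold verbatim in $\mathcal C$. Therefore the very same $F,G,h_1,h_2$ exhibit a homotopy equivalence $C\simeq D$ in $\kom(\mathcal C)$.

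The statement is thus essentially a bookkeeping consequence of the fact that $\iota$ is a strictly fully faithful embedding rather than a result requiring genuine splitting arguments; the only point to double-check is that all the needed identities are preserved, which is immediate since they take place entirely inside the image of $\iota$. Put differently, there is no obstacle of the sort one might fear, namely that the homotopy $h$ or a chain map in $\KAR(\mathcal C)$ could factor through some genuinely new object $(\mathcal O,e)$ with $e\ne\mathrm{id}$; by construction of $\iota_*(C)$ and $\iota_*(D)$, no such intermediate object ever appears in the definition of $F$, $G$, $h_1$, or $h_2$, so the data stays inside $\mathcal C$ throughout.
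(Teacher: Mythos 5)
Your argument is correct, and it is precisely the standard one (the paper itself does not reproduce a proof, citing Bar-Natan and Morrison, who argue exactly this way). The key observation — that $\Hom_{\KAR(\mathcal C)}\bigl((\mathcal O,\mathrm{id}),(\mathcal O',\mathrm{id})\bigr)$ coincides as an abelian group with $\Hom_{\mathcal C}(\mathcal O,\mathcal O')$, with the same composition and identities, so that the chain maps $F,G$ and homotopies $h_1,h_2$ together with the equations they satisfy all live verbatim in $\mathcal C$ — is exactly what makes the reflection of homotopy equivalence a bookkeeping fact rather than a splitting argument. Your closing remark about why no genuinely new object $(\mathcal O,e)$ with $e\ne\mathrm{id}$ can enter is the right thing to flag, since a reader's instinctive worry is that a homotopy might ``use'' the Karoubi completion in an essential way; it cannot, because each component of $F$, $G$, $h_1$, $h_2$ is a single arrow between components of $\iota_*(C)$ and $\iota_*(D)$, not a composite through auxiliary objects.
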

\vskip0.5cm
We define the two orthogonal idempotents $\mathrm{u},\mathrm{d}$ now and show some basic, but very important, properties afterwards.

We call the idempotents \textit{``down and up''}. The reader should be careful not to confuse them with the orientations on the resolutions or the colourings of Theorem~\ref{thm-nonalternating}, i.e. latter colours change at v-crossings, but ``down and up'' do not change.
\begin{defn}\label{defn-idempotent}
We call the two cobordisms in Figure~\ref{figure-idem} the \textit{``down and up'' idempotents}. We denote them by $\mathrm{d}$ and $\mathrm{u}$.
\begin{figure}[ht]
  \centering
     \includegraphics[width=0.6\linewidth]{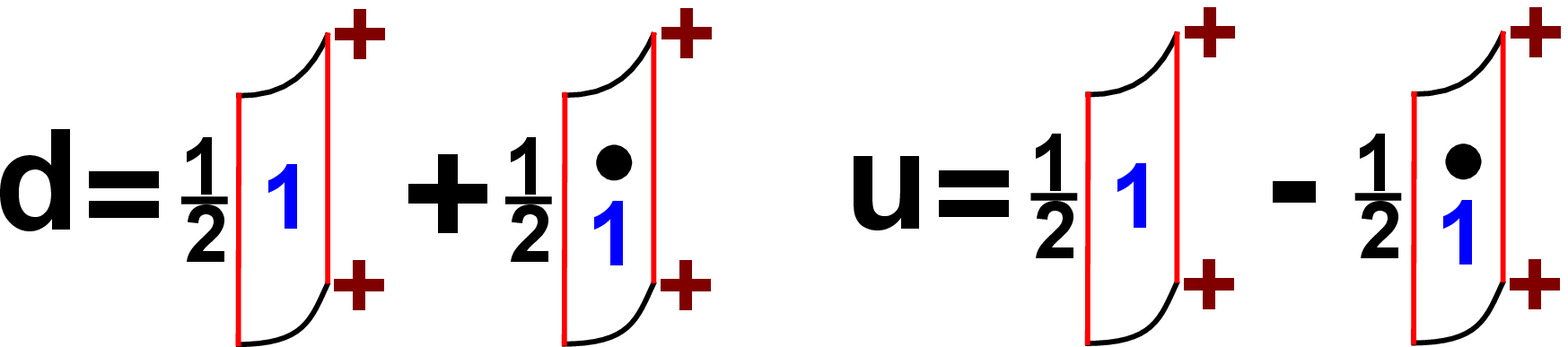}
  \caption{The two idempotents up and down.}
  \label{figure-idem}
\end{figure}

Recall that the dot represents $\frac{1}{2}$-times a handle.
\end{defn}
It is worth noting that (e) is very important. Moreover, we write $\Phi^-_+$ instead of $\Phi^-_+(1)$.
\begin{lem}\label{lem-idem}
The cobordisms $\mathrm{d},\mathrm{u}$ satisfy the following identities.
\begin{enumerate}
\item[(a)] $\mathrm{d}^2=\mathrm{d}$ and $\mathrm{u}^2=\mathrm{u}$ (idempotent).
\item[(b)] $\mathrm{d}\circ\mathrm{u}=0=\mathrm{u}\circ\mathrm{d}$ (orthogonal).
\item[(c)] $\mathrm{d}+\mathrm{u}=\mathrm{id}$ (complete).
\item[(d)] $\mathrm{id}_{\mathrm{dot}}\circ\mathrm{d}=\mathrm{d}$ and $\mathrm{id}_{\mathrm{dot}}\circ\mathrm{u}=-\mathrm{u}$ (Eigenvalues).
\item[(e)] $\Phi^-_+\circ\mathrm{d}=\mathrm{u}\circ\Phi^-_+$ and $\mathrm{d}\circ\Phi^-_+=\Phi^-_+\circ\mathrm{u}$ (change of orientations).
\item[(f)] $[\mathrm{d},\Phi^-_+]=\mathrm{id}(1)_{\mathrm{dot}}=-[\mathrm{u},\Phi^-_+]$ (Commutator relation).
\end{enumerate}
\end{lem}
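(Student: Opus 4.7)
The plan is to reduce the six identities to two ingredients: the dot-relations of Figure~\ref{figure-dotrel}, which (since $2^{-1}\in R$ and $t=1$) give $\mathrm{id}_{\mathrm{dot}}\circ\mathrm{id}_{\mathrm{dot}}=\mathrm{id}$ on a single strand, and the sign-behaviour of $\Phi^-_+$ recorded in Lemma~\ref{lem-basiscalculations}. Reading off from Figure~\ref{figure-idem} the formal expressions $\mathrm{d}=\tfrac{1}{2}(\mathrm{id}+\mathrm{id}_{\mathrm{dot}})$ and $\mathrm{u}=\tfrac{1}{2}(\mathrm{id}-\mathrm{id}_{\mathrm{dot}})$, parts (a), (b), (c), (d) become immediate expansions inside the commutative two-dimensional subalgebra on one strand generated by $\mathrm{id}$ and $\mathrm{id}_{\mathrm{dot}}$: for instance $\mathrm{d}^{2}=\tfrac{1}{4}(\mathrm{id}+\mathrm{id}_{\mathrm{dot}})^{2}=\tfrac{1}{4}(2\,\mathrm{id}+2\,\mathrm{id}_{\mathrm{dot}})=\mathrm{d}$, and likewise $\mathrm{d}\circ\mathrm{u}=\tfrac{1}{4}(\mathrm{id}-\mathrm{id}_{\mathrm{dot}}^{2})=0$, $\mathrm{d}+\mathrm{u}=\mathrm{id}$ and $\mathrm{id}_{\mathrm{dot}}\circ\mathrm{u}=\tfrac{1}{2}(\mathrm{id}_{\mathrm{dot}}-\mathrm{id})=-\mathrm{u}$.

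The heart of the argument is (e), whose topological content is the anticommutation
\[
\Phi^-_+\circ\mathrm{id}_{\mathrm{dot}} \;=\; -\,\mathrm{id}_{\mathrm{dot}}\circ\Phi^-_+ .
\]
To prove this I would realise the dot on the strand as $\tfrac{1}{2}$ of a handle and factor that handle via a cap-and-cup insertion into a pantsdown stacked on a pantsup (with the extra identity strand capped off by $\iota_+$ and $\varepsilon^+$); sliding $\Phi^-_+$ past the pantsdown costs a sign by Lemma~\ref{lem-basiscalculations}(b), while sliding it past the pantsup is free by Lemma~\ref{lem-basiscalculations}(d), producing exactly one minus overall. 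Substituting the anticommutation into $\Phi^-_+\circ\mathrm{d}=\tfrac{1}{2}(\Phi^-_++\Phi^-_+\circ\mathrm{id}_{\mathrm{dot}})$ rewrites the right-hand side as $\tfrac{1}{2}(\mathrm{id}-\mathrm{id}_{\mathrm{dot}})\circ\Phi^-_+=\mathrm{u}\circ\Phi^-_+$, and the second identity of (e) is the mirror computation.

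Part (f) is then a two-line consequence: using the anticommutation,
\[
[\mathrm{d},\Phi^-_+]=\mathrm{d}\Phi^-_+-\Phi^-_+\mathrm{d}=\tfrac{1}{2}\bigl(\mathrm{id}_{\mathrm{dot}}\circ\Phi^-_+-\Phi^-_+\circ\mathrm{id}_{\mathrm{dot}}\bigr)=\mathrm{id}_{\mathrm{dot}}\circ\Phi^-_+,
\]
which topologically is the dotted indicator-$1$ cylinder $\mathrm{id}(1)_{\mathrm{dot}}$; the equality $[\mathrm{u},\Phi^-_+]=-[\mathrm{d},\Phi^-_+]$ is then forced by $\mathrm{d}+\mathrm{u}=\mathrm{id}$ and the Leibniz rule for $[\cdot,\Phi^-_+]$. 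The only genuine obstacle is verifying the anticommutation cleanly, because this is the one step where the glueing-number conventions and indicator tracking of Section~\ref{sec-vkhcat} all interact at once; everything else reduces to bookkeeping inside the two-dimensional commutative algebra generated by $\mathrm{id}$ and $\mathrm{id}_{\mathrm{dot}}$.
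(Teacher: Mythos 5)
Your proposal is essentially the paper's argument spelled out in detail; the paper's proof is just the terse remark that everything follows from the dot-relations of Figure~\ref{figure-dotrel} and Definition~\ref{defn-category}, with the key hint that the dot ($\tfrac{1}{2}$ a handle) picks up a sign when slid past $\Phi^-_+$. Your reduction of (a)--(d) and (f) to arithmetic in the two-dimensional commutative subalgebra generated by $\mathrm{id}$ and $\mathrm{id}_{\mathrm{dot}}$ (using $\mathrm{id}_{\mathrm{dot}}^2=\mathrm{id}$, which holds since $t=1$), together with the anticommutation $\Phi^-_+\circ\mathrm{id}_{\mathrm{dot}}=-\,\mathrm{id}_{\mathrm{dot}}\circ\Phi^-_+$, is exactly what the paper intends.

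One small inaccuracy in your account of (e): the topological factorisation of the handle is simply $m^{++}_+\circ\Delta^+_{++}$ (pantsdown then pantsup on the \emph{same} strand, yielding a twice-punctured torus), with no cap-and-cup insertion or auxiliary identity strand. The sign count you then perform is nevertheless the right one: by Lemma~\ref{lem-basiscalculations}(b), $\Delta^+_{++}\circ\Phi^-_+=-(\Phi^-_+\amalg\Phi^-_+)\circ\Delta^+_{++}$ contributes the minus, and by Lemma~\ref{lem-basiscalculations}(d), $m^{++}_+\circ(\Phi^-_+\amalg\Phi^-_+)=\Phi^-_+\circ m^{++}_+$ contributes no sign, so the handle anticommutes with $\Phi^-_+$ and therefore so does its half, the dot. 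With that cosmetic fix to the handle decomposition, the write-up is correct and complete.
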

\begin{proof}
All equations are straightforward to prove. One has to use the dot-relations from Figure~\ref{figure-dotrel} and the relations from Definition~\ref{defn-category}.

In (d)+(f) the surface $\mathrm{id}(1)_{\mathrm{dot}}$ denotes an identity with an extra dot and $+1$ as an indicator.

Beware that the dot represents $\frac{1}{2}$-times a handle. This forces a sign change after composition with the cobordism $\Phi^-_+$. The reader should compare this with the relations in Definition~\ref{defn-category3}.
\end{proof}
Now we take a look at the Karoubi envelope. The discussion above shows that there is an isomorphism
\[
\none\simeq\down\oplus\up.
\]
With this notation we get
\[
\smoothinga\simeq\ddp\oplus\dup\oplus\udp\oplus\uup\;\;\text{ and }\;\;\hsmoothinga\simeq\ddpp\oplus\dupp\oplus\udpp\oplus\uupp.
\]
Recall that the standard orientation for the complex $\bn{\overcrossing}$ is (see e.g. Figure~\ref{figure0-main})
\[
\bn{\overcrossing}=\du\xrightarrow{S(1)^{++}_{++}}\ler.
\]
In order to avoid mixing the notions of the down and up-colours and the orientations we denote this complex simply as $\bn{\overcrossing}^{++}_{++}$, i.e. standard orientations for all strings. Moreover, under the convention left=first superscript, right=second superscript, bottom=first subscript and top=second subscript, a notation like $\bn{\overcrossing}^{+-}_{-+}$ makes sense, i.e. act by $\Phi^-_+$ at the corresponding positions. The following theorem is a main observation of this section. It is worth noting again that (e) of Lemma~\ref{lem-idem} is crucial for the theorem.
\begin{thm}\label{thm-karoubi}
In $\ukobk_R$, there are sixteen chain homotopies (only four are illustrated, but it should be clear how the rest works)
\[
\begin{xy}
  \xymatrix{
      \bn{\overcrossing}^{++}_{++}\simeq_h\dup\oplus\udp\xrightarrow{0}\dupp\oplus\udpp, & \bn{\overcrossing}^{+-}_{-+}\simeq_h\ddp\oplus\uup\xrightarrow{0}\ddpp\oplus\uupp,\\
      \bn{\overcrossing}^{++}_{+-}\simeq_h\dup\oplus\udp\xrightarrow{0}\ddpp\oplus\uupp, & \bn{\overcrossing}^{+-}_{++}\simeq_h\ddp\oplus\uup\xrightarrow{0}\dupp\oplus\udpp.
      }
\end{xy}
\]
Moreover, similar formulas hold for $\bn{\undercrossing}$. 
\end{thm}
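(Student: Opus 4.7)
The plan is to work in the Karoubi envelope $\KAR(\ukobk_R)$ and perform a Gaussian elimination argument, then transfer back to $\ukobk_R$ via Proposition~\ref{prop-karoubi}. First I would use completeness $\mathrm{id}=\mathrm{d}+\mathrm{u}$ from Lemma~\ref{lem-idem}(c) together with orthogonality (b) to split each smoothing into four colored summands: the source $\du$ of $\bn{\overcrossing}^{++}_{++}$ becomes $\ddp\oplus\dup\oplus\udp\oplus\uup$ and the target $\ler$ becomes $\ddpp\oplus\dupp\oplus\udpp\oplus\uupp$. In this decomposition, the single saddle $S(1)^{++}_{++}$ becomes a $4\times 4$ matrix of morphisms whose entries are determined by pre- and post-composing the saddle with the two idempotents on the appropriate boundary arcs.

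Next I would compute this matrix explicitly using the Lee Frobenius algebra structure. Since $\frac{1}{2}\in R$ and $X^2=1$, we have $\mathrm{d}=(1+X)/2$ and $\mathrm{u}=(1-X)/2$, and a direct computation with $m$ and $\Delta$ from Section~\ref{sec-vkhapp} shows $m(\mathrm{d}\otimes\mathrm{d})=\mathrm{d}$, $m(\mathrm{u}\otimes\mathrm{u})=\mathrm{u}$, and $m(\mathrm{d}\otimes\mathrm{u})=m(\mathrm{u}\otimes\mathrm{d})=0$, with dual formulas for $\Delta$. Topologically, the saddle $S(1)^{++}_{++}$ in standard orientation $\du\to\ler$ re-routes the two arcs, so that a ``non-alternating'' coloring $\ddp$ of the source is carried to $\ddpp$ (and $\uup\to\uupp$) by an invertible cobordism, while the ``alternating'' colorings $\dup$ and $\udp$ are sent to the opposite alternating colorings on the target up to zero corrections, and all other mixed entries vanish by the above algebra identities. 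Put together, the saddle matrix has two entries that are isomorphisms in $\KAR(\ukobk_R)$ (between $\ddp\leftrightarrow\ddpp$ and $\uup\leftrightarrow\uupp$) and all other entries are zero.

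I would then apply Gaussian elimination, i.e.\ a strong deformation retract of the kind used in the proof of Theorem~\ref{thm-geoinvarianz} (via Proposition~\ref{prop-sdr}), to cancel the two isomorphism-paired summands. What remains is precisely the two-term complex $\dup\oplus\udp\xrightarrow{0}\dupp\oplus\udpp$, and Proposition~\ref{prop-karoubi} lifts this homotopy equivalence back from $\KAR(\ukobk_R)$ to $\ukobk_R$. To obtain the other fifteen cases $\bn{\overcrossing}^{uu'}_{ll'}$ (and the analogous statements for $\bn{\undercrossing}$) I would conjugate the argument by the appropriate $\Phi^-_+$ cobordisms on each boundary arc; by Lemma~\ref{lem-idem}(e), composition with $\Phi^-_+$ swaps $\mathrm{d}\leftrightarrow\mathrm{u}$ on that arc, so the roles of ``isomorphism entry'' and ``zero entry'' in the $4\times 4$ matrix permute accordingly, yielding exactly the fifteen other decompositions predicted by the theorem.

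The main obstacle I anticipate is bookkeeping the decorations: the indicator $+1$ of the open saddle $S(1)^{++}_{++}$, the glueing numbers on the four boundary arcs, the sign flips induced by dots (half-handles) and $\Phi^-_+$ from Lemma~\ref{lem-idem}(d) and (f), and the fact that in the open (tangle) setting the idempotents live on \emph{strings}, not closed circles, so every application of the dot-relation in Figure~\ref{figure-dotrel} must be checked against Definition~\ref{defn-category3} rather than Definition~\ref{defn-category}. Concretely, the commutator $[\mathrm{d},\Phi^-_+]=\mathrm{id}(1)_{\mathrm{dot}}$ from (f) shows that the colored decomposition does not strictly commute with orientation changes, so when conjugating by $\Phi^-_+$ one picks up correction terms that must be absorbed into the homotopy; verifying that these corrections are compatible with the SDR, rather than obstructing it, is where the real work sits.
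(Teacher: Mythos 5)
Your approach matches the paper's proof essentially step-for-step: decompose both smoothings into the four colored summands in $\KAR(\ukobk_R)$, observe by orthogonality that the saddle matrix is supported only on the two uniformly colored diagonal entries $\ddp\to\ddpp$ and $\uup\to\uupp$ where it is invertible (the inverses being $\pm\tfrac{1}{2}$ times the reversed saddle, with the sign dictated by Lemma~\ref{lem-idem}(d)), Gaussian-eliminate those via Proposition~\ref{prop-sdr}, descend through Proposition~\ref{prop-karoubi}, and obtain the remaining fifteen cases by composing with $\Phi^-_+$ on boundary arcs together with Lemma~\ref{lem-idem}(e). One clarification on your anticipated obstacle: the commutator in Lemma~\ref{lem-idem}(f) is not an obstruction and produces no correction terms to absorb, because part~(e) is an exact intertwining identity $\Phi^-_+\circ\mathrm{d}=\mathrm{u}\circ\Phi^-_+$ (and its flip), so conjugating by $\Phi^-_+$ on a given arc literally swaps which idempotent summand is picked out there, and the SDR carries over verbatim with the surviving objects permuted exactly as in the statement of the theorem.
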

\begin{proof}
We use the observations from above, i.e. in the Karoubi envelope the differential of $\bn{\overcrossing}^{++}_{++}$ is a $4\times 4$-matrix of saddles. Hence, for $\bn{\overcrossing}^{++}_{++}$ we get (for simplicity write $S=S(1)^{++}_{++}$ and $S_{\mathrm{d}}$ and $S_{\mathrm{u}}$ for the saddle under the action of down and up)
\[
\begin{xy}
  \xymatrix{
       \text{\phantom{.}}\ddp\oplus\dup\oplus\udp\oplus\uup\ar[rrr]^{\begin{pmatrix}
      S_{\mathrm{d}} & 0 & 0 & 0\\
      0 & 0 & 0 & 0\\
      0 & 0 & 0 & 0\\
      0 & 0 & 0 & S_{\mathrm{u}}
      \end{pmatrix}} & & &  \text{\phantom{.}}\ddpp\oplus\dupp\oplus\udpp\oplus\uupp.
      }
\end{xy}
\]
This is true, because all other saddles are killed by the orthogonality relations of the colours down and up, i.e. (b) of Lemma~\ref{lem-idem}.

Note that both non-zero saddles are invertible, i.e. their inverses are the saddles
\[
\frac{1}{2}(S\colon \hsmoothing \to \smoothing)_{\mathrm{d}}\;\;\text{and}\;\;-\frac{1}{2}(S\colon \hsmoothing \to \smoothing)_{\mathrm{u}}
\]
with only $+$ as boundary decorations. To see this one uses Lemma~\ref{lem-idem} and the neck cutting relation. Thus, we get
\[
\bn{\overcrossing}^{++}_{++}\simeq_h\dup\oplus\udp\xrightarrow{0}\dupp\oplus\udpp.
\]
To prove the rest of the statements one has to use the relation (e) of Lemma~\ref{lem-idem}, i.e. the only surviving objects change according to the action of $\Phi^-_+$. We note again that this is a very important observation, i.e. with a different action of $\Phi^-_+$ this would not be true any more.

For $\bn{\undercrossing}^{++}_{++}$ one can simply copy the arguments from before.
\end{proof}
The following corollary is an application of the semi-local properties of our construction, i.e. we use Theorem~\ref{thm-pcomplex} and Corollary~\ref{cor-pcomplex} to avoid the usage of signs and Lemma~\ref{lem-pcomplex} to see that the involved choices do not matter up to chain isomorphisms.
\begin{cor}\label{cor-importantsaddles}
Let $T^k_D$ be a v-tangle diagram with $m$ crossings. Then $\bn{T^k_D}$ is chain homotopic to a chain complex $(C_*,c_*)$ with only $0$-differentials and objects coloured by the orientations of the resolutions of $T^k_D$, i.e. if a resolution of $\bn{T^k_D}$ is locally of the form
\[
\du\;\;\text{ or }\;\;\dd\;\;\text{ or }\;\;\uu\;\;\text{ or }\;\;\ud
\]
then $(C_*,c_*)$ is locally of the form
\[
\dup\oplus\udp\;\;\text{ or }\;\;\ddp\oplus\uup\;\;\text{ or }\;\;\ddp\oplus\uup\;\;\text{ or }\;\;\dup\oplus\udp.
\]
\end{cor}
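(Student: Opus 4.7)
The plan is to bootstrap the local statement of Theorem~\ref{thm-karoubi} to all of $\bn{T^k_D}$ via the semi-local circuit-algebra machinery of Section~\ref{sec-vkhca}, working throughout in the Karoubi envelope $\KAR(\ukobk_R)$ and transferring the conclusion back to $\ukobk_R$ at the end by Proposition~\ref{prop-karoubi}. Concretely, I would first invoke Theorem~\ref{thm-pcomplex} (together with Corollary~\ref{cor-pcomplex}) to present $\bn{T^k_D}$, up to chain isomorphism, as $\mathcal{CD}^{\epsilon}_{T^k_D}(cr_1,\dots,cr_m)$ for a suitable edge assignment $\epsilon$. This reduces the problem to understanding what happens when one substitutes each $\bn{cr_i}$ by an equivalent complex and then re-applies the dot-calculus to reassemble.

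Next I would apply Theorem~\ref{thm-karoubi} to every factor: in $\KAR(\ukobk_R)$ each crossing complex $\bn{cr_i}^{uv}_{u'v'}$ is chain homotopic to a two-term complex of the form $X_i^0 \xrightarrow{0} X_i^1$, where $X_i^0, X_i^1$ are explicit direct sums of down/up-coloured versions of the two resolutions, chosen according to the local orientations $u,v,u',v'$ at that crossing. The key point here is relation (e) of Lemma~\ref{lem-idem}, namely $\Phi^-_+\circ\mathrm{d}=\mathrm{u}\circ\Phi^-_+$, which ensures that post/pre-composition with the orientation-change cylinder $\Phi^-_+$ dictated by the dot-calculus simply permutes the down and up summands rather than destroying the decomposition. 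Thus, while the surviving summands depend on the orientations of the resolutions of $T^k_D$, the zero differentials persist under the substitution.

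I would then assemble these local chain homotopies into a global one. Because $\mathcal{CD}_{T^k_D}$ acts by tensor-like composition of decorated cobordisms, the formal totalisation of the $m$ individual two-term complexes with zero differentials yields a genuine complex $(C_*,c_*)$ with $c_*=0$, whose chain groups split as the claimed direct sums of down/up-coloured resolutions; the correspondence with the orientations displayed in the statement is exactly the one dictated by Theorem~\ref{thm-karoubi} case-by-case. Lemma~\ref{lem-pcomplex} and Corollary~\ref{cor-pcomplex} guarantee that the edge assignment $\epsilon$ used to turn $\mathcal{CD}^{\epsilon}_{T^k_D}(cr_1,\dots,cr_m)$ into an honest chain complex is compatible with this replacement up to chain isomorphism, so the signs cause no trouble. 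Finally, Proposition~\ref{prop-karoubi} transfers the resulting chain homotopy equivalence from $\kom(\KAR(\ukobk_R))$ back to $\ukobk_R$.

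The main obstacle I expect is precisely this global assembly step: Theorem~\ref{thm-semiloc} warns that when open saddles with $0$-indicators appear (the ``non-nice'' situation), the dot-calculus fails to reconstruct $\bn{T^k_D}$ from local pieces up to p-homotopy. However, here each $cr_i$ is a single classical crossing in isolation, hence carries $\pm 1$-indicators on its two saddles, so we are in the regime of Theorem~\ref{thm-pcomplex} rather than the pathological regime of Theorem~\ref{thm-semiloc}; the delicate bookkeeping is to verify that after replacing each $\bn{cr_i}$ by its Karoubi-envelope model the circuit-algebra compositions with the various $\Phi^-_+$'s (encoding the coloured-dot data) respect the down/up decomposition coherently on all connected components of all resolutions of $T^k_D$, which is exactly what Lemma~\ref{lem-idem}(e) buys us.
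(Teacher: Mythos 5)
Your plan follows essentially the same route as the paper's proof: work in the Karoubi envelope and transfer back by Proposition~\ref{prop-karoubi}, use the circuit-algebra/dot-calculus presentation from Theorem~\ref{thm-pcomplex} and Corollary~\ref{cor-pcomplex} to assemble $\bn{T^k_D}$ from the single-crossing pieces $cr_i$, and apply Theorem~\ref{thm-karoubi} locally. Your observation that each $cr_i$ carries $\pm1$-indicators, so that the pathology of Theorem~\ref{thm-semiloc} does not arise, is also the right way to see why the assembly is legitimate.

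However, there is a real omission: you never address what happens at the $\theta$-saddles, i.e.\ the saddles of $\bn{T^k_D}$ that acquire a $0$-indicator under the dot-calculus. These saddles are $0$-morphisms once $2^{-1}\in R$, so the differential is not the issue; the problem is justifying the claimed local decomposition of the chain groups there. You attribute the coherence of the down/up decomposition entirely to Lemma~\ref{lem-idem}(e), the commutation of $\Phi^-_+$ with $\mathrm{d}$ and $\mathrm{u}$. That lemma only tells you that an orientation flip swaps the two colours; it does not by itself explain why the alternating-coloured summands $\dup$ and $\udp$ disappear near a $\theta$-saddle. For that you need the orthogonality relation $\mathrm{d}\circ\mathrm{u}=0=\mathrm{u}\circ\mathrm{d}$, i.e.\ Lemma~\ref{lem-idem}(b), together with the topological observation (recorded in Definition~\ref{defn-deco} and illustrated by the rightmost case of Figure~\ref{figure0-order}) that a $0$-indicator saddle is always located between non-alternating parts of the resolutions, where the two strands at the crossing are connected into a single component by the rest of the diagram. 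This forces both strands to carry the same colour, killing $\dup$ and $\udp$ and leaving exactly $\ddp\oplus\uup$ as asserted. The paper's proof spells this out explicitly; your argument needs to do the same, or at least to say that Lemma~\ref{lem-idem}(b) is the missing ingredient, rather than resting everything on part (e).
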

\begin{proof}
We note that we work in the Karoubi envelope, but with Proposition~\ref{prop-karoubi} we see that we are free to do so. Moreover, as stated above, we do not care about signs or choices at this place.

Then the statement follows from Theorem~\ref{thm-karoubi} together with the Theorem~\ref{thm-pcomplex} in Section~\ref{sec-vkhca}. To be more precise, we copy the arguments from Theorem~\ref{thm-karoubi} for the saddles of the complex $\bn{T^k_D}$ with a $+1,-1$-indicator. Note that these saddles have an extra action of $\Phi^-_+$ at some of its boundary components. That is why the parts of $(C_*,c_*)$ are locally as illustrated above.

Moreover, the saddles with a $0$-indicator are $0$-morphisms for $\frac{1}{2}\in R$ and there local decomposition is the one given above, since they will, by construction, always be between non-alternating parts of the resolutions and due to the orthogonality relations for up and down, i.e. (b) of Lemma~\ref{lem-idem}, the $\dup$ and $\udp$ parts will be therefore killed (the only possibility how they close is as the rightmost case of Figure~\ref{figure0-order}).
\end{proof}
\vskip0.25cm
As an application of the Theorems~\ref{thm-nonalternating} and~\ref{thm-karoubi} above, we get the desired statement for v-link diagrams. That is, we have the following.
\begin{thm}\label{thm-leedeg}(\textbf{Degeneration}) Let $L_D$ denote a $n$-component v-link diagram. Then $\bn{L_D}_{\mathrm{Lee}}$ is homotopy equivalent (in $\ukob_R$) to a chain complex with only zero differentials and $2^n$ generators given by the $2^n$ non-alternating resolutions.

If $n=1$, i.e. $L_D$ is a v-knot diagram, then the two generators are in homology degree $0$.
\end{thm}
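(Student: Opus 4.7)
The plan is to lift the Bar-Natan--Morrison Karoubi envelope argument to the virtual setting, using the semi-locality already established in Section~\ref{sec-vkhca}. First I would view $L_D$ as a v-tangle with $k=0$ boundary points and pass to $\KAR(\ukob_R)$. By Corollary~\ref{cor-importantsaddles}, the complex $\bn{L_D}_{\mathrm{Lee}}$ is already chain homotopic (in the Karoubi envelope) to a complex $(C_*,0)$ with vanishing differential, whose chain modules are read off locally from each resolution $\gamma_a$: every neighbourhood of a classical crossing of the form $\du$ or $\ud$ contributes a summand $\dup\oplus\udp$, while every neighbourhood of the form $\dd$ or $\uu$ contributes $\ddp\oplus\uup$.

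The heart of the proof is then a counting argument that matches the surviving coloured resolutions with non-alternating resolutions. I would colour each v-circle of a resolution globally by $\mathrm{d}$ or $\mathrm{u}$ and check when such a global colouring is compatible with the local decompositions above at every classical crossing. A v-circle carries a single colour throughout; by the orthogonality $\mathrm{d}\circ\mathrm{u}=0=\mathrm{u}\circ\mathrm{d}$ from Lemma~\ref{lem-idem}(b), any coloured resolution in which the two strands at some crossing are coloured oppositely is killed. This forces the local picture at every crossing to be non-alternating in the sense preceding Theorem~\ref{thm-nonalternating}, and conversely any non-alternating resolution admits exactly the colourings dictated by the local decomposition. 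Invoking Theorem~\ref{thm-nonalternating} then identifies the surviving summands with the set of orientations of $L_D$, which has cardinality $2^n$.

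For the degree statement when $n=1$, I would use the explicit bijection $f\colon\mathrm{Or}\to\mathrm{Re}$ from the proof of Theorem~\ref{thm-nonalternating}: every $n_+$-crossing is $0$-resolved and every $n_-$-crossing is $1$-resolved, so after the Khovanov shift by $n_-$ the two non-alternating resolutions sit in homology degree $0$. Finally, Proposition~\ref{prop-karoubi} transfers the chain homotopy equivalence from $\kom_b(\KAR(\ucob_R(\emptyset)))$ back to $\ukob_R$, and Theorem~\ref{thm-algcomplex} makes the statement an honest v-link invariant.

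The main obstacle I expect is the global compatibility step: Corollary~\ref{cor-importantsaddles} is a local statement about each classical crossing, but for a v-link one must rule out down/up colourings that are locally consistent at every crossing yet globally forbidden. The key input is relation (e) of Lemma~\ref{lem-idem}, namely $\Phi^-_+\circ\mathrm{d}=\mathrm{u}\circ\Phi^-_+$, which dictates how the colour must flip along any cylinder $\Phi^-_+$ arising from opposite local orientations on an unchanged v-circle; together with (b) this forces each v-circle to carry a single globally consistent colour across v-crossings, so the surviving sectors are indexed precisely by the orientations of $L_D$ rather than by an uncontrolled local count. Once that bookkeeping is in place, the theorem reduces to the local computation of Theorem~\ref{thm-karoubi} and the bijection of Theorem~\ref{thm-nonalternating}.
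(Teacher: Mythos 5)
Your overall strategy matches the paper's: pass to the Karoubi envelope, invoke Corollary~\ref{cor-importantsaddles} to reduce to a complex with vanishing differentials, count surviving coloured summands, and transfer back via Proposition~\ref{prop-karoubi}. However, the heart of the counting argument contains a genuine error and then a genuine gap.

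The error is in the sentence ``any coloured resolution in which the two strands at some crossing are coloured oppositely is killed.'' This is backwards. Per Corollary~\ref{cor-importantsaddles}, at a crossing where the local picture is \emph{alternating} ($\du$ or $\ud$, i.e.\ where the saddle is an $m$ or a $\Delta$), the surviving local decomposition is $\dup\oplus\udp$, so the two strands \emph{must} carry opposite colours to survive; it is the same-colour summands that are killed there. Only at non-alternating crossings ($\dd$, $\uu$, i.e.\ $\theta$ saddles) does opposite colouring kill. The correct observation driving the proof is the converse of your claim: when the two strands at an alternating crossing belong to the \emph{same} v-circle (the single-circle end of an $m$ or $\Delta$), the single circle forces equal colours, so that whole resolution is killed. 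With your claim as written, the inference ``this forces the local picture at every crossing to be non-alternating'' is not justified.

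The gap is that, even with the claim corrected, you never actually carry out the counting. The paper has to first exploit Lemma~\ref{lem-commutativeindependence} to fix, for each resolution, an orientation minimising the number $m$ of alternating crossings, and then it packages the colour-compatibility constraints into the \emph{dual graph} of the resolution (vertices are edges of the four-valent graph; edges labelled $v$, $a$, or $n$) and reduces the survival question to $2$-colourability of that graph. The case analysis (tree, all cycles even, some odd cycle), together with the minimality of $m$, is what rules out every resolution that is not non-alternating. Your closing paragraph rightly identifies the global-compatibility issue and names Lemma~\ref{lem-idem}(e) as the key commutation rule, but relation (e) on its own does not produce the count: it is an input to Theorem~\ref{thm-karoubi}/Corollary~\ref{cor-importantsaddles}, while the dual-graph $2$-colouring argument and the minimality trick are the missing mechanism that actually pins the surviving sectors to the $2^n$ non-alternating resolutions. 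The degree statement for $n=1$ via the bijection $f\colon\mathrm{Or}\to\mathrm{Re}$ and the shift by $n_-$ is correct and matches the paper.
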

\begin{proof}
We will suppress the notion of the x-markers and the formal signs of the morphisms to maintain readability. Moreover, we will choose a specific orientation for the resolutions. We can do both freely because of Lemma~\ref{lem-commutativeindependence}.

The main part of the proof will be to choose the orientations in a ``good'' way and use Corollary~\ref{cor-importantsaddles}. Moreover, with Theorem~\ref{thm-karoubi}, we see that the complex will be homotopy equivalent to a complex with only $0$-differentials. Hence, the only remaining thing is to show that the number of generators will work out as claimed.

Note that, if a resolution contains a lower part of a multiplication or a upper part of a comultiplication, then by Corollary~\ref{cor-importantsaddles}, this resolution is killed, because these will always be alternating, e.g. $\du$, but will connect as the $\pm 1$ cases of Figure~\ref{figure0-order} (the strings are closed with an even number of v-crossings). Moreover, we can ignore top and bottom parts of $\theta$, since they will always be non-alternating.
\vskip0.5cm
Now we define the \textit{dual graph of a resolution}, denoted $\mathcal D$, as follows. Recall that a resolution is a four valent graph without any c-crossings. Any edge of this graph is a vertex of $\mathcal D$. Two vertices are connected with a labelled edge iff they are connected by a v-crossing $\virtual$ or a $\smoothing$ (or rotations) that is a top part of a multiplication or a bottom part of a comultiplication. First edges should be labelled $v$, the second type of edges should get a labelling that corresponds to the given orientation of the resolution, that is an ``a'' for alternating orientations and a ``n'' otherwise. We will work with the simple graph of that type, i.e. remove circles or parallel edges of the same type. See Figure~\ref{figure-dual}, i.e. the figure shows two resolutions from Figure~\ref{figure-rasexample} and their dual graphs. Note that the leftmost $\smoothing$ of the 011 resolution is part of a $\theta$. 
\begin{figure}[ht]
  \centering
     \includegraphics[width=0.7\linewidth]{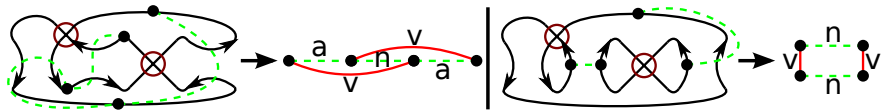}
  \caption{The resolutions 000 and 011 and their dual graphs.}
  \label{figure-dual}
\end{figure}

The advantage of this notation is that the question of surviving resolutions simplifies to the question of a colouring of the dual graph, i.e. a colouring of the dual graph is a colouring with two colours, say red and green, such that any $v$-labelled or $a$-labelled edge has two equally coloured adjacent vertices, but any $n$-labelled edge has two equal colours at adjacent vertices. The reader should compare this to Theorem~\ref{thm-karoubi} and Corollary~\ref{cor-importantsaddles}.

Then, because of Corollary~\ref{cor-importantsaddles}, a resolution will have surviving generators iff it does not contain lower parts of multiplication or upper parts of comultiplications and, given an orientation of the resolution, it allows such a colouring. For example, the left resolution in Figure~\ref{figure-dual} does not allow such an colouring, but the right does.
\vskip0.5cm
Recall that the number of crossings is finite. Hence, we can choose an orientation of any resolution such that the number $m$ of alternating crossings is minimal. The rest is just a case-by-case check, i.e. we have the following three cases.
\begin{itemize}
\item[(i)] The dual graph of the resolution is a tree, i.e. no circles.
\item[(ii)] All circles in the dual graph have an even number of $v$-labelled edges.
\item[(iii)] There is one circle in the dual graph with an odd number of $v$-labelled edges.
\end{itemize}
If $m=0$, i.e. the resolution is non-alternating, we get exactly the claimed number of generators, since there are, by construction, no lower parts of multiplication or upper parts of comultiplications and the dual graph is of type (i) or (ii) and in both cases the graph can be coloured.

So let $m>0$ and let $c$ be an alternating crossing in a resolution $\mathrm{Re}$. The whole resolution is killed if the $c$ is a lower part of a multiplication or an upper part of a comultiplication. Hence, we can assume that all alternating crossings of $\mathrm{Re}$ are either top components of multiplications or bottom components of comultiplications.

So we only have to check the three cases from above. If the resolution is one of type (i), then it is possible to choose the orientations in such a way that all crossings are non-alternating, i.e. this would be a contradiction to the minimality of $m$.

If the resolution is of type (ii), then the resolution only survives, i.e. the dual graph allows a colouring, iff the number of other alternating crossings in every circle is even. But in this case one can also choose an orientation with a lower number of non-alternating crossings. Hence, we would get a contradiction to the minimality of $m$ again. An analogous argument works in the case of type (iii), i.e. contradicting the minimality of $m$ again\footnote{It is worth noting that these arguments work because of the well-known fact that a graph allows a $2$-colouring iff it has no circles of odd length.}.   

Hence, only non-alternating resolutions generate non-vanishing objects and any non-alternating resolution will create exactly two of these. Thus, with Theorem~\ref{thm-nonalternating}, the statement follows.
\end{proof}
\begin{ex}\label{ex-nonalt2}
As an example how the Theorem~\ref{thm-leedeg} works consider the v-knot diagram of Example~\ref{ex-nonalt} again.

The theorem tells us that the resolution 000 should not contribute to the number of generators, i.e. it should get killed. To see this, we first note that in the Karoubi envelope there are $4^3$ different direct summands of coloured (with the idempotents down $d$ and up $u$) versions of the resolution, i.e. four for each crossing. But most of them are killed by the orthogonality of $d$ and $u$, i.e. the two components of the resolution need to have the same colour. Hence, we have the four remaining summands as shown in Figure~\ref{figure-ras2}.
\begin{figure}[ht]
  	 \xy
  	 (-40,0)*{\phantom{.}};
     (40,0)*{\includegraphics[scale=0.7]{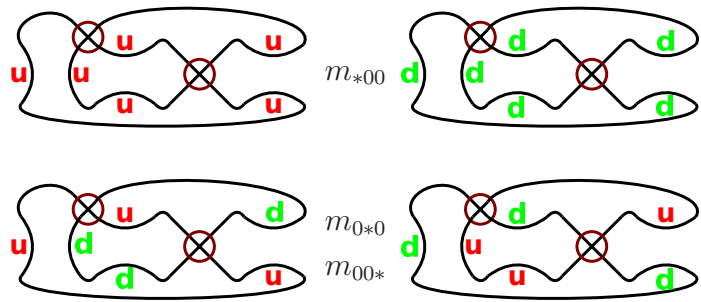}};
     (40,10.5)*{m_{*00}};
     (40,-9.5)*{m_{0*0}};
     (40,-15.5)*{m_{00*}};
     \endxy
  \centering
  \caption{The remaining four coloured versions of resolution 000.}
  \label{figure-ras2}
\end{figure}

Let us denote the three multiplications with this resolution $\gamma_{000}$ as source by
\[
m_{*00}\colon\gamma_{000}\to\gamma_{100}\;\;\text{and}\;\;m_{0*0}\colon\gamma_{000}\to\gamma_{010}\;\;\text{and}\;\;m_{00*}\colon\gamma_{000}\to\gamma_{001}.
\]
If we choose the orientation for $\gamma_{000}$ as indicated in the Figure~\ref{figure-rasexample}, we see that
\[
m_{*00}\colon\dd\to\ril\;\;\text{and}\;\;m_{0*0}\colon\du\to\ler\;\;\text{and}\;\;m_{00*}\colon\ud\to\ril.
\]
We can now use Corollary~\ref{cor-importantsaddles} to see that the only remaining parts for the three multiplications are as follows.
\[
m_{*00}\colon\ddp\oplus\uup\to\dupp\oplus\udpp,
\]
for $m_{*00}$ and for the other two
\[
m_{0*0},m_{*00}\colon\dup\oplus\udp\to\dupp\oplus\udpp.
\]
Hence, the pick two distinct coloured versions as illustrated in Figure~\ref{figure-ras2}. Therefore, there are no surviving generators for the 000 resolution.

It should be noted that changing for example the orientation of the leftmost v-circle in Figure~\ref{ex-nonalt} does not affecting the result, since Lemma~\ref{lem-commutativeindependence} ensures that the resulting complexes are isomorphic.

And in fact such a change leads to
\[
m_{*00}\colon\ud\to\ril\;\;\text{and}\;\;m_{0*0}\colon\uu\to\ler\;\;\text{and}\;\;m_{00*}\colon\uu\to\ril.
\]
Hence, the $m_{*00}$ and the two multiplications $m_{0*0},m_{*00}$ still pick out different coloured versions of the resolution 000. Therefore, there will not be any surviving generators for this case either.
\end{ex}
\vskip0.2cm
We finish by using the functor $\mathcal F_{\mathrm{Lee}}$ to get the corresponding statement in the category $\RMOD$. The reader may compare this to the results in the classical case, e.g. see Proposition 2.4 in~\cite{mtv}.
\begin{prop}\label{prop-algebraicdegen}
Let $L_D$ denote a $n$-component v-link diagram. Then we have the following.
\begin{itemize}
\item[(a)] If $R=\bZ$, then there is an isomorphism
\[
H(\mathcal F_{\mathrm{Lee}}(L_D),\bZ)\cong\bigoplus_{2^n}\bZ\oplus \mathrm{Tor},
\]
where $\mathrm{Tor}$ is all torsion. Moreover, he only possible torsion is $2$-torsion.
\item[(b)] If $R=\bQ$ or $R=\bZ\left[\frac{1}{2}\right]$, then there is an isomorphism
\[
H(\mathcal F_{\mathrm{Lee}}(L_D),R)\cong\bigoplus_{2^n}R.
\]
\end{itemize}
\end{prop}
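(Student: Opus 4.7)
The plan is to derive both parts from \Cref{thm-leedeg} by transporting the topological homotopy equivalence through the functor $\mathcal F_{\mathrm{Lee}}$ and then, for part (a), invoking the Universal Coefficient Theorem. First I would verify that $\mathcal F_{\mathrm{Lee}}$, being an $R$-pre-additive uTQFT, extends to a functor $\ukob_R \to \kom_b(\mat(\RMOD))$ that preserves chain homotopy equivalences; this is immediate from its additivity, and the extension to $\KAR(\ukob_R)$ is obtained by sending an idempotent-split object $\mathrm{im}(e)$ to the image of $\mathcal F_{\mathrm{Lee}}(e)$ acting on $\mathcal F_{\mathrm{Lee}}(\mathcal O)$.

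For part (b), with $2^{-1}\in R$, \Cref{thm-leedeg} supplies a homotopy equivalence in $\ukob_R$ between $\bn{L_D}_{\mathrm{Lee}}$ and a complex with zero differentials whose $2^n$ generators are the non-alternating resolutions equipped with the ``down/up'' idempotent decoration forced by \Cref{cor-importantsaddles}. Each such coloured generator is (a tensor product of) images of the idempotents $\mathrm{d},\mathrm{u}$ of \Cref{defn-idempotent} acting on copies of $A_{\mathrm{Lee}}=R[X]/(X^2-1)$; the eigenvector identity $\mathrm{id}_{\mathrm{dot}}\circ\mathrm{d}=\mathrm{d}$, $\mathrm{id}_{\mathrm{dot}}\circ\mathrm{u}=-\mathrm{u}$ of \Cref{lem-idem}(d) together with invertibility of $2$ yields the splitting $A_{\mathrm{Lee}}\cong R\cdot\tfrac{1+X}{2}\oplus R\cdot\tfrac{1-X}{2}$ into the images of $\mathcal F_{\mathrm{Lee}}(\mathrm{d})$ and $\mathcal F_{\mathrm{Lee}}(\mathrm{u})$. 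The constraint that colours are constant along components connected by the surviving saddles (the graph-colouring argument in the proof of \Cref{thm-leedeg}) collapses each coloured resolution to a single rank-one free $R$-module. Since the differentials vanish, homology coincides with the chain groups, giving $H(\mathcal F_{\mathrm{Lee}}(L_D),R)\cong\bigoplus_{2^n}R$.

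For part (a), the virtual Khovanov-Lee complex is defined over $\bZ$, so $H(\mathcal F_{\mathrm{Lee}}(L_D),\bZ)$ is a finitely generated abelian group $\bZ^r\oplus T$ with $T$ finite. Applying part (b) with $R=\bQ$ and using the UCT $H(\mathcal F_{\mathrm{Lee}}(L_D),\bZ)\otimes\bQ\cong H(\mathcal F_{\mathrm{Lee}}(L_D),\bQ)$ (the Tor term vanishes over $\bQ$) pins down the free rank as $r=2^n$. To exclude $p$-torsion for odd primes $p$, I would apply part (b) again with $R=\bZ_{(p)}$, in which $2$ is a unit; this produces $H(\mathcal F_{\mathrm{Lee}}(L_D),\bZ_{(p)})\cong\bigoplus_{2^n}\bZ_{(p)}$, and by localisation the $p$-primary part of $T$ must vanish. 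Only $p=2$ can contribute, so $T$ is pure $2$-torsion, yielding the claimed decomposition.

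The principal technical obstacle will be the passage to the Karoubi envelope when applying $\mathcal F_{\mathrm{Lee}}$: one must check that \Cref{prop-karoubi} transfers the topological homotopy equivalence back to $\kom_b(\ucob_R(\emptyset))$ before composing with $\mathcal F_{\mathrm{Lee}}$, and that the rank count per surviving non-alternating resolution is genuinely one, not higher. This requires carefully matching the graph-theoretic enumeration of admissible colourings in the proof of \Cref{thm-leedeg} with the algebraic eigenspace decomposition of $A_{\mathrm{Lee}}$, so that each of the $2^n$ orientations of $L_D$ contributes exactly one copy of $R$ rather than an uncontrolled tensor power. Once this bookkeeping is in place, the rest of the proof is a direct application of UCT.
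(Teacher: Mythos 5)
Your proof follows essentially the same two-step strategy as the paper: part (b) is read off from Theorem~\ref{thm-leedeg} after applying $\mathcal F_{\mathrm{Lee}}$, and part (a) is deduced from (b) via the universal coefficient theorem, using $\bQ$ to pin down the free rank and a ring in which $2$ is invertible to exclude odd torsion. The one genuine difference is cosmetic: you localise at each odd prime $p$ separately via $\bZ_{(p)}$, whereas the paper uses the single ring $\bZ\left[\frac{1}{2}\right]$. Both work — $\bZ[1/2]$ is flat over $\bZ$ and the $p$-primary part of a finite abelian group survives tensoring with $\bZ[1/2]$ for every odd $p$, so the paper's choice already rules out all odd torsion at once. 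A small caveat with your version: when you write ``apply part~(b) with $R=\bZ_{(p)}$'' you are invoking the Proposition for a ring it does not actually name; either cite Theorem~\ref{thm-leedeg} directly for $\bZ_{(p)}$ (which is legitimate, since that theorem only requires $2^{-1}\in R$) or simply use $\bZ[1/2]$. Your bookkeeping for part (b) — that each surviving coloured generator in the Karoubi envelope contributes exactly one copy of $R$, via the rank-one eigenspace splitting $A_{\mathrm{Lee}}\cong R\cdot\frac{1+X}{2}\oplus R\cdot\frac{1-X}{2}$ under the idempotents $\mathrm{d},\mathrm{u}$ of Lemma~\ref{lem-idem} — is correct and usefully makes explicit a step that the paper's one-line proof of (b) leaves implicit.
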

\begin{proof}
The statement (b) follows from Theorem~\ref{thm-leedeg} above. Recall that the whole construction requires that $2$ is invertible.

For (a) recall the universal coefficients theorem. i.e. there is a short exact sequence
\[
0\rightarrow H_*(\mathcal F_{\mathrm{Lee}}(L_D),\bZ)\otimes_{\bZ} R\rightarrow H_*(\mathcal F_{\mathrm{Lee}}(L_D),R)\rightarrow \mathrm{Tor}(H_{*+1}(\mathcal F_{\mathrm{Lee}}(L_D),\bZ)),R)\rightarrow 0.
\]
Therefore, (a) follows from (b) with $R=\bQ$, since the Tor-functor will vanish in this case and from (b) with $R=\bZ\left[\frac{1}{2}\right]$. Hence, this shows the proposition.
\end{proof}
\subsection{Computer talk}\label{sec-vkhpro}
In this section we show some basic calculations with a computer program we have written. The program is a MATHEMATICA (see~\cite{wolf}) package called \textit{vKh.m}. There is also a notebook called \textit{vKh.nb}. Both and some calculation results are available online, i.e. on the authors homepage\footnote{{\tt \tiny http://xwww.uni-math.gwdg.de/dtubben/vKh.htm}}.

The input data is a v-link diagram in a \textit{circuit notation}, i.e. the classical \textit{planar diagram notation}, but we allow v-crossings. Hence, the input data is a string of labelled $X$, i.e crossings are presented by symbols $X_{ijkl}$ where the numbers are obtained by numbering the edges of the v-link diagram and the
edges around the crossing start counting from the lower incoming and proceeding counterclockwise. We denote such a diagram by CD[X[i,j,k,l],...,X[m,n,o,p]].

After starting MATHEMATICA and loading our package vKh.m, we type in the unknot from Figure~\ref{figure0-big}, the classical and virtual trefoil. Our notation follows the notation of Green in his nice \textit{table of virtual knots}\footnote{J.~Green, A Table of Virtual Knots, {\tt \tiny http://www.math.toronto.edu/drorbn/Students/GreenJ/} (2004)}.

{\In Unknot:= CD[X[1,3,2,4], X[2,1,3,4]]; Knot21 := CD[X[1,3,2,4], X[4,2,1,3]];\\ Knot36 := CD[X[1,5,2,4], X[5,3,6,2], X[3,1,4,6]];}\vskip 2mm

Let us denote the elements $1,X\in A=\bZ[X]/X^2=0$ by 1=vp[i] and X=vm[i] and tensors of these elements multiplicatively. Here the module $A$ should belong to the $i$-th v-circle. Moreover, we denote by the word $a$, whose letters are from the alphabet $\{0,1,*\}$ with exactly one $*$-entry, the cobordism starting at the resolution $\gamma_{*=0}$ and going to the resolution $\gamma_{*=1}$. Let us check the different morphisms.

{\In {d2[Unknot, "0*"],d2[Unknot, "*0"], d2[Unknot, "1*"],d2[Unknot, "*1"]}}
\begingroup\Out {{vp[1] -> vm[2] vp[1] - vm[1] vp[2], vm[1] -> vm[1] vm[2]}, {vp[1] -> 0,\\vm[1] -> 0}, {vp[1] -> 0, vm[1] -> 0}, {vp[1] vp[2] -> -vp[1], vm[2] vp[1] ->\\-vm[1], vm[1] vp[2] -> -vm[1], vm[1] vm[2] -> 0}}\endgroup\vskip 2mm

We see that the two orientable morphisms are $\Delta^+_{-+}$ and $-m^{--}_-=-m^{++}_+$. With the command KhBracket[Knot,r] we generate the $r$-th module of the complex (here for simplicity without gradings). Moreover, with d[Knot][KhBracket[Knot,r]] we calculate the image of the $r$-th differential for the whole module. Let us check the output.

{\In {KhBracket[Unknot, 0], KhBracket[Unknot, 1], KhBracket[Unknot, 2]}}
\begingroup\Out {{v[0, 0] vm[1], v[0, 0] vp[1]}, {v[0, 1] vm[1] vm[2], v[0, 1] vm[2] vp[1],\\v[0, 1] vm[1] vp[2], v[0, 1] vp[1] vp[2], v[1, 0] vm[1], v[1, 0] vp[1]},\\{v[1, 1] vm[1], v[1, 1] vp[1]}}\endgroup
{\In {d[Unknot][KhBracket[Unknot, 0]], d[Unknot][KhBracket[Unknot, 1]]}}
\begingroup\Out {{v[0, 1] vm[1] vm[2], v[0, 1] vm[2] vp[1] - v[0, 1] vm[1] vp[2]}, {0,\\-v[1, 1] vm[1], -v[1, 1] vm[1], -v[1, 1] vp[1], 0, 0}}\endgroup\vskip 2mm

It is easy to check that the composition $d_1\circ d_0$ is indeed zero.

{\In d[Unknot][d[Unknot][KhBracket[Unknot, 0]]]}
\begingroup\Out {0, 0}\endgroup\vskip 2mm

Let us check this for the other two knots, too.

{\In d[Knot21][d[Knot21][KhBracket[Knot21, 0]]]}
\begingroup\Out {0, 0, 0, 0}\endgroup
{\In {d[Knot36][d[Knot36][KhBracket[Knot36, 0]]], d[Knot36][d[Knot36]\\ [KhBracket[Knot36, 1]]]}}
\begingroup\Out {{0, 0, 0, 0, 0, 0, 0, 0}, {0, 0, 0, 0, 0, 0, 0, 0, 0, 0, 0, 0}}\endgroup\vskip 2mm

Now let us check for the trefoil how the signs of the morphisms work out.

{\In {sgn[Knot36, "00*"], sgn[Knot36, "0*0"], sgn[Knot36, "*00"]}}
\begingroup\Out {1, -1, 1}\endgroup
{\In {sgn[Knot36, "01*"], sgn[Knot36, "10*"], sgn[Knot36, "0*1"], 
 sgn[Knot36, "1*0"], sgn[Knot36, "*01"], sgn[Knot36, "*10"]}}
\begingroup\Out {1, 1, 1, -1, -1, -1}\endgroup
{\In {sgn[Knot36, "11*"], sgn[Knot36, "1*1"], sgn[Knot36, "*11"]}}
\begingroup\Out {1, 1, 1}\endgroup\vskip 2mm

We observe that all of the six different faces have an odd number of signs. For example the face $F1=(\gamma_{000},\gamma_{001}\oplus\gamma_{010},\gamma_{011})$ gets a sign from the morphism $d_{0*0}$. Furthermore, the face $F2=(\gamma_{100},\gamma_{101}\oplus\gamma_{110},\gamma_{111})$ gets a sign from the morphism $d_{1*0}$.

The first face is of type 2b and the second is of type 1b. Hence, after a virtualisation the latter should have an even number of signs, but the first should have an odd number signs. Let's check this. First we define a new knot diagram which we obtain by performing a virtualisation on the second crossing of the trefoil.

{\In Knot36v := CD[X[1, 4, 2, 5], X[2, 5, 3, 6], X[3, 6, 4, 1]];}
{\In {sgn[Knot36v, "00*"], sgn[Knot36v, "0*0"], sgn[Knot36v, "*00"]}}
\begingroup\Out {1, -1, 1}\endgroup
{\In {sgn[Knot36v, "01*"], sgn[Knot36v, "10*"], sgn[Knot36v, "0*1"], 
 sgn[Knot36v, "1*0"], sgn[Knot36v, "*01"], sgn[Knot36v, "*10"]}}
\begingroup\Out {1, 1, 1, -1, -1, -1}\endgroup
{\In {sgn[Knot36v, "11*"], sgn[Knot36v, "1*1"], sgn[Knot36v, "*11"]}}
\begingroup\Out {1, -1, 1}\endgroup\vskip 2mm

Indeed only the sign of the morphism $d_{1*1}$ is different now. Hence, the face $F1$ still has an odd number, but the face $F2$ has an even number off signs. This should cancel with the extra sign of the pantsdown morphism $d_{1*1}$.

{\In {d[Knot36v][d[Knot36v][KhBracket[Knot36v, 0]]], d[Knot36v][d[Knot36v]\\ [KhBracket[Knot36v, 1]]]}}
\begingroup\Out {{0, 0, 0, 0, 0, 0, 0, 0}, {0, 0, 0, 0, 0, 0, 0, 0, 0, 0, 0, 0}}\endgroup\vskip 2mm

Let us look at some calculation results for the four knots. The output is Betti[q,t], i.e. the dimension of the homology group in quantum degree $q$ and homology degree $t$. The unknot should have trivial homology.

{\In vKh[Unknot]}
\begingroup\Out {Betti[-1,-2] = 0, Betti[-1,0] = 0, Betti[0,-2] = 0, Betti[0,-1] = 1,\\Betti[0,0]= 0, Betti[0,1] = 1, Betti[0,1] = 0, Betti[1,0] = 0, Betti[1,2] = 0}\endgroup
\begingroup\Out 1/q + q\endgroup\vskip 2mm

For the other outputs we skip the Betti-numbers. One can read them off from the polynomial. The trefoil and its virtualisation have the same output (as they should).

{\In vKh[Knot21]}
\begingroup\Out 1/q^3 + 1/q + 1/(q^6 t^2) + 1/(q^2 t)\endgroup
{\In vKh[Knot36]}
\begingroup\Out 1/q^3 + 1/q + 1/(q^9 t^3) + 1/(q^5 t^2)\endgroup
{\In vKh[Knot36v]}
\begingroup\Out 1/q^3 + 1/q + 1/(q^9 t^3) + 1/(q^5 t^2)\endgroup\vskip 2mm

Let us check that the graded Euler characteristic is the Jones polynomial\footnote{To simplify the outputs we have avoided to include the orientation of the v-links in the input, i.e. every output needs a grading shift.}.

{\In Factor[(vKh[Knot21] /. t -> -1)/(q + q^-1)]}
\begingroup\Out (1 - q^2 + q^3)/q^5\endgroup
{\In Factor[(vKh[Knot36] /. t -> -1)/(q + q^-1)]}
\begingroup\Out (-1 + q^2 + q^6)/q^8\endgroup\vskip 2mm

Another observation is the following. The map $\Phi^-_+$ sends $1$ to itself, but $X$ to $-X$. Hence, there is a good change for 2-torsion. Let us check. Here Tor[q,t] denotes the $\bZ/p\bZ$-rank minus the $\bZ$-rank (both graded) of Betti[q,t]$\otimes\bZ/p\bZ$. Even the v-trefoil has 2-torsion, but no 3-torsion.

{\In vKh[Knot21,2]}
\begingroup\Out {Tor[-2,-6] = 0, Tor[-2,-4] = 0, Tor[-2,-2] = 0, Tor[-1,-4] = 1,\\Tor[-1,-2] = 0, Tor[0,-3] = 0, Tor[0,-1] = 0}\endgroup
\begingroup\Out 1/(q^4 t)\endgroup
{\In vKh[Knot21,3]}
\begingroup\Out {Tor[-2,-6] = 0, Tor[-2,-4] = 0, Tor[-2,-2] = 0, Tor[-1,-4] = 0,\\Tor[-1,-2] = 0, Tor[0,-3] = 0, Tor[0,-1] = 0}\endgroup
\begingroup\Out 0\endgroup\vskip 2mm

There seems to be a lot of 2-torsion!

{\In Knot32 := CD[X[2, 6, 3, 1], X[4, 2, 5, 1], X[5, 3, 6, 4]];}
{\In vKh[Knot32]}
\begingroup\Out 1/q^2 + 1/q + q + 1/(q^5 t^2) + 1/(q t) + q^2 t\endgroup
{\In vKh[Knot32,2]}
\begingroup\Out 1/(q^3 t) + t\endgroup\vskip 2mm

Because the virtual Khovanov complex is invariant under virtualisation, there are many examples of non-trivial v-knots with trivial Khovanov complex.

{\In Knot459 := CD[X[2, 8, 3, 1], X[4, 2, 5, 1], X[3, 6, 4, 7], X[5, 8, 6, 7]];}
{\In vKh[Knot32]}
\begingroup\Out 1/q + q\endgroup\vskip 2mm

Let us try an harder example. We mention that the faces are all anticommutative, hence the composition of the differentials is zero. 

{\In Knot53 := CD[X[1, 9, 2, 10], X[2, 10, 3, 1], X[5, 4, 6, 3], X[7, 4, 8, 5],\\X[8, 7, 9, 6]];}
{\In vKh[Knot53]}
\begingroup\Out 2 + 1/q^3 + 1/q^2 + 1/q + 1/(q^7 t^3) + 1/(q^6 t^2) + 1/(q^5 t^2) \\+ 1/(q^3 t^2) + 2/(q^4 t) + 1/(q^2 t) + 1/(q t) + t/q + q^2 t + q^3 t^2\endgroup
{\In vKh[Knot53,2]}
\begingroup\Out 2/q^2 + 1/(q^5 t^2) + 1/(q^4 t) + 1/(q^3 t) + t + q t^2\endgroup
{\In \{d[Knot53][d[Knot53][KhBracket[Knot53, 0]]], d[Knot53][d[Knot53][KhBracket\\ [Knot53, 1]]], d[Knot53][d[Knot53][KhBracket[Knot53, 2]]], d[Knot53][d[Knot53]\\ [KhBracket[Knot53, 3]]]\}}
\begingroup\Out {{0, 0, 0, 0, 0, 0, 0, 0}, {0, 0, 0, 0, 0, 0, 0, 0, 0, 0, 0, 0, 0, 0, 0,\\0, 0, 0, 0, 0, 0, 0, 0, 0}, {0, 0, 0, 0, 0, 0, 0, 0, 0, 0, 0, 0, 0, 0, 0, 0, 0, 0,\\0, 0, 0, 0, 0, 0, 0, 0, 0, 0, 0, 0, 0, 0}, {0, 0, 0, 0, 0, 0, 0, 0, 0, 0, 0, 0, 0,\\0, 0, 0, 0, 0, 0, 0, 0, 0, 0, 0, 0, 0, 0, 0}}\endgroup\vskip 2mm

The virtual Khovanov complex is strictly stronger than the virtual Jones polynomial. The first example appears for v-links with seven crossings. Let's check two examples.

{\In Example1 := CD[X[1, 4, 2, 3], X[2, 10, 3, 11], X[4, 9, 5, 10], X[11, 5, 12, 6],\\X[6, 1, 7, 14], X[12, 8, 13, 7], X[13, 9, 14, 8]]; Example2 := CD[X[1, 4, 2, 3],\\X[2, 11, 3, 10], X[4, 10, 5, 9], X[14, 5, 1, 6], X[6, 12, 7, 11], X[13, 7, 14, 8],\\X[12, 8, 13, 9]]; Example3 := CD[X[1, 4, 2, 3], X[2, 11, 3, 10], X[4, 9, 5, 10],\\X[13, 5, 14, 6], X[6, 11, 7, 12], X[14, 8, 1, 7], X[12, 8, 13, 9]]; Example4 :=\\CD[X[1, 4, 2, 3], X[2, 11, 3, 10], X[4, 10, 5, 9], X[14, 5, 1, 6], X[6, 13, 7, 14],\\X[11, 7, 12, 8], X[12, 8, 13, 9]];}\vskip 2mm

So let us see what our program calculates.

{\In \{vKh[Example1], vKh[Example2], vKh[Example3], vKh[Example4]\}}
\begingroup\Out {2 + 1/q + q + 2 q^2 + 1/(q^3 t^2) + 2/(q^2 t) + q/t + 2 q t + 2 q^4 t\\+ q^3 t^2 + 2 q^5 t^2 + q^7 t^3, 2 + 1/q + q + 2 q^2 + q^3 + 1/(q^3 t^2) + 2/(q^2 t)\\+ q/t + 2 q t + q^2 t + q^3 t + 2 q^4 t + q^2 t^2 + q^3 t^2 + 2 q^5 t^2 + q^6 t^2 + q^6 t^3 + q^7 t^3, 2/q^2 + 1/q + 3 q + 1/(q^6 t^3) + 2/(q^5 t^2) + 1/(q^2 t^2) + 2/(q^3 t)\\+ 2/(q t) + t + 2 q^2 t + q^4 t^2, 1 + 2/q^2 + 2/q + 3 q + 1/(q^6 t^3) + 2/(q^5 t^2)\\+ 1/(q^4 t^2) + 1/(q^2 t^2) + 1/t + 1/(q^4 t) + 2/(q^3 t) + 2/(q t) + t + t/q + 2 q^2 t + q^3 t + q^3 t^2 + q^4 t^2}\endgroup\vskip 2mm

Good news: Example1 and Example2 have the same virtual Jones polynomial ($t=-1$), but different virtual Khovanov homology, i.e. Example2 has the six extra terms (compared to Example1) $q^2t$, $q^2t^2$, $q^3$, $q^3t$, $q^6t^2$ and $q^6t^3$. They all cancel if we substitute $t=-1$. An analogously effect happens for Example3 and Example4. Furthermore, our calculations suggest that this repeats frequently for v-knots with seven or more crossings.

The command line GausstoCD converts \textit{signed Gauss Code} to a CD representation. The signed Gauss code has to start with the first overcrossing. To get the mirror image we can use the rule from below. For example the virtual trefoil and its mirror are not equivalent.
{\In Knot21gauss := "O1-O2-U1-U2-";}
{\In GuasstoCD[Knot21gauss]}
\begingroup\Out CD[X[1, 4, 2, 3], X[2, 1, 3, 4]]\endgroup
{\In GuasstoCD[Knot21gauss] /. X[i_,j_,k_,l_] :> X[i,l,k,j]}
\begingroup\Out CD[X[1, 3, 2, 4], X[2, 4, 3, 1]]\endgroup
{\In \{vKh[GausstoCD[Knot21gauss]], 
 vKh[GausstoCD[Knot21gauss] /. X[i_, j_, k_, l_] :> X[i, l, k, j]]\}}
\begingroup\Out {q + q^3 + q^2 t + q^6 t^2, 1/q^3 + 1/q + 1/(q^6 t^2) + 1/(q^2 t)}\endgroup\vskip 2mm

We used this to calculate the virtual Khovanov homology for all different v-knots with less or equal five crossings. The input was the list of v-knots from Green's virtual knot table. The results are available on the author's website (as mentioned before). One could visualise the polynomial with the function \textit{Ployplot}. It creates an output as in the figures~ \ref{figure-calc1},~\ref{figure-calc2}.
\begin{figure}[ht]
\begin{minipage}[c]{6,9cm}
	\centering
	\includegraphics[scale=0.425]{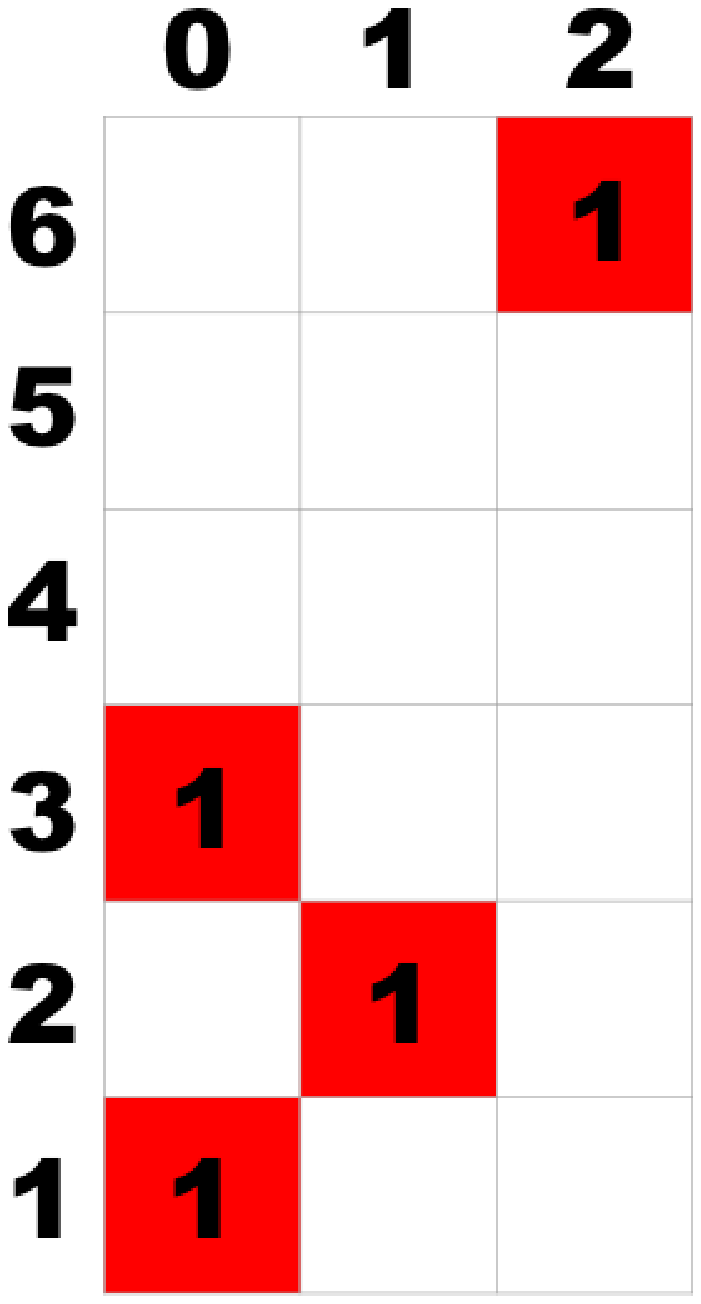}
	\caption{Homology of the v-trefoil.}
	\label{figure-calc1}
\end{minipage}
\begin{minipage}[c]{6,9cm}
	\centering
	\includegraphics[scale=0.425]{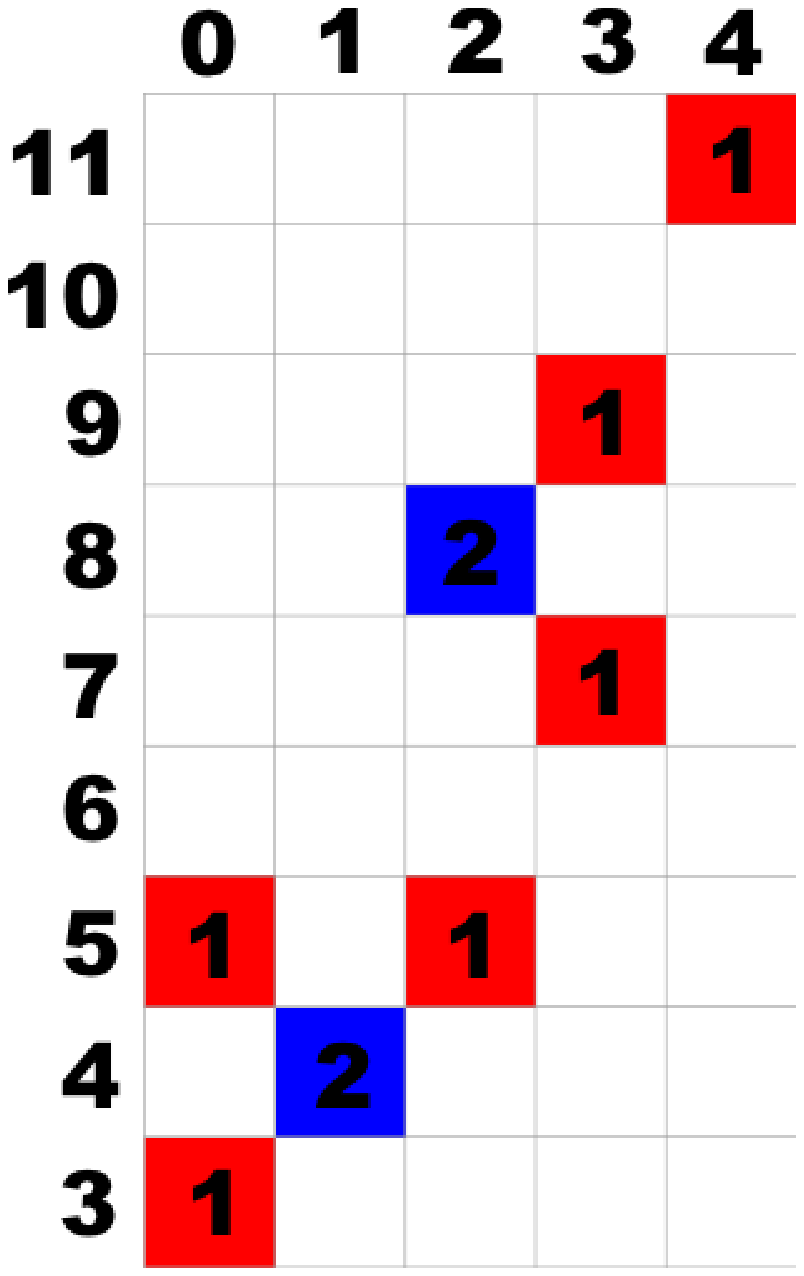}
	\caption{Homology of the v-knot 4.1.}
	\label{figure-calc2}
\end{minipage}
\end{figure}
{\In Knot41 := "O1-O2-U1-U2-O3-O4-U3-U4-"; vKh[GausstoCD[Knot41]]}
\begingroup\Out q^3 + q^5 + 2 q^4 t + q^5 t^2 + 2 q^8 t^2 + q^7 t^3\\ + q^9 t^3 + 
 q^11 t^4]\endgroup\vskip 2mm
The output of this v-knot and of the mirror of the virtual trefoil is shown in the figures~\ref{figure-calc1},~\ref{figure-calc2}. In these pictures the quantum degree is on the y-axis and the homology degree on the x-axis.
\subsection{Open issues}\label{sec-vkhend}
Here are some open problems that we have observed. Note that nowadays the results about classical Khovanov homology form a highly studied and rich field. So there are much more open questions related to our construction.
\begin{itemize}
\item It is quite remarkable that one has to use a ``$\wedge$-product like'' construction to define even, virtual Khovanov homology. An interpretation of this fact is missing.
\item Our complex is an extension of the classical (even) Khovanov complex. We shortly discuss a method which could lead to an extension of odd Khovanov homology~\cite{ors}. Even and odd Khovanov homology differ over $\bQ$ but are equal over $\bZ/2$.
\item Secondly we discuss the relationship between the virtual Khovanov complex and the categorification of the higher quantum polynomials ($n\ge 3$) from Khovanov in~\cite{kh3} and Mackaay and Vaz in~\cite{mv1} and Mackaay, Sto\v{s}i\'{c} and Vaz in~\cite{msv}.
\item The results from Section~\ref{sec-vkhapp} could lead to an extension of the Rasmussen invariant to virtual knots.
\end{itemize}
On the second point: The reader familiar with the paper of Ozsv\'{a}th, Rasmussen and Szab\'{o} may have already identified our map
\[
F_{Kh}((\Phi^-_+\amalg\mathrm{id}^+_+)\circ\Delta^+_{++})\colon A\to A\otimes A
\]
to be the comultiplication which they use.

One main difference between the even and odd Khovanov complex is the usage of this map instead of the standard map $F_{Kh}(\Delta^+_{++})$ and the structure of an exterior algebra instead of direct sums. Furthermore, there are commutative and anticommutative faces in the odd Khovanov complex. But because every cube has an even number of both types of faces, there is a sign assignment which makes every face anticommute. One major problem is the question how to handle unorientable faces, because these faces can be counted as commutative or anticommutative. Furthermore, one should admit that faces of type 1a and 1b can be commutative or anticommutative. Hence, there is still much work to do.  

On the third point: The key idea in the categorification of the $\mathfrak{sl}(n)$-polynomial for $n\ge 3$ is the usage of so-called foams. This very interesting approach due to Khovanov, Mackaay, Sto\v{s}i\'{c} and Vaz (see in their papers~\cite{kh3},~\cite{mv1} and~\cite{msv}).

So in the virtual case one should use a topological construction with virtual webs and decorated, possible non-orientable foams (immersed rather than embedded). So their concept to categorify the $\mathfrak{sl}(n)$-polynomials for $n=3$ should lift to v-links. This needs further work (the sign assignment seem to be the main point), but seems to be very interesting. The $n>3$ case is indeed more complicated. In their paper Mackaay, Sto\v{s}i\'{c} and Vaz (see~\cite{msv}) use a special formula, the so-called Kapustin-Li formula, to find the adapted relations. But this formula only works in the orientable case and it has no straightforward extension to the non-orientable case. But hopefully the collection of relations they use is already enough to show invariance under the vRM1, vRM2, vRM3 and the mRM moves. At least in the case $n=2$ the local relations are enough to show the invariance.
\newpage
\section{The $\mathfrak{sl}_3$ web algebra}\label{sec-web}
\subsection{A brief summary}\label{sec-websum}
We summarise the results of Section~\ref{sec-web} now. Recall that Section~\ref{sec-web} is a slight adaptation of a preprint of Mackaay, Pan and the author~\cite{mpt}.

It is worth noting that we have collected some facts about higher categories (see Section~\ref{sec-techhigher}), Grothendieck groups (see Sections~\ref{sec-techgrgr} and~\ref{sec-techgrgrcat}) and cellular algebras (see Section~\ref{sec-techcell}) in later sections. They are far from being complete, but the reader can hopefully find some useful information therein.
\vskip0.5cm
Recall that we consider the $\mathfrak{sl}_3$ analogue of Khovanov's arc algebras and we call them \textit{web algebras} and denote them by $K_S$, where $S$ is a sign string (string of $+$ and $-$ signs). We prove the following main results regarding $K_S$.
\begin{enumerate}
\item $K_S$ is a graded, symmetric Frobenius algebra (Theorem~\ref{thm:frob}).
\item We give an explicit degree preserving algebra isomorphism 
between the cohomology ring of the Spaltenstein variety 
$X^{\lambda}_{\mu}$ and $Z(K_S)$, where $\lambda$ and $\mu$ are 
two weights determined by $S$ (Theorem~\ref{thm:center}). 
\item Let $V_S=V_{s_1}\otimes\cdots\otimes V_{s_n}$, where $V_+$ is the 
basic $U_q(\mathfrak{sl}_3)$-representation and $V_-$ its dual. 
Kuperberg~\cite{kup} proved that $W_S$, the space of $\mathfrak{sl}_3$ webs 
whose boundary is determined by $S$, is isomorphic to 
$\mathrm{Inv}_{U_q(\mathfrak{sl}_3)}(V_S)$, the space of invariant tensors in 
$V_S$. Choose an arbitrary 
$k\in\mathbb{N}$ and let $n=3k$. By $q$-skew Howe duality, which 
we will explain at the beginning of Section~\ref{sec-webhowe}, 
we know that 
\[
V_{(3^k)}\cong \bigoplus_S W_S.
\]
Here $V_{(3^k)}$ denotes the irreducible $U_q(\mathfrak{gl}_n)$-module 
with highest weight $(3^k)$. Recall that this can be restricted to an irreducible $U_q(\mathfrak{sl}_n)$-module. The direct sum on the right-hand side 
is taken over all \textit{enhanced sign sequences} of length $n$, 
which are in bijective correspondence to the semi-standard Young tableaux with 
$k$ rows and 3 columns. 
\vskip0.5cm
In Section~\ref{sec-webhoweb} we categorify 
this result. Let $R_{(3^k)}$ be the cyclotomic Khovanov-Lauda-Rouquier 
algebra (cyclotomic 
KLR algebra for short) with highest weight $(3^k)$. Brundan and 
Kleshchev~\cite{bk2} (see also~\cite{kaka},
~\cite{lv},~\cite{vv} and~\cite{web1}) proved that 
\[
K^{\oplus}_0(R_{(3^k)}\text{-}\mathrm{p\textbf{Mod}}_{\mathrm{gr}})
\cong V_{(3^k)}^{\mathbb{Z}},
\] 
where the latter is the integral form of $V_{(3^k)}$. 

We prove (in Proposition~\ref{prop:cataction}) that there exists an  
exact, degree preserving categorical $\Ucat$-action on 
\[
\bigoplus_S K_S\text{-}\mathrm{\textbf{Mod}}_{\mathrm{gr}},
\] 
where $\Ucat$ is Khovanov and Lauda's diagrammatic categorification of 
$\U$. This categorical action can be restricted to 
\[
\bigoplus_S K_S\text{-}\mathrm{p\textbf{Mod}}_{\mathrm{gr}}.
\] 
By a general result due to Rouquier~\cite{rou} (in fact a slight variation of Rouquier's result, see Section~\ref{sec-techhigherrep}), which 
we recall in Proposition~\ref{prop:rouquier}, we get
\begin{equation}
\label{eq:rouquierequiv}
R_{(3^k)}\text{-}\mathrm{p\textbf{Mod}}_{\mathrm{gr}}\cong \bigoplus_S 
K_S\text{-}\mathrm{p\textbf{Mod}}_{\mathrm{gr}}.
\end{equation}
\item In particular, 
this proves that the split Grothendieck groups of both categories are 
isomorphic (Corollary~\ref{cor:equivalence}). It follows that we have 
\[
K^{\oplus}_0\left(K_S\text{-}\mathrm{p\textbf{Mod}}_{\mathrm{gr}}\right)\cong W_S^{\mathbb{Z}},\]
for any $S$. Again, the superscript $\mathbb{Z}$ denotes the integral form. 
\item As proved in Corollary~\ref{cor:equivalence}, the equivalence 
in~\eqref{eq:rouquierequiv} implies that 
$R_{(3^k)}$ and $\bigoplus_S K_S$ 
are Morita equivalent (Proposition~\ref{prop:morita}), i.e. we have 
\begin{equation}
\label{eq:rouquiermorita}
R_{(3^k)}\text{-}\mathrm{\textbf{Mod}}_{\mathrm{gr}}\cong \bigoplus_S 
K_S\text{-}\mathrm{\textbf{Mod}}_{\mathrm{gr}}.
\end{equation}
\item In Corollary~\ref{cor:moritacellular}, we show 
that~\eqref{eq:rouquiermorita} implies that 
$K_S$ is a graded cellular algebra, for any $S$. This observation relies on several facts, see Section~\ref{sec-techcell} for more details.
\item We show that the graded, indecomposable, projective $K_S$-modules 
correspond to the dual canonical basis elements in 
$\mathrm{Inv}(V_S)$ (Theorem~\ref{thm:dualcan}).
\item In a new section we give an isotopy invariant, homogeneous basis of $K_S$ (Theorem~\ref{thm-basis2} and Corollary~\ref{cor-basis}).  
\end{enumerate}
The first result is easy to prove and similar to the case for $H_n$. 
Some of the other results are much harder to prove for $K_S$ 
than their analogues are for $H_n$ 
(e.g. see Remark~\ref{rem:counter2}). In order to prove the second and 
the penultimate result, we introduce a 
``new trick'', i.e. we use a deformation of $K_S$, called $G_S$. This deformation 
is induced by Gornik's~\cite{gornik} deformation of Khovanov's original 
$\mathfrak{sl}_3$ foam relations. One big difference 
between $G_S$ and $K_S$ is that the former algebra is \textit{filtered} whereas 
the latter is \textit{graded}. As a matter of fact, 
$K_S$ is the associated graded algebra of $G_S$. The usefulness of 
$G_S$ relies on the fact that $G_S$ is semisimple as an algebra, 
i.e. forgetting the filtration (see Proposition~\ref{prop:Gsemisimple}).
\vskip0.5cm
Let us explain the connection to existing work in the literature. There are 
two diagrammatic approaches which give $\mathfrak{sl}_3$ link homologies, e.g. 
there is Khovanov's original approach using foams~\cite{kh3} and 
there is Webster's approach~\cite{web1},~\cite{web2} using a generalisation of the 
cyclotomic KLR-algebras. In Proposition 4.4 in~\cite{web2}, 
Webster 
proved that both link homologies are isomorphic, but the proof is quite 
sophisticated and relies on Mazorchuk and Stroppel's approach~\cite{mast} to 
link homology using functors and natural transformations on certain 
blocks of category $\mathcal{O}$. Our results 
of Section~\ref{sec-web} might help to give an elementary and direct proof that 
Khovanov and Webster's $\mathfrak{sl}_3$ link homologies are isomorphic. A very recent and related approach is due to Lauda, Queffelec and Rose in~\cite{lqr}.

As we explained in more detail in Section~\ref{sec-introb}, it 
should not be too hard to generalise our results of Section~\ref{sec-web} to the case 
for $\mathfrak{sl}_n$, with $n\geq 4$, using matrix factorisations instead 
of foams. This could be helpful to show that 
Webster's $\mathfrak{sl}_n$ link homology 
is isomorphic to Khovanov and Rozansky's link homology~\cite{kr1}. For 
$n\geq 4$, Webster has conjectured this result to hold, but he 
has not proved it (see his remarks below Proposition 4.4 in~\cite{web2}).
\vskip0.5cm
Another question is how $K_S\text{-}\mathrm{p\textbf{Mod}}_{\mathrm{gr}}$ is related to 
(a subcategory of) $R_S\text{-}\mathrm{\textbf{Mod}}_{\mathrm{gr}}$, where $R_S$ is 
Webster's~\cite{web1} generalisation of the cyclotomic KLR-algebra which 
categorifies $V_S$. In one of the later versions of~\cite{web1}, 
Webster has added a section (Section 4.3) on the categorification of 
skew Howe duality within his framework of generalised cyclotomic KLR-algebras. 
\vskip0.5cm
As mentioned in Section~\ref{sec-introb}, in~\cite{fkk}, Fontaine, Kamnitzer and Kuperberg study so-called spiders i.e. given a sign string $S$, 
the \textit{Satake fiber} $F(S)$, denoted 
$F(\overrightarrow{\lambda})$ in~\cite{fkk}, is isomorphic to 
the Spaltenstein variety $X^{\lambda}_{\mu}$ mentioned above. 
Here we point out the difference in these notations 
that otherwise might confuse the reader, i.e. the $\lambda$ in~\cite{fkk} is 
equal to $\mu$ in our notation, which is also equivalent to $S$. 
Given a web $w$ with boundary corresponding to $S$, Fontaine, Kamnitzer and 
Kuperberg also define a 
variety $Q(D(w))$, called the \textit{web variety}. One obvious question 
is the following (asked to us by Kamnitzer).
\begin{question}\label{question}
For any two basis webs $u,v\in B_S$, does there exist 
a isomorphism of graded vector spaces  
\[
H^*(Q(D(u)))\otimes_{F(S)}H^*(Q(D(v)))\cong {}_uK_v,
\]  
that gives rise to an isomorphisms of graded algebras
\[
\bigoplus_{u,v\in B_S}{}_uK_v\cong \bigoplus_{u,v\in B_S}H^*(Q(D(u)))\otimes_{F(S)}H^*(Q(D(v))),
\]
where the left side is the decomposition of $K_S$ of Section~\ref{sec-webalg} and 
the product on 
\[
\bigoplus_{u,v\in B_S} H^*(Q(D(u)))\otimes_{F(S)}H^*(Q(D(v)))
\] 
is given by convolution.   
\end{question}
If the answer to this question is affirmative, then that would be 
the $\mathfrak{sl}_3$ analogue of the result, 
due to Stroppel and Webster~\cite{sw}, 
relating $H_n$ to the intersection cohomology of the $(n,n)$-Springer fiber. 
Our Theorem~\ref{thm:center} is a first step towards proving 
Kamnitzer's conjecture. 
We also note that, in~\cite{kh5}, Khovanov showed that the center of $H_n$ is 
isomorphic to the ordinary cohomology of the $(n,n)$-Springer fiber, before 
Stroppel and Webster proved the more general result.
\vskip0.5cm
Another related point is to prove similar results as Brundan and Stroppel~\cite{bs1} showed for the $\mathfrak{sl}_2$ analogues, denoted $H_n$, of our algebra $K_S$. For example, they showed that $H_n$ is a graded cellular algebra by constructing an explicit cellular basis. Using the explicit basis, they also constructed the quasi-hereditary cover of $H_n$. We note that the results of the new section, i.e. Section~\ref{sec-webbase}, are the first step to generalise their work. 
\vskip0.5cm
This section is organised as follows.
\begin{enumerate}
\item We recall the definitions 
and some fundamental properties of $\mathfrak{sl}_3$ webs in Section~\ref{sec-webbasica}, $\mathfrak{sl}_3$ foams in Section~\ref{sec-webbasicb} and categorified quantum 
algebras and their categorical representations in Section~\ref{sec-webbasicc}.  
\item In Section~\ref{sec-webalg}, we define $K_S$ and prove the 
first of our aforementioned main results.
\item After recalling some notation in Section~\ref{sec-webcenter}, we first study the relation between 
column strict tableaux and webs with 
flows, i.e. in Section~\ref{sec-webflow}. Using this relation, we prove our second main result in Section~\ref{sec-webcenterb}.
\item In Section~\ref{sec-webhowe}, we recall Howe duality. 
\item In the Sections~\ref{sec-webhowea} and~\ref{sec-webhoweb}, we explain Howe duality (we recall Howe duality in Section~\ref{sec-webhowe}) in our 
context and categorify the case relevant in Section~\ref{sec-web}. This leads to the 
other main results. 
\item The Sections~\ref{sec-webcenter},~\ref{sec-webflow} and~\ref{sec-webcenterb} and the three Sections~\ref{sec-webhowe},~\ref{sec-webhowea} and~\ref{sec-webhoweb} are largely independent of each other. However, the proof of 
Theorem~\ref{thm:center} requires Proposition~\ref{prop:morita} and 
the proof of Proposition~\ref{prop:unitriang}, which is a key ingredient 
for the proof of Theorem~\ref{thm:dualcan}, requires Lemma~\ref{lem:dimZG}.
\item In the new Section~\ref{sec-webbase} we give in Theorem~\ref{thm-basis} a method, if one makes certain choices, to obtain a homogeneous basis of $K_S$. We conjecture that certain choices will give a graded cellular basis (in the sense of Section~\ref{sec-techcell}) of $K_S$. Furthermore, in Theorem~\ref{thm-basis2} and Corollary~\ref{cor-basis}, we explain how this procedure can be used to give an isotopy invariant basis.
\item Note that Section~\ref{sec-appendix} replaces the Appendix of our paper~\cite{mpt}.
\end{enumerate}
\subsection{Basic definitions and background: Webs}\label{sec-webbasica}
In~\cite{kup}, Kuperberg describes the representation theory of 
$U_{q}(\mathfrak{sl}_3)$ using oriented trivalent graphs, possibly with 
boundary, called \textit{webs}. Boundaries of webs
consist of univalent vertices (the ends of oriented edges), which we will 
usually put on a horizontal line (or various horizontal lines), e.g. such a web is shown below.
\begin{align}
\xy
   (0,0)*{\includegraphics[width=140px]{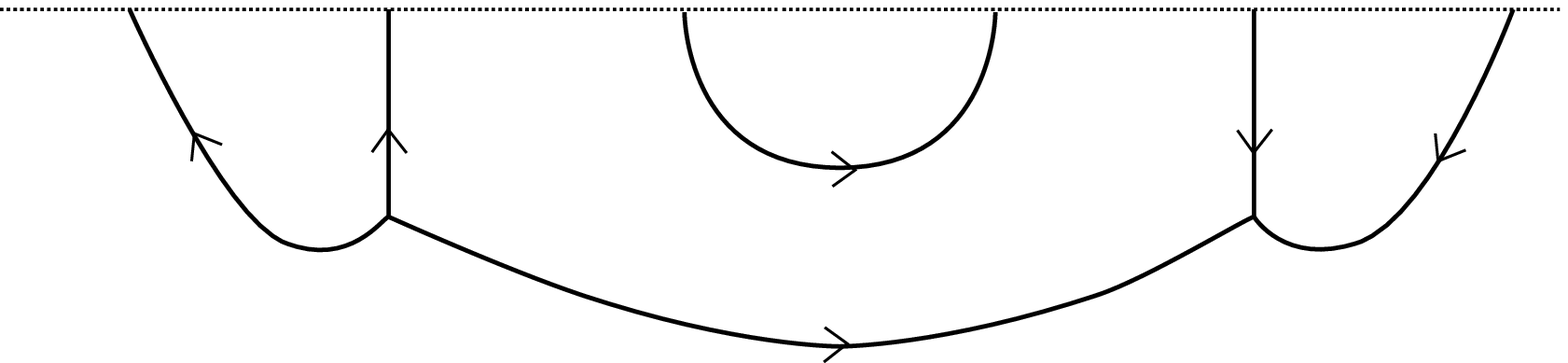}};
\endxy
\end{align}
We say that a web has $n$ free strands if the number of non-trivalent vertices 
is exactly $n$. In this way, the boundary of a web can be 
identified with a \textit{sign string} $S=(s_1,\ldots,s_n)$, with $s_i=\pm$, 
such that upward oriented boundary edges get a ``$+$'' and downward oriented 
boundary edges a ``$-$'' sign. Webs without boundary are called 
\textit{closed} webs. 

Any web can be obtained from the following 
elementary webs by glueing and disjoint union.
\begin{align}
\xy
   (0,0)*{\includegraphics[width=280px]{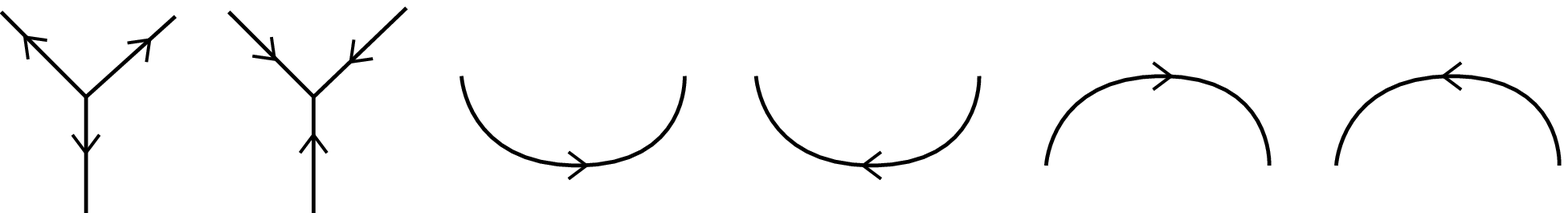}};
\endxy
\end{align}
Fixing a boundary $S$, we can form the 
$\mathbb{C}(q)$-vector space $W_S$, spanned by all webs with boundary $S$, 
modulo the following set of local relations (due to Kuperberg~\cite{kup}).
\begin{align}
\label{eq:circle}
\xy(0,0)*{\includegraphics[width=20px]{res/figs/intro/cirrem}}\endxy\;\; &=\;\; [3]\\
\label{eq:digon}
\xy(0,0)*{\includegraphics[width=70px]{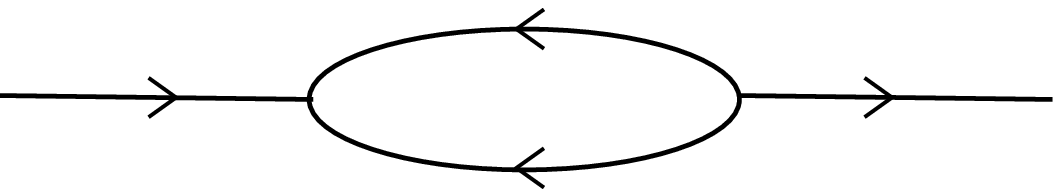}}\endxy\;\; &=\;\; [2]\;\; \xy(0,0)*{\includegraphics[width=50px]{res/figs/intro/line}}\endxy\\
\label{eq:square}
\xy(0,0)*{\includegraphics[width=50px]{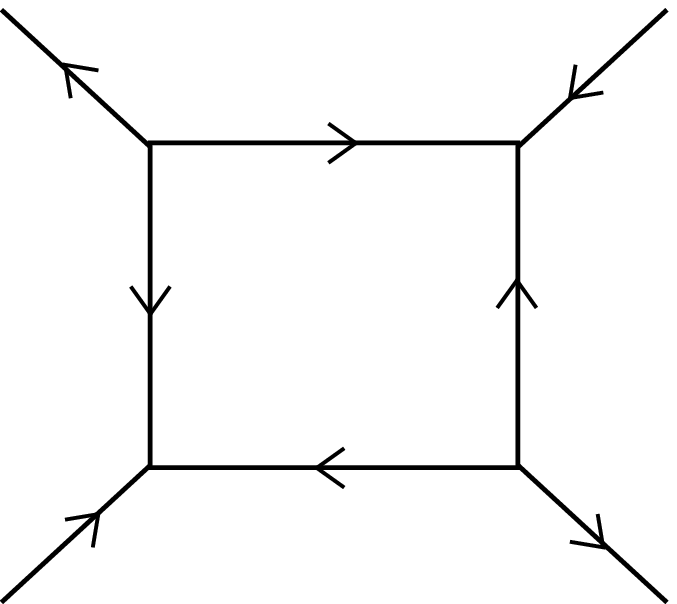}}\endxy\;\; &=\;\;\xy(0,0)*{\includegraphics[height=45px]{res/figs/intro/vert}}\endxy + \;\;\xy(0,0)*{\includegraphics[width=45px]{res/figs/intro/horiz}}\endxy
\end{align}
Recall that 
$$[a]=\frac{q^a-q^{-a}}{q-q^{-1}}=q^{a-1}+q^{a-3}+\cdots+q^{-(a-1)}\in\mathbb{N}[q,q^{-1}]$$ 
denotes the \textit{quantum integer}.

By abuse of notation, we will call all elements of $W_S$ webs. 
From relations~\eqref{eq:circle},~\eqref{eq:digon} and~\eqref{eq:square} it 
follows that any element in $W_S$ is a linear 
combination of webs with the same boundary and without circles, digons or 
squares. These are called \textit{non-elliptic webs}. 
As a matter of fact, the non-elliptic webs form a basis of $W_S$, which 
we call $B_S$. 
Therefore, we will simply call them \textit{basis webs}.
\vskip0.5cm
Let $W_S^{\mathbb{Z}}$ be the free $\mathbb{Z}[q,q^{-1}]$-submodule of 
$W_S$ generated by $B_S$. We call this the \textit{integral form} of 
the web space. 
\vskip0.5cm
Following Brundan and Stroppel's~\cite{bs1} notation for arc diagrams, 
we will write $w^*$ to denote the web obtained by reflecting a given 
web $w$ horizontally and reversing all 
orientations.
\begin{align}
\xy
 (0,0)*{\includegraphics[width=140px]{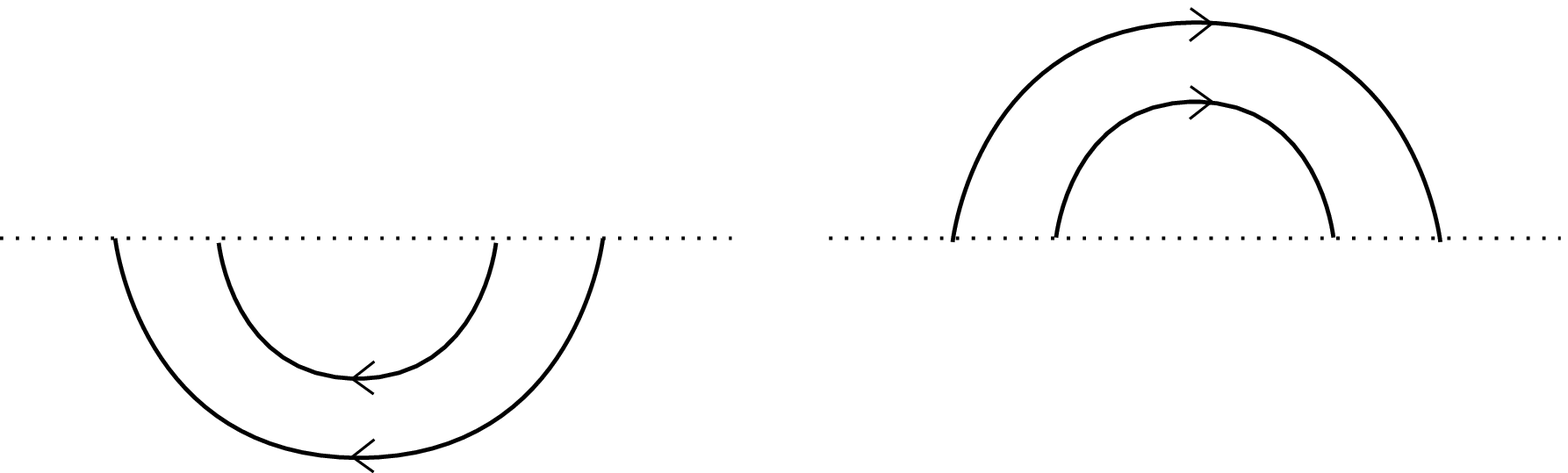}};
 (-13.5,2)*{w};
 (14,-2)*{w^*};
\endxy
\end{align}
By $uv^*$, we mean the planar diagram containing the disjoint union of $u$ and 
$v^*$, where $u$ lies vertically above $v^*$.
\begin{align}
\xy
 (0,0)*{\includegraphics[width=70px]{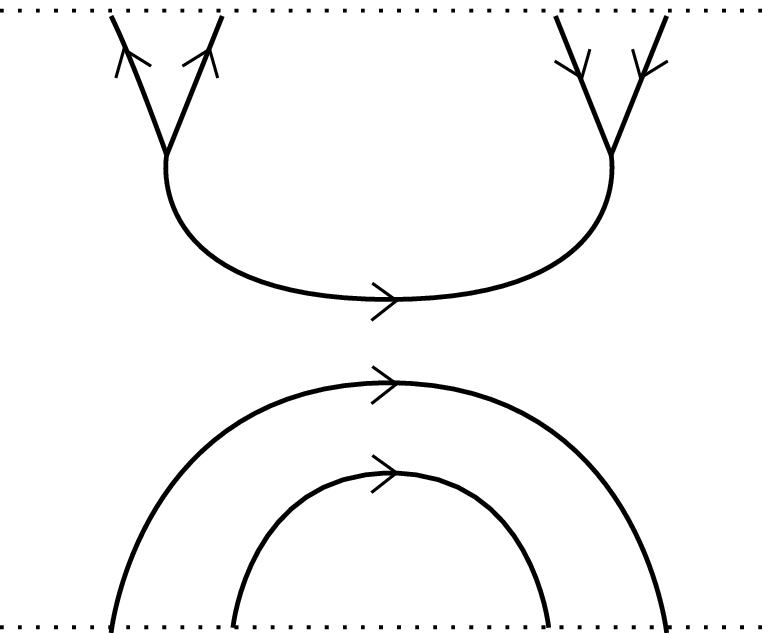}};
 (12,5)*{u};
 (12.5,-5)*{v^*};
\endxy
\end{align}
By $v^*u$, we shall mean the closed web obtained by glueing 
$v^*$ on top of $u$, when such a construction is possible (i.e. the number of free strands and orientations on the strands match).
\begin{align}\label{closed}
\xy
 (0,0)*{\includegraphics[width=70px]{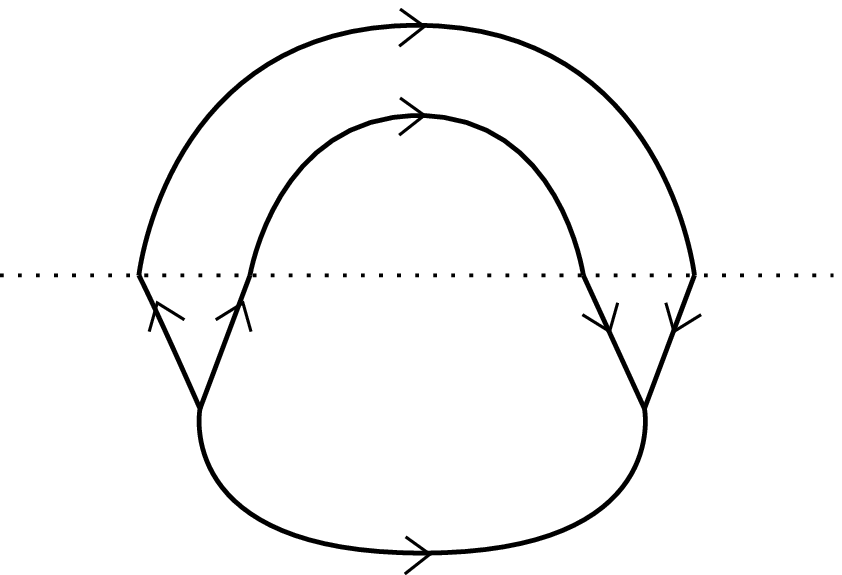}};
 (12,-4)*{u};
 (12.5,5)*{v^*};
\endxy
\end{align}
In the same vein, by $v_1^*u_1v_2^*u_2$ we denote the following web.
\begin{align}
\xy
 (0,0)*{\includegraphics[width=70px]{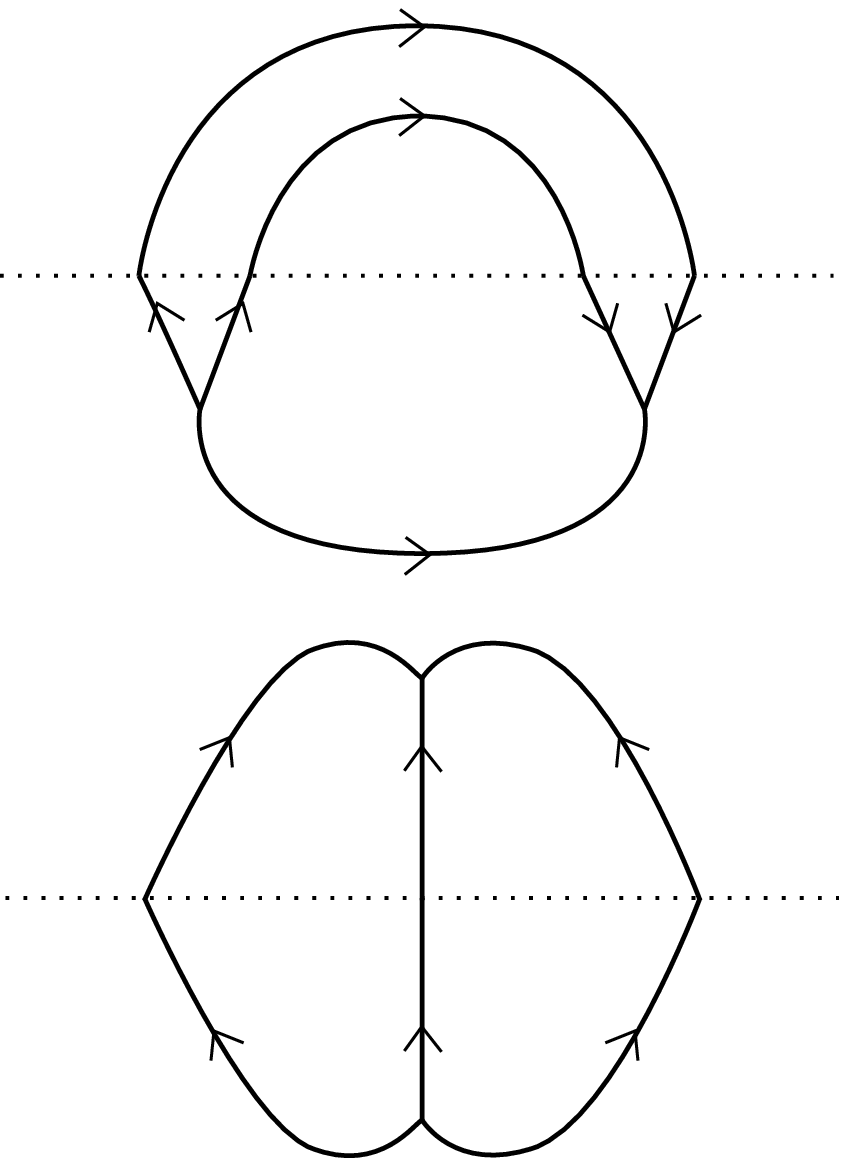}};
 (12.2,5)*{u_1};
 (12.4,-5.5)*{v_2^*};
 (12.2,-13)*{u_2};
 (12.4,13)*{v_1^*};
\endxy
\end{align}
\vskip0.5cm
To make the connection with the representation theory of 
$U_{q}(\mathfrak{sl}_3)$, we recall that 
a sign string $S=(s_1,\ldots,s_n)$ corresponds to 
\[
V_S=V_{s_1}\otimes \cdots\otimes V_{s_n},
\]
where $V_{+}$ is the fundamental representation and $V_{-}$ its dual. The 
latter is also isomorphic to $V_+\wedge V_+$, a fact which we will need 
later on. Both $V_+$ and $V_-$ have dimension three. Khovanov and Kuperberg~\cite{kk} 
use a particular basis for $V_+$, denoted $\{e^+_1,e^+_2,e^+_3\}$, and also one for $V_-$, denoted $\{e^-_1,e^-_2,e^-_3\}$. In this interpretation, 
webs correspond to intertwiners and 
\[
W_S\cong \mathrm{Inv}(V_S).
\]
Therefore, the elements of $B_S$ give a basis of $\mathrm{Inv}(V_S)$. 
However, this basis is not equal to the usual tensor basis. In 
Theorem 2 of~\cite{kk}, Kuperberg and Khovanov prove an important result 
concerning the change of basis matrix, which we will reproduce in 
Theorem~\ref{thm:upptriang}.
\vskip0.5cm
Kuperberg showed in~\cite{kup} (see also~\cite{kk}) that basis webs are indexed 
by closed weight lattice paths in the dominant Weyl chamber of 
$\mathfrak{sl}_3$. It is well-known that any path in the 
$\mathfrak{sl}_3$-weight lattice can be presented by a pair consisting of a 
sign string $S=(s_1,\ldots,s_n)$ and a 
\textit{state string} $J=(j_1,\ldots,j_n)$, with $j_i\in \{-1,0,1\}$ for all 
$1\leq i\leq n$. Given a pair $(S,J)$ representing a closed dominant path, 
a unique basis web (up to isotopy) is determined by a set of 
inductive rules called 
the \textit{growth algorithm}. We briefly recall the algorithm as 
described in~\cite{kk}. In fact, the algorithm can be applied to any path, but 
we will only use it for closed dominant paths. 

\begin{defn} \label{growth}
\textbf{(The growth algorithm)} 
Given $(S,J)$, a web $w^S_J$ is recursively generated by the following rules.
\begin{enumerate}
\item Initially, the web consists of $n$ parallel strands whose orientations are given by the sign string. If $s_{i} = +$, then the $i$-th strand is oriented 
upwards; if $s_{i} = -$, it is oriented downwards.
\item The algorithm builds the web downwards. Suppose we have already applied 
the algorithm $k-1$ times. For the $k$-th step, do the following. 
If the bottom boundary string contains a neighbouring pair of edges 
matching the top of one of 
the following webs (called H, arc and Y respectively), then glue 
the corresponding H, arc or Y to the relevant bottom boundary edges.
\begin{figure}[H]
  \centering
    \includegraphics[width=260px]{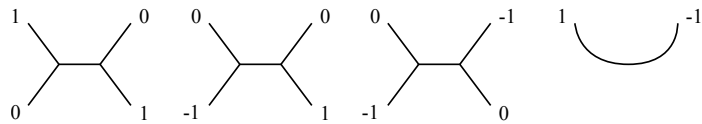}
    \caption{Top strands have different signs.}
\end{figure}
\begin{figure}[H]
  \centering
    \includegraphics[width=180px]{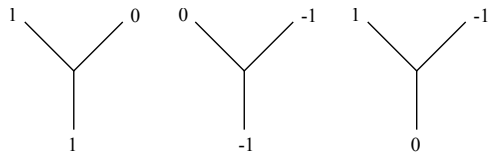}
    \caption{Top strands have same sign.}
\end{figure}
\end{enumerate}
These rules apply for any compatible orientation of the edges in the webs. 
Therefore, we have drawn them without any specific orientations. Below, 
whenever we write down an equation involving webs without orientations, 
we mean that the equation holds for all possible orientations. 
For future use, we will call the rules above the \textit{H, arc and Y-rule}. 
The growth algorithm stops if no further rules can be applied. 
\end{defn}
If $(S,J)$ represents a closed dominant path, then the growth algorithm 
produces a basis web. 

For example, the growth algorithm converts $S=(+-+-+++)$ and 
$J=(1,1,0,0,-1,0,-1)$ into the following basis web.
\begin{align}
\xy
 (0,0)*{\includegraphics[width=140px]{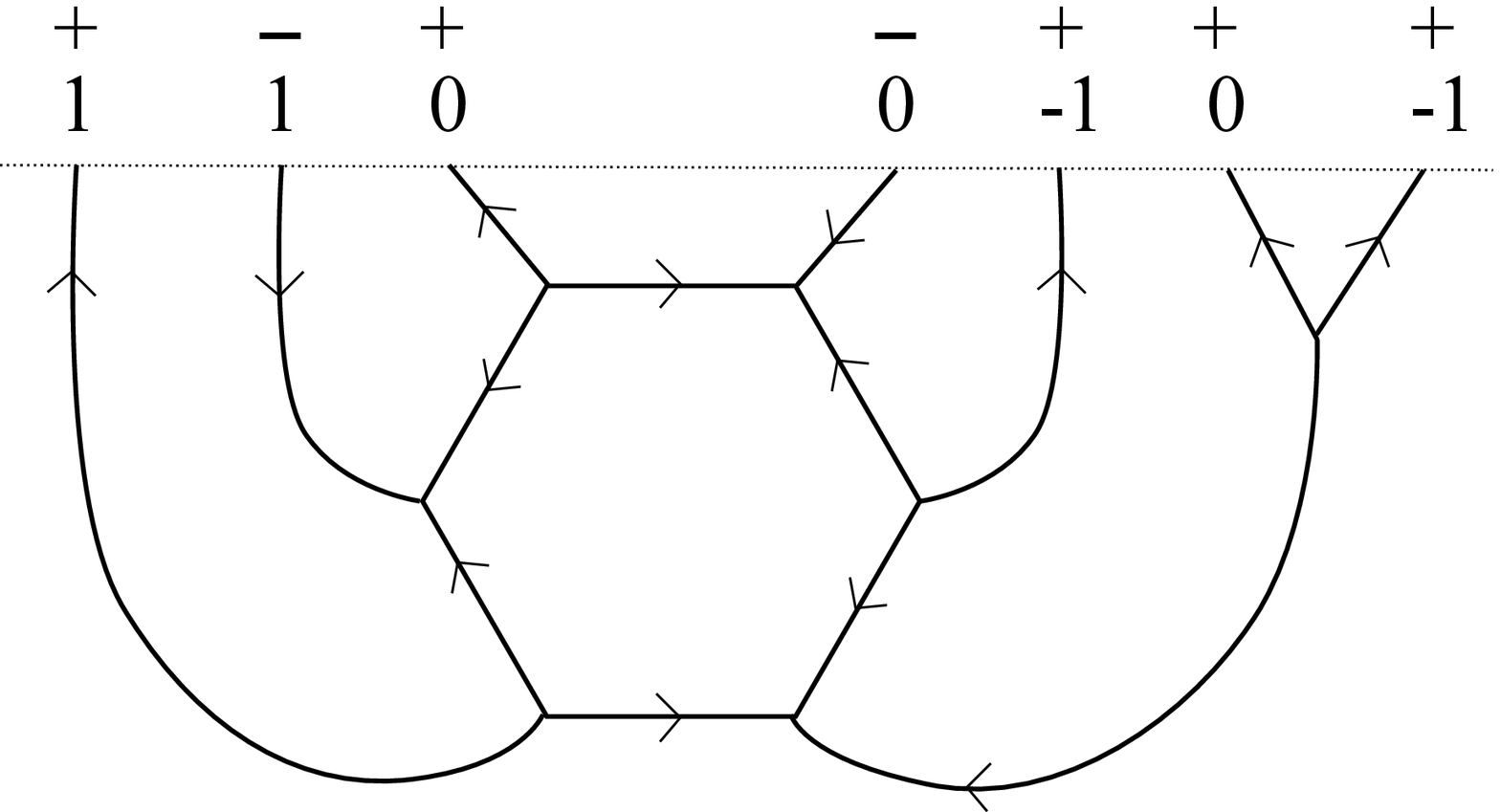}};
\endxy
\end{align}
In addition, the growth algorithm has an inverse, 
called the \textit{minimal cut path algorithm}~\cite{kk}, which we will 
not use here. 
\vskip0.5cm
Following Khovanov and Kuperberg in~\cite{kk}, we define a \textit{flow} $f$ on 
a web $w$ to be an oriented subgraph that contains exactly two of the three edges 
incident to each trivalent vertex. The connected components of the 
flow are called the \textit{flow lines}. The orientation of the flow lines 
need not agree with the orientation of $w$. Note that if $w$ is closed, then 
each flow line is a closed cycle. At the boundary, the flow lines 
can be represented by a state string $J$. By convention, at the $i$-th 
boundary edge, we set $j_i=+1$ if the flow line is oriented upward, $j_i=-1$ if 
the flow line is oriented downward and $j_i=0$ there is no flow line. The same 
convention determines a state for each edge of $w$. 
\begin{rem}
\label{rem:3color}
Every flow determines a unique 3-colouring of $w$, with colours $-1,0,1$, 
satisfying the property that, for any trivalent vertex of $w$, 
the colours of the three incident edges are all distinct. 
These colourings are called \textit{admissible} in~\cite{gornik}. 

Conversely, any such 3-colouring determines a unique flow on $w$. This 
correspondence determines a bijection between flows and admissible 
3-colourings on $w$.   

This remark will be important in Section~\ref{sec-webhowea} and in Section~\ref{sec-webhoweb}.
\end{rem}
We will also say that any flow $f$ that is compatible with a given state string $J$ 
on the boundary of $w$ \textit{extends} $J$. 

Given a web with a flow, denoted $w_f$, Khovanov and Kuperberg~\cite{kk} 
attribute a \textit{weight} to each trivalent vertex and each arc in $w_f$, 
as in Figures~\ref{weights} and~\ref{weights2}. 
The total weight of $w_f$ is by definition the sum of the 
weights at all trivalent vertices and arcs.
\begin{align}\label{weights}
  & \xy
 (0,0)*{\includegraphics[width=310px]{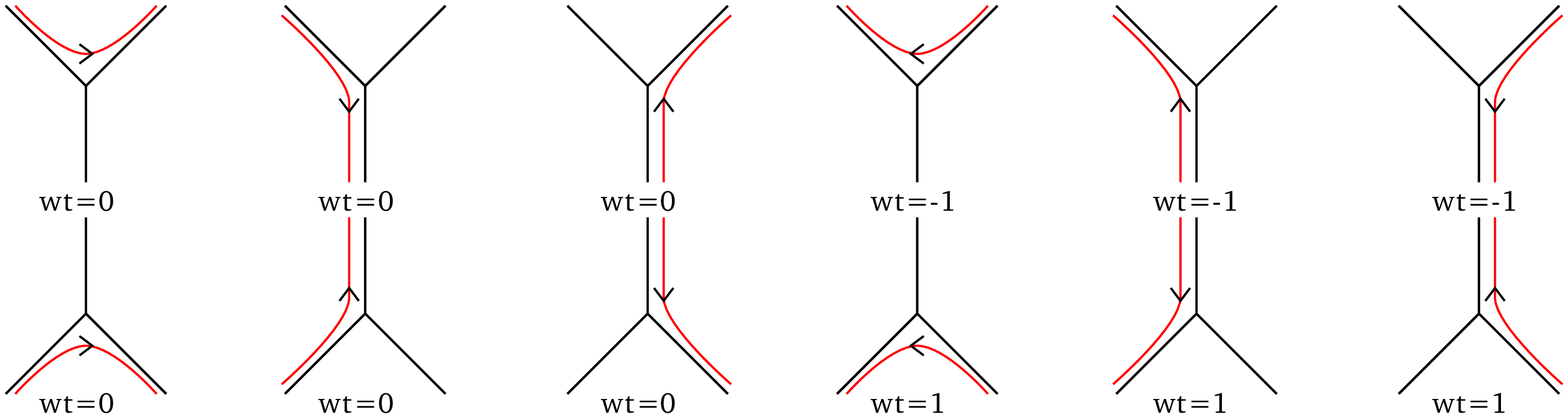}};
\endxy\\
   \label{weights2}
  & \xy
 (0,0)*{\includegraphics[width=310px]{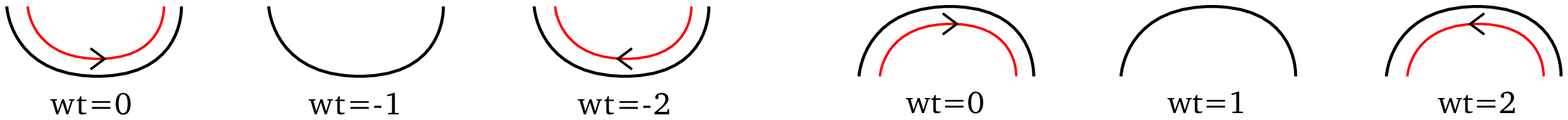}};
\endxy
\end{align}
For example, the following web has weight $-3$.
\begin{align}
\xy
 (0,0)*{\includegraphics[width=150px]{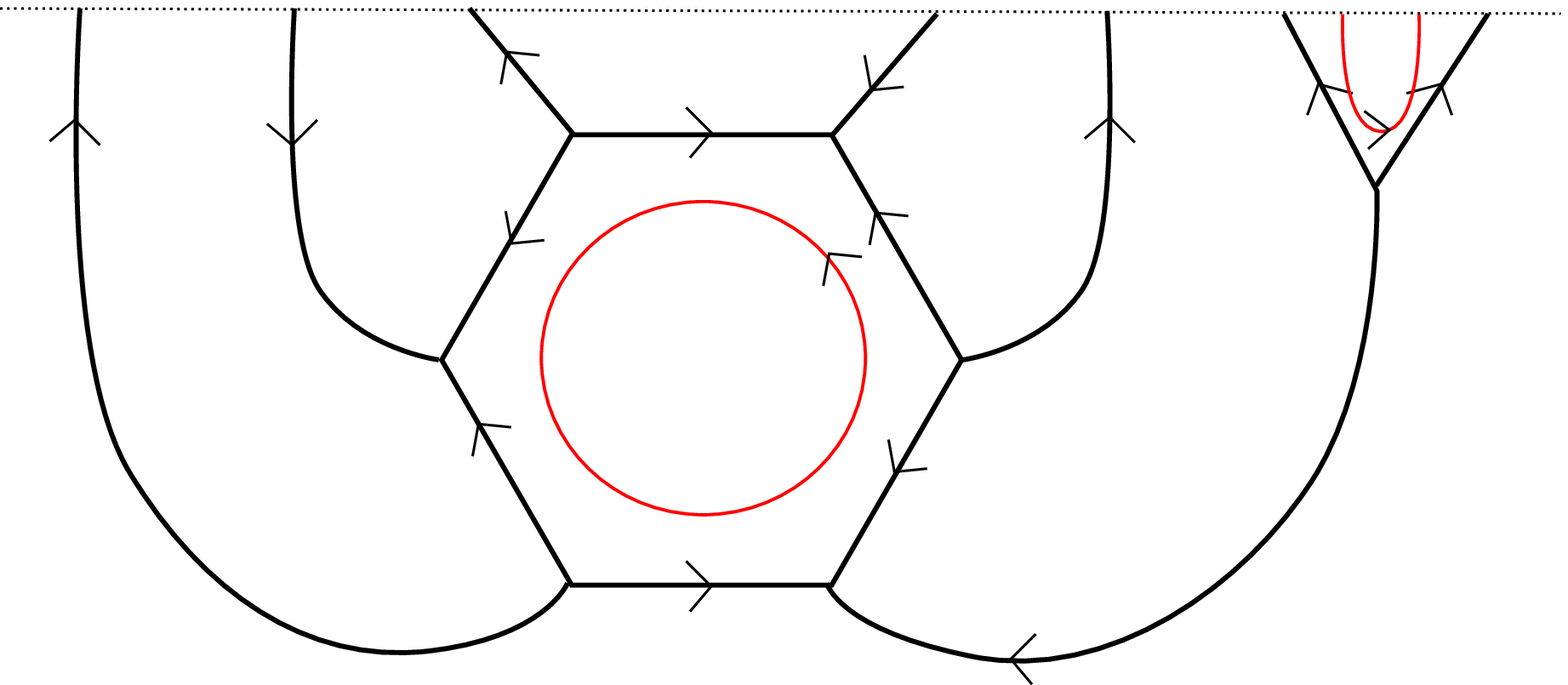}};
\endxy
\end{align}
We can extend the table in~\eqref{weights} and~\eqref{weights2} to calculate 
weights determined by flows on $H$'s, so that it becomes easier to 
compute the weight of $w_f$ when $w$ is expressed using the growth algorithm 
(Definition~\ref{growth}).

\begin{defn}\cite{kk}\label{defn-cano}
\textbf{(Canonical flows on basis webs)} Given a basis web $w$ expressed 
using the growth algorithm. 
We define the \textit{canonical flow} on $w$ by the following rules.
\begin{align} \label{canrule}
\xy
 (0,0)*{\includegraphics[width=250px]{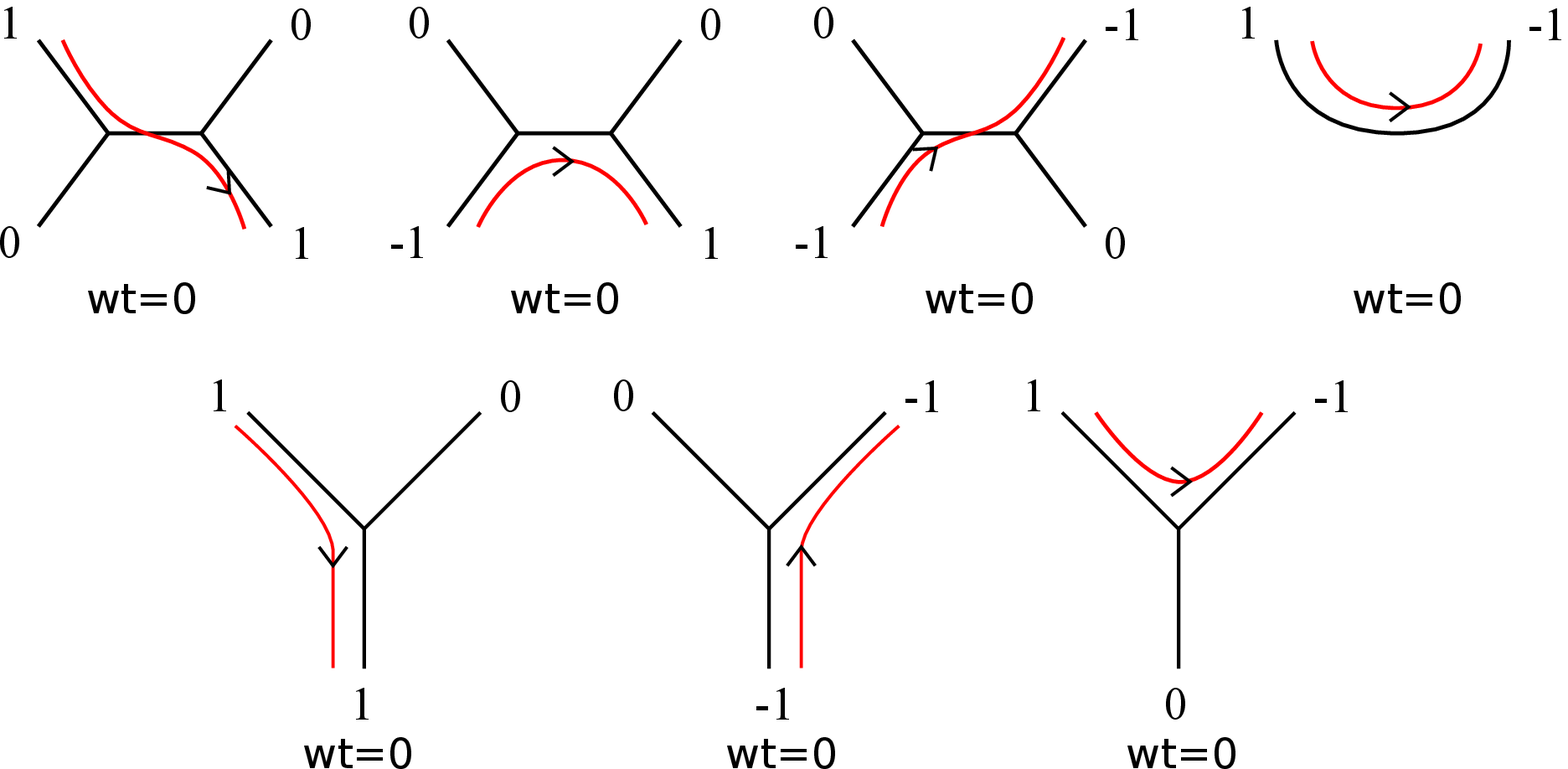}};
\endxy
\end{align}
The canonical flow does not depend on the 
particular instance of the growth algorithm that we have chosen to obtain 
$w$. 
\end{defn}
Observe that the definition of the canonical flows implies the following lemma. 

\begin{lem}
\label{lem:canflowzero}
A basis web with the canonical flow has weight zero.
\end{lem}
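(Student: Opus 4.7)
My plan is to proceed by induction on the number of steps in the growth algorithm used to construct the basis web $w$, using the fact that the canonical flow is defined locally piece-by-piece via the rules in~\eqref{canrule}. The base case is a collection of parallel strands (zero steps applied), which has no trivalent vertices and no arcs, hence total weight zero trivially.

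For the inductive step, I would show that whenever the growth algorithm glues an H, an arc, or a Y onto the current web (carrying its canonical flow on the boundary), the new local piece contributes zero to the total weight. Concretely, I would do a finite case analysis over the rules in~\eqref{canrule}. For each of the Y configurations listed there, one checks against Figure~\ref{weights} that the canonical choice of flow on that Y yields a vertex weight of $0$. For each arc configuration, one checks against Figure~\ref{weights2} that the canonical flow yields arc weight $0$. For each H-rule, there are two trivalent vertices and one verifies that the sum of the two vertex weights is $0$ under the canonical assignment. Since the weight of $w_f$ is defined additively over trivalent vertices and arcs, and every trivalent vertex and arc of $w$ arises in exactly one step of the growth algorithm with the canonical flow attached to it by~\eqref{canrule}, the total weight telescopes to $0$.

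The only non-routine aspect is checking that the rules in~\eqref{canrule} are exhaustive and well-posed: one must verify that the canonical flow on an H, arc, or Y introduced at the $k$-th growth step is consistent with the flow already present on the bottom boundary of the partial web built in the previous $k-1$ steps. This is guaranteed by the fact, noted after Definition~\ref{defn-cano}, that the canonical flow does not depend on the particular instance of the growth algorithm producing $w$; equivalently, the boundary flow states read off the partial web at each stage match precisely the top states of the local piece being glued. Granting this consistency, the case analysis is entirely mechanical and the inductive conclusion follows.

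The main obstacle I anticipate is bookkeeping: one must be careful with orientations, since the figures display rules without specifying edge orientations, and every orientation-compatible variant of each local piece must be checked against the corresponding entry of Figures~\ref{weights} and~\ref{weights2}. However, once the tables are laid out side by side with~\eqref{canrule}, no conceptual difficulty remains, and this is really a verification rather than an argument requiring a new idea.
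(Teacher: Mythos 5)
Your approach is correct and coincides with what the paper has in mind: the paper offers no written proof but merely notes that the lemma follows by ``observing'' the definition of the canonical flow, which is precisely your local additivity argument (each arc, Y, or H glued on by the growth algorithm carries a canonical flow configuration whose vertex/arc weights sum to zero, so the total telescopes to zero). Your extra care about orientation bookkeeping and about consistency of the boundary flow across growth steps is a reasonable fleshing-out of what the paper leaves implicit.
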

Given $(S,J)$, let 
\[
e^S_J=e^{s_1}_{j_1}\otimes\cdots\otimes e^{s_n}_{j_n}.
\] 
Khovanov and Kuperberg prove the following  
theorem (Theorem 2 in~\cite{kk}), which will be important for us 
in Section~\ref{sec-webhoweb}.
\begin{thm}\label{thm:upptriang}(\textbf{Khovanov-Kuperberg})
Given $(S,J)$, we have 
\[ 
w^S_J=e^S_J+\sum_{J^{\prime}<J}c(S,J,J^{\prime})e^S_{J^{\prime}}
\]
for some coefficients $c(S,J,J^{\prime})\in\mathbb{N}[q,q^{-1}]$, where the 
state strings $J$ and $J^{\prime}$ are ordered lexicographically.
\end{thm}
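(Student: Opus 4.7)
My plan is to prove the theorem by giving a state-sum expansion of any web in the tensor basis and then analysing this expansion specifically for $w^S_J$ constructed via the growth algorithm. The starting point is the identity
\[
w = \sum_{f} q^{\mathrm{wt}(f)}\, e^S_{J(f)},
\]
where the sum runs over all flows $f$ on $w$, $J(f)$ denotes the boundary state induced by $f$, and $\mathrm{wt}(f)$ is the total weight as defined in~\eqref{weights} and~\eqref{weights2}. I would first establish this identity by checking it on the three elementary webs (arc, Y-vertex, H) against the explicit matrix coefficients of the corresponding $U_q(\mathfrak{sl}_3)$ intertwiners in the basis $\{e^\pm_j\}$, so that the local weights in the table match the structure constants of cup, cap, merge and split intertwiners; then one extends by multiplicativity under disjoint union and composition, using Remark~\ref{rem:3color} to see that flows compose as admissible 3-colourings do.

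With the state-sum in hand, the theorem reduces to the following two combinatorial claims about flows on $w^S_J$. First, the canonical flow of Definition~\ref{defn-cano} is a flow on $w^S_J$ whose boundary state is exactly $J$ and whose weight is $0$ by Lemma~\ref{lem:canflowzero}; this contributes the term $e^S_J$ with coefficient $1$. Second, every other flow $f$ on $w^S_J$ satisfies $J(f)<J$ lexicographically. I would prove both statements simultaneously by induction on the number of steps of the growth algorithm used to construct $w^S_J$. The base case is a web with no trivalent vertices, where there is a unique flow, the canonical one, matching $J$. For the inductive step, each growth move (H, arc, or Y-rule) attaches a small web to the previous boundary; the canonical flow is obtained by extending the canonical flow on the smaller web by the rules in~\eqref{canrule}, and a direct inspection of the three local cases shows that any alternative local extension either fails to produce a valid flow or strictly decreases the boundary state at the leftmost position where it differs from the canonical one.

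The non-negativity of the coefficients $c(S,J,J')$ then follows because each local weight in~\eqref{weights} and~\eqref{weights2} lies in $\mathbb{N}[q,q^{-1}]$, so every summand $q^{\mathrm{wt}(f)}$ is a monomial in $q$ with coefficient $1$, and the total coefficient of $e^S_{J'}$ is the number of flows extending $J'$, weighted by these monomials; this is manifestly in $\mathbb{N}[q,q^{-1}]$.

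The main obstacle is the second combinatorial claim, namely that every non-canonical flow on $w^S_J$ produces a boundary state strictly less than $J$ in lex order. The subtlety is that the growth algorithm grows the web from top to bottom while the canonical flow is defined move-by-move, and one needs to be careful that local modifications of the flow far from the top of the web do not inadvertently raise the boundary state to the left of the position where they occur. I would handle this by tracking, at each step of the growth algorithm, the full state string on the current bottom boundary of the partial web; the key observation is that the canonical flow always assigns the lexicographically largest state string compatible with the restriction of the flow further up, so any deviation from the canonical rule first lowers the state at the corresponding position and can never be compensated afterwards. Making this monotonicity argument rigorous on each of the H, arc and Y moves is the technical heart of the proof.
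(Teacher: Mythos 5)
The paper does not actually contain a proof of this statement: it is quoted from Khovanov and Kuperberg and explicitly cited as Theorem~2 of~\cite{kk}, so there is no in-text argument to compare against. Your strategy is in fact the one they use: establish the state-sum expansion $w=\sum_f q^{\mathrm{wt}(f)}\,e^S_{J(f)}$ over flows $f$ (their Theorem~1), then show the canonical flow is the unique flow whose boundary state is lex-maximal and that this state is $J$ with weight zero. So the route is the right one, and your observation that $c(S,J,J^{\prime})\in\mathbb{N}[q,q^{-1}]$ because the state sum has monomial summands with coefficient $1$ and no cancellation is correct.

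However, two parts of your outline are not yet sound. The claimed base case is false: a basis web with no trivalent vertex is a disjoint union of arcs, and a single arc already carries three flows, with boundary state pairs $(1,-1)$, $(0,0)$, $(-1,1)$ up to orientation — there is no unique flow. The base case must instead establish, for an arc, that the canonical flow alone attains the lex-largest boundary pair and has weight zero. More seriously, the inductive step is oriented the wrong way: the canonical flow is propagated top-down starting from $J$, so phrasing the comparison as "the lexicographically largest state string compatible with the restriction of the flow further up" presupposes the top state, which is precisely what must be bounded. The comparison has to be run along the minimal cut path (the inverse of the growth algorithm), tracking the flow's state string on each successive cut line and proving that it is lex-dominated by the dominant path encoded by $(S,J)$, with the dominance strict forever after the first step at which the flow deviates from the canonical one. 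Checking that none of the three local rules (arc, Y, H) can raise the lex order when read back up is the genuine technical content of Lemma~1 in~\cite{kk}, which you correctly identify as the heart of the proof but do not supply.
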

\begin{rem} 
\label{rem:counter}
Khovanov and Kuperberg~\cite{kk} show that $B_S$ is not 
equal to the dual canonical basis of $W_S$. This follows from the 
fact that $c(S,J,J^{\prime})\not\in q\mathbb{N}[q^{-1}]$ in general. 
In their Section 8, they give explicit counterexamples of elements $w\in B_S$ which 
admit non-canonical weight zero flows.

We note that the web $w$ shown in Example~\ref{ex-reso3} later in Section~\ref{sec-webbase} is related to one of the counterexamples of Khovanov and Kuperberg~\cite{kk}.
\end{rem}
\subsection{Basic definitions and background: Foams}\label{sec-webbasicb}
In this section we review the category called $\foamt$ of $\mathfrak{sl}_3$-foams 
introduced by Khovanov in~\cite{kh3}. As a matter of fact, we will also need 
a deformation of Khovanov's original category, due to Gornik~\cite{gornik} 
in the context of matrix factorizations, and studied in~\cite{mv1} in the 
context of foams. Therefore, we introduce a parameter $c\in \mathbb{C}$ in 
$\foamt^c$, just 
as in~\cite{mv1}, such that we get Khovanov's original category for $c=0$ and, 
for any $c\ne 0$, the category $\foamt^c$ is isomorphic to 
Gornik's deformation (his original deformation was for $c=1$). 
A big difference between these two 
specializations is that $\foamt^c$ is graded for $c=0$ and filtered for 
any $c\ne 0$. In fact, for any $c\ne 0$, the associated graded category of 
$\foamt^c$ is isomorphic to $\foamt^0$.  

We recall the following definitions as they appear in~\cite{mv1}. We note that the diagrams accompanying these definitions are taken, also, from~\cite{mv1}.

A \textit{pre-foam} is a cobordism with singular arcs between two webs. 
A singular arc in a pre-foam $U$ is the set of points of $U$ which have a neighborhood 
homeomorphic to the letter Y 
times an interval. Note that singular arcs are disjoint. 
Interpreted as morphisms, we read pre-foams from bottom to top by convention. Thus, pre-foam composition consists of placing one 
pre-foam on top of the other. The orientation of the singular arcs is, by convention, as in 
the diagrams below (called the \textit{zip} and the 
\textit{unzip} respectively).
\begin{equation*}
\figins{-20}{0.5}{ssaddle}
\mspace{35mu}\mspace{35mu}
\figins{-20}{0.5}{ssaddle_ud}\ 
\end{equation*}
We allow pre-foams to have dots that can move freely about the facet on which they 
belong, but we do not allow a dot to cross singular arcs. 

By a \textit{foam}, we mean a formal $\mathbb{C}$-linear combination of 
isotopy classes of pre-foams modulo 
the ideal generated by the set of relations $\ell=(3D,NC,S,\Theta)$ and 
the \textit{closure relation}, 
as described below. 
\begin{gather*}
\figins{-7}{0.25}{planddd}
= c\figins{-7}{0.25}{plan}
\tag{3D}
\\[1.5ex]\displaybreak[0]
 \figwhins{-22}{0.65}{0.26}{cyl}=
-\figwhins{-22}{0.65}{0.26}{cneck01}
-\figwhins{-22}{0.65}{0.26}{cneck02}
-\figwhins{-22}{0.65}{0.26}{cneck03}
\tag{NC}\label{eq:cn}
\\[1.5ex]\displaybreak[0]
\figins{-8}{0.3}{sundot}=
\figins{-8}{0.3}{sdot}=0,\quad
\figins{-8}{0.3}{sddot}=-1
\tag{S}
\\[1.5ex]\displaybreak[0]
\labellist
\small\hair 2pt
\pinlabel $\alpha$ at 3 33
\pinlabel $\beta$ at -3 17
\pinlabel $\delta$ at 3 5
\endlabellist
\centering
\figins{-10}{0.4}{thetafoam} 
=\begin{cases}
\ \ 1, & (\alpha,\beta,\delta)=(1,2,0)\text{ or a cyclic permutation}, \\ 
-1, & (\alpha,\beta,\delta)=(2,1,0)\text{ or a cyclic permutation}, \\ 
\ \ 0, & \text{else}.
\end{cases}
\tag{$\Theta$}\label{eq:theta}
\end{gather*}
\begin{quote}
The \textit{closure relation}, i.e. any $\mathbb{C}$-linear combination of foams with the same boundary,
is equal to zero if and only if any way of capping off these foams with a 
common foam yields a $\mathbb{C}$-linear combination of closed foams 
whose evaluation is zero. 
\end{quote}

The relations in $\ell$ imply the following identities (for detailed proofs see~\cite{kh3}).
\begin{align}
\figwhins{-22}{0.65}{0.30}{sddisk}
& = -\ 
\figwhins{-22}{0.65}{0.30}{cneckb}
\tag{Bamboo}\label{eq:bamboo}
\\[1.2ex]\displaybreak[0]
\figwhins{-22}{0.65}{0.30}{sdisk}
& =\
\figwhins{-22}{0.65}{0.30}{cnecka1}-
\figwhins{-22}{0.65}{0.30}{cnecka2}
 \tag{RD}\label{eq:rd}
\\[1.2ex]\displaybreak[0]
\figins{-12}{0.4}{bbubble00} 
& =\ 0
\tag{Bubble}\label{eq:bubble}
\\[1.2ex]\displaybreak[0]
\figins{-20}{0.6}{digonfid-sl3}
& = 
\figins{-26}{0.75}{digon_rem1-sl3}-
\figins{-26}{0.75}{digon_rem2-sl3}
\tag{DR}\label{eq:dr}
\\[1.2ex]\displaybreak[0]
\figins{-28}{0.8}{square_id-sl3}
&=
-\ \figins{-28}{0.8}{square_rem1-sl3}
-\figins{-28}{0.8}{square_rem2-sl3}
\tag{SqR}\label{eq:sqr}
\end{align}

\begin{equation}\tag{Dot Migration}\label{eq:dotm}
\begin{split}
\figins{-22}{0.6}{pdots100}
\,+\,
\figins{-22}{0.6}{pdots010}
\,+\,
\figins{-22}{0.6}{pdots001}
\, &= 0
\\[1.2ex]
\figins{-22}{0.6}{pdots110}
\,+\,
\figins{-22}{0.6}{pdots101}
\,+\,
\figins{-22}{0.6}{pdots011}
\, &=\ 0
\\[1.2ex]
\figins{-22}{0.6}{pdots111}
\, &=c\;\figins{-22}{0.6}{pdots000}
\end{split}
\end{equation}
\begin{defn}
For any $c\in \mathbb{C}$, let $\foamt^c$ be the category whose 
objects are webs $\Gamma$ lying inside a horizontal strip in 
$\mathbb{R}^2$, which is bounded by the lines $y=0,1$ containing the boundary points of $\Gamma$. 
The morphisms of $\foamt^c$ are $\mathbb{C}$-linear combinations of foams lying inside the horizontal strip bounded by $y=0,1$ times the unit interval. We require that the vertical boundary of each foam is a set (possibly empty) of vertical lines.
\end{defn}
The \textit{$q$-grading} of a foam $U$ is defined as 
\[
q(U) = \chi(\partial U) - 2\chi(U) + 2d+b,
\] 
where $\chi$ denotes the Euler characteristic, $d$ is 
the number of dots on $U$ and $b$ is the number of vertical boundary 
components. This makes $\foamt^0$ into a graded category. For any 
$c\ne 0$, this makes $\foamt^c$ into a filtered category, whose 
associated graded category is isomorphic to $\foamt^0$. 
\begin{defn}\cite{kh3} \label{foamhom} \textbf{(Foam homology)} Given a web 
$w$ the \textit{foam homology} of $w$ is the complex vector space, 
$\F^c(w)$, spanned by all foams  
\[U\colon\emptyset \to w\]
in $\foamt^c$.
\end{defn}
The complex vector space $\F^c(w)$ is filtered/graded by 
the $q$-grading on foams and has rank 
$\langle w\rangle$, where $\langle w\rangle $ is the \textit{Kuperberg bracket} 
computed recursively by the rules below.
\begin{enumerate}
\item $\left\langle w \amalg \raisebox{-2mm}
{\includegraphics[width=15px]{res/figs/intro/cirrem}}\right\rangle = [3]\langle w\rangle$.
\item $\langle  \raisebox{-0.5mm}{\includegraphics[width=45px]{res/figs/section21/dgrem}}\rangle  
= [2]\langle  \raisebox{0.5mm}{\includegraphics[width=30px]{res/figs/intro/line}}\rangle $.
\item $\left\langle \raisebox{-3mm}{\includegraphics[width=25px]{res/figs/section21/sqrem}}
\right\rangle  = 
\left\langle  \raisebox{-3mm}{\includegraphics[width=15px]{res/figs/intro/vert}}\right
\rangle  + \left\langle  \raisebox{-1.9mm}{\includegraphics[width=23px]{res/figs/intro/horiz}}
\right\rangle $.
\end{enumerate}
The relations above correspond to the decomposition of $\F^c(w)$ into direct 
summands. The idempotents corresponding to these direct summands are the 
terms on the r.h.s. of the relations (NC), (DR) and (SqR), respectively.
For any $c\ne 0$, the complex vector space $\F^c(w)$ is filtered and 
its associated graded vector space is $\F^0(w)$. See~\cite{kh3},~\cite{mv1} for details. 
\begin{rem}
\label{rem:flowsfoams}
Given $u,v\in B_S$, the observations above 
and Theorem~\ref{thm:upptriang} 
show that there exists a homogeneous basis of 
$\F^0(u^*v)$ parametrised by the flows on 
$u^*v$. We have, in fact, constructed such a basis, but it is not unique. See Section~\ref{sec-webbase}.
There is 
also no ``preferred choice'', unless one requires the basis to have other 
nice properties, e.g. in the $\mathfrak{sl}_2$ case, 
Brundan and Stroppel prove that there is a cellular basis of $H_n$ (in fact, it is also graded cellular~\ref{ex-cellular2}). The 
construction of a ``good'' basis of the $\mathfrak{sl}_3$ web algebra 
$K_S$ (and similarly for Gornik's deformation $G_S$) 
is still work in progress and will, 
hopefully, be the contents of a paper. Although we do not need 
such a ``good'' basis here, it is important that the reader keep this remark 
in mind while reading Section~\ref{sec-webhoweb}. 
\end{rem}
\subsection{Basic definitions and background: Quantum 2-algebras}\label{sec-webbasicc}
\subsubsection{The quantum general and special linear algebras}
First we recall the quantum general and special linear algebras. Most parts in this section are copied from section two and three in~\cite{msv1}. Note that, in contrast to~\cite{msv1}, we work over $\mathbb{C}(q)$ instead of $\mathbb{Q}(q)$.

The $\mathfrak{gl}_n$-weight lattice is isomorphic to $\bZ^n$. Let 
$\epsilon_i=(0,\ldots,1,\ldots,0)\in \bZ^n$, with $1$ being on the $i$-th 
coordinate, and $\alpha_i=\epsilon_i-\epsilon_{i+1}
=(0,\ldots,1,-1,\ldots,0)\in\bZ^{n}$, for 
$i=1,\ldots,n-1$. Recall that the Euclidean inner product on $\bZ^n$ is defined by  
$(\epsilon_i,\epsilon_j)=\delta_{i,j}$. 
   
\begin{defn} For $n\in\bN_{>1}$ the \textit{quantum general linear algebra} 
${\mathbf U}_q(\mathfrak{gl}_n)$ is 
the associative unital $\bC(q)$-algebra generated by $K_i$ and $K_i^{-1}$, for $1,\ldots, n$, 
and $E_{\pm i}$ (beware that some authors use $F_i$ instead of $E_{-i}$), for $i=1,\ldots, n-1$, subject to the relations
\begin{gather*}
K_iK_j=K_jK_i,\quad K_iK_i^{-1}=K_i^{-1}K_i=1,
\\
E_iE_{-j} - E_{-j}E_i = \delta_{i,j}\dfrac{K_iK_{i+1}^{-1}-K_i^{-1}K_{i+1}}{q-q^{-1}},
\\
K_iE_{\pm j}=q^{\pm (\epsilon_i,\alpha_j)}E_{\pm j}K_i,
\\
E_{\pm i}^2E_{\pm j}-(q+q^{-1})E_{\pm i}E_{\pm j}E_{\pm i}+E_{\pm j}E_{\pm i}^2=0,
\qquad\text{if}\quad |i-j|=1,
\\
E_{\pm i}E_{\pm j}-E_{\pm j}E_{\pm i}=0,\qquad\text{else}.
\end{gather*} 
\end{defn}

\begin{defn} 
\label{defn:qsln}
For $n\in\bN_{>1}$ the \textit{quantum special linear algebra} 
${\mathbf U}_q(\mathfrak{sl}_n)\subseteq {\mathbf U}_q(\mathfrak{gl}_n)$ is 
the unital $\bC(q)$-subalgebra generated by $K_iK^{-1}_{i+1}$ and 
$E_{\pm i}$, for $i=1,\ldots, n-1$.
\end{defn}

Recall that the ${\mathbf U}_q(\mathfrak{sl}_n)$-weight lattice is 
isomorphic to $\bZ^{n-1}$. Suppose that $V$ is a 
${\mathbf U}_q(\mathfrak{gl}_n)$-weight representation with 
weights $\lambda=(\lambda_1,\ldots,\lambda_n)\in\bZ^n$, i.e. 
\[
V\cong \bigoplus_{\lambda}V_{\lambda},
\] 
and $K_i$ acts as multiplication by 
$q^{\lambda_i}$ on $V_{\lambda}$. Then $V$ is also a 
${\mathbf U}_q(\mathfrak{sl}_n)$-weight representation with weights 
$\overline{\lambda}=(\overline{\lambda}_1,\ldots,\overline{\lambda}_{n-1})\in
\bZ^{n-1}$ such that 
$\overline{\lambda}_j=\lambda_j-\lambda_{j+1}$ for $j=1,\ldots,n-1$.
 
Conversely, given a ${\mathbf U}_q(\mathfrak{sl}_n)$-weight 
representation with weights $\mu=(\mu_1,\ldots,\mu_{n-1})$, there is not a 
unique choice of ${\mathbf U}_q(\mathfrak{gl}_n)$-action on $V$. We can 
fix this by choosing the action of $K_1,\cdots, K_n$. In terms of weights, this 
corresponds to the observation that, for any $d\in\bZ$, the equations 
\begin{align}
\label{eq:sl-gl-wts1}
\lambda_i-\lambda_{i+1}&=\mu_i,\\
\label{eq:sl-gl-wts2}
\qquad \sum_{i=1}^{n}\lambda_i&=d,
\end{align}  
determine $\lambda=(\lambda_1,\ldots,\lambda_n)$ uniquely, 
if there exists a solution to~\eqref{eq:sl-gl-wts1} and~\eqref{eq:sl-gl-wts2} 
at all. To fix notation, we 
define the map $\phi_{n,d}\colon \bZ^{n-1}\to \bZ^{n}\cup \{*\}$ by 
\[
\phi_{n,d}(\mu)=\lambda, 
\]
if~\eqref{eq:sl-gl-wts1} and~\eqref{eq:sl-gl-wts2} have a solution, and we
put $\phi_{n,d}(\mu)=*$ otherwise.   

Note that ${\mathbf U}_q(\mathfrak{gl}_n)$ and 
${\mathbf U}_q(\mathfrak{sl}_n)$ are both Hopf algebras, which implies that 
the tensor product of two of their representations is a representation again. 

Both ${\mathbf U}_q(\mathfrak{gl}_n)$ and ${\mathbf U}_q(\mathfrak{sl}_n)$ 
have plenty of non-weight representations, but we will not discuss them here. 
Therefore we can restrict our attention to the 
Beilinson-Lusztig-MacPherson~\cite{blm} idempotent version of these 
quantum groups, denoted $\Ugl$ and $\U$ respectively. It is worth noting, as explained in (f) of Example~\ref{ex-2cat}, that such algebras can be seen as $1$-categories.
 
To understand their definition, recall that $K_i$ acts as $q^{\lambda_i}$ on the 
$\lambda$-weight space of any weight representation. 
For each $\lambda\in\bZ^n$ adjoin an idempotent $1_{\lambda}$ to 
${\mathbf U}_q(\mathfrak{gl}_n)$ and add 
the relations
\begin{align*}
1_{\lambda}1_{\mu} &= \delta_{\lambda,\nu}1_{\lambda},   
\\
E_{\pm i}1_{\lambda} &= 1_{\lambda\pm\alpha_i}E_{\pm i},
\\
K_i1_{\lambda} &= q^{\lambda_i}1_{\lambda}.
\end{align*}
\begin{defn} 
\label{defn:Uglndot}
The idempotented quantum general linear algebra is defined by 
\[
\Ugl=\bigoplus_{\lambda,\mu\in\bZ^n}1_{\lambda}{\mathbf U}_q(\mathfrak{gl}_n)1_{\mu}.
\]
\end{defn}
\noindent Let $I=\{1,2,\ldots,n-1\}$. In the sequel we use \textit{signed sequences} 
$\ii=(\alpha_1i_1,\ldots,\alpha_mi_m)$, 
for any $m\in\bN$, $\alpha_j\in\{\pm 1\}$ and $i_j\in I$. 
We denote the set of signed sequences 
by $\sseq$.

For such an $\ii=(\alpha_1 i_1,\ldots,\alpha_{n-1}i_{n-1})$ we define
\[
E_{\ii}=E_{\alpha_1 i_1}\cdots E_{\alpha_{n-1} i_{n-1}}
\] 
and we define $\ii_{\Lambda}\in\bZ^n$ to be the $n$-tuple such that 
\[
E_{\ii}1_{\mu}=1_{\mu + \ii_{\Lambda}}E_{\ii}.
\]

Similarly, for ${\mathbf U}_q(\mathfrak{sl}_n)$, adjoin an idempotent $1_{\mu}$ 
for each $\mu\in\bZ^{n-1}$ and add the relations
\begin{align*}
1_{\mu}1_{\nu} &= \delta_{\mu,\nu}1_{\lambda},   
\\
E_{\pm i}1_{\mu} &= 1_{\mu\pm\overline{\alpha}_i}E_{\pm i},\quad\text{with}\;
\overline{\alpha}_i=\alpha_i-\alpha_{i+1},
\\
K_iK^{-1}_{i+1}1_{\mu} &= q^{\mu_i}1_{\mu}.
\end{align*}
\begin{defn} The idempotented quantum special linear algebra is defined by 
\[
\U=\bigoplus_{\mu,\nu\in\bZ^{n-1}}1_{\mu}{\mathbf U}_q(\mathfrak{sl}_n)1_{\nu}.
\]
\end{defn}
\noindent Note that $\Ugl$ and $\U$ are both non-unital algebras, 
because their units 
would have to be equal to the infinite sum of all their idempotents.
 
Furthermore, the only ${\mathbf U}_q(\mathfrak{gl}_n)$ 
and ${\mathbf U}_q(\mathfrak{sl}_n)$-representations which factor through 
$\Ugl$ and $\U$ respectively are the weight representations. 
Finally, note that there is no embedding of $\U$ into $\Ugl$, because 
there is no embedding of the $\mathfrak{sl}_n$-weights into the 
$\mathfrak{gl}_n$-weights.  
\vskip0.5cm
Finally, recall the \textit{integral forms} of these quantum algebras. For each 
$i=1,\ldots, n-1$ and each $a\in\mathbb{N}$, define the \textit{divided power}
$$E_{\pm i}^{(a)}=\dfrac{E_{\pm i}^a}{[a]!}.$$
\begin{defn}
Let $\UZ(\mathfrak{gl}_n)\subset \Ugl$ and $\UZ(\mathfrak{sl}_n)\subset 
\U$ be the $\mathbb{Z}[q,q^{-1}]$-subalgebras generated by the divided 
powers $E_{\pm i}^{(a)}1_{\lambda}$.  
\end{defn}
\subsubsection{The $q$-Schur algebra}
Let $d\in\bN$ and let $V$ be the natural $n$-dimensional representation of 
${\mathbf U}_q(\mathfrak{gl}_n)$. Define  
\[
\Lambda(n,d)=\{\lambda\in \bN^n\mid\,\, 
\sum_{i=1}^{n}\lambda_i=d\}\quad\text{and}
\]  
\[
\Lambda^+(n,d)=\{\lambda\in\Lambda(n,d)\mid d\geq 
\lambda_1\geq\lambda_2\geq\cdots 
\geq\lambda_n\geq 0\}.
\] 
Recall that the weights in $V^{\otimes d}$ are precisely the elements of 
$\Lambda(n,d)$, and that the highest weights are the elements of 
$\Lambda^+(n,d)$.  
The highest weights correspond exactly to the irreducibles $V_{\lambda}$ 
that show up in the decomposition of $V^{\otimes d}$. 

We can define the $q$-Schur algebra as follows. 
\begin{defn}
The \textit{$q$-Schur algebra} $S_q(n,d)$ is the image 
of the representation $\psi_{n,d}$ defined by
$$\psi_{n,d}\colon {\mathbf U}_q(\mathfrak{gl}_n)\to 
\End_{\mathbb{C}}(V^{\otimes d}).$$
\end{defn}
Recall that for $\lambda\in\Lambda^+(n,d)$, the 
${\mathbf U}_q(\mathfrak{gl}_n)$-action on $V_{\lambda}$ factors through 
the projection given by $\psi_{n,d}\colon {\mathbf U}_q(\mathfrak{gl}_n)\to S_q(n,d)$. 
This way we obtain all irreducible representations of $S_q(n,d)$. Note that 
this also implies that all representations of $S_q(n,d)$ have a 
weight decomposition. As a matter of fact, it is well-known that 
\[
S_q(n,d)\cong \prod_{\lambda\in\Lambda^+(n,d)}\End_{\mathbb{C}}(V_{\lambda}).
\]
Therefore $S_q(n,d)$ is a finite dimensional, semisimple, 
unital algebra and its dimension is equal to 
\[
\sum_{\lambda\in\Lambda^+(n,d)}\dim(V_{\lambda})^2=\binom{n^2+d-1}{d}. 
\]
Since $V^{\otimes d}$ is a weight representation, 
$\psi_{n,d}$ gives rise to a homomorphism 
$\Ugl\to S_q(n,d)$, for 
which we use the same notation. This map is still surjective and 
Doty and Giaquinto, in Theorem 2.4 of~\cite{dogi}, showed that 
the kernel of $\psi_{n,d}$ is equal to the ideal generated by all 
idempotents $1_{\lambda}$ such that 
$\lambda\not\in\Lambda(n,d)$. Clearly the kernel of $\psi_{n,d}$ is isomorphic to
$S_q(n,d)$. By the above observations, 
we see that $S_q(n,d)$ has a Serre presentation. As a matter of fact, 
by Corollary 4.3.2 in~\cite{chgi}, this presentation is simpler than 
that of $\Ugl$, i.e. one does not need to impose the last two Serre relations, 
involving cubical terms, because they are implied by the other relations 
and the finite 
dimensionality.  
\begin{lem} 
$S_q(n,d)$ is isomorphic to the associative, unital 
$\bC(q)$-algebra generated by $1_{\lambda}$, for $\lambda\in\Lambda(n,d)$, 
and $E_{\pm i}$, for $i=1,\ldots,n-1$, subject to the relations
\begin{align}
\label{eq:schur1} 1_{\lambda}1_{\mu} &= \delta_{\lambda,\mu}1_{\lambda},
\\[0.5ex]
\label{eq:schur2}
\sum_{\lambda\in\Lambda(n,d)}1_{\lambda} &= 1,
\\[0.5ex]
\label{eq:schur3}
E_{\pm i}1_{\lambda} &= 1_{\lambda\pm\alpha_i}E_{\pm i},\quad\text{with}\;\alpha_i=\epsilon_i-\epsilon_{i+1}=(0,\dots ,1,-1,\dots ,0),
\\[0.5ex]
\label{eq:schur4}
E_iE_{-j}-E_{-j}E_i &= \delta_{ij}\sum\limits_{\lambda\in\Lambda(n,d)}
[\overline{\lambda}_i]1_{\lambda}.
\end{align}
We use the convention that $1_{\mu}X1_{\lambda}=0$, if $\mu$ 
or $\lambda$ is not contained in $\Lambda(n,d)$. Again $[a]$ denotes the 
$q$-integer from before.
\end{lem}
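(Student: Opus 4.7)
The plan is to derive the presentation directly as a quotient of $\Ugl$ by the ideal identified in Doty--Giaquinto's Theorem 2.4 of~\cite{dogi}, and then invoke Chari--Ginzburg's redundancy result to drop the cubical Serre relations. I will use the fact, recalled just before the lemma, that $\psi_{n,d}\colon\Ugl\to S_q(n,d)$ is surjective with kernel equal to the two-sided ideal $J$ generated by $\{1_{\lambda}\mid \lambda\notin\Lambda(n,d)\}$. This gives an algebra isomorphism
\[
S_q(n,d)\;\cong\;\Ugl/J.
\]

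The first step is to read off a presentation of the quotient. Starting from the standard Serre presentation of $\Ugl$ with generators $1_{\lambda}$ ($\lambda\in\mathbb{Z}^n$) and $E_{\pm i}$ and relations as in Definition~\ref{defn:Uglndot}, I will kill every idempotent outside $\Lambda(n,d)$. Three things happen. First, the generating set of idempotents collapses to $\{1_{\lambda}\mid\lambda\in\Lambda(n,d)\}$, giving relations~\eqref{eq:schur1}. Second, since $\Lambda(n,d)$ is the set of weights appearing in $V^{\otimes d}$ and $S_q(n,d)$ acts faithfully on this finite dimensional space, the sum $\sum_{\lambda\in\Lambda(n,d)}1_{\lambda}$ is the identity, yielding the unitality relation~\eqref{eq:schur2}. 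Third, the weight-shifting and $E_iE_{-j}$-commutator relations of $\Ugl$ descend to~\eqref{eq:schur3} and~\eqref{eq:schur4}, using the convention $1_{\mu}X1_{\lambda}=0$ for out-of-range weights to account for the killed idempotents on the left-hand sides.

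The main obstacle, and the only nontrivial step, is to show that the quantum Serre relations
\[
E_{\pm i}^2E_{\pm j}-(q+q^{-1})E_{\pm i}E_{\pm j}E_{\pm i}+E_{\pm j}E_{\pm i}^2=0\quad(|i-j|=1),\qquad [E_{\pm i},E_{\pm j}]=0\quad(|i-j|>1),
\]
which are imposed in $\Ugl$ and hence automatically hold in $S_q(n,d)$, are in fact \emph{redundant} in the presentation listed in the lemma, so that they need not be included explicitly. For this I will cite Corollary 4.3.2 of~\cite{chgi}, which says that in any associative unital $\bC(q)$-algebra generated by such idempotents and $E_{\pm i}$'s satisfying~\eqref{eq:schur1}--\eqref{eq:schur4}, the Serre relations are forced by finite-dimensionality together with~\eqref{eq:schur4}: one uses the $\mathfrak{sl}_2$-triple at each node $i$ to argue, weight-by-weight, that the cubical combinations act as zero because there are only finitely many weights in $\Lambda(n,d)$ bounded by $d$ in each coordinate, so sufficiently high powers of $E_{\pm i}$ must vanish on each weight space. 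The commutation at distance $>1$ is analogous but easier.

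Putting these pieces together: the algebra $A$ presented by generators $\{1_{\lambda},E_{\pm i}\}$ modulo~\eqref{eq:schur1}--\eqref{eq:schur4} surjects onto $\Ugl/J\cong S_q(n,d)$ via the map sending generators to generators, because all of its defining relations hold in $S_q(n,d)$. Conversely, by the redundancy argument above, $A$ satisfies the quantum Serre relations, so the canonical map $\Ugl\to A$ factors through $\Ugl/J$, giving a two-sided inverse. Hence the two presentations agree, which is the content of the lemma.
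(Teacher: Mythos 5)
Your proof is correct and follows exactly the route the paper sketches in the paragraph preceding the lemma: identify $S_q(n,d)$ with $\Ugl/J$ using Doty--Giaquinto's description of $\ker\psi_{n,d}$, read off relations~\eqref{eq:schur1}--\eqref{eq:schur4} from the quotient, and invoke Corollary 4.3.2 of~\cite{chgi} to drop the cubical Serre relations. You have simply made explicit the two-sided-inverse bookkeeping that the paper leaves implicit.
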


Although there is no embedding of $\U$ into $\Ugl$, the projection 
\[
\psi_{n,d}\colon{\mathbf U}_q(\mathfrak{gl}_n)\to S_q(n,d)
\] 
can be restricted to ${\mathbf U}_q(\mathfrak{sl}_n)$ and is still surjective. 
This gives rise to the surjection 
\[
\psi_{n,d}\colon \U\to S_q(n,d),
\]
defined by 
\begin{equation}
\label{eq:psi}
\psi_{n,d}(E_{\pm i}1_{\lambda})=E_{\pm i}1_{\phi_{n,d}(\lambda)},
\end{equation}
where $\phi_{n,d}$ was defined below equations~\eqref{eq:sl-gl-wts1} and~\eqref{eq:sl-gl-wts2}. By convention we put $1_{*}=0$.   
\vskip0.5cm 
Just for completeness, let us also recall the integral form of the 
$q$-Schur algebra. 
\begin{defn}
Define $S_q^{\mathbb{Z}}(n,d)\subset S_q(n,d)$ to be the $\mathbb{Z}[q,q^{-1}]$-subalgebra generated by the divided powers $E_{\pm i}^{(a)}1_{\lambda}$.  
\end{defn}
\subsubsection{The general and special quantum 2-algebras}
We note that a lot of this section is copied from~\cite{msv1}. The reader can find even more details there.

Let $\Ucat$ be Khovanov and Lauda's~\cite{kl5} 
diagrammatic categorification of $\U$. It is worth noting, as explained in (f) of Example~\ref{ex-2cat}, that such a categorification has to be a $2$-category.
 
In~\cite{msv1} it was shown that there is a quotient 2-category of 
$\Ucat$, denoted $\Scat(n,n)$, which categorifies $S_q(n,n)$. Note that ``categorifies'' should be in the sense of Section~\ref{sec-techgrgrcat}. 

We recall the definition of these categorified quantum algebras and some notions from above. 
As before, let $I=\{1,2,\ldots,n-1\}$. Again, we use \textit{signed sequences} 
$\ii=(\alpha_1i_1,\ldots,\alpha_mi_m)$, 
for any $m\in\bN$, $\alpha_j\in\{\pm 1\}$ and $i_j\in I$, and 
the set of signed sequences is denoted $\sseq$. 
For $\ii=(\alpha_1i_1,\ldots,\alpha_mi_m)\in\sseq$ we 
define $\ii_{\Lambda}=\alpha_1 (i_1)_{\Lambda}+\cdots+\alpha_m (i_m)_{\Lambda}$, 
where 
\[
(i_j)_{\Lambda}=(0,0,\ldots,1,-1,0\ldots,0),
\]
such that the vector starts with $i_j-1$ and ends with $k-1-i_j$ zeros. 
We also define the symmetric $\bZ$-valued bilinear form on $\bC[I]$ 
by $i\cdot i=2$, $i\cdot (i+1)=-1$ and $i\cdot j=0$, for $\vert i-j\vert>1$. 
Recall that $\overline{\lambda}_i=\lambda_i-\lambda_{i+1}$.
\vskip0.5cm
We first recall the definition, given in~\cite{msv1}, of the 2-category which 
conjecturally categorifies $\Ugl$. It is 
a straightforward adaptation of Khovanov and Lauda's $\Ucat$. 
\begin{defn} \label{def_glcat} $\glcat$ is an
additive $\bC$-linear 2-category. The 2-category $\glcat$ consists of
\begin{itemize}
  \item Objects are $\lambda\in\bZ^n$.
\end{itemize}
The hom-category $\glcat(\lambda,\lambda')$ between two objects 
$\lambda$, $\lambda'$ is an additive $\bC$-linear category 
consisting of the following.
\begin{itemize}
  \item Objects\footnote{We refer to objects of the category
$\glcat(\lambda,\lambda')$ as 1-morphisms of $\glcat$.  Likewise, the morphisms of
$\glcat(\lambda,\lambda')$ are called 2-morphisms in $\glcat$. Compare to Section~\ref{sec-techhigher}.} of
$\glcat(\lambda,\lambda')$, i.e. a 1-morphism in $\glcat$ from $\lambda$ to $\lambda'$,
is a formal finite direct sum of 1-morphisms
  \[
 \mathcal{E}_{\ii} \onel\{t\} = \onelp \mathcal{E}_{\ii} \onel\{t\}
= \mathcal{E}_{\alpha_1 i_1}\dotsm\mathcal{E}_{\alpha_m i_m} \onel\{t\}
  \]
for any $t\in \bZ$ and signed sequence $\ii\in\sseq$ such that 
$\lambda'=\lambda+\ii_{\Lambda}$ and $\lambda$, $\lambda'\in\bZ^n$. 
  \item Morphisms of $\glcat(\lambda,\lambda')$, i.e. for 1-morphisms $\mathcal{E}_{\ii} \onel\{t\}$
and  $\mathcal{E}_{\jj} \onel\{t'\}$ in $\glcat$, the hom
sets $\glcat(\mathcal{E}_{\ii} \onel\{t\},\mathcal{E}_{\jj} \onel\{t'\})$ of
$\glcat(\lambda,\lambda')$ are graded $\bC$-vector spaces given by linear
combinations of degree $t-t'$ diagrams, modulo certain relations, built from
compo\-sites of the following.
\begin{enumerate}[(i)]
  \item  Degree zero identity 2-morphisms $1_x$ for each 1-morphism $x$ in
$\glcat$; the identity 2-morphisms $1_{\mathcal{E}_{+i} \onel}\{t\}$ and
$1_{\mathcal{E}_{-i} \onel}\{t\}$, for $i \in I$, are represented graphically by
\[
\begin{array}{ccc}
  1_{\mathcal{E}_{+i} \onel\{t\}} &\quad  & 1_{\mathcal{E}_{-i} \onel\{t\}} \\ \\
   \lambda + i_{\Lambda}\xy
 (0,0)*{\includegraphics[width=09px]{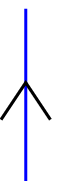}};
 (0,8)*{\scriptstyle i};
 (0,-8)*{\scriptstyle i};
 \endxy\lambda
 & &
 \;\;   
   \lambda - i_{\Lambda}\xy
 (0,0)*{\includegraphics[width=09px]{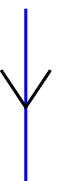}};
 (0,8)*{\scriptstyle i};
 (0,-8)*{\scriptstyle i};
 \endxy\lambda,
\\ \\
   \;\;\text{ {\rm deg} 0}\;\;
 & &\;\;\text{ {\rm deg} 0}\;\;
\end{array}
\]
for any $\lambda + i_{\Lambda} \in\bZ^n$ and any 
$\lambda - i_{\Lambda} \in \bZ^n$, respectively.

More generally, for a signed sequence $\ii=(\alpha_1i_1, \alpha_2i_2, \ldots
\alpha_mi_m)$, the identity $1_{\mathcal{E}_{\ii} \onel\{t\}}$ 2-morphism is
represented as
\begin{equation*}
\begin{array}{ccc}
  \lambda + \ii_{\Lambda}\xy
 (0,0)*{\includegraphics[width=0.7px]{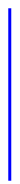}};
 (0,8)*{\scriptstyle i_1};
 (0,-8)*{\scriptstyle i_1};
 \endxy\,\xy
 (0,0)*{\includegraphics[width=0.7px]{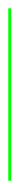}};
 (0,8)*{\scriptstyle i_2};
 (0,-8)*{\scriptstyle i_2};
 \endxy\cdots\xy
 (0,0)*{\includegraphics[width=0.7px]{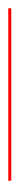}};
 (0,8)*{\scriptstyle i_m};
 (0,-8)*{\scriptstyle i_m};
 \endxy\lambda,
\end{array}
\end{equation*}
where the strand labelled $i_{k}$ is oriented up if $\alpha_{k}=+$
and oriented down if $\alpha_{k}=-$. We will often place labels with no
sign on the side of a strand and omit the labels at the top and bottom.  The
signs can be recovered from the orientations on the strands. 

\item Recall that $-\cdot -$ is the bilinear form from above. For each $\lambda \in \bZ^n$ the 2-morphisms 
\[
\begin{tabular}{|l|c|c|c|c|}
\hline
 {\bf Notation:} \xy (0,-5)*{};(0,7)*{}; \endxy& 
\xy
 (0,0)*{\includegraphics[width=7px]{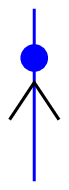}};
 (2,-5)*{\scriptstyle i,\lambda};
 \endxy &
\xy
 (0,0)*{\includegraphics[width=7px]{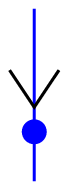}};
 (2,-5)*{\scriptstyle i,\lambda};
 \endxy  &  \xy
 (0,0)*{\includegraphics[width=20px]{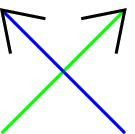}};
 (8,-4.5)*{\scriptstyle i,j,\lambda};
 \endxy  
 & 
\xy
 (0,0)*{\includegraphics[width=20px]{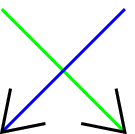}};
 (8,-4.5)*{\scriptstyle i,j,\lambda};
 \endxy\\
 \hline
 {\bf 2-morphism:} \xy (0,-5)*{};(0,9)*{}; \endxy&   \, \xy
 (0,0)*{\includegraphics[width=09px]{res/figs/section23/upsimpledot}};
 (1.5,-5)*{\scriptstyle i};
 (3,0)*{\scriptstyle\lambda};
 (-5,0)*{\scriptstyle\lambda+i_{\Lambda}};
 \endxy\,
 &
    \, \xy
 (0,0)*{\includegraphics[width=09px]{res/figs/section23/downsimpledot}};
 (1.5,-5)*{\scriptstyle i};
 (-3,0)*{\scriptstyle\lambda};
 (5,0)*{\scriptstyle \lambda+i_{\Lambda}};
 \endxy\,
 &
   \xy
 (0,1)*{\includegraphics[width=25px]{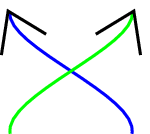}};
 (-5,-3)*{\scriptstyle i};
 (5,-3)*{\scriptstyle j};
 (5.5,0)*{\scriptstyle\lambda};
 \endxy\,
 &
   \xy
 (0,1)*{\includegraphics[width=25px]{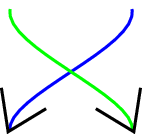}};
 (-5,-4)*{\scriptstyle i};
 (5,-4)*{\scriptstyle j};
 (5.5,0)*{\scriptstyle\lambda};
 \endxy\,
\\ & & & &\\
\hline
 {\bf Degree:} & \;\;\phantom{.a-}\text{$i\cdot i$}\;\;\phantom{.a-}
 &\;\;\phantom{.a-}\text{$i\cdot i$}\;\;\phantom{.a-}& \;\phantom{.a}\text{$-i\cdot j$}\;\;\phantom{..-}
 & \;\,\phantom{.a}\text{$-i\cdot j$}\;\,\phantom{..-} \\
 \hline
\end{tabular}
\]

\[
\begin{tabular}{|l|c|c|c|c|}
\hline
 {\bf Notation:} \xy (0,-5)*{};(0,7)*{}; \endxy& \xy
 (0,0)*{\includegraphics[width=20px]{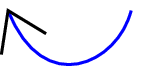}};
 (6,-2)*{\scriptstyle i,\lambda};
 \endxy &
 \xy
 (0,0)*{\includegraphics[width=20px]{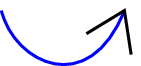}};
 (6,-2)*{\scriptstyle i,\lambda};
 \endxy  &  \xy
 (0,0)*{\includegraphics[width=20px]{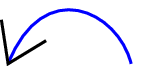}};
 (6,-2)*{\scriptstyle i,\lambda};
 \endxy  
 & \xy
 (0,0)*{\includegraphics[width=20px]{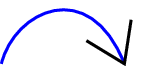}};
 (6,-2)*{\scriptstyle i,\lambda};
 \endxy  \\
 \hline
 {\bf 2-morphism:} \xy (0,-5)*{};(0,7)*{}; \endxy&   \xy
 (0,0)*{\includegraphics[width=25px]{res/figs/section23/leftcup}};
 (0,-3)*{\scriptstyle i};
 (5,0)*{\scriptstyle\lambda};
 \endxy\,
 &
    \xy
 (0,0)*{\includegraphics[width=25px]{res/figs/section23/rightcup}};
 (0,-3)*{\scriptstyle i};
 (5,0)*{\scriptstyle\lambda};
 \endxy\,
 &
   \xy
 (0,0)*{\includegraphics[width=25px]{res/figs/section23/leftcap}};
 (0,3)*{\scriptstyle i};
 (5,0)*{\scriptstyle\lambda};
 \endxy\,
 &
   \xy
 (0,0)*{\includegraphics[width=25px]{res/figs/section23/rightcap}};
 (0,3)*{\scriptstyle i};
 (5,0)*{\scriptstyle\lambda};
 \endxy\,
\\ & & & &\\
\hline
 {\bf Degree:} \xy (0,-1)*{};(0,5)*{}; \endxy& \phantom{.m}\text{$1-\overline{\lambda}_i$}\phantom{.m}
 &\phantom{.m}\text{$1+\overline{\lambda}_i$}\phantom{.m}&\phantom{.m}\text{$1+\overline{\lambda}_i$}\phantom{.m}
 & \phantom{.m}\text{$1-\overline{\lambda}_i$}\phantom{.m} \\
 \hline
\end{tabular}
\]
\end{enumerate}

\item Bi-adjointness and cyclicity are shown below.
\begin{enumerate}[(i)]
\item\label{it:sl2i}  $\mathbf{1}_{\lambda+i_{\Lambda}}\mathcal{E}_{+i}\onel$ and
$\onel\mathcal{E}_{-i}\mathbf{1}_{\lambda+i_{\Lambda}}$ are bi-adjoint, up to grading shifts:
\begin{equation} \label{eq_biadjoint1}
  \xy
 (0,0)*{\includegraphics[width=50px]{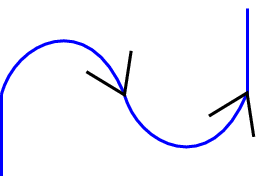}};
 (-4,5.5)*{\scriptstyle \lambda+i_{\Lambda}};
 (4,-5.5)*{\scriptstyle \lambda};
 \endxy
    \; =
    \;
\, \xy
 (0,0)*{\includegraphics[width=09px]{res/figs/section23/upsimple}};
 (1.5,-5)*{\scriptstyle i};
 (3,0)*{\scriptstyle\lambda};
 (-5,0)*{\scriptstyle\lambda+i_{\Lambda}};
 \endxy\,
\qquad \quad  \xy
 (0,0)*{\includegraphics[width=50px]{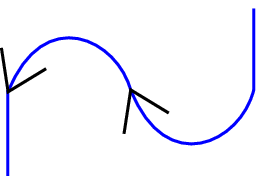}};
 (-4,5.5)*{\scriptstyle \lambda};
 (5,-5.5)*{\scriptstyle \lambda+i_{\Lambda}};
 \endxy
    \; =
    \;
\, \xy
 (0,0)*{\includegraphics[width=09px]{res/figs/section23/downsimple}};
 (1.5,-5)*{\scriptstyle i};
 (-3,0)*{\scriptstyle\lambda};
 (5,0)*{\scriptstyle\lambda+i_{\Lambda}};
 \endxy\,
\end{equation}

\begin{equation} \label{eq_biadjoint2}
  \xy
 (0,0)*{\includegraphics[width=50px]{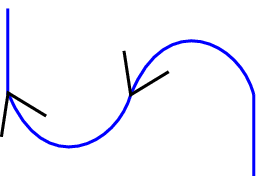}};
 (-4,-5.5)*{\scriptstyle \lambda+i_{\Lambda}};
 (4,5.5)*{\scriptstyle \lambda};
 \endxy
    \; =
    \;
\, \xy
 (0,0)*{\includegraphics[width=09px]{res/figs/section23/upsimple}};
 (1.5,-5)*{\scriptstyle i};
 (3,0)*{\scriptstyle\lambda};
 (-5,0)*{\scriptstyle\lambda+i_{\Lambda}};
 \endxy\,
\qquad \quad  \xy
 (0,0)*{\includegraphics[width=50px]{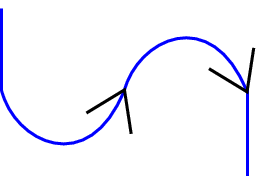}};
 (-4,-5.5)*{\scriptstyle \lambda};
 (5,5.5)*{\scriptstyle \lambda+i_{\Lambda}};
 \endxy
    \; =
    \;
\, \xy
 (0,0)*{\includegraphics[width=09px]{res/figs/section23/downsimple}};
 (1.5,-5)*{\scriptstyle i};
 (-3,0)*{\scriptstyle\lambda};
 (5,0)*{\scriptstyle\lambda+i_{\Lambda}};
 \endxy\,
\end{equation}
\item
\begin{equation} \label{eq_cyclic_dot}
  \xy
 (0,0)*{\includegraphics[width=50px]{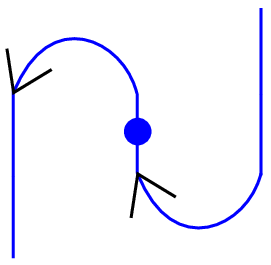}};
 (-4,8)*{\scriptstyle \lambda+i_{\Lambda}};
 (4,-8)*{\scriptstyle \lambda};
 \endxy
 \; =
    \;
\, \xy
 (0,0)*{\includegraphics[width=09px]{res/figs/section23/downsimpledot}};
 (1.5,-5)*{\scriptstyle i};
 (-3,0)*{\scriptstyle\lambda};
 (5,0)*{\scriptstyle\lambda+i_{\Lambda}};
 \endxy\,
    \; =
    \;
  \xy
 (0,0)*{\includegraphics[width=50px]{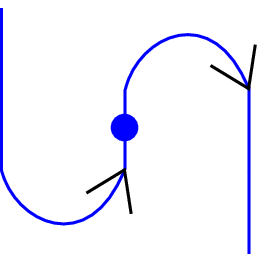}};
 (-4,-8)*{\scriptstyle \lambda};
 (4,8)*{\scriptstyle \lambda+i_{\Lambda}};
 \endxy    
\end{equation}
\item All 2-morphisms are cyclic with respect to the above bi-adjoint
   structure. This is ensured by the relations~\eqref{eq_cyclic_dot}, and, for arbitrary $i,j$, the
   relations
\begin{equation} \label{eq_cyclic_cross-gen}
  \xy
 (0,0)*{\includegraphics[width=125px]{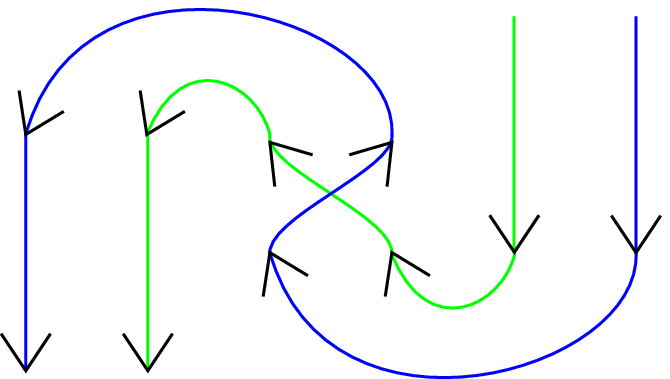}};
 (-12,-14)*{\scriptstyle j};
 (-20.3,-14)*{\scriptstyle i};
 (12,14)*{\scriptstyle j};
 (20.3,14)*{\scriptstyle i};
 (6,-0.5)*{\scriptstyle\lambda};
 \endxy\,
 \; =
    \;
\,\xy
 (0,1)*{\includegraphics[width=25px]{res/figs/section23/downcrosscurved}};
 (-5,-4)*{\scriptstyle i};
 (5,-4)*{\scriptstyle j};
 (-5.5,0)*{\scriptstyle\lambda};
 \endxy
    \; =
    \;
  \xy
 (0,0)*{\includegraphics[width=125px]{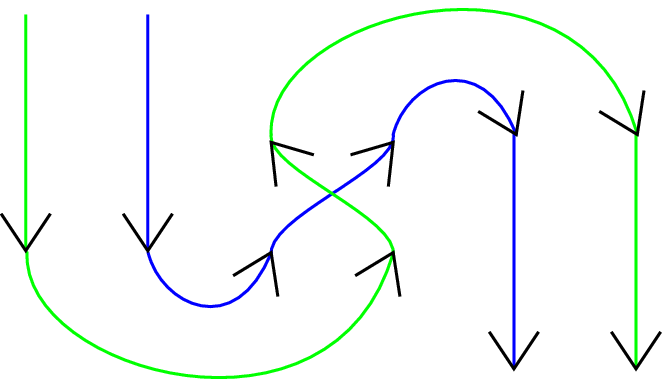}};
 (-12,14)*{\scriptstyle i};
 (-20.3,14)*{\scriptstyle j};
 (12,-14)*{\scriptstyle i};
 (20.3,-14)*{\scriptstyle j};
 (6,-1)*{\scriptstyle\lambda};
 \endxy\,.    
\end{equation}
Note that we can take either the first or the last diagram above as the 
definition of the up-side-down crossing. The cyclic condition on 
2-morphisms, expressed by~\eqref{eq_cyclic_dot} and~\eqref{eq_cyclic_cross-gen}, ensures that diagrams related by isotopy represent
the same 2-morphism in $\glcat$.

It will be convenient to introduce degree zero 2-morphisms as shown below.
\begin{equation} \label{eq_crossl-gen}
  \xy
 (0,0)*{\includegraphics[width=25px]{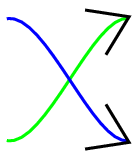}};
 (-5.5,4)*{\scriptstyle i};
 (-5.5,-4)*{\scriptstyle j};
 (5.5,0)*{\scriptstyle\lambda};
 \endxy\,
 \; =
    \;
 \xy
 (0,1)*{\includegraphics[width=75px]{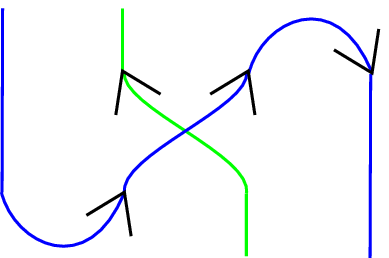}};
 (-4.3,11.5)*{\scriptstyle j};
 (-13.3,11.5)*{\scriptstyle i};
 (3.8,-9.5)*{\scriptstyle j};
 (12.5,-9.5)*{\scriptstyle i};
 (14,0)*{\scriptstyle\lambda};
 \endxy\,
    \; =
    \;
  \xy
 (0,0)*{\includegraphics[width=75px]{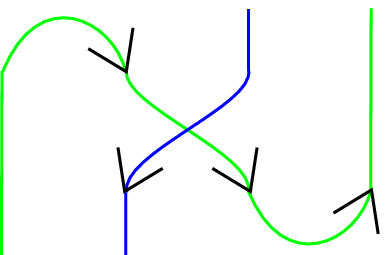}};
 (-4,-10.5)*{\scriptstyle i};
 (-13,-10.5)*{\scriptstyle j};
 (4,10.5)*{\scriptstyle i};
 (13,10.5)*{\scriptstyle j};
 (14,0)*{\scriptstyle\lambda};
 \endxy\,   
\end{equation}

\begin{equation} \label{eq_crossr-gen}
  \,\xy
 (0,0)*{\includegraphics[width=25px]{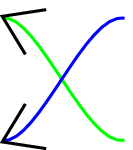}};
 (-5.5,0)*{\scriptstyle\lambda};
 (5.5,4)*{\scriptstyle i};
 (5.5,-4)*{\scriptstyle j};
 \endxy
 \; =
    \;
 \,\xy
 (0,1)*{\includegraphics[width=75px]{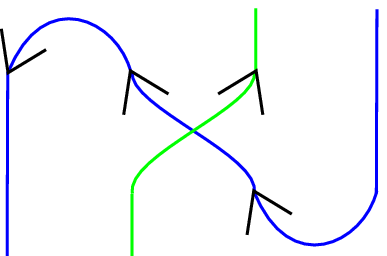}};
 (-4,-9.5)*{\scriptstyle j};
 (-13,-9.5)*{\scriptstyle i};
 (4.3,11.5)*{\scriptstyle j};
 (13.3,11.5)*{\scriptstyle i};
 (-14,0)*{\scriptstyle\lambda};
 \endxy\,
    \; =
    \;
  \,\xy
 (0,0)*{\includegraphics[width=75px]{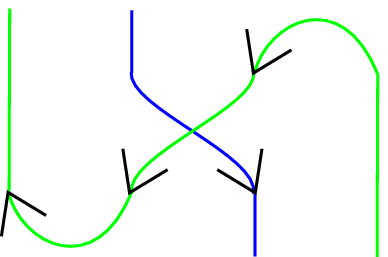}};
 (-4,10.5)*{\scriptstyle i};
 (-13,10.5)*{\scriptstyle j};
 (4,-10.5)*{\scriptstyle i};
 (13,-10.5)*{\scriptstyle j};
 (-14,0)*{\scriptstyle\lambda};
 \endxy\, ,    
\end{equation}
where the second equality in~\eqref{eq_crossl-gen} and~\eqref{eq_crossr-gen}
follow from~\eqref{eq_cyclic_cross-gen}.  

\item All dotted bubbles of negative degree are zero. That is,
\begin{equation} \label{eq_positivity_bubbles}
 \xy
 (0,0)*{\includegraphics[width=30px]{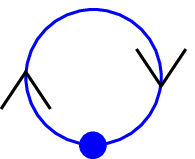}};
 (0,-5.5)*{\scriptstyle m};
 (-6,0)*{\scriptstyle i};
 (6,0)*{\scriptstyle\lambda};
 \endxy
  = 0,
 \qquad
  \text{if $m<\llambda_i-1$}, \qquad
 \xy
 (0,0)*{\includegraphics[width=30px]{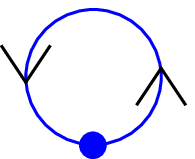}};
 (0,-5.5)*{\scriptstyle m};
 (-6,0)*{\scriptstyle i};
 (6,0)*{\scriptstyle\lambda};
 \endxy = 0,\quad
  \text{if $m< -\llambda_i-1$}
\end{equation}
for all $m \in \bZ_+$, where a dot carrying a label $m$ denotes the
$m$-fold iterated vertical composite of $\xy
 (0,0)*{\includegraphics[width=5px]{res/figs/section23/upsimpledot}};
 (3,-3)*{\scriptstyle i,\lambda};
 \endxy$ or
$\xy
 (0,0)*{\includegraphics[width=5px]{res/figs/section23/downsimpledot}};
 (3,-3)*{\scriptstyle i,\lambda};
 \endxy$ depending on the orientation.  A dotted bubble of degree
zero equals $\pm 1$ as illustrated below.
\begin{equation}\label{eq:bubb_deg0}
 \xy
 (0,0)*{\includegraphics[width=30px]{res/figs/section23/circleclock}};
 (0,-5.5)*{\scriptstyle m};
 (-6,0)*{\scriptstyle i};
 (6,0)*{\scriptstyle\lambda};
 \endxy
  = (-1)^{\lambda_{i+1}}, \quad \text{for $\llambda_i \geq 1$,}
  \qquad \quad
  \xy
 (0,0)*{\includegraphics[width=30px]{res/figs/section23/circlecounter}};
 (0,-5.5)*{\scriptstyle m};
 (-6,0)*{\scriptstyle i};
 (6,0)*{\scriptstyle\lambda};
 \endxy
  = (-1)^{\lambda_{i+1}-1}, \quad \text{for $\llambda_i \leq -1$.}
\end{equation}
\item For the following relations we employ the convention that all summations
are increasing, so that a summation of the form $\sum_{f=0}^{m}$ is zero 
if $m < 0$.
\begin{eqnarray}
\label{eq:redtobubbles}
  \;\;\;\;\qquad\text{$\,\xy
 (0,0)*{\includegraphics[width=50px]{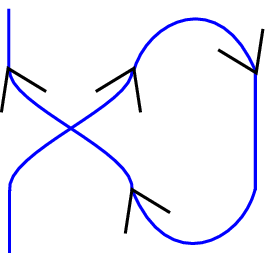}};
 (-8,0)*{\scriptstyle i};
 (9,0)*{\scriptstyle \lambda};
 \endxy$} \; = \; -\sum_{f=0}^{-\llambda_i}
   \xy
 (0,0)*{\includegraphics[width=9px]{res/figs/section23/upsimpledot}};
 (1,-4.5)*{\scriptstyle i};
 (5.5,5)*{\scriptstyle -\overline{\lambda}_i-f};
 \endxy\,\xy
 (0,0)*{\includegraphics[width=30px]{res/figs/section23/circleclock}};
 (0,-5.5)*{\scriptstyle \overline{\lambda}_i-1+f};
 (-6,0)*{\scriptstyle i};
 (6,0)*{\scriptstyle\lambda};
 \endxy
\quad\text{  and  } \quad
  \text{$\xy
 (0,0)*{\includegraphics[width=50px]{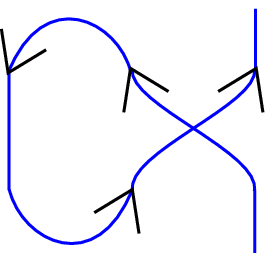}};
 (-9.5,0)*{\scriptstyle \lambda};
 (8,0)*{\scriptstyle i};
 \endxy$} \; = \;
 \sum_{g=0}^{\llambda_i}
   \xy
 (0,0)*{\includegraphics[width=30px]{res/figs/section23/circlecounter}};
 (0,-5.5)*{\scriptstyle -\overline{\lambda}_i-1+g};
 (-6,0)*{\scriptstyle i};
 (6,0)*{\scriptstyle\lambda};
 \endxy\,\xy
 (0,0)*{\includegraphics[width=9px]{res/figs/section23/upsimpledot}};
 (1,-4.5)*{\scriptstyle i};
 (5.5,5)*{\scriptstyle \overline{\lambda}_i-g};
 \endxy
\end{eqnarray}

\begin{equation}
\label{eq:EF}
 \xy
 (0,0)*{\includegraphics[width=9px]{res/figs/section23/upsimple}};
 (-1,-3.5)*{\scriptstyle i};
 (-2.5,0)*{\scriptstyle\lambda};
 \endxy\,\xy
 (0,0)*{\includegraphics[width=9px]{res/figs/section23/downsimple}};
 (1,-3.5)*{\scriptstyle i};
 (2.5,0)*{\scriptstyle\lambda};
 \endxy
  =
 \xy
 (0,0)*{\includegraphics[width=17px]{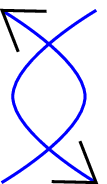}};
 (-3.9,-5.5)*{\scriptstyle i};
 (3.8,-5.5)*{\scriptstyle i};
 (3.2,0)*{\scriptstyle\lambda};
 \endxy
   - 
\sum_{f=0}^{\llambda_i-1} \sum_{g=0}^{f}
    \xy
 (0,0)*{\includegraphics[width=30px]{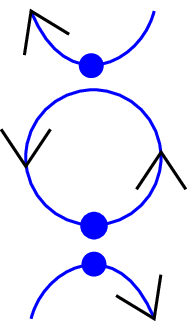}};
 (-6,0)*{\scriptstyle\lambda};
 (8,5)*{\scriptstyle \overline{\lambda}_i-1-f};
 (7,-4)*{\scriptstyle -\overline{\lambda}_i-1+g};
 (-5,-5)*{\scriptstyle f-g};
 (-4,4)*{\scriptstyle i};
 \endxy
 \quad\text{and}\quad
 \xy
 (0,0)*{\includegraphics[width=9px]{res/figs/section23/downsimple}};
 (-1,-3.5)*{\scriptstyle i};
 (-2.5,0)*{\scriptstyle\lambda};
 \endxy\,\xy
 (0,0)*{\includegraphics[width=9px]{res/figs/section23/upsimple}};
 (1,-3.5)*{\scriptstyle i};
 (2.5,0)*{\scriptstyle\lambda};
 \endxy
 = 
 \xy
 (0,0)*{\includegraphics[width=17px]{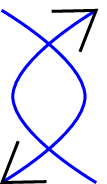}};
 (-3.9,-5.5)*{\scriptstyle i};
 (3.8,-5.5)*{\scriptstyle i};
 (3.2,0)*{\scriptstyle\lambda};
 \endxy
   - 
\sum_{f=0}^{-\llambda_i-1} \sum_{g=0}^{f}
    \xy
 (0,0)*{\includegraphics[width=30px]{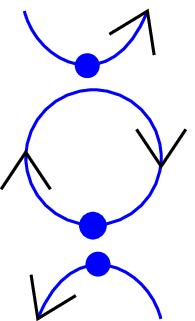}};
 (7,5)*{\scriptstyle -\overline{\lambda}_i-1-f};
 (-6,0)*{\scriptstyle\lambda};
 (8,-4)*{\scriptstyle \overline{\lambda}_i-1+g};
 (-5,-5)*{\scriptstyle f-g};
 (-4,4)*{\scriptstyle i};
 \endxy
\end{equation}
for all $\lambda\in \bZ^n$
(see~\eqref{eq_crossl-gen} and~\eqref{eq_crossr-gen} for the definition of sideways
crossings). 
Notice that for some values of 
$\lambda$ the dotted
bubbles appearing above have negative labels. A composite of $\xy
 (0,0)*{\includegraphics[width=5px]{res/figs/section23/upsimpledot}};
 (3,-3)*{\scriptstyle i,\lambda};
 \endxy$
or $\xy
 (0,0)*{\includegraphics[width=5px]{res/figs/section23/downsimpledot}};
 (3,-3)*{\scriptstyle i,\lambda};
 \endxy$ with itself a negative number of times does not make
sense. These dotted bubbles with negative labels, called \textit{fake bubbles}, are
formal symbols inductively defined by the equation
\begin{equation}
\left(\xy
 (0,0)*{\includegraphics[width=30px]{res/figs/section23/circlecounter}};
 (-4,4)*{\scriptstyle i};
 (4,4)*{\scriptstyle \lambda};
 (-1,-5)*{\scriptstyle -\overline{\lambda}_i-1};
 \endxy t^{0}+\cdots+\xy
 (0,0)*{\includegraphics[width=30px]{res/figs/section23/circlecounter}};
 (-4,4)*{\scriptstyle i};
 (4,4)*{\scriptstyle \lambda};
 (0,-5)*{\scriptstyle -\overline{\lambda}_i-1+r};
 \endxy t^{r}+\cdots\right)\left(\xy
 (0,0)*{\includegraphics[width=30px]{res/figs/section23/circleclock}};
 (-4,4)*{\scriptstyle i};
 (4,4)*{\scriptstyle \lambda};
 (-0.1,-5)*{\scriptstyle \overline{\lambda}_i-1};
 \endxy t^{0}+\cdots+\xy
 (0,0)*{\includegraphics[width=30px]{res/figs/section23/circleclock}};
 (-4,4)*{\scriptstyle i};
 (4,4)*{\scriptstyle \lambda};
 (0,-5)*{\scriptstyle \overline{\lambda}_i-1+r};
 \endxy t^{r}+\cdots\right)=-1
 \label{eq_infinite_Grass}
\end{equation}
and the additional condition
\[
\xy
 (0,0)*{\includegraphics[width=30px]{res/figs/section23/circleclock}};
 (-4,4)*{\scriptstyle i};
 (4,4)*{\scriptstyle \lambda};
 (0,-5)*{\scriptstyle -1};
 \endxy=(-1)^{\lambda_{i+1}}\quad\text{and}\quad\xy
 (0,0)*{\includegraphics[width=30px]{res/figs/section23/circlecounter}};
 (-4,4)*{\scriptstyle i};
 (4,4)*{\scriptstyle \lambda};
 (0,-5)*{\scriptstyle -1};
 \endxy=(-1)^{\lambda_{i+1}-1},\quad\text{if}\;\overline{\lambda}_i=0.
\]
Although the labels are negative for fake bubbles, one can check that the overall
degree of each fake bubble is still positive, so that these fake bubbles do not
violate the positivity of dotted bubble axiom. The above equation, called the
infinite Grassmannian relation, remains valid even in high degree when most of
the bubbles involved are not fake bubbles.  

\item NilHecke relations are the following.
\begin{equation}\label{eq_nil_rels}
 \xy
 (0,0)*{\includegraphics[width=17px]{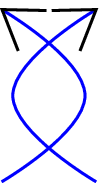}};
 (-3.9,-5.5)*{\scriptstyle i};
 (3.8,-5.5)*{\scriptstyle i};
 (3.2,0)*{\scriptstyle\lambda};
 \endxy=0,\quad\xy
 (0,0)*{\includegraphics[width=50px]{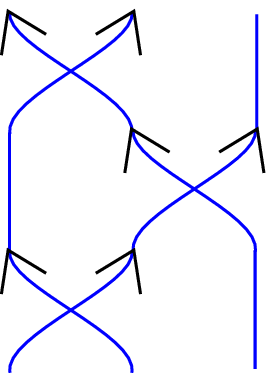}};
 (-9.5,-11.5)*{\scriptstyle i};
 (1,-11.5)*{\scriptstyle i};
 (7.3,-11.5)*{\scriptstyle i};
 (10,0)*{\scriptstyle \lambda};
 \endxy=\xy
 (0,0)*{\includegraphics[width=50px]{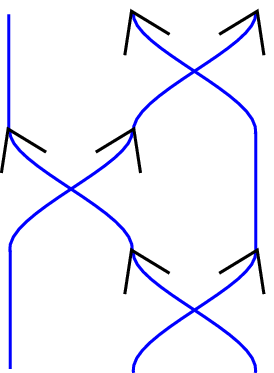}};
 (-7.3,-11.5)*{\scriptstyle i};
 (-1,-11.5)*{\scriptstyle i};
 (9.5,-11.5)*{\scriptstyle i};
 (10,0)*{\scriptstyle \lambda};
 \endxy
\end{equation}

\begin{equation}\label{eq_nil_dotslide}
 \xy
 (0,0)*{\includegraphics[width=9px]{res/figs/section23/upsimple}};
 (-1,-3.5)*{\scriptstyle i};
 \endxy\,\xy
 (0,0)*{\includegraphics[width=9px]{res/figs/section23/upsimple}};
 (1,-3.5)*{\scriptstyle i};
 (2.5,0)*{\scriptstyle\lambda};
 \endxy\;=\;\xy
 (0,1)*{\includegraphics[width=25px]{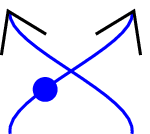}};
 (-5,-3)*{\scriptstyle i};
 (5,-3)*{\scriptstyle i};
 (5.5,0)*{\scriptstyle\lambda};
 \endxy\;-\xy
 (0,1)*{\includegraphics[width=25px]{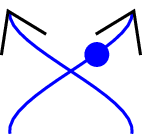}};
 (-5,-3)*{\scriptstyle i};
 (5,-3)*{\scriptstyle i};
 (5.5,0)*{\scriptstyle\lambda};
 \endxy\;=\xy
 (0,1)*{\includegraphics[width=25px]{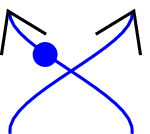}};
 (-5,-3)*{\scriptstyle i};
 (5,-3)*{\scriptstyle i};
 (5.5,0)*{\scriptstyle\lambda};
 \endxy\;-\xy
 (0,1)*{\includegraphics[width=25px]{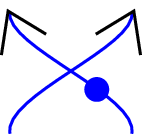}};
 (-5,-3)*{\scriptstyle i};
 (5,-3)*{\scriptstyle i};
 (5.5,0)*{\scriptstyle\lambda};
 \endxy.
\end{equation}
\end{enumerate}

\item For $i \neq j$:
\begin{equation} \label{eq_downup_ij-gen}
 \xy
 (0,0)*{\includegraphics[width=17px]{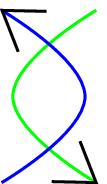}};
 (-3.9,-5.5)*{\scriptstyle i};
 (3.8,-5.5)*{\scriptstyle j};
 (3.2,0)*{\scriptstyle\lambda};
 \endxy=\xy
 (0,0)*{\includegraphics[width=9px]{res/figs/section23/upsimple}};
 (-1,-3.5)*{\scriptstyle i};
 \endxy\,\xy
 (0,0)*{\includegraphics[width=9px]{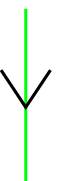}};
 (1,-3.5)*{\scriptstyle j};
 (2.5,0)*{\scriptstyle\lambda};
 \endxy\quad\text{and}\quad\xy
 (0,0)*{\includegraphics[width=17px]{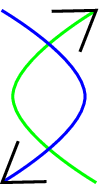}};
 (-3.9,-5.5)*{\scriptstyle i};
 (3.8,-5.5)*{\scriptstyle j};
 (3.2,0)*{\scriptstyle\lambda};
 \endxy=\xy
 (0,0)*{\includegraphics[width=9px]{res/figs/section23/downsimple}};
 (-1,-3.5)*{\scriptstyle i};
 \endxy\,\xy
 (0,0)*{\includegraphics[width=9px]{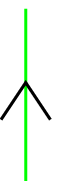}};
 (1,-3.5)*{\scriptstyle j};
 (2.5,0)*{\scriptstyle\lambda};
 \endxy
\end{equation}

\item \begin{enumerate}[(i)]
\item For $i \neq j$:
\begin{equation}\label{eq_r2_ij-gen}
 \xy
 (0,0)*{\includegraphics[width=17px]{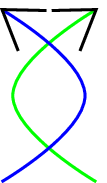}};
 (-3.9,-5.5)*{\scriptstyle i};
 (3.8,-5.5)*{\scriptstyle j};
 (3.2,0)*{\scriptstyle\lambda};
 \endxy =\begin{cases}\phantom{(i-j)(d}
 \xy
 (0,0)*{\includegraphics[width=9px]{res/figs/section23/upsimple}};
 (-1,-3.5)*{\scriptstyle i};
 \endxy\,\xy
 (0,0)*{\includegraphics[width=9px]{res/figs/section23/upsimplegreen}};
 (1,-3.5)*{\scriptstyle j};
 (2.5,0)*{\scriptstyle\lambda};
 \endxy, & \text{if}\,i\cdot j=0,\\[3ex]
 (i-j)\left(\xy
 (0,0)*{\includegraphics[width=9px]{res/figs/section23/upsimpledot}};
 (-1,-3.5)*{\scriptstyle i};
 \endxy\,\xy
 (0,0)*{\includegraphics[width=9px]{res/figs/section23/upsimplegreen}};
 (1,-3.5)*{\scriptstyle j};
 (2.5,0)*{\scriptstyle\lambda};
 \endxy-\xy
 (0,0)*{\includegraphics[width=9px]{res/figs/section23/upsimple}};
 (-1,-3.5)*{\scriptstyle i};
 \endxy\,\xy
 (0,0)*{\includegraphics[width=9px]{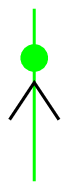}};
 (1,-3.5)*{\scriptstyle j};
 (2.5,0)*{\scriptstyle\lambda};
 \endxy\right), & \text{if}\,i\cdot j=-1.
 \end{cases}
\end{equation}
Notice that $(i-j)$ is just a sign, which takes into account the standard 
orientation of the Dynkin diagram.

\begin{equation}\label{eq_dot_slide_ij-gen}
\xy
 (0,1)*{\includegraphics[width=25px]{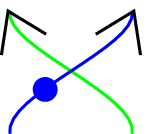}};
 (-5,-3)*{\scriptstyle i};
 (5,-3)*{\scriptstyle j};
 (5.5,0)*{\scriptstyle\lambda};
 \endxy\;=\xy
 (0,1)*{\includegraphics[width=25px]{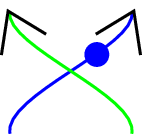}};
 (-5,-3)*{\scriptstyle i};
 (5,-3)*{\scriptstyle j};
 (5.5,0)*{\scriptstyle\lambda};
 \endxy\quad\text{and}\quad\xy
 (0,1)*{\includegraphics[width=25px]{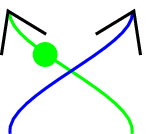}};
 (-5,-3)*{\scriptstyle i};
 (5,-3)*{\scriptstyle j};
 (5.5,0)*{\scriptstyle\lambda};
 \endxy\;=\xy
 (0,1)*{\includegraphics[width=25px]{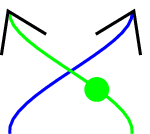}};
 (-5,-3)*{\scriptstyle i};
 (5,-3)*{\scriptstyle j};
 (5.5,0)*{\scriptstyle\lambda};
 \endxy.
\end{equation}

\item Unless $i = k$ and $i \cdot j=-1$:
\begin{equation}\label{eq_r3_easy-gen}
 \xy
 (0,0)*{\includegraphics[width=50px]{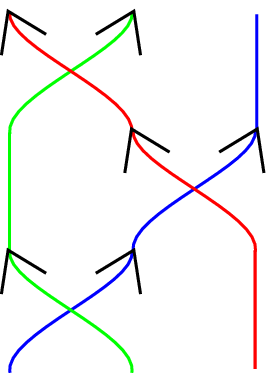}};
 (-9.1,-11.5)*{\scriptstyle i};
 (1,-11.5)*{\scriptstyle j};
 (7.1,-11.5)*{\scriptstyle k};
 (10,0)*{\scriptstyle \lambda};
 \endxy=\xy
 (0,0)*{\includegraphics[width=50px]{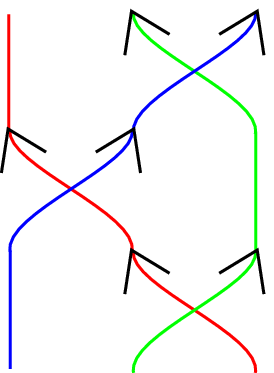}};
 (-9.3,-11.5)*{\scriptstyle i};
 (-1,-11.5)*{\scriptstyle j};
 (9.1,-11.5)*{\scriptstyle k};
 (10,0)*{\scriptstyle \lambda};
 \endxy
\end{equation}

\item For $i \cdot j =-1$:
\begin{equation}\label{eq_r3_hard-gen}
\xy
 (0,0)*{\includegraphics[width=50px]{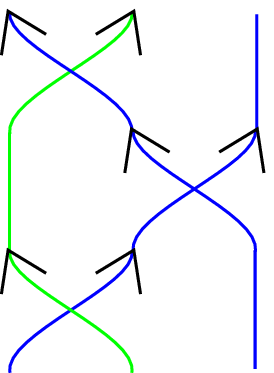}};
 (-9.1,-11.5)*{\scriptstyle i};
 (1,-11.5)*{\scriptstyle j};
 (7.1,-11.5)*{\scriptstyle i};
 (10,0)*{\scriptstyle \lambda};
 \endxy -\xy
 (0,0)*{\includegraphics[width=50px]{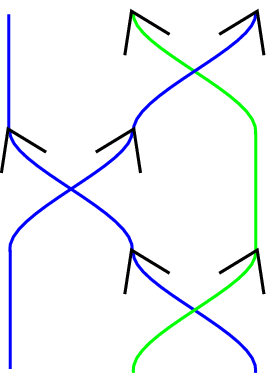}};
 (-9.3,-11.5)*{\scriptstyle i};
 (-1,-11.5)*{\scriptstyle j};
 (9.1,-11.5)*{\scriptstyle i};
 (10,0)*{\scriptstyle \lambda};
 \endxy =
 (i-j)\xy
 (0,0)*{\includegraphics[width=60px]{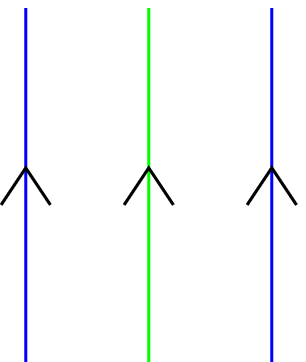}};
 (-9.4,-11.5)*{\scriptstyle i};
 (1,-11.5)*{\scriptstyle j};
 (7.7,-11.5)*{\scriptstyle i};
 (12,0)*{\scriptstyle \lambda};
 \endxy.
\end{equation}
\end{enumerate}
\item The additive, linear composition functor $\glcat(\lambda,\lambda')
\times \glcat(\lambda',\lambda'') \to \glcat(\lambda,\lambda'')$ is given on
1-morphisms of $\glcat$ by
\begin{equation}
  \mathcal{E}_{\jj}\mathbf{1}_{\lambda'}\{t'\} \times \mathcal{E}_{\ii}\onel\{t\} \mapsto
  \mathcal{E}_{\jj\ii}\onel\{t+t'\}
\end{equation}
for $\ii_{\Lambda}=\lambda-\lambda'$, and on 2-morphisms of $\glcat$ by juxtaposition of
diagrams, e.g.
\[
\left(\xy
 (0,0)*{\includegraphics[width=95px]{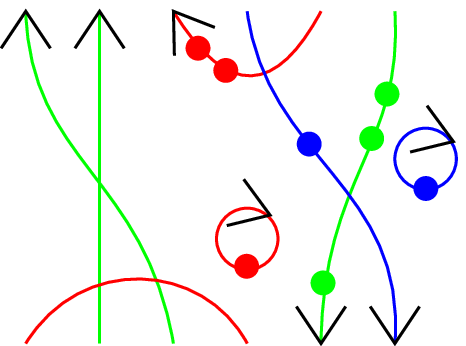}};
 (-16,0)*{\scriptstyle \lambda};
 (19,0)*{\scriptstyle\lambda'};
 \endxy\right)\times\left(\xy
 (0,0)*{\includegraphics[width=35px]{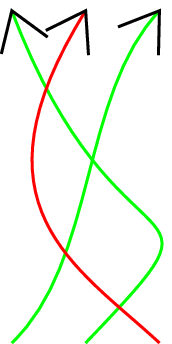}};
 (-8,0)*{\scriptstyle \lambda'};
 (8,0)*{\scriptstyle\lambda''};
 \endxy\right)\mapsto\xy
 (0,0)*{\includegraphics[width=125px]{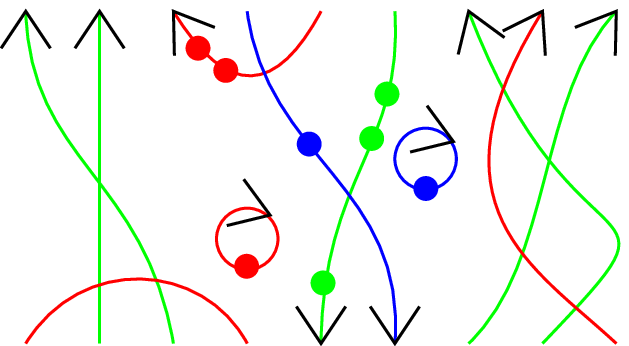}};
 (-20,0)*{\scriptstyle \lambda};
 (22,0)*{\scriptstyle\lambda''};
 \endxy.
\]
\end{itemize}
\end{defn}
This concludes the definition of $\glcat$. 
\vskip0.5cm
Note that for two $1$-morphisms $x$ and $y$ in 
$\glcat$ the 2-hom-space $\HomGL(x,y)$ 
only contains 2-morphisms of degree zero and is therefore finite dimensional. 
Following Khovanov and Lauda we introduce the graded 2-hom-space 
\[
\HOMGL(x,y)=\bigoplus_{t\in\bZ}\HomGL(x\{t\},y),
\]
which is infinite dimensional. We also define the $2$-category 
$\glcat^*$ which has the same objects and $1$-morphisms as $\glcat$, 
but for two $1$-morphisms $x$ and $y$ the vector space of 2-morphisms is 
defined by 
\begin{equation}
\label{eq:ast}
\glcat^*(x,y)=\HOMGL(x,y).
\end{equation}

Note that $\Ucat$ is defined just as $\glcat$, but labelling 
all the regions of the diagrams with $\mathfrak{sl}_n$-weights, i.e. 
elements of 
$\mathbb{Z}^{n-1}$. One also has to renormalise the signs of the 
left cups and caps, so that the bubble relations all become 
dependent on the $\mathfrak{sl}_n$-weights. For more details, see~\cite{msv1}.
\subsubsection{The $q$-Schur 2-algebra}
The categorification (in the sense of Section~\ref{sec-techgrgrcat}) of $S_q(n,n)$ is now obtained from 
$\glcat$ by taking a quotient. 
\begin{defn}
The 2-category $\Scat(n,n)$ is the quotient of $\glcat$ by the ideal 
generated by all 2-morphisms containing a region with a label not in 
$\Lambda(n,n)$. 
\end{defn}
We remark that we only put real bubbles, whose interior has a label outside 
$\Lambda(n,n)$, equal to zero. To see what happens to a fake bubble, one 
first has to write it in terms of real bubbles with the opposite orientation 
using the infinite Grassmannian relation~\eqref{eq_infinite_Grass}.

A main result of~\cite{msv1}, given in Theorem 7.11 in that paper, is the following.  
\begin{thm}
Let $K^{\oplus}_0(\dot{\mathcal S}(n,n))$ denote the split Grothendieck 
group of the Karoubi envelope of ${\mathcal S}(n,n)$. The 
$\mathbb{Z}[q,q^{-1}]$-linear map 
\[
\gamma_S\colon S_q^{\mathbb{Z}}(n,n)\to K^{\oplus}_0(\dot{\mathcal S}(n,n)),
\]
determined by 
\[
\gamma_S(E_{\ii}1_{\lambda})=[\mathcal{E}_{\ii}1_{\lambda}]
\]
is an isomorphism of algebras. 
\end{thm}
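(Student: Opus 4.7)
The plan is to establish the isomorphism in four conceptual steps: show that $\gamma_S$ is well-defined, that it is an algebra homomorphism, that it is surjective, and finally (the hard part) that it is injective.

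First I would verify that $\gamma_S$ is well-defined by checking that the defining relations of $S_q^{\mathbb{Z}}(n,n)$, namely \eqref{eq:schur1}--\eqref{eq:schur4}, are satisfied on the level of the split Grothendieck group of $\dot{\mathcal S}(n,n)$. The orthogonality of idempotents \eqref{eq:schur1} and the completeness \eqref{eq:schur2} hold because distinct weights give non-isomorphic identity 1-morphisms and, by the quotient definition of $\Scat(n,n)$, every 1-morphism containing a region labelled outside $\Lambda(n,n)$ is the zero 1-morphism. Relation \eqref{eq:schur3} is built into the definition of $\glcat$ and hence of $\Scat(n,n)$. The key commutator relation \eqref{eq:schur4} decategorifies directly from the $EF$--$FE$ decompositions in~\eqref{eq:EF}: summing the bubble contributions and evaluating by~\eqref{eq:bubb_deg0} and~\eqref{eq_infinite_Grass} produces exactly the quantum integer $[\overline{\lambda}_i]1_{\lambda}$ in $K_0^{\oplus}$, using that $[a]=q^{a-1}+q^{a-3}+\cdots+q^{-(a-1)}$ matches the graded count of terms. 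That $\gamma_S$ respects products is automatic from the composition structure in $\glcat$.

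Surjectivity is comparatively straightforward. The divided powers $E_{\pm i}^{(a)}1_{\lambda}$ generate $S_q^{\mathbb{Z}}(n,n)$ as a $\mathbb{Z}[q,q^{-1}]$-algebra, and in $\dot{\mathcal S}(n,n)$ these correspond to indecomposable summands of $\mathcal{E}_{\pm i}^{a}\onel$, extracted via the nilHecke idempotents supplied by~\eqref{eq_nil_rels} and~\eqref{eq_nil_dotslide}. Thus the image of $\gamma_S$ contains a generating set of $K^{\oplus}_0(\dot{\mathcal S}(n,n))$, after one observes (as in Khovanov--Lauda's work on $\Ucat$) that every indecomposable 1-morphism is a summand of some $\mathcal{E}_{\ii}\onel$. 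The passage to the $\mathfrak{gl}_n$-weighted version $\glcat$ and then to its $\Scat(n,n)$ quotient introduces no new indecomposables beyond those killed by the weight restriction.

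The main obstacle is injectivity. The strategy I would follow is to construct an exact, graded, categorical $\Scat(n,n)$-action on a suitable target 2-category whose decategorification is manifestly $S_q(n,n)$ acting faithfully on a known module, and then use the resulting commutative square to pinch $\gamma_S$. A natural candidate is the 2-category of graded bimodules over the equivariant cohomology rings $H^*_{\mathrm{GL}_n}(\mathrm{Fl}(\lambda))$ of partial flag varieties indexed by $\lambda\in\Lambda(n,n)$, with $\mathcal{E}_{\pm i}\onel$ sending to the natural cohomology-of-correspondence bimodules between $\mathrm{Fl}(\lambda)$ and $\mathrm{Fl}(\lambda\pm\alpha_i)$. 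Verifying that these bimodules satisfy the defining relations of $\glcat$ (and in particular vanish for weights outside $\Lambda(n,n)$) reduces to localisation computations in equivariant cohomology of Grassmannian-type correspondences; the bubble and infinite Grassmannian relations~\eqref{eq:redtobubbles},~\eqref{eq_infinite_Grass} correspond to Chern-class identities. The induced map on $K_0^{\oplus}$ recovers the faithful representation $\psi_{n,n}\colon S_q(n,n)\to\mathrm{End}_{\mathbb C}(V^{\otimes n})$ of~\eqref{eq:psi} after base change, and faithfulness of that representation together with surjectivity of $\gamma_S$ forces $\gamma_S$ itself to be injective. The delicate points are the integrality of the identification (to stay within $S_q^{\mathbb{Z}}(n,n)$ rather than its rationalisation) and the verification of the non-trivial $\mathfrak{sl}_2$ relation~\eqref{eq:EF} in the bimodule setting, both of which I expect to require a careful cohomological argument along the lines of Khovanov--Lauda's original flag variety calculation.
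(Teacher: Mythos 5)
This theorem is not proved in the present paper; it is stated as a quotation of Theorem 7.11 in~\cite{msv1}, so there is no proof in this document to compare against. Your four-step plan is a sound outline and is broadly consistent with the strategy used in~\cite{msv1}: well-definedness of $\gamma_S$ from the defining relations of $\Scat(n,n)$, surjectivity from divided-power idempotents supplied by the nilHecke relations (which is precisely why one passes to the Karoubi envelope $\dot{\mathcal S}(n,n)$), and injectivity from a categorical action whose decategorification acts faithfully. Two remarks on your injectivity step. First, rather than constructing a flag-variety 2-representation of $\Scat(n,n)$ from scratch, it is more economical to invoke Khovanov--Lauda's existing action of $\Ucat$ on bimodules over equivariant cohomology rings of partial flag varieties and check that this action factors through $\Psi_{n,n}\colon\Ucat\to\Scat(n,n)$, since all $1$-morphisms carrying a region labelled outside $\Lambda(n,n)$ already act by zero on the relevant bimodules; this spares you re-verifying every 2-relation in the bimodule category. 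Second, there is a purely algebraic alternative that avoids geometry entirely: compute the graded dimensions of 2-Hom spaces in $\dot{\mathcal S}(n,n)$ and match them against the semilinear form on $S_q(n,n)$; non-degeneracy of that form, together with the already established surjectivity, forces $\gamma_S$ to be injective. You have correctly identified the genuinely delicate points, namely keeping the argument $\mathbb{Z}[q,q^{-1}]$-integral and verifying~\eqref{eq:EF} on the target side, and either of these routes requires careful attention to both.
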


Recall also (see Definition 4.1 in~\cite{msv1}) that there is an 
essentially surjective and full additive 2-functor
\[
\Psi_{n,n}\colon \Ucat\to \Scat(n,n),
\]
whose precise definition is not relevant here. Up to signs related to 
cups and caps, it is obtained by mapping any string diagram to itself 
and applying $\phi_{n,n}$ to the labels of the regions. By convention, any diagram 
with a region labelled $*$ is taken to be zero. 
It is important to note that 
\[
K^{\oplus}_0(\Psi_{n,n})\colon K^{\oplus}_0(\UcatD)\otimes_{\mathbb{Z}[q,q^{-1}]}\mathbb{C}(q)
\to K^{\oplus}_0(\ScatD(n,n))\otimes_{\mathbb{Z}[q,q^{-1}]}\mathbb{C}(q)
\]
corresponds to the aforementioned surjective homomorphism
\[
\psi_{n,n}\colon \U\to S_q(n,n).
\] 
\subsubsection{The cyclotomic KLR-algebras}
In this subsection, we recall the definition of the cyclotomic KLR-algebras, 
due to Khovanov and Lauda~\cite{kl1},~\cite{kl3} and, independently, 
to Rouquier~\cite{rou}. We also recall two important results about them. 

Fix $\nu\in\mathbb{Z}_{\leq 0}[I]$. Let $\mathrm{Seq}(\nu)$ be the set of all 
sequences $\ii=(-i_1,-i_2,\cdots,-i_m)$, such that $i_k\in I$ for 
each $k$ and $\nu_j=\#\{k\mid i_k=j\}$. 
\begin{defn} For any $\ii,\jj\in \mathrm{Seq}(\nu)$ and any 
$\mathfrak{gl}_n$-weight $\lambda\in\mathbb{Z}^{n}$, let 
\[
{}_{\ii}R(\nu)_{\jj}\subset 
\mathrm{End}_{\glcat}({\mathcal E}_{\jj}1_{\lambda},{\mathcal E}_{\ii}1_{\lambda})
\]  
be the subalgebra containing only diagrams which are oriented downwards. So, 
only strands oriented downwards with dots and crossings are allowed. No 
strands oriented upwards, no cups and no caps. The relations 
in $\glcat$ involving only downward strands do not depend on $\lambda$. 
Therefore, the definition above makes sense. In~\cite{kl1}, 
the authors do not label the regions of the diagrams. 

Then $R(\nu)$ is defined as 
\[
R(\nu)=\bigoplus_{\ii,\jj\in \mathrm{Seq}(\nu)} {}_{\ii}R(\nu)_{\jj}.
\]

The ring $R$ is defined as 
\[
R=\bigoplus_{\nu\in\mathbb{Z}_{\leq 0}[I]}R(\nu).
\]
\end{defn}
As remarked above, the definition of $R(\nu)$ does not depend on $\lambda$. 
However, when we use a particular $\lambda$, we will write $R(\nu)1_{\lambda}$. 

Note that $R(\nu)$ is unital, whereas $R$ has infinitely many idempotents. 

Let $R(\nu)\text{-}\mathrm{p\textbf{Mod}}_{\mathrm{gr}}$ be the category of graded, 
finitely-generated, projective $R(\nu)$-modules and define 
\[
R\text{-}\mathrm{p\textbf{Mod}}_{\mathrm{gr}}=
\bigoplus_{\nu\in\mathbb{Z}_{\leq 0}[I]}R(\nu)\text{-}\mathrm{p\textbf{Mod}}_{\mathrm{gr}}.
\]

In Proposition 3.18 in~\cite{kl1}, Khovanov and Lauda showed that 
$R\text{-}\mathrm{p\textbf{Mod}}_{\mathrm{gr}}$ categorifies the negative half of 
$\U$ and $R(\nu)\text{-}\mathrm{p\textbf{Mod}}_{\mathrm{gr}}$ categorifies the 
$\nu$-root space. 
\vskip0.5cm
We can now recall the definition of the \textit{cyclotomic KLR-algebras}. The reader can find more details in~\cite{kl1} or~\cite{rou}, for example. 
\begin{defn}
Choose a dominant $\Ugl$-weight $\lambda\in\Lambda(n,n)^+$. 
Let $R(\nu;\lambda)$ be the quotient algebra 
of $R(\nu)1_{\lambda}$ by the ideal generated by all diagrams of the form
\begin{align*}
\xy
(0,0)*{\includegraphics[width=45px]{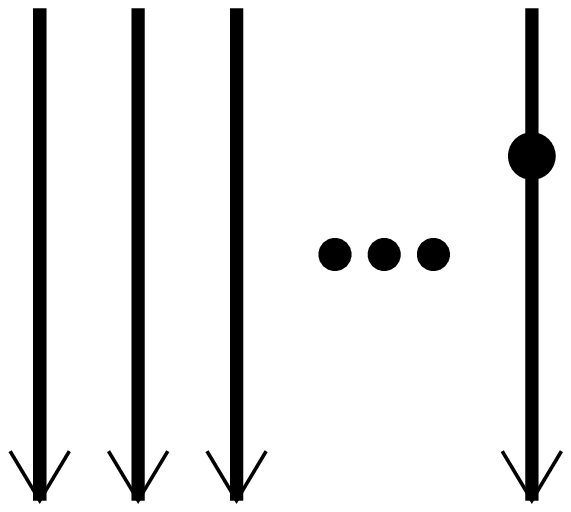}};
(-6.5,-8.5)*{\scriptstyle i_1};
(-3.5,-8.5)*{\scriptstyle i_2};
(-0.5,-8.5)*{\scriptstyle i_3};
(7.5,-8.5)*{\scriptstyle i_m};
(10,5)*{\scriptstyle \overline{\lambda}_m};
(8.8,0.3)*{\scriptstyle\lambda};
\endxy.
\end{align*}
Define 
\[
R_{\lambda}=\bigoplus_{\nu\in\mathbb{Z}_{\leq 0}[I]}R(\nu;\lambda).
\]
\end{defn}
Recall that $\overline{\lambda}_m=\lambda_m-\lambda_{m+1}$, the $m$-th entry 
of the $\mathfrak{sl}_n$-weight corresponding to $\lambda$. 
 
Note that we mod out by relations involving dots on the last strand, rather 
than the first strand as in~\cite{kl1}. This is to make the definition 
compatible with the other definitions in Section~\ref{sec-web}. 

It turns out that $R_{\lambda}$ is a finite dimensional, unital algebra. Let 
$R_{\lambda}\text{-}\mathrm{p\textbf{Mod}}_{\mathrm{gr}}$ be its category of 
finite dimensional, graded, projective modules and let $K^{\oplus}_0(R_{\lambda})=K^{\oplus}_0(R_{\lambda}\text{-}\mathrm{p\textbf{Mod}}_{\mathrm{gr}})$ 
be the split Grothendieck group of that category.

There is a graded categorical action (in the sense of Section~\ref{sec-techhigherrep}) of $\Ucat$ on 
$R_{\lambda}\text{-}\mathrm{p\textbf{Mod}}_{\mathrm{gr}}$ and  
Brundan and Kleshchev~\cite{bk2} (see also~\cite{kaka},
~\cite{lv},~\cite{vv} and~\cite{web1}) 
proved a conjecture by Khovanov and Lauda, i.e. the following holds.
\begin{thm}
\label{thm:bk}
We have 
\[
K^{\oplus}_0(R_{\lambda})\cong V_{\lambda}^{\mathbb{Z}}
\]
as $\UZ(\mathfrak{sl}_n)$-modules. Here 
$V_{\lambda}^{\mathbb{Z}}$ is the irreducible $\UZ(\mathfrak{sl}_n)$-module with 
highest weight $\overline{\lambda}$.  
\end{thm}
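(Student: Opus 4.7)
The plan is to follow the strategy of Kang--Kashiwara~\cite{kaka} (or, alternatively, Webster~\cite{web1}), whose approach is more self-contained than the original Brundan--Kleshchev proof via graded cyclotomic Hecke algebras. The first task is to set up a categorical $\Ucat$-action on
\[
\bigoplus_{\nu\in\mathbb{Z}_{\leq 0}[I]}R(\nu;\lambda)\text{-}\mathrm{p\textbf{Mod}}_{\mathrm{gr}}.
\]
For each $i\in I$, one defines an induction functor $F_i$ and a restriction functor $E_i$ between $R(\nu;\lambda)$-pmod and $R(\nu-i;\lambda)$-pmod, using the natural inclusion of $R(\nu;\lambda)$ into $R(\nu-i;\lambda)$ obtained by adding a strand with label $i$ on the right. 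The dots, crossings and cups/caps of $\Ucat$ then act on these functors essentially tautologically, by the very definition of $R(\nu)$ as the downward-strand subalgebra of $\mathrm{End}_{\glcat}(\mathcal{E}_{\ii}1_{\lambda})$.

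The key step is then to verify that, passing to the split Grothendieck group, the operators $[F_i]$ and $[E_i]$ satisfy the defining relations of $\UZ(\mathfrak{sl}_n)$ acting on the integrable highest weight module $V_\lambda^{\mathbb{Z}}$. The Serre relations and the $K_iK_j$-type weight relations are essentially formal consequences of the $\Ucat$-action. The hard relation is the commutator
\[
E_iF_j - F_jE_i \cong \delta_{ij}\bigoplus_{[\overline{\mu}_i]}\mathrm{Id},
\]
when restricted to the $\mu$-weight space, where the graded multiplicity $[\overline{\mu}_i]$ records the required $q$-integer. This is proved by exhibiting a short exact sequence of functors on $R(\nu;\lambda)$-pmod that categorifies the Serre-style identity; the cyclotomic quotient relation (which kills $\overline{\lambda}_m+1$ dots on the last strand) is exactly what truncates the infinite sum one would get for $R(\nu)$ to a finite direct sum of shifts of the identity functor that matches $[\overline{\mu}_i]$. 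This truncation computation, carried out using the nilHecke and bubble relations of $\glcat$, is the main technical obstacle and the real content of the theorem.

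Once the categorical action is established, the Grothendieck group $\bigoplus_\nu K^{\oplus}_0(R(\nu;\lambda))$ is a $\UZ(\mathfrak{sl}_n)$-module. The class of the rank-one projective $R(0;\lambda)=\mathbb{C}$ sits in weight $\overline{\lambda}$ and is annihilated by every $[E_i]$ by the cyclotomic relation, hence is a highest weight vector. It generates the whole Grothendieck group because every $R(\nu;\lambda)$ is built by repeatedly adding strands, i.e.\ by iterated application of $F_i$'s. This already shows that $K^{\oplus}_0(R_\lambda)$ is a quotient of $V_\lambda^{\mathbb{Z}}$. To conclude equality, it suffices to show that $K^{\oplus}_0(R_\lambda)$ is free over $\mathbb{Z}[q,q^{-1}]$ with the right graded rank in each weight space. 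One obtains this by constructing, for every standard basis element of $V_\lambda^{\mathbb{Z}}$, an explicit non-zero indecomposable projective in the correct weight space, for example using the graded cellular structure on $R(\nu;\lambda)$ (or, equivalently, the graded Specht/standard module theory); alternatively one uses the abstract non-degeneracy argument of Kang--Kashiwara based on the strong $\mathfrak{sl}_2$-categorification axioms of Chuang--Rouquier, which forces the surjection $V_\lambda^{\mathbb{Z}}\twoheadrightarrow K^{\oplus}_0(R_\lambda)$ to be an isomorphism because a proper quotient of an irreducible integrable highest weight module is zero.
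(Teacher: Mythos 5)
The paper does not prove Theorem~\ref{thm:bk} at all; it is stated as a citation. The sentence immediately preceding it explicitly attributes the result to Brundan and Kleshchev~\cite{bk2} and lists Kang--Kashiwara~\cite{kaka}, Lauda--Vazirani~\cite{lv}, Varagnolo--Vasserot~\cite{vv} and Webster~\cite{web1} as alternative sources, so there is no in-paper argument to compare against. Your proposal is essentially a high-level summary of the Kang--Kashiwara/Webster route, and the overall architecture you describe is the correct one: set up the categorical $\Ucat$-action on $\bigoplus_\nu R(\nu;\lambda)\text{-}\mathrm{p\textbf{Mod}}_{\mathrm{gr}}$, decategorify, identify a highest weight vector in weight $\overline{\lambda}$, show it generates, and then upgrade the resulting surjection to an isomorphism by a dimension count or non-degeneracy argument.

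That said, be aware that what you have written is a roadmap rather than a proof: the two steps you flag as ``the main technical obstacle'' really are the entire content of the theorem, and each of them is a substantial paper-length argument. The categorical commutator relation on the cyclotomic quotient (that $E_iF_i$ and $F_iE_i$ differ by $[\overline{\mu}_i]$ copies of the identity on the $\mu$-weight category) is not a formal consequence of anything; it requires a careful analysis of the kernel of the natural map, and is exactly what Kang--Kashiwara prove over many pages. Similarly, the final injectivity step needs more care than ``a proper quotient of an irreducible module is zero''---that argument works over $\mathbb{C}(q)$ but not directly over $\mathbb{Z}[q,q^{-1}]$, where $V_\lambda^{\mathbb{Z}}$ has many proper quotients; one must first base change to $\mathbb{C}(q)$ to see the surjection is an isomorphism there, and then use that $K_0^\oplus(R_\lambda)$ is $\mathbb{Z}[q,q^{-1}]$-free (which it is, being free on shifts of indecomposable projectives) together with a rank comparison, or else use the Shapovalov-form non-degeneracy argument directly. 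You gesture at both options, which is good, but you should not conflate them. Also note one small inaccuracy in your setup: what you want is that $[R(0;\lambda)]$ generates an integrable highest weight module, from which it follows that the surjection from the Verma factors through the Weyl module $V_\lambda^{\mathbb{Z}}$; you implicitly use this but should say so, since it is what lets you write a map out of $V_\lambda^{\mathbb{Z}}$ rather than merely out of the Verma.
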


Rouquier's showed that, in a certain sense, 
$R_{\lambda}$ is the universal categorification of 
$V_{\lambda}$. For a proof of the following 
result, see Lemma 5.4, Proposition 5.6 and Corollary 5.7 in~\cite{rou}.

\begin{prop}
\label{prop:rouquier}
Let $\mathcal V$ be any additive idempotent complete category, which allows 
an integrable graded categorical action by $\Ucat$ (for the precise definition 
see~\cite{rou}). Suppose $V_h$ is a highest weight object in $\mathcal V$, i.e 
an object that is killed by ${\mathcal E}_{+i}$, for all $i\in I$, and 
$\mathrm{End}_{\mathcal V}(V_h)\cong \mathbb{C}$. Suppose also 
that any object in $\mathcal V$ is a direct summand of $XV_h$, for some 
object $X\in\Ucat$. Then there exists an equivalence of 
categorical $\Ucat$-representations
\[
\Phi\colon R_{\lambda}\text{-}\mathrm{p\textbf{Mod}}_{\mathrm{gr}}\to {\mathcal V}.
\]
\end{prop}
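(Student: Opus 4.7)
The plan is to construct $\Phi$ explicitly on a generating set of projective objects and then check it extends to a fully faithful and essentially surjective functor of categorical $\Ucat$-representations. First I would send the trivial idempotent $1_{\emptyset} \in R_\lambda$ to $V_h$, and, for any sequence $\ii = (-i_1,\ldots,-i_m) \in \seq(\nu)$, send the projective $R(\nu;\lambda)1_{\ii}$ to $\cE_{\ii}V_h \in \cV$, where $\cE_{\ii}$ is interpreted via the given categorical action of $\Ucat$ on $\cV$. On morphisms, any element of $1_{\jj}R(\nu;\lambda)1_{\ii}$ is, by definition, a $\mathbb{C}$-linear combination of downward-oriented string diagrams in $\glcat$ with regions labelled by (a representative of) $\lambda$; the action of $\Ucat$ on $\cV$ then turns such a diagram into a 2-morphism $\cE_{\ii}V_h \to \cE_{\jj}V_h$. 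Composition is preserved tautologically, and because the categorical action is graded the construction preserves the $q$-grading.

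The first thing to check is that $\Phi$ is well-defined, i.e. that the cyclotomic defining relation of $R_\lambda$ is automatically killed. This is exactly where the highest-weight hypothesis $\cE_{+i}V_h = 0$ enters. A diagram with $\overline{\lambda}_m$ dots on the last (rightmost) strand can, using the adjunction 2-morphisms~\eqref{eq_biadjoint1}--\eqref{eq_biadjoint2} and the ``bubble-slide'' identities~\eqref{eq:redtobubbles}--\eqref{eq:EF}, be rewritten so that it factors through a 1-morphism of the form $\cE_{\ii'}\cE_{+i_m}$ applied to $V_h$, which is zero in $\cV$. A careful case analysis, analogous to the one in the proof of Theorem~\ref{thm:bk}, shows the converse implication also holds and allows $\Phi$ to be promoted to the idempotent completion, which is $R_\lambda\text{-}\mathrm{p\textbf{Mod}}_{\mathrm{gr}}$ itself by standard Morita-style arguments. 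The intertwining of $\Phi$ with the $\Ucat$-action is then automatic since, on both sides, the action is generated by the same 1- and 2-morphisms of $\glcat$.

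Essential surjectivity is nearly immediate from the hypothesis: any object of $\cV$ is a direct summand of $XV_h$ for some 1-morphism $X$ of $\Ucat$, and decomposing $X$ into a sum of $\cE_{\ii}$'s places it in the image of $\Phi$ after taking summands. The real obstacle will be fullness and faithfulness, i.e. showing that the canonical map
\[
\HOMGL\!\bigl(\cE_{\ii}1_{\lambda},\cE_{\jj}1_{\lambda}\bigr)\big/\bigl(\text{cyclotomic ideal}\bigr) \;\longrightarrow\; \Hom_{\cV}\!\bigl(\cE_{\ii}V_h,\cE_{\jj}V_h\bigr)
\]
is a graded isomorphism. Fullness reduces to the claim that every morphism in $\cV$ between these two objects can be represented by a downward-oriented diagram: upward-oriented strands can be eliminated using bi-adjointness~\eqref{eq_biadjoint1}--\eqref{eq_biadjoint2} and then collapsed via $\cE_{+i}V_h = 0$. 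Faithfulness is the hard part, and I would establish it by combining integrability of the action with Theorem~\ref{thm:bk}: the categorical $\Ucat$-subrepresentation of $\cV$ generated by $V_h$ has split Grothendieck group a quotient of $V_\lambda^{\mathbb Z}$, while the domain has Grothendieck group exactly $V_\lambda^{\mathbb Z}$, and the condition $\End_{\cV}(V_h) \cong \mathbb C$ together with positivity of graded dimensions forces weight-space-by-weight-space equality of graded ranks of Hom-spaces, ruling out additional relations in $\cV$. It is this dimension-counting matching — requiring both integrability and the minimality of $V_h$ — that I expect to be the main technical obstacle.
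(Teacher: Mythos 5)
The paper does not actually give its own proof of this proposition: immediately after the statement it defers to Rouquier, citing his remarks in Section 5.1.2 and the content of his proofs of Lemma 5.4 and Proposition 5.6 in~\cite{rou} (with Webster's observation that the argument carries over between Rouquier's and Khovanov--Lauda's frameworks). So there is no ``paper proof'' to compare line-by-line; what follows is a comparison against Rouquier's argument, which is what the paper implicitly endorses.

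Your construction of $\Phi$ on objects ($1_\ii \mapsto \mathcal{E}_{\ii}V_h$) and on morphisms, and your observation that the cyclotomic ideal is killed because the relevant diagrams factor through $\mathcal{E}_{+i_m}V_h = 0$, are correct and match the opening of Rouquier's argument. Essential surjectivity from the generation hypothesis is also fine.

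The gap is in your fullness argument. You write that fullness ``reduces to the claim that every morphism in $\mathcal{V}$ between these two objects can be represented by a downward-oriented diagram,'' and then manipulate strands using bi-adjointness. But this presupposes that the morphism in question already \emph{is} a diagram: the categorical action only hands you a map from $2$-morphisms of $\Ucat$ into $\mathrm{Hom}_{\mathcal{V}}$; it does not tell you that map is surjective, which is precisely what fullness asserts. The argument is circular as stated. The way to close it (and the heart of Rouquier's Lemma 5.4) is to apply bi-adjointness to the \emph{Hom-space}, not to individual diagrams: one has $\mathrm{Hom}_{\mathcal{V}}(\mathcal{E}_{\ii}V_h,\mathcal{E}_{\jj}V_h)\cong\mathrm{Hom}_{\mathcal{V}}(V_h,\mathcal{E}_{\kk}V_h)$ for a suitable mixed signed sequence $\kk$, and then integrability plus the isomorphisms $\mathcal{E}_{+i}\mathcal{E}_{-i}\onel\cong\mathcal{E}_{-i}\mathcal{E}_{+i}\onel\oplus\cdots$ (which are realised by explicit $2$-morphisms of $\Ucat$) let you decompose $\mathcal{E}_{\kk}V_h$ as a finite direct sum of graded shifts of $V_h$. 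The hypothesis $\mathrm{End}_{\mathcal{V}}(V_h)\cong\mathbb{C}$ then shows the entire Hom-space is spanned by images of $2$-morphisms of $\Ucat$ (fullness) \emph{and} simultaneously pins its graded dimension, giving faithfulness by a dimension count. This subsumes your separate Grothendieck-group argument and, importantly, avoids the appeal to Theorem~\ref{thm:bk}: Rouquier's proof is self-contained and logically upstream of Brundan--Kleshchev's identification $K^{\oplus}_0(R_\lambda)\cong V_\lambda^{\mathbb{Z}}$, whereas your route imports it. In the present paper's context that import is not strictly a circularity, but it is a heavier and more indirect tool than the situation calls for, and the crucial step --- passing from an isomorphism of Grothendieck groups and an equality of Euler forms to an equality of graded dimensions of individual Hom-spaces --- is left entirely implicit in your sketch.
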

There are some subtle differences between 
Rouquier's approach to categorification and Khovanov and Lauda's, compare to Section~\ref{sec-techhigherrep}. However, 
Proposition~\ref{prop:rouquier} holds in both set-ups, as already remarked by 
Webster in Section 1.4 in~\cite{web1}.

The proof of Proposition~\ref{prop:rouquier} consists of Rouquier's remarks in Section 5.1.2 and of the 
contents of his proofs of Lemma 5.4 and Proposition 5.6 in~\cite{rou}, which 
only rely on the assumptions in the statement of our 
Proposition~\ref{prop:rouquier} and the fact that $\mathcal{E}_{+i}$ and 
$\mathcal{E}_{-i}$ are bi-adjoint in $\Ucat$, for any $i\in I$.

The precise 
definition of the units and the counits, i.e. the cups and the caps, 
is not relevant for 
the validity of the proof. Note that we have included the hypothesis 
$$
\mathrm{End}_{\mathcal V}(V_h)\cong \mathbb{C}
$$  
in Proposition~\ref{prop:rouquier}, which is not one of 
Rouquier's assumptions. There are categorifications of 
$V_{\lambda}$ without that property, see Conjecture 7.16 in~\cite{msv1} for 
example. However, in order to get a categorification which is 
really equivalent to $R_{\lambda}\text{-}\mathrm{p\textbf{Mod}}_{\mathrm{gr}}$, 
i.e. with hom-spaces of the same graded dimension, 
one needs to add that assumption because it holds in the latter category. 

We do not need the precise definition of $\Phi$ here. 
In order to contain the length of this thesis within reasonable boundaries, 
we will not explain it here.
\subsection{The $\mathfrak{sl}_{3}$ web algebra ${\mathcal W}_S^c$}\label{sec-webalg}
\setcounter{subsubsection}{1}
For the rest of this section, let $S$ be a fixed sign string of length $n$. 
We are going to define the \textit{web algebra} ${\mathcal W}_S^c$.
\begin{defn}
\textbf{(Web algebra)}\label{defn:webalg} For $u,v\in B_S$, we define 
\[
{}_{u}\mathcal{W}^c_{v}=\F^c(u^*v)\{n\},
\] 
where $\{n\}$ denotes a grading shift upwards in degree by $n$.

The \textit{web algebra} ${\mathcal W}_S^c$ is defined by 
\[
{\mathcal W}_S^c=\bigoplus_{u,v\in B_S}{}_{u}\mathcal{W}^c_{v}.
\]
 
The multiplication on ${\mathcal W}_S^c$ is defined by taking 
\[
{}_{u}{\mathcal W}^c_{v_1}\otimes {}_{v_2}{\mathcal W}^c_{w} \to 
{}_{u}{\mathcal W}^c_{w}
\]
to be zero, if $v_1\ne v_2$, and by the map to be defined in Definition~\ref{multfoam}, 
if $v_1=v_2=v$. 
\end{defn}
\begin{rem}
In Proposition~\ref{prop:multqgrade} we prove that the 
multiplication foam always has
degree $n$, so the degree shift in the definition above 
makes ${\mathcal W}_S^0$ into a graded algebra and, for any $c\ne 0$, it 
makes ${\mathcal W}_S^c$ into a filtered algebra. 
\end{rem}
\begin{defn} \textbf{(Multiplication of closed webs)}
\label{multfoam}
The \textit{multiplication} 
\[
{}_{u}{\mathcal W}^c_{v}\otimes {}_{v}\mathcal{W}^c_{w} \to 
{}_{u}{\mathcal W}^c_{w}
\] 
is induced by the \textit{multiplication foam}  
\[
m_{u,v,w}\colon u^*vv^*w \xrightarrow{Id_{u^*}m_{v}Id_{w}}u^*w,
\]
where $m_v\colon vv^*\to \Ver_{n}$, with $\Ver_{n}$ being the web of $n$ 
parallel oriented vertical line segments, is defined by the following 
inductive algorithm.
\begin{enumerate}
\item Express $v$ using the growth algorithm, label each level of the 
growth algorithm starting from zero. Then form $vv^*$.
\item At the $k$th level in the growth algorithm, \textit{resolve} 
the corresponding pair 
of arc, H or Y-rules in $v$ and $v^*$ by applying the foams.
\begin{align}
 \xy(0,0)*{\label{multrules}\includegraphics[width=280px, height=45px]{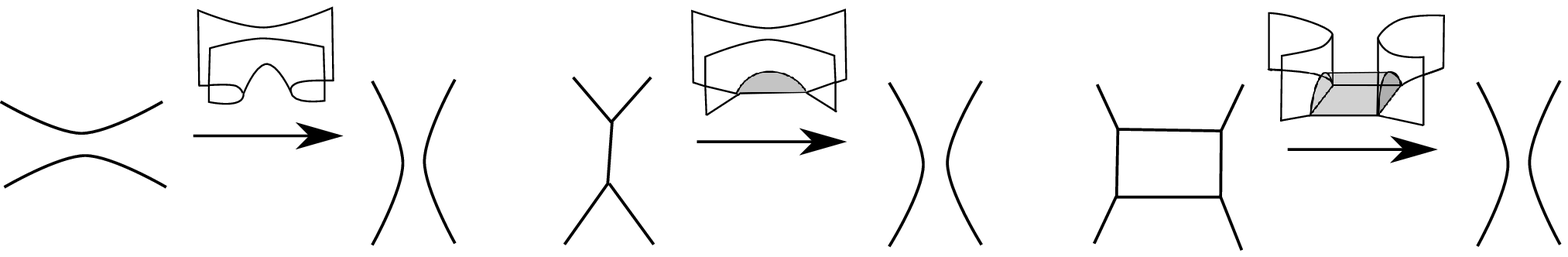}};\endxy
\end{align}
\end{enumerate}
Note that at the last level in the growth algorithm of $v$, only pairs of arcs 
are present.
\end{defn}
\begin{ex}
Let $w$ and $v$ be the following webs. 
\begin{align}
   \xy(0,0)*{\includegraphics[width=180px]{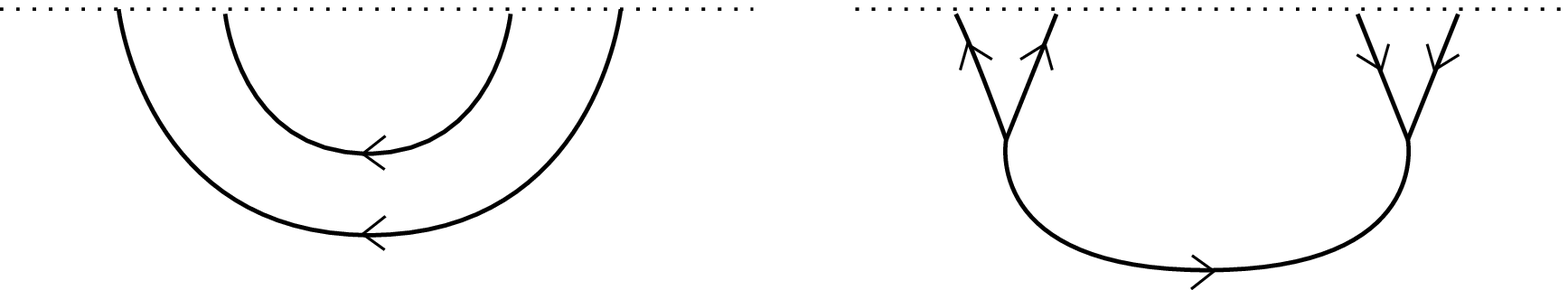}};(-5,0)*{w};
 (28,0)*{v};\endxy
\end{align}
The multiplication foam $m_{w,v,v}$ is given by the following steps.
\begin{align}
   \xy(0,0)*{\includegraphics[width=300px]{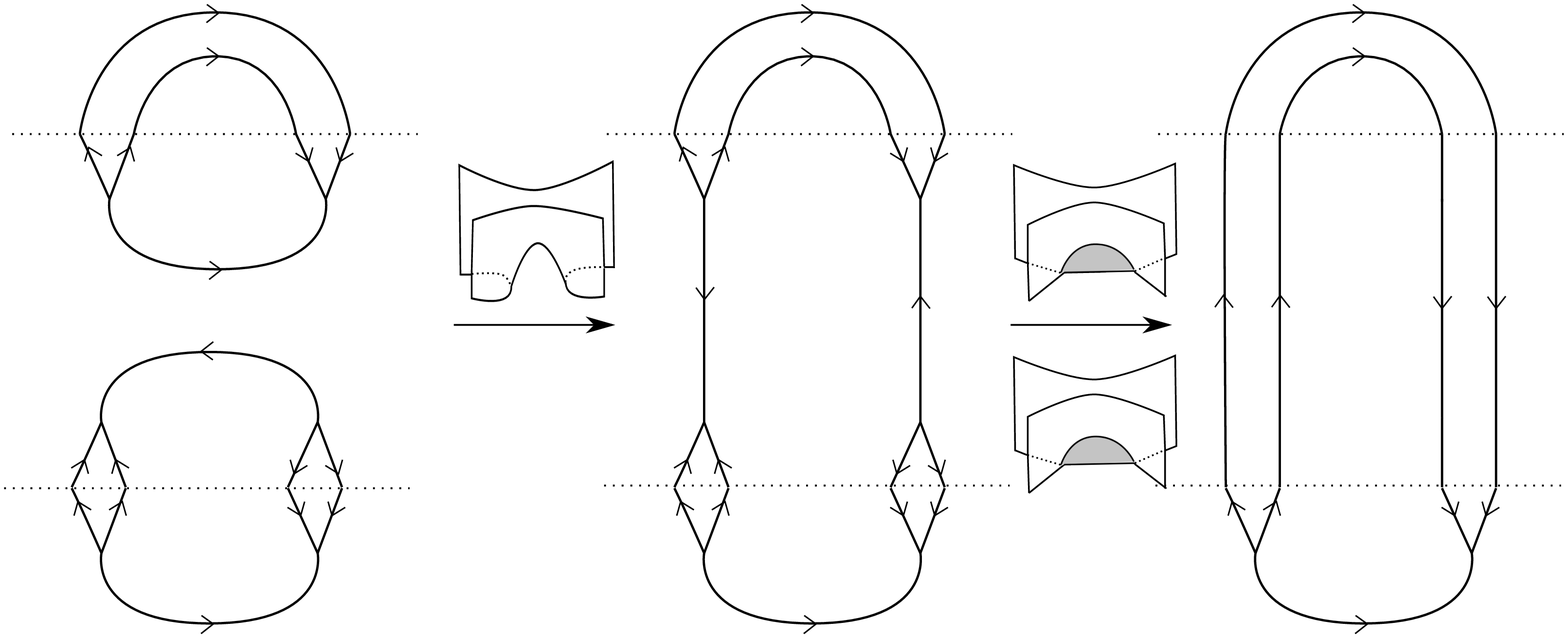}};(51,17)*{w^*};
 (-28,-17)*{v};(-26.3,17)*{w^*};
 (-27.2,-7)*{v^*};(-27,8)*{v};
 (50,-17)*{v};\endxy
\end{align}
\end{ex}
\begin{prop}
The foam $m_v$ in Definition~\ref{multfoam} only depends on the isotopy type of $v$.
\end{prop}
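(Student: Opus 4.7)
The plan is to show that any two instances of the growth algorithm that produce the same basis web $v$ (up to isotopy) yield equal multiplication foams $m_v$ once the corresponding building blocks in~\eqref{multrules} are composed. Since basis webs are unique up to planar isotopy for a fixed pair $(S,J)$, it suffices to control the ambiguity in the growth algorithm itself, namely the choice of order in which the arc, H and Y-rules are applied when several of them can be performed at the same level.

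First I would reduce the problem to a finite list of \emph{local commutation moves}. Two different runs of the growth algorithm that produce the same web differ by a sequence of moves of the form: ``apply rule $X_1$ at position $p_1$ then rule $X_2$ at position $p_2$'' versus ``apply rule $X_2$ at position $p_2$ then rule $X_1$ at position $p_1$,'' where $p_1$ and $p_2$ are either disjoint or adjacent regions of the boundary at the relevant level and $X_i\in\{\mathrm{arc},\mathrm{H},\mathrm{Y}\}$. That any two orderings are connected by such elementary swaps is a straightforward combinatorial lemma about standard fillings of the growth algorithm, very much in the spirit of the coherence one uses for Kuperberg's basis webs. I would prove it by induction on the total number of levels, noting that if two runs first disagree at level $k$, then they necessarily apply independent rules at that level which can be swapped.

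Next, for each elementary swap I would verify that the two compositions of building-block foams in Definition~\ref{multfoam} coincide in $\foamt^c$. When the two local rules act in disjoint regions of $vv^*$, the two compositions are literally isotopic as pre-foams, so equality is automatic. The interesting swaps are the adjacent ones, where the two local rules share a boundary edge. These reduce to a finite checklist: arc/arc, arc/H, arc/Y, H/H, H/Y, Y/Y (and their various orientation variants). Each case is a local identity between two foams with the same boundary, which I would verify using the defining relations $(3D)$, $(NC)$, $(S)$ and $(\Theta)$ together with their derived consequences, in particular the neck cutting relation~\eqref{eq:cn}, the digon removal~\eqref{eq:dr} and the square removal~\eqref{eq:sqr}. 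Indeed, the building-block foams in~\eqref{multrules} are precisely the foams that implement the web relations~\eqref{eq:digon} and~\eqref{eq:square}, so the digon and square removal identities at the foam level are exactly the tools needed.

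The main obstacle will be the adjacent H/H and H/arc swaps, where the resulting foams are not isotopic and one genuinely has to decompose a small ``square'' or ``digon'' piece in two different ways. Here I would invoke~\eqref{eq:dr} and~\eqref{eq:sqr} to rewrite both sides as a common linear combination of simpler foams and check that the coefficients agree; the closure relation (together with the evaluation of theta-foams and dotted spheres) then forces the two compositions to be equal morphism by morphism. Once all elementary swaps are handled, the general statement follows by composing these local equalities along any sequence of swaps connecting two given runs of the growth algorithm, which establishes that $m_v$ depends only on the isotopy class of $v$.
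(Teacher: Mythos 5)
Your plan follows the paper's overall skeleton (compare two runs $G_1$, $G_2$ of the growth algorithm, note that reorderings in distant regions only change the height function and hence are isotopies of $m_v$, then check a finite list of local ambiguities), but it misdiagnoses the key local step. In the paper, because both runs must produce the same non-elliptic web $v$, the only local two-step ambiguities that actually arise are the two specific configurations shown in~\eqref{Yreplace} and~\eqref{Hreplace}; the generic list you propose (arc/arc, arc/H, arc/Y, H/H, H/Y, Y/Y) contains many cases that cannot occur once the output web is fixed, so you would waste effort or, worse, fail to see why the check terminates. More importantly, your claim that ``the resulting foams are not isotopic'' for the adjacent H/H and H/arc swaps, and that one must invoke~\eqref{eq:dr}, \eqref{eq:sqr} and the closure relation, is wrong.

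The paper shows that in both local cases the two candidate foams are genuinely isotopic as pre-foams — one slides the singular arc over a saddle in the Y-case and translates it laterally in the H-case — so \emph{no relation from the ideal} $\ell$ is used anywhere in the proof. This is not an incidental simplification: proving equality only modulo the ideal would require careful bookkeeping of degrees and scalars (and the closure relation is a heavy and indirect tool for verifying a single morphism equality), whereas the isotopy argument settles the matter at the level of pre-foams and makes the well-definedness of $m_v$ transparent. So the gap in your proposal is the unsupported assertion that the local foams disagree as pre-foams; once you realize they are isotopic, the algebraic machinery you invoke becomes unnecessary, and your case list should be replaced by the two configurations that actually occur.
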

\begin{proof} We have to show that $m_v$ is 
independent of the way $v$ is expressed using the growth algorithm 
(Definition~\ref{growth}). Let $G_{1}$ and $G_{2}$ be two different expressions of 
$v$ using the growth algorithm. We have to compare $G_1$ and $G_2$ 
walking backwards in the growth algorithm. Note that we only have to worry 
about two consecutive steps in the same region of $v$. Reordering steps in 
``distant'' regions of $v$ corresponds to an isotopy which simply alters 
the height function on $m_v$. With these observations, the only possible 
remaining difference between the last two steps in $G_1$ and $G_2$ is 
the following.
\begin{align}\label{Yreplace}
	\xy(0,0)*{\includegraphics[width=80px]{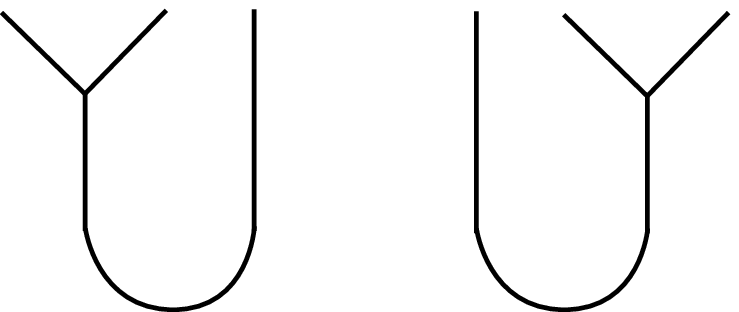}};\endxy
\end{align}
If the last two steps in $G_1$ and $G_2$ are equal, we have to go further back 
in the growth algorithm. Besides two-step differences of the same 
sort as above, we can encounter another one of the following sort.
\begin{align}\label{Hreplace}
   \xy(0,0)*{\includegraphics[width=80px]{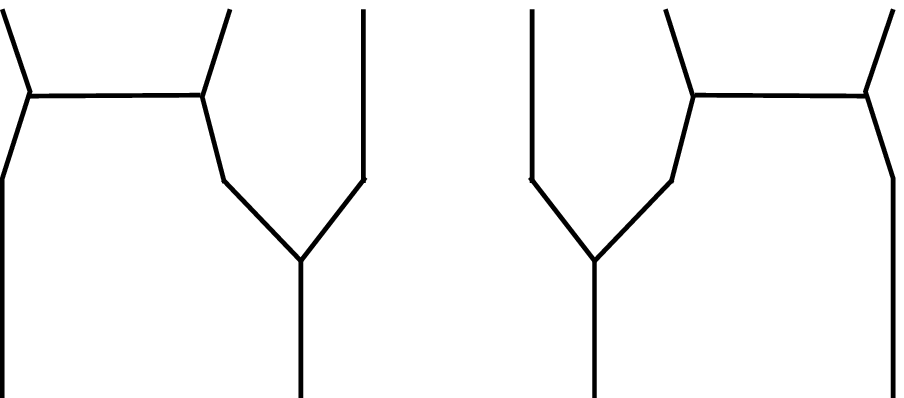}};\endxy
\end{align}
We have to check that the above two-step differences in $G_1$ and $G_2$ 
correspond to equivalent foams. In the first case, the foams in the 
multiplication algorithm are given by 
\begin{figure}[H]
   \centering
     \includegraphics[width=200px]{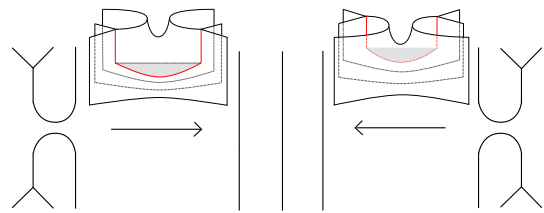}
     \caption{A possible local difference between $m_{G_{1}}$ and $m_{G_{2}}$.}
     \label{Yfoam}
\end{figure}

In the second case, we get 
\begin{figure}[H] 
   \centering
    \includegraphics[width=230px]{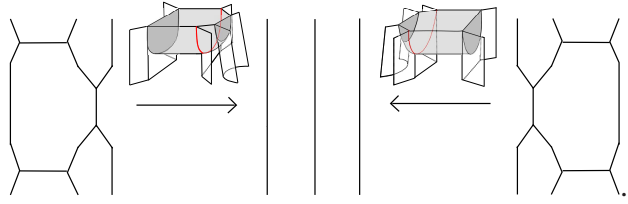}.
    \caption{The other possible local difference between $m_{G_{1}}$ and $m_{G_{2}}$.}
    \label{Hfoam}
\end{figure}

The two foams in Figure~\ref{Yfoam} are isotopic - one foam can be produced 
from the other by sliding the red singular arc over the saddle as illustrated below.
\begin{align}
   \xy(0,0)*{\includegraphics[width=150px]{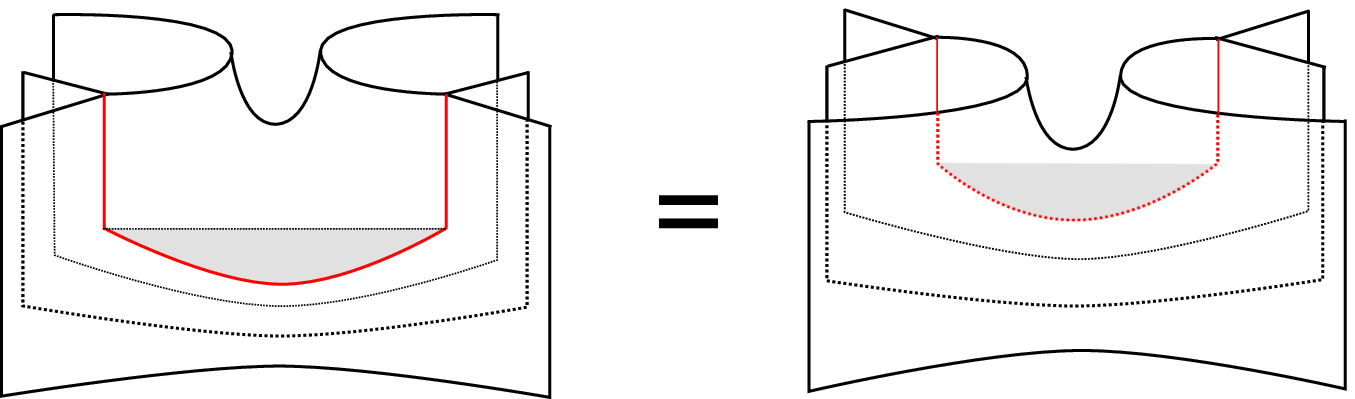}};\endxy
\end{align}
The two foams in Figure~\ref{Hfoam} are also isotopic - one foam can be 
produced from the other by moving the red singular arc to the right or to the 
left as illustrated below.
\begin{align}
   \xy(0,0)*{\includegraphics[width=160px]{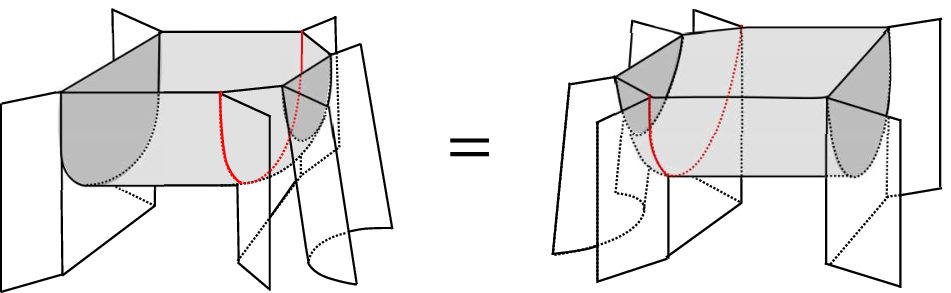}};\endxy 
\end{align}
The cases above are the only possible ones, so their verification provides 
the proof. 
\end{proof}
\begin{prop} \label{prop:multqgrade}
The foam $m_v$ has $q$-grading $n$.
\end{prop}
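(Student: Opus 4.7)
The plan is to reduce the calculation to a sum of local contributions by using the additivity of the $q$-grading under vertical composition of foams, and then to verify that this sum is $n$ by inspecting the three types of elementary multiplication foams (arc, H, Y) appearing in~\eqref{multrules}.

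First, I would establish additivity: if a foam $U$ is obtained as the vertical composition $V\circ W$ of two foams glued along a common web $\Gamma$, then $q(U)=q(V)+q(W)$. This follows directly from the formula $q(U)=\chi(\partial U)-2\chi(U)+2d+b$ together with the inclusion-exclusion identity $\chi(U)=\chi(V)+\chi(W)-\chi(\Gamma)$ and a careful accounting of how the vertical boundary lines of $V$ and $W$ glue along the boundary points of $\Gamma$. The dot count $d$ is additive by definition. As a sanity check, this additivity implies $q(\mathrm{id}_\Gamma)=0$ for any web $\Gamma$, which is consistent with the grading conventions.

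Second, I would use the construction in Definition~\ref{multfoam} to write $m_v$ as an explicit vertical composition of elementary local foams, one for each level of the growth algorithm expressing $v$. At level $k$, the algorithm resolves a pair of mirror-image rules (arc/arc, H/H, or Y/Y) in $v$ and $v^*$ by inserting the corresponding local piece from~\eqref{multrules}, and leaves the rest as an identity foam. By the additivity established above, the identity parts contribute zero, so $q(m_v)$ equals the sum of the $q$-gradings of the elementary pieces, taken over all levels.

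Third, for each of the three elementary pieces (arc-arc, H-H, Y-Y), I would read off $\chi$, $\chi(\partial)$ and $b$ directly from the pictures in~\eqref{multrules} and substitute them into the grading formula. A combinatorial count of how many pieces of each type appear in the growth algorithm for $v$, which only depends on $n$ and not on the particular $v\in B_S$ (by Euler's formula applied to the underlying plane trivalent graph, together with the fact that $v$ has exactly $n$ boundary points), should combine with the local gradings to yield exactly $n$.

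The main obstacle will be the third step. The arc-arc piece is essentially a pair of nested cap discs and its grading is a short Euler-characteristic calculation, but the H-H and Y-Y pieces involve singular arcs that complicate both $\chi$ and the boundary structure. A delicate bookkeeping is needed to isolate the contribution of the local piece from the surrounding identity foam, and to verify that the totals combine so as to depend only on $n$. A useful cross-check is to specialise to a few small examples (such as $S=(+,-)$ and $S=(+,+,+)$) where $m_v$ can be written down explicitly and its grading compared against $n$.
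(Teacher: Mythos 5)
Your proposal is essentially the paper's proof, repackaged. The paper runs a backward induction on the levels of the growth algorithm: the inductive hypothesis is that after resolving $k$ levels, the $q$-degree of the partial foam $m_v^k$ equals the number $n_k$ of vertical strands present, and the three local cases (saddle $+2$, unzip $+1$, square $+0$) exactly match the change $n_{k+1}-n_k$; the conclusion follows because at the top $n_k=n$. You phrase the same content as additivity under vertical composition plus a sum of local contributions, and close the loop with the combinatorial identity $2a+y=n$ (where $a,y,h$ count arc/Y/H rules). That identity is just the observation that the growth algorithm starts with $n$ boundary strands and ends with $0$, with arcs, Y's, H's reducing the width by $2,1,0$ respectively — so you don't actually need Euler's formula for the plane trivalent graph, which is heavier machinery than required. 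One small imprecision: $a,y,h$ individually depend on the particular $v\in B_S$; only the combination $2a+y$ is determined by $n$, so be careful not to claim otherwise. The paper also does not verify the local gradings $2,1,0$ explicitly (it takes them as known for saddle/unzip/square foams), so your worry about the "delicate bookkeeping" in step 3 is real but applies equally to the paper's own brief proof.
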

\begin{proof}
We proceed by backward induction on the level of the growth algorithm 
expressing $v$. At the final level of the growth algorithm, 
the only possible rule is the arc rule. Resolving a corresponding pair of 
arcs in $v$ and $v^*$ results in two new vertical strands and is obtained 
by a saddle point cobordism, which has $q$-grading 2. 

Let $n_{k}$ be the number of vertical strands and $m_v^{k}$ be the foam after 
resolving the last $k$ rules in the growth algorithm of $v$. Suppose that 
$n_k$ is equal to the $q$-degree of $m_v^k$. In the next 
step of the multiplication we can have three cases.
\begin{enumerate}
\item The resolution of a pair of arc rules. In this case we have 
$n_{k+1}=n_k+2$ and $m_v^{k+1}$ is obtained from $m_v^k$ by adding a saddle, 
which adds 2 to the $q$-grading.
\item The resolution of a pair of Y rules. In this case we have 
$n_{k+1}=n_k+1$ and $m_v^{k+1}$ is obtained from $m_v^k$ by adding an unzip, 
which adds 1 to the $q$-grading.
\item The resolution of a pair of H rules. In this case we have 
$n_{k+1}=n_k$ and $m_v^{k+1}$ is obtained from $m_v^k$ by adding a square foam, 
which adds $0$ to the $q$-grading. 
\end{enumerate}
\end{proof}
There is a useful alternative definition of $\mathcal{W}^c_S$, 
which we give below. 
As a service to the reader, we state it as a lemma and prove that it really is 
equivalent to our definition above. Both definitions have their advantages 
and disadvantages, so it is worthwhile to catalogue both in this thesis. 
\begin{lem}
\label{lem:webalgaltern}
For any $c\in\mathbb{C}$ and any $u,v\in B_S$, we have a 
grading preserving isomorphism 
\[
\foamt^c(u,v)\cong{}_{u}\mathcal{W}^c_{v}.
\] 

Using this isomorphism, the multiplication  
\[
{}_{u}{\mathcal W}^c_{v}\otimes {}_{v'}{\mathcal W}^c_{w}\to 
{}_{u}{\mathcal W}^c_{w}
\]
corresponds to the composition 
\[
\foamt^c(u,v)\otimes\foamt^c(v',w)\to\foamt^c(u,w),
\]
if $v=v'$, and is zero otherwise. 
\end{lem}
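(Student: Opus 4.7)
The plan is to prove the lemma via a standard bending/adjunction argument, analogous to the one Khovanov uses for the arc algebras $H_n$. The main point is that in the foam category $\foamt^c$, a web $u$ and its mirror $u^*$ are bi-adjoint via cup and cap foams, so any morphism $F\colon u \to v$ can be ``bent'' into a foam $\hat F\colon \emptyset \to u^*v$, and vice versa. I would carry out the argument in the following steps.

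First, I would construct the map $\Phi\colon \foamt^c(u,v) \to {}_{u}\mathcal{W}^c_{v}$ explicitly. Given $F\colon u \to v$ sitting inside the horizontal strip $\mathbb{R}\times [0,1]\times [0,1]$ with vertical boundary consisting of $n$ vertical segments, I would bend the bottom face (carrying $u$) upwards by $180^\circ$, rotating it around the line $y=1$ (or an isotopic construction). This produces a foam $\hat F$ whose only non-vertical boundary lies at the top and is precisely $u^*v$ (the mirror $u^*$ replacing $u$ with reversed orientations glued next to $v$), and whose vertical boundary is empty. The inverse map $\Psi$ is the same bending performed in reverse: given $G\colon \emptyset \to u^*v$, cut along a vertical plane separating the $u^*$-part from the $v$-part and fold the $u^*$-part back down. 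The zigzag identities satisfied by the cup and cap foams (which are consequences of relations (RD), (SqR) and the closure relation) imply $\Psi\circ\Phi = \mathrm{id}$ and $\Phi\circ\Psi = \mathrm{id}$, so $\Phi$ is a bijection.

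Next, I would verify that $\Phi$ is degree-preserving. Using the formula $q(U)=\chi(\partial U)-2\chi(U)+2d+b$, bending changes $b$ from $n$ to $0$, and changes $\chi(\partial U)$ by exactly $-n$ (the $n$ vertical boundary arcs contributing to $b$ get absorbed into the top boundary, each reducing $\chi(\partial U)$ by $1$ since each arc joins two separate components of the top boundary into one). The bulk Euler characteristic $\chi(U)$ and the dot count $d$ are unchanged by bending. Thus $q(\hat F) = q(F) - n$, and the shift $\{n\}$ in the definition ${}_{u}\mathcal{W}^c_{v} = \F^c(u^*v)\{n\}$ is exactly what is needed to make $\Phi$ a graded/filtered isomorphism.

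Finally, I would check compatibility of $\Phi$ with composition. For $F\colon u \to v$ and $G\colon v \to w$, stacking gives $G\circ F\colon u\to w$ in $\foamt^c$, and under $\Phi$ this must correspond to the web-algebra multiplication on the foam side, i.e.\ to the foam obtained from $\hat F\amalg\hat G\colon \emptyset \to u^*vv^*w$ by composing with $\mathrm{id}_{u^*}\,m_v\,\mathrm{id}_w\colon u^*vv^*w\to u^*w$ from Definition~\ref{multfoam}. This is the crucial step and, I expect, the main obstacle: one must show that the explicit multiplication foam $m_v$ built inductively from arc/Y/H-resolutions is isotopic (modulo the relations in $\foamt^c$) to the cap foam $v\cdot v^* \to \emptyset$ coming from the bi-adjunction, placed on top of $\hat F \amalg \hat G$. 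This amounts to verifying that $m_v$ is a model for the counit of the adjunction, which one does by induction on the growth algorithm for $v$: at each level, the arc, Y and H cases in Definition~\ref{multfoam} are precisely the local cap foams that implement the adjunction for the corresponding elementary webs. The zigzag identities then reassemble these local pieces into the global counit $v\cdot v^*\Rightarrow \mathrm{id}$, so that $\Phi(G\circ F) = \Phi(F)\cdot \Phi(G)$ in $\mathcal{W}^c_S$. Finally, if $v\neq v'$ then there is no foam at all between $v$ and $v'$ whose bending lives in a single summand of $\mathcal{W}^c_S$, so the product is zero by definition, matching the convention that ${}_{u}\mathcal{W}^c_{v}\otimes {}_{v'}\mathcal{W}^c_{w}\to{}_{u}\mathcal{W}^c_{w}$ vanishes unless $v=v'$.
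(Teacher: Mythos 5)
Your overall strategy (bend a foam $F\colon u\to v$ into a foam $\hat F\colon\emptyset\to u^*v$, invert by cutting along the mirror plane, and verify compatibility with multiplication by analysing the arc/Y/H local pieces of $m_v$) is exactly what the paper does; the paper just carries it out with three pictures (one per local rule) instead of phrasing it as a bi-adjunction. Your framing via the counit and zigzag identities is the same argument in different language. Two small quibbles on the fringes: the zigzag identities you invoke for the cup/cap foams are plain isotopies, not consequences of (RD), (SqR) or the closure relation, so that parenthetical is misleading; and the paper, being pictorial, does not spell out the degree computation at all, so you are adding that step, which is fine in principle.

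However, the degree computation as you wrote it is internally inconsistent and would, if followed literally, refute the lemma. You claim both that $b$ drops from $n$ to $0$ and that $\chi(\partial U)$ drops by $n$; plugging both into $q(U)=\chi(\partial U)-2\chi(U)+2d+b$ gives $\Delta q=-2n$, not the $-n$ you then assert, and $-2n$ would not be compensated by the shift $\{n\}$. The error is in the $\chi(\partial U)$ claim: the $n$ vertical boundary arcs are already part of $\partial F$ before bending (they sit at $y=0,1$ and contribute $-n$ to $\chi(\partial F)$ on top of $\chi(u)+\chi(v)$); bending merely collapses them to the $n$ identified boundary points of $u^*v$, which is a $\chi$-preserving contraction of contractible subsets. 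So $\chi(\partial F)=\chi(u)+\chi(v)-n=\chi(u^*v)=\chi(\partial\hat F)$, and only $b$ changes, giving $\Delta q=-n$ as desired. A quick sanity check with the identity cylinder on a cup (so $n=2$, $\chi(\partial)=0$ on both sides, $b\colon 2\mapsto 0$, $q\colon 0\mapsto -2$) confirms this. You should replace your parenthetical explanation with this corrected accounting; with that fix, the proposal matches the paper.
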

\begin{proof}
The isomorphism of the first claim is sketched in the following figure.
\begin{align*}
	\xy(0,0)*{\includegraphics[width=180px]{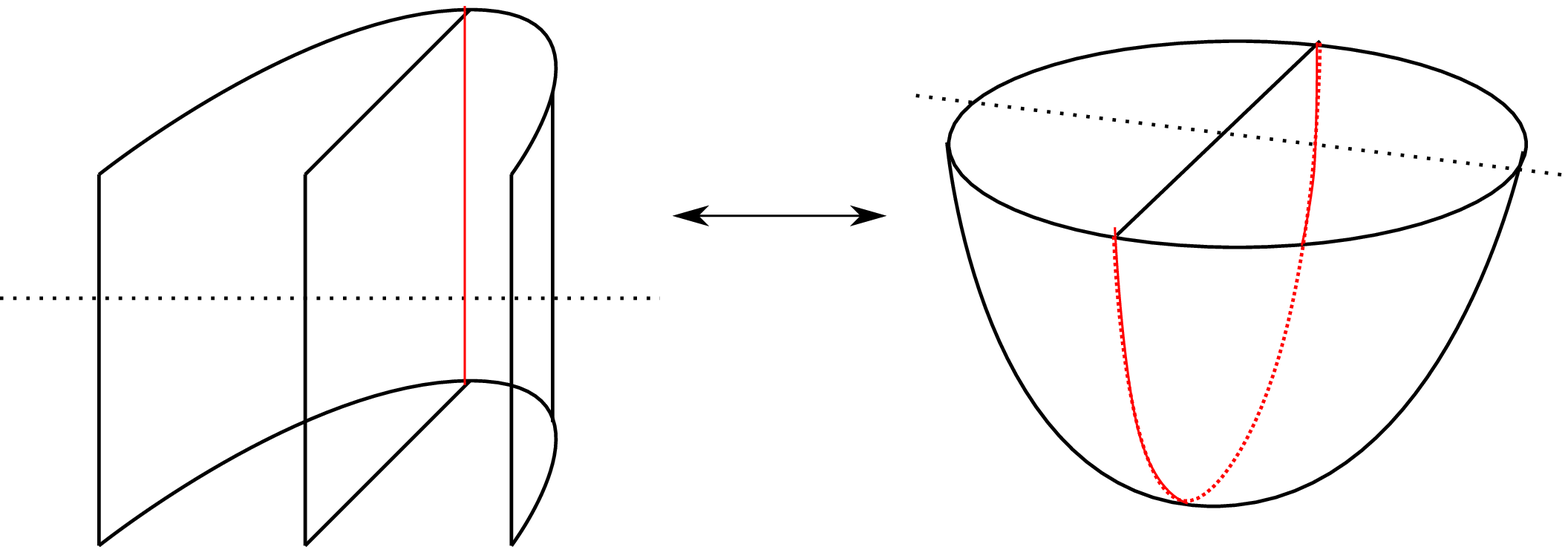}};(-5.5,5)*{v^*};
 (28.5,10.5)*{v^*};(-6,-6)*{v};
 (23,-1)*{v};\endxy
\end{align*}
The proof of the second claim follows from analysing what the 
isomorphism does to the resolution of a pair of arc, Y or H-rules in the 
multiplication foam. This is done below. 
\begin{align*}
	\xy(0,0)*{\includegraphics[width=180px]{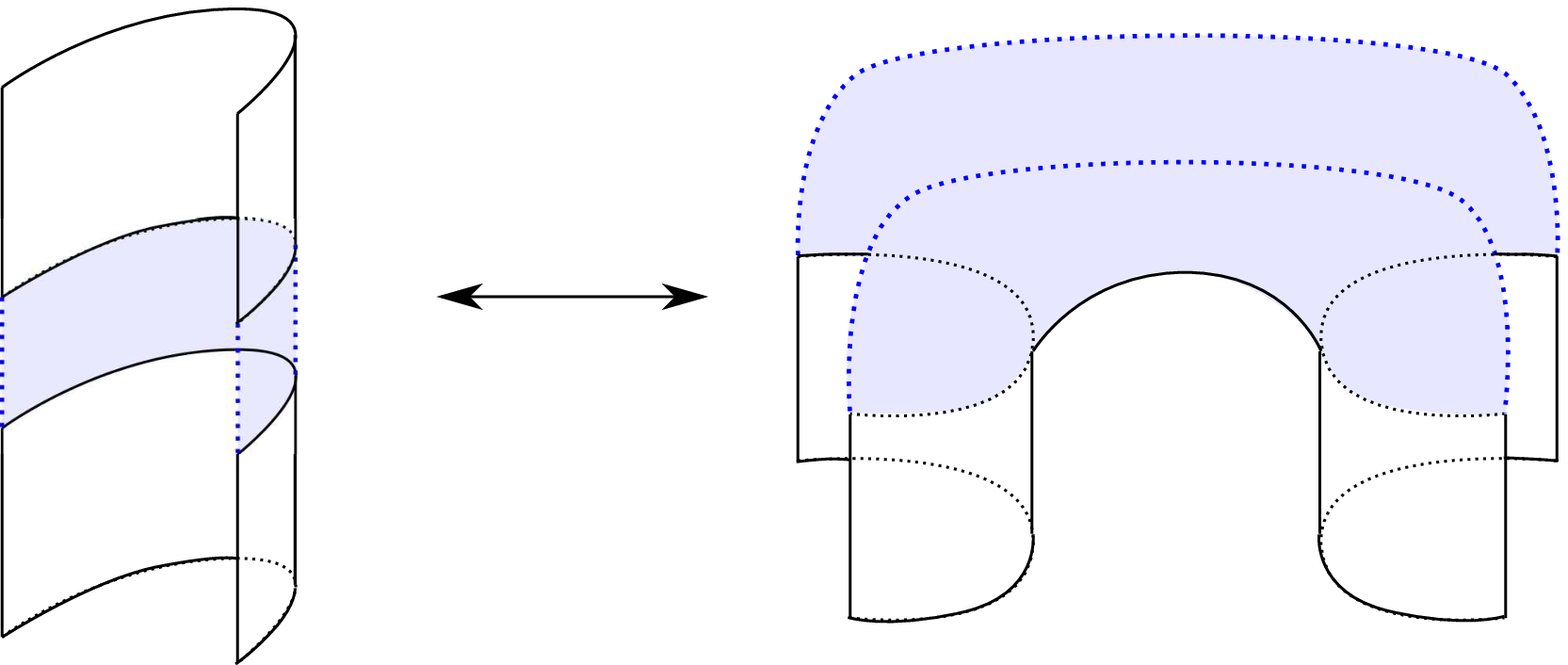}};(-17,8)*{f_2};
 (-1,-7.5)*{f_1};(-17,-7)*{f_1};
 (33,-7.5)*{f_2};\endxy\\
	\xy(0,0)*{\includegraphics[width=180px]{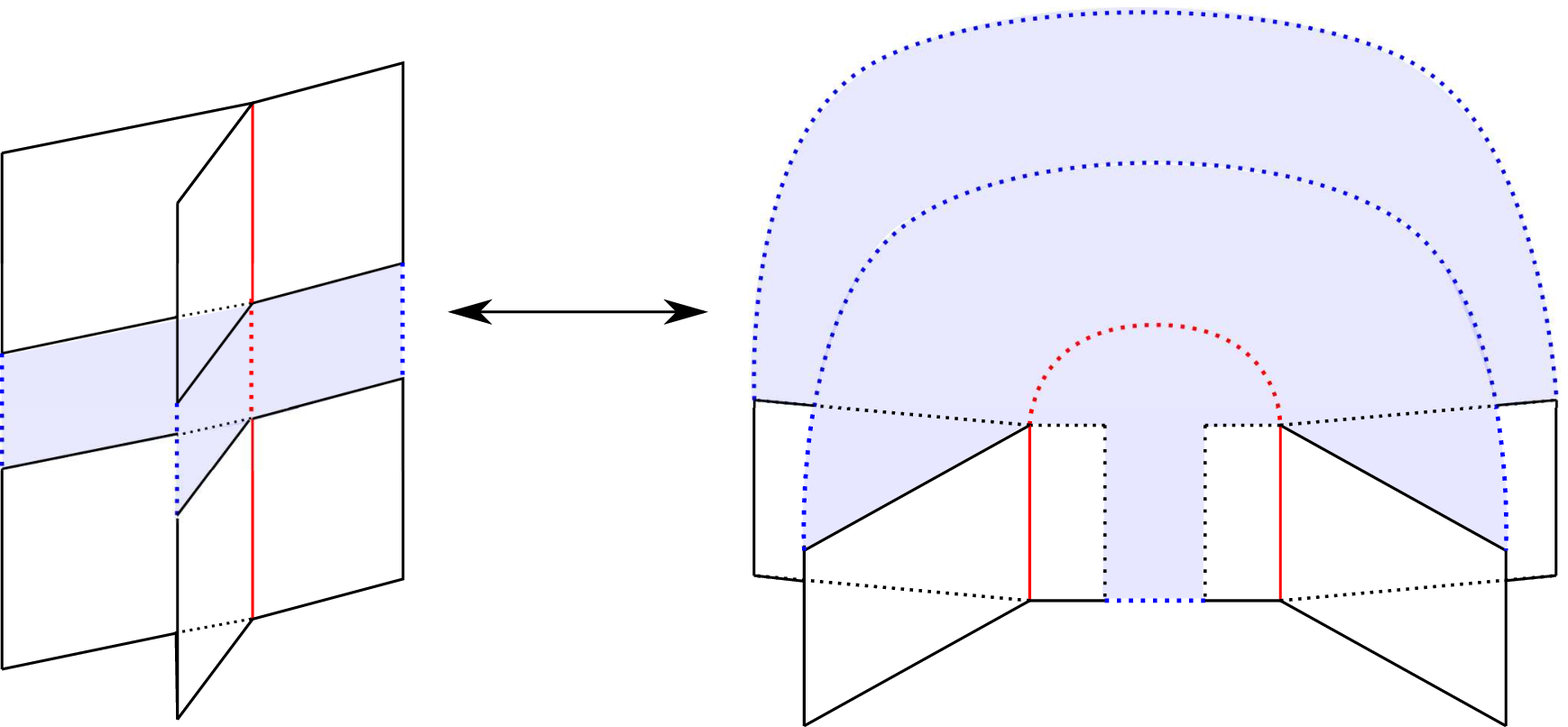}};(-13,9)*{f_2};
 (-4,-9.5)*{f_1};(-13,-5.5)*{f_1};
 (33,-9.5)*{f_2};\endxy\\
	\xy(0,0)*{\includegraphics[width=180px]{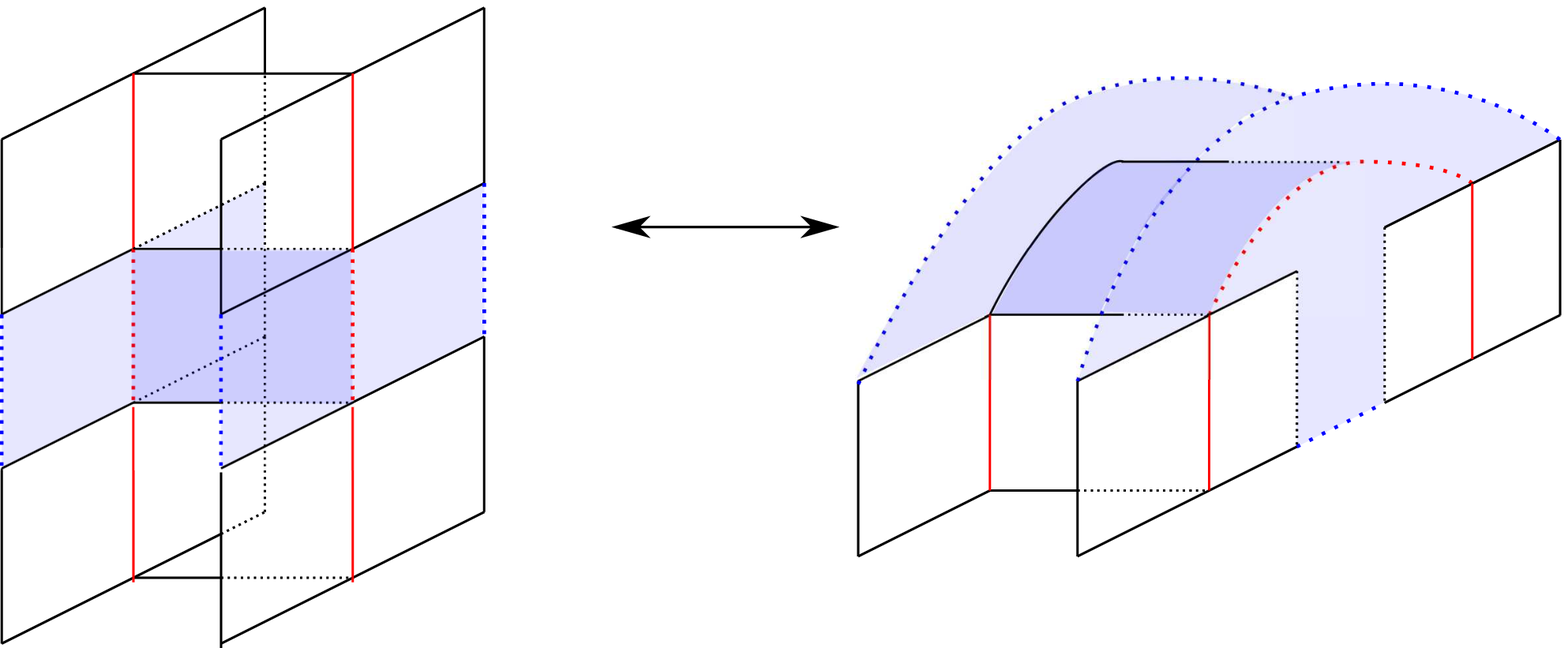}};(-9,10)*{f_2};
 (20,-8.6)*{f_1};(-9,-4.5)*{f_1};
 (30,-3)*{f_2};\endxy
\end{align*}
\end{proof}
Note that Lemma~\ref{lem:webalgaltern} implies that ${\mathcal W}_S^c$ is 
associative and unital, something that is not immediately clear from 
Definition~\ref{defn:webalg}. 
For any $u\in B_S$, the identity $1_u\in\foamt^c(u,u)$ defines an idempotent. 
We have 
\[
1=\sum_{u\in B_S} 1_u\in {\mathcal W}_S^c.
\]
Alternatively, one can see ${\mathcal W}_S^c$ as a category whose 
objects are the elements in $B_S$ such that the module of morphisms 
between $u\in B_S$ and $v\in B_S$ is given by $\foamt^c(u,v)$. 
In this thesis we will mostly see 
${\mathcal W}_S^c$ as an algebra, but will sometimes refer to the category 
point of view. 
\vskip0.5cm
In this thesis, we will study ${\mathcal W}_S^c$ for two special 
values of $c\in\mathbb{C}$. 
\begin{defn}\label{defn-khgo}
Let $K_S$ and $G_S$ be the complex algebras obtained from ${\mathcal W}_S^c$ 
by setting $c=0$ and $c=1$, respectively. We call them 
\textit{Khovanov's web algebra} and \textit{Gornik's web algebra}, respectively, 
to distinguish them throughout Section~\ref{sec-web}.  
\end{defn}
Note that $G_S$ is a filtered algebra. Its associated 
graded algebra is $K_S$. By Lemma~\ref{lem:webalgaltern}, 
both $K_S$ and $G_S$ are finite dimensional, unital, associative algebras. 
They also have similar decompositions as shown below.
\[
K_S=\bigoplus_{u,v\in B_S}{}_uK_v\,,\quad\quad G_S=\bigoplus_{u,v\in B_S}{}_uG_v.
\] 
We now recall the definition of complex, 
graded and filtered Frobenius algebras. 
Let $A$ be a finite dimensional, graded, complex algebra and let  
$\mathrm{hom}_{\mathbb{C}}(A,\mathbb{C})$ be the complex vector space of 
grading preserving 
maps. The \textit{dual} of $A$ is defined by      
\[
A^{\vee}=\bigoplus_{n\in\mathbb{Z}}\mathrm{hom}_{\bC}(A,\mathbb{C}\{n\}),
\] 
where $\{n\}$ denotes an upward degree shift of size $n$.
Note that $A^{\vee}$ is also a graded module, such that 
\begin{equation}
\label{eq:dualgrading}
(A^{\vee})_i=(A_{-i})^{\vee},
\end{equation}
for any $i\in \mathbb{Z}$.
Then $A$ is called a \textit{graded, 
symmetric Frobenius algebra of Gorenstein parameter} $\ell$, 
if there exists an isomorphism of graded $(A,A)$-bimodules 
\[
A^{\vee}\cong A\{-\ell\}.
\] 
If $A$ is a complex, finite dimensional, filtered algebra, 
let $\mathrm{hom}_{\mathbb{C}}(A,\mathbb{C})$ be the complex vector space of 
filtration preserving 
maps. The \textit{dual} of $A$ is defined by     
\[
A^{\vee}=\bigoplus_{n\in\mathbb{Z}}\mathrm{hom}_{\bC}(A,\mathbb{C}\{n\}),
\]
where $\{n\}$ denotes an upward suspension of size $n$.
Note that $A^{\vee}$ is also a filtered module, such that 
\begin{equation}
\label{eq:dualfiltration}
(A^{\vee})_i=(A_{-i})^{\vee},
\end{equation}
for any $i\in \mathbb{Z}$.
Then $A$ is called a \textit{filtered, 
symmetric Frobenius algebra of Gorenstein parameter} $\ell$, 
if there exists an isomorphism of filtered $(A,A)$-bimodules 
\[
A^{\vee}\cong A\{-\ell\}.
\] 
For more information on graded Frobenius algebras, see~\cite{ue} and 
the references therein, for example. We do not have a good reference for 
filtered Frobenius algebras, but it is a straightforward generalisation of 
the graded case. We explain some basic results on the character theory of 
filtered and graded, symmetric Frobenius algebras in 
Section~\ref{sec-appendix}. 
\begin{thm}\label{thm:frob} For any sign string $S$ of length $n$, 
the algebra $K_S$ is a graded, symmetric Frobenius algebra 
and $G_S$ is a filtered, symmetric Frobenius algebra, both of Gorenstein 
parameter $2n$. 
\end{thm}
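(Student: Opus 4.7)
The plan is to exhibit an explicit symmetric, non-degenerate trace $\varepsilon_S\colon K_S\to \mathbb{C}$ of degree $-2n$, which will simultaneously give the graded Frobenius structure on $K_S$ and (via the same construction inside $\foamt^{1}$) the filtered Frobenius structure on $G_S$. I will work through the alternative description of ${}_uK_v$ given by Lemma~\ref{lem:webalgaltern}, i.e. ${}_uK_v\cong\foamt^{0}(u,v)\{n\}$, because the closure operations and isotopy arguments are cleaner there than in the foam-with-boundary $u^*v$ picture.

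The first step is to construct the trace. For $f\in {}_uK_v$ with $u\neq v$, I set $\varepsilon_S(f)=0$. For $f\in{}_uK_u$, viewed as an endomorphism in $\foamt^{0}(u,u)$, I define $\varepsilon_S(f)$ by capping $f$ with the reflection $m_u^{\downarrow}$ of the multiplication foam $m_u\colon uu^{*}\to\mathrm{Ver}_n$ of Definition~\ref{multfoam} (and closing $\mathrm{Ver}_n$ by the standard collection of discs), producing a closed foam $\widehat f$; then $\varepsilon_S(f)\in\mathbb{C}$ is the value of $\widehat f$ under the foam relations $(3D)$, $(S)$, $(\Theta)$ together with the closure relation. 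The next step is to compute the degree of $\varepsilon_S$: by Proposition~\ref{prop:multqgrade}, $m_u$ has $q$-degree $n$, so its reflection contributes $n$; combined with the shift $\{n\}$ used to define ${}_uK_u$, the closure map shifts $q$-degrees by $-2n$, so $\varepsilon_S$ has degree $-2n$. Equivalently, the top degree in $K_S$ (which is realised by the class $m_u^{\downarrow}$ acting on $1_u$) is $2n$, and $\varepsilon_S$ picks out precisely this top component.

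The third step, which is the main obstacle, is to show that the bilinear form $\varphi(a,b):=\varepsilon_S(ab)$ is non-degenerate and symmetric. Symmetry is a cyclic property: the closed foams obtained from $\widehat{ab}$ and $\widehat{ba}$ differ by an ambient isotopy in $\mathbb{R}^2\times[0,1]$ that slides the ``cap'' around, and foam evaluation is invariant under such isotopies. For non-degeneracy, I will analyse the pairing block by block: $\varphi$ vanishes unless $a\in{}_uK_v$ and $b\in{}_vK_u$ for matching indices, in which case it corresponds under Lemma~\ref{lem:webalgaltern} to the composition pairing
\[
\foamt^{0}(u,v)\otimes\foamt^{0}(v,u)\longrightarrow\foamt^{0}(u,u)\longrightarrow\mathbb{C},
\]
where the second arrow is the closure by $m_u^{\downarrow}$. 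The key input is that this pairing is non-degenerate on every block, which I expect to prove by the following ``graded dimension matching'' argument: both ${}_uK_v$ and ${}_vK_u$ are graded vector spaces of graded dimension $\langle u^{*}v\rangle=\langle v^{*}u\rangle$ (up to the $\{n\}$-shift), and the pairing is graded of degree $-2n$, so it suffices to show it is non-zero on a single top-bottom pair; this in turn reduces, via the explicit form of $m_u$ in Definition~\ref{multfoam}, to the non-triviality of the Kuperberg bracket evaluation, which follows from the closure relation in the definition of foams.

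Finally, the filtered case for $G_S$ requires no new ideas. Repeating the construction in $\foamt^{1}$ produces a filtration-preserving trace of degree $-2n$, and the induced pairing is non-degenerate because its associated graded coincides with the non-degenerate pairing on $K_S$ already established, so a standard spectral-sequence or ``leading term'' argument transfers non-degeneracy from $K_S$ to $G_S$. The hardest part of the whole argument will be the graded-dimension matching underlying non-degeneracy in Step 3; if direct basis constructions prove delicate, an alternative is to exploit the bi-adjointness and ``Calabi--Yau'' character of $\foamt^{0}$ to argue abstractly that the composition-plus-closure pairing is perfect.
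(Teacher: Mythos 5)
Your construction of the trace, the degree count via Proposition~\ref{prop:multqgrade} and the $\{n\}$-shift, the cyclicity argument, and the reduction of the filtered ($G_S$) case to the graded ($K_S$) case via an associated-graded argument are all the same strategy as the paper's proof (the paper invokes Proposition~\ref{prop:Srid} for the last point, which is exactly the ``leading term'' fact you gesture at). So you are on the paper's track for everything except the non-degeneracy step.

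For non-degeneracy, your proposed reduction is logically flawed. Knowing that ${}_uK_v$ and ${}_vK_u$ have matching graded dimensions and that the pairing is non-zero on a single top--bottom pair does \emph{not} imply the pairing is perfect: a bilinear form on two spaces of equal dimension can be non-zero and still have a large radical. The dimension matching is a necessary condition for perfection but never a sufficient one, so the middle of your Step~3 does not prove what you want, and the ``Calabi--Yau / bi-adjointness'' fallback would need to be made precise (and would risk circularity, since the Frobenius structure is what you are trying to establish). In fact the paper's argument here is essentially tautological and much shorter: the closure relation in Section~\ref{sec-webbasicb} \emph{states} that a linear combination of foams with a common boundary is zero if and only if every capping-off evaluates to zero. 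Via Lemma~\ref{lem:webalgaltern}, cappings-off of an element of $\foamt^0(u,v)$ are exactly pairings (composition followed by $\mathrm{tr}$) against elements of $\foamt^0(v,u)$, so the closure relation \emph{is} the non-degeneracy of the trace form, block by block. You should replace the graded-dimension argument by this direct appeal; you already cite the closure relation at the end, but the intermediate dimension-matching step is doing no work and would not survive scrutiny.
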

\begin{proof} First, let $c=0$. 
We take, by definition, the trace form 
\[
\mathrm{tr}\colon K_S\to \mathbb{C}
\] 
to be zero on ${}_uK_v$, when $u\ne v\in B_S$. 
For any $v\in B_S$, we define 
\[
\mathrm{tr}\colon {}_vK_v\to\mathbb{C}
\] 
by closing any foam $f_v$ with $1_v$, e.g. as pictured below.
\begin{align*}
	\xy(0,0)*{\includegraphics[width=60px]{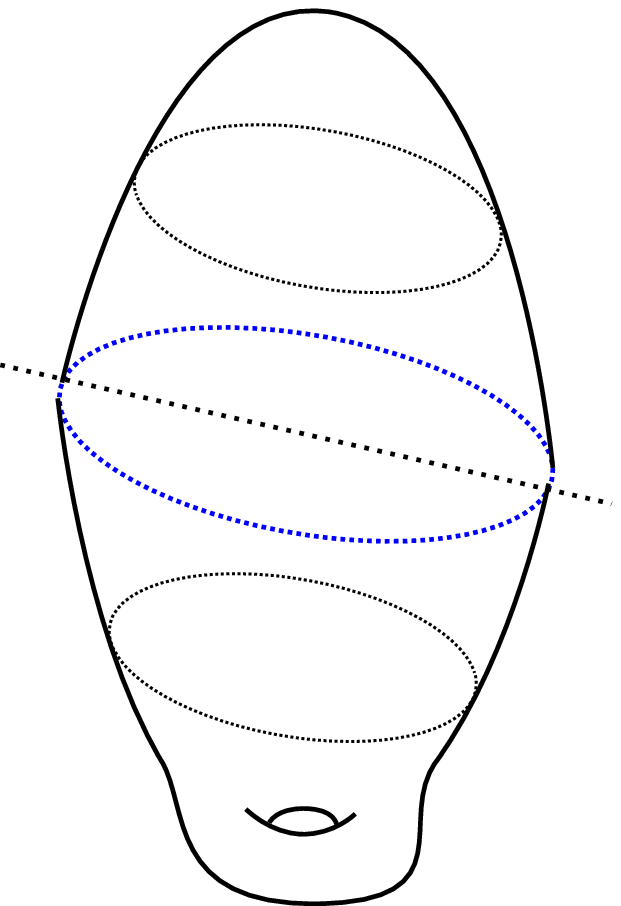}};(-9,10)*{1_v};
 (-9,-8)*{f_v};(10.5,3)*{v^*};
 (9.5,-3.5)*{v};\endxy
\end{align*}  
Equivalently, in $\foamt^0(v,v)$, closing $f_v$ by $1_v$,
\begin{align*}
	\xy(0,0)*{\includegraphics[width=180px]{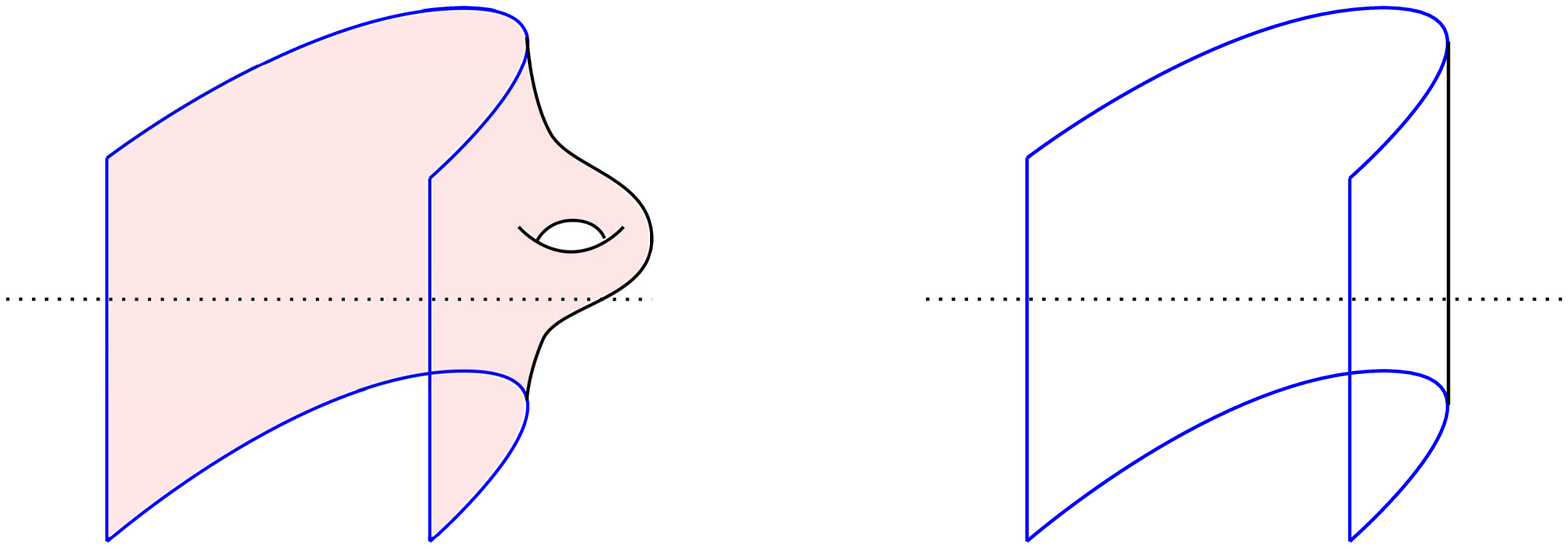}};(-30,2)*{f_v};(-4.5,5)*{v^*};
 (-5,-5.5)*{v};(8,2)*{1_v};(30,5)*{v^*};
 (29.5,-5.5)*{v};\endxy
\end{align*}  
gives
\begin{align*}
	\xy(0,0)*{\includegraphics[width=95px]{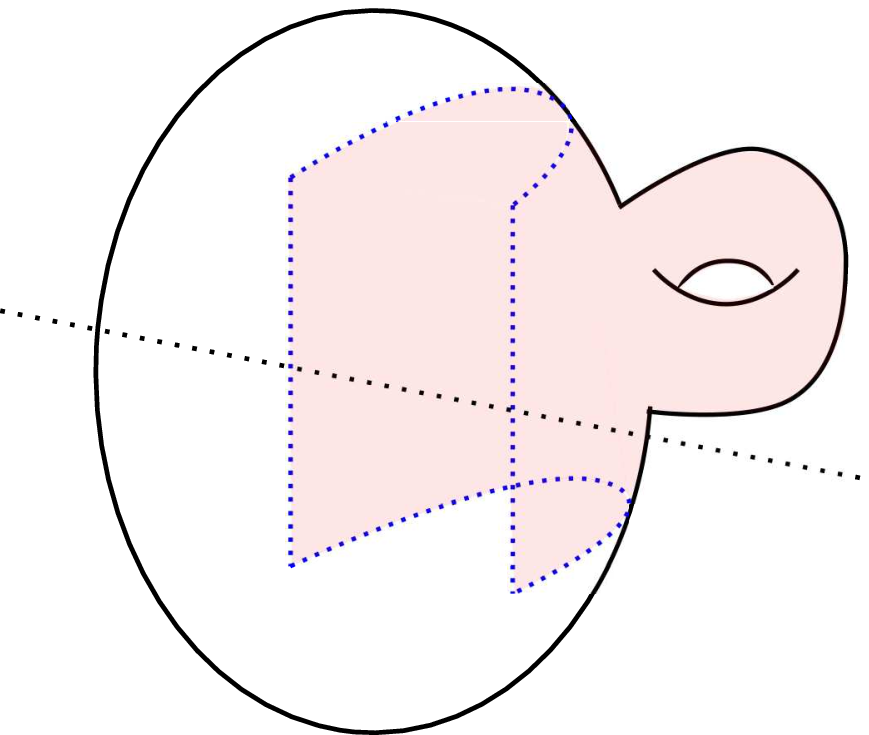}};(-15,4.5)*{1_v};
 (19,4.5)*{f_v};\endxy
\end{align*}
The fact that the trace form is non-degenerate follows immediately 
from the closure relation in Section~\ref{sec-webbasicb}.

The fact that $\mathrm{tr}(gf)=\mathrm{tr}(fg)$ holds follows from sliding 
$f$ around the closure until it appears on the other side of $g$, e.g. as shown below.
\begin{align*}
\xy(0,1)*{\includegraphics[width=90px]{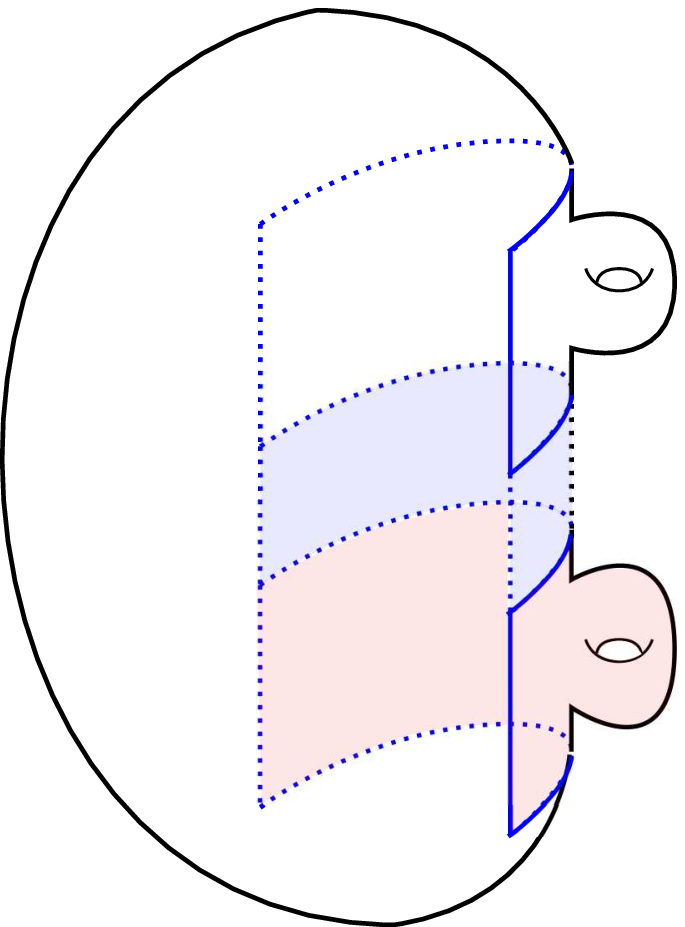}};(-18.5,1)*{1_u};
 (18.5,1)*{1_v};(17.5,-8)*{f};
 (17.5,9)*{g};\endxy\quad=\quad\xy(0,0)*{\includegraphics[width=88px]{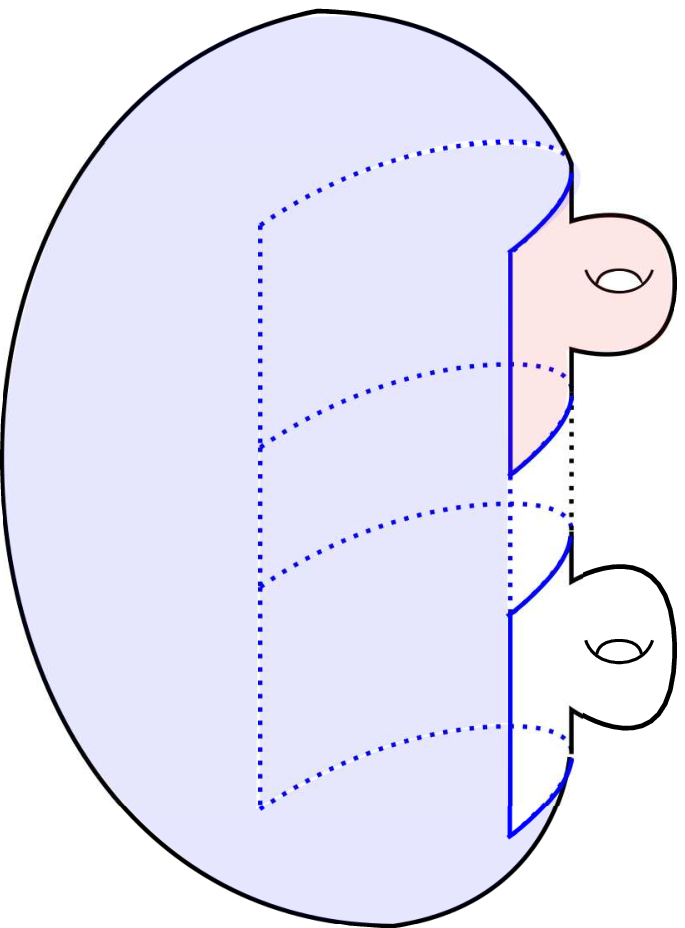}};(-18,0)*{1_v};
 (18.5,0)*{1_u};(17.5,-9)*{g};
 (17.5,8)*{f};\endxy
\end{align*}    
Note that a closed foam can only have non-zero evaluation if it has 
degree zero. Therefore, for any $u\in B_S$ and any 
two homogeneous elements $f\in \F^0(u^*v)$ and $g\in \F^0(v^*u)$, we have 
$\mathrm{tr}(fg)\ne 0$ unless $\deg(f)=-\deg(g)$. By the shift in  
\[
{}_uK_v=\mathcal{F}^0(u^*v)\{n\}
\]
and by~\eqref{eq:dualgrading}, 
this implies that 
the non-degenerate trace form on $K_S$ gives rise to a graded 
$(K_S,K_S)$-bimodule isomorphism   
\begin{equation}
\label{eq:frobK}
K_S^{\vee}\cong K_S\{-2n\}.
\end{equation}
Now, let $c=1$. Then the construction above also gives a 
non-degenerate bilinear form on $G_S$. Moreover, it induces 
a filtration preserving bijective $\mathbb{C}$-linear map 
of filtered $(G_S,G_S)$-bimodules 
\begin{equation}
\label{eq:frobG}
G_s\{-2n\}\to G_s^{\vee}.
\end{equation}
The associated graded map is precisely the isomorphism in~\eqref{eq:frobK}. By Proposition~\ref{prop:Srid}, this implies that 
the map in~\eqref{eq:frobG} is a strict isomorphism 
of filtered $(G_S,G_S)$-bimodules.   
\end{proof}
We now explain some of Gornik's results, which are relevant for $G_S$. 
Recall that $R^1_{u^*v}$ is the commutative ring associated to $u^*v$, 
generated by the edge variables of $u^*v$ and mod out by the ideal, which,  
for each trivalent vertex in $u^*v$, is generated by the relations 
\begin{equation}
\label{eqn:R1}
x_1+x_2+x_3=0,\qquad x_1x_2+x_1x_3+x_2x_3=0,\qquad x_1x_2x_3=1,
\end{equation}
where $x_1,x_2$ and $x_3$ are the edge variables around the vertex. 
The algebra $R^1_{u^*v}$ acts on ${}_uG_v$ in such a way 
that each edge variable corresponds to adding a dot on the incident facet.
See~\cite{gornik},~\cite{kh3} or~\cite{mv1} for the precise definition and more details. 

In what follows, 3-colourings will always be assumed to be admissible and 
we therefore omit the adjective. Theorem 3 in~\cite{gornik} proves the following.   
\begin{thm}\label{thm:Gornik}(\textbf{Gornik})
There is a complete set of orthogonal idempotents $e_T\in R^1_{u^*v}$, indexed 
by the 3-colourings $T$ of $u^*v$. The number of 3-colourings of $u^*v$ is 
exactly equal to $\dim_q({}_uG_v)$. 

These idempotents are not filtration preserving, 
but as an $R^1_{u^*v}$-module (i.e. forgetting the filtration on ${}_uG_v$ 
and its left ${}_uG_u$ and right ${}_vG_v$-module structures) 
we have  
\[
{}_uG_v\cong \bigoplus_T \mathbb{C}e_T.
\]
\end{thm}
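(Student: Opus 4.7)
The plan is to first analyze the commutative ring $R^1_{u^*v}$ by identifying its maximal spectrum with the set of admissible 3-colorings, then match dimensions via the Kuperberg bracket, and finally use dot migration to descend the ring decomposition to a module decomposition of ${}_uG_v$.

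For the first step I would observe that at each trivalent vertex the relations~\eqref{eqn:R1} are precisely $e_1 = 0$, $e_2 = 0$, $e_3 = 1$ for the elementary symmetric polynomials in the three incident edge variables, so these variables are the three distinct roots of $t^3 - 1$. Hence every $\mathbb{C}$-algebra map $R^1_{u^*v} \to \mathbb{C}$ assigns to each edge a cube root of unity in such a way that the three roots at every vertex are distinct, which by Remark~\ref{rem:3color} is exactly an admissible 3-coloring. For each such coloring $T$, with $c_T(e)$ the color of edge $e$, I would construct the Lagrange idempotent
\[
e_T \;=\; \prod_{e} \frac{(x_e - \zeta_1(e))(x_e - \zeta_2(e))}{(c_T(e) - \zeta_1(e))(c_T(e) - \zeta_2(e))},
\]
where $\{c_T(e),\zeta_1(e),\zeta_2(e)\} = \{1,\omega,\omega^2\}$ with $\omega = e^{2\pi i/3}$. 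These $e_T$ are manifestly orthogonal since two distinct colorings differ on some edge, and once $R^1_{u^*v}$ is shown to be reduced they sum to $1$ and give the Chinese Remainder decomposition $R^1_{u^*v} \cong \bigoplus_T \mathbb{C}\, e_T$.

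For the dimension match I would show that both $\#\{\text{3-colorings of } u^*v\}$ and $\dim_{\mathbb{C}}{}_uG_v$ (equivalently $\dim_q({}_uG_v)|_{q=1}$) satisfy the Kuperberg bracket recursion: an added circle multiplies by $3$, a digon by $2$, and a square equals the sum of its two resolutions. For the coloring count this is an elementary case analysis; for the foam side it follows from the direct sum decomposition of $\F^1(\cdot)$ coming from (NC), (DR), (SqR), exactly as in Section~\ref{sec-webbasicb}. With $\dim R^1_{u^*v} = \dim {}_uG_v$ in hand, the ring decomposition transfers to ${}_uG_v = \bigoplus_T e_T \cdot {}_uG_v$ through the dot-placement action, which is well-defined by dot migration~\eqref{eq:dotm} at $c=1$. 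To see that each summand is one-dimensional I would exhibit, for each $T$, an explicit foam with facets decorated in accordance with $T$ for which the $\Theta$-evaluation~\eqref{eq:theta} is nontrivial along every singular seam, so that $e_T \cdot {}_uG_v \ne 0$; dimension counting then forces the required isomorphism.

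The main obstacle will be establishing that $R^1_{u^*v}$ is actually reduced, equivalently that the evaluation map $R^1_{u^*v} \twoheadrightarrow \prod_T \mathbb{C}$ is injective, or that $\dim_{\mathbb{C}} R^1_{u^*v} \leq \#\{\text{3-colorings}\}$. This does not follow formally from the vertex relations alone, since a priori nilpotents could survive in the quotient. One route is a filtration/standard-monomial argument on $\mathbb{C}[x_e : e]$ modulo the vertex ideals, producing a spanning set whose size equals the number of 3-colorings. An alternative route is to use the nondegenerate trace form from Theorem~\ref{thm:frob}, combined with faithfulness of the $R^1_{u^*v}$-action on ${}_uG_v$, to bound $\dim R^1_{u^*v}$ above by $\dim {}_uG_v$. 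Once this inequality is secured, the Lagrange idempotents constructed above must exhaust the algebra, and the theorem follows.
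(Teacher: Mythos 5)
The paper does not give its own proof of this theorem: it is recalled verbatim from Gornik's paper, citing Theorem 3 of~\cite{gornik}, with the comment preceding the statement making this explicit. So your proposal is not being measured against an argument in the text; it is a reconstruction of Gornik's original proof (translated from matrix factorizations to foams via~\cite{mv1}).

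Your overall outline is sound, but the ``main obstacle'' you identify --- reducedness of $R^1_{u^*v}$ --- can be dissolved more cheaply, and this simplification matters because neither of the two routes you propose (a standard-monomial count, or faithfulness of the $R^1_{u^*v}$-action on ${}_uG_v$) is established in the text and each is at least as hard as the theorem itself. Here is the cleaner route. Each edge variable satisfies $x_e^3=1$ in $R^1_{u^*v}$, since $x_e$ is a root of $t^3-e_1t^2+e_2t-e_3=t^3-1$ using the relations at any incident vertex. Hence the Lagrange idempotents $e_c(x_e)$ make sense and $\sum_c e_c(x_e)=1$ for every edge, so your product formula gives $\sum_T e_T = 1$ where $T$ ranges over \emph{all} $\{1,\omega,\omega^2\}$-labellings of the edges, admissible or not. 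To cut this down to admissible $T$ only, observe that the vertex relations force $\bigl((x_1-x_2)(x_2-x_3)(x_1-x_3)\bigr)^2 = -27$, the discriminant of $t^3-1$, so $x_i-x_j$ is a unit at every vertex; combined with $e_c(x_i)(x_i-c)=0$, this gives $e_c(x_i)e_c(x_j)=0$ whenever $x_i,x_j$ share a vertex, and hence $e_T=0$ for every inadmissible $T$. This establishes that the admissible $e_T$ form a complete orthogonal family with no appeal to reducedness of the whole algebra.

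The genuine content then concentrates in your final step: showing $e_T\cdot{}_uG_v\neq 0$ for each admissible $T$, which together with the Kuperberg-bracket dimension count (correct as you describe) pins each summand to dimension one. Your sketch of exhibiting a decorated foam with nonzero $\Theta$-evaluation is the right idea, but it is the one place that needs to be carried out in detail --- for instance by computing $\operatorname{tr}(e_T)$ via the Frobenius form of Theorem~\ref{thm:frob} and checking it is nonzero --- and is precisely what Gornik's original argument supplies.
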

Let us have a closer look at Gorniks idempotents.  
First of all, in the proof of Theorem 3 in~\cite{gornik} Gornik notes 
that for any edge $i$ and any 3-colouring $T$ of $u^*v$, we have 
\begin{equation}
\label{eqn:edgeaction}
x_ie_T=\zeta^{T_i}e_T\in R^1_{u^*v},
\end{equation}
where $\zeta$ is a primitive third root of unity, $x_i$ is the edge variable 
and $T_i$ the colour of the edge 
(see (4) in~\cite{mv1} for this result in the context of foams). 

Furthermore, a 3-colouring of $u^*v$ 
actually corresponds to a pair of 3-colourings of $u$ and $v^*$ 
that match at the boundary. Of course, there is a bijective correspondence 
between 3-colourings of $v$ and $v^*$, so we see that a 3-colouring of 
$u^*v$ corresponds to a matching pair of 3-colourings of $u$ and $v$. Recall that 3-colourings can be seen as flows.

Recall that ${}_uG_v$ is a left ${}_uG_u$-module and a right ${}_vG_v$-module.  
Let $T_1$ and $T_2$ be a pair of matching 3-colourings of $u$ and $v$, 
respectively, which together give a 3-colouring $T$ of $u^*v$. 
Then the action of $e_T$ on any $f\colon u\to v$ can be written as 
\[
e_{T_1}fe_{T_2}.
\]
To show that this notation really makes sense, define 
\textit{Gornik's symmetric idempotent} associated to $T_1$ as
\[
e_{u,T_1}=e_{T_1}1_ue_{T_1}.
\]
So we let the Gornik idempotent associated to the symmetric 3-colouring 
of $u^*u$, given by $T_1$ both on $u$ and $u^*$, act on $1_u$. Then 
we have 
\[
e_{T_1}fe_{T_2}=e_{u,T_1}fe_{v,T_2},
\]
where on the right-hand side we really mean composition. 

We immediately see that 
\[
e_{T_1}1_ue_{T_2}=0\Leftrightarrow T_1\ne T_2
\]
and 
\[
e_{T_1}1_ue_{T_1}e_{T_2}1_ue_{T_2}=\delta_{1,2} e_{T_1}1_ue_{T_1}\quad\text{and}\quad 
\sum_{T}e_{T}1_ue_{T}=1_u,
\]
where the sum is over all 3-colourings of $u$. 
This shows that the $e_{u,T}$, for all 3-colourings 
$T$ of a given $u\in B_S$, are orthogonal idempotents in ${}_uG_u$.  
It also implies that 
\[
e_{T_1}1_ue_{T_1}=e_{T_1}1_u=1_ue_{T_1},
\]
so it is enough to label just the source or just the target of $1_u$.
For this purpose, we define $R^1_u$ to be ``half'' of $R^1_{u^*u}$, i.e. the 
subring which is only generated by the edge variables of $u$. To be precise, 
we have 
\[
R^1_{u^*u}\cong R^1_u\otimes_{S}R^1_u,
\]
where $\otimes_{S}$ indicates that we impose the relation 
$x\otimes 1=1\otimes x$, for any $x$ corresponding to a boundary edge of $u$. 

If $u$ has no closed cycles, then all the 3-colourings of 
$u^*u$ are symmetric, because they are completely determined by the colours on 
the boundary of $u$. In that case 
\[
e_T\mapsto e_{u,T}
\]
defines an isomorphism of algebras $R^1_u\cong {}_uG_u$. In particular, 
${}_uG_u$ is commutative. This is not true in general, but we can prove 
the following.
\begin{lem}
\label{lem:embedding} 
For any $u\in B_S$, the map 
\[
x\mapsto x1_u
\]
defines a strict embedding of filtered $R^1_u$-modules
\[
\iota\colon R^1_u\to {}_uG_u.
\]
In particular, we see that $(R^1_u)_0\cong\text{Im}(\iota)_0\cong\mathbb{C}1_u$.
\end{lem}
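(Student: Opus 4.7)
The plan is to exploit Gornik's decomposition of $\mathcal{F}^1(u^*u)$ and translate the question of injectivity of $\iota$ into one about the character theory of the commutative $\mathbb{C}$-algebra $R^1_u$. First I would verify that $\iota$ is well-defined and filtration-preserving: the Gornik relations~\eqref{eqn:R1} at each trivalent vertex of $u$ hold after applying dots to $1_u$, which is a direct consequence of the dot migration relations together with the normalisation $c=1$ in $\mathbf{Foam}_3^1$. Since each edge variable in $R^1_u$ is assigned degree $2$ and placing a dot on a facet of $1_u$ raises the foam filtration by $2$, the map preserves filtrations tautologically.

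For the injectivity step I would invoke Theorem~\ref{thm:Gornik}, which writes ${}_uG_u = \bigoplus_T \mathbb{C}e_T$ with $T$ ranging over $3$-colourings of $u^*u$. As noted in the text, only symmetric $T$ survive when paired with $1_u$, and these are in bijection with $3$-colourings of $u$; hence $1_u = \sum_{T\text{ symmetric}} e_{u,T}$. Combining this with the eigenvalue formula~\eqref{eqn:edgeaction}, any $p\in R^1_u$ satisfies
\[
\iota(p) \;=\; p\cdot 1_u \;=\; \sum_{T} \chi_T(p)\,e_{u,T},
\]
where now $T$ runs over $3$-colourings of $u$ and $\chi_T(x_i)=\zeta^{T_i}$. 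Since the $e_{u,T}$ are linearly independent, injectivity of $\iota$ reduces to the claim that the joint character map $R^1_u \to \prod_T \mathbb{C}$, $p\mapsto(\chi_T(p))_T$, is injective. The hard part will be establishing this separation, and I propose to prove it by running Gornik's argument on the half-algebra $R^1_u$ itself: the defining relations force each generator $x_i$ to satisfy $x_i^3=1$, so $R^1_u$ is a finite-dimensional commutative $\mathbb{C}$-algebra whose scheme-theoretic points are exactly the $3$-colourings of $u$; the same Chinese-remainder-style construction that produces Gornik's idempotents $e_T$ then produces a complete orthogonal idempotent decomposition of $R^1_u$ indexed by $3$-colourings of $u$, giving an isomorphism $R^1_u \cong \prod_T \mathbb{C}$ under which the character map becomes the identity.

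Strictness of the filtered embedding would then follow by passing to the associated graded: $\iota$ induces the analogous map $R^0_u \to {}_uK_u$ at $c=0$, which is again injective by the same Gornik-type argument (here one uses the graded analogue of Theorem~\ref{thm:Gornik}, equivalently Proposition~\ref{prop:Srid} invoked earlier in the proof of Theorem~\ref{thm:frob}). The final assertion $(R^1_u)_0 \cong \operatorname{Im}(\iota)_0 \cong \mathbb{C}\cdot 1_u$ is then immediate, since the degree-zero piece of $R^1_u$ is spanned by the unit and $\iota(1) = 1_u$. The main obstacle throughout is the commutative-algebraic claim $R^1_u \cong \prod_T \mathbb{C}$; everything else is bookkeeping around Gornik's theorem and the grading conventions for dots.
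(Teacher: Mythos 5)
Your injectivity argument is correct and takes a genuinely different route from the paper. The paper's proof is terser: it just observes that the dot-migration relations~\eqref{eq:dotm} at the singular edges of $1_u$ are exactly the defining relations of $R^1_u$, and that these are the only singular edges, so nothing else can collapse; strictness is then read off from the fact that the correspondence is degree-for-degree. You instead use Gornik's decomposition, express $1_u=\sum_T e_{u,T}$, compute $\iota(p)=\sum_T\chi_T(p)e_{u,T}$, and reduce injectivity to the reducedness of $R^1_u$. That works, and your subsidiary claim $R^1_u\cong\prod_T\mathbb{C}$ does hold — in fact you don't even need to re-run Gornik's construction: $R^1_u$ is by definition a subring of $R^1_{u^*u}$, which is a product of copies of $\mathbb{C}$ by Theorem~\ref{thm:Gornik}, so $R^1_u$ is reduced, hence itself a product of $\mathbb{C}$'s indexed by its characters, which are precisely the admissible $3$-colourings of $u$. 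So the injectivity step is fine, at the cost of invoking much heavier machinery than the paper does.

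The strictness step, however, has a genuine gap. You propose to deduce strictness by showing that $E(\iota)\colon R^0_u\to {}_uK_u$ is injective ``by the same Gornik-type argument.'' But there is no Gornik-type argument for $R^0_u$: its defining relations~\eqref{eqn:R0} give $x_i^3=0$ rather than $x_i^3=1$, so $R^0_u$ is a local ring with nilpotent radical, has a unique character (sending every $x_i$ to $0$), and admits no nontrivial orthogonal idempotent decomposition. The entire character-separation mechanism is a $c=1$ phenomenon and collapses at $c=0$. Moreover, Proposition~\ref{prop:Srid} is not the right tool here either — it requires $E(f)$ to be an isomorphism, not merely an embedding. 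What you actually need at $c=0$ is the paper's direct argument (the dot-migration relations at $c=0$ are exactly the relations in $R^0_u$, and the singular edges of $1_u$ are exactly the trivalent vertices of $u$, so no further identifications occur), which works uniformly for any $c$; this gives injectivity of $E(\iota)$ on the nose, and thence strictness. As it stands, your proposal establishes that $\iota$ is an injective filtered homomorphism but not that it is strict, and these are genuinely distinct conditions for filtered maps.
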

\begin{proof}
The map is clearly a homomorphism of filtered algebras. 

The relations~\eqref{eq:dotm} correspond precisely to 
the relations in $R^1_u$, because the only singular edges in $1_u$ are the 
ones corresponding to the trivalent vertices of $u$. 
This shows that it is a strict embedding. 
\end{proof}
For any $u\in B_S$, we define the graded ring 
\[
R^0_u=E(R^1_u).
\]
This ring is the one which appears 
in Khovanov's original paper~\cite{kh3}. In $R^0_u$ we have the relations
\begin{equation}
\label{eqn:R0}
x_1+x_2+x_3=0,\qquad x_1x_2+x_1x_3+x_2x_3=0,\qquad x_1x_2x_3=0.
\end{equation}
The reader should compare them to~\eqref{eqn:R1}.

There are no analogues of the Gornik idempotents in $R^0_u$, but we do 
have an analogue of Lemma~\ref{lem:embedding}.
\begin{lem}
\label{lem:gradedembedding} 
For any $u\in B_S$, the map 
\[
x\mapsto x1_u
\]
defines an embedding of graded $R^0_u$-modules
\[
E(\iota)\colon R^0_u\to {}_uK_u.
\]
In particular, we see that $(R^0_u)_0\cong\text{Im}(E(\iota))_0\cong\mathbb{C}1_u$.
\end{lem}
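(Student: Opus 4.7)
The plan is to derive this lemma as the associated graded analogue of Lemma~\ref{lem:embedding}. First I would observe that by definition $R^0_u = E(R^1_u)$, and that $K_S$ is the associated graded algebra of $G_S$ (as noted just after Definition~\ref{defn-khgo}). In particular, the filtered decomposition $G_S = \bigoplus_{u,v\in B_S}{}_uG_v$ passes to the associated graded to give $K_S = \bigoplus_{u,v\in B_S}{}_uK_v$, so ${}_uK_u \cong E({}_uG_u)$ as graded modules.

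Next I would invoke Lemma~\ref{lem:embedding}: the map $\iota\colon R^1_u\to{}_uG_u$ defined by $x\mapsto x1_u$ is a strict embedding of filtered $R^1_u$-modules. Since strict injective morphisms of filtered modules induce injective morphisms on associated graded (this is essentially Proposition~\ref{prop:Srid} in Section~\ref{sec-appendix}, or in any case a standard fact about filtrations), applying $E(-)$ yields an injective morphism of graded $R^0_u$-modules
\[
E(\iota)\colon R^0_u \to {}_uK_u, \qquad x\mapsto x1_u.
\]
One small check is that the map is well-defined at the level of $R^0_u$: the defining relations~\eqref{eqn:R0} of $R^0_u$ hold in ${}_uK_u$ because they correspond precisely to the dot migration relations~\eqref{eq:dotm} specialised to $c=0$, where the only singular edges in $1_u$ are those arising from trivalent vertices of $u$ — this is identical to the argument in Lemma~\ref{lem:embedding} but with the $c=0$ relation $\figins{-6}{0.2}{pdots111}=0$ replacing $\figins{-6}{0.2}{pdots111}=c\cdot\figins{-6}{0.2}{pdots000}$.

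Finally, for the degree zero statement: since $R^0_u$ is generated as a graded ring by the edge variables $x_i$, all of which have strictly positive $q$-degree, the degree zero part $(R^0_u)_0$ is exactly $\mathbb{C}\cdot 1$. The injective map $E(\iota)$ sends $1$ to $1_u$, which has degree $0$ in ${}_uK_u$. Hence $(R^0_u)_0 \cong \mathrm{Im}(E(\iota))_0 \cong \mathbb{C}1_u$.

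The argument has no real obstacle; the whole content is already in Lemma~\ref{lem:embedding}. The only point requiring care is ensuring that "strict" really is the correct hypothesis on $\iota$ so that passage to associated graded preserves injectivity — but this was explicitly proved in Lemma~\ref{lem:embedding}, so nothing further is needed.
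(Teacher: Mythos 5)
The paper itself states Lemma~\ref{lem:gradedembedding} without proof, leaving it as the evident associated-graded analogue of Lemma~\ref{lem:embedding}; your proposal supplies exactly the intended argument, which the authors' choice of the name $E(\iota)$ already signals. Both the route via applying $E(-)$ to the strict embedding and the parallel direct route via the $c=0$ dot-migration relations are valid and are really the same calculation written two ways.

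One small citation imprecision: Proposition~\ref{prop:Srid} goes in the opposite direction to what you need (it deduces that $f$ is an isomorphism from $E(f)$ being one), so it is not quite the right reference for ``strict injective $\Rightarrow$ $E(\iota)$ injective.'' But you hedge this appropriately, and the fact you actually use is standard and easy: if $\iota$ is a strict monomorphism of filtered modules and $\iota(x)\in N_{i-1}$ for some $x\in M_i$, then $\iota(x)\in\iota(M)\cap N_{i-1}=\iota(M_{i-1})$, so $x\in M_{i-1}$ by injectivity of $\iota$, hence $E(\iota)$ is injective. With that substitution the argument is complete.
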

 Another interesting consequence of Theorem~\ref{thm:Gornik} is the following. 
\begin{prop}
\label{prop:Gsemisimple}
As a complex algebra, i.e. without taking the filtration into account, 
$G_S$ is semisimple.
\end{prop}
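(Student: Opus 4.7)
The plan is to exhibit an explicit block-matrix decomposition of $G_S$ into finitely many full matrix algebras over $\mathbb{C}$, using Gornik's idempotents recalled in Theorem~\ref{thm:Gornik}. For any $u \in B_S$, let $\mathrm{Col}(u)$ denote the set of admissible 3-colourings of $u$; each such colouring restricts to a 3-colouring of $S$. Write $\mathcal{T}_S$ for the set of such boundary colourings, and for $\tau \in \mathcal{T}_S$ set $\mathcal{I}_\tau = \{(u, T_u) : u \in B_S,\ T_u \in \mathrm{Col}(u),\ T_u|_{\partial u} = \tau\}$. For each $(u, T_u)$ with $u \in B_S$ and $T_u \in \mathrm{Col}(u)$, I take the Gornik symmetric idempotent $e_{u, T_u} = e_{T_u} 1_u \in {}_uG_u$ constructed after Theorem~\ref{thm:Gornik}, and define $e_\tau = \sum_{(u, T_u) \in \mathcal{I}_\tau} e_{u, T_u}$.

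The key technical step is to prove the Peirce dimension formula
\[
\dim_\mathbb{C}(e_{u, T_u}\, G_S\, e_{v, T_v}) = \begin{cases} 1 & \text{if } T_u|_{\partial u} = T_v|_{\partial v}, \\ 0 & \text{otherwise}.\end{cases}
\]
The vanishing when the boundary colourings disagree uses~\eqref{eqn:edgeaction}: if $x_i$ is a boundary edge variable of $u^*v$ (which, by construction of $u^*v$, is simultaneously a boundary edge of $u$ and of $v$), then $x_i e_{T_u} = \zeta^{(T_u)_i} e_{T_u}$ acts on the left while $e_{T_v} x_i = \zeta^{(T_v)_i} e_{T_v}$ acts on the right; disagreement in any $i$ forces the Peirce component to vanish. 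The dimension-one claim when the boundary colourings agree follows from Theorem~\ref{thm:Gornik} itself, because matching pairs $(T_u, T_v)$ are precisely the 3-colourings of $u^*v$, and each 3-colouring $T$ of $u^*v$ contributes the one-dimensional summand $\mathbb{C} e_T$ to ${}_uG_v \cong \bigoplus_T \mathbb{C} e_T$.

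This Peirce formula has two immediate consequences. First, each $e_\tau$ is central: sandwiching any $f \in {}_uG_v$ between $e_\tau$ on either side yields the same sum over matching colouring pairs. Second, $e_{u, T_u}\, G_S\, e_{u, T_u} = \mathbb{C} e_{u, T_u}$, so each $e_{u, T_u}$ is primitive. Together with orthogonality and the completeness $\sum_\tau e_\tau = \sum_{u, T_u} e_{u, T_u} = 1_{G_S}$, this gives the block decomposition
\[
G_S = \bigoplus_{\tau \in \mathcal{T}_S} e_\tau G_S e_\tau, \quad \dim_\mathbb{C}(e_\tau G_S e_\tau) = |\mathcal{I}_\tau|^2.
\]

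The remaining task, and the main obstacle, is to upgrade each block $e_\tau G_S e_\tau$ to a genuine matrix algebra $\mathrm{Mat}_{|\mathcal{I}_\tau|}(\mathbb{C})$. Concretely, for any three pairs $(u, T_u), (v, T_v), (w, T_w) \in \mathcal{I}_\tau$, one must show that the product of non-zero generators of $e_{u, T_u} G_S e_{v, T_v}$ and $e_{v, T_v} G_S e_{w, T_w}$ is a non-zero generator of $e_{u, T_u} G_S e_{w, T_w}$, so that these Peirce pieces can be rescaled into matrix units. I would handle this using the symmetric Frobenius structure from Theorem~\ref{thm:frob}: since $\mathrm{tr}(e_{u, T_u} z e_{u, T_b}) = \mathrm{tr}(e_{u, T_b} e_{u, T_u} z) = 0$ whenever $T_b \ne T_u$, the non-degenerate trace pairing restricts to a non-degenerate pairing $e_{u, T_u} G_S e_{v, T_v} \otimes e_{v, T_v} G_S e_{u, T_u} \to \mathbb{C} e_{u, T_u}$, giving non-vanishing for two-fold products; extending to three-fold products is then a standard argument exploiting the fact that each Peirce piece is one-dimensional. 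Once the matrix-unit structure is in place, $G_S \cong \bigoplus_{\tau \in \mathcal{T}_S} \mathrm{Mat}_{|\mathcal{I}_\tau|}(\mathbb{C})$ is manifestly semisimple.
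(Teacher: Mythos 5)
Your proof is correct, and it shares the same core input as the paper---Gornik's idempotents $e_{u,T}$ from Theorem~\ref{thm:Gornik} together with the Peirce computation $\dim e_{u,T}G_S e_{v,T'}\in\{0,1\}$ governed by whether the boundary colourings match---but the final step is organised differently. The paper stays module-theoretic: it defines $P_{u,T}=(G_S)e_{u,T}$, reads off $\mathrm{Hom}(P_{u,T},P_{v,T'})\cong e_{u,T}(G_S)e_{v,T'}$, deduces that each indecomposable projective is irreducible, and then invokes a standard criterion (Benson, Prop.~1.8.5) for semisimplicity. You instead aggregate the matching pairs into central idempotents $e_\tau$ and aim to exhibit each block $e_\tau G_S e_\tau$ as a full matrix algebra.

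The point you rightly flag as the ``main obstacle''---that nonzero products of Peirce pieces might compose to zero---is genuine: orthogonal primitive idempotents with all $\dim e_iAe_j=1$ do \emph{not} by themselves force $A$ to be a matrix algebra (take $e_1,e_2$ with $e_1Ae_2=\mathbb{C}a$, $e_2Ae_1=\mathbb{C}b$ and $ab=ba=0$). Your use of the symmetric Frobenius form from Theorem~\ref{thm:frob} (proved in the paper before this proposition, so no circularity) is a clean way to get $e_{u,T}Ge_{v,T'}\cdot e_{v,T'}Ge_{u,T}\neq 0$; the cyclicity $\mathrm{tr}(fh)=\mathrm{tr}(e_{v,T'}he_{u,T}\cdot f)$ indeed restricts the nondegenerate pairing to these one-dimensional pieces. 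The ``standard argument'' you allude to for three-fold products then does close: if $a_{ij}a_{jk}=0$ with $j\neq i$, then $0=a_{ji}(a_{ij}a_{jk})=(a_{ji}a_{ij})a_{jk}=\lambda a_{jk}$ with $\lambda\neq 0$ from the two-fold non-vanishing---contradiction. In fact the paper's claim ``$P_{u,T}\cong P_{v,T'}$ iff $T,T'$ match'' needs exactly this two-fold non-vanishing to produce an actual isomorphism, and the paper does not spell out where it comes from, so your version is in this respect slightly more explicit. Both routes are valid; the paper's is shorter once one is willing to cite the module-theoretic criterion, yours is more self-contained and yields an explicit Artin--Wedderburn decomposition.
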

\begin{proof}
For any $u\in B_S$ and any 3-colouring $T$ of $u$, define the projective 
$G_S$-module 
\[
P_{u,T}=(G_S)e_{u,T},
\]
where $e_{u,T}$ is Gornik's symmetric idempotent in $G_S$ defined above. 
Theorem~\ref{thm:Gornik} and our subsequent analysis of Gornik's idempotents 
show that the $P_{u,T}$ form a complete set of indecomposable projective 
$G_S$-modules. Furthermore, we have 
\[
\text{Hom}_{G_S}(P_{u,T},P_{v,T'})\cong e_{u,T}(G_S)e_{v,T'}\cong 
\begin{cases}
\mathbb{C},&\quad\text{if}\; T\;\text{and}\;T'\;\text{match at}\; S,\\
\{0\},&\quad\text{else}.
\end{cases}
\]  
This shows that $P_{u,T}\cong P_{v,T'}$ if and only if 
$T$ and $T'$ match at the common boundary. It also shows that 
if $P_{u,T}\not\cong P_{v,T'}$, then 
\[
\text{Hom}_{G_S}(P_{u,T},P_{v,T'})=\text{Hom}_{G_S}(P_{v,T'},P_{u,T})=\{0\}.
\]
Finally, it shows that each $P_{u,T}$ has only one composition factor, i.e. 
$P_{u,T}$ is irreducible. 

It is well-known that this implies that $G_S$ is semisimple; see 
Proposition 1.8.5 in~\cite{be} for example.   
\end{proof}
By Proposition~\ref{prop:Gsemisimple}, it is clear that for each 
$u\in B_S$ and each colouring $T$ of $u$, the corresponding block 
in $G_S$ is isomorphic to $\mathrm{End}(P_{u,T})$. 
In Section~\ref{sec-webcenter}, we will determine the central idempotents 
of $G_S$.
\subsection{The center of the web algebra and the cohomology ring of the Spaltenstein variety}\label{sec-webcenter}
For the rest of this section (and the following two sections), choose arbitrary but fixed non-negative integers 
$n\geq 2$ and $k\leq n$, such that $d=3k\geq n$. Let 
\[
\Lambda(n,d)=\left\{\mu\in \mathbb{N}^n\mid \sum_{i=1}^n\mu_i=d\right\}
\]
be the set of \textit{compositions} of $d$ of length $n$. 
By $\Lambda^+(n,d)\subset\Lambda(n,d)$ we denote the 
subset of \textit{partitions}, i.e. all $\mu\in\Lambda(n,d)$ such that 
\[
\mu_1\geq\mu_2\geq \ldots\geq \mu_n\geq 0.
\] 
Also for the rest of this section (and the following two sections), choose an arbitrary but fixed sign string 
$S$ of length $n$. We associate to $S$ a unique element 
$\mu=\mu_S\in\Lambda(n,d)$, such that    
\[
\mu_i=
\begin{cases}
1,&\quad \mathrm{if}\quad s_i=+,\\
2,&\quad \mathrm{if}\quad s_i=-. 
\end{cases}
\] 
Let $\Lambda(n,d)_{1,2}\subset \Lambda(n,d)$ be the subset of compositions 
whose entries are all $1$ or $2$. For any sign string $S$, we have 
$\mu_S\in \Lambda(n,d)_{1,2}$. 

Let $\lambda=(3^k)\in \Lambda(n,d)$. Let $\mathrm{Col}_{\mu}^{\lambda}$ be the 
set of column strict tableaux of shape $\lambda$ and type $\mu$, both 
of length $n$. It is well-known that there is a bijection between 
$\mathrm{Col}_{\mu}^{\lambda}$ and the tensor basis of 
\[
V_{\mu}=V_{\mu_1}\otimes \cdots\otimes V_{\mu_n},
\]
where $V_1=V_+$ and $V_2=V_1\wedge V_1\cong V_-$ (see Section 3 in~\cite{mast}, 
for example). However, we are interested in tensors as summands in 
the decomposition of elements in $B_S$. Therefore, we prove 
Proposition~\ref{prop:tableauxflows} in Section~\ref{sec-webflow}. 
The reader, who is not interested in the details of the proof of this 
proposition, can choose to skip Section~\ref{sec-webflow} at a first reading and 
just read the statement of the proposition.
\subsection{Tableaux and flows}\label{sec-webflow}
Let $p_S$ be the number of positive entries and $n_S$ the number of negative 
entries of $S$. By definition, we have that $d=p_S+2n_S$. The key idea 
in this section is to reduce all proofs to the case where $n_S=0$. 

\begin{defn} \label{def:posstr}
Fix any state string $J$ of length $n$, we define a new state string 
$\hat{J}$ of length $d$ by the following algorithm.
\begin{enumerate}
\item Let ${}_{0}\hat{J}$ be the empty string.
\item For $1\leq i\leq n$, let $_{i}\hat{J}$ be the result of 
concatenating $j_{i}$ to $_{i-1}\hat{J}$ if $\mu_{i}=1$. 
If $\mu_{i}=2$ then
\begin{enumerate}
\item concatenate $(1, 0)$ to $_{i-1}\hat{J}$ if $j_{i} = 1$.
\item concatenate $(0, -1)$ to $_{i-1}\hat{J}$ if $j_{i} = -1$.
\item concatenate $(1, -1)$ to $_{i-1}\hat{J}$ if $j_{i} = 0$.
\end{enumerate}
\end{enumerate}
We set $\hat{J} = {}_{n}\hat{J}$. Lastly, for any $c\in \{-1,0,1\}$, 
we define $\hat{J}^{c}$ to be the number of entries in $\hat{J}$ that is 
equal to $c$.
\end{defn} 

\begin{prop} 
\label{prop:tableauxflows}
There is a bijection between $\mathrm{Col}_{\mu}^{\lambda}$ and the 
set of state strings $J$ such that there exists a $w\in B_S$ and 
a flow $f$ on $w$ which extends $J$. 
\end{prop}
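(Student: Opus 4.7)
The strategy is to pass to the all-positive case via the hat construction of Definition~\ref{def:posstr}. When $n_S = 0$ we have $S = (+^d)$ with $d = 3k$ and $\mu = (1^d)$, so $\mathrm{Col}_{(1^d)}^{(3^k)}$ is the set of standard Young tableaux of shape $(3^k)$; I would send such a $T$ to the state string $J(T)$ with $j_i \in \{1,0,-1\}$ encoding whether $i$ lies in column $1$, $2$, or $3$ of $T$. By classical combinatorics, this identifies $\mathrm{Col}_{(1^d)}^{(3^k)}$ with lattice words of length $d$ whose $\mathfrak{gl}_3$-weight partial sums $\nu^{(i)} = \sum_{\ell \leq i} \epsilon_{c_\ell}$ are dominant and terminate at $(k,k,k)$. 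On the web side, Remark~\ref{rem:3color} identifies flows with admissible $3$-colorings, whose restriction to the boundary produces precisely such a weight-lattice path; existence of a basis web in $B_{(+^d)}$ carrying a flow extending $J$ is therefore equivalent to dominance of this path. The forward direction is witnessed by applying the growth algorithm of Definition~\ref{growth} together with the canonical flow of Definition~\ref{defn-cano}, while the reverse direction follows because any boundary of a $3$-coloring must be dominant.

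\textbf{Reducing the general case.} For arbitrary $S$, I would use the hat construction as an injection $J \mapsto \hat J$ from state strings on $S$ to state strings on $\hat S = (+^d)$; its image consists of strings which, on each pair of positions arising from a minus sign of $S$, are strictly decreasing in the order $1 > 0 > -1$. Using the isomorphism $V_- \cong V_+ \wedge V_+$, under which $e_1 \wedge e_2, e_1 \wedge e_3, e_2 \wedge e_3$ correspond respectively to $j_i = 1, 0, -1$, this operation converts a web $w \in B_S$ carrying a flow extending $J$ into a web $\hat w \in B_{\hat S}$ carrying a flow extending $\hat J$, and conversely by pairing up the appropriate positive strands. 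In parallel, I would inject $\mathrm{Col}_{\mu}^{(3^k)}$ into $\mathrm{Col}_{(1^d)}^{(3^k)}$ by relabelling each entry $i$ (which, being in at most $\mu_i \leq 2$ cells occupying distinct columns by column strictness) with the $\mu_i$ consecutive integers $\mu_1 + \cdots + \mu_{i-1} + 1, \ldots, \mu_1 + \cdots + \mu_i$, assigned so that the smaller integer goes to the smaller column. The image is exactly the subset of SYTs matching the hat image condition on the state-string side under the classical bijection of the first paragraph, so restricting the all-positive bijection yields the desired one.

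\textbf{Main obstacle.} The key technical point is that the hat construction preserves extendability, i.e., the lattice path of $(\hat S, \hat J)$ is dominant at the new intermediate weights introduced by splitting each negative strand. A case analysis of Definition~\ref{def:posstr} reduces this to the elementary observation: if $\nu = (x,y,z)$ is dominant and $\nu + \epsilon_a + \epsilon_b$ is dominant for $a < b \in \{1,2,3\}$, then the intermediate weight $\nu + \epsilon_a$ is also dominant; this explains why Definition~\ref{def:posstr} orders each split by smaller column index first. For the splits $(1,0)$ and $(1,-1)$ the intermediate is $\nu + \epsilon_1$ (automatically dominant since adding to the top coordinate preserves the inequalities), while for $(0,-1)$ the intermediate $\nu + \epsilon_2 = (x, y+1, z)$ is dominant because dominance of $\nu + \epsilon_2 + \epsilon_3 = (x, y+1, z+1)$ forces $x \geq y+1$. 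The remaining checks, namely that the tableau injection is well-defined and that its image matches the hat image on the state-string side, are routine verifications in Young tableau combinatorics and are essentially captured by the growth algorithm of Khovanov and Kuperberg~\cite{kk}.
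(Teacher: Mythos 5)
Your reduction via the hat construction and your map sending a tableau to the state string of column indices both coincide with the paper's strategy (Definition~\ref{def:posstr} and Lemma~\ref{lem:ttostr}). The problem is that both of your characterizing lemmata are false, and the hard step they replace is precisely the content of the paper's Lemma~\ref{lem:strtoflow}.

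On the tableau side, $\mathrm{Col}_\mu^\lambda$ in this paper (following Brundan--Ostrik~\cite{bros}) is the set of fillings of shape $\lambda$ of type $\mu$ whose \emph{columns} strictly increase, with no condition on rows. For $\mu=(1^d)$ this is not the set of standard Young tableaux: it is the larger set of size $(3k)!/(k!)^3$ obtained by distributing $1,\dots,d$ into the three columns in all possible ways. Already for $k=1$, $S=(+,+,+)$, there are $6$ such fillings but only one SYT of shape $(3)$.

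On the web side, the claim that a state string $J$ extends to a flow on some $w\in B_{(+^d)}$ if and only if the weight-lattice path of $J$ is dominant is false, and in particular the sentence "the reverse direction follows because any boundary of a $3$-colouring must be dominant" is incorrect. For $S=(+,+,+)$ the unique basis web is the half-theta (a single trivalent vertex with three boundary edges), and by Remark~\ref{rem:3color} its flows are exactly the $3!=6$ admissible $3$-colourings, i.e.\ all $J$ that are permutations of $(1,0,-1)$. The string $J=(0,1,-1)$ gives the path $(0,0,0)\to(0,1,0)\to(1,1,0)\to(1,1,1)$, which is not dominant, yet it does occur as the boundary of a flow. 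The same thing happens for $n=2$, $S=(+,-)$, $J=(0,0)$ on an arc: here $\hat{J}=(0,1,-1)$ is non-dominant but corresponds to a perfectly good (empty) flow.

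The actual characterization, proved in Lemmas~\ref{lem:ttostr} and~\ref{lem:strtoflow}, is the endpoint condition $\hat{J}^{-1}=\hat{J}^{0}=\hat{J}^{1}=k$, with no constraint on partial sums. Had you used that condition instead of dominance, your reduction to $n_S=0$ would become trivial (the count condition visibly depends only on $\hat{J}$ and not on intermediate weights, making your "main obstacle" paragraph unnecessary), but you would still need to prove the nontrivial direction that every balanced state string is realized by some flow on some basis web --- this is the substantial case analysis with arc, $Y$ and $H$-rules that occupies most of the paper's proof of Lemma~\ref{lem:strtoflow}, and it is entirely absent from your argument. As written, your proof establishes (at best) a bijection between standard Young tableaux and ballot sequences, which is a different and classical statement, not Proposition~\ref{prop:tableauxflows}.
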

\noindent The proof of Proposition~\ref{prop:tableauxflows} follows 
directly from 
Lemmas~\ref{lem:ttostr} and~\ref{lem:strtoflow}.

\begin{lem} \label{lem:ttostr}
There is a bijection between $\mathrm{Col}_{\mu}^{\lambda}$ and 
state strings $J$ of length $n$ such that 
\begin{equation}\label{eqn:conds}
\hat{J}^{-1} = \hat{J}^{0} = \hat{J}^{1}.
\end{equation}
where the $\hat{J}^{c}$ are as defined in Definition~\ref{def:posstr}.
\end{lem}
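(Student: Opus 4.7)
The strategy is to exhibit explicit mutually inverse maps between $\mathrm{Col}_{\mu}^{\lambda}$ and the set of balanced state strings. The fundamental observation is that a column strict tableau $T$ of shape $\lambda=(3^k)$ and type $\mu$ records, for each $i\in\{1,\ldots,n\}$, a distinguished collection of $\mu_i$ columns (taken from $\{1,2,3\}$) in which the entry $i$ appears. Since ``column strict'' imposes only strict increase down columns (no condition on rows), each entry $i$ appears in $\mu_i$ distinct columns, and conversely, once the column placements of every $i$ are specified, the tableau is reconstructed by sorting each column in ascending order.

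First, I would construct the map $\phi\colon\mathrm{Col}_{\mu}^{\lambda}\to \{J:\hat{J}^{-1}=\hat{J}^0=\hat{J}^1\}$ as follows. Identify column indices $1,2,3$ with $\hat{J}$-values $1,0,-1$ respectively. When $\mu_i=1$ and entry $i$ appears in column $c$, set $j_i=2-c$. When $\mu_i=2$ and entry $i$ appears in columns $\{c_1,c_2\}$ with $c_1<c_2$, set $j_i$ to be $1,0,-1$ according to whether $\{c_1,c_2\}$ equals $\{1,2\}$, $\{1,3\}$, or $\{2,3\}$; these three cases match precisely the $\hat{J}$-contributions $(1,0)$, $(1,-1)$, $(0,-1)$ prescribed by Definition~\ref{def:posstr}. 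Under this encoding, $\hat{J}^c$ equals the total number of cells placed in the column corresponding to $c$. Because $\lambda=(3^k)$ has exactly $k$ cells in each of its three columns, one obtains $\hat{J}^{-1}=\hat{J}^0=\hat{J}^1=k$, so $\phi(T)$ is balanced.

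For the inverse map $\psi$, given a balanced $J$, place entry $i$ in the $\mu_i$ columns dictated by $(\mu_i,j_i)$ (reading off the rules above in reverse), and then fill each column with its assigned entries listed in ascending order. The resulting filling has strictly increasing columns, since $\phi$ sends each $i$ to a set of $\mu_i$ \emph{distinct} columns, preventing repeated entries inside a column. The balance condition guarantees each of the three columns receives exactly $k$ cells, so the shape is $\lambda=(3^k)$. That $\psi\circ\phi$ and $\phi\circ\psi$ are the identities is then immediate from the construction.

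The only point where care is required, and the main technical checkpoint, is verifying that Definition~\ref{def:posstr} does indeed encode the six possible column placements of $i$ bijectively (three singletons when $\mu_i=1$, three unordered pairs when $\mu_i=2$). In the spirit of the paper's announced strategy of reducing to the case $n_S=0$, one can first settle the situation $\mu=(1^n)$, where $\hat{J}=J$ and the correspondence is just the obvious bijection between column-strict fillings of $(3^k)$ by $\{1,\ldots,d\}$ and balanced words of length $d$ in $\{1,0,-1\}$; the general case then follows by feeding the $\hat{J}$-unpacking of Definition~\ref{def:posstr} into this base case, since that unpacking turns an $i$ with $\mu_i=2$ precisely into the two singleton placements determined by the encoded column pair.
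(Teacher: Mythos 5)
Your proof is correct and constructs exactly the same bijection as the paper, just presented in the opposite order (tableau-to-state-string first, via explicit column placements rather than the paper's top-to-bottom growth algorithm for $Y_J$) with the column labels $1,2,3$ identified with $\hat{J}$-values $1,0,-1$. The concluding reduction to the $\mu=(1^n)$ case is an extra conceptual observation not in the paper, but the core argument matches.
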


\begin{proof}
Given a state string $J$ satisfying (\ref{eqn:conds}), we first give an 
algorithm to build a 3-column tableau $Y_J$, filled 
with integers from 1 to $n$. Afterwards, we show that $Y_J$ has 
shape $\lambda$. 

Begin by labelling the three columns with $1, 0$ and $-1$, 
reading from left to right. We are going to build up $Y_J$ from top to bottom. 
Start by taking $Y_J$ to be the empty tableau. Then, from $i=1$ to $i=n$, 
do the following.
\begin{enumerate}
\item If $\mu_{i}=1$, add one box labelled $i$ to column $j_i$ in $Y_J$.  
\item If $\mu_{i} = 2$, add two boxes labelled $i$ to columns $c_{1}$ and 
$c_{2}$, such that $c_1\ne c_2$ and $c_1+c_2=j_i$.
\end{enumerate}

We have to show that $Y_J$ belongs to $\mathrm{Col}_{\mu}^{\lambda}$. 
Since the algorithm builds up from top to bottom, $Y_J$ is strictly column 
increasing. To see that $Y_J$ has shape $\lambda$, we need to show 
that every row in $Y_J$ has three entries. Observe that the number of 
filled boxes in column $c$ of $Y_J$ is exactly equal to $\hat{J}^{c}$. 
Since we have assumed condition~\eqref{eqn:conds}, all three columns 
have the same length, therefore every row in $Y_J$ must have 
exactly three entries.
\vskip0.5cm
Conversely, let $T\in\mathrm{Col}_{\mu}^{\lambda}$. We define a state string 
$J$ as follows.
\[
j_{i}=\sum_{i\mathrm{\,appears\,in\,column\,c}} c.
\]
Since $\mu$ corresponds to a sign string and $T$ is column strict, 
we see that, for each $1\leq i\leq n$, $i$ 
can appear at most twice in $T$ but never twice in the same 
column. Thus, $j_{i}\in\{-1,0,1\}$, i.e. $J$ is a state string. It follows from 
the definition of $\hat{J}$ that $\hat{J}^{c}$ is equal to the length of 
column $c$ of $T$. Since $T$ is of shape $\lambda$, the number of boxes in 
each column is the same. Hence, condition (\ref{eqn:conds}) holds for $J$. 

It is straightforward to check that the above two constructions are inverse 
to each other and therefore determine a bijection. 
\end{proof}

\begin{lem} \label{lem:strtoflow}
A state string $J$ corresponds to the boundary state of a flow on a 
web $w\in B_S$ if and only if condition~\eqref{eqn:conds} holds for $J$.
\end{lem}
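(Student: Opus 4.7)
The plan is to prove both directions by reducing to the case when $S$ consists entirely of positive signs. First I would observe, via Remark~\ref{rem:3color}, that a flow on $w$ is the same data as an edge $3$-colouring (colours in $\{-1,0,1\}$) in which the three colours at every trivalent vertex are distinct. Under this correspondence, the state at an upward boundary edge coincides with its colour, while at a downward boundary edge the state is the negative of its colour. I will use this dictionary throughout.

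Next, for the reduction, given a web $w\in B_S$ with boundary $S$, I would form an enlarged web $\widehat{w}$ by attaching a trivalent vertex to every downward boundary point: the original downward edge becomes an interior edge terminating at a new vertex, and the vertex emits two new upward edges to the new boundary. The resulting sign string $\widehat{S}$ is an all-positive string of length $d$. Crucially, any $3$-colouring of $w$ extends uniquely to a $3$-colouring of $\widehat{w}$: at each new Y-vertex, the colour on the interior edge is prescribed, and the two new upward edges must then carry the two remaining colours (in some order that the planar embedding fixes). A direct case check against Definition~\ref{def:posstr} shows that this produces exactly the pairs $(1,0)$, $(0,-1)$ and $(1,-1)$ from downward states $1,-1,0$, respectively. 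In particular, flows on $w$ extending $J$ biject with flows on $\widehat{w}$ extending $\widehat{J}$, and the triple of colour counts $(\widehat{J}^{-1},\widehat{J}^0,\widehat{J}^1)$ can be computed either from the boundary of $w$ via $\widehat{J}$, or directly from the boundary of $\widehat{w}$.

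The problem is therefore reduced to the all-upward case, where the boundary state string coincides with the boundary colouring. For the forward direction in this case, I would run a double count at the vertices: since each vertex sees each colour exactly once, the standard incidence count yields $V = 2I^c + B^c$ for every colour $c$, where $I^c$ and $B^c$ denote the numbers of interior and boundary $c$-edges. Combining this vertex count with the bipartite source/sink structure of Kuperberg's webs then forces the orientation-refined counts $B^1 = B^0 = B^{-1}$, which is exactly $\widehat{J}^1 = \widehat{J}^0 = \widehat{J}^{-1}$. For the backward direction, given any balanced state string $\widehat{J}$ on an all-upward boundary, I would construct a basis web together with a compatible flow by induction on $d = 3k$: pick two neighbouring boundary entries of distinct colours and connect them by an arc or a Y according to the growth algorithm, recording the induced colouring on the newly created interior edges. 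The compatibility of the growth algorithm with colourings (Definition~\ref{defn-cano}) guarantees that this procedure can be continued, and that it terminates with an element of $B_{\widehat{S}}$ carrying the prescribed flow.

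The main obstacle will be the backward direction in the all-upward case: one must show that the combinatorial condition $\widehat{J}^{-1} = \widehat{J}^0 = \widehat{J}^1$ is not only necessary but also \emph{sufficient} to find both a basis web and a compatible flow extending the given boundary data. The natural strategy, via the growth algorithm with colour bookkeeping, requires verifying that at each step one can choose an arc, Y or H move that is compatible with the already-fixed colours on adjacent strands; this in turn demands a careful case analysis matching the three colour patterns on adjacent boundary edges to the admissible outputs of the three growth rules, using the balance hypothesis to ensure that one never runs out of compatible choices before the web is complete.
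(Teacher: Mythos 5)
Your forward direction is a genuinely different and more elegant argument than the paper's, which proceeds by tracking how the triple $(\hat{J}^{-1},\hat{J}^0,\hat{J}^1)$ changes across each arc-, Y- and H-move of the growth algorithm by induction on $n$. Your counting argument does work, but you should spell out the refined count that you allude to with ``the bipartite source/sink structure'': in the all-upward case every boundary edge is outgoing from its unique interior endpoint, so each boundary-adjacent trivalent vertex is a source. Counting colour-$c$ incidences separately at sources and at sinks gives $V_+ = I^c + B^c$ (each source sees colour $c$ once, either on an interior or on a boundary edge) and $V_- = I^c$ (each sink sees colour $c$ once, always on an interior edge, since no boundary edge meets a sink). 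Hence $B^c = V_+ - V_-$ is the same for all three colours, and in fact $B^c = k$ since $3V_+ = I + B$ and $3V_- = I$ force $V_+ - V_- = B/3 = k$. The cruder equation $V = 2I^c + B^c$ that you wrote down only pins down the $B^c$ modulo $2$; you need the asymmetric source/sink version.

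For the backward direction, the all-upward reduction does not actually buy you anything, and I think it introduces a gap. First, producing a $\hat{w}\in B_{\hat S}$ with a flow extending $\hat J$ does not give a $w\in B_S$ with a flow extending $J$: an arbitrary $\hat w$ need not contain attached $Y$'s in the correct positions so that it contracts to a web with boundary $S$, and you have no control over its shape. Second, even if you try to construct $\hat w$ by the growth algorithm starting from the all-upward boundary, the very first $Y$-move produces a $2$-labelled (downward) strand, so you leave the all-upward regime immediately; the problem you then face is exactly the original mixed-sign problem. The paper's proof handles the backward direction directly by induction on $n$: either an arc- or $Y$-move is applicable and the induction closes, or one must perform a chain of $H$-moves to slide a $0$-state into a position adjacent to a $\pm 1$-state of the opposite parity, and a fairly lengthy case analysis on $(\mu_i,\mu_{i+1})$ and $(j_i,j_{i+1})$ is required to show that the balance condition prevents you from ever getting stuck; one must then also verify separately that the constructed web is non-elliptic. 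You correctly identify this case analysis as ``the main obstacle,'' but your proposal does not carry it out, and the reduction you placed in front of it is a detour rather than a simplification.
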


\begin{proof}
Let $w\in B_S$ be equipped with a flow with boundary state string 
$J$. We are going to show that $J$ satisfies condition~\eqref{eqn:conds} 
by induction on $n$. For $n=2$, $w$ can only be an arc. 
In this case it is simple to check that all flows on $w$ have corresponding 
boundary state strings satisfying condition~\eqref{eqn:conds}. 

For $n>2$, we express $w$ using the growth algorithm in an arbitrary, but 
fixed way, with the restriction that only one rule is applied per level. 
Let $_{k}J$ denote the boundary state string at the beginning of the $k$-th 
level in the growth algorithm and $_{k}\hat{J}$ the associated string as in 
Definition~\ref{def:posstr}. Similarly, let $_{k}\mu$ denote the composition 
corresponding to the sign string at the $k$-th level. 
Let us compare $_{k+1}J$ and $_{k}J$. They can 
only differ in the following ways.
\begin{enumerate}
\item In case an arc-rule is applied at the $k$-th level, $_{k}J$ 
can be obtained from $_{k+1}J$ by inserting the substring 
$(1,-1)$, $(0, 0)$ or $(-1,1)$ 
between the $i$-th and $i+1$-th entries in $_{k+1}J$. $_{k}\mu$ can be obtained 
from $_{k+1}\mu$ by inserting the substring $(1, 2)$ or $(2, 1)$ 
between the $i$-th and $i+1$-th entries in $_{k+1}\mu$.
\begin{align} \label{fig:arcflow}
   \xy(0,0)*{\includegraphics[width=150px]{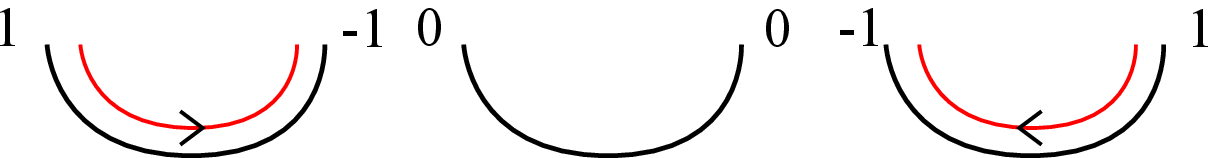}};\endxy
\end{align}
\item In case a Y-rule is applied, $_{k}J$ can be obtained from $_{k+1}J$ 
by replacing the $i$-th entry in $_{k+1}J$ with a length two substring 
whose sum is equal to the $i$-th entry. $_{k}\mu$ can be obtained from 
$_{k+1}\mu$ by replacing the $i$-th entry in $_{k+1}\mu$ with the 
substring $(3-{}_{k+1}\mu_{i}, 3-{}_{k+1}\mu_{i})$.
\begin{align} \label{fig:yflow}
   \xy(0,0)*{\includegraphics[width=300px]{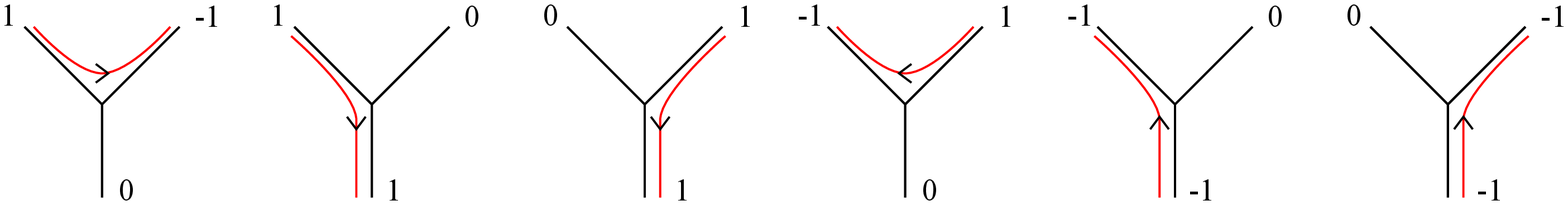}};\endxy
\end{align}
\item In case an H-rule is applied, $_{k}\mu$ can be obtained from 
$_{k+1}\mu$ by replacing a substring $(1, 2)$ or $(2, 1)$, at the $i$-th and 
$(i+1)$-th position in $_{k+1}\mu$, with 
$(3-{}_{k+1}\mu_{i}, 3-{}_{k+1}\mu_{i+1})$. $_{k}J$ can be obtained from 
$_{k+1}J$ by replacing a substring of length two in $_{k+1}J$ at the $i$-th 
and $(i+1)$-th position according to the schema.
\begin{align} \label{fig:hflow}
  &\xy(0,0)*{\includegraphics[width=220px]{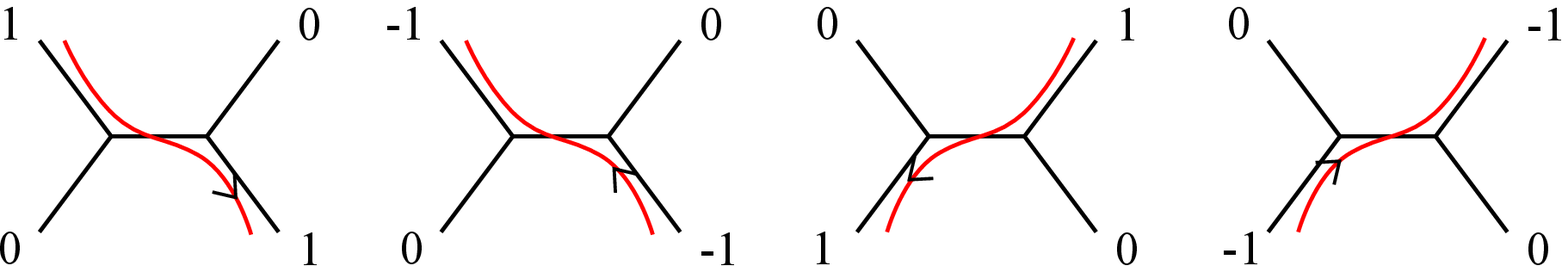}};\endxy\\
   &\xy(0,0)*{\includegraphics[width=220px]{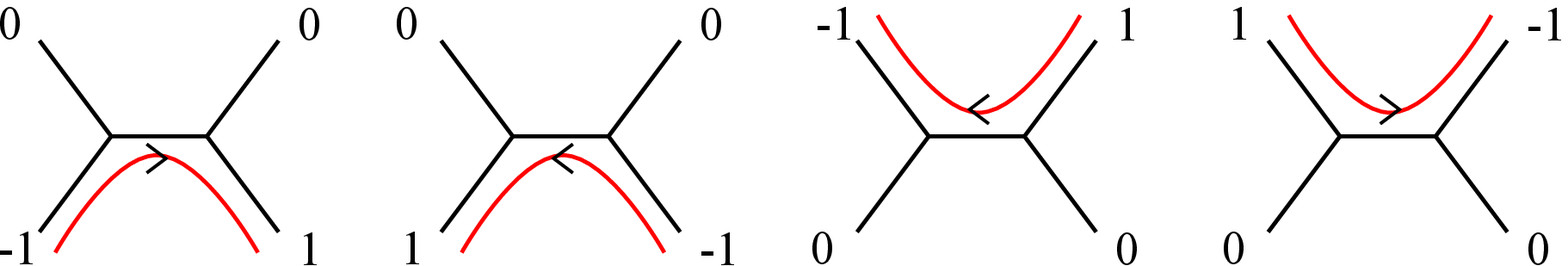}};\endxy
\end{align}
\end{enumerate}
It is straightforward to check that $_{k+1}J$ satisfies 
condition~\eqref{eqn:conds}, with composition $_{k+1}\mu$, if and only if 
$_{k}J$ does, with composition $_{k}\mu$. Take, for example, an instance where 
a Y-rule is applied; suppose also that the $i$-th entry in $_{k+1}\mu$ is 
2 and the $i$-th entry in $_{k+1}J$ is 0. Thus, the $i$-th entry in 
$_{k+1}J$ contributes a pair $(1, -1)$ to $_{k+1}\hat{J}$. 
By~\eqref{fig:yflow},  $_{k}J$ is 
obtained from $_{k+1}J$ by replacing the $i$-th entry in $_{k+1}J$ 
with $(1, -1)$ and the $i$-th entry in $_{k+1}\mu$ with $(1, 1)$. 
We see that $_{k}\hat{J}$ is in fact exactly equal to $_{k+1}\hat{J}$. 
Therefore $_{k+1}\hat{J}$ satisfies condition~\eqref{eqn:conds} if and only if  
$_{k}\hat{J}$ does. Similar analysis apply to all cases 
in~\eqref{fig:arcflow},~\eqref{fig:yflow} and~\eqref{fig:hflow}.

Let $k$ be the first level in the growth algorithm of $w$ where a Y or an 
arc-rule is applied. From the $(k+1)$-th level down we have a 
non-elliptic web $w'$ with flow, whose boundary state string $_{k+1}J$ and 
composition $_{k+1}\mu$ both have length less than $n$. Thus, by our 
induction hypothesis, $_{k+1}J$, with composition $_{k+1}\mu$, satisfies 
condition~\eqref{eqn:conds}. 

By the above argument, then ${}_{i}J$ also satisfy condition~\eqref{eqn:conds}, 
for any $0\leq i\leq k$. In particular, $J={}_0J$ satisfies that condition, 
which is what we had to prove.  

\vskip0.5cm

Conversely, let $J$ satisfy condition~\eqref{eqn:conds}, with composition 
$\mu$. We show, by induction on $n$, that there is a $w\in B_S$ 
with flow whose boundary state string is exactly $J$. More specifically, 
we first construct a $w\in W_S$ and then show that $w$ is non-elliptic, i.e. 
$w\in B_S$. 

For $n=2$, then $w$ must be an arc. 
It is simple to check that if $J$ satisfies condition~\eqref{eqn:conds}, 
$J$ is the boundary state of a flow on an arc. 

For $n>2$, suppose it is possible to apply an arc or Y-rule to 
the pair $\mu$ and $J$, depicted in~\eqref{fig:arcflow} 
and~\eqref{fig:yflow}. Then we obtain a new pair $\mu'$ and 
$J'$ with length less than $n$. Thus, by induction, there exist a web $w'\in 
W_{s'}$ and flow extending $J'$. Gluing the arc or Y on top 
of $w'$ results in a web $w\in W_S$ with a flow extending $J$.

Suppose, then, that it is not possible to apply an arc or Y-rule to 
$\mu$ and $J$. This means that one of the following must hold.
\begin{enumerate}
\item \label{case1} $\mu$ does not contain a substring of type 
$(1,2)$ or $(2,1)$ and $J = (1,...,1)$, $J=(-1,...,-1)$ or $J=(0,...,0)$.
\item \label{case2}$\mu$ contains at least one substring of the form $(1, 2)$ 
or $(2, 1)$. For every substring in $\mu$ of the form $(1, 2)$ or $(2, 1)$, 
the corresponding substring in $J$ is $(\pm1, \pm1)$, $(0, 1)$ or $(1,0)$. 
For every substring in $\mu$ of the form $(1, 1)$ or $(2,2)$, 
the corresponding substring in $J$ is $(1,1)$, $(-1,-1)$ or $(0,0)$.
\end{enumerate}
Case 1 contradicts the assumption that $J$ satisfies 
condition~\eqref{eqn:conds}. 

Case~\ref{case2} contains several subcases, each of which contains details 
which are slightly different. However, the general idea is the same for all of 
them and is very simple, i.e. apply H-moves until you can apply an arc or a Y-rule 
and finish the proof by induction. 

We first suppose, without loss of generality, that 
$\mu$ contains a substring $(\mu_{i}, \mu_{i+1})=(1, 2)$ and that the 
corresponding substring in $J$ is $(j_{i}, j_{i+1})=(1, 1)$ 
(the subcase for $(j_{i}, j_{i+1})=(-1, -1)$ is analogous). 
We see that $(1, 1)$ in $J$ contributes a substring $(1, 0, 1)$ to $\hat{J}$. 
Thus, our assumption that $\hat{J}$ satisfies condition~\eqref{eqn:conds} 
implies that $\hat{J}$ contains at least one more entry equal to $-1$. 
This means that for some $r\neq i, i+1$, $1\leq r\leq n$, one of the 
following is true.
\begin{itemize}
\item[(a)] $j_{r} = -1$, $\mu_{r} = 1$, denoted for brevity by
\begin{align} 
   \xy(0,0)*{\includegraphics[width=70px]{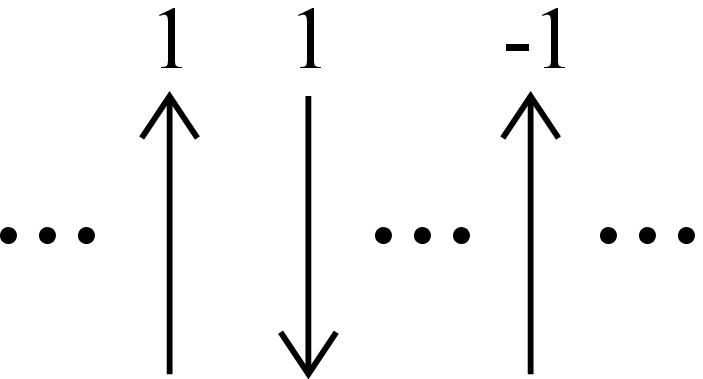}};\endxy
\end{align}
\item[(b)] $j_r = -1$, $\mu_{r} = 2$, denoted
\begin{align} 
   \xy(0,0)*{\includegraphics[width=70px]{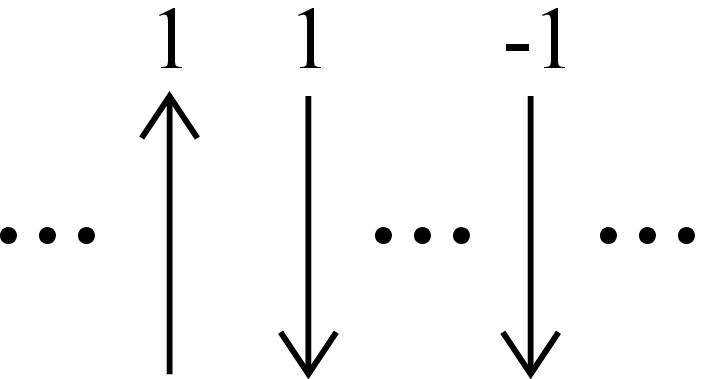}};\endxy
\end{align}
\item[(c)] $j_r = 0$, $\mu_{r} = 2$, denoted
\begin{align} 
   \xy(0,0)*{\includegraphics[width=70px]{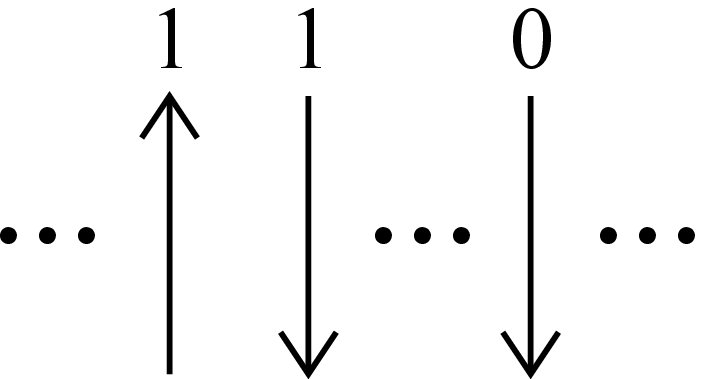}};\endxy
\end{align}
\end{itemize}
Without loss of generality, let us assume $i+1<r$. 
Consider subcases (a) and (b). If $j_{m}\neq 0$ for all $i+1<m<r$, then it 
is possible to apply an arc or Y-move to $J$ and $\mu$, contrary to our 
assumption in case~\ref{case2}. Thus, in all three scenarios above it suffices 
to analyse the following two configurations.
\begin{align} \label{fig:110}
   \xy(0,0)*{\includegraphics[width=70px]{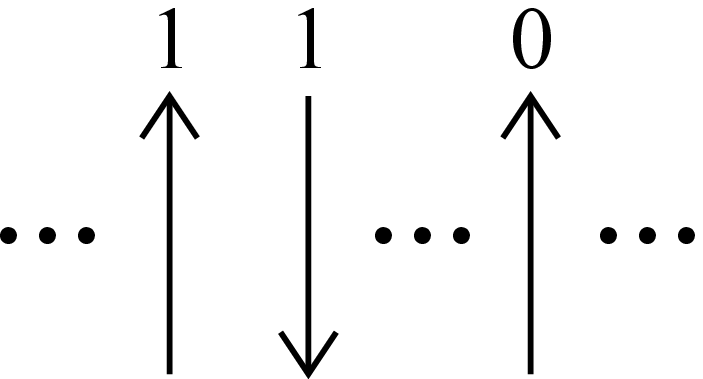}};\endxy && 			\xy(0,0)*{\includegraphics[width=70px]{res/figs/section41/udd110}};\endxy
\end{align}
Let $i+1< r \leq n$ be smallest integer where $j_r = 0$. We must have that 
$\mu_{r-1}=3-\mu_{r}$ and $j_{r-1}=\pm 1$. For any other values of $\mu_{r-1}$ 
and $j_{r-1}$ we would be able to apply an arc or a Y-move, 
contradicting our assumptions for case~\ref{case2}. In both situations, 
we can apply an H-rule to the substrings $(j_{r-1}, j_{r})$ and 
$(\mu_{r-1}, \mu_{r})$ as shown below.
\begin{align} \label{fig:hmove}
   \xy(0,0)*{\includegraphics[width=80px]{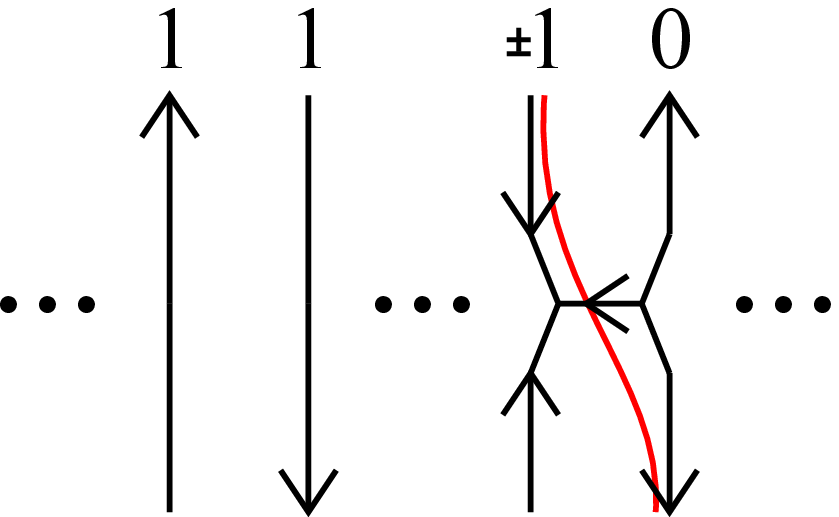}};\endxy &&
   \xy(0,0)*{\includegraphics[width=80px]{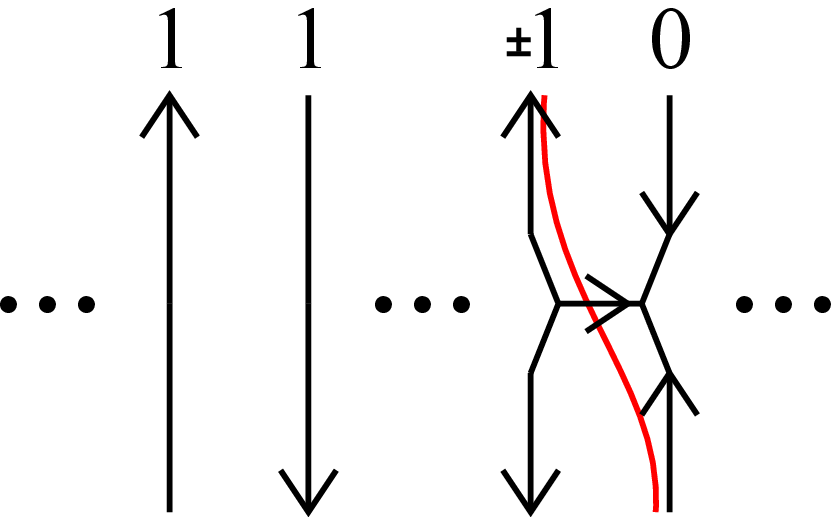}};\endxy
\end{align}
This results in new sign and state strings, each with length $n$ satisfying 
condition~\eqref{eqn:conds}. The application of the H-rule in~\eqref{fig:hmove} 
moves the zero at the $r$-th position to the $r-1$ position. 
Either we can now apply an arc or Y-rule to the new strings or 
by repeatedly applying an H-rule in the manner of~\eqref{fig:hmove}, 
we obtain one of the following pairs.
\begin{align} 
   \xy(0,0)*{\includegraphics[width=45px]{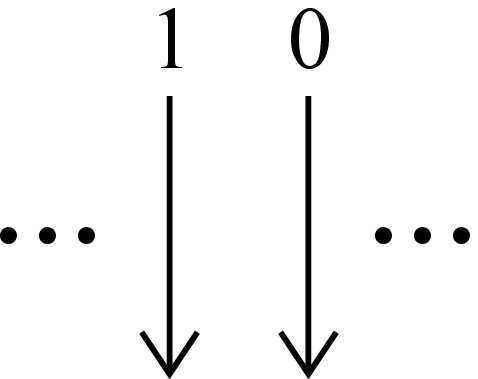}};\endxy &&
\xy(0,0)*{\includegraphics[width=45px]{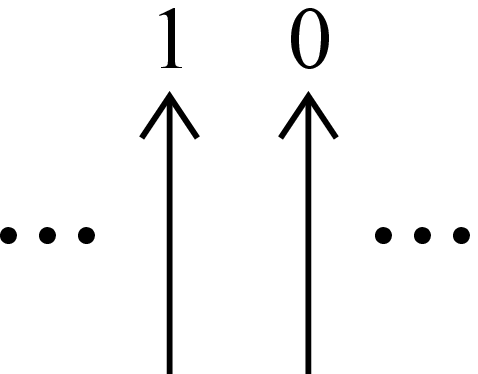}};\endxy
\end{align}
To either of the above diagrams we can apply a Y-rule, after which we 
can use induction. 

To complete our analysis of case~\ref{case2}, now suppose, without loss of 
generality, that $\mu$ contains a substring $(\mu_{i}, \mu_{i+1})=(1, 2)$ and 
that the corresponding substring in $J$ is $(j_{i}, j_{i+1})=(1, 0)$ 
(the subcases for $(0,\pm 1)$ or $(-1,0)$ are analogous).  

We see that $(1, 0)$ in $J$ contributes a substring $(1, 1,-1)$ 
to $\hat{J}$. Thus, our assumption that $\hat{J}$ satisfies 
condition~\eqref{eqn:conds} implies that $\hat{J}$ contains at least one more 
entry equal to $-1$. This means that for some $r$, with $1\leq r\leq n$, 
one of the following is true.
\begin{itemize}
\item[(a)] $j_{r} = -1$, $\mu_{r} = 1$, denoted for brevity by
\begin{align} 
   \xy(0,0)*{\includegraphics[width=70px]{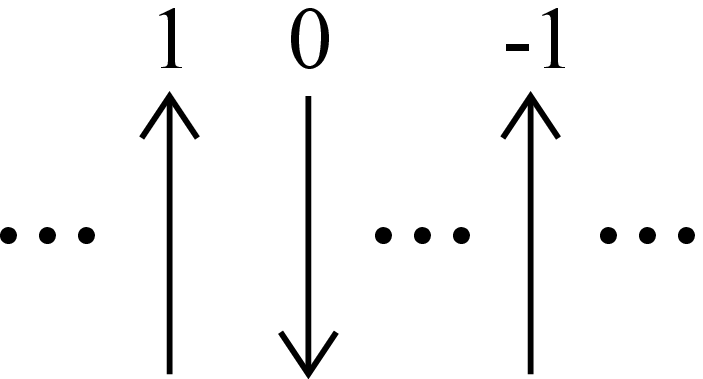}};\endxy
\end{align}
\item[(b)] $j_{r} = -1$, $\mu_{r} = 2$, denoted
\begin{align} 
   \xy(0,0)*{\includegraphics[width=70px]{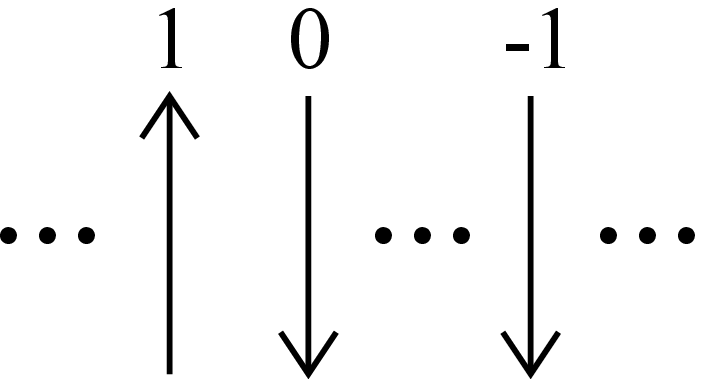}};\endxy
\end{align}
\item[(c)] $j_{r} = 0$, $\mu_{r} = 2$, denoted
\begin{align} 
   \xy(0,0)*{\includegraphics[width=70px]{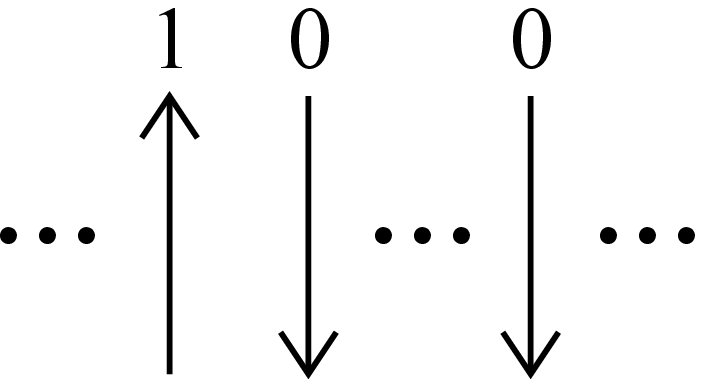}};\endxy
\end{align}
\end{itemize}
For subcases (a) and (b), if $\mu_{i+2}=1$ and $j_{i+2} =-1$, we may apply an 
H-rule to $(\mu_{i+1}, \mu_{i+2})$, $(j_{i+1}, j_{i+2})$ to obtain a new 
pair $\mu'$ and $J'$. Subsequently we can apply a Y-rule 
to the $i$-th and $(i+1)$-th entries of $\mu'$ and $J'$ as illustrated below.
\begin{align}
   \xy(0,0)*{\includegraphics[width=60px]{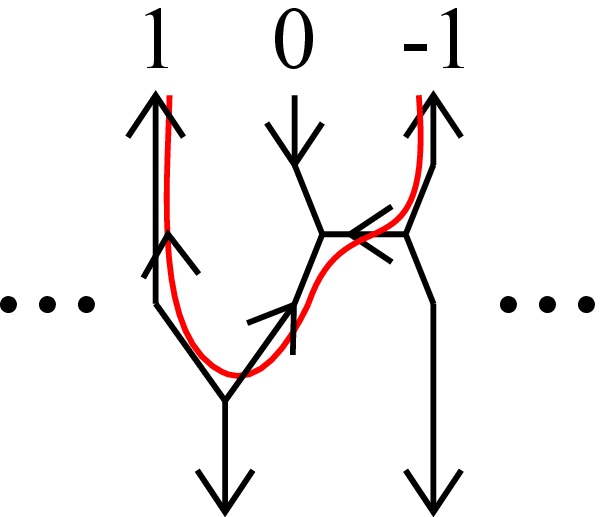}};\endxy 
\end{align}
After applying the Y-rule, we can use induction. 

Otherwise, we can show, just as before, that all three scenarios 
above reduce to an analysis of the following two configurations.
\begin{align} \label{fig:100}
   \xy(0,0)*{\includegraphics[width=70px]{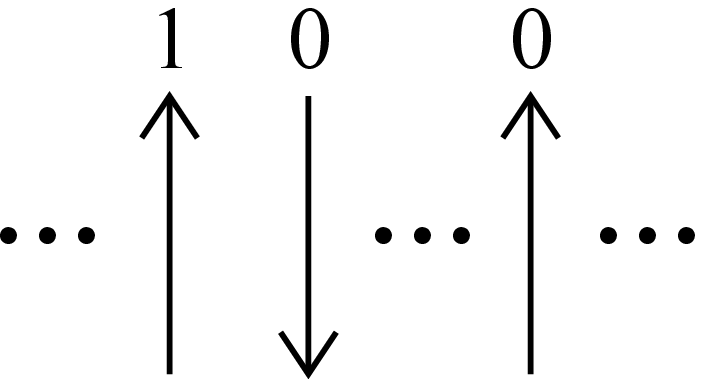}};\endxy && \xy(0,0)*{\includegraphics[width=70px]{res/figs/section41/udd100}};\endxy
\end{align}
That is, we can assume that $\mu$ contains a substring 
$(\mu_{i}, \mu_{i+1})=(1, 2)$ with the corresponding substring in $J$ being 
$(j_{i}, j_{i+1})=(1, 0)$, and for some $0<r\neq i+1<n$ we have $j_{r} = 0$. 
In particular, this tells us that there exist a $0<r\neq i+1<n$ such that 
$\mu_{r}=1$ and $j_{r} = 0$.
\begin{align}
   \xy(0,0)*{\includegraphics[width=70px]{res/figs/section41/udu100}};\endxy
\end{align}
This has to hold because otherwise $\hat{J}$ cannot satisfy 
condition~\eqref{eqn:conds}. Let us assume $r$ to be the smallest integer 
such that $i+1<r$, $\mu_{r}=1$ and $j_{r} = 0$. By our assumption that we 
cannot apply an arc or Y-rule to $J$ and $\mu$, we see that 
$\mu_{r-1} = 2$ and $j_{r-1} = \pm 1$. Applying an H-rule to 
$(j_{r-1}, j_{r})$ and $(\mu_{r-1}, \mu_{r})$ 
\begin{align}
   \xy(0,0)*{\includegraphics[width=80px]{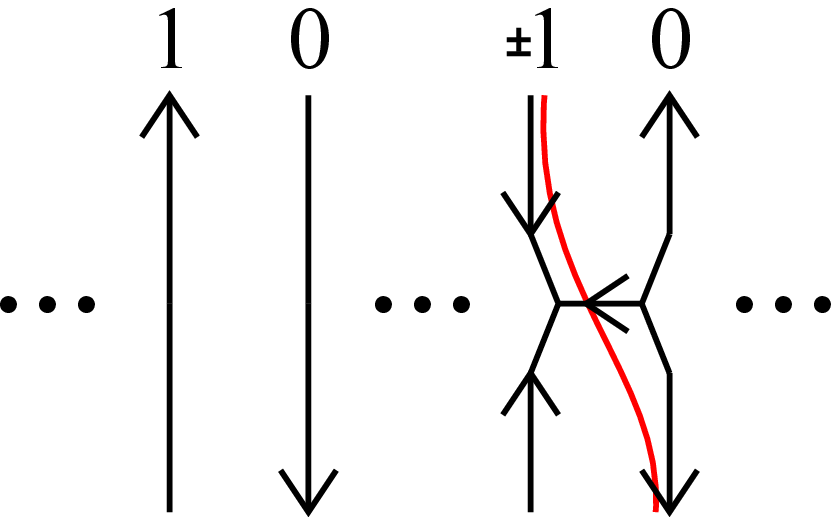}};\endxy
\end{align}
results in new sign and state strings, also with length $n$, satisfying 
condition~\eqref{eqn:conds}. The application of the H-rule in the above 
case moves the zero at the $r$-th position to the $r-1$-th position. 
Either we can now apply an arc or a Y-rule to the new sign and state strings, 
or by repeatedly apply an H-rule in the manner of~\eqref{fig:hmove}, 
we obtain a pair like below.
\begin{align} 
   \xy(0,0)*{\includegraphics[width=45px]{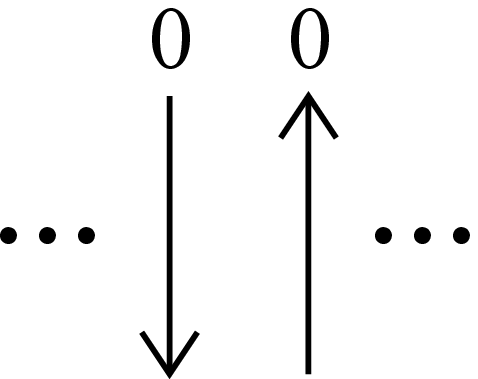}};\endxy
\end{align}
to which we can apply an arc-rule. Finally, apply induction.
\vskip0.5cm
It remains to show that the web $w$ (with flow) produced from the above algorithm is an element of $B_s$, that is, $w$ does not contain digons or squares. 
We note that, just as in~\cite{kk}, in the expression of $w$ using 
the arc, Y and H-rules, digons can only appear as the result of applying an 
arc-rule to the bottom of an H-rule, i.e. we have
\begin{align} 
   \xy(0,0)*{\includegraphics[width=25px]{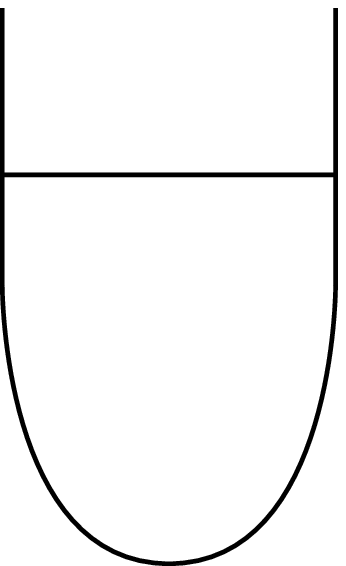}};\endxy
\end{align}

A square can only result from the following sequence of arc, Y and H-rules.
\begin{align} 
   \xy(0,0)*{\includegraphics[width=250px]{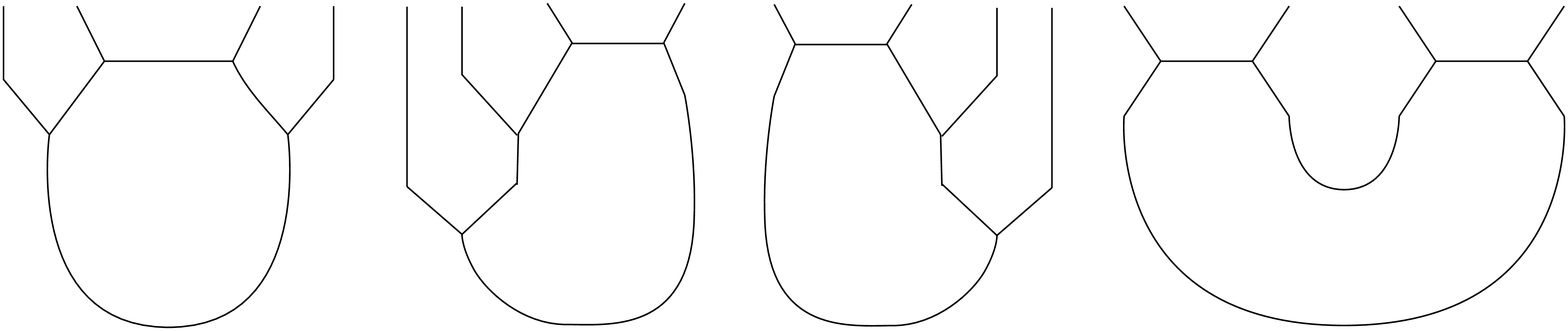}};\endxy
\end{align}
Note that in the above, we do not consider the case in which we apply 
an H-rule to the bottom of another H-rule. This is because such a case cannot arise in 
our construction of $w$.

Recall that in our inductive construction of $w$, we only apply H-rules 
equipped with the following flows.
\begin{align} 
   \xy(0,0)*{\includegraphics[width=90px]{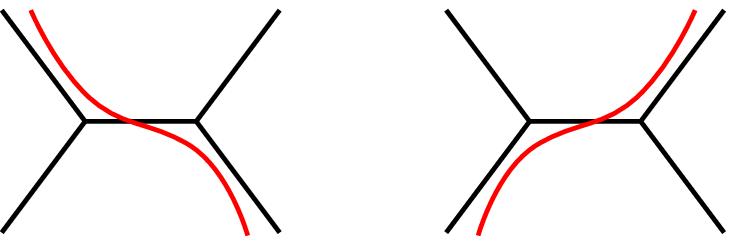}};\endxy
\end{align}

We can immediately see that is it not possible to apply an arc-rule with flow 
to the bottom of such an H-rule as shown below.
\begin{align} 
   \xy(0,0)*{\includegraphics[width=30px]{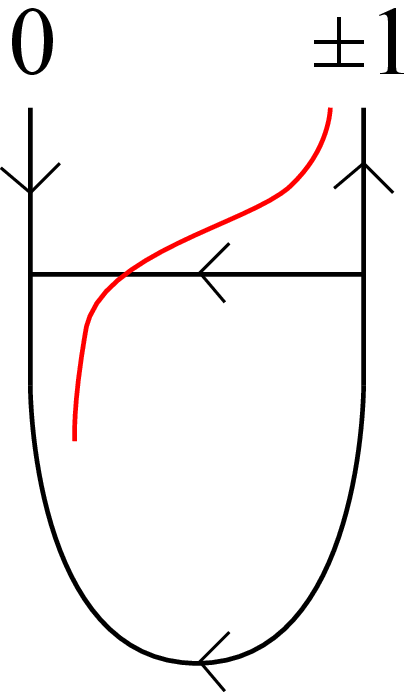}};\endxy
\end{align}

Since we only use the above two H-rules with flow, the induced flows on 
squares are as follows.
\begin{align} 
   \xy(0,0)*{\includegraphics[width=130px]{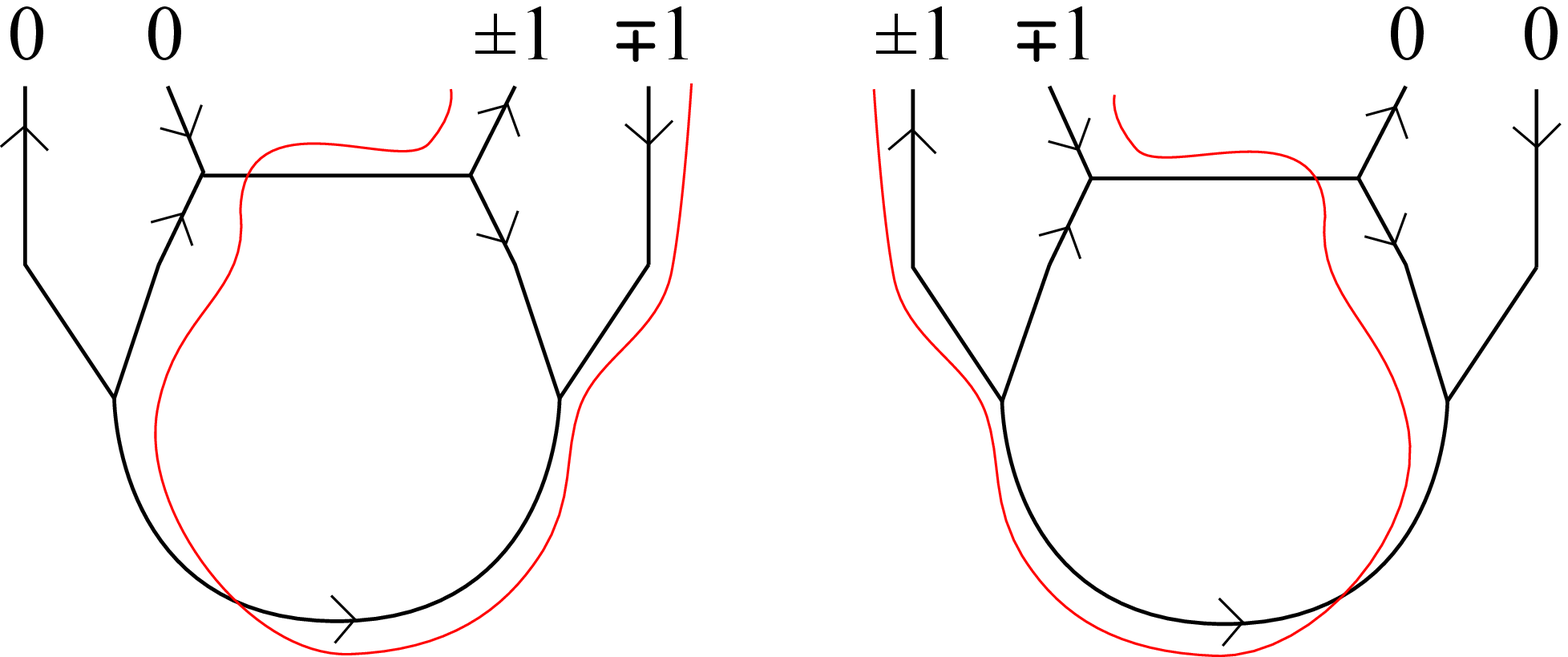}};\endxy
\end{align}

\begin{align} 
   \xy(0,0)*{\includegraphics[width=130px]{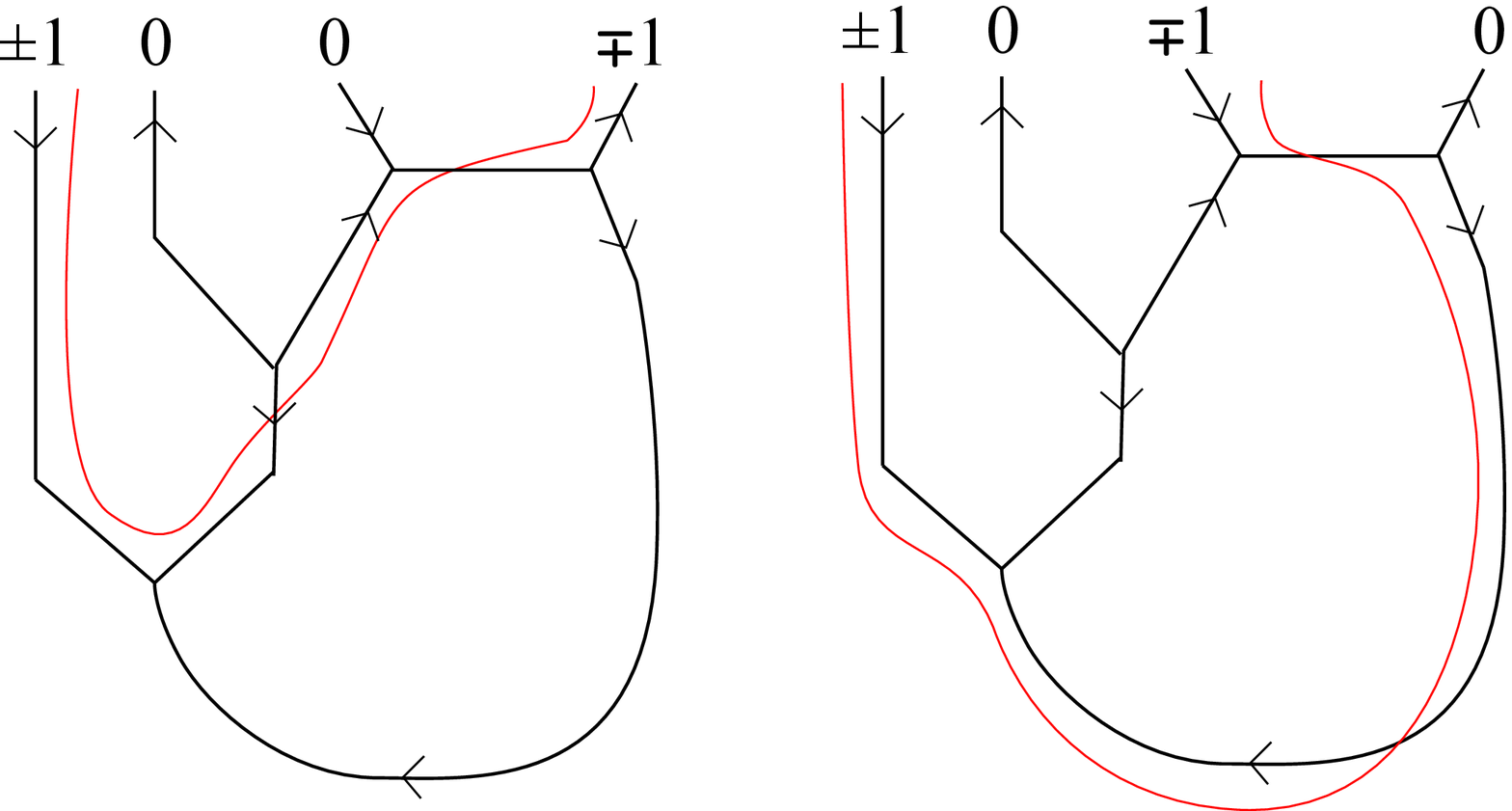}};\endxy
\end{align}

\begin{align} 
   \xy(0,0)*{\includegraphics[width=180px]{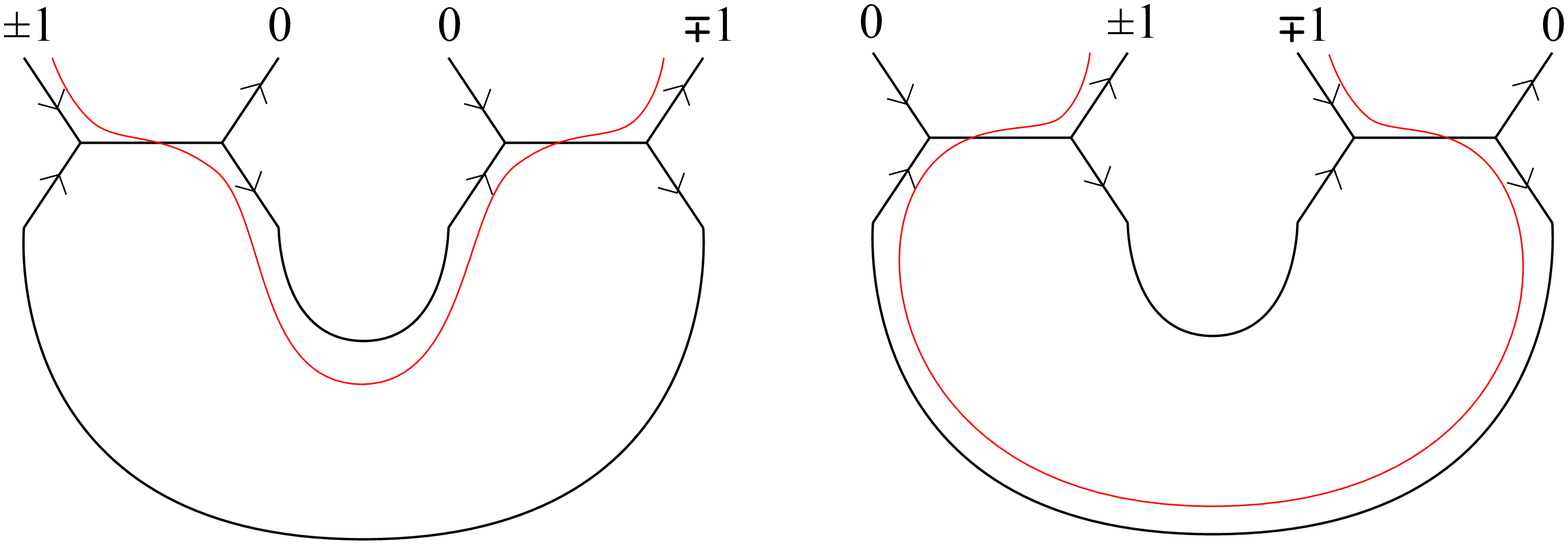}};\endxy
\end{align}
In each case, one can check that it is possible to apply an arc-rule to the 
state and sign strings 
(the same analysis applies to the cases where the faces above are given 
the opposite edge orientations). 
However, recall that an H-rule is used in our construction 
only in the case for which it is not possible to apply any other rules 
to the boundary. This implies that none of the above faces 
can appear during the construction of $w$.
\end{proof}

Implicit in the proof of Lemma~\ref{lem:strtoflow} is a procedure to construct,
from a state string $J$ satisfying condition~\eqref{eqn:conds},  
a non-elliptic web $w$ with flow extending $J$, such that $\partial w=\mu$. Note
that this procedure is not deterministic. That is, it is possible to produce 
different webs with flows extending $J$ by making different choices in the 
construction. 

\begin{ex}
\label{ex:tableauxflows}
The procedure is exemplified below. If we choose to replace the substrings as indicated in the right figure, the tableau on the left gives rise to the web with flow next to it.  
\begin{align} 
   \xy(0,0)*{\includegraphics[width=80px]{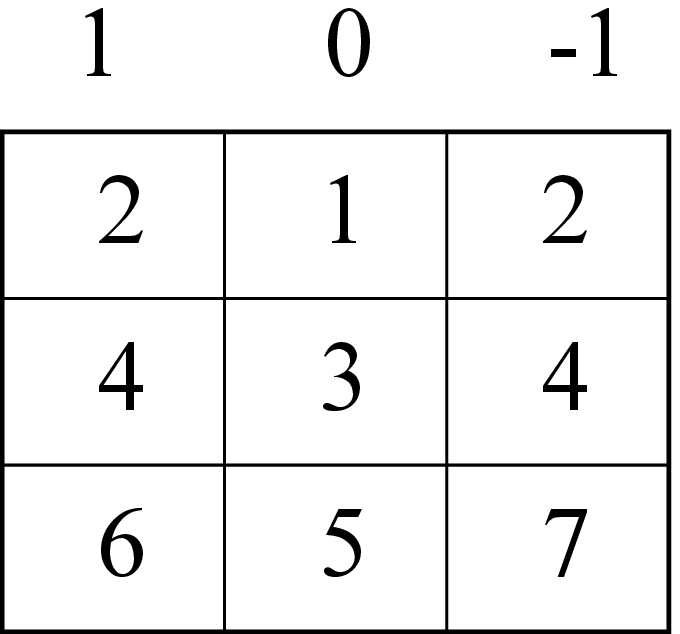}};\endxy && \xy(0,0)*{\includegraphics[width=160px]{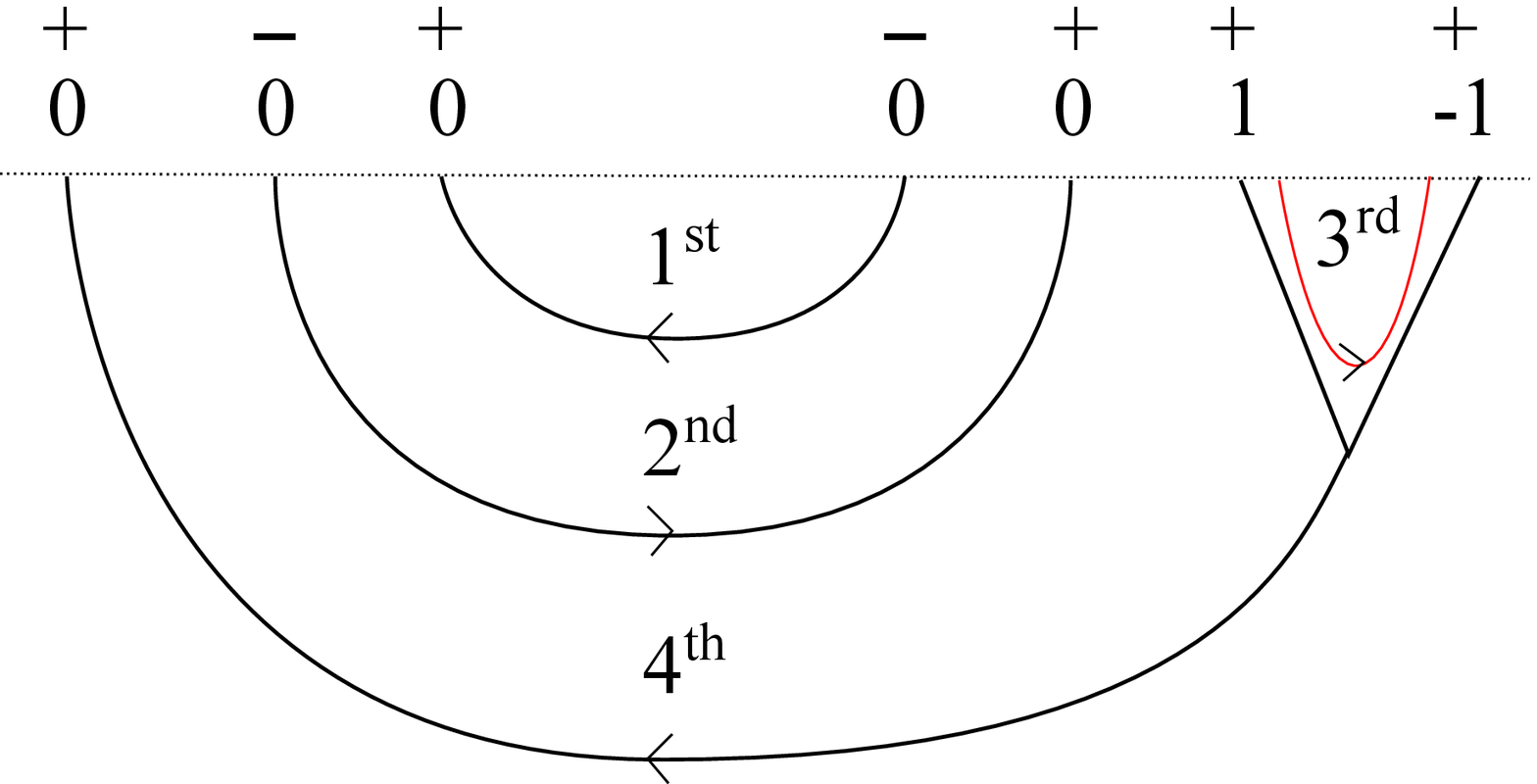}};\endxy
\end{align}
However, for other choices the same tableau generates the following web with flow.
\begin{align} 
   \xy(0,0)*{\includegraphics[width=160px]{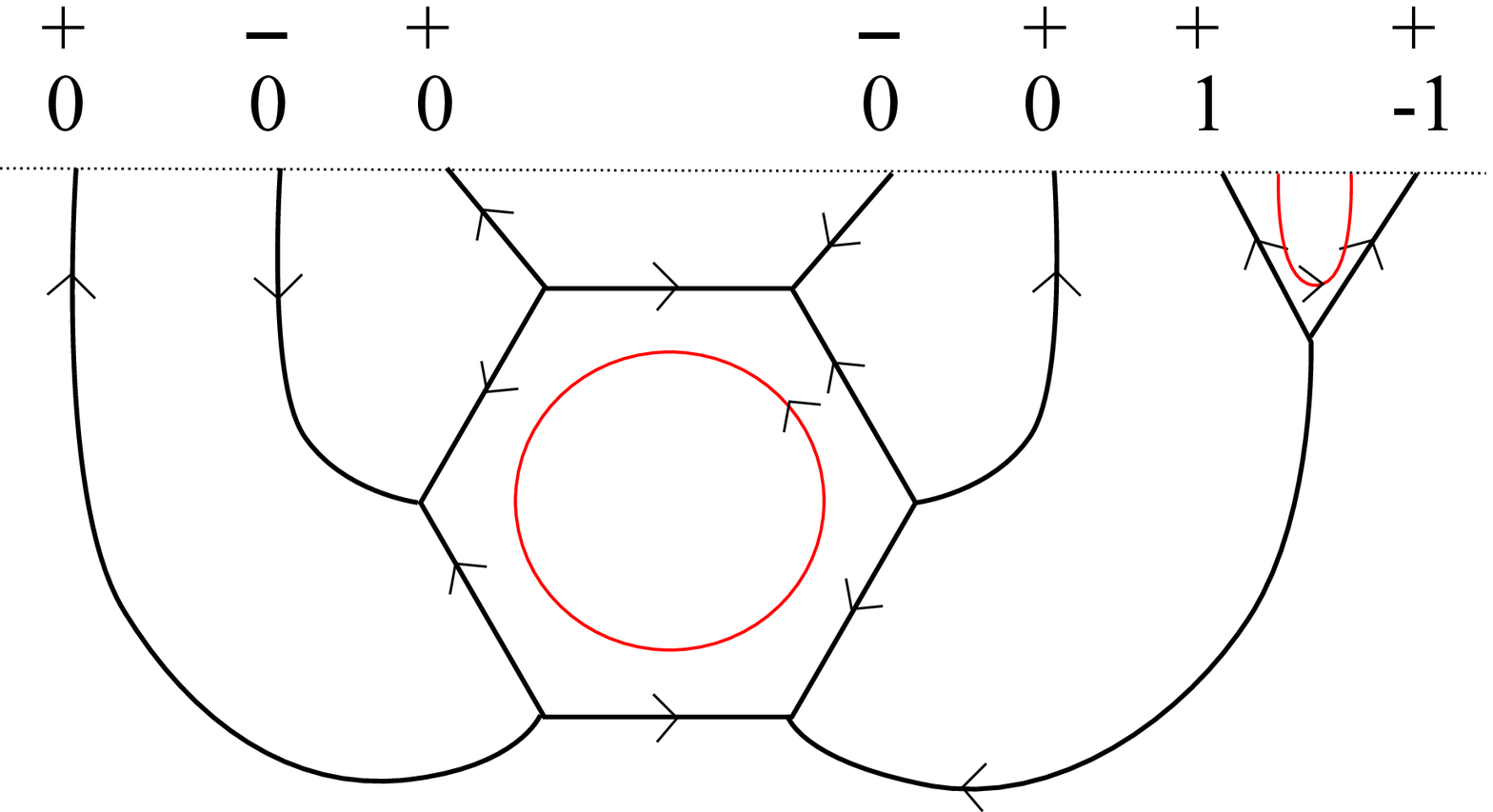}};\endxy
\end{align}
As a matter of fact, we could also invert the orientation of the flow in the internal cycle. The resulting web with flow 
would still correspond to the same tableau. 
\end{ex}

However, when we restrict to semi-standard tableaux, the 
procedure gives a unique web with flow, the canonical flow. 
One can check that the procedure implicit in Lemma~\ref{lem:strtoflow}, 
restricts to the same bijection between $\mathrm{Std}_{\mu}^{\lambda}$ and 
non-elliptic webs as defined by Russell in~\cite{ru}.
\subsection{$Z(G_S)$ and $E(Z(G_S))$}\label{sec-webcenterb}
In this section, $S$ continues to be a fixed sign string of length $n$. 
Moreover, we continue to use some of the other notations and conventions from 
the previous sections as well, e.g. $d=3k\geq n$ etc.  
Let $\mu$ be the composition associated to $S$ and 
let $S_{\mu}$ be the corresponding parabolic subgroup of the 
symmetric group $S_d$. 

Let $Z(K_S)$ be the center of $K_S$ and let $X^{\lambda}_{\mu}$ be 
the Spaltenstein variety, with the notation as in~\cite{bros}. If $n_s=0$, 
then $X^{\lambda}_{\mu}=X^{\lambda}$, the latter being the Springer fiber 
associated to $\lambda$.\footnote{When comparing to Khovanov's result 
for $\mathfrak{sl}_2$, the reader should be aware that he labels the 
Springer fiber by $\lambda^T$, the transpose of $\lambda$.} 

In Theorem~\ref{thm:center}, we are going to prove that $H^*(X^{\lambda}_{\mu})$ 
and $Z(K_S)$ are isomorphic as graded algebras. 
\vskip0.5cm
Recall the following result by Tanisaki~\cite{ta}. Let 
$P=\mathbb{C}[x_1,\ldots,x_d]$ and let $I^{\lambda}$ be the ideal generated by
\begin{equation}
\label{eqn:SpringerI}
\left\{
e_r(i_1,\ldots,i_m)\;\middle\vert\;
\begin{array}{c}
m\geq 1, 1\leq i_1< \cdots < i_m\leq d\\
r> m-\lambda_{d-m+1}-\cdots-\lambda_n 
\end{array}
\right\}, 
\end{equation}
where $e_r(i_1,\ldots,i_m)\in P$ is the $r$-th elementary symmetric 
polynomial. Write 
\[
R^{\lambda}=P/I^{\lambda}.
\]
Tanisaki showed that 
\[
H^*(X^{\lambda})\cong R^{\lambda}.
\]

Note that $S_{\mu}$ acts on $P$ by permuting the variables, but it maps 
$I^{\lambda}$ to itself. Let $P^{\mu}=P^{S_{\mu}}\subset P$ be the subring of 
polynomials which are invariant under $S_{\mu}$. For 
$1\leq i_1\leq \cdots\leq i_m\leq n$ and $r\geq 1$, we let 
$e_r(\mu, i_1,\ldots,i_m)$ denote the $r$-th elementary 
symmetric polynomials in the variables $X_{i_1}\cup\cdots\cup X_{i_m}$, where 
\[
X_{p}=
\left\{x_k\mid \mu_1+\cdots+\mu_{p-1}+1\leq k\leq \mu_1+\cdots+\mu_p\right\}.
\]
So, we have 
\[
e_r(\mu, i_1,\ldots,i_m)=\sum_{r_1+\cdots+r_m=r}e_{r_1}(\mu;i_1)\cdots 
e_{r_m}(\mu;i_m).
\]
If $r=0$, we set $e_r(\mu, i_1,\ldots,i_m)=1$ and if $r<0$, 
we set $e_r(\mu, i_1,\ldots,i_m)=0$. 
Let $I^{\lambda}_{\mu}$ be the ideal generated by 
\begin{equation}
\label{eqn:SpaltensteinI}
\left\{
e_r(\mu, i_1,\ldots,i_m)\;\middle\vert\;
\begin{array}{l}
m\geq 1, 1\leq i_1< \cdots < i_m\leq d\\
r> m-\mu_{i_1}+\cdots+\mu_{i_m}-\lambda_{l+1}-\cdots-\lambda_n \\
\mathrm{where}\;l=\#\{i\mid \mu_i>0, i\ne i_1,\ldots, i_m\}
\end{array}
\right\}.
\end{equation}
Note that $I^{\lambda}_{\mu}\subseteq I^{\lambda}$ holds. Write 
\[
R^{\lambda}_{\mu}=P^{\mu}/I^{\lambda}_{\mu}.
\]
Brundan and Ostrik~\cite{bros} proved that 
\[
H^*(X^{\lambda}_{\mu})\cong R^{\lambda}_{\mu}.
\]

First we want to show that $R^{\lambda}_{\mu}$ acts on $K_S$. Clearly, 
$P^{\mu}$ acts on $K_S$, by converting polynomials into dots on the facets 
meeting $S$. 

\begin{lem}
\label{lem:I}
The ideal $I^{\lambda}_{\mu}$ annihilates any foam in $K_S$. 
\end{lem}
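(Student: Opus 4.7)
My plan is to prove the lemma by a direct verification that each of the listed generators of $I^{\lambda}_{\mu}$ acts as zero on foams, using the local relations of $\foamt^0$. First I reduce to a simpler statement: since the $P^{\mu}$-action commutes with both the left and the right multiplication in $K_S$ (dots placed at the boundary can be migrated along any incident facet, in particular across composition lines, by~\eqref{eq:dotm}), it is enough to verify annihilation on the identity foams $1_u$, for $u\in B_S$. Even better, using the Frobenius structure established in Theorem~\ref{thm:frob}, it suffices to prove that, after closing $1_u$ by any $1_v$ with $v\in B_S$, every closed dotted foam obtained from a generator of $I^{\lambda}_{\mu}$ evaluates to zero.

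Once reduced to closed foams, the local analysis proceeds as follows. At the boundary edge labelled $p$, the $\mu_p$ variables in $X_p$ correspond to dots on the facet(s) attached to $p$: a single facet if $\mu_p=1$, and, by the growth algorithm applied to $u$ and $v$, the two top facets of a pair of Y-webs if $\mu_p=2$. In the latter case the two variables correspond to two dots, one on each of the two facets meeting at a singular arc, so their elementary symmetric polynomials become visible via the dot migration relations $x_1+x_2+x_3=0$ and $x_1x_2+x_1x_3+x_2x_3=0$ at that singular arc and the $3D$-relation $x_1x_2x_3=0$ (with $c=0$). Combined with Lemmas~\ref{lem:embedding} and~\ref{lem:gradedembedding}, these relations identify the symmetric-polynomial action of $P^{\mu}$ on $1_u$ with the action by dots from the graded ring $R^0_u$.

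Finally I plan to match the combinatorial bound $r>m-\sum_{j}\mu_{i_j}-\sum_{s>l}\lambda_s$ with the foam-theoretic degree bound. The key identity is that a closed dotted foam of sufficiently high degree is forced to contain either a bubble of negative degree or a facet carrying at least three dots, hence vanishes by~\eqref{eq_positivity_bubbles}--\eqref{eq:bubb_deg0} and the $3D$-relation. I intend to induct on $n_S$, the number of negative signs of $S$, using the Y-rule to reduce an extra downward boundary edge to a pair of upward boundary edges at each step; the $S_2$-symmetry forced by the wedge structure at downward edges exactly accounts for the passage from $P$ to $P^{\mu}$ and from the Tanisaki ideal $I^{\lambda}$ to $I^{\lambda}_{\mu}$. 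The main obstacle will be the inductive step: one has to verify that the specific symmetric polynomials $e_r(\mu,i_1,\ldots,i_m)$, after translating to dots and migrating them onto internal facets, produce configurations of dots and bubbles whose combined $q$-degree exceeds the total capacity of the closed web $u^*v$, so that the closure relation of Section~\ref{sec-webbasicb} forces them to evaluate to zero. Tracking this $q$-degree bookkeeping precisely, and matching it to the exponent $m-\sum_j\mu_{i_j}-\sum_{s>l}\lambda_s$, is the essential difficulty.
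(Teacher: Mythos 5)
Your proposal shares two correct ingredients with the paper's proof: the reduction to the all-positive case $n_S=0$ by attaching a $Y$ to each downward boundary edge (the paper does this in one step, not an induction, and the content is identical), and the use of the $3D$-relation and~\eqref{eq:dotm} to kill high powers of dots. But the central algebraic step is missing, and the step you flag as ``the essential difficulty'' does not, as stated, yield the result.

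The generators of $I^{\lambda}_{\mu}$ (after the reduction, of $I^{\lambda}$) are \emph{partial} elementary symmetric polynomials $e_r(x_{i_1},\ldots,x_{i_m})$ in only $m<d$ of the boundary variables. There is no way to slide their dots directly onto a single facet, so no local relation applies to them as given. The key manoeuvre — absent from your proposal — is to first observe that each \emph{full} elementary symmetric polynomial $e_j(x_1,\ldots,x_d)$ already annihilates every foam: complete symmetry lets one migrate all its dots to the three facets incident to a single singular arc, where~\eqref{eq:dotm} (with $c=0$) kills it. Then the identity $e_j(x_1,\ldots,x_d)=e_j(x_1,\ldots,x_{d-1})+e_{j-1}(x_1,\ldots,x_{d-1})\,x_d$ lets one trade $e_r$ on $m=d-\ell$ variables, acting on a foam $f$, for a linear combination of pure monomials $x_{d-\ell+1}^{r_1}\cdots x_d^{r_\ell}f$ with $r_1+\cdots+r_\ell=r$. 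The defining numerical constraint of $I^\lambda_\mu$ gives $r>2\ell$, forcing some $r_j\geq 3$, and that term dies by $3D$. This is a finite, explicit computation; there is no $q$-degree bookkeeping to carry out.

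Your alternative reduction ``to closed foams'' is also shaky. First, to conclude $p\,1_u=0$ in ${}_uK_u$ by non-degeneracy of the trace form you must show $\mathrm{tr}(g\cdot p\,1_u)=0$ for \emph{all} $g\in {}_uK_u$, not merely for the identity closures; a closure by $1_v$ with $v\neq u$ is not even composable. Second, the resulting closed foams need not have high degree: $p\,1_u$ has strictly positive degree, so it can always be paired with a negative-degree $g$ to produce a degree-zero closed foam, and a degree-zero closed foam is not automatically zero. So the claim that ``a closed dotted foam of sufficiently high degree is forced to vanish'' is both not applicable (the degree is not forced to be high after closure) and, even where it applies, strictly weaker than what must be shown. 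You still need a structural reason why these specific dot configurations cannot be redistributed to nonzero closed evaluations, and that is exactly the algebraic manipulation above.
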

\begin{proof}
The following argument demonstrates that it suffices to show this for the 
case when $n_s=0$.
Let $u,v\in B_S$. For each $1\leq i\leq n$ with $s_i=-$, glue a $Y$ onto 
the $i$-th boundary edge of $u$ and $v$, respectively. Call these 
new webs $\hat{u}$ and $\hat{v}$, respectively. 
Note that $\partial\hat{u}=\partial\hat{v}=\hat{S}$, 
where $\hat{S}=(+^d)$. 
Let $f\in {}_uK_v$ be any foam. For each $1\leq i\leq n$ with $s_i=-$, 
glue a digon foam on top of the 
$i$-th facet of $f$ meeting $S$. The new foam $\hat{f}$, 
obtained in this way, belongs to ${}_{\hat{u}}K_{\hat{v}}$. Note that we can 
reobtain $f$ by capping off $\hat{f}$ with dotted digon foams. Any 
polynomial $p\in I^{\lambda}_{\mu}\subseteq I^{\lambda}$ acting on $f$ 
also acts on $\hat{f}$. So, if we know that $p\hat{f}=0$, then it follows that 
$pf=0$.

Thus, without loss of generality, assume that $n_s=0$. 
We are now going to show that $I^{\lambda}$ annihilates $K_S$.  

As follows from Definition in~\eqref{eqn:SpringerI}, 
$I^{\lambda}$ is generated by the elementary symmetric polynomials 
$e_r(x_{i_1},\ldots,x_{i_m})$, for the following values of $m$ and $r$.
\[
\begin{array}{lll}
m=2n+1 &;&r>2n-2,\\
m=2n+2 &;&  r>2n-4,\\
\vdots &\vdots& \vdots\\
m=3n-1 &;& r>2,\\
m=3n & ;& r>0.
\end{array}
\]
Note that for $m=3n$, we simply get all completely symmetric polynomials of 
positive degree in the variables $x_1,\ldots,x_d$. Any such polynomial $p$ 
annihilates any foam $f\in {}_uK_v$, because 
by the complete symmetry of $p$, the dots can all be moved to the 
three facets around one singular edge.
The relations~\eqref{eq:dotm} then show that $p$ kills $f$.

Now suppose $m=3n-\ell$, for $\ell>0$. So we must have $r>2\ell$. The 
argument we are going give does 
not depend on the particular choice of 
$i_1,\ldots,i_m\subseteq \{1,2,\ldots,d\}$, so, without loss of 
generality, let us assume that $(i_1,\ldots,i_m)=(1,\ldots,m)$. 

Let $f$ be any foam in ${}_uK_v$. 

First assume that $\ell=1$, then we have 
\begin{align*}
&e_r(x_1,\ldots,x_{d-1})f\\
=&-e_{r-1}(x_1,\ldots,x_{d-1})x_df\\
=&e_{r-2}(x_1,\ldots,x_{d-1})x_d^2f\\
\vdots&\\
=&(-1)^r x_d^rf.
\end{align*}
All these equations follow from the fact that, for any $j>0$, we have  
\[
e_j(x_1,\ldots,x_{d})=e_j(x_1,\ldots,x_{d-1})+e_{j-1}(x_1,\ldots,x_{d-1})x_d,
\]
and the fact that $e_j(x_1,\ldots,x_{d})f=0$, as we proved above in the 
previous case for $m=3n$.  
Since in this case we have $r>2$, we see that 
\[
(-1)^r x_d^rf=0,
\]
by Relation (3D). This finishes the proof for this case. 

In general, for $\ell\geq 1$, we get that 
$e_r(x_1,\ldots,x_{d-\ell})f$ is equal to a linear combination of 
terms of the form 
\[
x_{d-\ell+1}^{r_1}x_{d-\ell+2}^{r_2}\cdots x_{d}^{r_{\ell}}f,
\]
with $r_1+\cdots+r_{\ell}=r$. Since $r>2\ell$, there exists 
a $1\leq j\leq \ell$ such that $r_j>2$, in each term. 
So each term kills $f$, by Relation (3D). This 
finishes the proof.
\end{proof}

Note that Lemma~\ref{lem:I} shows that there is a well-defined homomorphism of 
graded algebras 
$c_S\colon R^{\lambda}_{\mu}\to Z(K_S)$, defined by 
\[
c_S(p)=p1.
\] 

Similarly, there is a filtration preserving homomorphism 
\[
P^{\mu}\to Z(G_S)
\]
defined by 
$p\mapsto p1$. This homomorphism does not descend to 
$R^{\lambda}_{\mu}$, because the relations in $G_S$ are deformations 
of those in $K_S$, but the associated graded homomorphism 
maps $E(P^{\mu})$ to $E(Z(G_S))$ and we have 
\[
E(P^{\mu}1)=R^{\lambda}_{\mu}1.
\]
Before giving our following result, we recall that Brundan and Ostrik~\cite{bros} 
showed that 
\[
\dim H^*(X^{\lambda}_{\mu})=\#\mathrm{Col}^{\lambda}_{\mu}.
\]
They actually gave a concrete basis, but we do not need it here. 

\begin{lem}
\label{lem:dimZG}
We have 
\[
\dim Z(G_S)=\#\mathrm{Col}^{\lambda}_{\mu}.
\] 
\end{lem}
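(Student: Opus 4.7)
The plan is to exploit the semisimplicity of $G_S$ together with the classification of its indecomposable projectives from Gornik's theorem, and then convert the count into one governed by Proposition~\ref{prop:tableauxflows}.

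First I would invoke Proposition~\ref{prop:Gsemisimple}, which says that $G_S$ is semisimple as a $\mathbb{C}$-algebra. For any finite dimensional, semisimple $\mathbb{C}$-algebra $A$ one has $Z(A) \cong \mathbb{C}^r$, where $r$ is the number of isomorphism classes of simple $A$-modules (equivalently, the number of blocks, or the number of iso classes of indecomposable projectives). So the task reduces to counting the iso classes of the projectives $P_{u,T}$ constructed in the proof of Proposition~\ref{prop:Gsemisimple}.

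Next I would recall the Hom-computation established after Theorem~\ref{thm:Gornik}, namely
\[
\mathrm{Hom}_{G_S}(P_{u,T},P_{v,T'})\cong e_{u,T}(G_S)e_{v,T'},
\]
together with the observation that this space is one-dimensional if $T$ and $T'$ agree on the common boundary $S$ and zero otherwise. Thus $P_{u,T}\cong P_{v,T'}$ precisely when the 3-colorings $T$ and $T'$ restrict to the same coloring of the boundary $S$. By Remark~\ref{rem:3color}, 3-colorings are the same data as flows, so the iso class of $P_{u,T}$ is determined exactly by the state string $J$ on $\partial u = S$ induced by the flow corresponding to $T$. Conversely, any such boundary state string $J$ arises from some $P_{u,T}$ provided there exists a basis web $u\in B_S$ admitting a flow extending $J$.

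Therefore the number of iso classes of indecomposable projective $G_S$-modules equals the number of state strings $J$ of length $n$ for which some $u\in B_S$ carries a flow extending $J$. Proposition~\ref{prop:tableauxflows} furnishes a bijection between this set of state strings and $\mathrm{Col}^{\lambda}_{\mu}$, so combining these bijections with the semisimplicity count gives
\[
\dim Z(G_S)=\#\{\text{iso classes of simple }G_S\text{-modules}\}=\#\mathrm{Col}^{\lambda}_{\mu},
\]
as desired. The argument is essentially bookkeeping; the only subtle point is making sure that every boundary state string arising from a flow on \emph{some} $u\in B_S$ is accounted for exactly once, which is precisely what Proposition~\ref{prop:tableauxflows} guarantees, so I do not expect any real obstacle beyond carefully citing the inputs.
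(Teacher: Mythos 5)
Your proof is correct, and it takes a genuinely different route from the paper. You invoke the semisimplicity of $G_S$ (Proposition~\ref{prop:Gsemisimple}) together with the general fact that a finite dimensional semisimple $\mathbb{C}$-algebra has $Z(A)\cong \mathbb{C}^r$ with $r$ the number of Wedderburn blocks, then count blocks by counting isomorphism classes of the $P_{u,T}$ via the Hom-computation after Theorem~\ref{thm:Gornik}; the boundary state string $J$ determines the iso class, and Proposition~\ref{prop:tableauxflows} turns the count into $\#\mathrm{Col}^{\lambda}_{\mu}$. The paper instead constructs, for each admissible state string $J$, an explicit central idempotent $z_J=\sum_{u\in B_S}\sum_{T\;\text{extends}\;J}e_{u,T}$, verifies directly that these are orthogonal, nonzero (via Lemma~\ref{lem:strtoflow}), sum to $1$, lie in $P^{\mu}1$, and that $Z(G_S)z_J=\mathbb{C}z_J$, so the $z_J$ form a basis of $Z(G_S)$. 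Your argument is arguably more conceptual and shorter, but the paper's explicit construction is not merely stylistic: it establishes that the map $P^{\mu}\to Z(G_S)$, $p\mapsto p1$, is surjective, a fact the paper quotes verbatim later in the proof of Theorem~\ref{thm:center}. If you adopt your route, you would need to supply that surjectivity separately before it is used there.
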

\begin{proof}
Let $J$ be any state-string satisfying condition~\eqref{eqn:conds}. 
We define 
\begin{equation}
\label{eq:centralidempotent}
z_J=\sum_{u\in B_S} \sum_{T} e_{u,T}\in G_S,
\end{equation}
where the second sum is over all 3-colourings of $u$ extending $J$.

First we show that $z_J\in Z(G_S)$. For any $u,v\in B_S$, 
let $f\in {}_uG_v$. Choose two arbitrary 
compatible colourings $T_1$ and $T_2$ of $u$ and $v$, respectively. 
Assume that $e_{T_1}fe_{T_2}\ne 0$. Then we have 
\[
z_Je_{T_1}fe_{T_2}=
\begin{cases}
e_{T_1}fe_{T_2},&\quad\text{if}\quad T_1\;\text{extends}\;J,\\
0,&\quad\text{else}. 
\end{cases}
\]  
We also have 
\[
e_{T_1}fe_{T_2}z_J=
\begin{cases}
e_{T_1}fe_{T_2},&\quad\text{if}\quad T_2\;\text{extends}\;J,\\
0,&\quad\text{else}. 
\end{cases}
\]  
This shows that $z_J\in Z(G_S)$, because $T_1$ and $T_2$ are compatible, 
and so $T_1$ extends $J$ if and only if $T_2$ extends $J$.

Note that 
\[
\sum_J z_J=1\quad\text{and}\quad z_Jz_{J'}=\delta_{J,J'}z_J.
\]
In particular, the $z_J$'s are linearly independent. 
\vskip0.5cm
For any state-string $J$ satisfying condition~\eqref{eqn:conds}, 
the central idempotent $z_J$ belongs to $P^{\mu}1$. In order to see this, 
first note that, for 
any $u\in B_S$, the element
\[
z_J1_u=\sum_{T\;\mathrm{extends}\;J} e_{u,T},
\]
belongs to $P^{\mu}1_u$. This holds, because only the colours of the boundary 
edges of $u$ are fixed. We can sum over all possible 3-colourings of the other 
edges, which implies that these edges only contribute a factor $1$ to 
$z_J1_u$. Furthermore, we see that $z_J1_u=p_J1_u$, for a fixed polynomial 
$p_J\in P^{\mu}$, i.e. $p_J$ is independent of $u$. Therefore, we have 
\[
z_J=\sum_{u\in B_S} p_J1_u=p_J1\in P^{\mu}1.
\]
It remains to show that $Z(G_S)z_J=\mathbb{C}z_J$. 
Let $z\in Z(G_S)$. By the orthogonality of Gornik's symmetric idempotents, 
we have 
\[
z=\sum_{u,T}e_{u,T}ze_{u,T}.
\] 
By Theorem~\ref{thm:Gornik}, we know that 
\[
e_{u,T}ze_{u,T}=\lambda_{u,T}(z) e_{u,T},
\]
for a certain $\lambda_{u,T}(z)\in\mathbb{C}$. Therefore, we have 
\[
z=\sum_{u,T}\lambda_{u,T}(z)e_{u,T}\in \bigoplus_{u,T}\mathbb{C}e_{u,T}.
\]
By Lemma~\ref{lem:strtoflow}, we know that $z_J\ne 0$. This shows that 
\[
\{z_J\mid J\;\text{satisfying condition~\eqref{eqn:conds}}\}
\] 
forms a basis of $Z(G_S)$. By Proposition~\ref{prop:tableauxflows}, 
the claim of the lemma follows. 
\end{proof}

\begin{thm}
\label{thm:center}
The degree preserving algebra homomorphism
$$c_S\colon R^{\lambda}_{\mu_S}\to Z(K_S)$$
is an isomorphism.
\end{thm}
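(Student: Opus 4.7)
The plan is to prove injectivity of $c_S$ using the Gornik deformation $G_S$ as a bridge, and then to deduce surjectivity by a dimension count invoking Proposition~\ref{prop:morita}. The map $c_S$ is already known to be well-defined by Lemma~\ref{lem:I}.

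First I would establish the identity $Z(G_S) = P^\mu 1$ inside $G_S$. The inclusion $Z(G_S)\subseteq P^\mu 1$ is essentially contained in the proof of Lemma~\ref{lem:dimZG}, where the basis $\{z_J\}$ of $Z(G_S)$ was shown to consist of elements $z_J=p_J 1\in P^\mu 1$. For the reverse inclusion I would verify that any $p\in P^\mu$ acts centrally on $G_S$: its action amounts to placing dots on facets meeting the sign string $S$, and such boundary dots slide freely across any common glueing line, so $p \cdot 1_u \circ f = f \circ p \cdot 1_v$ for every foam $f\colon u^*\to v^*$. Taking associated graded now gives
\[
E(Z(G_S)) = E(P^\mu 1) = R^\lambda_\mu 1 = \operatorname{Im}(c_S)
\]
inside $E(G_S)=K_S$, where the middle equality is the observation recorded just before Lemma~\ref{lem:dimZG}. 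Since passing to the associated graded preserves dimension, Lemma~\ref{lem:dimZG} together with Brundan--Ostrik's formula $\dim R^\lambda_\mu = \#\mathrm{Col}^\lambda_\mu$ yield
\[
\dim \operatorname{Im}(c_S) = \dim Z(G_S) = \#\mathrm{Col}^\lambda_\mu = \dim R^\lambda_\mu,
\]
so $\ker c_S = 0$ and $c_S$ is injective.

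For surjectivity, since $\operatorname{Im}(c_S)\subseteq Z(K_S)$ it suffices to show $\dim Z(K_S)\leq \#\mathrm{Col}^\lambda_\mu$. Here I would use Proposition~\ref{prop:morita}: the algebra $\bigoplus_{S'} K_{S'}$, with $S'$ ranging over all sign strings associated to weights in $\Lambda(n,d)_{1,2}$, is Morita equivalent to the idempotent truncation $eR_{(3^k)}e$ of the cyclotomic KLR algebra cutting out exactly these weight spaces. Since the center is a Morita invariant,
\[
\bigoplus_{S'} Z(K_{S'}) \cong Z\Bigl(\bigoplus_{S'} K_{S'}\Bigr) \cong Z(eR_{(3^k)}e).
\]
Combining Theorem~\ref{thm:bk} with the Brundan--Kleshchev isomorphism and the known dimension of the center of a cyclotomic KLR algebra yields $\dim Z(eR_{(3^k)}e) = \sum_{S'} \#\mathrm{Col}^\lambda_{\mu_{S'}}$. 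Summing the inequalities $\dim \operatorname{Im}(c_{S'}) \leq \dim Z(K_{S'})$ over all $S'$, the left-hand side already matches the total dimension on the right, so each inequality is an equality and $c_S$ is surjective for every $S$.

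The hard part will be the surjectivity step, since the injectivity half follows almost formally from the semisimplicity of the deformed algebra $G_S$ and the compatibility of centres with the associated graded construction. Surjectivity, by contrast, requires external input in the form of Proposition~\ref{prop:morita} and the dimension formula for centres of cyclotomic KLR algebras; without a Morita-style argument one lacks any a priori upper bound on $\dim Z(K_S)$, because $K_S$ is the associated graded of the semisimple algebra $G_S$, and passing to associated graded can only enlarge the centre.
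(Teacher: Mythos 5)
Your argument reproduces the paper's proof in substance: injectivity comes from the Gornik deformation $G_S$ via the chain $Z(G_S)=P^{\mu}1$, $E(P^{\mu}1)=R^{\lambda}_{\mu}1=\operatorname{Im}(c_S)$, and the dimension count using Lemma~\ref{lem:dimZG} and Brundan--Ostrik, while surjectivity is supplied by the Morita equivalence with the cyclotomic KLR algebra and the known dimension of its centre, exactly the content of Corollary~\ref{cor:moritacenter}. The only difference is cosmetic: you phrase the surjectivity bound by summing $\dim Z(K_{S'})$ over all sign strings and invoking an aggregate dimension formula, whereas the paper extracts $\dim Z(K_S)=\dim H^*(X^{(3^k)}_{\mu_S})$ block by block directly from the block-wise Morita equivalence proved inside Proposition~\ref{prop:morita}, which makes the summation unnecessary and sidesteps the slight imprecision in your identification of $\bigoplus_{S'}K_{S'}$ with an idempotent truncation of $R_{(3^k)}$.
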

\begin{proof}
In Corollary~\ref{cor:moritacenter} it will be shown that 
$$\dim H^*(X^{\lambda}_{\mu_S})=\dim Z(K_S),$$ 
so it suffices to show that $c_S$ is injective.   

Lemma~\ref{lem:I} shows that (as graded complex algebras)
\[
R^{\lambda}_{\mu}1\subset Z(K_S).
\] 
As already mentioned above, Brundan and Ostrik~\cite{bros} showed that 
\[
H^*(X^{\lambda}_{\mu})\cong R^{\lambda}_{\mu}
\] 
as graded complex algebras. 

The proof of Lemma~\ref{lem:dimZG} shows that the filtration preserving 
homomorphism 
\[
P^{\mu}\to Z(G_S),
\]
defined by 
$p\mapsto p1$, is surjective. Note the $E(\cdot)$ is not a map. However, 
a filtered algebra $A$ and its associated graded $E(A)$ 
are isomorphic as vector spaces. In particular, they satisfy 
\[
\dim A =\dim E(A). 
\]
Therefore, since $p\mapsto p1$ is a surjection of vector spaces, we have 
\[
\dim Z(G_S)=\dim E(Z(G_S))=\dim E(P^{\mu}1)=\dim P^{\mu}1.
\]
Recall that $E(P^{\mu}1)=R^{\lambda}_{\mu}1$ and 
$\dim Z(G_S)=\dim R^{\lambda}_{\mu}$. This shows 
\[
\dim R^{\lambda}_{\mu}1=\dim P^{\mu}1=\dim Z(G_S)=\dim R^{\lambda}_{\mu},
\]
which implies that the map $c_S$ is injective. 
\end{proof}
\subsection{Web algebras and the cyclotomic KLR algebras: Howe duality}\label{sec-webhowe}
Our main references for Howe duality are~\cite{ho2} and~\cite{ho1}, 
where the reader can find the proofs of the results, which we recall below, 
and other details.

Let us briefly explain Howe duality.\footnote{We follow Kamnitzer's exposition in ``The ubiquity of Howe duality'', which is online available at https://sbseminar.wordpress.com/2007/08/10/the-ubiquity-of-howe-duality/.} The two natural actions of $\mathrm{GL}_m=\mathrm{GL}(m,\mathbb{C})$ and of $\mathrm{GL}_n=\mathrm{GL}(n,\mathbb{C})$ on $\mathbb{C}^m\otimes \mathbb{C}^n$ commute and the two groups are each others commutant. We say that the actions of $\mathrm{GL}_m$ and $\mathrm{GL}_n$ are \textit{Howe dual}. 

More interestingly, their actions on the symmetric powers
\[
S^p\left(\mathbb{C}^m\otimes \mathbb{C}^n\right)
\]
and on the alternating powers
\[
\Lambda^p\left(\mathbb{C}^m\otimes \mathbb{C}^n\right)
\]
are also Howe dual, for any $p\in\mathbb{N}$. These are called the 
\textit{symmetric} and the \textit{skew} Howe duality of $\mathrm{GL}_m$ and 
$\mathrm{GL}_n$, respectively. In this thesis, we are only considering 
the skew Howe duality. 

The skew Howe duality implies that 
we have the following decomposition into irreducible 
$\mathrm{GL}_m\times \mathrm{GL}_n$-modules.
\begin{equation}
\label{eq:decomp}
\Lambda^p\left(\mathbb{C}^m\otimes \mathbb{C}^n\right)\cong 
\bigoplus_{\lambda} V_{\lambda}\otimes W_{\lambda'},
\end{equation}
where $\lambda$ ranges over all partitions with $p$ boxes and at most 
$m$ rows and $n$ columns and $\lambda'$ is the transpose of $\lambda$.

Here $V_{\lambda}$ is the unique irreducible $\mathrm{GL}_m$-module of 
highest weight $\lambda$ and $W_{\lambda'}$ is the unique irreducible 
$\mathrm{GL}_n$-module of highest weight $\lambda'$.

Without giving a full 
proof of~\eqref{eq:decomp}, which can be found in Section 4.1 of~\cite{ho2}, 
we note that it is easy to write down 
the highest weight vectors in the decomposition of 
\[
\Lambda^p\left(\mathbb{C}^m\otimes \mathbb{C}^n\right).
\]
Define 
\[
\epsilon_{ij}=\epsilon_i\otimes \epsilon_j,
\]
for any $1\leq i\leq m$ and $1\leq j\leq n$. Here the $\epsilon_i$ and the 
$\epsilon_j$ are the canonical basis elements of $\mathbb{C}^m$ and $\mathbb{C}^n$ 
respectively. Let $\lambda$ be one of the highest $\mathrm{GL}_m$ weights 
in~\eqref{eq:decomp}. Write $\lambda=(\lambda_1,\ldots,\lambda_m)$ with 
$n\geq \lambda_1\geq 
\lambda_2\geq \cdots\geq \lambda_m\geq 0$. Then  
\begin{eqnarray*}
v_{\lambda,\lambda'}&=&\left(\epsilon_{11}\wedge \cdots\wedge \epsilon_{1\lambda_1}\right)\wedge
  \left(\epsilon_{21}\wedge \cdots\wedge \epsilon_{2\lambda_2}\right)\wedge
  \left(\epsilon_{m1}\wedge \cdots\wedge \epsilon_{m\lambda_m}\right)\\
&=&\pm\left(\epsilon_{11}\wedge \cdots\wedge \epsilon_{\lambda'_1 1}\right)\wedge
  \left(\epsilon_{12}\wedge \cdots\wedge \epsilon_{\lambda'_2 2}\right)\wedge
  \left(\epsilon_{1n}\wedge \cdots\wedge \epsilon_{\lambda'_n n}\right)
\end{eqnarray*}
is a highest $\mathrm{GL}_m\times\mathrm{GL}_n$ weight. 
By convention, we exclude factors $\epsilon_{ij}$ for which $\lambda_i=0$ or 
$\lambda_j'=0$.

Now restrict to $\mathrm{SL}_m$ and assume that $p=mk$, 
for some $k\in\mathbb{N}$. 
By Schur's lemma, the decomposition in~\eqref{eq:decomp} implies that 
\begin{equation}
\label{eq:howehom1}
\mathrm{Inv}_{\mathrm{GL}_m}\left(\Lambda^p\left(\mathbb{C}^m\otimes 
\mathbb{C}^n\right)\right)\cong 
\mathrm{Hom}_{\mathrm{SL}_m}\left(\mathbb{C},\Lambda^p\left(\mathbb{C}^m\otimes 
\mathbb{C}^n\right)\right)\cong W_{(k^m)},
\end{equation}  
where $\mathbb{C}$ denotes the trivial representation. 

Decompose 
\[
\mathbb{C}^n\cong \mathbb{C}\epsilon_1\oplus \mathbb{C}\epsilon_2\oplus 
\cdots\oplus \mathbb{C}\epsilon_n
\]
into its one-dimensional $\mathfrak{gl}_n$-weight spaces. Then we have 
\begin{equation}
\label{eq:decomp2}
\Lambda^p\left(\mathbb{C}^m\otimes \mathbb{C}^n\right)\cong 
\bigoplus_{(p_1,\ldots,p_n)\in \Lambda(n,p)}\Lambda^{p_1}\left(\mathbb{C}^m\right) 
\otimes \Lambda^{p_2}\left(\mathbb{C}^m\right) \otimes \cdots\otimes 
\Lambda^{p_n}\left(\mathbb{C}^m\right) 
\end{equation}
as $\mathrm{GL}_m\times T$-modules, where $T$ is the diagonal torus in 
$\mathrm{GL}_n$.  
\vskip0.5cm
This decomposition implies that  
\begin{equation}
\label{eq:howehom2}
\mathrm{Inv}_{\mathrm{SL}_m}\left(\Lambda^{p_1}\left(\mathbb{C}^m\right) 
\otimes \Lambda^{p_2}\left(\mathbb{C}^m \right)\otimes \cdots\otimes 
\Lambda^{p_n}\left(\mathbb{C}^m\right)\right)\cong W(p_1,\ldots,p_n), 
\end{equation}
where $W(p_1,\ldots,p_n)$ denotes the 
$(p_1,\ldots,p_n)$-weight space of $W_{(k^m)}$.  
\vskip0.5cm
It is worth noting that Cautis has written down a $q$-version of skew Howe duality in Section 6.1 
in~\cite{caut} (see also~\cite{ckm}). 
We do not recall his general explanation here.

Instead, in the next section, 
we use Kuperberg's webs to give a $q$-version 
of the isomorphism in~\eqref{eq:howehom2}, for $U_q(\mathfrak{sl}_3)$ 
and $U_q(\mathfrak{gl}_n)$ with $n=3k$ and $k\in\mathbb{N}$ arbitrary 
but fixed.
\vskip0.5cm
We also categorify this instance of $q$-skew Howe duality, as we will explain 
after the next section.
\subsection{Web algebras and the cyclotomic KLR algebras: The uncategorified story}\label{sec-webhowea}
\subsubsection{Enhanced sign sequences}
In this section we slightly generalise the notion of a sign sequence/string. We call this generalisation \textit{enhanced sign sequence} or \textit{enhanced sign string}. Note that, with a slight abuse of notation, we use $\hat{S}$ for sign strings and $S$ for enhanced sign string throughout the whole section. 
\begin{defn} An \textit{enhanced sign sequence/string} is 
a sequence $S=(s_1,\ldots, s_n)$ with entries $s_i\in\{\circ,-1,+1,\times\}$, for 
all $i=1,\ldots n$. The corresponding weight $\mu=\mu_S\in\Lambda(n,d)$ is 
given by the rules
\[
\mu_i=
\begin{cases}
0,&\quad\text{if}\;s_i=\circ,\\
1,&\quad\text{if}\;s_i=1,\\
2,&\quad\text{if}\;s_i=-1,\\
3,&\quad\text{if}\;s_i=\times.
\end{cases}
\] 
Let $\Lambda(n,d)_3\subset \Lambda(n,d)$ be the subset of weights 
with entries between $0$ and $3$. Recall that $\Lambda(n,d)_{1,2}$ denotes the subset of weights with only $1$ and $2$ as entries.
\end{defn}

Let $n=d=3k$. For any enhanced sign string $S$ such that 
$\mu_S\in\Lambda(n,n)_3$, we define $\hat{S}$ to be the sign sequence 
obtained from $S$ by deleting all entries that are equal to $\circ$ or 
$\times$ and keeping the linear ordering 
of the remaining entries. Similarly, for any $\mu\in\Lambda(n,n)_3$, 
let $\hat{\mu}$ be the weight obtained from $\mu$ by deleting all entries 
which are equal to $0$ or $3$. Thus, if $\mu=\mu_S$, for a certain enhanced 
sign string $S$, then $\hat{\mu}=\mu_{\hat{S}}$. 
Note that $\hat{\mu}\in\Lambda(m,d)_{1,2}$, for a certain 
$0\leq m\leq n$ and $d=3(k-(n-m))$. 

Note that for any semi-standard tableau $T\in \mathrm{Std}^{(3^k)}_{\mu}$, 
there is a unique 
semi-standard tableau $\hat{T}\in \mathrm{Std}^{(3^{k-(n-m)})}_{\hat{\mu}}$, 
obtained by deleting any cell in $T$ whose label appears three times and 
keeping the linear ordering of the remaining cells within each column. 

Conversely, let $\mu'\in\Lambda(m,d)_{1,2}$, 
with $m\leq n$ and $d=3(k-(n-m))$. In general, there is more than one 
$\mu\in \Lambda(n,n)_3$ such that $\hat{\mu}=\mu'$, but at least one. Choose one of them, 
say $\mu_0$. Then, given any $T'\in \mathrm{Std}^{(3^{k-(n-m)})}_{\mu'}$, there is 
a unique $T\in \mathrm{Std}^{(3^k)}_{\mu_0}$ such that 
$\hat{T}=T'$.

The construction of $T$ is as follows. Suppose that 
$i$ is the smallest number such that $(\mu_0)_i=3$.
\begin{itemize}
\item[(1)] In each column $c$ of $T'$, there is a unique vertical position such that 
all cells above that position have label smaller than $i$ and 
all cells below that position have label greater than $i$. Insert a new 
cell labeled $i$ precisely in that position, for each column $c$.
\item[(2)] In this way, we obtain a new tableau of shape $(3^{k-(n-m)+1})$. It is easy to 
see that this new tableau is semi-standard. Now apply this procedure 
recursively for each $i=1,\ldots, n$, such that $(\mu_0)_i=3$.
\item[(3)] In this way, we obtain a tableau $T$ of shape $(3^k)$. Since in each step the 
new tableau that we get is semi-standard, we see that $T$ belongs to 
$\mathrm{Std}^{(3^k)}_{\mu_0}$.  
\end{itemize} 
Note also that $\hat{T}=T'$. This shows that 
for a fixed $\mu\in\Lambda(n,n)_3$, we have a bijection
\[
\mathrm{Std}^{(3^k)}_{\mu}\ni T\; \longleftrightarrow\; \hat{T}\in
\mathrm{Std}^{(3^{k-(n-m)})}_{\hat{\mu}}.
\]
Given an enhanced sign sequence $S$, such that $\mu_S\in\Lambda(n,n)_3$, 
we define 
\[
W_S=W_{\hat{S}}.
\]
In other words, as a vector space $W_S$ does not depend on the $\circ$ and 
$\times$-entries of $S$. However, they do play an important role below.   
Similarly, we define 
\[
B_S=B_{\hat{S}}\quad\text{and}\quad K_S=K_{\hat{S}}.
\]
\subsubsection{An instance of $q$-skew Howe duality}
Let $V_{(3^k)}$ be the irreducible 
$U_q(\mathfrak{gl}_n)$-module of highest weight $(3^k)$. 
By restriction, $V_{(3^k)}$ is also a $U_q(\mathfrak{sl}_n)$-module 
and, since it is a weight representation, it is a 
$\dot{\mathbf U}(\mathfrak{sl}_n)$-module, too.
It is well-known (see~\cite{fu} and~\cite{ma}) for example) that 
\[
\dim V_{(3^k)}=\sum_{\mu\in\Lambda(n,n)_3}\#\mathrm{Std}^{(3^k)}_{\mu}.
\] 
Note that a tableau of shape $(3^k)$ can only be semi-standard if its 
filling belongs to $\Lambda(n,n)_3$, so strictly speaking we could 
drop the $3$-subscript. More precisely, if 
\[
V_{(3^k)}=\bigoplus_{\mu\in\Lambda(n,n)_3} V_{(3^k)}(\mu)
\]
is the $U_q(\mathfrak{gl}_n)$-weight space decomposition of $V_{(3^k)}$, then 
\[
\dim V_{(3^k)}(\mu)=\#\mathrm{Std}^{(3^k)}_{\mu}.
\] 
Note that the action of $U_q(\mathfrak{gl}_n)$ on $V_{(3^k)}$ descends to 
$S_q(n,n)$ and recall that there exists a surjective algebra homomorphism 
\[
\psi_{n,n}\colon \U\to S_q(n,n).
\]
The action of $\U$ on $V_{(3^k)}$ is equal to the pull-back of the 
action of $S_q(n,n)$ via $\psi_{n,n}$. 

Define 
\[
W_{(3^k)}=\bigoplus_{S\in \Lambda(n,n)_3} W_S.
\]
Below, we will show that $S_q(n,n)$ acts on $W_{(3^k)}$. Pulling 
back the action via $\psi_{n,n}$, we see that $W_{(3^k)}$ is a 
$\U$-module. We will also show that 
\[
W_{(3^k)}\cong V_{(3^k)}
\]
as $S_q(n,n)$-modules, and therefore also as $\U$-modules, and that 
$W_S$ corresponds to the $\mu_S$-weight space of $V_{(3^k)}$. 
\vskip0.5cm
Let us define the aforementioned left action of $S_q(n,n)$ on $W_{(3^k)}$. 
The reader should compare this action to the categorical action on the 
objects in Section 4.2 in~\cite{msv1}. Note that our conventions in 
this thesis are different from those in~\cite{msv1}. 
 
\begin{defn}
\label{defn:phi}
Let 
\[
\phi\colon S_q(n,n)\to \mathrm{End}_{\mathbb{C}(q)}\left(W_{(3^k)}\right)
\] 
be the homomorphism of $\mathbb{C}(q)$-algebras defined by glueing 
the following webs on top of the elements in $W_{(3^k)}$.
\begin{align*}
1_{\lambda}&\mapsto
\;\xy
(0,0)*{\includegraphics[width=60px]{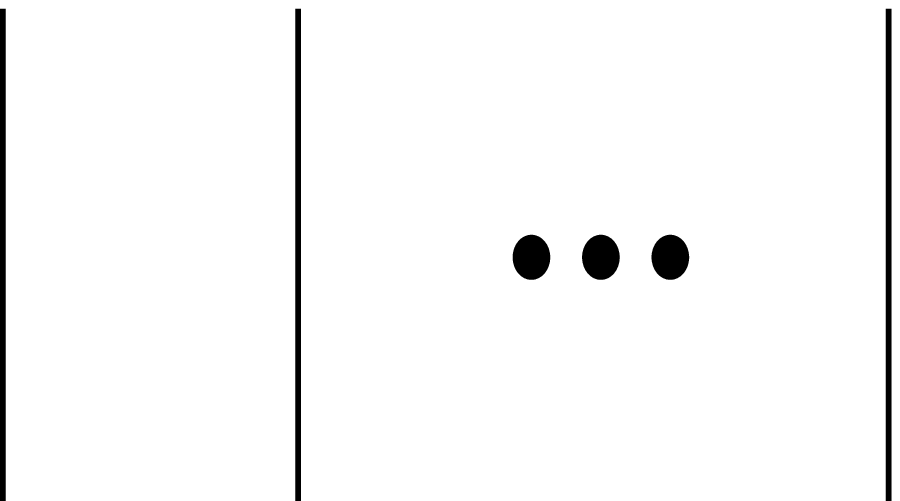}};
(-10,-8)*{\scriptstyle\lambda_1};
(-3,-8)*{\scriptstyle\lambda_{2}};
(10,-8)*{\scriptstyle\lambda_n};
\endxy
\\[0.5ex]
E_{\pm i}1_{\lambda}&\mapsto
\;\xy
(-0,0)*{\includegraphics[width=150px]{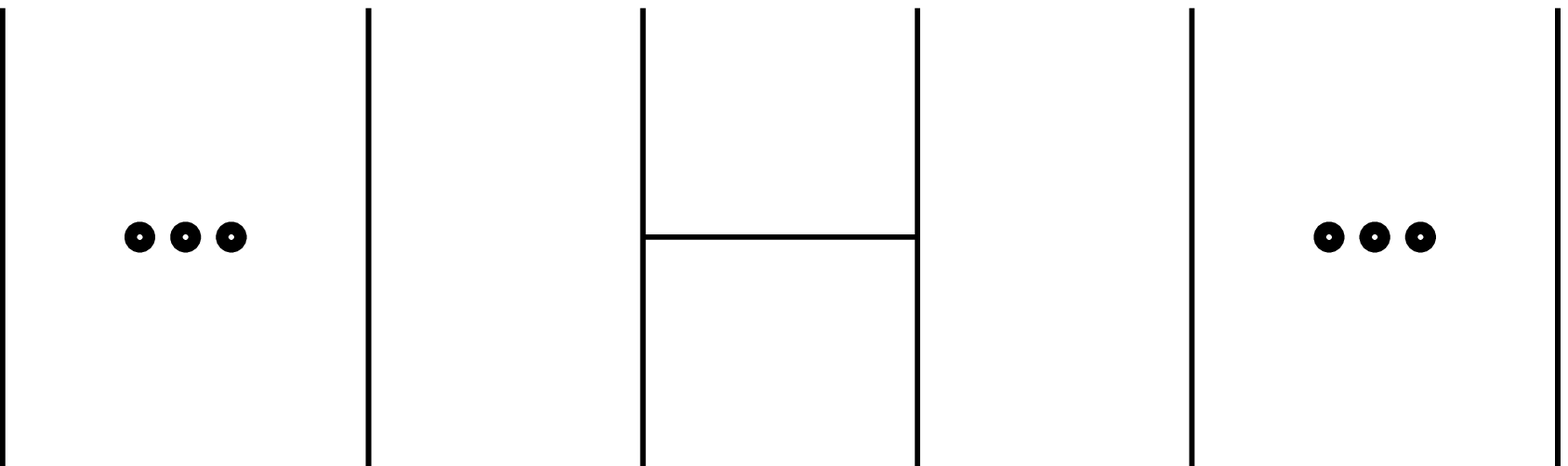}};
(-27,-10)*{\scriptstyle\lambda_1};
(-14,-10)*{\scriptstyle\lambda_{i-1}};
(-5,-10)*{\scriptstyle\lambda_{i}};
(5,-10)*{\scriptstyle\lambda_{i+1}};
(-5,10)*{\scriptstyle\lambda_{i}\pm 1};
(5,10)*{\scriptstyle\lambda_{i+1}\mp 1};
(14,-10)*{\scriptstyle\lambda_{i+2}};
(27,-10)*{\scriptstyle\lambda_n};
\endxy
\end{align*}
We use the convention that vertical edges labeled 1 are oriented upwards, 
vertical edges labeled 2 are oriented downwards and edges labeled 0 or 3 
are erased. The orientation of the horizontal edges is uniquely determined by 
the orientation of the vertical edges. With these conventions, one can 
check that the horizontal edge is always oriented from right 
to left for $E_{+i}$ and from left to right for $E_{-i}$.  

Furthermore, let $\lambda\in\Lambda(n,n)$ and let $S$ be any 
sign string such that $\mu_S\in\Lambda(n,n)_3$. For any $w\in W_S$, 
we define 
\[
\phi(1_{\lambda})w=0,\quad\text{if}\quad\mu_S\ne\lambda.
\]
By 
$\phi(1_{\lambda})w$ we mean the left action of $\phi(1_{\lambda})$ on $w$. 
In particular, for any $\lambda>(3^k)$, we have $\phi(1_{\lambda})=0$ in 
$\mathrm{End}_{\mathbb{C}(q)}\left(W_{(3^k)}\right)$.  
\end{defn}

Let us give two examples to show how these conventions work. We only write 
down the relevant entries of the weights and only draw the important edges. 
We have 
\begin{align*}
E_{+1}1_{(22)}&\mapsto
\;\xy
(-0,0)*{\includegraphics[width=30px]{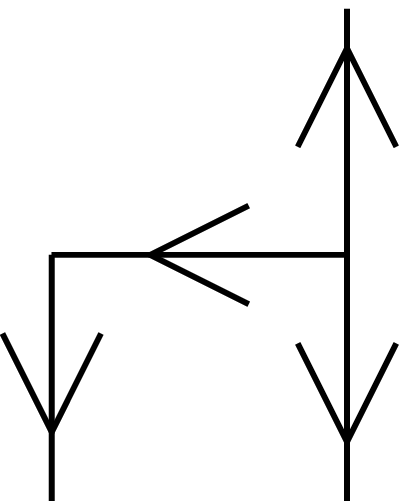}};
(-4,-8)*{\scriptstyle 2};
(4,-8)*{\scriptstyle 2};
(-4,8)*{\scriptstyle 3};
(4,8)*{\scriptstyle 1};
\endxy
\\[0.5ex]
E_{-2}E_{+1}1_{(121)}&\mapsto
\;\xy
(-0,0)*{\includegraphics[width=60px]{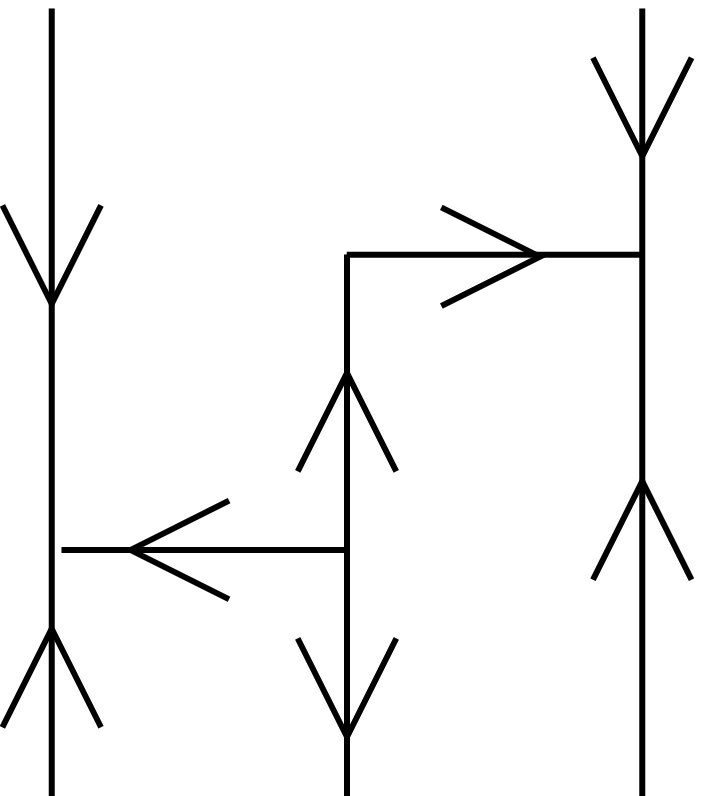}};
(-9,-14)*{\scriptstyle 1};
(0,-14)*{\scriptstyle 2};
(9,-14)*{\scriptstyle 1};
(-9,14)*{\scriptstyle 2};
(0,14)*{\scriptstyle 0};
(9,14)*{\scriptstyle 2};
\endxy
\end{align*}

\begin{rem} Note that the introduction of enhanced sign strings is necessary 
for the definition of $\phi$ to make sense. Although as a vector space 
$W_S$ does not depend on the entries of $S$ which are equal to 
$\circ$ or $\times$, the $S_q(n,n)$-action on $W_S$ does depend on them. 
\end{rem}

\begin{rem}
A more general version of the map $\phi$ was studied later in the paper~\cite{ckm} by Cautis, Kamnitzer and Morrison.
\end{rem}

\begin{lem} 
The map $\phi$ in Definition~\ref{defn:phi} is well-defined. 
\end{lem}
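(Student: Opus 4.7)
The plan is to verify that $\phi$ respects the defining relations of $S_q(n,n)$ given in equations~\eqref{eq:schur1}--\eqref{eq:schur4}. Since these relations already form a complete presentation of $S_q(n,n)$ (the cubical Serre relations being implied), it suffices to check these four relation families on the generators, after which $\phi$ extends uniquely to an algebra homomorphism.

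The orthogonality of the idempotents~\eqref{eq:schur1} and the completeness relation~\eqref{eq:schur2} are immediate from the convention that $\phi(1_\lambda)$ acts as zero on $W_S$ unless $\mu_S = \lambda$, and as the identity (i.e.\ the empty web glued on top) when $\mu_S = \lambda$. The weight-shifting relation~\eqref{eq:schur3} is also transparent from the definition: gluing the $H$-web corresponding to $E_{\pm i}$ on top of a web whose top boundary has weight $\lambda$ changes the $i$-th and $(i+1)$-th entries by $\pm 1$ and $\mp 1$ respectively, so the new top boundary is $\lambda \pm \alpha_i$; this uses only the way the orientation and erasure conventions (upward for $1$, downward for $2$, erased for $0$ and $3$) interact with the pictured $H$-webs.

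The main content is the commutator relation~\eqref{eq:schur4}. When $i \neq j$, I will split into the subcases $|i-j|>1$ and $|i-j|=1$. For $|i-j|>1$, the $H$-webs of $E_{\pm i}$ and $E_{\pm j}$ are supported on disjoint strands, so the two compositions $E_i E_{-j}$ and $E_{-j} E_i$ glue to isotopic webs on the nose. For $|i-j|=1$, the two $H$-webs share exactly one vertical strand and one checks by a direct planar isotopy that the two stacked webs agree (using only identity moves, with no Kuperberg relations). For $i = j$ the composed webs $\phi(E_i E_{-i} 1_\lambda)$ and $\phi(E_{-i} E_i 1_\lambda)$ both contain a closed internal region at positions $i,i+1$ which can be reduced using the circle~\eqref{eq:circle}, digon~\eqref{eq:digon} and square~\eqref{eq:square} relations, with the resulting scalar precisely producing $[\overline{\lambda}_i]=[\lambda_i-\lambda_{i+1}]$ as the difference.

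The main obstacle is the bookkeeping in the $i=j$ case: one must run through the local weights $(\lambda_i,\lambda_{i+1})\in\{0,1,2,3\}^2$ and track, in each case, how the erasure of edges labelled $0$ or $3$ combines with the orientation convention to produce a composite web with either a circle, a digon, a square, or a degenerate (trivially zero or identity) configuration. Quite a few cases are forced to vanish on one side by the convention that weights leaving $\Lambda(n,n)_3$ kill the representation, and symmetry under $(\lambda_i,\lambda_{i+1})\leftrightarrow(\lambda_{i+1},\lambda_i)$ and orientation reversal cuts the analysis roughly in half; after this reduction the quantum integer $[\lambda_i-\lambda_{i+1}]$ falls out from~\eqref{eq:digon} (when one of the weights equals $1$ or $2$ and the other is extremal) and from the interaction of~\eqref{eq:square} with~\eqref{eq:digon} in the mixed case $(\lambda_i,\lambda_{i+1})\in\{1,2\}^2$, matching the sign and magnitude on the right-hand side of~\eqref{eq:schur4}.
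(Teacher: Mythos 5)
Your plan is the same as the paper's: check the presentation relations~\eqref{eq:schur1}--\eqref{eq:schur4} of $S_q(n,n)$, note that the first three are immediate from the definition of $\phi$, and verify the commutator relation~\eqref{eq:schur4} case by case, with the $i=j$ case reducing via the Kuperberg relations~\eqref{eq:circle}--\eqref{eq:square}. The paper's proof is essentially this argument, stated more briefly and illustrated by a single example (the $i=j=1$, $\lambda=(2,1)$ case, which produces precisely the square relation~\eqref{eq:square}); your extra breakdown into $|i-j|>1$, $|i-j|=1$, and $i=j$ is just unpacking the "case by case" check the paper alludes to, so there is nothing genuinely different in the approach.
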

\begin{proof}
It follows immediately from its definition that $\phi$ preserves the three 
relations~\eqref{eq:schur1},~\eqref{eq:schur2} and~\eqref{eq:schur3}.  

Checking case by case, one can easily show that $\phi$ 
preserves~\eqref{eq:schur4} by using the relations~\eqref{eq:circle}, \eqref{eq:digon} and~\eqref{eq:square}. We do just one example and 
leave the other cases to the reader. The figure 
below shows the image of the relation 
\[
E_1E_{-1}1_{(21)}-E_{-1}E_11_{(21)}=1_{(21)}
\]
under $\phi$.   
\begin{align*}
\xy
(0,1)*{\includegraphics[width=35px]{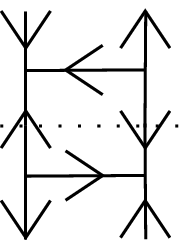}};
(-4.5,-9)*{\scriptstyle 2};
(4,-9)*{\scriptstyle 1};
(-4.5,11)*{\scriptstyle 2};
(4,11)*{\scriptstyle 1};
\endxy
\;-\;
\xy
(0,0)*{\includegraphics[width=35px]{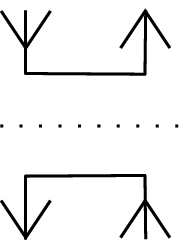}};
(-4.5,-10)*{\scriptstyle 2};
(4,-10)*{\scriptstyle 1};
(-4.5,10)*{\scriptstyle 2};
(4,10)*{\scriptstyle 1};
\endxy
\;=\;
\xy
(0,0)*{\includegraphics[width=35px]{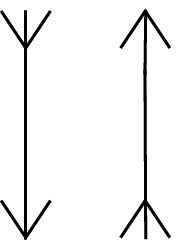}};
(-4.5,-10)*{\scriptstyle 2};
(4,-10)*{\scriptstyle 1};
(-4.5,10)*{\scriptstyle 2};
(4,10)*{\scriptstyle 1};
\endxy
\end{align*}
This relation is exactly the third Kuperberg relation in~\eqref{eq:square}.
\end{proof}

\begin{lem} The map $\phi$ gives rise to an isomorphism 
\label{lem:isoirrep}
\[
\phi\colon V_{(3^k)}\to W_{(3^k)}
\]
of $S_q(n,n)$-modules.  
\end{lem}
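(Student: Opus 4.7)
The plan is to construct an $S_q(n,n)$-module isomorphism
$\tilde{\phi}\colon V_{(3^k)}\to W_{(3^k)}$ by sending a highest
weight vector of $V_{(3^k)}$ to a distinguished element of $W_{(3^k)}$.
First, I would consider the enhanced sign string
$S_0=(\underbrace{\times,\ldots,\times}_{k},\underbrace{\circ,\ldots,\circ}_{2k})$,
so that $\mu_{S_0}=(3,\ldots,3,0,\ldots,0)=(3^k)$ and
$\hat{S}_0=\emptyset$; consequently
$W_{S_0}=W_{\hat{S}_0}\cong\mathbb{C}(q)$, and I denote its canonical
generator (the empty web) by $v_0$. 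A direct inspection of
Definition~\ref{defn:phi} shows $\phi(1_{(3^k)})v_0=v_0$, while for every
$i=1,\ldots,n-1$ one has $\phi(E_{+i})v_0=0$, since $(3^k)+\alpha_i$
either has an entry equal to $4$ (when $i\leq k$) or an entry equal to
$-1$ (when $i\geq k+1$), and so does not belong to $\Lambda(n,n)_3$;
hence $\phi(1_{(3^k)+\alpha_i})$ vanishes on $W_{(3^k)}$. Therefore
$v_0\in W_{(3^k)}$ is a highest weight vector of weight $(3^k)$.

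Next, since the annihilator in $S_q(n,n)$ of the (up to scalar unique)
highest weight vector $v_{\mathrm{hw}}\in V_{(3^k)}$ is generated by
$\{1_\lambda:\lambda\neq(3^k)\}$ together with $\{E_{+i}1_{(3^k)}:i=1,\ldots,n-1\}$,
the computation above shows that this annihilator is contained in
$\mathrm{Ann}_{S_q(n,n)}(v_0)$. It follows that the assignment
$X\,v_{\mathrm{hw}}\mapsto\phi(X)\,v_0$ extends to a well-defined
homomorphism of $S_q(n,n)$-modules $\tilde{\phi}\colon V_{(3^k)}\to
W_{(3^k)}$. Because $V_{(3^k)}$ is a simple $S_q(n,n)$-module and
$\tilde{\phi}(v_{\mathrm{hw}})=v_0\neq 0$, the kernel of $\tilde{\phi}$
is trivial, so $\tilde{\phi}$ is injective.

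Surjectivity will be the heart of the argument. I would show that for
every enhanced sign string $S$ with $\mu_S\in\Lambda(n,n)_3$ and every
basis web $w\in B_{\hat{S}}$, the element $w\in W_S\subset W_{(3^k)}$ lies
in the image of $\tilde{\phi}$, i.e.\ in the $S_q(n,n)$-orbit of $v_0$.
The key observation is that the H-shaped piece glued by
$\phi(E_{\pm i}1_\lambda)$ collapses, depending on the values of
$\lambda_i$ and $\lambda_{i+1}$, to an arc-rule, a Y-rule, or stays as
an H-rule of the growth algorithm (Definition~\ref{growth}), since
vertical edges labelled $0$ or $3$ are erased by the convention of
Definition~\ref{defn:phi}. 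Hence, starting from $v_0$ at weight $(3^k)$,
a sequence of successive applications of suitable
$\phi(E_{\pm i_j}1_{\lambda_j})$ reproduces any run of the growth
algorithm that builds a basis web $w$, and the resulting web can be
rewritten in the non-elliptic basis $B_{\hat{S}}$ by invoking
Kuperberg's relations~\eqref{eq:circle},~\eqref{eq:digon},~\eqref{eq:square}
(which hold in $W_S$). Since every basis web is produced by some run
of the growth algorithm, $v_0$ generates $W_{(3^k)}$ as an
$S_q(n,n)$-module, and $\tilde{\phi}$ is surjective, hence an isomorphism.

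The main obstacle is precisely the case-by-case verification in the
previous paragraph: one must check that for every $\mu\in\Lambda(n,n)_3$
there is a path of intermediate weights in $\Lambda(n,n)_3$ from $(3^k)$
to $\mu$ along which the successive actions of $\phi(E_{\pm i}1_\lambda)$
(modulo the Kuperberg relations) realise every arc-, Y- and H-rule
occurring in some growth algorithm expression of each basis web in
$B_{\hat{S}_\mu}$. Once this bookkeeping is in place, injectivity and
surjectivity combine to give the desired $S_q(n,n)$-module isomorphism
$V_{(3^k)}\cong W_{(3^k)}$.
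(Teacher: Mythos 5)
Your argument is correct in substance, but you take a noticeably harder route for surjectivity than the paper does. The paper simply observes that $w_h$ is a highest weight vector of weight $(3^k)$, so the cyclic $S_q(n,n)$-submodule $\phi\left(S_q(n,n)\right)w_h\subseteq W_{(3^k)}$ is (by semisimplicity of $S_q(n,n)$ over $\mathbb{C}(q)$) isomorphic to $V_{(3^k)}$; surjectivity then falls out of the dimension count
\[
\dim V_{(3^k)}=\sum_{\mu_S\in\Lambda(n,n)_3}\#\mathrm{Std}^{(3^k)}_{\mu_S}
=\sum_{\mu_S\in\Lambda(n,n)_3}\dim W_S=\dim W_{(3^k)},
\]
which was already remarked earlier. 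You instead propose to show directly that every basis web in every $B_S$ lies in the $S_q(n,n)$-orbit of $v_0$ by reproducing runs of the growth algorithm via the maps $\phi(E_{\pm i}1_\lambda)$. That statement is true, but it is a separate, nontrivial result — it is precisely Lemma~\ref{lem:phisurj} in the paper, proved via a multi-step ``enhanced inverse growth algorithm'' that has to handle non-consecutive strands using divided powers as in~\eqref{eq:adjust1} and~\eqref{eq:adjust2}. You flag this bookkeeping as the ``main obstacle'' but don't carry it out, so as written your proof of surjectivity has a real gap; the dimension count is both available and substantially cheaper, and it is what the paper uses here (the growth-algorithm surjectivity is only needed later, for Theorem~\ref{thm:equivalence}). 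Your injectivity/well-definedness argument via the annihilator of $v_{\mathrm{hw}}$ is fine over $\mathbb{C}(q)$ (where the Weyl quotient of $S_q(n,n)1_{(3^k)}$ is simple), though you should note you are using semisimplicity of the $q$-Schur algebra there; the paper states the corresponding fact — that the kernel of $\phi$ restricted to $S_q(n,n)1_{(3^k)}$ equals the ideal $(\mu>(3^k))$ — as a remark after the main computation.
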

\begin{proof}
Note that the empty web $w_h=w_{(3^k)}$, which generates 
$W_{(\times^k,\circ^{2k})}\cong \mathbb{C}(q)$, is a highest weight vector. 

The map $\phi$ induces a surjective homomorphism of 
$S_q(n,n)$-modules 
\[
\phi\colon S_q(n,n)1_{(3^k)}\to W_{(3^k)},
\]
defined by 
\[
\phi(x1_{(3^k)})=\phi(x)w_h.
\]

As we already remarked above, we have
\begin{align*}
\dim V_{(3^k)}&=\sum_{\mu_S\in\Lambda(n,n)_3} 
\#\mathrm{Std}^{(3^k)}_{\mu_S}\\
&=\sum_{\mu_S\in\Lambda(n,n)_3} \dim W_S
=\dim W_{(3^k)}.
\end{align*} 
Therefore, we have 
\[
V_{(3^k)}\cong \phi\left(S_q(n,n)\right)w_h\cong W_{(3^k)}, 
\]
which finishes the proof. 
\vskip0.2cm
It is well-known that 
\[
V_{(3^k)}\cong 
S_q(n,n)1_{(3^k)}/(\mu > (3^k)),
\]
where $(\mu > (3^k))$ is the ideal generated by all 
elements of the form $x1_{\mu}y1_{(3^k)}$, with $x,y\in S_q(n,n)$ 
and $\mu$ is some weight greater than $(3^k)$. This quotient of 
$S_q(n,n)$ is an example of a so called \textit{Weyl module}. 
We see that the kernel of $\phi$ is also 
equal to $(\mu > (3^k))$. 
\end{proof}
\vskip0.2cm
We want to explain two more facts about the 
isomorphism in Lemma~\ref{lem:isoirrep}, which we will need later.  

Recall that there is an inner product on $V_{(3^k)}$. 
First of all, there is a $\bC$-linear and $q$-antilinear involution on $\mathbb{C}(q)$ determined by 
\[
\overline{aq^n}=\overline{a}q^{-n},
\]
for any $a\in\mathbb{C}$. Here $\overline{a}$ denotes the complex conjugate 
of $a$. Recall Lusztig's $q$-antilinear (antilinear means 
w.r.t. to the involution above) algebra anti-involution 
$\tau$ on $S_q(n,n)$ defined by 
\[
\tau(1_{\lambda})=1_{\lambda},\;\;
\tau(1_{\lambda+\alpha_i}E_i1_{\lambda})= 
q^{-1-\overline{\lambda}_i}1_{\lambda}E_{-i}1_{\lambda+\alpha_i},\;\;
\tau(1_{\lambda}E_{-i}1_{\lambda+\alpha_i})= q^{1+\overline{\lambda}_i}
1_{\lambda+\alpha_i}E_i1_{\lambda}.
\] 
The $q$-\textit{Shapovalov form} $\langle\;\cdot\;,\;\cdot\;\rangle$ on 
$V_{(3^k)}$ is the unique $q$-sesquilinear form such that 
\begin{enumerate}
\item $\langle v_h,v_h\rangle =1$, for a fixed highest weight vector $v_h$.
\item $\langle x v, v' \rangle=\langle v,\tau(x) v'\rangle$, for any 
$x\in S_q(n,n)$ and any $v,v'\in V_{(3^k)}$.
\item $f\langle v,v'\rangle =\langle v\overline{f},v'\rangle=\langle 
v,v'f\rangle$, for any $f\in \mathbb{C}(q)$ and any $v,v'\in V_{(3^k)}$.
\end{enumerate}  

We can also define an inner product on $W_{(3^k)}$, using the 
Kuperberg bracket. Let $S$ be any enhanced sign string $S$, 
such that $\mu_S\in\Lambda(n,n)_3$. Denote the length of the sign string 
$\hat{S}$ by $\ell(\hat{S})$.  
\begin{defn}
\label{defn:normkuperform}
Define the $q$-sesquilinear 
\textit{normalised Kuperberg form} by 
\begin{itemize}
\item $\langle w_h,w_h\rangle=1$, for a fixed highest weight vector $w_h$.
\item $\langle u,v\rangle=q^{\ell(\hat{S})}\langle u^*v\rangle_{\mathrm{Kup}}$, for any $u,v\in B_S$.
\item $\langle f(q)u,g(q)v\rangle=\overline{f(q)}g(q)\langle u,v\rangle$, for any $u,v\in B_S$ and $f(q),g(q)\in \mathbb{C}(q)$. 
\end{itemize}
\end{defn}
The following lemma motivates the normalisation of the Kuperberg form. 
\begin{lem}
\label{lem:phiisometry}
The isomorphism of $S_q(n,n)$-modules 
\[
\phi\colon V_{(3^k)}\to W_{(3^k)}
\]
is an isometry. 
\end{lem}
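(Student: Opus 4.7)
The plan is to invoke uniqueness of the $q$-Shapovalov form and reduce the isometry statement to a single $\tau$-adjointness relation for each Chevalley generator of $S_q(n,n)$. By Lemma~\ref{lem:isoirrep}, $\phi$ is an $S_q(n,n)$-linear isomorphism, so the pullback $\phi^\ast\langle\cdot,\cdot\rangle_{\mathrm{Kup}}^{\mathrm{norm}}$ is an $S_q(n,n)$-invariant $q$-sesquilinear form on $V_{(3^k)}$ which, by the normalisation in Definition~\ref{defn:normkuperform} together with $\phi(v_h)=w_h$, already satisfies the first defining property of the Shapovalov form. Since the Shapovalov form is uniquely determined on the irreducible highest-weight module $V_{(3^k)}$ by its three axioms, it suffices to check the $\tau$-adjointness relation
\[
\langle \phi(E_{+i}1_\lambda)u,\,v\rangle \;=\; q^{-1-\overline{\lambda}_i}\,\langle u,\,\phi(E_{-i}1_{\lambda+\alpha_i})v\rangle,
\]
valid for $u$ of weight $\lambda$ and $v$ of weight $\lambda+\alpha_i$; the companion identity for $E_{-i}$ then follows because $\tau$ is an involution.

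The argument rests on two observations. The first is purely topological: from Definition~\ref{defn:phi}, the webs $\phi(E_{+i}1_\lambda)$ and $\phi(E_{-i}1_{\lambda+\alpha_i})$ are both H-webs with identical labels on the four vertical ends (hence identical orientations on those edges, since orientations are determined by labels), and with opposite orientations on the horizontal bar (right-to-left for $E_{+i}$, left-to-right for $E_{-i}$). Consequently, under the horizontal-reflection-and-reverse-orientations operation $(\cdot)^*$ that underlies the Kuperberg pairing, I would obtain the planar equality
\[
\phi(E_{+i}1_\lambda)^* \;=\; \phi(E_{-i}1_{\lambda+\alpha_i})
\]
with no Kuperberg relations needed. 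Using $\bigl(\phi(E_{+i}1_\lambda)u\bigr)^\ast v = u^\ast \phi(E_{+i}1_\lambda)^\ast v$ together with the definition of the normalised Kuperberg form, this collapses the desired identity to the single scalar equation
\[
\ell(\hat S_{\lambda+\alpha_i}) - \ell(\hat S_\lambda) \;=\; -1-\overline{\lambda}_i.
\]

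The second observation is that this scalar equation holds whenever both $\lambda$ and $\lambda+\alpha_i$ lie in $\Lambda(n,n)_3$; if one of them does not, then both sides of the isometry identity vanish by Definition~\ref{defn:phi}, so there is nothing to check. This is a routine enumeration over the possible pairs $(\lambda_i,\lambda_{i+1})\in\{0,1,2,3\}^2$ with $\lambda_i<3$ and $\lambda_{i+1}>0$, comparing how many of the two entries belong to $\{1,2\}$ before and after the shift by $\alpha_i$, since only entries equal to $1$ or $2$ contribute to $\ell(\hat S)$. For example, $(\lambda_i,\lambda_{i+1})=(2,1)$ gives $\overline{\lambda}_i=1$ and a length drop of $2$, matching $-1-1$; while $(0,3)$ gives $\overline{\lambda}_i=-3$ and a length gain of $2$, matching $-1-(-3)$; all nine relevant cases agree.

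The main obstacle is not conceptual but bookkeeping: I must be meticulous about the conventions in Definition~\ref{defn:phi} when asserting the topological equality $\phi(E_{+i}1_\lambda)^*=\phi(E_{-i}1_{\lambda+\alpha_i})$, especially concerning the orientation of the horizontal bar and the rule that label-$0$ and label-$3$ vertical edges are erased (so that webs at different weights are genuinely the same planar graph once the erased edges are taken into account). Once those conventions are firmly in hand, the topological identification reduces to an inspection, the length formula to a finite check, and the remainder of the proof is the one-line reduction to $\tau$-adjointness described above.
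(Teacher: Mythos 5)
Your proof is correct and follows essentially the same two-step route the paper takes: first observe that the adjunction $\tau$-adjointness of $E_{+i}$ and $E_{-i}$ holds for the \emph{unnormalised} Kuperberg bracket via the topological identity $\phi(E_{+i}1_\lambda)^* = \phi(E_{-i}1_{\lambda+\alpha_i})$ (the paper states this as $\langle (E_{\pm i}u)^*v\rangle_{\mathrm{Kup}} = \langle u^*E_{\mp i}v\rangle_{\mathrm{Kup}}$ without writing out the web equality), then reduce the normalisation to the length bookkeeping $\ell(\hat S_{\lambda+\alpha_i}) - \ell(\hat S_\lambda) = -1-\overline{\lambda}_i$, verified case by case. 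You have made the uniqueness-of-the-Shapovalov-form invocation and the boundary case ($\lambda$ or $\lambda+\alpha_i$ outside $\Lambda(n,n)_3$) more explicit than the paper does, but the substance of the argument is the same.
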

\begin{proof}
First note that  
\[
\langle (E_{\pm i}u)^*v\rangle_{\mathrm{Kup}}=
\langle u^*E_{\mp i}v\rangle_{\mathrm{Kup}},
\]
for any $u,v\in W_S$ and any $i=1,\ldots, n$, which is exactly (2) from above. 
This shows that the result of the lemma holds up to normalisation. 

Our normalisation of the Kuperberg form matches the 
normalisation of the $q$-Shapovalov form. One can easily check this 
case by case. Let us just do two examples. Let $i=1$. Then one has
$E_11_{(a,b,\ldots)}=1_{(a+1,b-1,\ldots)}E_1$. If 
$(a,b,\ldots)\in\Lambda(n,n)_3$ such that $a-b=-1$, 
then 
\[
\ell(\widehat{(a,b)})=\ell(\widehat{(a+1,b-1)}),
\]
where $\ell$ indicates the length of the sign sequence. 
This matches 
\[
\tau(E_11_{(a,b)})=1_{(a,b)}E_{-1}.
\]  
If $(a,b)=(2,1)$, then $E_11_{(2,1,\ldots)}=1_{(3,0,\ldots)}E_1$. Note 
that 
\[
\ell(\widehat{(2,1,\ldots)})=\ell(\widehat{(3,0,\ldots)})+2.
\]
This $+2$ 
cancels exactly with the $-2$, which appears as the exponent of 
$q$ in 
\[
\tau(E_11_{(2,1,\ldots)})=q^{-2}1_{(2,1,\ldots)}E_{-1}.
\]    
\end{proof}

We will need one more fact about $\phi$. For any $i=1,\ldots,n$ 
and any $a\in\mathbb{N}$, let  
\[
E_{\pm i}^{(a)}=\dfrac{E_{\pm i}^a}{[a]!}
\]
denote the \textit{divided power} in $S_q(n,n)$. 
Recall the following relations for the divided powers.
\begin{eqnarray}
\label{eq:divpow1}
E_{\pm i}^{(a)}E_{\pm i}^{(b)}1_{\lambda}&=&\qbin{a+b}{a}E_{\pm i}^{(a+b)}1_{\lambda},\\
\label{eq:divpow2}
E_{+i}^{(a)}E_{-i}^{(b)}1_{\lambda}&=&
\sum_{j=0}^{\min(a,b)}\qbin{a-b+\lambda_i-\lambda_{i+1}}{j}
E_{-i}^{(b-j)}E_{+i}^{(a-j)}1_{\lambda},\\
\label{eq:divpow3}
E_{-i}^{(b)}E_{+i}^{(a)}1_{\lambda}&=&
\sum_{j=0}^{\min(a,b)}\qbin{b-a-(\lambda_i-\lambda_{i+1})}{j}
E_{+i}^{(a-j)}E_{-i}^{(b-j)}1_{\lambda}.
\end{eqnarray}
Here $[a]!$ denotes the \textit{quantum factorial} and $\qbin{a}{b}$ 
denotes the \textit{quantum binomial}.

The images of the divided powers under 
\[
\phi\colon S_q(n,n)\to \mathrm{End}(W_{(3^k)})
\]
are easy to compute. For example, we have (for simplicity, we only 
draw two of the strands and write $E=E_{+i}$)
\[
\phi(E^21_{(0,2)})=
\;\xy
(0,0)*{\includegraphics[width=30px]{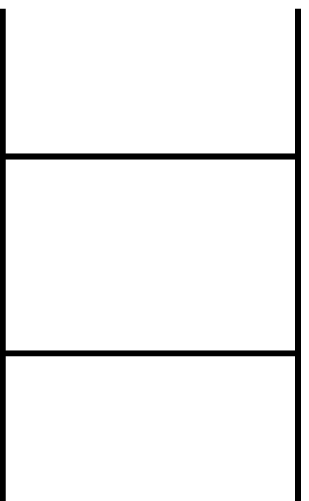}};
(-6,-10)*{\scriptstyle 0};
(6,-10)*{\scriptstyle 2};
(-7,0)*{\scriptstyle 1};
(7,0)*{\scriptstyle 1};
(-6,10)*{\scriptstyle 2};
(6,10)*{\scriptstyle 0};
\endxy 
\;
=
\;\xy
(0,0)*{\includegraphics[width=40px]{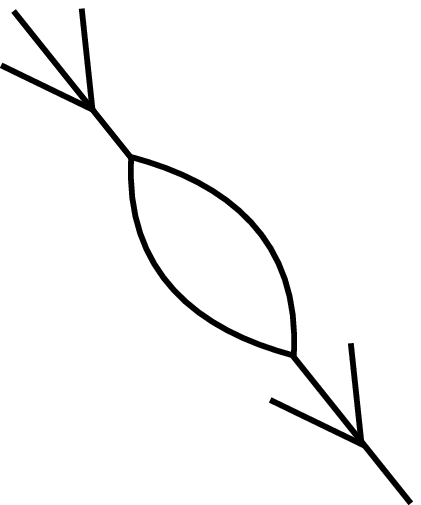}};
(6,10)*{\scriptstyle \circ};
(-6,10)*{\scriptstyle -};
(-6,-10)*{\scriptstyle \circ};
(6,-10)*{\scriptstyle -};
\endxy 
=
\;
[2]
\;\xy
(0,0)*{\includegraphics[width=40px]{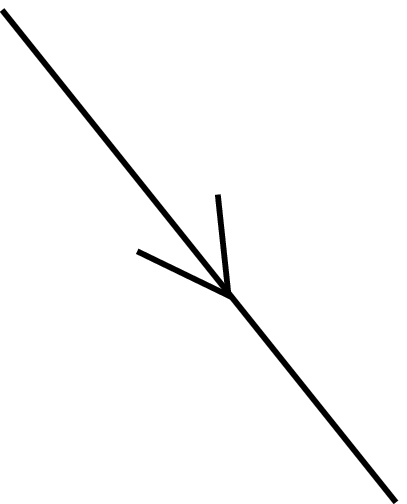}};
(6,10)*{\scriptstyle \circ};
(-6,10)*{\scriptstyle -};
(-6,-10)*{\scriptstyle \circ};
(6,-10)*{\scriptstyle -};
\endxy. 
\]
Therefore, we get 
\[
\phi(E^{(2)}1_{(0,2)})=
\;\xy
(0,0)*{\includegraphics[width=40px]{res/figs/section51/DivEE02.eps}};
(6,10)*{\scriptstyle \circ};
(-6,10)*{\scriptstyle -};
(-6,-10)*{\scriptstyle \circ};
(6,-10)*{\scriptstyle -};
\endxy. 
\]
Another interesting example is 
\[
\phi(E^{2}1_{(0,3)})=
\;\xy
(-0,0)*{\includegraphics[width=30px]{res/figs/section51/HHweb.eps}};
(-6,-10)*{\scriptstyle 0};
(6,-10)*{\scriptstyle 3};
(-7,0)*{\scriptstyle 1};
(7,0)*{\scriptstyle 2};
(-6,10)*{\scriptstyle 2};
(6,10)*{\scriptstyle 1};
\endxy
\;
=
\;
[2]
\;\xy
(0,0)*{\includegraphics[width=40px]{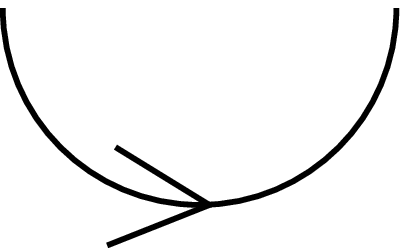}};
(6,10)*{\scriptstyle +};
(-6,10)*{\scriptstyle -};
(-6,-10)*{\scriptstyle \circ};
(6,-10)*{\scriptstyle \times};
\endxy, 
\]
which shows that  
\[
\phi(E^{(2)}1_{(03)})=
\;
\xy
(0,0)*{\includegraphics[width=40px]{res/figs/section51/rightcup.eps}};
(6,10)*{\scriptstyle +};
(-6,10)*{\scriptstyle -};
(-6,-10)*{\scriptstyle \circ};
(6,-10)*{\scriptstyle \times};
\endxy. 
\]
The final example we will consider is $\phi(E^{(3)}1_{(0,3)})$.
We see that 
\[
\phi(E^{3}1_{(0,3)})=
\;\xy
(-0,0)*{\includegraphics[width=30px]{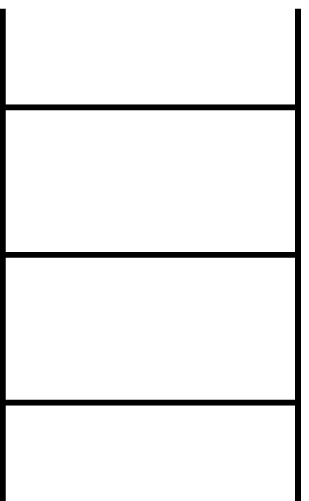}};
(-6,-10)*{\scriptstyle 0};
(6,-10)*{\scriptstyle 3};
(-7,-3)*{\scriptstyle 1};
(7,-3)*{\scriptstyle 2};
(-7,3)*{\scriptstyle 2};
(7,3)*{\scriptstyle 1};
(-6,10)*{\scriptstyle 3};
(6,10)*{\scriptstyle 0};
\endxy
\;
=
\;
\xy
(0,0)*{\includegraphics[width=40px]{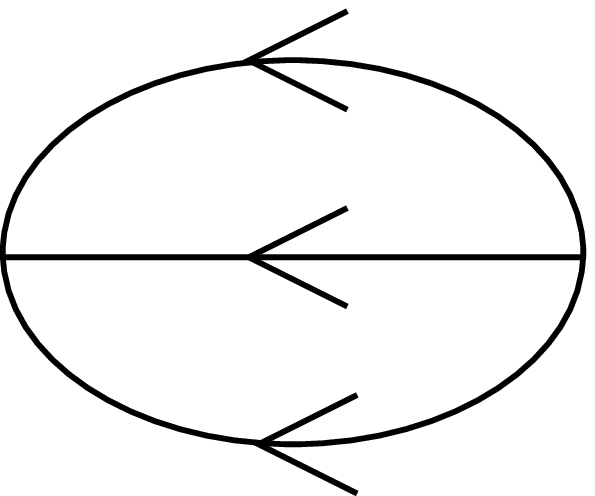}};
(-6,10)*{\scriptstyle \times};
(6,10)*{\scriptstyle \circ};
(-6,-10)*{\scriptstyle \circ};
(6,-10)*{\scriptstyle \times};
\endxy
\;
=
\;
[3]!
\xy
(-6,10)*{\scriptstyle \times};
(6,10)*{\scriptstyle \circ};
(-6,-10)*{\scriptstyle \circ};
(6,-10)*{\scriptstyle \times};
\endxy.
\]
Thus, we have 
\[
\phi(E^{(3)}1_{(0,3)})=\;\xy
(-6,10)*{\scriptstyle \times};
(6,10)*{\scriptstyle \circ};
(-6,-10)*{\scriptstyle \circ};
(6,-10)*{\scriptstyle \times};
\endxy,
\]
which is the unique empty web from $(\circ,\times)$ to 
$(\times,\circ)$.

Note that~\eqref{eq:divpow2} and~\eqref{eq:divpow3} imply that, 
for any $a\in\mathbb{N}$, we have 
\begin{equation}
\label{eq:adjust1}
E_{-i}^{(a)}E_{+i}^{(a)}1_{(\ldots,0,a,\ldots)}=1_{(\ldots,0,a,\ldots)}
\quad\text{and}\quad 
E_{+i}^{(a)}E_{-i}^{(a)}1_{(\ldots,a,0,\ldots)}=1_{(\ldots,a,0,\ldots)}
\end{equation}
in $S_q(n,n)$. Similarly, 
let $S_q(n,n)/I$, where $I$ denotes the two-sided ideal generated by 
all $1_{\mu}$ such that $\mu>(3^k)$. Again by~\eqref{eq:divpow2} 
and~\eqref{eq:divpow3}, we have
\begin{equation}
\label{eq:adjust2}
E_{-i}^{(3-a)}E_{+i}^{(3-a)}1_{(\ldots,a,3,\ldots)}=1_{(\ldots,a,3,\ldots)}
\quad\text{and}\quad 
E_{+i}^{(3-a)}E_{-i}^{(3-a)}1_{(\ldots,3,a,\ldots)}=1_{(\ldots,3,a,\ldots)}
\end{equation}
in $S_q(n,n)/I$.
One can check that $\phi$ maps the two sides of the equations 
in~\eqref{eq:adjust1} and~\eqref{eq:adjust2} to isotopic diagrams. 
For example, $\phi$ maps 
\[
E_{-}^{(2)}E_{+}^{(2)}1_{(0,2)}=1_{(0,2)}
\]
to 
\[
\xy
(0,0)*{\includegraphics[width=22px]{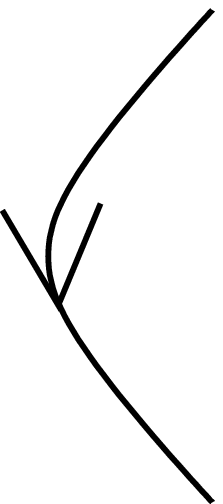}};
(-6,12)*{\scriptstyle \circ};
(6,12)*{\scriptstyle -};
(-6,0)*{\scriptstyle -};
(6,0)*{\scriptstyle \circ};
(-6,-12)*{\scriptstyle \circ};
(6,-12)*{\scriptstyle -};
\endxy
\;
=
\;
\xy
(6,0)*{\includegraphics[width=11.3px]{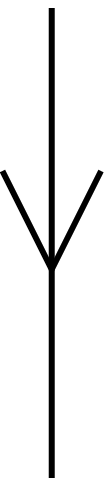}};
(-6,12)*{\scriptstyle \circ};
(6,12)*{\scriptstyle -};
(-6,-12)*{\scriptstyle \circ};
(6,-12)*{\scriptstyle -};
\endxy.
\]

\begin{rem}
Let 
\[
W_{(3^k)}^{\mathbb{Z}}=\bigoplus_{\mu_S\in\Lambda(n,n)_3}W_S^{\mathbb{Z}}
\]
be the integral form. Then the remarks above show that the action in 
Definition~\ref{defn:phi} restricts to a well-defined action of 
$S_q^{\mathbb{Z}}(n,n)$ on 
$W_{(3^k)}^{\mathbb{Z}}$. Therefore, the isomorphism in Lemma~\ref{lem:isoirrep} 
restricts to a well-defined isomorphism between the integral forms 
\[
V_{(3^k)}^{\mathbb{Z}}\cong W_{(3^k)}^{\mathbb{Z}}.
\]
\end{rem}

The proof of the following lemma is based on an algorithm, which we call 
\textit{enhanced inverse growth algorithm}. The result is needed later to 
show surjectivity in Theorem~\ref{thm:equivalence}. 
\begin{lem}
\label{lem:phisurj}
Let $S$ be any enhanced sign string such that $\mu_S\in\Lambda(n,n)_3$. 
For any $w\in B_S$, there exists a product of divided powers $x$, such that
\[
\phi(x1_{(3^k)})=w.
\] 
\end{lem}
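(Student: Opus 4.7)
The plan is to construct $x$ by inverting the growth algorithm of $w$, step by step, and interpreting each step as the application of a divided power of some $E_{\pm i}$. The starting point at the bottom (which becomes the target of $\phi$ when read top-to-bottom) is the empty web, corresponding to the highest weight $1_{(3^k)}$ and to the enhanced sign string $(\times,\dots,\times,\circ,\dots,\circ)$ with $k$ entries equal to $\times$. I plan to build $w$ on top of this empty configuration by successively gluing arc, $Y$, and $H$ pieces, each of which has already been identified in the examples preceding the lemma as the image under $\phi$ of an appropriate divided power.

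Concretely, I would first apply the growth algorithm of Definition~\ref{growth} to $w$, obtaining a presentation of $w$ as a sequence of arc, $Y$, and $H$ rules read from top (boundary $\hat S$) to bottom. Reversing this sequence gives a bottom-up construction of $w$ starting from nothing. I then translate each elementary gluing into a divided power according to the dictionary: a local arc attached at neighbouring positions is produced by $E^{(2)}$ applied at a local weight $(0,3)$, giving a cup as in the example $\phi(E^{(2)}1_{(03)})$; a local $Y$ is similarly produced by $E^{(2)}$ applied to a pair of entries of the form $(0,2)$ or $(1,3)$, as in the example $\phi(E^{(2)}1_{(02)})$; and a single $H$ is exactly $\phi(E_{\pm i}1_\lambda)$ by definition. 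Between these creation steps I use $E^{(3)}$ applied locally at weight $(0,3)$, which by the example $\phi(E^{(3)}1_{(03)})$ is the identity crossing of a $\circ$ and a $\times$, to transport the remaining $\times$ entries to whatever position is needed next. Composing these divided powers in the order dictated by the reversed growth algorithm yields a product $x$ of divided powers with $\phi(x 1_{(3^k)})=w$.

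Throughout, I need to verify two bookkeeping points. First, that every intermediate enhanced sign string produced by this process lies in $\Lambda(n,n)_3$, so that no $1_\mu$ with $\mu>(3^k)$ appears and we stay inside the Weyl module quotient; this is automatic because each elementary gluing is the image of a divided power sending one $\Lambda(n,n)_3$ weight to another, as catalogued in the examples. Second, that the resulting web really agrees with $w$ and not just with some web with the same boundary; this follows because the reversed growth algorithm glues exactly the same arc, $Y$, and $H$ pieces in the same order, and $\phi$ of the corresponding divided powers glues identically shaped local webs, so the two constructions produce isotopic webs.

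The main obstacle I anticipate is not the existence of a translation but its verification in the $Y$-rule cases, where the precise divided power needed depends subtly on the local weight (for instance distinguishing $(0,2)$ from $(0,3)$ or $(1,2)$ from $(1,3)$ neighbourhoods). I would handle this by a short case analysis, matching each local configuration permitted by the growth algorithm against one of the computed examples $\phi(E^{(a)}1_{(b,c)})$ immediately preceding the lemma, and invoking the identities~\eqref{eq:adjust1} and~\eqref{eq:adjust2} whenever a swap through a weight with an entry equal to $3$ is needed to reposition a $\times$ before the next creation step.
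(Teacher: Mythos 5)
Your overall strategy matches the paper's proof exactly: express $w$ via the growth algorithm, translate each elementary rule into the $\phi$-image of a (divided) power of $E_{\pm i}$, and insert swap moves via~\eqref{eq:adjust1} and~\eqref{eq:adjust2} whenever a rule applies at non-consecutive strands, with a final reordering factor to land on $(3^k)$. This is precisely how the paper builds $x=1_{\mu_S}x_1\cdots x_{m+1}1_{(3^k)}$.

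You correctly flag the $Y$-case as the delicate one, but the rule you wrote down there is wrong in a way that would make the construction fail as stated. The example $\phi\bigl(E^{(2)}1_{(0,2)}\bigr)$ is \emph{not} a $Y$: the displayed computation gives $\phi(E^{2}1_{(0,2)})$ as $[2]$ times a single strand (the intermediate bigon having been removed), so $\phi(E^{(2)}1_{(0,2)})$ is that single strand travelling from the bottom-right to the top-left position, not a trivalent vertex. A $Y$-rule only changes the local $\mathfrak{gl}_n$-weight by a single simple root, e.g.\ $(2,0)\leftrightarrow(1,1)$ or $(1,3)\leftrightarrow(2,2)$, so it must always be implemented by a single $E_{\pm i}$, never by $E^{(2)}$. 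Likewise only one of the two arc orientations uses $E_{-i}^{(2)}$; the other (from $(3,0)$ to $(2,1)$) uses a single $E_{-i}$, which is why the paper distinguishes the clockwise and counter-clockwise cases. With the corrected dictionary — $H$'s and $Y$'s by single $E_{\pm i}$'s, arcs by $E_{-i}$ or $E_{-i}^{(2)}$ depending on orientation, transports by the divided powers in~\eqref{eq:adjust1} and~\eqref{eq:adjust2} — your construction coincides with the paper's.
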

\begin{proof}
Choose any $w\in B_S$. We consider $w\in B_{(\times^k,\circ^{2k})}^S$, i.e. 
a non-elliptic web with (empty) lower boundary determined by 
$(\times^k,\circ^{2k})$ and upper boundary determined by $S$. 
Express $w$ using the growth algorithm, 
in an arbitary way. Suppose there are $m$ steps in this instance of the 
growth algorithm. The element $x$ is built up in $m+2$ steps, i.e. 
an initial step, one step for each step in the growth algorithm, and a last 
step. During the construction of $x$, we always keep track of 
the $\circ$s and $\times$s. At each step 
the strands of $w$ are numbered according to their position in $x$. 

If the $H$, $Y$ or arc-move is applied to two non-consecutive strands, 
we first have to apply some divided powers, 
as in~\eqref{eq:adjust1} and~\eqref{eq:adjust2}, 
to make them consecutive. Let $x_k\in S_q(n,n)$ be the element 
assigned to the $k$-th step and let $\mu^k$ be the weight after the 
$k$-step, i.e. $x_k=1_{\mu^{k-1}}x_k1_{\mu^k}$. 
The element $x$ we are looking for is the product of all $x_k$. 
\begin{enumerate}
\item Take $x_0=1_{\mu_S}$.
\item Suppose that the $k$-th step in the growth algorithm is applied to 
the strands $i$ and $i+r$, for some $r\in\mathbb{N}_{>0}$. This means that 
the entries of $\mu^{k-1}$ satisfy $\mu_j\in\{0,3\}$, 
for all $j=i+1,\ldots,i+r-1$. Let $x_k'$ be the product of divided powers 
which ``swap'' the $(\mu_{i+1},\ldots,\mu_{i+r-1})$ and $\mu_{i+r}$. So, we 
first swap $\mu_{i+r-1}$ and $\mu_{i+r}$, then $\mu_{i+r-2}$ and $\mu_{i+r}$ 
etc. Now, the rule in the growth algorithm, still corresponding to the 
$k$-th step, can be applied to the strands $i$ and $i+1$. 
\item Suppose that it is an $H$-rule. If the bottom of the $H$ is a pair 
(up-arrow down-arrow), then take $x_k=x_k'E_{+i}$. If the bottom of the 
$H$ is a pair (down-arrow up-arrow), then take $x_k=X_k'E_{-i}$. 
\item Suppose that the rule, corresponding to the $k$-th step in 
the growth algorithm, is a $Y$-rule. If the bottom strand of $Y$ is oriented 
downward, then take $x_k=x_k'E_{-i}$. If it is oriented upward, take 
$x_k=x_k'E_{+i}$. Note that these two choices are not unique. They depend on 
where you put $0$ or $3$ in $\mu^k$. The choice we made corresponds to taking 
$(\mu^k_i,\mu^k_{i+1})=(2,0)$ in the first case and 
$(\mu^k_i,\mu^k_{i+1})=(1,3)$ in the second case. Other choices would be 
perfectly fine and would lead to equivalent elements in $S_q(n,n)1_{(3^k)}/
(\mu > (3^k))$. 
\item Suppose that the rule, corresponding to the $k$-th step in 
the growth algorithm, is an arc-rule. If the arc is oriented clockwise, 
take $x_k=x_k'E_{-i}^{(2)}$. If the arc is oriented counter-clockwise, 
take $x_k=x_k'E_{-i}$. Again, these choices are not unique. They correspond 
to taking $(\mu^k_i,\mu^k_{i+1})=(3,0)$ in both cases. 
\item After the $m$-th step in the growth algorithm, which is the last one, 
we obtain $\mu^m$, which is a sequence of $3$s and $0$s. Let $x_{m+1}$ be 
the product of divided powers which reorders the entries of $\mu^m$, so that 
$\mu^{m+1}=(3^k)$. 
\item Take $x=1_{\mu_S}x_1x_2\cdots x_{m+1}1_{(3^k)}\in S_q(n,n)$. Note that 
$x$ is of the form $E_{\ii}1_{(3^k)}$.      
\end{enumerate}
From the analysis of the images of the divided powers under $\phi$, it is 
clear that 
\[
\phi(x)=w.
\] 
\end{proof}

We do a simple example to illustrate Lemma~\ref{lem:phisurj}.
Let 
\[
w=
\;
\xy
(0,0)*{\includegraphics[width=40px]{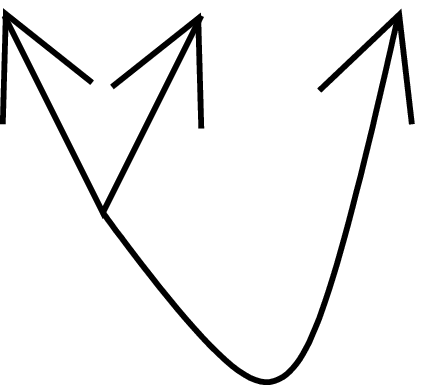}};
(-6,10)*{\scriptstyle 1};
(0,10)*{\scriptstyle 1};
(6,10)*{\scriptstyle 1};
\endxy
\] 
Then the algorithm in the proof of Lemma~\ref{lem:phisurj} gives 
\[
x=1_{(111)}E_{-1}E_{-2}E_{-1}1_{(300)},
\]
or as a picture (read from bottom to top)
\[
\xy
(0,0)*{\includegraphics[width=50px]{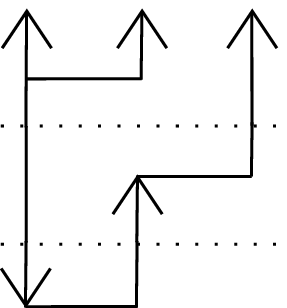}};
(-9,12)*{\scriptstyle 1};
(0.3,12)*{\scriptstyle 1};
(9,12)*{\scriptstyle 1};
(-15,5)*{\scriptstyle{E_{-1}}};
(-9,2)*{\scriptstyle 2};
(0.3,2)*{\scriptstyle 0};
(9,2)*{\scriptstyle 1};
(-15,-1.5)*{\scriptstyle{E_{-2}}};
(-9,-5.5)*{\scriptstyle 2};
(0.3,-5.5)*{\scriptstyle 1};
(9,-5.5)*{\scriptstyle 0};
(-15,-9)*{\scriptstyle{E_{-1}}};
(-9,-12)*{\scriptstyle 3};
(0.3,-12)*{\scriptstyle 0};
(9,-12)*{\scriptstyle 0};
\endxy.
\]

We are now ready to start explaining the categorified story.
\subsection{Web algebras and the cyclotomic KLR algebras: And its categorification}\label{sec-webhoweb}
Let us denote with $K_S\text{-}\mathrm{p\textbf{Mod}}_{\mathrm{gr}}$ 
the category of all finite dimensional, projective, unitary, 
graded $K_S$-modules and 
$K^{\oplus}_0(K_S)=K^{\oplus}_0(K_S\text{-}\mathrm{p\textbf{Mod}}_{\mathrm{gr}})$ its 
split Grothendieck group. 
Recall that a \textit{unitary} module is one on which the identity of $K_S$ 
acts as the identity operator. In what follows, 
it will sometimes be useful to consider homomorphisms 
of arbitrary degree, so we define
\[
\mathrm{HOM}_{B}(M,N)=\bigoplus_{t\in\mathbb{Z}}\text{hom}_B(M,N\{t\}),
\]
for any finite dimensional, associative, unital, graded algebra $B$ and 
any finite dimensional, unitary, graded $B$-modules $M$ and $N$. Note that for 
almost all 
$t\in\mathbb{Z}$ we have $\text{hom}_B(M,N\{t\})=\{0\}$, so 
$\text{HOM}_B(M,N)$ is still finite dimensional.

Moreover, we need the following notions throughout the rest of the section.

Suppose that $S$ is an enhanced sign string 
such that $\mu_S\in\Lambda(n,n)_3$. For any $u\in B_S$, let 
\[
P_u=\bigoplus_{w\in B_S} {}_wK_u.
\]
Then we have   
\[
K_S=\bigoplus_{u\in B_S} P_u,
\]
and so $P_u$ is an object in $K_S\text{-}\mathrm{p\textbf{Mod}}_{\mathrm{gr}}$, 
for any $u\in B_S$. Note that, for any $u,v\in B_S$, we have  
\[
\mathrm{HOM}(P_u,P_v)\cong {}_uK_v,
\]
where an element in ${}_{u^{\prime}}K_{v^{\prime}}$ acts on $P_u$ by composition on the 
left-hand side. 

Similarly, we can define 
\[
{}_uP=\bigoplus_{w\in B_S} {}_uK_w,
\]
which is a right graded, projective $K_S$-module. 

\begin{rem}
Just one warning. The reader should not confuse 
$P_u$ with $P_{u,T}$ in Section~\ref{sec-webalg}.
\end{rem} 
\subsubsection{The definition of ${\mathcal W}_{(3^k)}$}
Recall that $S$ denotes an enhanced sign string. Define  
\[
K_{(3^k)}=\bigoplus_{\mu_S\in\Lambda(n,n)_3} K_S
\]
and
\[
{\mathcal W}_{(3^k)}=K_{(3^k)}\text{-}\mathrm{p\textbf{Mod}}_{\mathrm{gr}}\cong 
\bigoplus_{\mu_S\in\Lambda(n,n)_3} 
K_S\text{-}\mathrm{p\textbf{Mod}}_{\mathrm{gr}}.
\]
The main goal of this section is to show that 
there exists a categorical $\Ucat$-action on ${\mathcal W}_{(3^k)}$ and 
that 
\[
{\mathcal W}_{(3^k)}\cong {\mathcal V}_{(3^k)}
\]
as $\Ucat$-2-representations as explained in Section~\ref{sec-techhigherrep}. 

This will imply that 
\[
K^{\oplus}_0({\mathcal W}_{(3^k)})\cong V_{(3^k)}^{\mathbb{Z}}.
\]
Note that 
\[
K^{\oplus}_0({\mathcal W}_{(3^k)})\cong \bigoplus_{\mu_S\in\Lambda(n,n)_3}K^{\oplus}_0(K_S).
\]
We will show that this corresponds exactly to 
the $U_q(\mathfrak{gl}_n)$-weight space decomposition of $V_{(3^k)}$. 
In particular, this will show that 
\begin{equation}
\label{eq:fundequality}
K^{\oplus}_0(K_S)\cong W_S,
\end{equation}
for any enhanced sign sequence $S$ such that $\mu_S\in \Lambda(n,n)_3$. 
\vskip0.5cm
First, we have to recall the definitions of 
\textit{sweet} bimodules.
\subsubsection{Sweet bimodules}
Note that the following definitions and results are 
the $\mathfrak{sl}_3$-analogues of those in Section 2.7 in~\cite{kh4}.

\begin{defn} Given rings $R_1$ and $R_2$, a $(R_1,R_2)$-bimodule $N$ is 
called \textit{sweet} if it is finitely generated and projective as a 
left $R_1$-module and as a right $R_2$-module. 
\end{defn}
If $N$ is a sweet $(R_1,R_2)$-bimodule, then the functor 
\[
N\otimes_{R_2}- \colon 
R_2\text{-}\mathrm{\textbf{Mod}}\to R_1\text{-}\mathrm{\textbf{Mod}}
\]
is exact and sends projective modules to projective modules. Given a 
sweet $(R_1,R_2)$-bimodule $M$ and a sweet $(R_2,R_3)$-bimodule $N$, then the 
tensor product $M\otimes_{R_2}N$ is a sweet $(R_1,R_3)$-bimodule.

Let $S$ and $S'$ be two enhanced sign strings. Then $\widehat{B}_S^{S'}$ 
denotes the set of all webs whose boundary is divided 
into a lower part, determined by $S$, and an upper part, determined by $S'$. 
Here we mean one diagram when we say web, not a linear combination of 
diagrams. Let $B_S^{S'}\subset \widehat{B}_S^{S'}$ be 
the subset of non-elliptic webs. 

For any $w\in \widehat{B}_S^{S'}$, define a finite dimensional, graded 
$(K_{S'},K_{S})$-bimodule $\Gamma(w)$ by 
\[
\Gamma(w)=\bigoplus_{u\in B_{S'},v\in B_{S}}{}_u\Gamma(w)_v,
\]
with 
\[
{}_u\Gamma(w)_v={\mathcal F}^c(u^*wv)\{n\},
\] 
where $n$ is the length of $S'$. The left and right actions of $K_S$ on 
$\Gamma(w)$ are defined by applying the multiplication foam in~\ref{multfoam} 
to 
\[
{}_rK_u\otimes{}_u\Gamma(w)_v\to {}_r\Gamma(w)_v\quad\text{and}\quad 
{}_u\Gamma(w)_v\otimes {}_vK_r\to {}_u\Gamma(w)_r.
\]
Let $w\in\widehat{B}_S^{S'}$. Then $w=c_1w_1+\cdots +c_mw_m$, for 
certain $w_i\in B_S^{S'}$ and $c_i\in \mathbb{N}[q,q^{-1}]$. Since all 
relations which are satisfied by the Kuperberg bracket have categorical 
analogues for foams, this shows that 
\[
\Gamma(w)\cong c_1\Gamma(w_1)\oplus \cdots\oplus c_m\Gamma(w_m),
\]
where the multiplication by the $c_i$ is interpreted in the usual way using 
direct sums and grading shifts.  

We have the following analogue of Proposition 3 in~\cite{kh4}.
\begin{prop}\label{prop-sweet}
For any $w\in\widehat{B}_S^{S'}$, the graded $(K_{S'},K_{S})$-bimodule 
$\Gamma(w)$ is sweet. 
\end{prop}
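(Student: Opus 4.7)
The plan is to adapt Khovanov's argument for the arc algebra (Proposition~3 in~\cite{kh4}) to the $\mathfrak{sl}_3$ foam setting, exploiting the fact that Kuperberg's relations~\eqref{eq:circle},~\eqref{eq:digon} and~\eqref{eq:square} each have a local foam realisation given by the relations $(NC)$, $(DR)$ and $(SqR)$ recalled in Section~\ref{sec-webbasicb}. Finite dimensionality of $\Gamma(w)$, and hence finite generation on either side, is immediate from the finiteness of $B_S$, $B_{S'}$ and of each $\mathcal{F}^c(u^*wv)$, so the content of the statement is projectivity on each side.

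First I would reduce to the case $w\in B_S^{S'}$. Writing $w=\sum_i c_i w_i$ in $W_S^{S'}$ with $w_i\in B_S^{S'}$ and $c_i\in\mathbb{N}[q,q^{-1}]$, and applying the foam versions of Kuperberg's relations inside the middle horizontal slab of $u^*wv$, produces a grading-preserving isomorphism $\mathcal{F}^c(u^*wv)\cong\bigoplus_i c_i\mathcal{F}^c(u^*w_iv)$ which is natural in both $u$ and $v$. Because the foam identities $(NC),(DR),(SqR)$ are supported in small balls inside this middle slab, they commute with the multiplication foams acting near the top (the $u^*$ region) and the bottom (the $v$ region); hence $\Gamma(w)\cong\bigoplus_i c_i \Gamma(w_i)$ as graded $(K_{S'},K_S)$-bimodules, and it suffices to prove the claim for non-elliptic $w$.

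Now fix $w\in B_S^{S'}$ and $v\in B_S$. The composite diagram $wv$ lies in $\widehat{B}_{\emptyset}^{S'}$, so Kuperberg's relations give $wv=\sum_j d_j x_j$ with $x_j\in B_{S'}$ and $d_j\in\mathbb{N}[q,q^{-1}]$. The corresponding foam identities, applied now inside the \emph{lower} half of $u^*wv$, yield a grading-preserving isomorphism of graded left $K_{S'}$-modules
\[
\Gamma(w)_v=\bigoplus_{u\in B_{S'}}\mathcal{F}^c(u^*wv)\{n\}\;\cong\;\bigoplus_j d_j\bigoplus_{u\in B_{S'}}\mathcal{F}^c(u^*x_j)\{n\}=\bigoplus_j d_j P_{x_j},
\]
where compatibility with the left $K_{S'}$-action is again guaranteed by locality of the foam decomposition (applied below, the action applied above). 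Since $K_{S'}=\bigoplus_{x\in B_{S'}} P_x$, each $P_{x_j}$ is a direct summand of $K_{S'}$ and therefore a finitely generated graded projective left $K_{S'}$-module; summing over the finite set $v\in B_S$ shows that $\Gamma(w)$ is finitely generated projective on the left.

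The right $K_S$-module claim follows by the symmetric argument: fix $u\in B_{S'}$, decompose $u^*w\in\widehat{B}_S^{\emptyset}$ as $\sum_k e_k y_k^*$ with $y_k\in B_S$, and apply the corresponding foam identities in the upper half of $u^*wv$ to obtain ${}_u\Gamma(w)\cong\bigoplus_k e_k\,{}_{y_k}P$ as graded right $K_S$-modules, which is projective for the same reason. The main technical point common to both halves is checking that the local foam realisations of the Kuperberg relations genuinely commute with the multiplication foams defining the bimodule structures; this should be an immediate consequence of the bilinearity and locality of composition in $\foamt^c$ together with Lemma~\ref{lem:webalgaltern}, which identifies the bimodule actions with honest foam composition. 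Beyond careful bookkeeping of the grading shifts $\{n\}$, I do not anticipate any deeper obstacle.
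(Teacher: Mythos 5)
Your proposal is correct and follows essentially the same route as the paper: fix $v\in B_S$, decompose $wv$ into basis webs of $B_{S'}$ via the Kuperberg relations, apply the corresponding foam isomorphisms $(NC)$, $(DR)$, $(SqR)$, and recognise $\Gamma(w)_v$ as a direct sum of degree-shifted copies of the $P_{u_i}$ (hence projective), with the symmetric argument on the right. The only difference is your preliminary reduction to non-elliptic $w$, which is harmless but unnecessary — the decomposition of $wv$ into elements of $B_{S'}$ works for arbitrary $w\in\widehat{B}_S^{S'}$, and the paper skips this step.
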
 
\begin{proof}
As a left $K_S$-module, we have 
\[
\Gamma(w)\cong \bigoplus_{v\in B_{S}} \Gamma(w)_v,
\]
where 
\[
\Gamma(w)_v=\bigoplus_{u\in B_{S'}}{}_u\Gamma(w)_v.
\]
So, as far as the left action is concerned, it suffices to show that 
$\Gamma(w)_v$ is a left projective $K_{S'}$-module. Note that, as a left 
$K_{S'}$-module, we have  
\[
\Gamma(w)_v\cong \bigoplus_{u\in B_{S'}}{\mathcal F}^0(wv).
\] 
Then $wv=c_1u_1+\cdots+c_1u_m$, for certain 
$u_i\in B_{S'}$ and $c_i\in\mathbb{N}[q,q^{-1}]$. By the remarks above, this 
means that 
\[
{\mathcal F}^0(wv)\cong c_1P_{u_1}\oplus \cdots\oplus c_mP_{u_m},
\]
which proves that $\Gamma(w)$ is projective as a left $K_{S'}$-module. 

The proof that $\Gamma(w)$ is projective as a right $K_{S}$-module is 
similar.   
\end{proof}

It is not hard to see that (see for example~\cite{kh4}), for any $w\in \widehat{B}_S^{S'}$ and 
$w'\in \widehat{B}_{S'}^{S''}$, we have 
\begin{equation}
\label{eq:sweettensor}
\Gamma(ww')\cong \Gamma(w)\otimes_{K_{S'}}\Gamma(w').
\end{equation}
\begin{lem} Let $w, w'\in \widehat{B}_{S}^{S'}$. An isotopy between 
$w$ and $w'$ induces an isomorphism between $\Gamma(w)$ and $\Gamma(w')$. 
Two isotopies between $w$ and $w'$ induce the same isomorphism if and only 
if they induce the same bijection between the connected components of $w$ 
and $w'$.  
\end{lem}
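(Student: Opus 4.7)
The strategy is to associate to an isotopy $\phi$ between $w$ and $w'$ a foam $F_\phi \colon w \to w'$ in $\foamt^c$, namely the ``trace'' of the isotopy in $\mathbb{R}^2 \times [0,1]$: each point of an edge of $w$ sweeps out an arc as the isotopy is carried out, and collectively these arcs assemble into vertical facets meeting along singular arcs (the traces of the trivalent vertices). The foam $F_\phi$ carries no dots and all facets are ``cylindrical'', so it is homogeneous of $q$-degree zero. Pre- and post-composing with the identity on $u^*$ and $v$ (for any $u \in B_{S'}$, $v \in B_S$) and using the identification from Lemma~\ref{lem:webalgaltern}, we get a degree preserving $\mathbb{C}$-linear map $\Phi_\phi \colon \Gamma(w) \to \Gamma(w')$. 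Because $F_\phi$ is the identity on its top and bottom collars corresponding to $u^*$ and $v$, $\Phi_\phi$ commutes with the multiplication foams of Definition~\ref{multfoam} used to define the left $K_{S'}$- and right $K_S$-actions, so $\Phi_\phi$ is a map of graded $(K_{S'},K_S)$-bimodules.

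To see that $\Phi_\phi$ is an isomorphism, apply the same construction to the reverse isotopy $\phi^{-1}\colon w' \to w$. The composite $F_{\phi^{-1}} \circ F_\phi$ is the trace of a loop of embeddings of $w$ in the strip which starts and ends at $w$; such a loop is contractible through an isotopy of foams to $1_w$, since the trace of the constant isotopy at $w$ is literally $1_w$. By functoriality this shows $\Phi_{\phi^{-1}} \circ \Phi_\phi = \mathrm{id}_{\Gamma(w)}$, and similarly in the other order.

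The content of the second statement is that $\Phi_\phi$ depends on $\phi$ only through the induced bijection on connected components. Reducing to $\psi = \phi_1^{-1} \circ \phi_0$, we must show that a self-isotopy of $w'$ induces $\mathrm{id}_{\Gamma(w')}$ if and only if it induces the identity bijection on $\pi_0(w')$. The plan is to use the following classification: a self-isotopy of a planar web in the disc, fixing the boundary pointwise, is determined up to isotopies-of-isotopies by its induced permutation of components together with, for each closed internal component $C$, an integer counting the net number of full rotations of $C$ (this is the $\mathfrak{sl}_3$-analogue of the well-known $\mathfrak{sl}_2$-fact used in the proof of the corresponding lemma for Khovanov's arc algebra $H_n$ in~\cite{kh4}). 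Thus it suffices to prove that a full rotation of a closed component $C$ traces out a foam equivalent to $1_{w'}$ in $\foamt^c$. The trace of such a rotation is a ``twisted cylinder'' on $C$; by repeatedly applying the neck cutting relation (NC) along $C$, the bubble relation~\eqref{eq:bubble} and, where trivalent vertices of $C$ occur, the bamboo relation~\eqref{eq:bamboo}, one reduces the twisted cylinder to $1_C$ plus correction terms that all vanish by the same relations. Conversely, if $\psi$ permutes components non-trivially, then restricting $\Phi_\psi$ to the subalgebra of $1_{w'}$ decorated by dots on those components (which embeds in $\Gamma(w')$ via the analogue of Lemma~\ref{lem:gradedembedding}, applied facet by facet) exhibits the same non-trivial permutation of dot-patterns, so $\Phi_\psi \neq \mathrm{id}$.

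The main obstacle, and the step I would spend most care on, is the classification of self-isotopies of a planar web in terms of component-bijection together with per-component rotation numbers, and then the explicit verification that a single full rotation on a closed component of an $\mathfrak{sl}_3$ web yields a foam that reduces to the identity modulo the relations $\ell=(3D, NC, S, \Theta)$ together with~\eqref{eq:bamboo},~\eqref{eq:bubble} and~\eqref{eq:dr}; the presence of trivalent vertices on $C$ (absent in the $\mathfrak{sl}_2$-case) is what makes this slightly more delicate than in Khovanov's original argument.
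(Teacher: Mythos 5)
The paper states this lemma without proof, implicitly deferring to the analogous result for $H_n$ in Section~2.7 of~\cite{kh4}. So there is no internal argument to compare against, and your task was genuinely to supply one; the broad strategy you chose (take the trace of the isotopy to get a degree-zero foam, observe it is a bimodule map, reduce the ``if and only if'' to self-isotopies, and classify those) is the only reasonable one and matches what one would extract from~\cite{kh4}.

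Two concrete issues. First, the step where you reduce the ``twisted cylinder'' on a closed component $C$ to $1_{C}$ ``by repeatedly applying the neck cutting relation (NC), the bubble relation and the bamboo relation'' does not work as written: (NC) replaces a tube by a signed sum of three cut pieces carrying dots, so applying it yields a genuine linear combination, not an isotopy to the identity, and it is not clear why the extra terms would vanish. The correct observation is geometric rather than algebraic: the loop in the space of embeddings of $C$ in the disc which generates $\pi_1$ can be taken to be a reparametrization rotation, which does not move the image of $C$, so its trace is \emph{literally} $C\times[0,1]$; any other rotation loop is homotopic to it, and a homotopy of loops gives an isotopy of traces rel boundary. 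No foam relations are required.

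Second and more seriously, the classification of self-isotopies of a planar web (rel boundary) up to homotopy, by the component permutation together with a per-component \emph{full} rotation count, is asserted but not proved, and it is not a straightforward port of the $\mathfrak{sl}_2$ statement. For circles the relevant space of embeddings retracts onto $SO(2)$ and the generator's trace is the product cylinder, as you say. But a closed $\mathfrak{sl}_3$ web component with trivalent vertices may admit an embedded symmetry realized by a \emph{partial} rotation (e.g.\ the $\pi$-rotation of a symmetric theta web, swapping its two vertices and permuting its edges). Such a rotation is a legitimate self-isotopy of the web as a subset, induces the identity on $\pi_0$, but traces out a foam whose facets connect distinct edges of the component and which is \emph{not} isotopic to $1_{w'}$ rel boundary. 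Your classification does not account for these loops, and showing that they nonetheless induce the identity on $\Gamma(w')$ requires an additional argument about the foam space of the closed component (e.g.\ via the $\Theta$-evaluation and its equivariance under the symmetry) that is absent from your proof. This is exactly the point where the $\mathfrak{sl}_3$ case is harder than the $\mathfrak{sl}_2$ case, as you anticipate in your closing paragraph, so the gap is real and is located precisely where you would have to ``spend the most care''.
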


\begin{lem}
Let $w,w'\in \widehat{B}_{S}^{S'}$ and let $f\in \foamt^0(w,w')$ be a foam 
of degree $t$. Then $f$ induces a bimodule map 
\[
\Gamma(f)\colon \Gamma(w)\to \Gamma(w')
\] 
of degree $t$.
\end{lem}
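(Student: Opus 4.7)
The plan is to define $\Gamma(f)$ directly on each summand by vertical composition with $f$, and then verify degree, well-definedness, and bimodule compatibility. More precisely, given any pair $u\in B_{S'}$, $v\in B_S$, and given $g\in {}_u\Gamma(w)_v=\mathcal{F}^0(u^*wv)\{n\}$, viewed as a foam $g\colon\emptyset\to u^*wv$, I would define
\[
{}_u\Gamma(f)_v(g)=(\mathrm{id}_{u^*}\cdot f\cdot \mathrm{id}_v)\circ g \;\in\;\mathcal{F}^0(u^*w'v)\{n\}={}_u\Gamma(w')_v,
\]
where $\mathrm{id}_{u^*}\cdot f\cdot\mathrm{id}_v$ denotes the foam $u^*wv\to u^*w'v$ obtained by horizontal juxtaposition of $f$ with the identity foams on $u^*$ and $v$. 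Taking the direct sum over all $u$ and $v$ yields the candidate map $\Gamma(f)\colon\Gamma(w)\to\Gamma(w')$.

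First I would check well-definedness: since $\foamt^0$ is defined as the quotient by the closure relation and the relations in $\ell$, and these relations are local and preserved under horizontal juxtaposition with identity foams and vertical composition with $g$, the map descends to the quotient. Next, the degree assertion is immediate from the $q$-grading on foams: the identity foams have degree zero, the horizontal juxtaposition is additive on the grading, and vertical composition is additive modulo the Euler characteristic correction which is already accounted for in the definition of the grading. Since $f$ has degree $t$, composition with $\mathrm{id}_{u^*}\cdot f\cdot\mathrm{id}_v$ raises the $q$-degree by exactly $t$, and the shift $\{n\}$ is the same on both sides, so $\Gamma(f)$ has degree $t$.

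The main thing that requires attention is the bimodule compatibility, though this should be essentially formal given the 2-categorical structure of $\foamt^0$. For the right $K_S$-action, I have to show that for any $r\in B_S$ and any $h\in {}_vK_r$, the equality
\[
{}_u\Gamma(f)_r(g\cdot h)={}_u\Gamma(f)_v(g)\cdot h
\]
holds in ${}_u\Gamma(w')_r$. By definition the right action is given by composing with the multiplication foam $m_{v,r}$ applied in the $v$-region. Since $f$ acts in the $w$-region, which is disjoint from the $v^*vr$-region where $m_{v,r}$ acts, the two operations commute as foams (this is the interchange law in the underlying 2-category of foams, or equivalently the fact that disjoint local modifications of a foam commute). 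The compatibility with the left $K_{S'}$-action is entirely analogous, with $f$ now disjoint from the $uu^*$-region where the multiplication foam lives. Hence $\Gamma(f)$ is a homogeneous bimodule map of degree $t$, as required.
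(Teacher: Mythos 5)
Your construction of $\Gamma(f)$ is precisely the one the paper uses (gluing $1_{u^*}\cdot f\cdot 1_v$ onto each summand), and the degree and bimodule checks reach the same conclusions via essentially the same mechanisms, so the proposal is correct. The one place where your route differs in presentation is the bimodule compatibility: you argue directly that $f$ and the multiplication foam live in disjoint planar regions and hence commute via the interchange law for the monoidal structure on $\foamt^0$, whereas the paper first passes to the alternative description $\F^0(u^*wv)\cong\foamt^0(u,wv)$ from Lemma~\ref{lem:webalgaltern} and then observes that $\Gamma(f)$ becomes post-composition (resp.\ pre-composition) with a fixed foam, which automatically commutes with the $K_{S'}$- (resp.\ $K_S$-) action because composition in a category is associative. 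Both boil down to the same geometric fact — the $f$-modification and the action happen in disjoint parts of the web — but the paper's reformulation makes the bimodule axiom a tautology rather than a diagrammatic interchange; if you wanted to spell out the disjointness rigorously you would in effect recover the paper's argument. One small wording remark: in your degree discussion, ``vertical composition is additive modulo the Euler characteristic correction'' is not quite the right way to say it — the $q$-grading is already defined so that degree is strictly additive under composition (this is what makes $\foamt^0$ a graded category), and the $\{n\}$ shift cancels on both sides as you observe, so there is no correction to invoke.
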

\begin{proof}
Note that, for any $u\in B_{S'}$ and $v\in B_{S}$, the foam $f$ induces a 
linear map 
\[
{\mathcal F}^0(1_{u^*}f1_v)\colon {\mathcal F}^0(u^*wv)\to \F^0(u^*w'v),
\]
by glueing $1_{u^*}f1_v$ on top of any element in 
${\mathcal F}^0(u^*wv)=\foamt^0(\emptyset, u^*wv).$ 
This map has degree $t$, e.g. the identity has degree $0$ 
because the multiplication in $K_S$ is degree preserving. 
By taking the direct sum over all 
$u\in B_{S'}$ and $v\in B_{S}$, we get a linear map 
\[
\Gamma(f)\colon \Gamma(w)\to \Gamma(w').
\]
The shifts in the definition of $\Gamma(w)$ and $\Gamma(w')$, given by the 
length $n$ of $w$ and the length $m$ of $w'$, imply that 
$\deg \Gamma(f)=t$.
\vskip0.5cm 
The fact that $\Gamma(f)$ is a left $K_S$-module map follows from 
the following observation. For any $u\in B_S$ and $v\in B_{S'}$, 
the linear map ${\mathcal F}^0(1_{u^*}f1_v)$ corresponds to the linear map 
\[
\foamt^0(u,wv)\to \foamt^0(u,w'v)
\]
determined by horizontally composing with $f1_v$ on the right-hand side. 
This map clearly commutes with any composition on the left-hand side. 

Analogously, the linear map ${\mathcal F}^0(1_{u^*}f1_v)$ corresponds to 
the linear map  
\[
\foamt^0(w^*u,v)\to \foamt^0((w')^*u,v)
\]
determined by horizontally composing with $f^*1_u$ on the left-hand side. 
This map clearly commutes with any composition on the right-hand side. 

These two observations show that $\Gamma(f)$ is a $(K_{S'},K_{S})$-bimodule map. 
\end{proof}
It is not hard to see that, for any $f\in \foamt^0(w,w')$ and 
$g\in\foamt^0(w',w'')$, we have 
\[
\Gamma(fg)=\Gamma(f)\Gamma(g).
\]
Similarly, for any given $u_1,u_2\in \widehat{B}_{S}^{S'}$ and 
$u_1',u_2'\in \widehat{B}_{S'}^{S''}$ and any given 
$f\in\foamt^0(u_1,u_2)$ and $f'\in\foamt^0(u'_1,u'_2)$, we have 
a commuting square 
\[
\begin{xy}
  \xymatrix{
      \Gamma(u_1u_1') \ar[rr]^{\Gamma(f\circ f')} \ar[d]_{\cong}    & &   \Gamma(u_2u_2') \ar[d]^{\cong}  \\
      \Gamma(u_1)\otimes_{K_{S'}}\Gamma(u_1') \ar[rr]_{\Gamma(f)\otimes \Gamma(f')}             & &   \Gamma(u_2)\otimes_{K_{S'}}\Gamma(u_2')   
  }
\end{xy}
\]
where the vertical isomorphisms are as in~\eqref{eq:sweettensor}.

\subsubsection{The categorical $\Scat(n,n)$-action on ${\mathcal W}_{(3^k)}$}
We are now going to use sweet bimodules to define a categorical action of 
$\Scat(n,n)$ on $\mathcal{W}_{(3^k)}$ in the sense of Section~\ref{sec-techhigherrep}. For the definition of this action, 
we will consider $\Scat(n,n)$ to be a monoidal category 
rather than a 2-category. Like always, everything should be strict. The reader should compare this to Section~\ref{sec-techhigher}.  
\begin{defn}
\label{defn:cataction}
\textbf{On objects:} The categorical action of any object 
$\mathcal{E}_{\ii}1_{\lambda}$ in $\Scat(n,n)$ on $\mathcal{W}_{(3^k)}$ 
is defined by tensoring with the sweet bimodule (see Proposition~\ref{prop-sweet})
\[
\Gamma\left(\phi\left(E_{\ii}1_{\lambda}\right)\right).
\]
Recall that $\phi\colon S_q(n,n)\to \mathrm{End}_{\mathbb{C}(q)}(W_{(3^k)})$ 
was defined in Definition~\ref{defn:phi}.
\vskip0.5cm
\textbf{On morphisms:} We give a list of the foams associated to the 
generating morphisms of $\Scat(n,n)$. 
Applying $\Gamma$ to these foams determines 
the natural transformations associated to the morphisms of $\Scat(n,n)$. 

As before, we only draw the most important part of the foams, 
omitting partial identity foams. Note our conventions.
\begin{itemize}
\item[(1)]We read the regions of the morphisms in 
$\Scat(n,n)$ from right to left and the morphisms themselves from bottom to 
top.
\item[(2)]The corresponding foams we read from bottom to top and from front to 
back.
\item[(3)]Vertical front edges labeled $1$ are assumed to be oriented upward and 
vertical front edges labeled $2$ are assumed to 
be oriented downward.
\item[(4)]The convention for the orientation of the back edges is 
precisely the opposite.
\item[(5)]A facet is labeled 0 or 3 if and only if its boundary has edges 
labeled 0 or 3. 
\end{itemize}
In the list below, we always assume that $i<j$. 
Finally, all facets labeled 0 or 3 in the images below have to be erased, 
in order to get real foams. For any $\lambda>(3^k)$, the image of 
the elementary morphisms below is taken to be zero, by convention.

{\allowdisplaybreaks
\begin{align*}
\xy
(4,1.5)*{\includegraphics[width=9px]{res/figs/section23/upsimpledot}};
(7,-4)*{{\scriptstyle i,\lambda}};
\endxy &\mapsto 
\;\xy
(0,0)*{\includegraphics[width=60px]{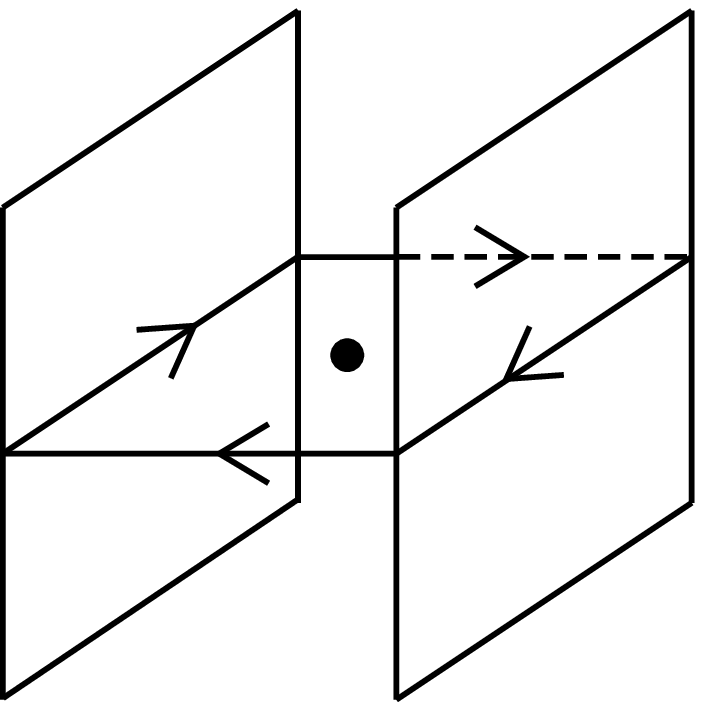}};
(-8,-12)*{\scriptstyle\lambda_i};
(6,-12)*{\scriptstyle\lambda_{i+1}};
\endxy
\\
\xy
(4,1.5)*{\includegraphics[width=9px]{res/figs/section23/downsimpledot}};
(7,-4)*{{\scriptstyle i,\lambda}};
\endxy &\mapsto 
\;\xy
(0,0)*{\includegraphics[width=60px]{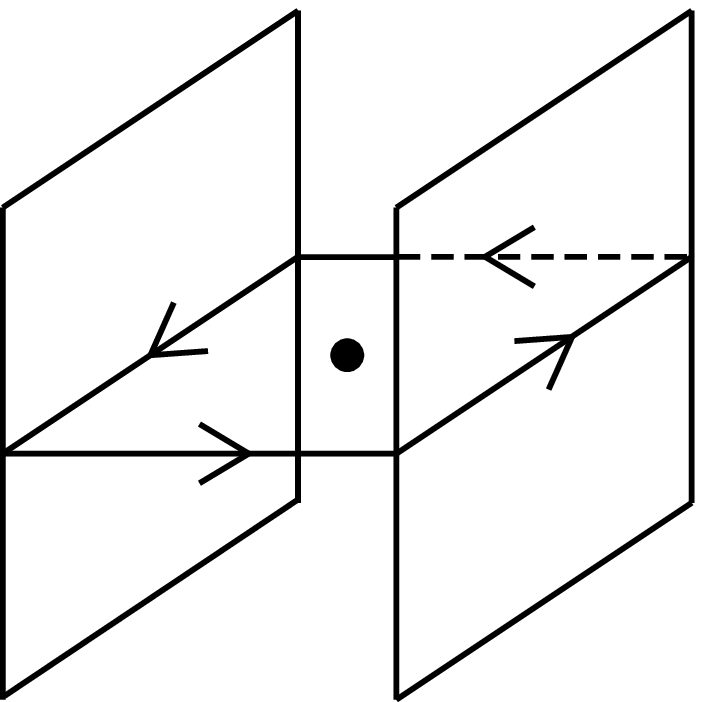}};
(-8,-12)*{\scriptstyle\lambda_i};
(6,-12)*{\scriptstyle\lambda_{i+1}};
\endxy
\\ \xy
(6,1.5)*{\includegraphics[width=20px]{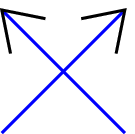}};
(12.5,-4)*{{\scriptstyle i,i,\lambda}};
\endxy  &\mapsto
\;\;-\;\;
\xy
(0,0)*{\includegraphics[width=60px]{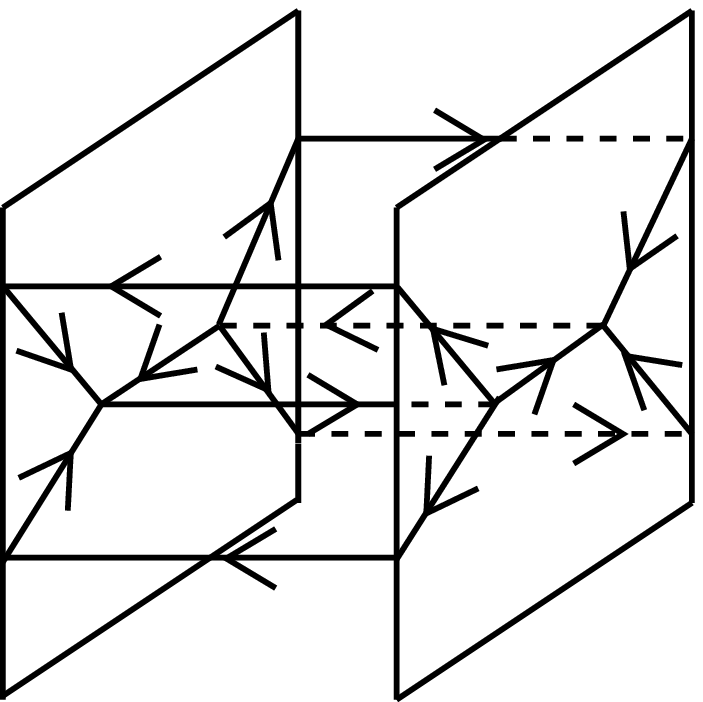}};
(-8,-12)*{\scriptstyle\lambda_i};
(6,-12)*{\scriptstyle\lambda_{i+1}};
\endxy
\\
\xy
(6,1.5)*{\includegraphics[width=20px]{res/figs/section23/upcross}};
(12.5,-4)*{{\scriptstyle i,i+1,\lambda}};
\endxy &\mapsto
\;\;(-1)^{\lambda_{i+1}}\;\;
\xy
(0,0)*{\includegraphics[width=100px]{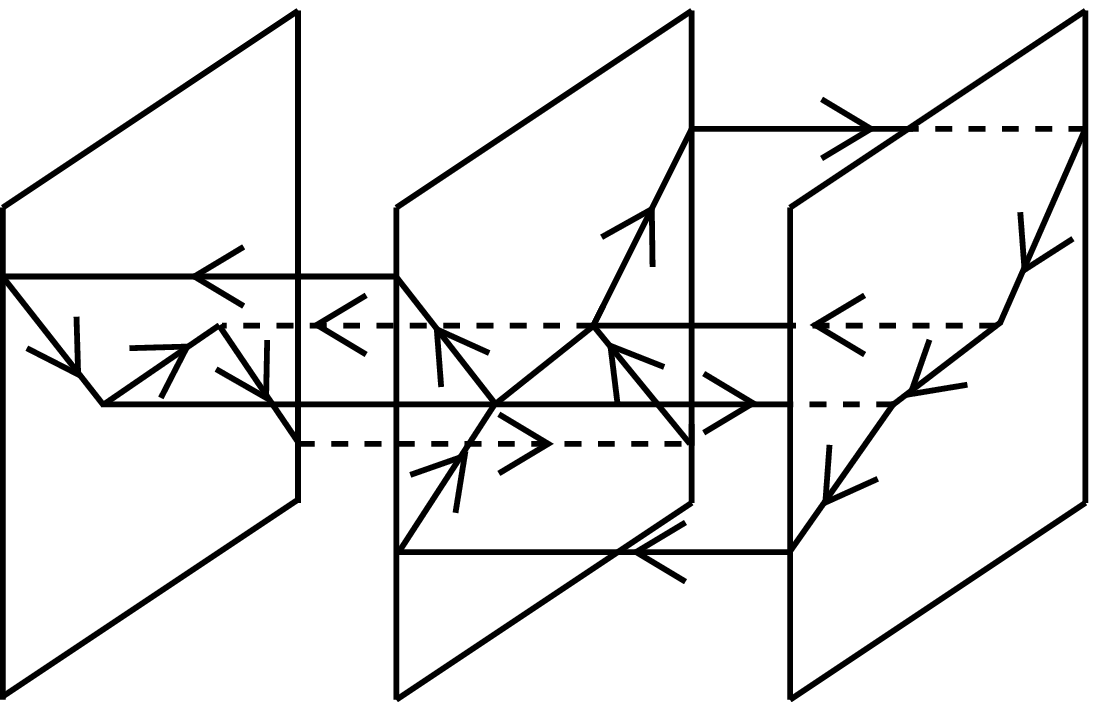}};
(-14,-12)*{\scriptstyle\lambda_i};
(0,-12)*{\scriptstyle\lambda_{i+1}};
(13,-12)*{\scriptstyle\lambda_{i+2}};
\endxy
\\
\xy
(6,1.5)*{\includegraphics[width=20px]{res/figs/section23/upcross}};
(12.5,-4)*{{\scriptstyle i+1,i,\lambda}};
\endxy &\mapsto
\;
\xy
(0,0)*{\includegraphics[width=100px]{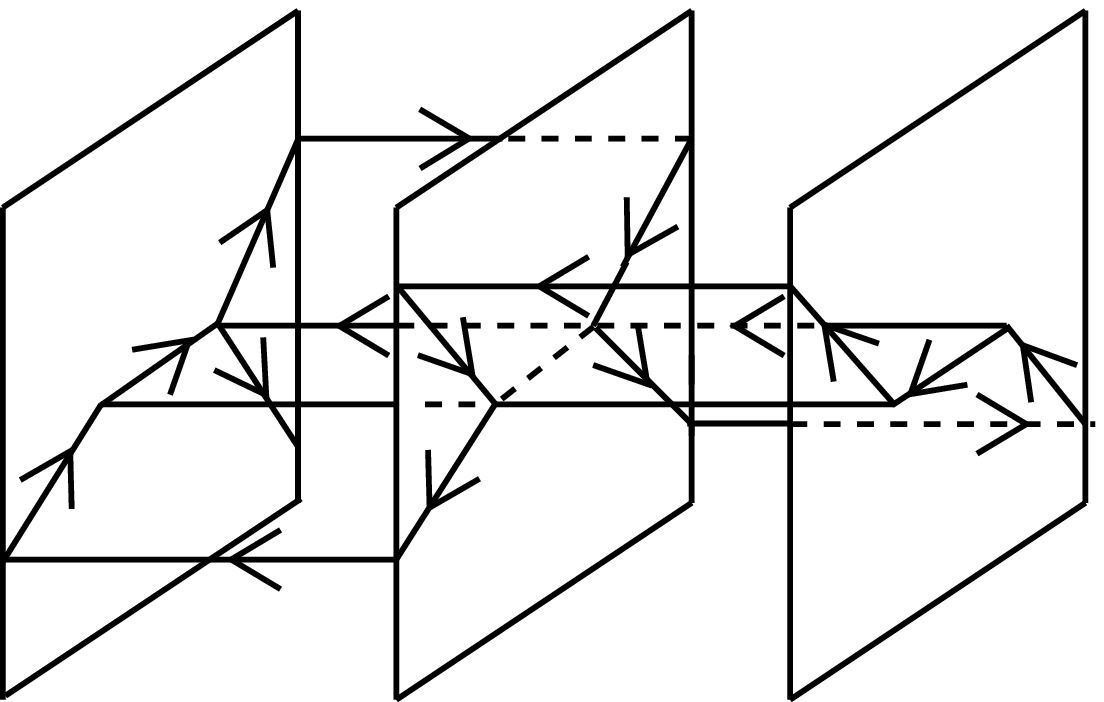}};
(-14,-12)*{\scriptstyle\lambda_i};
(0,-12)*{\scriptstyle\lambda_{i+1}};
(13,-12)*{\scriptstyle\lambda_{i+2}};
\endxy
\\
\xy
(6,1.5)*{\includegraphics[width=20px]{res/figs/section23/upcross}};
(12.5,-4)*{{\scriptstyle i,j,\lambda}};
\endxy &\mapsto
\;
\xy
(0,0)*{\includegraphics[width=130px]{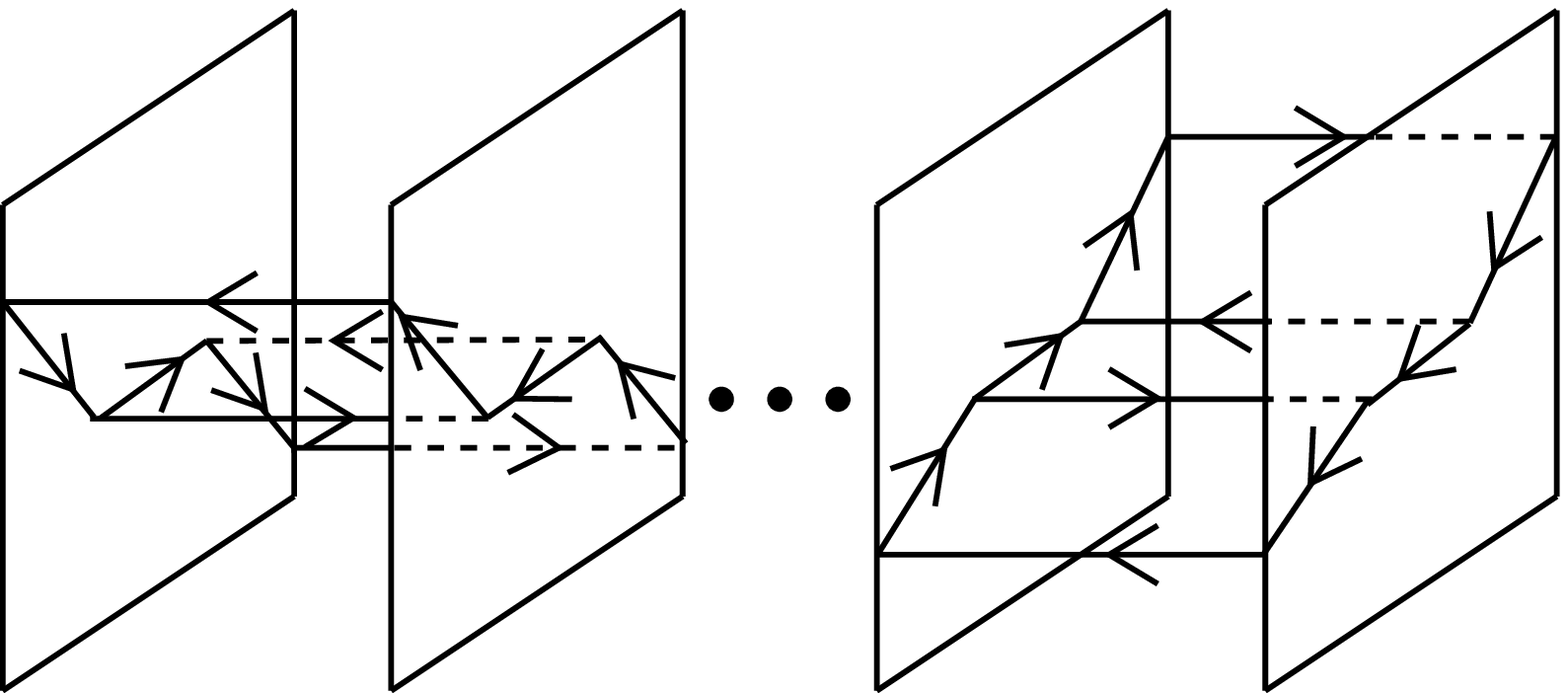}};
(-22,-12)*{\scriptstyle\lambda_i};
(-7,-12)*{\scriptstyle\lambda_{i+1}};
(4,-12)*{\scriptstyle\lambda_{j}};
(19,-12)*{\scriptstyle\lambda_{j+1}};
\endxy
\\
\xy
(6,1.5)*{\includegraphics[width=20px]{res/figs/section23/upcross}};
(12.5,-4)*{{\scriptstyle j,i,\lambda}};
\endxy &\mapsto
\;
\xy
(0,0)*{\includegraphics[width=130px]{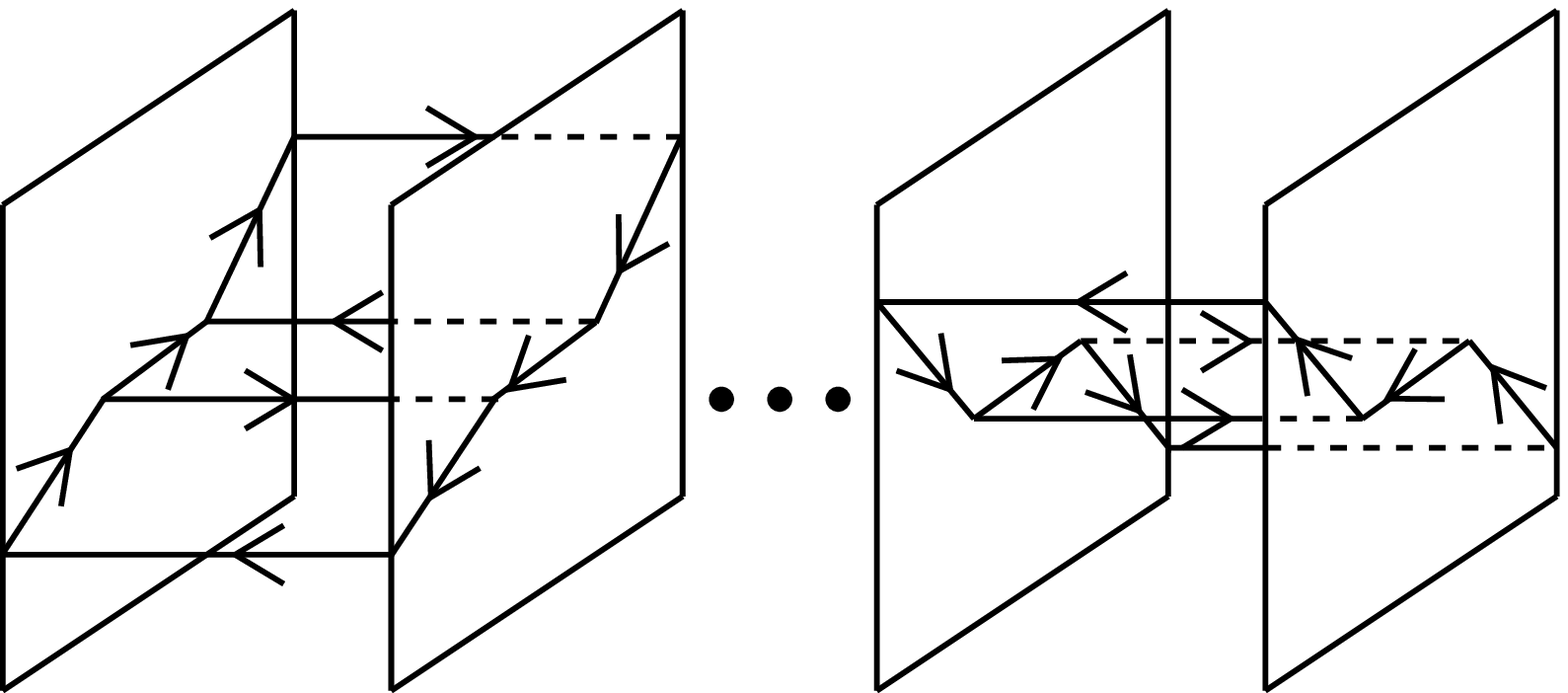}};
(-22,-12)*{\scriptstyle\lambda_i};
(-7,-12)*{\scriptstyle\lambda_{i+1}};
(4,-12)*{\scriptstyle\lambda_{j}};
(19,-12)*{\scriptstyle\lambda_{j+1}};
\endxy
\\
\xy
(8,0)*{\includegraphics[width=25px]{res/figs/section23/rightcup}};
(12.5,-4)*{{\scriptstyle i,\lambda}};
\endxy &\mapsto
\;\xy
(0,0)*{\includegraphics[width=60px]{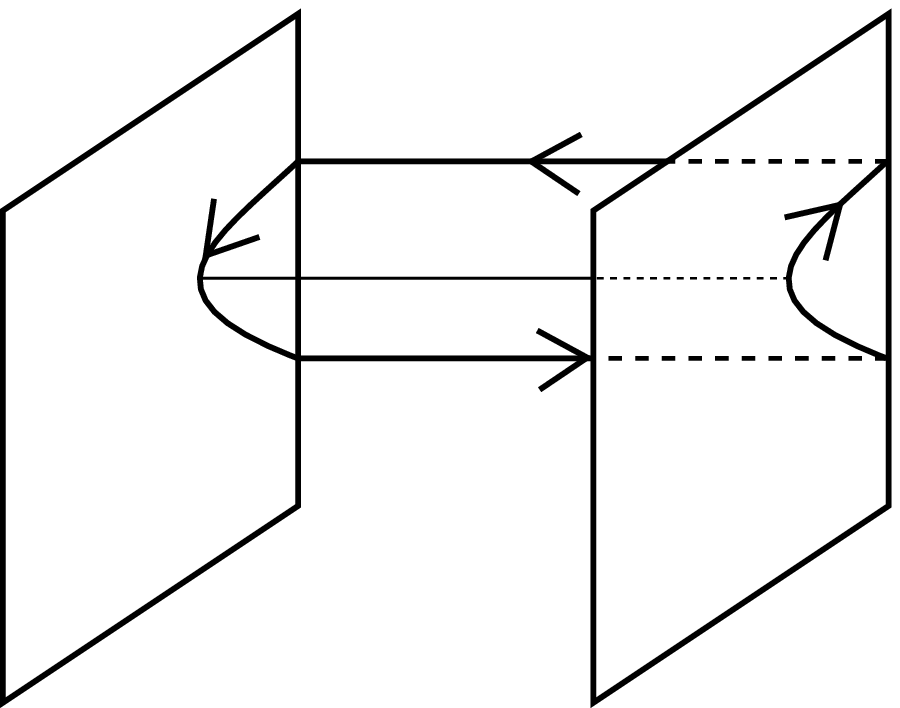}};
(-8,-10)*{\scriptstyle\lambda_i};
(8,-10)*{\scriptstyle\lambda_{i+1}};
\endxy
\\
\xy
(8,0)*{\includegraphics[width=25px]{res/figs/section23/leftcup}};
(12.5,-4)*{{\scriptstyle i,\lambda}};
\endxy &\mapsto
(-1)^{\lfloor\frac{\lambda_i}{2}\rfloor+\lceil\frac{\lambda_{i+1}}{2}\rceil}\;\xy
(0,0)*{\includegraphics[width=60px]{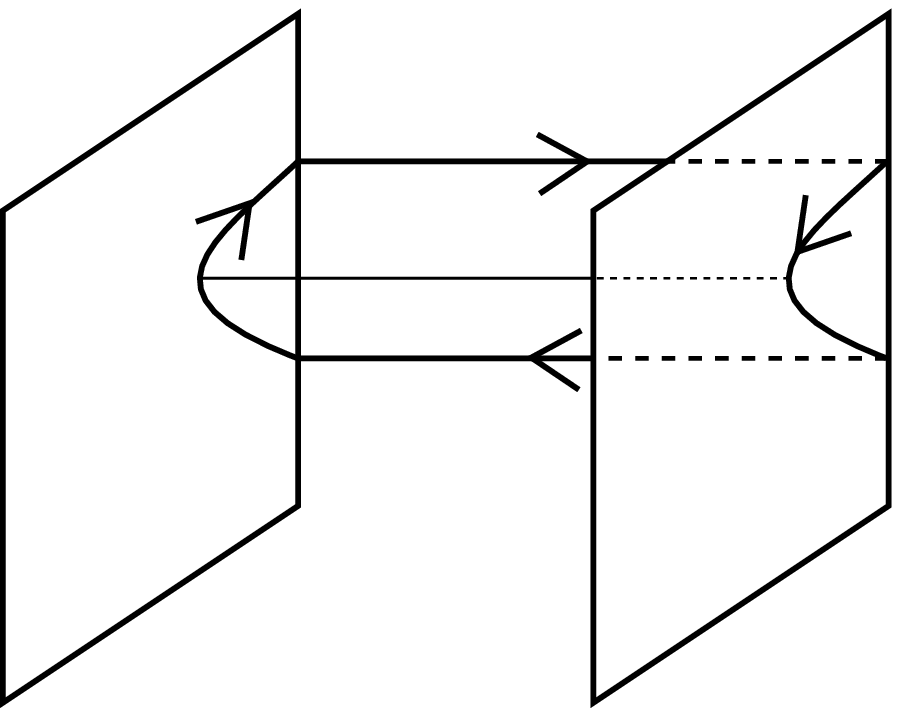}};
(-8,-10)*{\scriptstyle\lambda_i};
(8,-10)*{\scriptstyle\lambda_{i+1}};
\endxy
\end{align*}
\begin{align*}
\xy
(8,0)*{\includegraphics[width=25px]{res/figs/section23/leftcap}};
(12.5,-4)*{{\scriptstyle i,\lambda}};
\endxy &\mapsto 
(-1)^{\lceil\frac{\lambda_i}{2}\rceil+\lfloor\frac{\lambda_{i+1}}{2}\rfloor}\;\xy
(0,0)*{\includegraphics[width=60px]{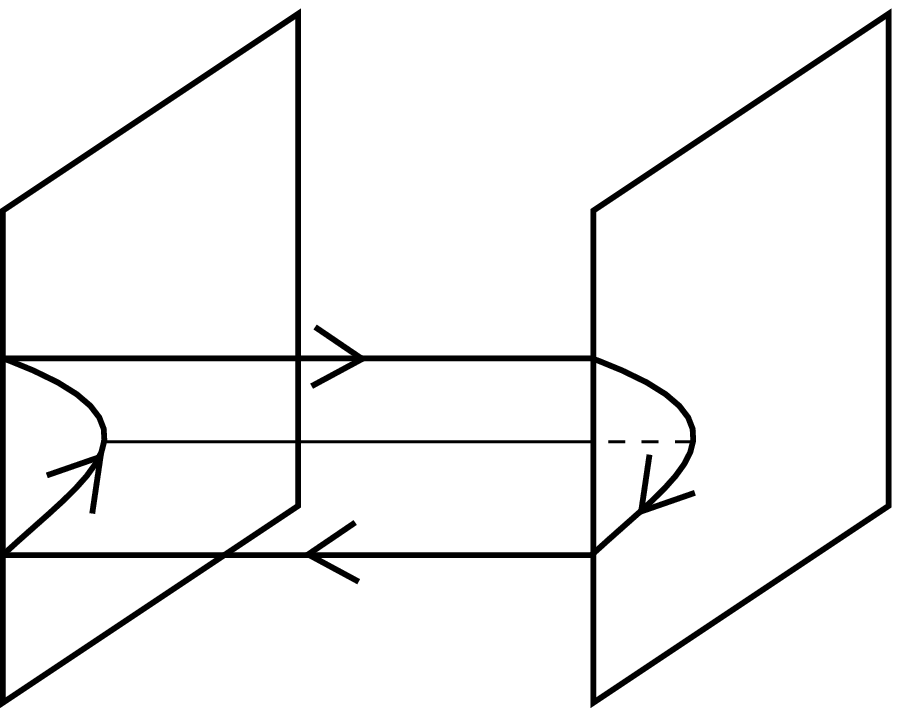}};
(-8,-10)*{\scriptstyle\lambda_i};
(8,-10)*{\scriptstyle\lambda_{i+1}};
\endxy
\\
\xy
(8,0)*{\includegraphics[width=25px]{res/figs/section23/rightcap}};
(12.5,-4)*{{\scriptstyle i,\lambda}};
\endxy &\mapsto
\;\xy
(0,0)*{\includegraphics[width=60px]{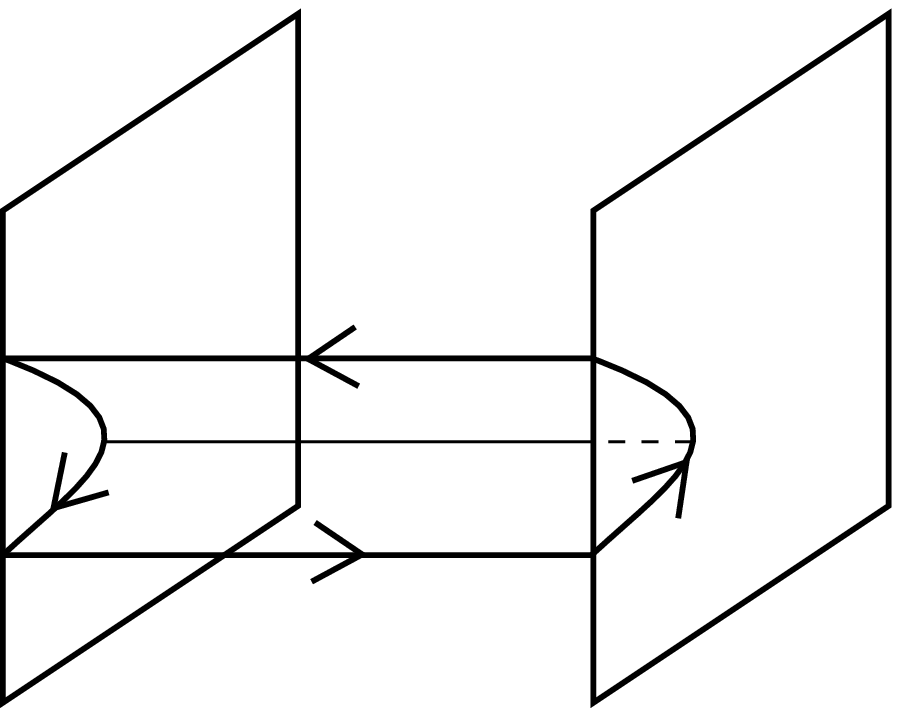}};
(-8,-10)*{\scriptstyle\lambda_i};
(8,-10)*{\scriptstyle\lambda_{i+1}};
\endxy
\end{align*} 
}
\end{defn}  

\begin{prop} 
\label{prop:cataction}
The formulas in Definition~\ref{defn:cataction} 
determine a well-defined graded categorical action of 
$\Scat(n,n)$ on ${\mathcal W}_{(3^k)}$ in the sense of Section~\ref{sec-techhigherrep}. 
\end{prop}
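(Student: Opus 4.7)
My plan is to reduce the proof to two separate checks: well-definedness of the assignment on $1$-morphisms (i.e.\ objects of the hom-categories) and well-definedness of the assignment on $2$-morphisms (i.e.\ verification of all relations of $\Scat(n,n)$). The bulk of the work lies in the second check, but the underlying framework makes the first one essentially automatic.

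For the $1$-morphisms, the strategy is to observe that each object $\mathcal{E}_{\ii}1_\lambda$ of $\Scat(n,n)$ is by definition a composition of $\mathcal{E}_{\pm i}$'s, and under $\phi$ from Definition~\ref{defn:phi} this becomes a vertical stacking of elementary $H$-webs on top of a web representing $\lambda$. By the compatibility~\eqref{eq:sweettensor}, $\Gamma$ of such a stacking is isomorphic to the iterated tensor product of $\Gamma$ of each elementary slice, and by Proposition~\ref{prop-sweet} each $\Gamma(w)$ is a sweet $(K_{S'},K_S)$-bimodule. Hence the tensor product functor
\[
\Gamma(\phi(E_\ii 1_\lambda))\otimes_{K_S}-
\]
is automatically exact and sends (graded) projective modules to (graded) projective modules. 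The grading shift $\{n\}$ built into $\Gamma$ ensures that the action is degree preserving and that the conventions on the quantum integers from Kuperberg's bracket are respected; the condition $\phi(1_\lambda)=0$ for $\lambda > (3^k)$ is consistent with the quotient structure of $\Scat(n,n)$ in which all regions with labels outside $\Lambda(n,n)$ are killed.

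For the $2$-morphisms, I would first verify that each elementary foam of Definition~\ref{defn:cataction} has the degree predicted by the degree of the corresponding string diagram. This is a direct Euler-characteristic computation using Proposition~\ref{prop:multqgrade} together with the formula $q(U)=\chi(\partial U)-2\chi(U)+2d+b$. After this, the heart of the proof is checking each relation of $\Scat(n,n)$ and interpreting it via $\Gamma$ as an equality of bimodule maps. I would organise the relations into four groups: (i) isotopy-type relations (the bi-adjointness relations~\eqref{eq_biadjoint1},~\eqref{eq_biadjoint2}, cyclicity~\eqref{eq_cyclic_dot},~\eqref{eq_cyclic_cross-gen}) which follow from the topological nature of foams together with the closure relation in $\foamt^0$; (ii) ``Heisenberg-type'' relations~\eqref{eq_downup_ij-gen} and~\eqref{eq_r2_ij-gen} for distant or adjacent indices $i\neq j$, which reduce to the local foam identity expressing that distant $H$-facets commute and that adjacent ones combine via digon/square removal; (iii) nilHecke-type relations~\eqref{eq_nil_rels} and~\eqref{eq_nil_dotslide}, which follow from the bamboo identity~\eqref{eq:bamboo} and dot migration~\eqref{eq:dotm}; and (iv) the hardest relations, namely the $\mathcal{EF}$-decomposition~\eqref{eq:EF} together with the bubble identities~\eqref{eq_positivity_bubbles},~\eqref{eq:bubb_deg0} and the infinite Grassmannian relation~\eqref{eq_infinite_Grass}, as well as the mixed R3 relation~\eqref{eq_r3_hard-gen}.

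The main obstacle will be group (iv), and in particular getting the signs right. The positivity-of-bubbles axiom~\eqref{eq_positivity_bubbles} translates, after closing the relevant facet with $1_u$, into a statement about degrees of closed foams, which vanish by the $q$-grading bound and relation (3D); but the normalisation of~\eqref{eq:bubb_deg0} and of the infinite Grassmannian relation~\eqref{eq_infinite_Grass} will force a precise choice of signs in the left cup and cap, which is exactly the reason for the factors $(-1)^{\lfloor\lambda_i/2\rfloor+\lceil\lambda_{i+1}/2\rceil}$ and $(-1)^{\lceil\lambda_i/2\rceil+\lfloor\lambda_{i+1}/2\rfloor}$ appearing in Definition~\ref{defn:cataction}. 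I would verify these signs by computing closed theta-foams and using~\eqref{eq:theta}, then propagate to the real and fake bubbles by~\eqref{eq_infinite_Grass}. The $\mathcal{EF}$-decomposition~\eqref{eq:EF} should then follow from the neck-cutting relation~\eqref{eq:cn} applied to the tube obtained by composing an $H$-unzip with an $H$-zip, reading off the bubble terms as the different ways of decomposing this tube. Finally the mixed R3 relation~\eqref{eq_r3_hard-gen} with $i\cdot j=-1$ is the most delicate; once the signs in groups (i)--(iii) are fixed, it amounts to a local foam identity that can be checked by comparing the two sides after capping off with an arbitrary boundary foam and using the closure relation, reducing it to the already-verified local identities among zip, unzip and square foams.
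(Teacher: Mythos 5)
Your broad strategy -- degree check via the $q$-grading formula, then relation verification via local foam identities, with sign determination as the delicate step -- is the same as the paper's. The paper's proof is also organised around determining the signs in Definition~\ref{defn:cataction}, and it leans on an earlier verification of these relations modulo $2$ due to Mackaay (Theorem 4.2 in~\cite{mack}), so that the new content is precisely nailing down the correct lifts of signs to $\mathbb{C}$.

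However, your localisation of where the sign constraints arise is off in a way that would cause the proof to stall. You place the nilHecke relations~\eqref{eq_nil_rels} and~\eqref{eq_nil_dotslide} in group~(iii) and assert they ``follow from the bamboo identity~\eqref{eq:bamboo} and dot migration~\eqref{eq:dotm}.'' This is false for the unsigned foam assignment: the dot-slide relation~\eqref{eq_nil_dotslide} fails by a sign for \emph{every} $\lambda$ that yields a non-zero foam, and this failure is precisely what forces the global $-1$ on the $(i,i)$-upward crossing appearing in Definition~\ref{defn:cataction}. You never mention this $-1$, nor the factor $(-1)^{\lambda_{i+1}}$ on the $(i,i+1)$-crossing, which in the paper's argument is forced not by group~(iv) at all but by the $R2$-type relation~\eqref{eq_downup_ij-gen} for $|i-j|=1$ (your group~(ii)). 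Likewise, bi-adjointness and cyclicity (your group~(i)) do \emph{not} follow for free ``from the topological nature of foams''; once signs are attached, the zig-zag relations~\eqref{eq_biadjoint1},~\eqref{eq_biadjoint2} become genuine constraints, and in the paper they are exactly what pins down the cup/cap signs not already fixed by the degree-zero bubble computations. So rather than the signs being concentrated in group~(iv), they are determined incrementally across groups~(i)--(iii), with~(iv) being where one then \emph{verifies} consistency (via the infinite Grassmannian relation to eliminate fake bubbles). Your proposal to determine the cup/cap signs from closed theta-foams via~\eqref{eq:theta} is a reasonable alternative to the paper's use of degree-zero bubbles (the two are closely related, since theta-foams appear when one closes off a bubble), but as written the proof cannot be carried out in the order you describe: groups~(i)--(iii) are not sign-free.
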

\begin{proof}
A tedious but straightforward case by case check, for each 
generating morphism and each $\lambda$ which give a non-zero foam, 
shows that each of the foams in 
Definition~\ref{defn:cataction} has the 
same degree as the elementary morphism in $\Scat(n,n)$ to which it is 
associated. Note that it is important to erase the facets labeled 0 or 3, 
before computing the degree of the foams. We do just one example here. We 
have 
\begin{align*}
\xy
(8,0)*{\includegraphics[width=25px]{res/figs/section23/leftcup}};
(12.5,-4)*{{\scriptstyle i,(12)}}
\endxy &\mapsto
-\;\xy
(0,0)*{\includegraphics[width=80px]{res/figs/section52/lcupfoam}};
(-12,-12)*{\scriptstyle 1};
(7,-12)*{\scriptstyle 2};
(-5.5,2.5)*{\scriptstyle 0};
(12.5,2.5)*{\scriptstyle 3};
\endxy=f
&\text{and}\quad \deg(\xy
(8,0)*{\includegraphics[width=25px]{res/figs/section23/leftcup}};
(12.5,-4)*{{\scriptstyle i,(12)}}
\endxy)=2.
\end{align*}
We see that $f$ has one facet labeled 0 and another 
labeled 3, so those two facets have to be erased. Therefore, $f$ has 
12 vertices, 14 edges and 3 faces, i.e.
\[
\chi(f)=12-14+3=1.
\]
The boundary of $f$ has 12 vertices and 12 edges, so 
\[
\chi(\partial f)=12-12=0.
\]
Note that the two circular edges do not belong to $\partial f$, because 
the circular facets have been removed. 
In this section we draw the foams horizontally, so 
$b$ is the number of horizontal edges at the top and the bottom of $f$, 
which go from the front to the back. Thus, for $f$ we have 
\[
b=4.
\] 
Altogether, we get 
\[
q(f)=0-2+4=2.
\]
\vskip0.5cm
In order to show that the categorical action is well-defined, one has to 
check that it preserves all the relations in Definition~\ref{def_glcat}. 
Modulo 2 this was done in the proof of Theorem 4.2 in~\cite{mack}. At the 
time there was a small issue about the signs in~\cite{kl5}, which prevented 
the author to formulate and prove Theorem 4.2 in~\cite{mack} over 
$\mathbb{C}$. That issue has now been solved (see~\cite{msv1} and~\cite{kl4} 
for more information) and in this thesis we use the sign conventions 
from~\cite{msv1}, which are compatible with those from~\cite{kl4}.  
We laboriously checked all these relations again, but now over $\mathbb{C}$ 
and with the signs above. The arguments are exactly the same, 
so let us not repeat them one by one here. Instead, we first explain 
how we computed the signs for the categorical 
action above and why they give the desired result over $\mathbb{C}$. 
After that, we will do an example. 
For a complete case by case check, we refer to the arguments used in the 
proof of Theorem 4.2 in~\cite{mack}. The reader should check that our signs above 
remove the sign ambiguities in that proof. 

One can compute the signs above as follows. First check the relations 
only involving strands of one colour, i.e. the $\mathfrak{sl}_2$-relations. 
The first thing to notice is that 
the foams in the categorical action do not satisfy 
relation~\eqref{eq_nil_dotslide}; 
for all $\lambda$, which give a non-zero foam, the sign is wrong. 
Therefore, one is forced to multiply the foam associated to 
\[
\xy
(6,1.5)*{\includegraphics[width=20px]{res/figs/section52/upcrossblue}};
(12.5,-4)*{{\scriptstyle i,i,\lambda}};
\endxy
\]
by $-1$, for all $\lambda$. 

After that, compute the foams 
associated to the degree zero bubbles (real bubbles, not fake bubbles) and 
adjust the signs of the images of the left cups and caps accordingly. 
This way, most of the signs of the images of the 
left cups and caps get determined. The remaining ones can be determined by 
imposing the zig-zag relations in~\eqref{eq_biadjoint1} 
and~\eqref{eq_biadjoint2}. 
  
Of course, one could also choose to adjust the signs of the images of 
the right cups and caps. That would determine a categorical 
action that is naturally isomorphic to the one in this thesis. 
  
After these signs have been determined, one can check that all 
$\mathfrak{sl}_2$-relations are preserved by the categorical action. 

The next and final step consists in determining the signs of
\[
\xy
(6,1.5)*{\includegraphics[width=20px]{res/figs/section23/upcross}};
(12.5,-4)*{{\scriptstyle i,j,\lambda}};
\endxy,
\]
for $i\ne j$. First one can check that cyclicity is already preserved. 
The relations in~\eqref{eq_cyclic_cross-gen} are preserved by the corresponding 
foams, which are all isotopic, with our sign choices for the foams 
associated to the left cups and caps. Therefore, cyclicity does not 
determine any more signs. 

The relations in~\eqref{eq_downup_ij-gen} are preserved on the nose, for 
$i=j$ and $|i-j|>1$. For $|i-j|=1$, they are only preserved up to a sign. 
Note that, since the corresponding foams are all isotopic, the signs 
actually come from the sign choice for the foams associated to the 
left cups and caps. Thus, whenever the total sign 
in the image of~\eqref{eq_downup_ij-gen} becomes negative, one has 
to change the sign of one of the two crossings (not of both of course). 
Our choice has been to change the sign of the foam associated to 
\[
\xy
(6,1.5)*{\includegraphics[width=20px]{res/figs/section23/upcross}};
(12.5,-4)*{{\scriptstyle i,i+1,\lambda}};
\endxy,
\]
whenever necessary. Any other choice, consistent with all the previous 
sign choices, leads to a naturally isomorphic categorical action. 
It turns out that the sign has to be equal to $(-1)^{\lambda_{i+1}}$, after 
checking for all $\lambda$. 

After this, one can check that all relations involving two or three colours 
are preserved by the categorical action. Note that we have not specified an 
image for the fake bubbles. As stressed repeatedly in~\cite{kl5}, fake bubbles 
do not exist as separate entities. They are merely formal symbols, used as 
computational devices to keep the computations involving real bubbles 
tidy and short. As we are using $\mathfrak{sl}_3$-foams in this thesis, 
most of the dotted bubbles are mapped to zero. Therefore, under the 
categorical action it is very easy to convert the fake bubbles 
in the relations in Definition~\ref{def_glcat} 
into linear combinations of real bubbles, using the 
infinite Grassmannian relation~\eqref{eq_infinite_Grass}. 
Thus, there is no need to use fake bubbles in this thesis.  
\vskip0.5cm
Finally, let us do two examples; one involving only one colour and another 
involving two colours. 

The left side of the equation in~\eqref{eq:EF}, for $i=1$ and 
$\lambda=(1,2)$ (the other entries are omitted for simplicity), becomes
\begin{align*}
\xy
(0,0)*{\includegraphics[width=120px]{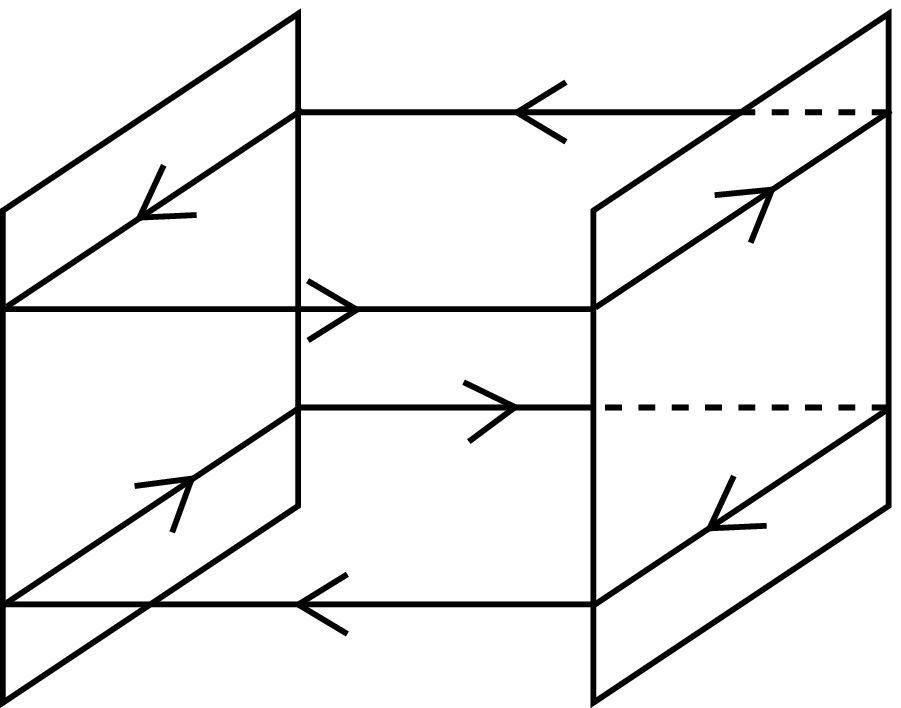}};
(-23,-18)*{\scriptstyle 1};
(5,-18)*{\scriptstyle 2};
(-23,-4)*{\scriptstyle 2};
(5,-4)*{\scriptstyle 1};
(-23,7)*{\scriptstyle 1};
(5,7)*{\scriptstyle 2};
\endxy
\;=\;
-\;\xy
(0,0)*{\includegraphics[width=120px]{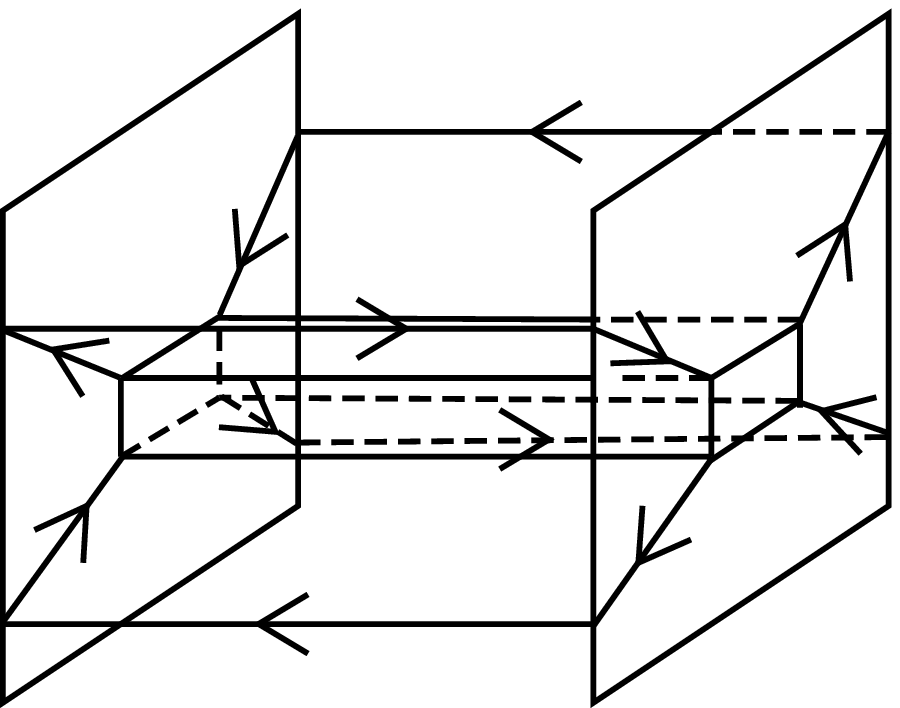}};
(-23,-18)*{\scriptstyle 1};
(5,-18)*{\scriptstyle 2};
(-19,-6)*{\scriptstyle 2};
(8,-7)*{\scriptstyle 1};
(-23,7)*{\scriptstyle 1};
(5,7)*{\scriptstyle 2};
(-14,-2.3)*{\scriptstyle 3};
(14,-2)*{\scriptstyle 0};
(-8.5,3)*{\scriptstyle 2};
(19,3)*{\scriptstyle 1};
\endxy
\;-\;
\xy
(0,0)*{\includegraphics[width=120px]{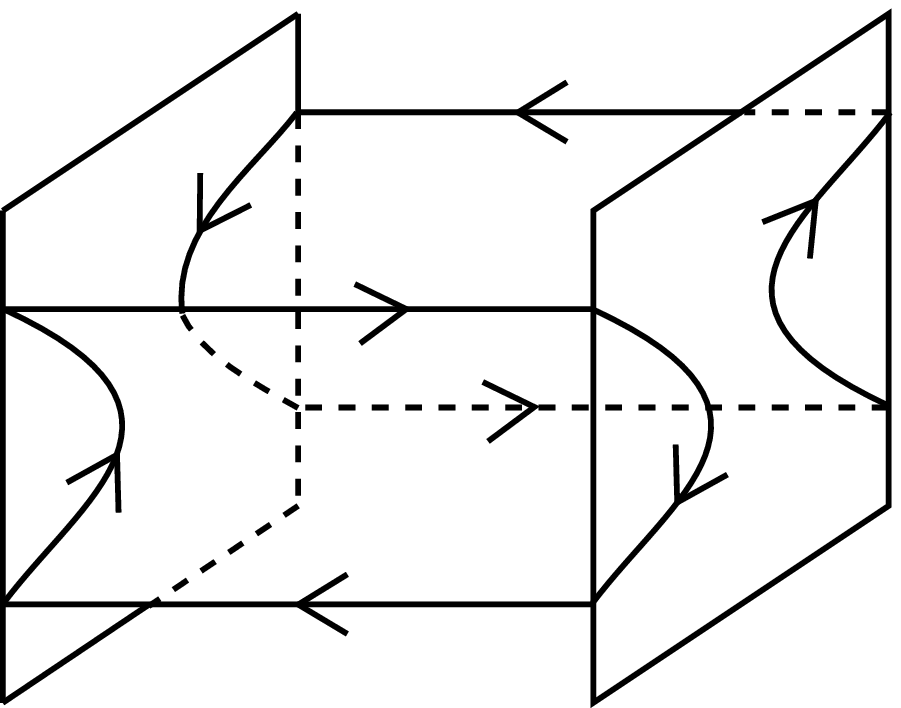}};
(-23,-18)*{\scriptstyle 1};
(5,-18)*{\scriptstyle 2};
(-19,-6)*{\scriptstyle 2};
(8,-7)*{\scriptstyle 1};
(-23,7)*{\scriptstyle 1};
(5,7)*{\scriptstyle 2};
(-8.5,4)*{\scriptstyle 2};
(19,4)*{\scriptstyle 1};
\endxy.
\end{align*}  
This foam equation is precisely the relation (SqR). 
Note that the signs match perfectly, because we have 
\[
\mathrm{sign}\left(\xy
(8,0)*{\includegraphics[width=25px]{res/figs/section23/leftcap}};
(12.5,-4)*{{\scriptstyle i,(12)}}
\endxy\right)=+\quad\text{and}\quad 
\mathrm{sign}\left(\xy
(8,0)*{\includegraphics[width=25px]{res/figs/section23/leftcup}};
(12.5,-4)*{{\scriptstyle i,(12)}}
\endxy\right)=-.
\]
\vskip0.5cm
The equation in~\eqref{eq_r2_ij-gen}, for $(i,j)=(1,2)$ and 
$\lambda=(121)$ (the other entries are omitted for simplicity), becomes
\begin{align*}
\xy
(0,0)*{\includegraphics[width=120px]{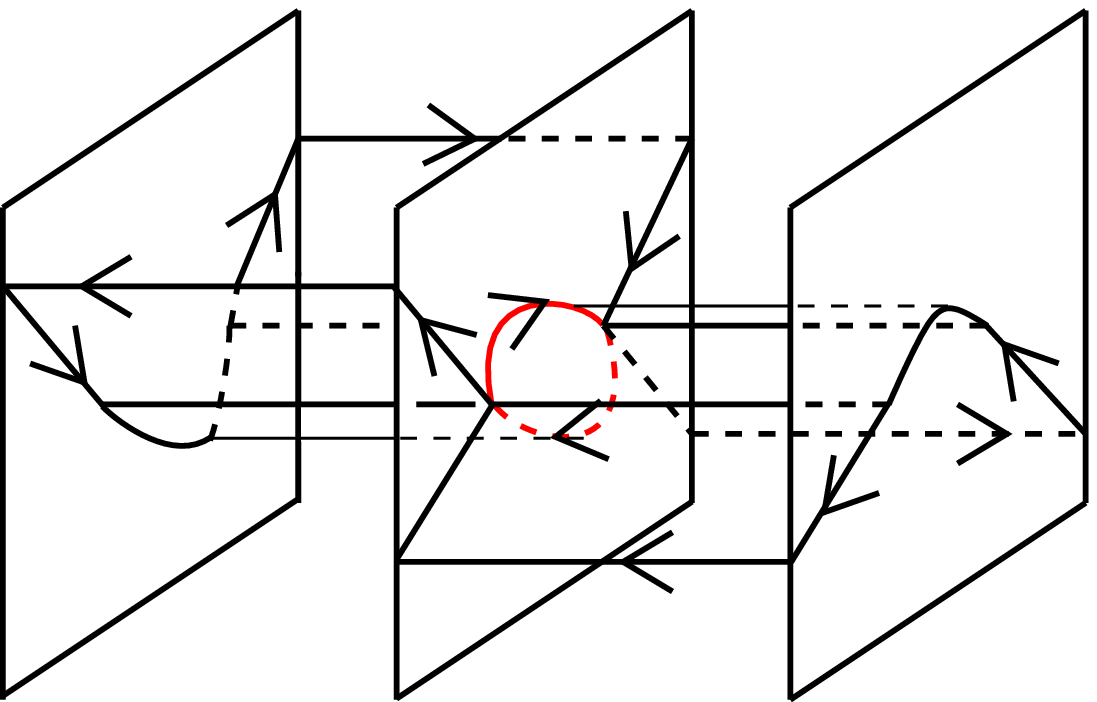}};
(-17,-13)*{\scriptstyle 1};
(-2,-13)*{\scriptstyle 2};
(13,-13)*{\scriptstyle 1};
(-13,13)*{\scriptstyle 2};
(2,13)*{\scriptstyle 2};
(17,13)*{\scriptstyle 0};
(0.5,-0.5)*{\scriptstyle 1};
(-4.5,-3.5)*{\scriptstyle 3};
(4.5,3)*{\scriptstyle 3};
\endxy
\;=\;
\xy
(0,0)*{\includegraphics[width=120px]{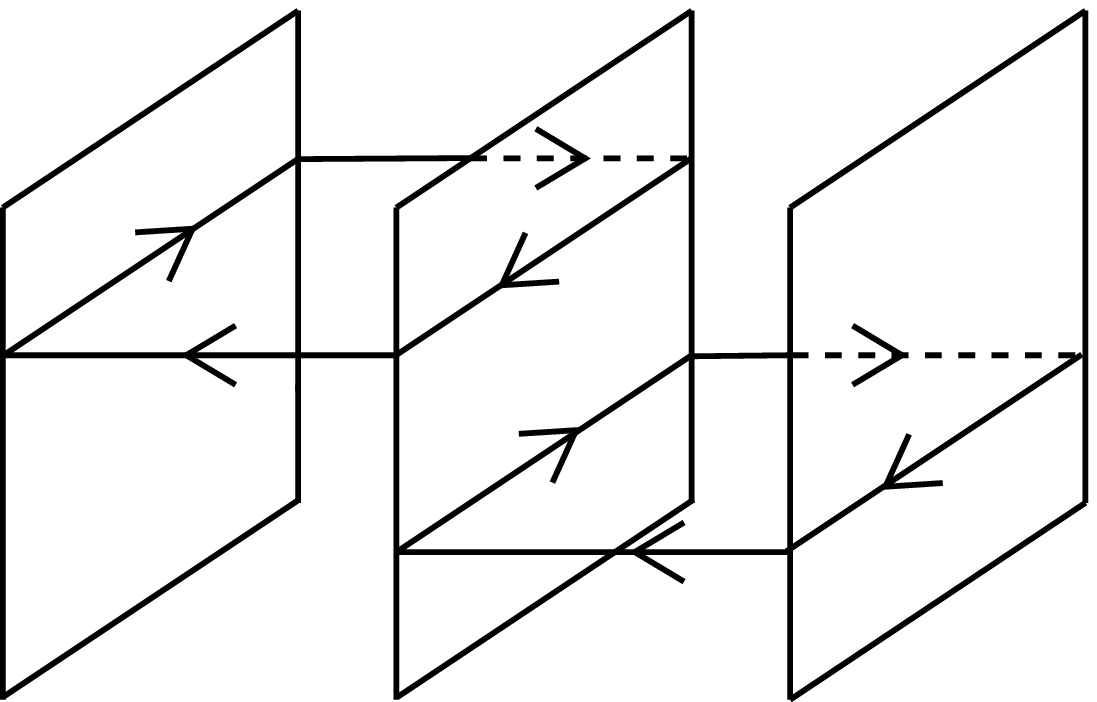}};
(-17,-13)*{\scriptstyle 1};
(-2,-13)*{\scriptstyle 2};
(13,-13)*{\scriptstyle 1};
(-13,13)*{\scriptstyle 2};
(2,13)*{\scriptstyle 2};
(17,13)*{\scriptstyle 0};
(0.5,-0.5)*{\scriptstyle 3};
(7.5,-4)*{\bullet};
\endxy
\;-\;
\xy
(0,0)*{\includegraphics[width=120px]{res/figs/section52/R2ijrhsfoam}};
(-17,-13)*{\scriptstyle 1};
(-2,-13)*{\scriptstyle 2};
(13,-13)*{\scriptstyle 1};
(-13,13)*{\scriptstyle 2};
(2,13)*{\scriptstyle 2};
(17,13)*{\scriptstyle 0};
(0.5,-0.5)*{\scriptstyle 3};
(-7.5,4)*{\bullet};
\endxy.
\end{align*} 
To see that this holds, apply the (RD) relation to the foam on the l.h.s., 
in order to remove the disc bounded by the red singular circle on the middle 
sheet. 
\end{proof}

Let $W_h\cong \mathbb{C}$ be the unique indecomposable, projective, graded 
$K_{(\times^k,\circ^{2k})}$-module of degree zero. 
Recall that $K_{(\times^k,\circ^{2k})}$ is 
generated by the empty diagram, so $W_h$ is indeed one-dimensional. It is the 
categorification of $w_h$, the highest weight vector in $W_{(3^k)}$. 

Note that we can pull back the categorical action on ${\mathcal W}_{(3^k)}$ 
via 
\[
\Psi_{n,n}\colon \Ucat\to \Scat(n,n).
\]
We are now able to prove one of our main results. Recall that $\mathcal V$ is any additive, idempotent complete category, which allows an integrable, graded categorical action by $\Ucat$ in the sense of Section~\ref{sec-techhigherrep}.
\begin{thm}
\label{thm:equivalence}
There exists an equivalence of categorical $\Ucat$-representations   
\[
\Phi\colon {\mathcal V}_{(3^k)}\to {\mathcal W}_{(3^k)}.
\]
\end{thm}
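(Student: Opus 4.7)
The plan is to invoke Proposition~\ref{prop:rouquier} with $\mathcal{V}=\mathcal{W}_{(3^k)}$ and highest weight object $V_h=W_h$. Since by definition $\mathcal{V}_{(3^k)}=R_{(3^k)}\text{-}\mathrm{p\textbf{Mod}}_{\mathrm{gr}}$, the resulting equivalence is exactly what the theorem claims. So the work amounts to verifying the four hypotheses of that proposition for $\mathcal{W}_{(3^k)}$ and $W_h$.

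First, $\mathcal{W}_{(3^k)}$ is a finite direct sum of categories of finite dimensional, graded, projective modules over finite dimensional algebras $K_S$, hence additive and idempotent complete. Proposition~\ref{prop:cataction} provides a graded categorical $\Scat(n,n)$-action on $\mathcal{W}_{(3^k)}$, and pulling back via $\Psi_{n,n}\colon \Ucat\to \Scat(n,n)$ yields the required graded categorical $\Ucat$-action. Integrability is automatic because any weight outside $\Lambda(n,n)_3$ indexes a zero summand of $\mathcal{W}_{(3^k)}$, so the action of the corresponding identity $1$-morphism is zero.

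Second, I verify that $W_h$ is a highest weight object. It lives in the summand indexed by the enhanced string $(\times^k,\circ^{2k})$, i.e.\ the $\mathfrak{gl}_n$-weight $(3^k,0^{2k})$. For any $i$, applying $\mathcal{E}_{+i}$ would land in the weight $(3^k,0^{2k})+\alpha_i$; a short case check (whether $i<k$, $i=k$, or $i>k$) shows this weight always has either a $4$ or a $-1$ entry, hence lies outside $\Lambda(n,n)_3$ and gives zero. Since $W_h$ is the $1$-dimensional, degree zero, projective module over $K_{(\times^k,\circ^{2k})}\cong\mathbb{C}$, we have $\mathrm{End}_{\mathcal{W}_{(3^k)}}(W_h)\cong\mathbb{C}$ as needed.

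Third, and this is the key step, I must show that every object of $\mathcal{W}_{(3^k)}$ is a direct summand of $XW_h$ for some $X\in\Ucat$. It suffices to check this on the indecomposable projectives $P_u$ for $u\in B_S$ and $\mu_S\in\Lambda(n,n)_3$. By Lemma~\ref{lem:phisurj}, there is a signed sequence $\ii$ such that $\phi(E_{\ii}1_{(3^k)})=u$ in $W_S$. Categorically, the action of $\mathcal{E}_{\ii}1_{(3^k)}$ on $W_h$ is given by tensoring with the sweet bimodule $\Gamma(\phi(E_{\ii}1_{(3^k)}))$, and since every Kuperberg relation lifts to a foam-level direct sum decomposition (as explained after Proposition~\ref{prop-sweet}, together with the categorical analogues of \eqref{eq:circle}, \eqref{eq:digon}, \eqref{eq:square}), this bimodule decomposes as $\bigoplus_j c_j\Gamma(u_j)$ with $u_j\in B_S$ and at least one summand equal to $\Gamma(u)$ (the web produced by the enhanced inverse growth algorithm). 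Applied to $W_h$, this exhibits $P_u$ as a direct summand of $\mathcal{E}_{\ii}1_{(3^k)}W_h$, as required.

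The main obstacle is the third step: ensuring that the foam-level decompositions really do match the uncategorified Kuperberg-bracket decompositions with the correct multiplicities, so that the basis web $u$ produced by the algorithm underlying Lemma~\ref{lem:phisurj} actually appears (with multiplicity at least one) as a direct summand of $\phi(E_{\ii}1_{(3^k)})$ at the categorified level. Given the known foam analogues of the Kuperberg relations in Section~\ref{sec-webbasicb}, this is a matter of careful bookkeeping rather than a new idea. Once these hypotheses are verified, Proposition~\ref{prop:rouquier} delivers the equivalence $\Phi\colon \mathcal{V}_{(3^k)}\to \mathcal{W}_{(3^k)}$ of categorical $\Ucat$-representations.
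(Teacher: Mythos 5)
Your proposal is correct and follows essentially the same approach as the paper's proof: verify the hypotheses of Proposition~\ref{prop:rouquier} for $\mathcal{W}_{(3^k)}$, with $W_h$ as highest weight object, using Lemma~\ref{lem:phisurj} together with Proposition~\ref{prop:cataction} (pulled back along $\Psi_{n,n}$) to establish that every object is a direct summand of $XW_h$; your bimodule-decomposition phrasing of the key step is a reformulation of the paper's remark that divided powers are direct summands of ordinary powers in $\ScatD(n,n)$. One tiny correction: the $P_u$ are not the indecomposable projectives---as Remark~\ref{rem:counter2} records, they may decompose further into the $Q_u$ of Proposition~\ref{prop:unitriang}---but this does not affect your argument, since every indecomposable projective is a direct summand of some $P_u\{t\}$, so checking the direct-summand condition on the $P_u$ still suffices.
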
   
\begin{proof}
As we already mentioned above, we have  
\[
\mathrm{End}_{{\mathcal W}_{(3^k)}}(W_h)\cong \mathbb{C}.
\]
Let $Q$ be any indecomposable object in ${\mathcal W}_{(3^k)}$. 
There exists an enhanced sign string $S$ such that $Q$ belongs to 
$K_S\text{-}\mathrm{p\textbf{Mod}}_{\mathrm{gr}}$. Therefore, there exists a 
basis web $w\in B_S$ and a $t\in\mathbb{Z}$, such that $Q$ is a graded 
direct summand of $P_w\{t\}$. Without loss of generality, we may assume that 
$t=0$.   

By Lemma~\ref{lem:phisurj} and Proposition~\ref{prop:cataction}, 
there exists an object of 
$X$ in $\Scat(n,n)$ such that $Q$ is a direct summand of $XW_h$. 
This holds, because in $\ScatD(n,n)$, the Karoubi envelope of $\Scat(n,n)$, 
the divided powers correspond to direct summands of ordinary powers. 
For more details on the categorification of the divided powers 
see~\cite{kl5} and~\cite{klms}.

Proposition~\ref{prop:rouquier} now proves the existence of $\Phi$.  
\end{proof}

An easy consequence of Theorem~\ref{thm:equivalence} is the following.
\begin{cor}
\label{cor:equivalence}
By Theorem~\ref{thm:equivalence}, the $S_q^{\mathbb{Z}}(n,n)$-module map 
\[
K^{\oplus}_0(\Phi)\colon K^{\oplus}_0({\mathcal V}_{(3^k)})\to K^{\oplus}_0({\mathcal W}_{(3^k)})
\]
is an isomorphism. 
\end{cor}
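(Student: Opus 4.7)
The proof plan is short because Corollary~\ref{cor:equivalence} is genuinely a direct consequence of Theorem~\ref{thm:equivalence}; the substantive work was done in establishing that theorem.

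First I would invoke the standard fact that any equivalence of additive categories $\Phi\colon {\mathcal V}_{(3^k)}\to {\mathcal W}_{(3^k)}$ sends indecomposables to indecomposables, and hence induces a bijection on isomorphism classes of indecomposable objects. Since both categories are idempotent complete and the split Grothendieck group is the free abelian group on isomorphism classes of indecomposables (modulo the relations coming from finite direct sums, which $\Phi$ preserves up to natural isomorphism), the induced map $K^{\oplus}_0(\Phi)$ is an isomorphism of abelian groups. The grading shift functor on both sides endows these Grothendieck groups with a $\mathbb{Z}[q,q^{-1}]$-module structure, and $\Phi$ commutes with the grading shifts (it is a grading preserving equivalence), so $K^{\oplus}_0(\Phi)$ is an isomorphism of $\mathbb{Z}[q,q^{-1}]$-modules.

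Next I would upgrade this to an isomorphism of $S_q^{\mathbb{Z}}(n,n)$-modules. The categorical $\Ucat$-action on ${\mathcal V}_{(3^k)}$ and the categorical $\Scat(n,n)$-action on ${\mathcal W}_{(3^k)}$ (pulled back to $\Ucat$ via $\Psi_{n,n}$, see Proposition~\ref{prop:cataction}) induce, on the level of split Grothendieck groups, actions of $K^{\oplus}_0(\UcatD)$. By the categorification theorems recalled in Section~\ref{sec-webbasicc}, $K^{\oplus}_0(\UcatD)\cong\UZ(\mathfrak{sl}_n)$ and this factors through $S_q^{\mathbb{Z}}(n,n)$ in the case relevant here, so both sides become $S_q^{\mathbb{Z}}(n,n)$-modules. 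Because $\Phi$ is by assumption an equivalence of categorical $\Ucat$-representations, it intertwines the action of every 1-morphism of $\Ucat$ up to natural isomorphism; passing to isomorphism classes in $K^{\oplus}_0$, this means $K^{\oplus}_0(\Phi)$ intertwines the $S_q^{\mathbb{Z}}(n,n)$-actions.

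Combining the two steps, $K^{\oplus}_0(\Phi)$ is a bijective homomorphism of $S_q^{\mathbb{Z}}(n,n)$-modules, hence an isomorphism, which is precisely the content of the corollary. There is no main obstacle here: all the real work, including the construction of the categorical action (Proposition~\ref{prop:cataction}), the verification that $W_h$ is a highest weight object with one-dimensional endomorphism ring, and the appeal to Rouquier's universality result (Proposition~\ref{prop:rouquier}) to produce $\Phi$ in the first place, has already been carried out in proving Theorem~\ref{thm:equivalence}. The corollary itself amounts to the observation that $K^{\oplus}_0$ is a functor from additive categories with $\Ucat$-action to $S_q^{\mathbb{Z}}(n,n)$-modules which sends equivalences to isomorphisms.
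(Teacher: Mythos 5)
Your proposal is correct and takes the same route the paper implicitly intends: the paper gives no proof, simply asserting the corollary as an immediate consequence of Theorem~\ref{thm:equivalence}, and your argument just spells out the standard facts (an equivalence of graded, additive, idempotent complete Krull-Schmidt categories induces an isomorphism of split Grothendieck groups compatible with degree shifts, and an equivalence of categorical $\Ucat$-representations intertwines the induced $S_q^{\mathbb{Z}}(n,n)$-actions on $K_0^{\oplus}$). No gap.
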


The following consequence of Theorem~\ref{thm:equivalence} is very important 
and we thank Ben Webster for explaining its proof. 
\begin{prop}
\label{prop:morita} 
The graded algebras $K_{(3^k)}$ and $R_{(3^k)}$ are (graded) Morita equivalent. 
\end{prop}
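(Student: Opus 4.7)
The plan is to deduce Proposition~\ref{prop:morita} directly from Theorem~\ref{thm:equivalence} together with standard (graded) Morita theory. Recall that by construction $\mathcal{W}_{(3^k)} = K_{(3^k)}\text{-}\mathrm{p\textbf{Mod}}_{\mathrm{gr}}$, and by Proposition~\ref{prop:rouquier} (applied with highest weight object $W_h$, using $\mathrm{End}_{\mathcal{W}_{(3^k)}}(W_h)\cong\mathbb{C}$, which was verified in the proof of Theorem~\ref{thm:equivalence}), the categorical $\Ucat$-representation $\mathcal{V}_{(3^k)}$ is identified with $R_{(3^k)}\text{-}\mathrm{p\textbf{Mod}}_{\mathrm{gr}}$. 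So Theorem~\ref{thm:equivalence} produces an additive, grading preserving equivalence
\[
\Phi\colon R_{(3^k)}\text{-}\mathrm{p\textbf{Mod}}_{\mathrm{gr}} \xrightarrow{\sim} K_{(3^k)}\text{-}\mathrm{p\textbf{Mod}}_{\mathrm{gr}}.
\]

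From here, the plan is to set $P = \Phi(R_{(3^k)})$, where $R_{(3^k)}$ is regarded as the free (hence graded projective) left module over itself. Because $\Phi$ is an equivalence of additive categories, $P$ is automatically a graded projective generator of $K_{(3^k)}\text{-}\mathrm{p\textbf{Mod}}_{\mathrm{gr}}$, and the induced isomorphism of graded endomorphism algebras gives
\[
\mathrm{END}_{K_{(3^k)}}(P)^{\mathrm{op}} \;\cong\; \mathrm{END}_{R_{(3^k)}}(R_{(3^k)})^{\mathrm{op}} \;\cong\; R_{(3^k)}
\]
as graded algebras. Standard graded Morita theory (i.e.\ the graded analogue of the classical statement that a ring is determined up to Morita equivalence by its category of finitely generated projectives together with a distinguished generator) then promotes $\Phi$ to an equivalence
\[
R_{(3^k)}\text{-}\mathrm{\textbf{Mod}}_{\mathrm{gr}} \xrightarrow{\sim} K_{(3^k)}\text{-}\mathrm{\textbf{Mod}}_{\mathrm{gr}},
\]
implemented by the sweet graded bimodule $P$ via $M \mapsto P \otimes_{R_{(3^k)}} M$, with quasi-inverse $N \mapsto \mathrm{HOM}_{K_{(3^k)}}(P,N)$. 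This is precisely what it means for $K_{(3^k)}$ and $R_{(3^k)}$ to be graded Morita equivalent.

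The main obstacle, and the only point requiring any real care, is the graded bookkeeping: one must check that $\Phi$ genuinely preserves the $\mathbb{Z}$-grading rather than merely the underlying additive structure (which is guaranteed by the fact that the categorical $\Ucat$-action constructed in Proposition~\ref{prop:cataction} is graded, and Rouquier's equivalence $\Phi$ respects this extra data), and that the endomorphism algebra computation above is carried out with $\mathrm{END}$ rather than $\mathrm{Hom}_0$ so that the full graded algebra structure of $R_{(3^k)}$ is recovered. Once this is in place, the rest of the argument is a direct application of Morita theory and involves no further calculations.
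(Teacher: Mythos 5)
Your proposal is correct and takes essentially the same route as the paper: both invoke the equivalence $\Phi$ from Theorem~\ref{thm:equivalence}, exhibit a graded projective generator of $K_{(3^k)}\text{-}\mathrm{\textbf{Mod}}_{\mathrm{gr}}$ whose graded endomorphism algebra is $R_{(3^k)}$, and conclude by Morita's theorem (the paper cites Theorem 5.55 in~\cite{rot}). The paper works weight space by weight space with the explicit progenerator $\Theta_{\mu_S}=\bigoplus_{\ii\in\mathrm{Seq}(\mu_S-\lambda)}\mathcal{E}_{\ii}W_h$ while you take $P=\Phi(R_{(3^k)})$ globally, but these agree up to reindexing since $\Phi$ carries $R(\nu;\lambda)1_{\ii}$ to $\mathcal{E}_{\ii}W_h$, and the one step your write-up compresses --- passing from ``generator of $K_{(3^k)}\text{-}\mathrm{p\textbf{Mod}}_{\mathrm{gr}}$'' to ``progenerator of $K_{(3^k)}\text{-}\mathrm{\textbf{Mod}}_{\mathrm{gr}}$'' via projective covers and finite dimensionality of $K_{(3^k)}$ --- is exactly what the paper spells out.
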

\begin{proof}
We are going to show that, for each weight $\mu_S$ 
which shows up in the weight decomposition of $V_{(3^k)}$, the graded algebras 
$K_S$ and $R(\mu_S-\lambda,\lambda)$ are Morita equivalent. This proves 
the proposition after taking direct sums.

Let $\mu_S$ be a weight 
which shows up in the weight decomposition of $V_{(3^k)}$. Define 
\[
\Theta_{\mu_S}=\bigoplus_{\ii\in\mathrm{Seq}(\mu_S-\lambda)}
\mathcal{E}_{\ii}W_h \in K_S\text{-}\mathrm{p\textbf{Mod}}_{\mathrm{gr}}.
\] 
In the proof of Theorem~\ref{thm:equivalence}, we already showed that 
every object in 
$K_S\text{-}\mathrm{p\textbf{Mod}}_{\mathrm{gr}}$ is a direct summand 
of $XW_h$ for some object $X\in\Scat(n,n)$. By the biadjointness of 
the $\mathcal{E}_i$ and $\mathcal{E}_{-i}$ in $\Scat(n,n)$ and the fact that 
$W_h$ is a highest weight object, it is not hard to see that 
$XW_h$ itself is a direct summand of a finite direct sum of 
degree shifted copies of 
$\Theta_{\mu_S}$. This shows that 
every object in $K_S\text{-}\mathrm{p\textbf{Mod}}_{\mathrm{gr}}$ is a direct summand 
of a finite direct sum of degree shifted copies of $\Theta_{\mu_S}$. Since $K_S$ is a finite dimensional, complex algebra, every 
finite dimensional, graded $K_S$-module has a projective cover and is therefore 
a quotient of a finite direct sum of degree-shifted copies of $\Theta_{\mu_S}$. 
This shows that $\Theta_{\mu_S}$ is a projective generator of 
$K_S\text{-}\mathrm{\textbf{Mod}}_{\mathrm{gr}}$. 

Theorem~\ref{thm:equivalence} also shows that 
\[
\mathrm{End}_{K_S}\left(\Theta_{\mu_S}\right)\cong R(\mu_S-\lambda,\lambda)
\]
holds. 

By a general result due to Morita, it follows that 
the above observations imply that 
$K_S$ and $R(\mu_S-\lambda,\lambda)$ are Morita 
equivalent. For a proof see Theorem 5.55 in~\cite{rot}, for example.
\end{proof}

We can draw two interesting conclusions from Proposition~\ref{prop:morita}.

In~\cite{bk2}, Brundan and Kleshchev defined an explicit isomorphism between 
blocks of cyclotomic Hecke algebras and cyclotomic KLR-algebras. 
Theorem 3.2 in~\cite{bru2} implies that the center of the 
cyclotomic Hecke algebra, which under Brundan and Kleshchev's isomorphism 
correponds to $R(\mu_S-\lambda,\lambda)$, has the same dimension as 
$H^*(X^{(3^k)}_{\mu_S})$.

\begin{cor}
\label{cor:moritacenter}
The center of $K_S$ is isomorphic to the center of $R(\mu_S-\lambda,\lambda)$. 
In particular, we have 
\[
\dim Z(K_S)=\dim Z(R(\mu_S-\lambda,\lambda))=\dim 
H^*(X^{(3^k)}_{\mu_S}).
\]
\end{cor}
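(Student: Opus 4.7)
The plan is to assemble this corollary from two ingredients that are essentially already in hand. First, I would invoke the block-by-block version of Proposition~\ref{prop:morita} that is actually established inside its proof, namely that for each weight $\mu_S$ appearing in $V_{(3^k)}$ the graded algebras $K_S$ and $R(\mu_S-\lambda,\lambda)$ are Morita equivalent via the progenerator $\Theta_{\mu_S}$. Second, I would appeal to the classical (and purely formal) fact that Morita equivalent unital rings have isomorphic centers, since the center is an invariant of the module category (concretely, $Z(A)\cong \End(\mathrm{id}_{A\text{-}\mathbf{Mod}})$, and a Morita equivalence identifies these endomorphism rings of identity functors). Combining these two observations immediately gives $Z(K_S)\cong Z(R(\mu_S-\lambda,\lambda))$ as graded algebras, hence in particular an equality of dimensions.

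For the second equality of dimensions, I would use the ingredients already referenced in the paragraph preceding the corollary. Brundan and Kleshchev's isomorphism in~\cite{bk2} identifies $R(\mu_S-\lambda,\lambda)$ with a block of a cyclotomic Hecke algebra, and Theorem 3.2 of~\cite{bru2} then states that the dimension of the center of that block equals $\dim H^*(X^{(3^k)}_{\mu_S})$. Chaining these isomorphisms yields
\[
\dim Z(K_S)\;=\;\dim Z(R(\mu_S-\lambda,\lambda))\;=\;\dim H^*(X^{(3^k)}_{\mu_S}),
\]
which is exactly the second assertion of the corollary.

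There is essentially no obstacle here beyond citing the right pieces: the nontrivial work has been done in proving Proposition~\ref{prop:morita}, and both the Morita-invariance of the center and the Brundan/Brundan--Kleshchev statements about the cyclotomic Hecke side are genuinely external inputs. The only small subtlety worth verifying in passing is that the Morita equivalence produced in the proof of Proposition~\ref{prop:morita} is a graded Morita equivalence, so that the resulting isomorphism of centers is grading preserving; this is clear because the progenerator $\Theta_{\mu_S}$ is a finite direct sum of degree-shifted projective modules and the identification $\mathrm{End}_{K_S}(\Theta_{\mu_S})\cong R(\mu_S-\lambda,\lambda)$ is by construction grading preserving.
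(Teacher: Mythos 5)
Your proof is correct and follows essentially the same route as the paper: invoke the block-by-block Morita equivalence from Proposition~\ref{prop:morita}, apply the Morita invariance of centers, and then cite Brundan--Kleshchev together with Theorem 3.2 of~\cite{bru2} for the identification with $\dim H^*(X^{(3^k)}_{\mu_S})$. The additional remark about the equivalence being graded is a sensible sanity check but not something the paper dwells on.
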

\begin{proof}
We only have to prove the first statement, which follows from 
the well-known fact that 
Morita equivalent algebras have 
isomorphic centers. For a proof see for example Corollary 18.42 in~\cite{lam}.
\end{proof}

In Theorem~\ref{thm:center} we used 
Corollary~\ref{cor:moritacenter} to give an explicit 
isomorphism  
\[
H^*(X^{(3^k)}_{\mu_S})\to Z(K_S).
\] 

\begin{rem}
Just for completeness, we remark that the aforementioned results 
in~\cite{bru2} and~\cite{bk2} together with the results in~\cite{bru}, 
which we have not explained, imply that 
\[
H^*(X^{(3^k)}_{\mu_S})\cong Z(R(\mu_S-\lambda,\lambda)),
\] 
so we have not proved anything new about 
$Z(R(\mu_S-\lambda,\lambda))$.
\end{rem}

Another interesting consequence of Proposition~\ref{prop:morita} is 
the following.
\begin{cor}
\label{cor:moritacellular}
$K_S$ is a graded cellular algebra. 
\end{cor}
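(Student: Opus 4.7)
The strategy is to transport a graded cellular structure from the cyclotomic KLR algebra $R(\mu_S-\lambda,\lambda)$ to $K_S$ along the Morita equivalence established in Proposition~\ref{prop:morita}. So the plan breaks into two ingredients: first, knowing that the relevant cyclotomic KLR algebras are graded cellular, and second, knowing that graded cellularity is preserved under the particular kind of Morita equivalence we have at hand.

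For the first ingredient, I would invoke the theorem of Hu and Mathas establishing that the cyclotomic KLR algebras of type $A$ carry an explicit graded cellular basis, which in turn refines the classical ungraded cellular structure on cyclotomic Hecke algebras of Graham and Lehrer via Brundan and Kleshchev's graded isomorphism. Applied to the weight $(3^k)$ and the sub-weight $\mu_S - \lambda$, this gives a graded cellular structure on $R(\mu_S - \lambda, \lambda)$, and by taking the direct sum over all relevant weights $\mu_S$, on $R_{(3^k)}$ itself. This is the kind of input that Section~\ref{sec-techcell} is meant to collect.

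For the second ingredient, the key point is that the Morita equivalence from Proposition~\ref{prop:morita} is not abstract: it comes from an explicit progenerator $\Theta_{\mu_S}$ in $K_S\text{-}\mathrm{p\textbf{Mod}}_{\mathrm{gr}}$, whose endomorphism algebra is $R(\mu_S - \lambda, \lambda)$. Equivalently, there is a (graded) idempotent $e$ in a matrix algebra over $K_S$ (or symmetrically, a full idempotent realising $K_S$ as a corner of $R(\mu_S-\lambda,\lambda)$-Morita data) through which the two algebras relate. I would then invoke the König-Xi type result, in its graded form, which says that if a graded algebra $A$ is graded cellular and $e\in A$ is a homogeneous idempotent with $AeA=A$, then $eAe$ is again graded cellular (indeed, the image of any graded cellular basis of $A$ under a suitable truncation gives one for $eAe$). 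Combined with the first ingredient, this yields graded cellularity of $K_S$.

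The main obstacle, and the reason this is nontrivial, is precisely the transfer step: cellularity is not a general Morita invariant, so one has to check that our Morita equivalence is of the right shape (coming from a full homogeneous idempotent, compatible with the grading and with the cell anti-involution) to feed into the König-Xi machinery. The content of Proposition~\ref{prop:morita} and its proof supplies exactly this: $\Theta_{\mu_S}$ is a projective generator given explicitly as a direct summand of tensor powers acting on the highest weight object $W_h$, so the associated idempotent is manifestly homogeneous and full, and the anti-involution on $R_{(3^k)}$ coming from the bar involution of the KLR presentation matches the Frobenius structure on $K_S$ from Theorem~\ref{thm:frob}. Once these compatibilities are in place, the corollary follows immediately.
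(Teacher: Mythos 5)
Your proposal takes essentially the same route as the paper: Hu--Mathas for graded cellularity of $R(\mu_S-\lambda,\lambda)$, K\"onig--Xi for Morita invariance of cellularity (in its graded form, as collected in Section~\ref{sec-techcell}), and Proposition~\ref{prop:morita} to transport. One small caution: the compatibility check you describe in the last paragraph --- matching the KLR bar anti-involution against the Frobenius structure on $K_S$ --- is not actually required by the version of K\"onig--Xi invoked here. Theorem~\ref{thm-cellular3} (and its graded extension via Gordon) asserts only that the Morita equivalent algebra admits \emph{some} suitable involution making it cellular; it does not promise, nor does the argument need, that this involution coincides with one you already had in mind. Your more constructive description of the transfer via a full homogeneous idempotent correctly captures the mechanism behind K\"onig--Xi's proof, but insisting that a specific pre-existing involution on $K_S$ is the cellular one would be an extra (and unverified) claim beyond what the corollary requires.
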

\begin{proof}
In Corollary 5.12 in~\cite{hm}, Hu and Mathas proved that 
$R(\mu_S-\lambda,\lambda)$ is a graded cellular algebra. 

In~\cite{kx1}, K\"{o}nig and Xi showed that ``being a cellular algebra'' is 
a Morita invariant property, provided that the algebra is defined over 
a field whose characteristic is not equal to two. Moreover, as we explained in Section~\ref{sec-techcell}, this is also true in the graded setting.  

These results together with Proposition~\ref{prop:morita} 
prove that $K_S$ is a graded cellular algebra. 
\end{proof}

The precise definition of a graded cellular algebra can be found 
in~\cite{hm} or Section~\ref{sec-techcell}. In a follow-up paper, we 
intend to discuss the cellular basis of $K_S$ in detail and use it to 
derive further results on the representation theory of $K_S$. For an isotopy invariant, homogeneous basis see Section~\ref{sec-webbase}.  

\begin{rem} 
Corollary~\ref{cor:moritacellular} is the $\mathfrak{sl}_3$ analogue of 
Corollary 3.3 in~\cite{bs1}, which proves that Khovanov's arc algebra $H_m$ 
is a graded cellular algebra. Compare also to the Example~\ref{ex-cellular2}.

It is ``easy'' to give a 
cellular basis of $H_m$. The proof of cellularity follows from checking a 
small number of cases by hand. 
For $K_S$, we tried to mimick that approach, but had to give up because the 
combinatorics got too complex.      
\end{rem}

\subsubsection{The Grothendieck group of $\mathcal{W}_{(3^k)}$}
Recall that $W_S^{\mathbb{Z}}$ has an inner product defined 
by the normalised Kuperberg form 
(see Definition~\ref{defn:normkuperform}). 
The \textit{Euler form}
\[
\langle [P],[Q]\rangle=\dim_q \mathrm{HOM}(P,Q)
\]
defines a $\mathbb{Z}[q,q^{-1}]$-sesquilinear form on $K^{\oplus}_0(K_S)$.

\begin{lem}
\label{lem:grothendieck}
Let $S$ be an enhanced sign sequence. Take 
\[
\gamma_S\colon W_S^{\mathbb{Z}}\to K^{\oplus}_0(K_S)
\] 
to be the $\mathbb{Z}[q,q^{-1}]$-linear 
map defined by 
\[
\gamma_S(u)=[P_u],
\]
for any $u\in B_S$. Then $\gamma_S$ is an isometric embedding.   

This implies that the $\mathbb{Z}[q,q^{-1}]$-linear map
\[
\gamma_W=\bigoplus_{\mu(S)\colon\Lambda(n,n)_3}\gamma_S
\] 
defines an isometric embedding 
\[
\gamma_W\colon W_{(3^k)}^{\mathbb{Z}}\to K^{\oplus}_0(\mathcal{W}_{(3^k)})
.
\] 
\end{lem}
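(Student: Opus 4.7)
The plan is to verify each of the three required properties—well-definedness, isometry, and injectivity—in turn, and then deduce the global statement about $\gamma_W$ from an orthogonal decomposition on both sides. Since $B_S = B_{\hat S}$ is a free $\mathbb{Z}[q,q^{-1}]$-basis of $W_S^{\mathbb{Z}}$, the prescription $u \mapsto [P_u]$ extends uniquely to a $\mathbb{Z}[q,q^{-1}]$-linear map, so well-definedness is immediate.

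The key computation is the isometry. For $u,v \in B_S$, I would chain the following identifications, each supplied by earlier material in the paper:
\[
\langle [P_u],[P_v]\rangle \;=\; \dim_q \mathrm{HOM}_{K_S}(P_u,P_v) \;=\; \dim_q {}_uK_v \;=\; q^{\ell(\hat S)}\dim_q \mathcal{F}^0(u^*v).
\]
The first equality is the definition of the Euler form, the second uses $\mathrm{HOM}(P_u,P_v)\cong {}_uK_v$ noted at the start of Section~\ref{sec-webhoweb}, and the third uses ${}_uK_v = \mathcal{F}^0(u^*v)\{\ell(\hat S)\}$ from Definition~\ref{defn:webalg}. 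Now $\dim_q \mathcal{F}^0(u^*v) = \langle u^*v\rangle_{\mathrm{Kup}}$ is exactly what the Kuperberg bracket rules after Definition~\ref{foamhom} compute, so the right-hand side equals $q^{\ell(\hat S)}\langle u^*v\rangle_{\mathrm{Kup}} = \langle u,v\rangle$ by Definition~\ref{defn:normkuperform}. Extending sesquilinearly on both sides, one must verify that the antilinearity convention of the normalised Kuperberg form (antilinear in the first slot) matches the behavior of $\dim_q \mathrm{HOM}$ under grading shifts in its first argument, which is where I expect the main bookkeeping obstacle to lie: this amounts to checking $\dim_q \mathrm{HOM}(P_u\{k\}, P_v) = q^{-k}\dim_q\mathrm{HOM}(P_u,P_v) = \overline{q^k}\,\dim_q\mathrm{HOM}(P_u,P_v)$, i.e.\ that shifts in the first argument correspond to the bar-involution on $\mathbb{Z}[q,q^{-1}]$, which is a routine but careful convention check.

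For injectivity, combine Lemma~\ref{lem:isoirrep} with Lemma~\ref{lem:phiisometry}: under $\phi$, the space $W_S$ is identified with the $\mu_S$-weight space of the irreducible $V_{(3^k)}$, and the normalised Kuperberg form is identified with the $q$-Shapovalov form. Since the $q$-Shapovalov form is non-degenerate on the irreducible, the Kuperberg form is non-degenerate on $W_S^{\mathbb{Z}}\otimes_{\mathbb{Z}[q,q^{-1}]}\mathbb{C}(q)$, hence on $W_S^{\mathbb{Z}}$. Thus, if $\gamma_S(x) = 0$, then $\langle x, y\rangle = \langle \gamma_S(x),\gamma_S(y)\rangle = 0$ for every $y$, forcing $x = 0$.

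The second statement follows by assembling orthogonal decompositions on both sides. On the left, the weight-space decomposition $W_{(3^k)}^{\mathbb{Z}} = \bigoplus_{\mu_S}W_S^{\mathbb{Z}}$ is orthogonal under the Kuperberg form, since this form is identified via $\phi$ with the Shapovalov form, which pairs distinct weight spaces to zero. On the right, modules supported on distinct blocks of $K_{(3^k)} = \bigoplus_S K_S$ admit no nonzero homomorphisms, so $K^{\oplus}_0(\mathcal{W}_{(3^k)}) = \bigoplus_S K^{\oplus}_0(K_S)$ is an orthogonal decomposition under the Euler form. Since $\gamma_W$ is the direct sum of the $\gamma_S$ and each summand is an isometric embedding, $\gamma_W$ itself is an isometric embedding.
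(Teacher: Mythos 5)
Your core isometry computation is exactly the paper's: expand $\langle[P_u],[P_v]\rangle$ via $\dim_q\mathrm{HOM}(P_u,P_v)\cong \dim_q{}_uK_v = q^{\ell(\hat S)}\langle u^*v\rangle_{\mathrm{Kup}}$ and match it against Definition~\ref{defn:normkuperform}. The one place you diverge is the non-degeneracy of the Kuperberg form: you route through Lemmas~\ref{lem:isoirrep} and~\ref{lem:phiisometry} to transport the question to the $q$-Shapovalov form on the irreducible $V_{(3^k)}$, whereas the paper asserts non-degeneracy directly from the Kuperberg relations~\eqref{eq:circle}, \eqref{eq:digon}, \eqref{eq:square} (a basis web $u$ pairs to something of lowest $q$-degree $1$ against itself, since $\langle u^*u\rangle$ has a constant term from the canonical flow). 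Your indirect route is valid but invokes heavier machinery than is needed, and technically requires you to argue that non-degeneracy over $\mathbb{C}(q)$ gives injectivity on the integral form, which you do note. Your discussion of the sesquilinearity/grading-shift convention and the orthogonal block decompositions for the second statement is additional bookkeeping that the paper leaves implicit; it is correct and harmless. No gaps.
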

\begin{proof}
Note that the normalised Kuperberg form, because of the 
relations~\ref{eq:circle},~\ref{eq:digon} and~\ref{eq:square}, and the 
Euler form are non-degenerate.   
For any pair $u,v\in B_S$, we have  
\[
\dim_q \mathrm{HOM}(P_u,P_v)=\dim_q {}_uK_v=
q^{\ell(\hat{S})}\langle u^*v\rangle.
\] 
The factor $q^{\ell(\hat{S})}$ is a consequence of the grading shift in the 
definition of ${}_uK_v$.

Thus, $\gamma_S$ is an isometry. Since the normalised Kuperberg form is 
non-degenerate, this implies that $\gamma_S$ is an embedding. 
\end{proof}

\begin{rem} 
\label{rem:counter2}
It is well-known that 
$K^{\oplus}_0(K_S)$ is the free $\mathbb{Z}[q,q^{-1}]$-module 
generated by the isomorphism 
classes of the indecomposable, projective $K_S$-modules (see Example~\ref{ex-grothen2}). 
In Section 5.5 in~\cite{mn}, Morrison and Nieh showed that 
$P_u$ is not necessarily indecomposable (see also~\cite{rob}). This 
is closely related to 
the contents of Remark~\ref{rem:counter}, as Morrison and Nieh showed.  
Therefore, the surjectivity of $\gamma_W$ is not immediately clear and we need 
the results of the previous sections to establish it below. 

The $\mathfrak{sl}_2$ case is much simpler. The projective modules analogous to 
the $P_u$ are all indecomposable. See Proposition 2 in~\cite{kh4} for 
the details. 
\end{rem}

\begin{thm}
\label{thm:equivalence2}
The map  
\[
\gamma_W\colon W_{(3^k)}^{\mathbb{Z}}\to K^{\oplus}_0({\mathcal W}_{(3^k)})
\] 
is an isomorphism of $S_q^{\mathbb{Z}}(n,n)$-modules. 

This also implies that, for each sign string $S$, the map  
\[
\gamma_S\colon W_S^{\mathbb{Z}}\to K^{\oplus}_0(K_S)
\]
is an isomorphism. 
\end{thm}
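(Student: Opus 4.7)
The plan is to combine the injectivity of Lemma~\ref{lem:grothendieck} with a surjectivity argument that exploits $S_q^{\mathbb{Z}}(n,n)$-equivariance and the categorical equivalence of Theorem~\ref{thm:equivalence}.

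First I would verify that $\gamma_W$ is a map of $S_q^{\mathbb{Z}}(n,n)$-modules. The action on $W_{(3^k)}^{\mathbb{Z}}$ is the one induced by $\phi$ from Definition~\ref{defn:phi} (glueing the relevant web on top), while the action on $K^{\oplus}_0(\mathcal{W}_{(3^k)})$ comes from the categorical $\Scat(n,n)$-action of Proposition~\ref{prop:cataction}, i.e. from tensoring with the sweet bimodules $\Gamma(\phi(E_{\ii}1_\lambda))$. For a generator $E_{\pm i}1_{\mu_S}$ and a basis web $u \in B_S$, if $\phi(E_{\pm i}1_{\mu_S})\,u = \sum_j c_j u_j$ with $u_j \in B_{S'}$ and $c_j \in \mathbb{N}[q,q^{-1}]$, then~\eqref{eq:sweettensor} together with the discussion following Proposition~\ref{prop-sweet} gives the isomorphism
\[
\Gamma(\phi(E_{\pm i}1_{\mu_S})) \otimes_{K_S} P_u \cong \bigoplus_{u' \in B_{S'}} \mathcal{F}^0(u'^{*}\phi(E_{\pm i}1_{\mu_S})\,u)\{n'\} \cong \bigoplus_j c_j P_{u_j}
\]
of left $K_{S'}$-modules, so that
\[
[\Gamma(\phi(E_{\pm i}1_{\mu_S})) \otimes_{K_S} P_u] = \sum_j c_j [P_{u_j}] = \gamma_{S'}(\phi(E_{\pm i}1_{\mu_S})\,u),
\]
which is the equivariance for the generators. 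Divided powers are handled by passing to $\ScatD(n,n)$ and using the identification of indecomposable summands.

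Next I would note that $\gamma_W(w_h) = [W_h]$. By Lemma~\ref{lem:phisurj}, the cyclic $S_q^{\mathbb{Z}}(n,n)$-submodule of $W_{(3^k)}^{\mathbb{Z}}$ generated by $w_h$ contains every basis web $u \in B_S$ for every enhanced sign string $S$ with $\mu_S \in \Lambda(n,n)_3$, and so equals $W_{(3^k)}^{\mathbb{Z}}$. On the other side, Corollary~\ref{cor:equivalence} combined with Theorem~\ref{thm:bk} yields an isomorphism $K^{\oplus}_0(\mathcal{W}_{(3^k)}) \cong V_{(3^k)}^{\mathbb{Z}}$ of $S_q^{\mathbb{Z}}(n,n)$-modules under which $[W_h]$ corresponds to the highest weight vector, so $[W_h]$ generates $K^{\oplus}_0(\mathcal{W}_{(3^k)})$ as an $S_q^{\mathbb{Z}}(n,n)$-module. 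Together with the equivariance from the first step, this forces $\gamma_W$ to be surjective, and combined with Lemma~\ref{lem:grothendieck} it gives the desired isomorphism. The statement for each individual $\gamma_S$ then follows by restricting $\gamma_W$ to the $\mu_S$-weight space, since both $\phi$ and the categorical action respect the weight decomposition $K^{\oplus}_0(\mathcal{W}_{(3^k)}) = \bigoplus_S K^{\oplus}_0(K_S)$.

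The main obstacle I anticipate is the careful verification of the equivariance in the first step. The identity~\eqref{eq:sweettensor} handles the undotted generators essentially on the nose, but passage to the integral form $S_q^{\mathbb{Z}}(n,n)$ requires realising the divided powers $E^{(a)}_{\pm i}$ via their categorifications as indecomposable summands in $\ScatD(n,n)$ and then checking that the sweet bimodules $\Gamma(\phi(E^{(a)}_{\pm i}1_\lambda))$ implement the correct operators on the Grothendieck group. This is a bookkeeping issue rather than a conceptual one, and follows from Proposition~\ref{prop:cataction} together with the identification of $K^{\oplus}_0(\ScatD(n,n))$ with $S_q^{\mathbb{Z}}(n,n)$.
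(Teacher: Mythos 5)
Your proof is correct, and it reaches the conclusion by a slightly different route than the paper does. The paper sets up a commuting square
\[
\begin{xy}
  \xymatrix{
      V_{(3^k)}^{\mathbb{Z}} \ar[r]^{\gamma_V} \ar[d]_{\phi}    &   K^{\oplus}_0({\mathcal V}_{(3^k)}) \ar[d]^{K^{\oplus}_0(\Phi)}  \\
      W_{(3^k)}^{\mathbb{Z}} \ar[r]_{\gamma_W}             &   K^{\oplus}_0({\mathcal W}_{(3^k)})
  }
\end{xy}
\]
verifies that it commutes by matching the images of the highest weight vector around both sides (using that all four maps are $S_q^{\mathbb{Z}}(n,n)$-intertwiners and the source is cyclic), and then concludes that $\gamma_W$ is an isomorphism because the other three maps are. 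You instead argue surjectivity directly: $\gamma_W$ is equivariant and sends $w_h$ to $[W_h]$, the source $W_{(3^k)}^{\mathbb{Z}}$ is cyclic over $S_q^{\mathbb{Z}}(n,n)$ on $w_h$ by Lemma~\ref{lem:phisurj}, and the target is cyclic on $[W_h]$ because it is isomorphic to $V_{(3^k)}^{\mathbb{Z}}$, so the image is everything; injectivity is then Lemma~\ref{lem:grothendieck}. The two arguments use exactly the same underlying ingredients (equivariance from Proposition~\ref{prop:cataction}, injectivity from the Euler form, cyclicity on both sides, Theorem~\ref{thm:bk} and Corollary~\ref{cor:equivalence}), but yours isolates the surjectivity as a separate step, whereas the paper packages it into the ``three sides of the square are isomorphisms'' argument. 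One small thing to watch: when you assert that $[W_h]$ corresponds to the highest weight vector under the composite $K^{\oplus}_0(\mathcal{W}_{(3^k)})\cong V_{(3^k)}^{\mathbb{Z}}$, you are implicitly using the same observation the paper spells out, namely that $\gamma_V$ and $\Phi$ must preserve highest weight objects; your gloss is fine but worth saying. The explicit check of equivariance on the generators $E_{\pm i}1_{\lambda}$ that you supply at the start is more detailed than the paper's one-line appeal to Proposition~\ref{prop:cataction}, and is a useful addition.
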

\begin{proof}
The proof of the theorem is only a matter of assembling already known pieces.

By Proposition~\ref{prop:cataction}, 
$\gamma_W$ intertwines the $S_q^{\mathbb{Z}}(n,n)\cong 
K^{\oplus}_0(\ScatD(n,n))$ actions. 

We already know that $\gamma_W$ is an embedding, 
by Lemma~\ref{lem:grothendieck}. 

Note that, by Theorem~\ref{thm:bk}, Lemma~\ref{lem:isoirrep} and 
Corollary~\ref{cor:equivalence}, 
we have the following commuting square   
\[
\begin{xy}
  \xymatrix{
      V_{(3^k)}^{\mathbb{Z}} \ar[r]^{\gamma_V} \ar[d]_{\phi}    &   K^{\oplus}_0({\mathcal V}_{(3^k)}) \ar[d]^{K^{\oplus}_0(\Phi)}  \\
      W_{(3^k)}^{\mathbb{Z}} \ar[r]_{\gamma_W}             &   K^{\oplus}_0({\mathcal W}_{(3^k)})   
  }
\end{xy}
\]
We already know that $\gamma_V$, $\phi$ and $K^{\oplus}_0(\Phi)$ are isomorphisms. 
Therefore, $\gamma_W$ has to be an isomorphism. This shows that 
$K_S$ indeed categorifies the $\mu_S$-weight space 
of $V_{(3^k)}$. 

Recall that we have not explained the definition of $\gamma_V$ nor 
Rouquier's definition of $\Phi$. However, for general reasons, 
$\gamma_V$ has to send the highest weight vector $v_h\in V_{(3^k)}^{\mathbb{Z}}$ 
to the class of the highest weight object in $\mathcal{V}_{(3^k)}$ and 
$\Phi$ has to send that highest weight object 
to the highest weight object in $\mathcal{W}_{(3^k)}$. 
This shows that the images of the highest weight vector 
$v_h\in V_{(3^k)}^{\mathbb{Z}}$ around the two sides of the square are equal. 
Since all maps involved are $S_q^{\mathbb{Z}}(n,n)$ intertwiners, 
it follows that the square indeed commutes.      
\end{proof}
\vskip0.5cm
A good question is how to find the graded, indecomposable, 
projective modules of $K_S$. Before answering that question, 
we need a result on the 3-colourings of webs. 

Let $w\in B_S$. Recall that there is a bijection between the flows on $w$ 
and the $3$-colourings of $w$, as already mentioned in Remark~\ref{rem:3color}. 
Call the 3-colouring corresponding to the canonical flow of $w$, the 
\textit{canonical 3-colouring}, denoted $T_w$. 
\begin{lem}
\label{lem:tech}
Let $u, v\in B_S$. If there 
is a 3-colouring of $v$ which matches $T_u$ and a 3-colouring of $u$ 
which matches $T_v$ on the common boundary $S$, then $u=v$. 
\end{lem}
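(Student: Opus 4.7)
The plan is to reduce the question to a lexicographic comparison of the two canonical boundary state strings $J_u$ and $J_v$ by means of Theorem~\ref{thm:upptriang}, and then recover $u$ and $v$ themselves from the growth algorithm. More precisely, let $J_u$ denote the boundary state string of the canonical flow on $u$ and similarly for $J_v$. First I would use Remark~\ref{rem:3color} to pass from 3-colourings to flows, so that the hypothesis that some 3-colouring of $v$ matches $T_u$ on $S$ becomes the statement that $v$ admits a flow whose boundary state string on $S$ is exactly $J_u$, and symmetrically for $u$ and $J_v$.

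The second step would be to invoke Theorem~\ref{thm:upptriang} twice. Writing $u = w^S_{J_u}$ and $v = w^S_{J_v}$ (which is just the growth algorithm applied to the canonical boundary states), that theorem gives the expansion
\[
w^S_{J_v} = e^S_{J_v} + \sum_{J' < J_v} c(S, J_v, J')\, e^S_{J'},
\]
and in the Khovanov--Kuperberg setup the coefficient $c(S, J_v, J')$ is a $q$-weighted count of flows on $v$ with boundary $J'$. Thus every flow on $v$ has boundary state $\leq J_v$ in the lex order; in particular, the flow on $v$ provided by the hypothesis gives $J_u \leq J_v$. The symmetric application yields $J_v \leq J_u$, hence $J_u = J_v$.

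The last step is immediate: the growth algorithm (Definition~\ref{growth}), together with the bijection between $B_S$ and closed dominant state strings of type $S$ (inverse to $J \mapsto w^S_J$), shows that a basis web is determined by the boundary state of its canonical flow. Therefore $u = w^S_{J_u} = w^S_{J_v} = v$.

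I do not expect a genuine obstacle here, since every ingredient is present in the Khovanov--Kuperberg material recalled in Section~\ref{sec-webbasica}; the only care required is in maintaining the dictionary between admissible 3-colourings, flows, and boundary state strings, which are used interchangeably. The essential content of the lemma is really the maximality property of the canonical flow's boundary state, which is precisely what Theorem~\ref{thm:upptriang} supplies.
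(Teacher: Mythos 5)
Your proposal is correct and follows essentially the same route as the paper: both pass between 3-colourings and flows via Remark~\ref{rem:3color}, invoke Theorem~\ref{thm:upptriang} to establish that every flow on a basis web has boundary state string lexicographically at most that of the canonical flow, deduce $J_u = J_v$ from the symmetric hypotheses, and conclude $u = v$ via the growth algorithm. Your remark that the coefficients $c(S,J,J')$ are $q$-weighted flow counts makes explicit a step the paper leaves implicit, but it is the same argument.
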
 
\begin{proof}
This result is a direct consequence of Theorem~\ref{thm:upptriang}. Recall 
that there is a partial order on flows, and therefore on 3-colourings by 
Remark~\ref{rem:3color}. This ordering is induced by the lexicographical 
order on the state-strings on $S$, which are induced by the flows. Note that 
two matching colourings of $u$ and $v$ have the same order, by definition. 
On the other hand, Theorem~\ref{thm:upptriang} implies that 
any 3-colouring of $u$, respectively $v$, has order less than or equal to 
that of $T_u$ and $T_v$ respectively. 
Therefore, if there exists a 3-colouring of $v$ matching $T_u$, the order of 
$T_u$ must be less or equal than that of $T_v$. 

Thus, if there 
exists a 3-colouring of $v$ matching $T_u$ 
and a 3-colouring of $u$ matching $T_v$, then $T_u$ and $T_v$ must have the 
same order. This implies that $u=v$, because canonical 3-colourings are uniquely 
determined by their order and the corresponding canonical flows determine 
the corresponding basis webs uniquely by the growth algorithm.   
\end{proof}

\begin{prop}
\label{prop:unitriang}
For each $u\in B_S$, there exists a unique graded, indecomposable, projective 
$K_S$-module $Q_u$, such that 
\[
P_u\cong Q_u\oplus\bigoplus_{J_v < J_u} d(S,J_u,J_v)\; Q_v.
\]
Here $J_u$ is the state string associated to the canonical flow on $u$, 
the coefficients $d(S,J_u,J_v)$ belong to $\mathbb{N}[q,q^{-1}]$ and 
indicate direct sums and degree shifts as usual, 
and the state strings are ordered lexicographically. 
\end{prop}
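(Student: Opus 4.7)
The plan is to combine the Khovanov--Kuperberg unitriangularity of Theorem~\ref{thm:upptriang} with the Grothendieck group isomorphism $\gamma_S\colon W_S^{\mathbb{Z}}\to K^{\oplus}_0(K_S)$ of Theorem~\ref{thm:equivalence2}, applying Krull--Schmidt to the finite-dimensional complex algebra $K_S$ together with an induction on the lexicographic order of canonical state strings $J_u$.

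First I would note that, since $K_S$ is finite dimensional, unital and graded, Krull--Schmidt holds in $K_S\text{-}\mathrm{p\textbf{Mod}}_{\mathrm{gr}}$, so each $P_u$ decomposes uniquely (up to reordering and isomorphism) as a direct sum of graded indecomposable projectives; the isomorphism classes $\{[Q_\alpha]\}$ of the latter form a $\mathbb{Z}[q,q^{-1}]$-basis of $K^{\oplus}_0(K_S)$ in which each $[P_u]$ has coefficients in $\mathbb{N}[q,q^{-1}]$. Transporting Theorem~\ref{thm:upptriang} through $\gamma_S$ yields, inside $K^{\oplus}_0(K_S)\cong W_S^{\mathbb{Z}}$,
\[
[P_u] \;=\; \gamma_S(e_{J_u}^S) \;+\; \sum_{J<J_u} c(S,J_u,J)\,\gamma_S(e_J^S),
\]
and inverting this unitriangular relation expresses $\gamma_S(e_{J_u}^S)$ as $[P_u]$ minus a $\mathbb{Z}[q,q^{-1}]$-linear combination of classes $[P_v]$ with $J_v<J_u$.

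I would then proceed by induction on $u\in B_S$ ordered by $J_u$. For minimal $J_u$ there are no lower terms, so $[P_u]=\gamma_S(e_{J_u}^S)$; combined with the positivity of the decomposition of $[P_u]$ in $\{[Q_\alpha]\}$ and the fact that $\gamma_S$ is injective, one concludes that $P_u$ is indecomposable and sets $Q_u:=P_u$. For the inductive step, I assume that for every $v$ with $J_v<J_u$ an indecomposable $Q_v$ has been produced as a summand of $P_v$, and that every other indecomposable summand of $P_v$ lies in $\{Q_w:J_w<J_v\}$. Decomposing $P_u$ by Krull--Schmidt and applying the unitriangular identity above, I can subtract off the $\{Q_v:J_v<J_u\}$-contribution and isolate a single new indecomposable summand, which I define to be $Q_u$; the diagonal coefficient $1$ in front of $\gamma_S(e_{J_u}^S)$ forces the multiplicity of $Q_u$ in $P_u$ to be exactly one, and the remaining summands must lie among the previously constructed $\{Q_v\}_{J_v<J_u}$.

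The main obstacle is verifying that the inductively produced family $\{Q_u\}_{u\in B_S}$ is a complete set of pairwise non-isomorphic graded indecomposable projectives of $K_S$. To handle this I would combine a count with a separation argument: the Morita equivalence of Proposition~\ref{prop:morita} together with the known representation theory of cyclotomic KLR algebras (matched with $\dim_{\mathbb{C}(q)}\!\bigl(K^{\oplus}_0(K_S)\otimes_{\mathbb{Z}[q,q^{-1}]}\mathbb{C}(q)\bigr) = \dim W_S = |B_S|$ via Theorem~\ref{thm:equivalence2} and Corollary~\ref{cor:moritacenter}) pins down the number of isomorphism classes (up to grading shift) of graded indecomposable projectives, while Lemma~\ref{lem:tech} combined with the unitriangular expansion above guarantees that the canonical state strings $J_u$ genuinely distinguish the $Q_u$. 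Uniqueness of each $Q_u$ up to isomorphism then follows from Krull--Schmidt applied to $P_u$.
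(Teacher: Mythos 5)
The central step of your argument rests on the identity
\[
[P_u] \;=\; \gamma_S(e_{J_u}^S) \;+\; \sum_{J<J_u} c(S,J_u,J)\,\gamma_S(e_J^S),
\]
obtained by pushing Theorem~\ref{thm:upptriang} through $\gamma_S$. This does not make sense: $\gamma_S$ is an isomorphism $W_S^{\mathbb{Z}}\to K^{\oplus}_0(K_S)$, but the elementary tensors $e^S_J$ live in $V_S^{\mathbb{Z}}$ and are generally not $U_q(\mathfrak{sl}_3)$-invariant, hence not in $W_S^{\mathbb{Z}}$. Theorem~\ref{thm:upptriang} is an identity in $V_S$, not in $W_S$, so only the full linear combination (which collapses to $w^S_J$) is defined in $W_S^{\mathbb{Z}}$ — the individual terms you want to peel off are not. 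Consequently the base and inductive steps of your induction do not start: when $J_u$ is minimal you cannot conclude that $[P_u]=\gamma_S(e^S_{J_u})$ picks out an indecomposable class, and in the inductive step there is no well-defined quantity to ``subtract off.'' (Even if there were, inverting a unitriangular matrix with entries in $\mathbb{N}[q,q^{-1}]$ introduces signs, so the positivity argument you invoke would not survive the inversion.) Relatedly, a relation in the Grothendieck group alone cannot tell you how to split an actual idempotent of $K_S$; you need a mechanism at the algebra level.

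This is precisely what the paper supplies and what is missing from your proposal: it works with Gornik's filtered deformation $G_S$, which is semisimple as an algebra (Proposition~\ref{prop:Gsemisimple}) and comes with explicit orthogonal symmetric idempotents $e_{u,T}$ indexed by $3$-colourings/flows. Taking a primitive decomposition $1_u = e_1+\cdots+e_r$ in $K_S$, lifting it to $G_S$ via Theorem~\ref{thm:sjodin}/Corollary~\ref{cor:sjodin}, and hitting it with the central idempotent $z_u\in Z(G_S)$ attached to the canonical state string $J_u$ (from the proof of Lemma~\ref{lem:dimZG}) forces exactly one $e_i$ to survive, because $z_u 1_u = e_{u,T_u}$ and $z_u\,{}_uG_u\,z_u\cong\mathbb{C}$ by Theorem~\ref{thm:Gornik} together with Lemma~\ref{lem:embedding}. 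That unique $e_u$ defines $Q_u = K_S e_u$, Lemma~\ref{lem:tech} shows the $Q_u$ are pairwise nonisomorphic, the rank count from Theorem~\ref{thm:equivalence2} shows they exhaust the indecomposables, and finally $z_v 1_u = 0$ for $J_v>J_u$ gives the unitriangular shape. Your instincts about the role of Krull--Schmidt, the rank count, and Lemma~\ref{lem:tech} are correct, but without the Gornik idempotent-lifting argument there is no way to single out $Q_u$ inside $P_u$, and the Grothendieck-group identity you rely on is ill-posed.
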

Note that we need a lot of the results from the Sections~\ref{sec-webalg},~\ref{sec-webcenterb} and~\ref{sec-webhoweb} to prove the proposition.
\begin{proof}
Let $u\in B_S$. Then there is a complete decomposition of $1_u$ into orthogonal, 
primitive idempotents 
\[
1_u=e_1+\cdots+e_r.
\]
By Theorem~\ref{thm:sjodin} and Corollary~\ref{cor:sjodin}, we can lift this 
decomposition to $G_S$. 
We do not introduce any 
new notation for this lift, trusting that the reader will not get confused 
by this slight abuse of notation. 

Let $z_u\in Z(G_S)$ be the central idempotent corresponding to $J_u$, as 
defined in the proof of Lemma~\ref{lem:dimZG}. We claim that there is a unique 
$1\leq i\leq r$, such that 
\begin{equation}
\label{eq:orto1}
z_ue_j=\delta_{ij}z_u1_u,
\end{equation}
for any $1\leq j\leq r$. 

Let us prove this claim. Note that 
\begin{equation}
\label{eq:orto2}
z_u1_u=e_{u,T_u},
\end{equation}
where $T_u$ is the canonical colouring of $u$, i.e. 
$J_u$ only allows one compatible colouring of $u$, which is $T_u$. 
Since $e_{u,T_u}\ne 0$, courtesy of Lemma~\ref{lem:embedding}, this implies that 
\begin{equation}
\label{eq:dimone}
z_u{}_uG_u={}_uG_u z_u=z_u{}_uG_uz_u=e_{u,T_u}G_Se_{u,T_u}\cong \mathbb{C},
\end{equation}
by Theorem~\ref{thm:Gornik}. 

We also see that there has to exist at least one 
$1\leq i_0\leq r$ such that $z_ue_{i_0}\ne 0$. Then, by~\eqref{eq:dimone}, 
there exists a non-zero $\lambda_{i_0}\in \mathbb{C}$, such that 
\[
z_ue_{i_0}=\lambda_{i_0}z_u1_u=\lambda_{i_0}e_{u,T_u}.
\]  
For any 
$1\leq i,j\leq r$, we have  
\[
z_ue_iz_ue_j=z_u^2e_ie_j=z_u\delta_{ij}e_i.
\]
This implies that $i_0$ is unique and $\lambda_{i_0}=1$. 
In order to see that this is true, suppose there exist 
$1\leq i_0\ne j_0\leq r$ such that $z_ue_{i_0}\ne 0$ and $z_ue_{j_0}\ne 0$. 
By~\eqref{eq:dimone}, there exist non-zero 
$\lambda_{i_0},\lambda_{j_0}\in\mathbb{C}$ such that 
\[
z_ue_{i_0}=\lambda_{i_0}z_u1_u\quad\text{and}\quad z_ue_{j_0}=\lambda_{j_0}
z_u1_u.
\]
However, this is impossible, because we get  
\[
z_ue_{i_0}z_ue_{j_0}=\lambda_{i_0}\lambda_{j_0}z_u1_u\ne 0,
\] 
which contradicts the orthogonality of $z_ue_{i_0}$ and $z_ue_{j_0}$. 

Thus, for each $u\in B_S$, there is a unique primitive idempotent 
$e_u\in \mathrm{End}_{\mathbb{C}}(P_u)$ that is not killed by $z_u$, 
when lifted to 
$G_S$. We define $Q_u$ to be the corresponding graded, indecomposable, 
projective $K_S$-module  
\[
Q_u=K_Se_u,
\]
which is clearly a direct summand of $P_u=(K_S)1_u$. 
\vskip0.5cm
Let us now show that, for any $u,v\in B_S$, we have  
\[
Q_u\cong Q_v\Leftrightarrow u=v.
\]
If $u=v$, we obviously have $Q_u\cong Q_v$. 
Let us prove the other implication. Suppose $Q_u\cong Q_v$. From 
the above, recall that $e_u$ and $e_v$ can be lifted to $G_S$. By a slight 
abuse of notation, call these lifted idempotents $e_u$ and $e_v$ again. 
We have 
\[
z_ue_u=e_{u,T_u}\ne 0\quad\text{and}\quad z_ve_v=e_{v,T_v}\ne 0.
\]
Since $Q_u\cong Q_v$, we then also have 
\[
z_ue_v\ne 0\quad\text{and}\quad z_ve_u\ne 0.
\]
This can only hold if $T_u$ gives a 3-colouring of $v$ and $T_v$ a 3-colouring 
of $u$. By Lemma~\ref{lem:tech}, this implies that $u=v$.
\vskip0.5cm

Since 
\[
\mathrm{rk}_{\mathrm{Z}[q,q^{-1}]}K^{\oplus}_0(K_S) 
=\mathrm{rk}_{\mathrm{Z}[q,q^{-1}]} W_S^{\mathbb{Z}}=\#B_S,
\]
by Theorem~\ref{thm:equivalence2}, 
the above shows that 
\[
\left\{[Q_u]\mid u\in B_S\right\}
\]
is a basis of the free $\mathbb{Z}[q,q^{-1}]$-module 
$K^{\oplus}_0(K_S)$. For any $u,v\in B_S$, we 
have  
\[
z_u1_u=z_ue_u\quad\text{and}\quad z_v1_u=0,\;\text{if}\; J_v>J_u.
\]
The second claim follows from the fact that there are no admissible 
3-colourings of $u$ greater than $J_u$. The proposition now follows. 
\end{proof}

\begin{rem} Proposition~\ref{prop:unitriang} proves the conjecture 
about the decomposition of $1_u$, which Morrison and Nieh 
formulate in their Conjectures 5.14 and 5.15 in~\cite{mn} and in the text below them.  
\end{rem}

Before giving the last result of this section, we briefly recall 
some facts about the \textit{dual canonical basis} of $W_S^{\mathbb{Z}}$. 
For more details see~\cite{fkhk} and~\cite{kk}. There exists a $q$-antilinear 
involution $\tilde{\psi}$ on $V_S^{\mathbb{Z}}$ (in~\cite{fkhk} and~\cite{kk} 
this involution is denoted $\psi'$ and $\Phi$, respectively). 
For any sign string $S$ and any state 
string $J$, there exists a unique element $e^S_{\heartsuit J}\in V_S^{\mathbb{Z}}$ which 
is invariant under $\tilde{\psi}$ and such that 
\begin{equation}
\label{eq:dualcaneq}
e^S_{\heartsuit J}=e^S_J+\sum_{J'<J}c(S,J,J')e^{S}_{J'},
\end{equation}
with $c(S,J,J')\in q\mathbb{Z}[q]$. Note 
that $q=v^{-1}$ in~\cite{fkhk} and~\cite{kk}. 
The $e^S_J$ are the elementary tensors, which were defined 
in~\ref{thm:upptriang}. The basis $\left\{e^S_{\heartsuit J}\right\}$ 
is called the \textit{dual canonical basis} of $V_S^{\mathbb{Z}}$. Restriction to the 
dominant closed paths $(S,J)$ 
gives the dual canonical basis of $W_S^{\mathbb{Z}}$ (see Theorem 3 
in~\cite{kk} and the comments below it).

We have not given a definition of $\tilde{\psi}$, but we 
note that $\tilde{\psi}$ is completely determined by Proposition 2 in~\cite{kk}, i.e. we recall the following. 
\begin{prop}\label{prop:basisbarinvariant}(\textbf{Khovanov-Kuperberg})
Each basis web $w\in B_S$ is invariant under $\tilde{\psi}$. 
\end{prop}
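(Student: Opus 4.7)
The plan is to proceed by induction on the number of steps in the growth algorithm used to express $w\in B_S$, using the fact that each elementary move in that algorithm (arc, Y, H) corresponds to post-composition at the bottom with a basic $U_q(\mathfrak{sl}_3)$-intertwiner. First I would recall that $\tilde{\psi}$ is a $q$-antilinear involution on $V_S^{\mathbb{Z}}$ compatible with the standard bar involution on $U_q(\mathfrak{sl}_3)$ and extended to tensor products via Lusztig's quasi-$R$-matrix. The key point is that with Kuperberg's normalizations, the cup, cap, and trivalent vertex, viewed as morphisms between tensor products of $V_{\pm}$, are bar self-dual intertwiners. This check is finite and reduces, after unpacking the definitions, to a direct computation on a basis of the relevant two- and three-fold tensor products.

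For the base case of the induction, the empty web (the highest weight configuration for the enhanced sign string $(\times^k,\circ^{2k})$) corresponds to $1\in\mathbb{C}(q)$ and is trivially bar-invariant. For the inductive step, suppose the partial web built after $k$ steps of the growth algorithm represents a bar-invariant tensor; glueing an arc, Y, or H at the next level composes with a bar-invariant elementary intertwiner, hence yields a bar-invariant tensor again. This would show that every $w\in B_S$ obtained via the growth algorithm is bar-invariant, which is all of $B_S$.

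An alternative route, more in the spirit of the rest of this section, would go through the Howe duality setup: by Lemma~\ref{lem:phisurj}, any $w\in B_S$ equals $\phi(x)w_h$ for some $x\in S_q(n,n)$ which is a product of divided powers and idempotents. Since Lusztig's divided powers $E_{\pm i}^{(a)}1_\lambda$ are bar-invariant in $S_q(n,n)$, and $w_h$ (the empty web) is bar-invariant, it would suffice to verify that $\phi$ intertwines Lusztig's bar involution on $S_q(n,n)$ with $\tilde{\psi}$ on $W_{(3^k)}^{\mathbb{Z}}$. This reduces to checking bar-compatibility of the elementary H-webs of Definition~\ref{defn:phi}.

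The main obstacle in either approach is a normalization/convention check. In the inductive approach one must confirm that the standard Kuperberg cup, cap, and trivalent vertex are precisely the bar-invariant normalizations of the $U_q(\mathfrak{sl}_3)$-intertwiners (not merely bar-invariant up to a power of $q$), which is sensitive to the coefficient conventions in the quasi-$R$-matrix. In the Howe-dual approach, the analogous obstacle is matching the $U_q(\mathfrak{sl}_3)$-bar on $V_S$ with the $U_q(\mathfrak{sl}_n)$-bar transported through $\phi$; this compatibility is again a finite verification on the H-web generators, but must be done with care since the two bar involutions are a priori distinct structures on the same space $W_{(3^k)}$.
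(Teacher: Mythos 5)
The paper does not actually prove this statement: it cites it as Proposition~2 of Khovanov--Kuperberg~\cite{kk} (where $\tilde{\psi}$ appears as $\Phi$) and simply records it as input. Your first approach --- induction over the growth algorithm, together with the observation that the elementary cup, cap and trivalent vertex are bar self-dual intertwiners once the bar involution is extended to tensor products via Lusztig's quasi-$R$-matrix --- is, in outline, precisely the argument given in~\cite{kk}, so that part of your proposal lines up with the literature; the delicate normalization check you flag is indeed the real content.

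Your second approach via $q$-skew Howe duality is a genuinely different route, but be alert to a circularity hidden in the way this paper later uses the result. In the proof of Theorem~\ref{thm:dualcan}, the paper \emph{uses} Proposition~\ref{prop:basisbarinvariant} to conclude that the bar involution on $W_{(3^k)}^{\mathbb{Z}}$ obtained by transporting the Khovanov--Lauda bar functor through the equivalence $\Phi$ coincides with $\tilde{\psi}$: both fix the non-elliptic webs, hence they agree. Running your Howe-dual argument requires exactly the converse implication --- that $\phi$ intertwines the $U_q(\mathfrak{sl}_n)$-side bar with $\tilde{\psi}$ --- established independently of Proposition~\ref{prop:basisbarinvariant}. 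You identify this as the main obstacle, correctly, but note further that the two bar involutions are built from different quasi-$R$-matrices (for $U_q(\mathfrak{sl}_n)$-tensor products versus $U_q(\mathfrak{sl}_3)$-tensor products on the same space $W_{(3^k)}$), so the check on the elementary $H$-web generators of Definition~\ref{defn:phi} is a computation of essentially the same flavour and difficulty as the cup/cap/vertex check in Approach~1. Howe duality reorganizes the verification but does not let you avoid it.
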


The above definition is hard to check directly for $\{[Q_u]\mid u\in B_S\}$. 
Therefore, let us recall Webster's~\cite{web3} very general definition of a 
canonical basis of a free $\mathbb{Z}[q,q^{-1}]$-module $M$. 
Our $q$ corresponds to $q^{-1}$ 
in~\cite{web3}. A \textit{pre-canonical structure} on 
$M$ is a choice of the following. 
\begin{itemize}
\item A $q$-antilinear ``bar involution'' $\psi\colon M\to M$.
\item A sesquilinear inner product $\langle -,-\rangle\colon M\times M\to 
\mathbb{Z}((q))$. 
\item A ``standard basis'' $\{a_c\}_{c\in C}$ with partially ordered index 
set $(C,<)$ 
such that 
\begin{equation}
\label{eq:standardbasis}
\psi(a_c)\in a_c+\sum_{c'<c}\mathbb{Z}[q,q^{-1}]a_{c'}.
\end{equation}
\end{itemize}
A basis $\{b_c\}_{c\in C}$ is called \textit{canonical} if the following is satisfied. 
\begin{enumerate}
\item Each vector $b_c$ is invariant under $\psi$.
\item Each vector $b_c$ belongs to $a_c+\sum_{c'<c}\mathbb{Z}[q,q^{-1}]a_{c'}$.
\item The vectors $b_c$ are \textit{almost orthonormal} in the sense that 
\begin{equation}
\label{eq:almostortho}
\langle b_c,b_{c'}\rangle\in \delta_{c,c'}+q\mathbb{Z}[q].
\end{equation}
\end{enumerate}
If a canonical basis exists, for a given pre-canonical structure, then it 
is unique by Theorem 26.3.1 in~\cite{lu}. In particular, the dual 
canonical basis is ``canonical'' in the above sense, w.r.t. to a 
pre-canonical structure which we will discuss below. 
We note that the same basis can be canonical w.r.t. different 
pre-canonical structures. 
\vskip0.5cm
Let us show now how Lusztig's canonical basis on $V_{(3^k)}^{\mathbb{Z}}\cong 
K^{\oplus}_0(\mathcal{V}_{(3^k)})$ is mapped to a basis in 
$W_{(3^k)}^{\mathbb{Z}}\cong K^{\oplus}_0(\mathcal{W}_{(3^k)})$, which 
is also canonical according to Webster's definition. After doing that, 
we will prove that the latter basis is exactly the dual canonical basis 
defined in~\cite{fkk} and~\cite{kk}. 

First the pre-canonical structures.  
\begin{itemize}
\item As Brundan and Kleshchev showed in~\cite{bk2} and Webster recalled in 1.2 in~\cite{web3}, 
the bar involution on $K^{\oplus}_0(\mathcal{V}_{(3^k)})$ is induced by 
Khovanov and Lauda's~\cite{kl5} contravariant functor 
\[
\psi\colon R_{(3^k)}\to R_{(3^k)},
\]
given by reflecting the diagrams in the $x$-axis and inverting 
their orientation. On objects this functor sends 
$\mathcal{F}_i\{t\}$ to $\mathcal{F}_i\{-t\}$. 

Using our equivalence 
\[
\Phi\colon \mathcal{V}_{(3^k)}\to \mathcal{W}_{(3^k)}
\] 
from Theorem~\ref{thm:equivalence}, we get a contravariant functor 
\[
\psi\colon \mathcal{W}_{(3^k)}\to \mathcal{W}_{(3^k)}
\]
given by reflecting the foams in the vertical $yz$-plane, i.e. the plane 
parallel to the front and the back of the foams in 
Definition~\ref{defn:cataction}, and inverting the orientation of 
their edges. 

We have 
\[
\psi(P_u)\cong P_u,\quad\text{for any non-elliptic web}\; u.
\]
It might seem confusing that $P_u$ is again a left and not a right 
$K_{(3^k)}$-module. The reason is that any $f\in K_{(3^k)}$ acts 
on $\psi(P_u)$ by multiplication on the right with $\psi(f)$. 
Since $\psi$ is contravariant, this gives a left action again.  

Using the isomorphism 
\[
\psi\colon W_{(3^k)}^{\mathbb{Z}}\to K^{\oplus}_0(\mathcal{W}_{(3^k)})
\] 
to pull back $K^{\oplus}_0(\psi)$, we get a bar involution on $W_{(3^k)}^{\mathbb{Z}}$ which 
fixes the non-elliptic webs. By Proposition~\ref{prop:basisbarinvariant}, 
we see that this bar involution is equal to $\tilde{\psi}$.
\item As remarked by Webster in the introduction of~\cite{web3}, the inner 
product on $K^{\oplus}_0(\mathcal{V}_{(3^k)})$ is given by the Euler form 
\[
\langle [P],[Q]\rangle =\dim_q\left(\mathrm{HOM}(P,Q)\right).
\]
Pulling back the Euler form via the isomorphism 
\[
\gamma\colon V_{(3^k)}^{\mathbb{Z}}\to K^{\oplus}_0(\mathcal{V}_{(3^k)})
\]
gives the $q$-Shapovalov form.  

Our isomorphism  
\[
K^{\oplus}_0(\Phi)\colon K^{\oplus}_0(\mathcal{V}_{(3^k)})\to K^{\oplus}_0(\mathcal{W}_{(3^k)})
\] 
from Theorem~\ref{thm:equivalence2} is an isometry intertwining 
the Euler forms. Furthermore, the Euler form on the latter 
Grothendieck group corresponds to 
the normalised Kuperberg form on $W_{(3^k)}^{\mathbb{Z}}$, by 
Lemma~\ref{lem:phiisometry}.
\item For our purpose, we are only interested in a standard basis 
of $K^{\oplus}_0(\mathcal{W}_{(3^k)})$. We take $\{[P_u]\}$, where the $u$ 
are the non-elliptic webs in $W_{(3^k)}^{\mathbb{Z}}$. The partial ordering is given 
by the lexicographical ordering of the state-strings for each $S$. By 
Proposition~\ref{prop:basisbarinvariant}, we see that $\{[P_u]\}$ 
satisfies~\eqref{eq:standardbasis}.
\end{itemize} 
Now, let us have a look at the canonical bases in 
$K^{\oplus}_0(\mathcal{V}_{(3^k)})$ and $K^{\oplus}_0(\mathcal{W}_{(3^k)})$, 
which both satisfy Webster's definition. 
\begin{enumerate}
\item The canonical basis elements in $K^{\oplus}_0(\mathcal{V}_{(3^k)})$ are 
the classes of the indecomposable projective $R_{(3^k)}$-modules, 
with their gradings suitably normalized (which is all that we need). These elements correspond precisely to 
Lusztig's canonical basis elements in $V_{(3^k)}^{\mathbb{Z}}$, 
as shown by Brundan and Kleshchev~\cite{bk2}. In particular, they satisfy 
the three conditions for a canonical basis in Webster's list.

Our equivalence in Theorem~\ref{thm:equivalence} maps the indecomposable 
objects in $\mathcal{V}_{(3^k)}$ to the indecomposable objects in 
$\mathcal{W}_{(3^k)}$, which we had called $Q_u$. 
In particular, this shows that $\tilde{\psi}([Q_u])=q^a[Q_u]$ for some suitable value $a$. Since, isomorphisms on Grothendieck groups intertwine the $\tilde{\psi}$, we see that $q^a[Q_u]$ is $\tilde{\psi}$ invariant. But since, as a consequence of the Propositions~\ref{prop:unitriang} and~\ref{prop:basisbarinvariant}, $[Q_u]$ is already $\tilde{\psi}$ invariant, we see that $a=0$. 
\item The $[Q_u]$ also satisfy the second condition in Webster's list, 
as follows from inverting the change of basis matrix in 
Proposition~\ref{prop:unitriang}.
\item The third condition in Webster's list, for the $[Q_u]$, 
follows from the fact that Lusztig's canonical basis elements $[P_u]$ 
satisfy that condition and the fact that the isomorphism 
\[
K^{\oplus}_0(\Phi)\colon K^{\oplus}_0(\mathcal{V}_{(3^k)})\to K^{\oplus}_0(\mathcal{W}_{(3^k)}),
\] 
with $\Phi$ as in Theorem~\ref{thm:equivalence2}, 
maps Lusztig's canonical basis isometrically onto $\{Q_u\}$.   
\end{enumerate}

\begin{thm}
\label{thm:dualcan}
The basis 
\[
\left\{[Q_u]\mid u\in B_S\right\}
\]
corresponds to the dual canonical basis 
of $\mathrm{Inv}(V_S^{\mathbb{Z}})$, under the isomorphisms 
\[ 
\mathrm{Inv}(V_S^{\mathbb{Z}})\cong W_S^{\mathbb{Z}}\cong K^{\oplus}_0(K_S).
\]
\end{thm}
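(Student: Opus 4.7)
The plan is to deduce Theorem~\ref{thm:dualcan} from Lusztig's uniqueness theorem for canonical bases (Theorem 26.3.1 of~\cite{lu}) as reformulated by Webster~\cite{web3}. The discussion preceding the theorem has already pinned down a pre-canonical structure on $W_S^{\mathbb{Z}}$, namely the bar involution $\tilde{\psi}$, the normalised Kuperberg form $\langle\;\cdot\;,\;\cdot\;\rangle$, and the standard basis $\{w_u\}_{u\in B_S}$ indexed by non-elliptic webs and ordered lexicographically via the canonical state strings $J_u$; moreover, that discussion verified that $\{[Q_u]\mid u\in B_S\}$ is a canonical basis in Webster's sense with respect to this structure. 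Under the identifications $\mathrm{Inv}(V_S^{\mathbb{Z}})\cong W_S^{\mathbb{Z}}\cong K^{\oplus}_0(K_S)$ of Lemma~\ref{lem:isoirrep} and Theorem~\ref{thm:equivalence2}, it therefore suffices to show that the dual canonical basis $\{e^S_{\heartsuit J}\}$ of~\cite{fkk, kk} satisfies the very same three axioms; uniqueness then forces $[Q_u]=e^S_{\heartsuit J_u}$ cell by cell.

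Bar-invariance of $e^S_{\heartsuit J}$ is part of its definition~\eqref{eq:dualcaneq}. For the upper-unitriangular condition with respect to $\{w_u\}$, I would combine Theorem~\ref{thm:upptriang}, which gives $w_J^S=e^S_J+\sum_{J'<J}c(S,J,J')e^S_{J'}$ with $c\in\mathbb{N}[q,q^{-1}]$, with the defining expansion $e^S_{\heartsuit J}=e^S_J+\sum_{J'<J}c'(S,J,J')e^S_{J'}$, $c'\in q\mathbb{Z}[q]$. Since the change-of-basis matrix from the web basis to the tensor basis is upper-unitriangular and hence invertible over $\mathbb{Z}[q,q^{-1}]$, one obtains an expansion $e^S_{\heartsuit J}=w_J^S+\sum_{J'<J}d(S,J,J')\,w_{J'}^S$ with $d\in\mathbb{Z}[q,q^{-1}]$, which is exactly condition (2) in Webster's list.

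The hard part will be verifying the almost-orthonormality axiom, i.e.\ $\langle e^S_{\heartsuit J},e^S_{\heartsuit J'}\rangle \in \delta_{J,J'}+q\mathbb{Z}[q]$ with respect to the normalised Kuperberg form. My approach is to import this from the $q$-Shapovalov side using skew Howe duality. By Lemma~\ref{lem:phiisometry}, $\phi\colon V_{(3^k)}^{\mathbb{Z}}\to W_{(3^k)}^{\mathbb{Z}}$ is an isometry, and the normalisation in Definition~\ref{defn:normkuperform} was chosen precisely so that the restriction to the $\mu_S$-weight space matches the inner product on $W_S^{\mathbb{Z}}$. The dual canonical basis of $V_S^{\mathbb{Z}}$ is, by standard Lusztig theory, almost orthonormal with respect to the $q$-Shapovalov form on the tensor product, and by Theorem 3 of~\cite{kk} the dual canonical basis of $W_S^{\mathbb{Z}}$ is the restriction of the dual canonical basis of $V_S^{\mathbb{Z}}$ to the invariant subspace, so it inherits almost orthonormality. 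With all three Webster axioms in hand, uniqueness yields $[Q_u]=e^S_{\heartsuit J_u}$ for every $u\in B_S$, which is precisely the statement of Theorem~\ref{thm:dualcan}.
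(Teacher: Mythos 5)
Your overall strategy — invoke Lusztig/Webster uniqueness of canonical bases, so it suffices to show the dual canonical basis of $\mathrm{Inv}(V_S^{\mathbb{Z}})$ satisfies the same three axioms as $\{[Q_u]\}$ with respect to the fixed pre-canonical structure on $W_S^{\mathbb{Z}}$ — is exactly the paper's strategy, and your treatment of bar-invariance and upper-unitriangularity is fine. But the almost-orthonormality step, which you yourself flag as ``the hard part'', contains a genuine gap and in fact a factual error.

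You assert that the dual canonical basis of $V_S^{\mathbb{Z}}$ is ``by standard Lusztig theory, almost orthonormal with respect to the $q$-Shapovalov form on the tensor product''. The paper explicitly denies this: the form in which Frenkel--Khovanov--Kirillov and Khovanov--Kuperberg work is Lusztig's bilinear form extended factorwise (Section 19.1.1 and 27.3 of~\cite{lu}), and the paper remarks that the associated sesquilinear form $\langle-,-\rangle_{\mathrm{Lusz}}$ is \emph{not} equal to the factorwise $q$-Shapovalov form; the latter is the pre-canonical structure for the (undualised) canonical basis of $V_S^{\mathbb{Z}}$ in Webster's Theorem~3.10. Moreover, Lemma~\ref{lem:phiisometry} identifies the normalised Kuperberg form with the $q$-Shapovalov form of the Howe-dual irreducible $V_{(3^k)}$, not with any factorwise form on the tensor product $V_S$. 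So even granting some orthogonality for $\{e^S_{\heartsuit J}\}$ inside $V_S$, the restriction argument does not transfer it to the Kuperberg form on $W_S$, because the two realisations of $W_S$ (as the invariant subspace of $V_S$ and as a weight space of $V_{(3^k)}$) carry forms that are not a priori compatible. Identifying them is precisely the content the paper supplies: a direct computation that the symmetric bilinear forms $(-,-)_{\mathrm{Lusz}}$ and $(-,-)_{\mathrm{Kup}}$ agree on $W_S^{\mathbb{Z}}$, by showing the elementary tensors $e^S_J$ are orthonormal for $(-,-)_{\mathrm{Lusz}}$ and then evaluating both forms on basis webs $w^S_J$ via the weight-zero property of the canonical flow and Theorem~\ref{thm:upptriang}. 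Without that computation your proof does not close; you would need to insert it (or an equivalent argument) before appealing to uniqueness.
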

\begin{proof}
The remarks above prove that $\{[Q_u]\mid u\in B_S\}$ 
is a canonical basis in the sense 
of Webster's definition. What remains to be proven, is that it is exactly 
the dual canonical basis defined by Frenkel, Khovanov and Kirillov 
Jr.~\cite{fkhk} (and in Theorem 3 in~\cite{kk}).

As we demonstrated above, the bar involution on $K^{\oplus}_0(\mathcal{W}_{(3^k)})$ 
is exactly the bar involution for the dual canonical basis in~\cite{fkhk} 
and~\cite{kk}. 

As we will explain below, the 
normalised Kuperberg $\mathbb{Z}[q,q^{-1}]$-sesquilinear form on 
$K^{\oplus}_0(\mathcal{W}_{(3^k)})$, given in Definition~\ref{defn:normkuperform} and 
denoted by $\langle-,-\rangle_{\mathrm{Kup}}$ in this proof, 
is exactly the one corresponding to the 
pre-canonical structure used in~\cite{fkhk} and~\cite{kk}. 

Since there is at most one canonical basis for any given 
pre-canonical structure on $K^{\oplus}_0(\mathcal{W}_{(3^k)})$, this proves that the 
two bases are equal. 
\vskip0.5cm
For completeness, let us explain why $\langle-,-\rangle_{\mathrm{Kup}}$ is 
exactly equal to the $\mathbb{Z}[q,q^{-1}]$-sesquilinear 
inner product that is used \textit{implicitly} 
in~\cite{fkhk} and~\cite{kk}. The form that is used \textit{explicitly} 
in~\cite{fkhk} 
and~\cite{kk} is actually Lusztig's $\mathbb{Z}[q,q^{-1}]$-bilinear form, 
denoted $(-,-)_{\mathrm{Lusz}}$ in this proof and 
defined in Section 19.1.1 in~\cite{lu} for irreducible modules and 
extended factorwise to tensor 
products in Section 27.3 of that same book. 

Therefore, we 
first have to recall how the $\mathbb{Z}[q,q^{-1}]$-bilinear forms from above are 
related to $\mathbb{Z}[q,q^{-1}]$-sesquilinear forms.
Given a $\mathbb{Z}[q,q^{-1}]$-bilinear form $(-,-)$ on $V_S^{\mathbb{Z}}$, we can define a 
$\mathbb{Z}[q,q^{-1}]$-sesquilinear form on $V_S^{\mathbb{Z}}$ which is 
$\mathbb{Z}[q,q^{-1}]$-antilinear in the first variable, by 
\begin{equation}
\label{eq:sesquilversusbil}
\langle x,y \rangle=(\tilde{\psi}(x),y),
\end{equation}
where $\tilde{\psi}$ is the $\mathbb{Z}[q,q^{-1}]$ anti-involution 
mentioned above. 
This is exactly how Khovanov and Lauda defined their 
$\mathbb{Z}[q,q^{-1}]$-sesquilinear form on $\U$ in 
Definition 2.3 in~\cite{kl5}. 

We do not compute the action of $\tilde{\psi}$ 
on the elementary tensors $e^S_J$ explicitly in this thesis. 
As we will show below, the $e^S_J$ are orthonormal 
w.r.t. $(-,-)_{\mathrm{Lusz}}$. Therefore, it 
is easier to show that $(-,-)_{\mathrm{Lusz}}$ is equal to the 
$\mathbb{Z}[q,q^{-1}]$-bilinear form coming from 
Kuperberg's bracket, which we denote by $(-,-)_{\mathrm{Kup}}$ in this proof, 
than to compare the corresponding $\mathbb{Z}[q,q^{-1}]$-sesquilinear forms 
directly. Just for the record, we remark that 
$\langle-,-\rangle_{\mathrm{Lusz}}$ is not equal to the 
factorwise $q$-Shapovalov form, which is part of the pre-canonical 
structure for Lusztig's canonical basis of $V_S^{\mathbb{Z}}$ (see Theorem 3.10 in~\cite{web3}).

Let us recall the definition of $(-,-)_{\mathrm{Lusz}}$ 
on an irreducible weight $\UZ(\mathfrak{sl}_3)$-module 
$V^{\mathbb{Z}}$ with highest weight vector $v_h$. 
We follow Khovanov and Lauda's normalisation from Proposition 
2.2 in~\cite{kl5}. 
Lusztig's $\mathbb{Z}[q,q^{-1}]$-bilinear form on $V^{\mathbb{Z}}$ 
is uniquely determined by the properties below.
\begin{itemize}
\item $(v_h,v_h)_{\mathrm{Lusz}}=1$.
\item $(ux,y)_{\mathrm{Lusz}}=(x,\overline{\rho}(u)y)_{\mathrm{Lusz}}$. 
\item $(y,x)_{\mathrm{Lusz}}=(x,y)_{\mathrm{Lusz}}$, for any $x,y\in V^{\mathbb{Z}}$ and any 
$u\in \UZ(\mathfrak{sl}_3)$.
\end{itemize} 
Here 
$\overline{\rho}$ is the $\mathbb{Z}[q,q^{-1}]$-linear anti-involution on 
$\UZ(\mathfrak{sl}_3)$ defined by 
\[
\rho(E_i)=q^{-1}K_i^{-1}E_{-i},\;\rho(E_{-i})=q^{-1}K_iE_i,\;\rho(K_i^{\pm 1})=
K_i^{\pm 1}.
\]
Let $(-,-)_{\mathrm{Lusz}}$ also denote the $\mathbb{Z}[q,q^{-1}]$-bilinear inner 
product on $V_S^{\mathbb{Z}}$ obtained by taking 
factorwise the above form on $V_{s_i}^{\mathbb{Z}}$, for $i=1,\ldots,\ell(S)$. 
 
Before we can compute the inner product of the elementary tensors, we first 
have to compute $(-,-)_{\mathrm{Lusz}}$ on $V_+^{\mathbb{Z}}$ and 
$V_-^{\mathbb{Z}}$. Let $e_1^+$ be the highest weight 
vector of $V_+$, of weight $(1,0)$, and define 
\[
e_0^+=E_{-1}(e_1^+)\quad\text{and}\quad e_{-1}^+=E_{-2}(e_0^+).
\]
Note that $e_0^+$ and $e_{-1}^+$ are of weight $(-1,1)$ and $(0,-1)$ 
respectively. Similarly, let $e_1^{-}$ be the highest weight vector of 
$V_-^{\mathbb{Z}}$, of weight $(0,1)$, and define 
\[
e_0^-=E_{-2}(e_1^-)\quad\text{and}\quad e_{-1}^-=E_{-1}(e_0^-).
\]
Note that $e_0^+$ and $e_{-1}^+$ are of weight $(1,-1)$ and $(-1,1)$ 
respectively. Using the rules above, we get 
\[
(e_i^{\pm},e_{j}^{\pm})_{\mathrm{Lusz}}=\delta_{ij}.
\]
On $V_S^{\mathbb{Z}}$, we now get 
\begin{equation}
\label{eq:Lusorthog}
(e^S_{J'},e^S_{J''})_{\mathrm{Lusz}}=\delta_{J',J''},
\end{equation}
for any elementary tensors $e^S_{J'}$ and $e^S_{J''}$.

Note that both $(-,-)_{\mathrm{Lusz}}$ and $(-,-)_{\mathrm{Kup}}$ 
are $\mathbb{Z}[q,q^{-1}]$-bilinear and symmetric. 
Therefore, in order to show that they 
are equal, it suffices to show that we have 
\[
(w^S_J,w^S_J)_{\mathrm{Lusz}}=(w^S_J,w^S_J)_{\mathrm{Kup}},
\]
for any $w^S_J\in B_S$. 

Let $w^S_J\in B_S$ be arbitrary and write  
\[
w^S_J=e^S_J+\sum_{J'<J} c(S,J,J')e^S_{J'},
\]
as in Theorem~\ref{thm:upptriang}. Then, by~\eqref{eq:Lusorthog}, we get 
\begin{equation}
\label{eq:firstcheck}
(w^S_J,w^S_J)_{\mathrm{Lusz}}=1 + \sum_{J'<J}c(S,J,J')^2.
\end{equation}

Finally, let us 
compute $(w^S_J,w^S_J)_{\mathrm{Kup}}$. By~\eqref{eq:sesquilversusbil}, 
we see that 
\[
(w^S_J,w^S_J)_{\mathrm{Kup}}=\langle w^S_J,w^S_J\rangle=q^{\ell(S)}\langle 
(w^S_J)^*w^S_J\rangle_{\mathrm{Kup}}.
\]
The first equality follows from Proposition~\ref{prop:basisbarinvariant}. 
Now consider the way in which the coefficients $c(S,J,J')$ change 
under the symmetry $x\mapsto x^*$, for $x$ any $Y$, cup or cap with flow. 
Comparing the corresponding weights 
in~\eqref{weights} and~\eqref{weights2}, we get 
\[
\mathrm{weight}(x^*)=q^{-(\ell(t(x))-\ell(b(x)))}\mathrm{weight}(x).
\]
where $t(x)$ and $b(x)$ are the top and bottom boundary of $x$. 
Recall also that the canonical flow on $w^S_J$ has weight 0 (see 
Lemma~\ref{lem:canflowzero}). 
It follows that 
\begin{eqnarray*}
\label{eq:secondcheck}
(w^S_J, w^{S}_{\tilde{J}})_{\mathrm{Kup}}&=&
q^{\ell(S)}\langle(w^S_J)^*w^{S}_{\tilde{J}}\rangle_{\mathrm{Kup}}\\
&=&q^{\ell(S)}\left(q^{-\ell(S)}+q^{-\ell(S)}\sum_{J'<J}c(S,J,J')^2\right)\\
&=&1+\sum_{J'<J}c(S,J,J')^2.
\end{eqnarray*}
This finishes the proof that $(-,-)_{\mathrm{Lusz}}=(-,-)_{\mathrm{Kup}}$. 
\end{proof}
\subsection{An isotopy invariant basis}\label{sec-webbase}
In the present section we define a \textit{homogeneous, isotopy invariant basis} of $K_S$. Note that this section is not part of our paper~\cite{mpt} and that it splits into two subsections.

To be more precise, we give a method for obtaining several different homogeneous bases in the first subsection and explain how one can define at least one of them (by an \textit{algorithm}) in an \textit{isotopy invariant} way in the second subsection. Note that the method follows a face removing convention, motivated by the Kuperberg relations~\cite{kup}. The algorithm makes a particular, isotopy invariant choice, that we call \textit{preferred}. But the procedure that we explain works for any \textit{fixed} choice of how to remove faces as explained in Theorem~\ref{thm-basis}.

This isotopy invariant basis is parametrised by flow lines, as we explain later in Theorem~\ref{thm-basis2} and Corollary~\ref{cor-basis}. Moreover, the whole discussion in this section also works for the more general web algebra ${\mathcal W}_S^c$ from Definition~\ref{defn:webalg}. And therefore for Gronik's filtered algebra $G_S$ defined in Definition~\ref{defn-khgo}.
\vskip0.5cm
The question can be explained as follows. Given an algebra $A$ that is only defined by generators and relation, e.g. $K_S$, then it is not obvious what the dimension of $A$ is, how to find a basis of $A$ and how to find a basis of $A$ with ``good'' structure coefficients (for example cellularity as described in Section~\ref{sec-techcell}, e.g. Theorem~\ref{thm-cellular2}).

In this section we answer the second question for $K_S$ and we conjecture that one of the bases we define is a graded cellular basis related to the basis for the cyclotomic KLR-algebra defined by Hu and Mathas~\cite{hm}. Recall that $K_{S}$ and $R(\mu_S-\lambda,\lambda)$ are graded Morita equivalent as we showed in Proposition~\ref{prop:morita}. This, by Theorem~\ref{thm-cellular3} of K\"onig and Xi~\cite{kx1} and the discussion in Section~\ref{sec-techcell}, shows that $K_{(3^k)}$ is a graded cellular algebra.  
\vskip0.5cm
In the whole section let $S$ denote a sign string of length $n$. Recall that if $u,v\in B_S$ are two non-elliptic webs with boundary $S$, then $w=u^*v$ is the closed web obtained by glueing $u^*$ on top of the web $v$. Moreover, recall that $u_f$ denotes a web $u$ with a flow $f$ that extents to $u$. By a slight abuse of notation, we use a similar notation for a closed web $w$ with flow $f$, but in this case the flow is closed and can be split into two flows $f_u,f_v$ that extent to $u$ and $v$ respectively and match at the boundary. Recall that flows have a weight that can be read off locally. In most pictures we use wt as a short hand notation for the weight.
\vskip0.5cm
We need some more terminology before we can start, i.e. in this section it is crucial to distinguish three types of faces.
\begin{itemize}
\item The \textit{external face} of $w$, i.e. the unbounded face around the web $w$.
\item The \textit{faces} of $w$, i.e. all bounded faces of $w$.
\item The \textit{internal faces} of $w$, i.e. all bounded faces of $w=u^*v$ not intersecting the cut line. 
\end{itemize}
\vskip0.5cm
For completeness, we note the following Proposition and a Corollary, i.e. the answer of question one from above. It is important to note that the web algebra has its $q$-degree shifted by $\{n\}$.
\begin{prop}\label{prop-dim}
Given $u,v\in B_S$, then the $q$-graded dimension $\dim_{q}$ of ${}_uK_v$ is given by the Kuperberg bracket, i.e. $\dim_{q}({}_uK_v)=q^n\cdot\langle u^*v\rangle_{\mathrm{Kup}}$.
\end{prop}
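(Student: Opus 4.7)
The plan is to reduce the closed web $w = u^*v$ to a disjoint union of circles using the foam-theoretic categorifications of the Kuperberg relations, and then compute the graded dimension of foam homology on a single circle by hand. The shift $q^n$ is then forced by the defining degree shift in $\mathcal{W}_S^0$.

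First I would verify that each of the relations (NC), (DR), and (SqR) from Section~\ref{sec-webbasicb} provides a grading-preserving direct sum decomposition of $\mathcal{F}^0(w)$. The idempotents that appear on the right-hand sides of these three relations are homogeneous of degree zero by inspection, so they split $\mathcal{F}^0(w)$ as a graded vector space into direct summands. A careful check of the grading shifts on each summand shows that the resulting identities at the level of graded dimensions are precisely the Kuperberg relations~\eqref{eq:circle}, \eqref{eq:digon}, \eqref{eq:square}; for instance, the two summands appearing in (DR) pick up shifts $\{+1\}$ and $\{-1\}$ relative to the simplified web, yielding the factor $[2] = q + q^{-1}$.

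Second, I would compute the base case. A foam $\emptyset \to \bigcirc$ is a disc $D^2$ decorated with some number of dots; by (3D) with $c=0$ together with (S), such a disc is zero whenever it carries three or more dots, while discs with zero, one, or two dots are linearly independent. The $q$-grading formula $q(U) = \chi(\partial U) - 2\chi(U) + 2d + b$ specialises, on a disc with $d$ dots, to $0 - 2 + 2d + 0 = 2d - 2$, giving basis degrees $-2, 0, 2$. Hence $\dim_q \mathcal{F}^0(\bigcirc) = q^{-2} + 1 + q^{2} = [3]$. For a disjoint union of circles the foam relations imply that foam homology is a tensor product, so graded dimensions multiply, matching the multiplicativity of $\langle - \rangle_{\mathrm{Kup}}$.

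Third, I would conclude by induction on the complexity of $w$. Kuperberg's rewriting system~\cite{kup} terminates with a disjoint union of circles (any non-elliptic closed web being either empty or a union of circles), and applying the graded decompositions of the first step at each rewriting step matches exactly the recursive evaluation of $\langle w \rangle_{\mathrm{Kup}}$. This proves $\dim_q \mathcal{F}^0(w) = \langle w \rangle_{\mathrm{Kup}}$ for any closed web $w$. Taking $w = u^*v$ and applying the shift $\{n\}$ in ${}_uK_v = \mathcal{F}^0(u^*v)\{n\}$, which multiplies the graded dimension by $q^n$, yields the claim.

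The main obstacle is the first step: verifying the precise matching between the categorical decompositions (NC), (DR), (SqR) and the Kuperberg relations at the level of graded, not merely ungraded, dimensions. This is essentially a bookkeeping exercise, but it is where the content of the proposition lies, since the ungraded statement $\mathrm{rank}\, \mathcal{F}^0(w) = \langle w \rangle|_{q=1}$ was already recorded in Section~\ref{sec-webbasicb}.
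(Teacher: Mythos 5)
Your proof is correct and follows essentially the same route as the paper's: the paper's proof of Proposition~\ref{prop-dim} simply points to the fact recorded below Definition~\ref{foamhom} (that $\dim_q\F^0(w)=\langle w\rangle_{\mathrm{Kup}}$, obtained from the graded direct-sum decompositions of $\F^0$ induced by (NC), (DR), (SqR)) together with the degree shift $\{n\}$ in Definition~\ref{defn:webalg}, which are precisely the two ingredients you supply in more detail. One small wording slip in your third paragraph: since a circle is itself elliptic, the only non-elliptic closed web is the empty web, so Kuperberg's rewriting terminates at $\emptyset$ rather than at a union of circles; your induction is unaffected, as circle removals are part of the rewriting.
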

\begin{proof}
This was already implicit in Section~\ref{sec-webbasicb} and Section~\ref{sec-webalg}, i.e. combine the notes below Definition~\ref{foamhom}, Remark~\ref{rem:flowsfoams} and Lemma~\ref{lem:webalgaltern}.
\end{proof}
\begin{cor}\label{cor-dim}
Let ${}_uB_v$ be any homogeneous basis of ${}_uK_v$ and let $w=u^*v$. Then there is a bijection of sets
\[
\{w_f\mid\;\text{weight of }f=k\}={}_uF_v^{k}\cong {}_uB_v^{-k+n}=\{b\in {}_uB_v\mid \mathrm{deg}(b)=-k+n\}.
\] 
\end{cor}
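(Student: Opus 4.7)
The plan is to combine Proposition~\ref{prop-dim} with the combinatorial interpretation of the Kuperberg bracket as a generating function for flows. Recall from Section~\ref{sec-webbasica} and Remark~\ref{rem:flowsfoams} that the Kuperberg bracket admits the expansion
\[
\langle u^*v\rangle_{\mathrm{Kup}} \;=\; \sum_{f \in {}_uF_v} q^{-\mathrm{wt}(f)},
\]
where the sum runs over all flows on the closed web $w = u^*v$ and $\mathrm{wt}(f)$ is computed using the local contributions of~\eqref{weights} and~\eqref{weights2}. Substituting into the identity of Proposition~\ref{prop-dim} yields
\[
\dim_{q}({}_uK_v) \;=\; q^{n}\,\langle u^*v\rangle_{\mathrm{Kup}} \;=\; \sum_{f \in {}_uF_v} q^{\,n - \mathrm{wt}(f)}.
\]

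First I would group the right-hand side by flow weight, writing it as $\sum_{k} \bigl(\#{}_uF_v^{\,k}\bigr)\,q^{-k+n}$. Reading off the coefficient of $q^{-k+n}$ on both sides and recalling that the left-hand side is precisely $\sum_{j} \dim({}_uK_v^{\,j})\,q^{j}$, we obtain the numerical identity
\[
\#{}_uF_v^{\,k} \;=\; \dim\bigl({}_uK_v^{\,-k+n}\bigr).
\]
Second, since ${}_uB_v$ is a homogeneous basis of ${}_uK_v$, the right-hand side equals $\#{}_uB_v^{\,-k+n}$. Any bijection of finite sets of equal cardinality then furnishes the claimed bijection
\[
{}_uF_v^{\,k} \;\cong\; {}_uB_v^{\,-k+n}.
\]

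I should stress that what is produced here is only a set-theoretic bijection; it is \emph{not} canonical, nor does it provide a way to explicitly associate a basis element to a given flow. That stronger statement is precisely the content of Theorem~\ref{thm-basis} and Theorem~\ref{thm-basis2} below, where a particular homogeneous basis is constructed by an algorithmic face-removal procedure, and where each basis element is built from a specific flow. The corollary as stated is therefore the ``coarse'' numerical shadow of the finer structural results to follow.

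The only real obstacle is bookkeeping: one must verify that the sign convention encoded in the definition of $\mathrm{wt}(f)$ in~\eqref{weights}--\eqref{weights2} is compatible both with the normalization of the Kuperberg bracket used in Proposition~\ref{prop-dim} and with the grading shift $\{n\}$ appearing in ${}_uK_v = \mathcal{F}^c(u^*v)\{n\}$. Once this is checked by inspecting the definition of the $q$-grading on foams (Section~\ref{sec-webbasicb}) and comparing with the weight contributions at each elementary piece produced by the growth algorithm, the argument above goes through with no further difficulty.
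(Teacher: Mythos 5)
Your proposal is correct and follows exactly the route the paper intends: Proposition~\ref{prop-dim} gives $\dim_q({}_uK_v)=q^n\langle u^*v\rangle_{\mathrm{Kup}}$, the discussion of flows in Section~\ref{sec-webbasica} (together with Remark~\ref{rem:flowsfoams}) gives the state-sum expansion $\langle u^*v\rangle_{\mathrm{Kup}}=\sum_f q^{-\mathrm{wt}(f)}$, and comparing coefficients yields the numerical bijection. You also correctly flag that the corollary only asserts a set-theoretic bijection and that the canonical version is deferred to Theorems~\ref{thm-basis} and~\ref{thm-basis2}, which is precisely the paper's intent.
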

\begin{proof}
This follows directly from Proposition~\ref{prop-dim} and the discussion in Section~\ref{sec-webbasica}.
\end{proof}
\subsubsection*{Face removing algorithm}
The definition below gives a procedure how to obtain for a given closed web $w=u^*v$ and a flow $f$ of weight $k$ on it an element in ${}_uK_v$ of degree $-k+n$. By a slight abuse of terminology, we call this procedure an algorithm, although we avoid to make some necessary choices at this point. This inductive algorithm is called the \textit{face removing algorithm}.

It is worth noting that the order in which we apply our local rules matters, since different orders give rise to different foams, but can be any fixed order if not otherwise specified.
\begin{defn}\label{defn-procedure}
\textbf{(Face removing algorithm)} Given a closed basis web $w=u^*v$ with any flow $f$, denoted $w_{f}$, we define a 
foam $f_{w_{f}}\colon \emptyset \to w\in {}_uK_v$ by a set of inductive, local rules 
beginning with the identity foam of $w$. Each rule corresponds to the 
removal of a circle, digon or a square face by glueing an elementary foam to the bottom of the previous foam. This is done in a way that is consistent with the flow until no more rules can be applied.

The elementary foams, which determine the local rules, called circle, digon and square removals, are depicted below. The \textit{circle removals} (for both possible orientations of the web) are
\[
\xy
 (0,0)*{\includegraphics[height=.05\textheight]{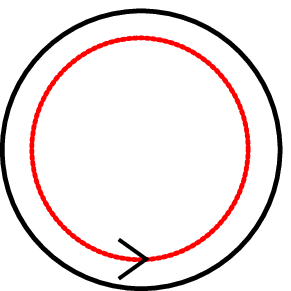}};
 (0,-7)*{\scriptstyle \text{wt}=2};
 (12,0)*{\longmapsto};
 (12,-5)*{\includegraphics[height=.03\textheight]{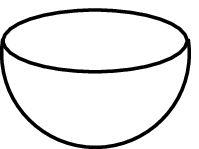}};
 (20,0)*{\emptyset};
 (20,-7)*{\scriptstyle \text{wt}=0};
 \endxy\;\;
 \xy
 (0,0)*{\includegraphics[height=.05\textheight]{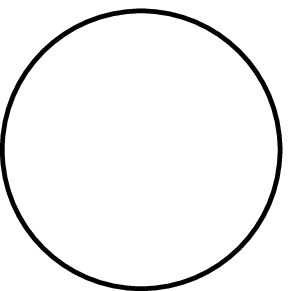}};
 (0,-7)*{\scriptstyle \text{wt}=0};
 (12,0)*{\longmapsto};
 (12,-5)*{\includegraphics[height=.03\textheight]{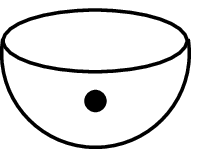}};
 (20,0)*{\emptyset};
 (20,-7)*{\scriptstyle \text{wt}=0};
 \endxy\;\;
\xy
 (0,0)*{\includegraphics[height=.05\textheight]{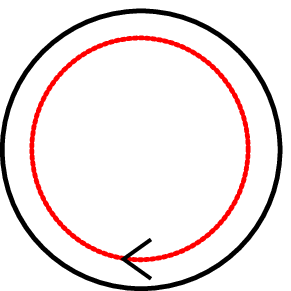}};
 (0,-7)*{\scriptstyle \text{wt}=-2};
 (12,0)*{\longmapsto};
 (12,-5)*{\includegraphics[height=.03\textheight]{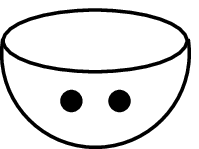}};
 (20,0)*{\emptyset};
 (20,-7)*{\scriptstyle \text{wt}=0};
 \endxy
\]
and the \textit{digon removals} are
\[
\xy
 (0,0)*{\includegraphics[height=.15\textheight]{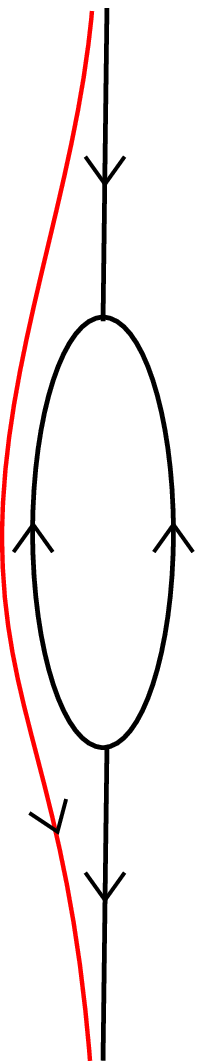}};
 (-6,-10)*{\scriptstyle \text{wt}=1};
 (13,0)*{\longmapsto};
 (13,-7)*{\includegraphics[height=.06\textheight]{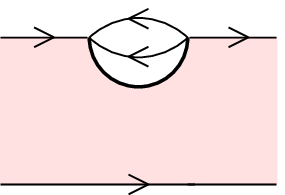}};
 (26,0)*{\includegraphics[height=.15\textheight]{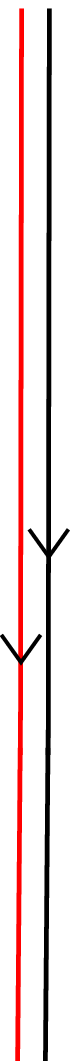}};
 (32,-10)*{\scriptstyle \text{wt}=0};
 (0,-40)*{\includegraphics[height=.15\textheight]{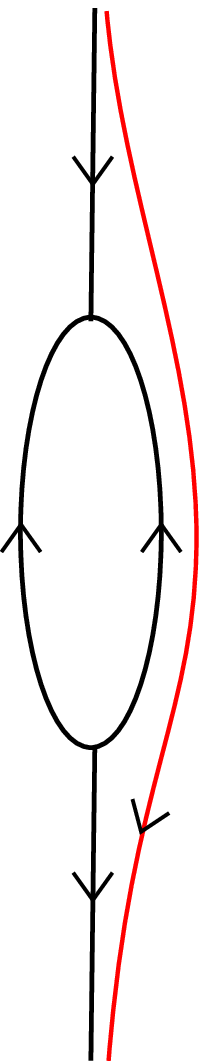}};
 (-6,-50)*{\scriptstyle \text{wt}=-1};
 (13,-40)*{\longmapsto};
 (13,-47)*{\includegraphics[height=.06\textheight]{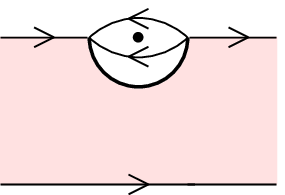}};
 (26,-40)*{\includegraphics[height=.15\textheight]{res/figs/basis/linedown.eps}};
 (32,-50)*{\scriptstyle \text{wt}=0};
 \endxy\;\;
 \xy
 (0,0)*{\includegraphics[height=.15\textheight]{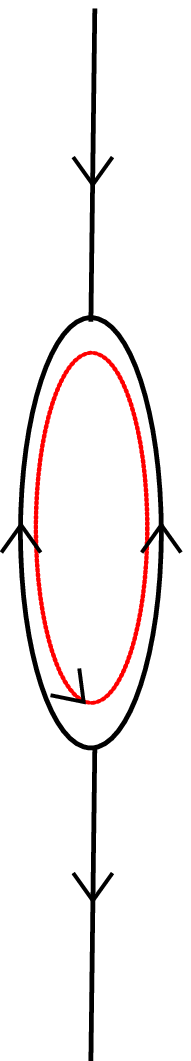}};
 (-6,-10)*{\scriptstyle \text{wt}=1};
 (13,0)*{\longmapsto};
 (13,-7)*{\includegraphics[height=.06\textheight]{res/figs/basis/digonem.eps}};
 (26,0)*{\includegraphics[height=.15\textheight]{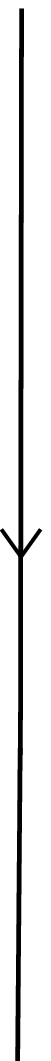}};
 (32,-10)*{\scriptstyle \text{wt}=0};
 (0,-40)*{\includegraphics[height=.15\textheight]{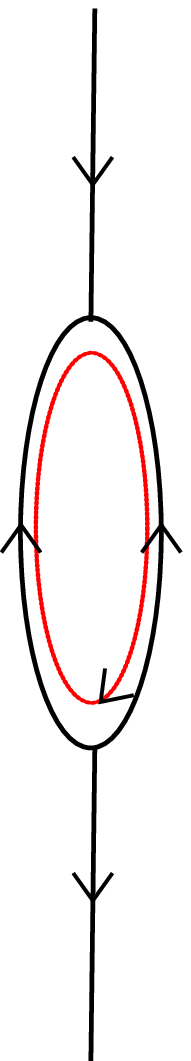}};
 (-6,-50)*{\scriptstyle \text{wt}=-1};
 (13,-40)*{\longmapsto};
 (13,-47)*{\includegraphics[height=.06\textheight]{res/figs/basis/digondot.eps}};
 (26,-40)*{\includegraphics[height=.15\textheight]{res/figs/basis/lineem.eps}};
 (32,-50)*{\scriptstyle \text{wt}=0};
 \endxy\;\;
 \xy
 (0,0)*{\includegraphics[height=.15\textheight]{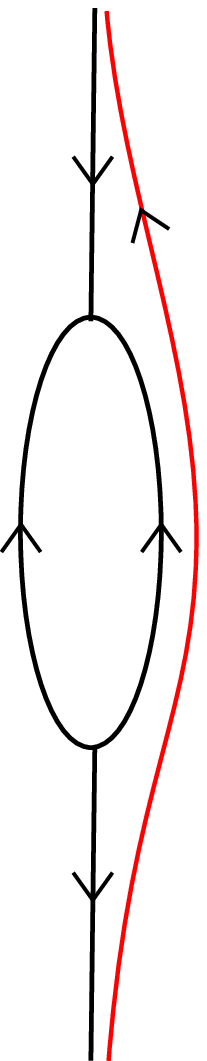}};
 (-6,-10)*{\scriptstyle \text{wt}=1};
 (13,0)*{\longmapsto};
 (13,-7)*{\includegraphics[height=.06\textheight]{res/figs/basis/digonem.eps}};
 (26,0)*{\includegraphics[height=.15\textheight]{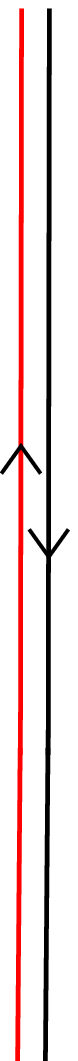}};
 (32,-10)*{\scriptstyle \text{wt}=0};
 (0,-40)*{\includegraphics[height=.15\textheight]{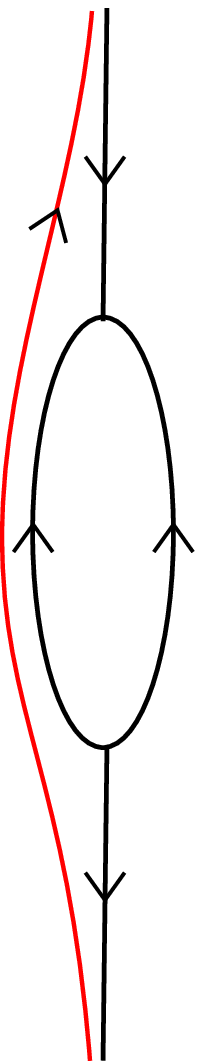}};
 (-6,-50)*{\scriptstyle \text{wt}=-1};
 (13,-40)*{\longmapsto};
 (13,-47)*{\includegraphics[height=.06\textheight]{res/figs/basis/digondot.eps}};
 (26,-40)*{\includegraphics[height=.15\textheight]{res/figs/basis/lineup.eps}};
 (32,-50)*{\scriptstyle \text{wt}=0};
 \endxy
\]
and the \textit{vertical square removals} are
\[
\xy
 (0,0)*{\includegraphics[height=.08\textheight]{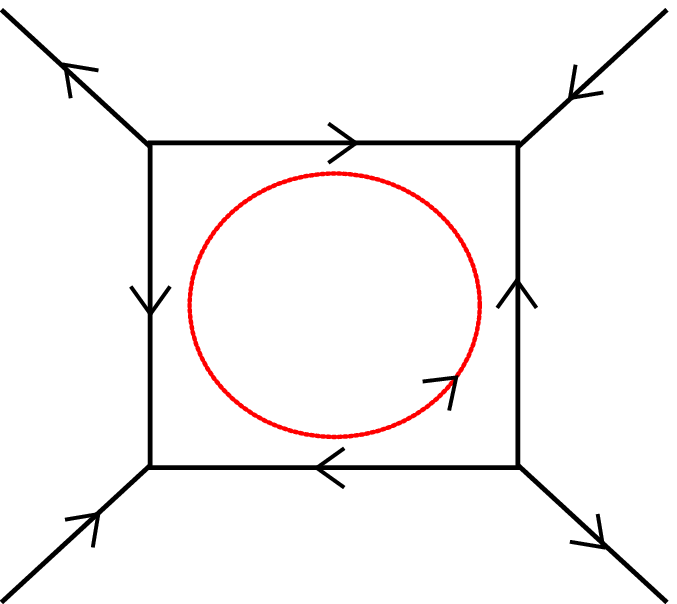}};
 (0,-10)*{\scriptstyle \text{wt}=0};
 (17,0)*{\longmapsto};
 (17,-7.5)*{\includegraphics[height=.055\textheight]{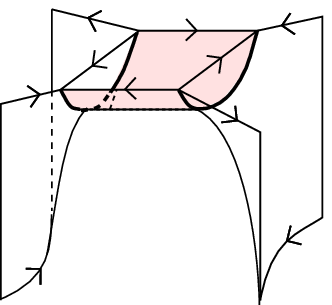}};
 (33,0)*{\includegraphics[height=.08\textheight]{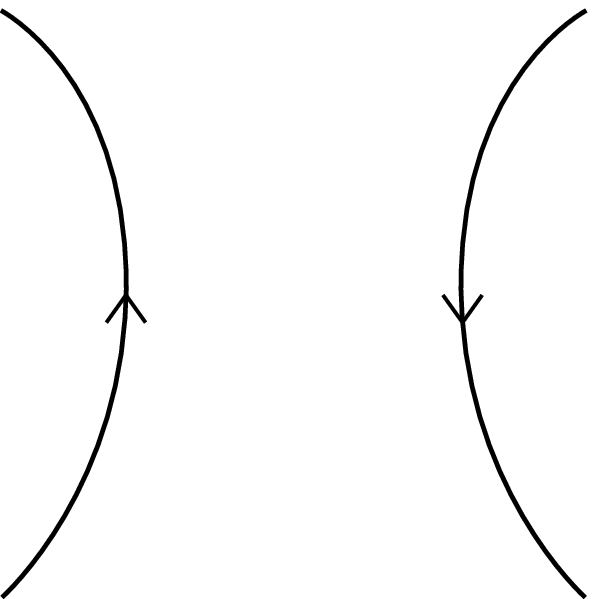}};
 (33,-10)*{\scriptstyle \text{wt}=0};
 (0,-25)*{\includegraphics[height=.08\textheight]{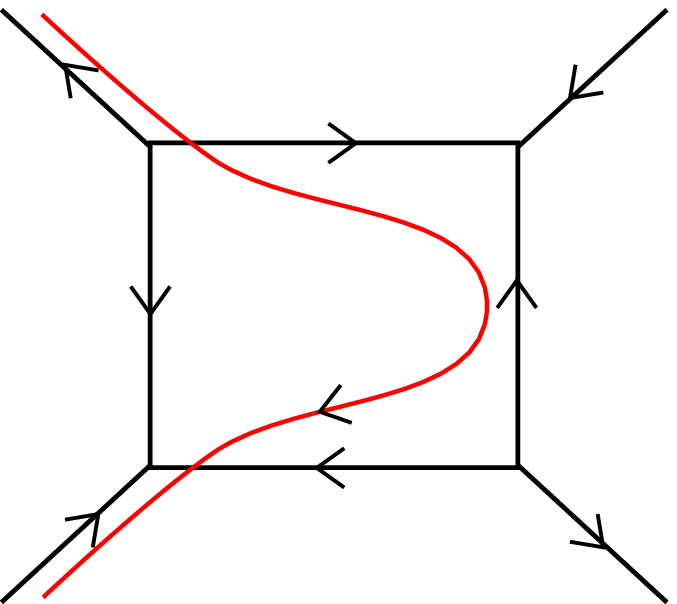}};
 (0,-35)*{\scriptstyle \text{wt}=0};
 (17,-25)*{\longmapsto};
 (17,-32.5)*{\includegraphics[height=.055\textheight]{res/figs/basis/squarevert.eps}};
 (33,-25)*{\includegraphics[height=.08\textheight]{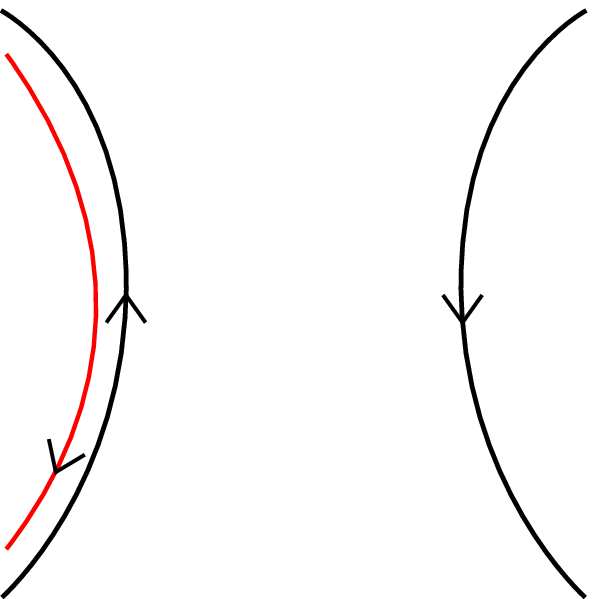}};
 (33,-35)*{\scriptstyle \text{wt}=0};
 (0,-50)*{\includegraphics[height=.08\textheight]{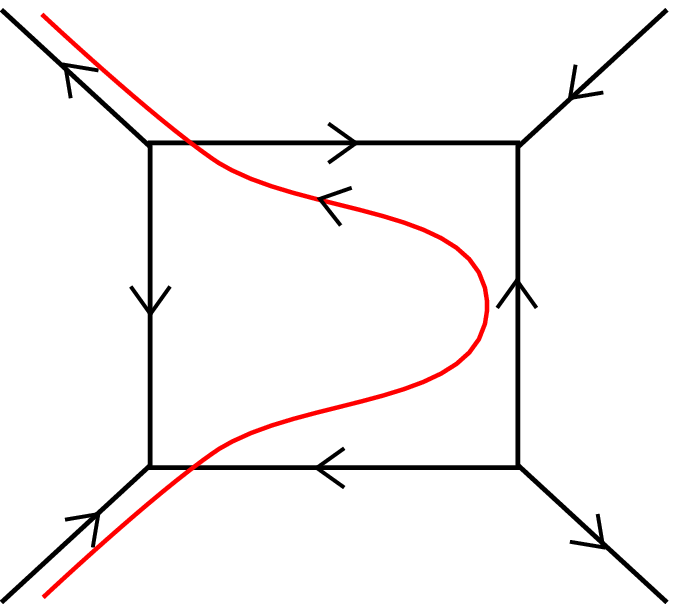}};
 (0,-60)*{\scriptstyle \text{wt}=0};
 (17,-50)*{\longmapsto};
 (17,-57.5)*{\includegraphics[height=.055\textheight]{res/figs/basis/squarevert.eps}};
 (33,-50)*{\includegraphics[height=.08\textheight]{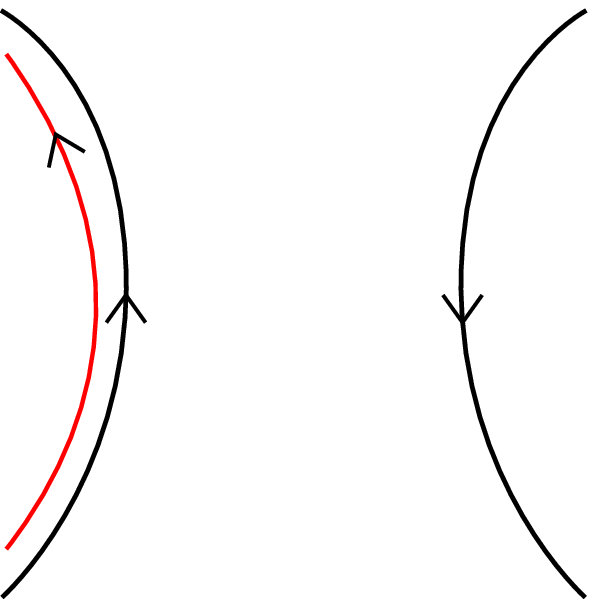}};
 (33,-60)*{\scriptstyle \text{wt}=0};
 \endxy\;\;\;\;
 \xy
 (0,0)*{\includegraphics[height=.08\textheight]{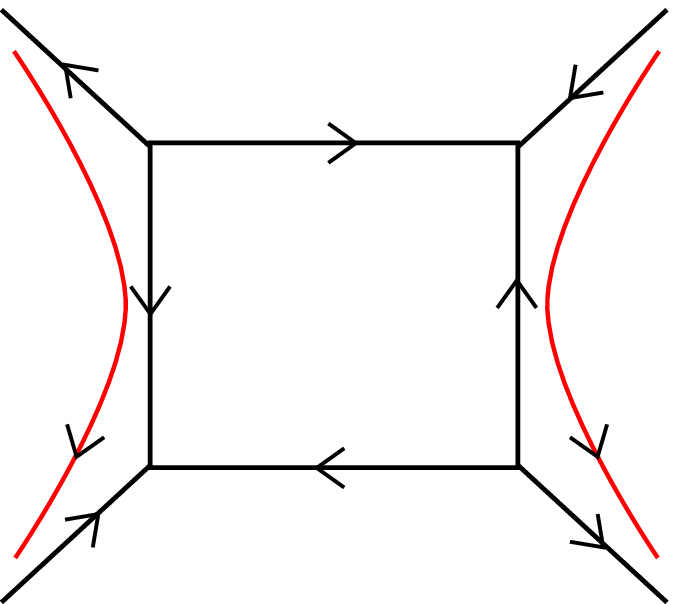}};
 (0,-10)*{\scriptstyle \text{wt}=0};
 (17,0)*{\longmapsto};
 (17,-7.5)*{\includegraphics[height=.055\textheight]{res/figs/basis/squarevert.eps}};
 (33,0)*{\includegraphics[height=.08\textheight]{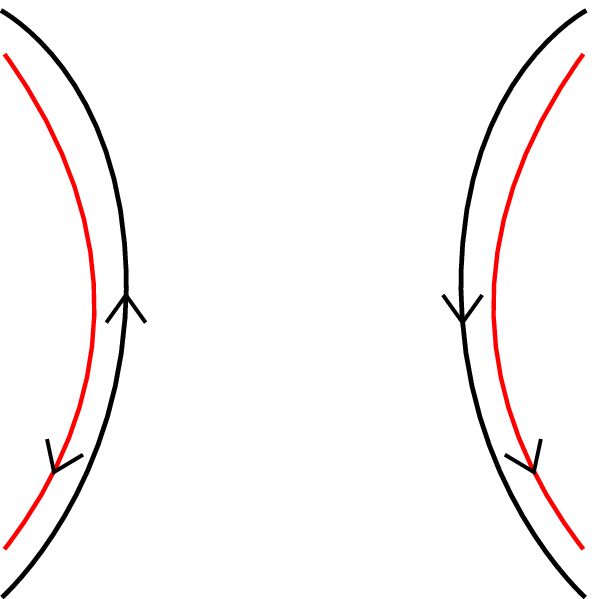}};
 (33,-10)*{\scriptstyle \text{wt}=0};
 (0,-25)*{\includegraphics[height=.08\textheight]{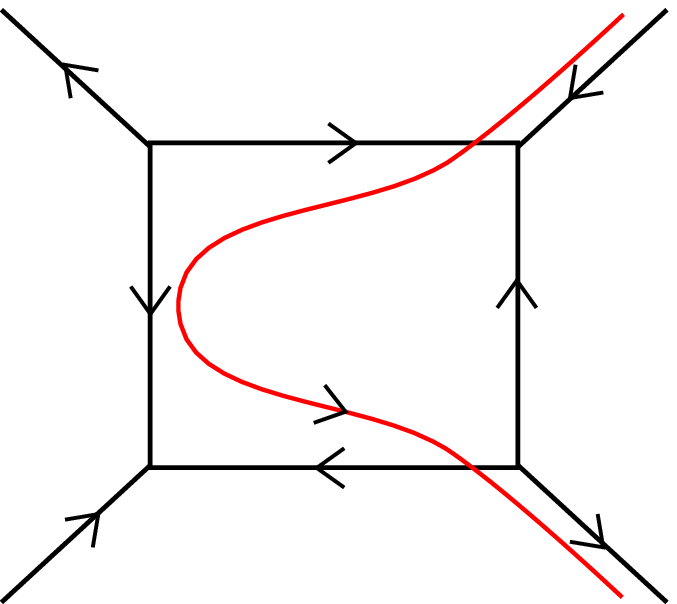}};
 (0,-35)*{\scriptstyle \text{wt}=0};
 (17,-25)*{\longmapsto};
 (17,-32.5)*{\includegraphics[height=.055\textheight]{res/figs/basis/squarevert.eps}};
 (33,-25)*{\includegraphics[height=.08\textheight]{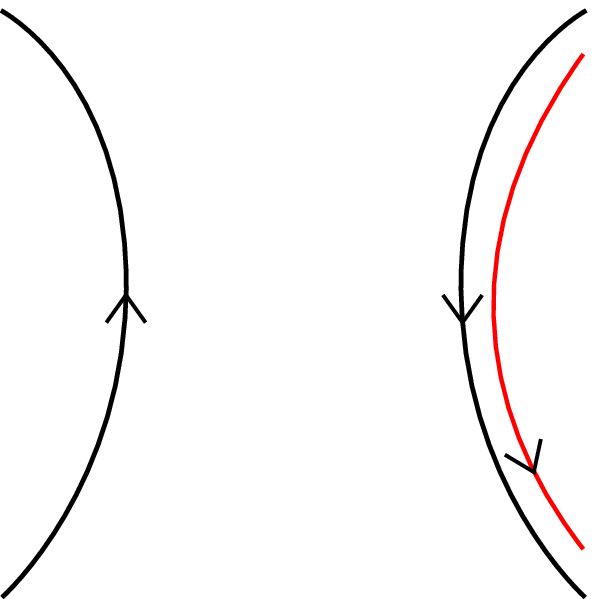}};
 (33,-35)*{\scriptstyle \text{wt}=0};
 (0,-50)*{\includegraphics[height=.08\textheight]{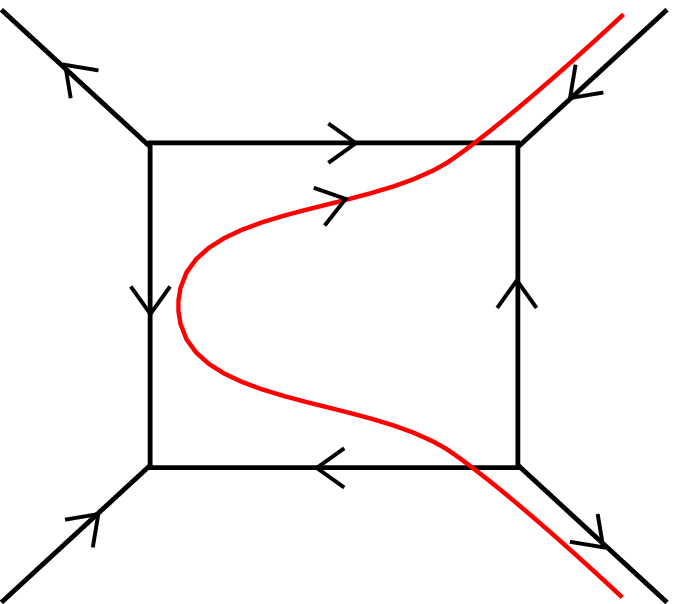}};
 (0,-60)*{\scriptstyle \text{wt}=0};
 (17,-50)*{\longmapsto};
 (17,-57.5)*{\includegraphics[height=.055\textheight]{res/figs/basis/squarevert.eps}};
 (33,-50)*{\includegraphics[height=.08\textheight]{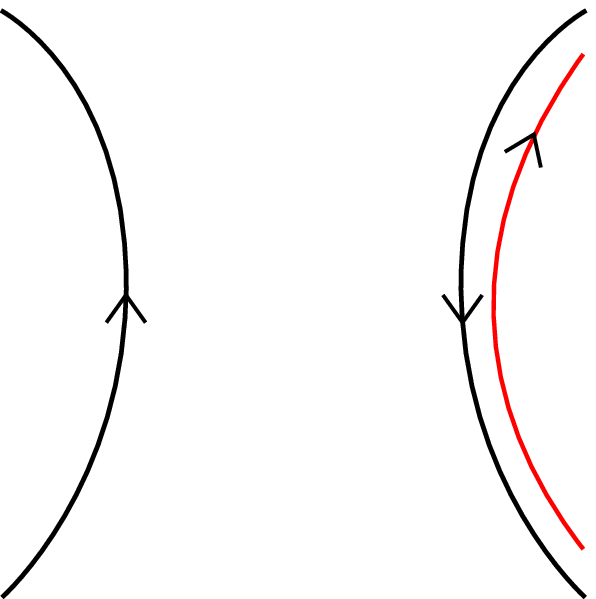}};
 (33,-60)*{\scriptstyle \text{wt}=0};
 \endxy\;\;\;\;
 \xy
 (0,0)*{\includegraphics[height=.08\textheight]{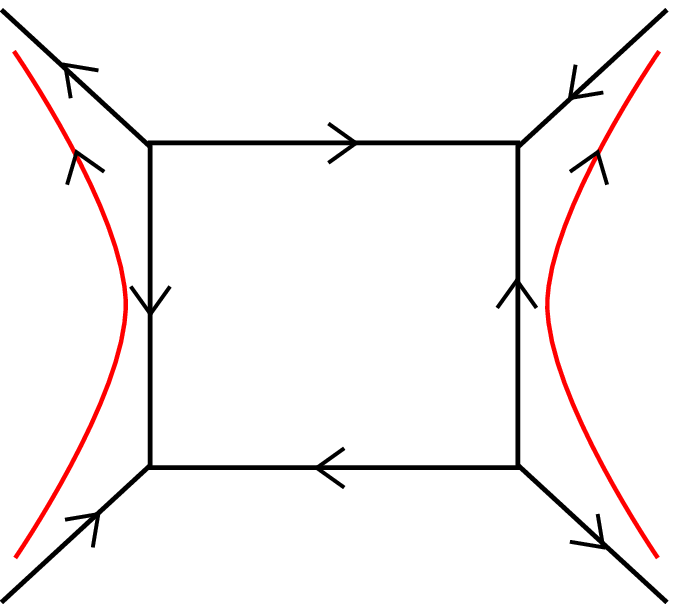}};
 (0,-10)*{\scriptstyle \text{wt}=0};
 (17,0)*{\longmapsto};
 (17,-7.5)*{\includegraphics[height=.055\textheight]{res/figs/basis/squarevert.eps}};
 (33,0)*{\includegraphics[height=.08\textheight]{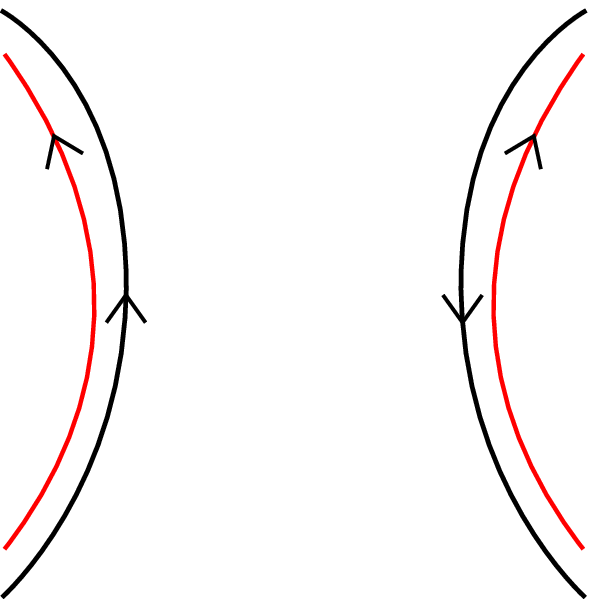}};
 (33,-10)*{\scriptstyle \text{wt}=0};
 (0,-25)*{\includegraphics[height=.08\textheight]{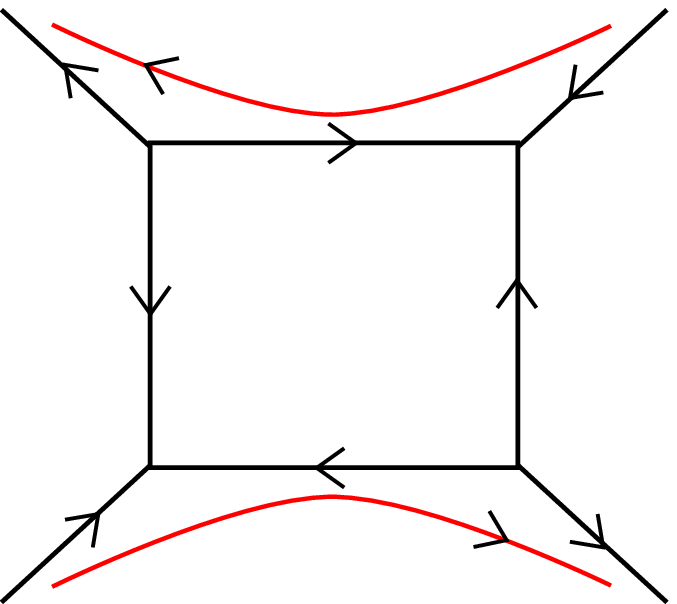}};
 (0,-35)*{\scriptstyle \text{wt}=0};
 (17,-25)*{\longmapsto};
 (17,-32.5)*{\includegraphics[height=.055\textheight]{res/figs/basis/squarevert.eps}};
 (33,-25)*{\includegraphics[height=.08\textheight]{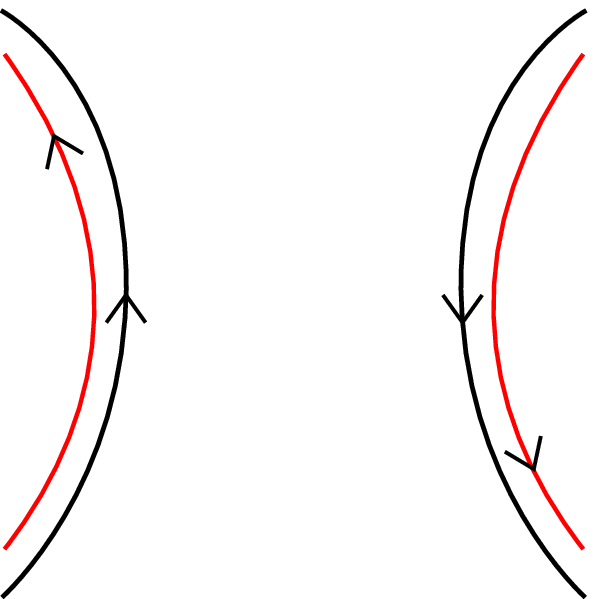}};
 (33,-35)*{\scriptstyle \text{wt}=0};
 (0,-50)*{\includegraphics[height=.08\textheight]{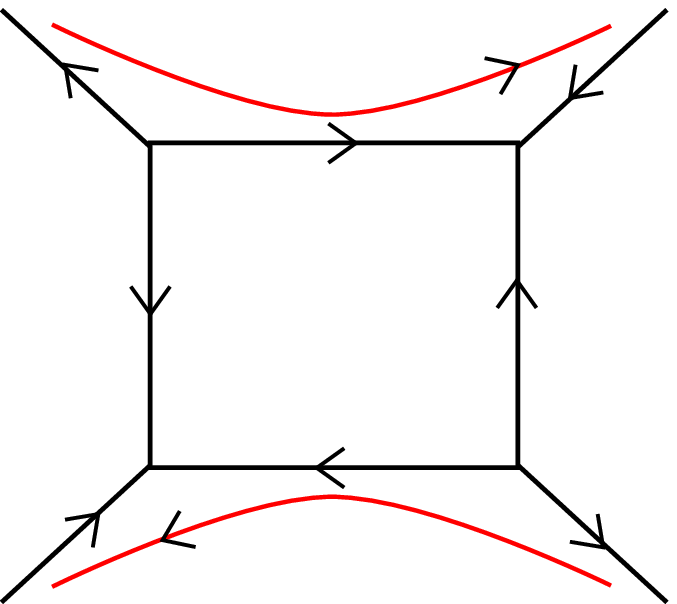}};
 (0,-60)*{\scriptstyle \text{wt}=0};
 (17,-50)*{\longmapsto};
 (17,-57.5)*{\includegraphics[height=.055\textheight]{res/figs/basis/squarevert.eps}};
 (33,-50)*{\includegraphics[height=.08\textheight]{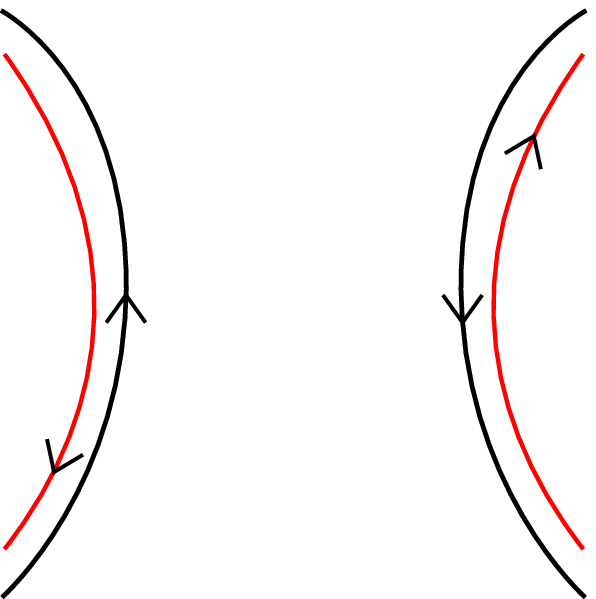}};
 (33,-60)*{\scriptstyle \text{wt}=0};
 \endxy
\]
and the \textit{horizontal square removals} are
\[
\xy
 (0,0)*{\includegraphics[height=.08\textheight]{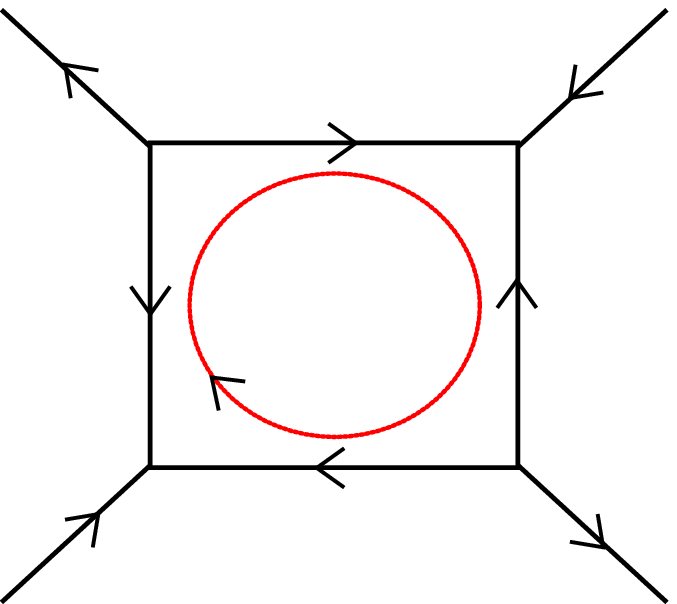}};
 (0,-10)*{\scriptstyle \text{wt}=0};
 (17,0)*{\longmapsto};
 (17,-7.5)*{\includegraphics[height=.055\textheight]{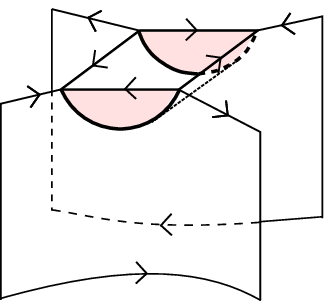}};
 (33,0)*{\reflectbox{\includegraphics[height=.08\textheight,angle=90]{res/figs/basis/vert1.eps}}};
 (33,-10)*{\scriptstyle \text{wt}=0};
 (0,-25)*{\includegraphics[height=.08\textheight]{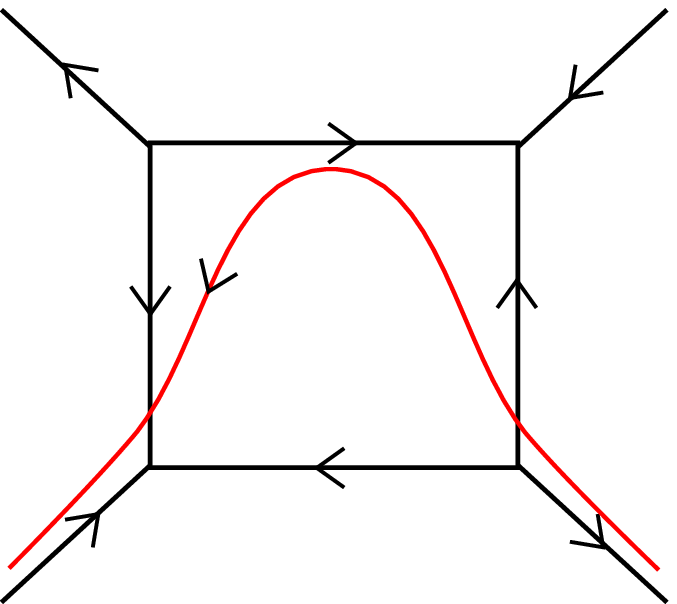}};
 (0,-35)*{\scriptstyle \text{wt}=1};
 (17,-25)*{\longmapsto};
 (17,-32.5)*{\includegraphics[height=.055\textheight]{res/figs/basis/squarehorz.eps}};
 (33,-25)*{\reflectbox{\includegraphics[height=.08\textheight,angle=90]{res/figs/basis/vert2.eps}}};
 (33,-35)*{\scriptstyle \text{wt}=1};
 (0,-50)*{\includegraphics[height=.08\textheight]{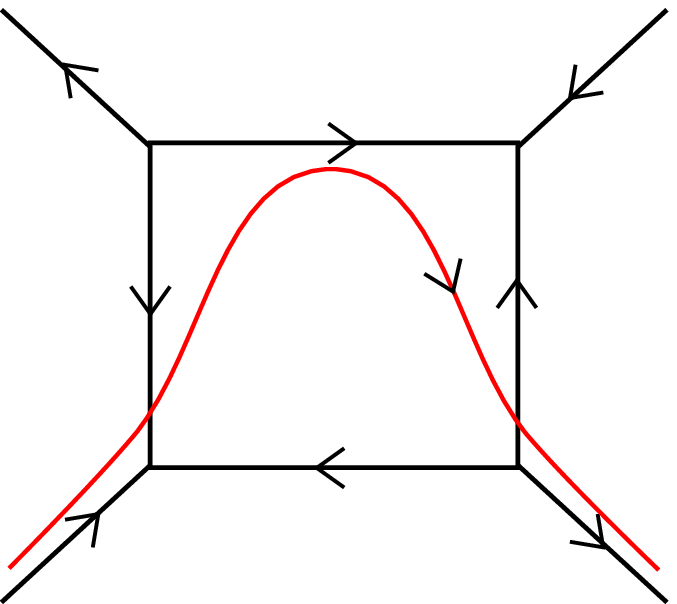}};
 (0,-60)*{\scriptstyle \text{wt}=-1};
 (17,-50)*{\longmapsto};
 (17,-57.5)*{\includegraphics[height=.055\textheight]{res/figs/basis/squarehorz.eps}};
 (33,-50)*{\reflectbox{\includegraphics[height=.08\textheight,angle=90]{res/figs/basis/vert3.eps}}};
 (33,-60)*{\scriptstyle \text{wt}=-1};
 \endxy\;\;\;\;
 \xy
 (0,0)*{\includegraphics[height=.08\textheight]{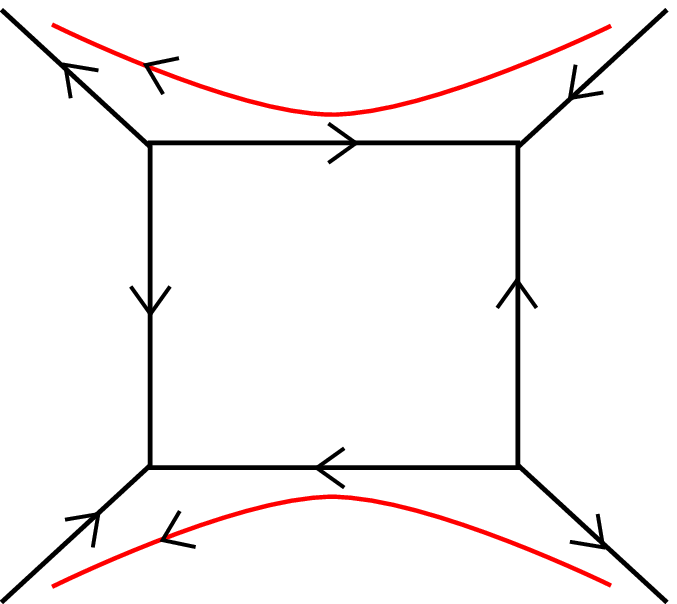}};
 (0,-10)*{\scriptstyle \text{wt}=0};
 (17,0)*{\longmapsto};
 (17,-7.5)*{\includegraphics[height=.055\textheight]{res/figs/basis/squarehorz.eps}};
 (33,0)*{\reflectbox{\includegraphics[height=.08\textheight,angle=90]{res/figs/basis/vert6.eps}}};
 (33,-10)*{\scriptstyle \text{wt}=0};
 (0,-25)*{\includegraphics[height=.08\textheight]{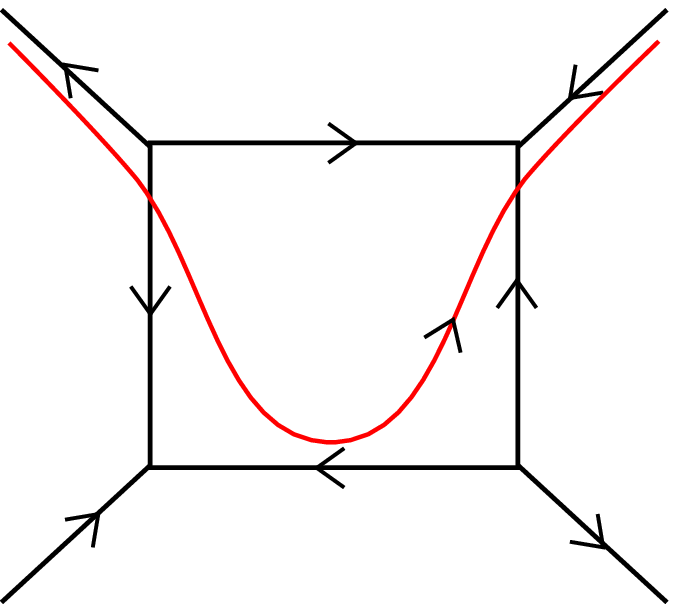}};
 (0,-35)*{\scriptstyle \text{wt}=1};
 (17,-25)*{\longmapsto};
 (17,-32.5)*{\includegraphics[height=.055\textheight]{res/figs/basis/squarehorz.eps}};
 (33,-25)*{\reflectbox{\includegraphics[height=.08\textheight,angle=90]{res/figs/basis/vert4.eps}}};
 (33,-35)*{\scriptstyle \text{wt}=1};
 (0,-50)*{\includegraphics[height=.08\textheight]{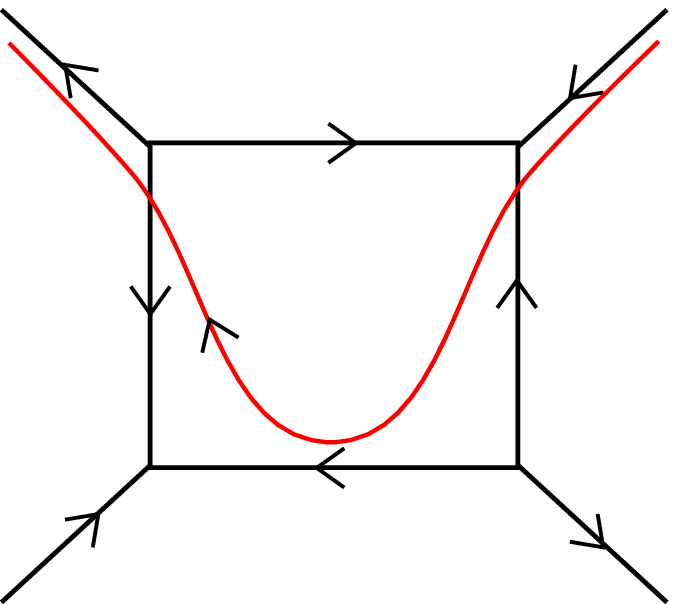}};
 (0,-60)*{\scriptstyle \text{wt}=-1};
 (17,-50)*{\longmapsto};
 (17,-57.5)*{\includegraphics[height=.055\textheight]{res/figs/basis/squarehorz.eps}};
 (33,-50)*{\reflectbox{\includegraphics[height=.08\textheight,angle=90]{res/figs/basis/vert5.eps}}};
 (33,-60)*{\scriptstyle \text{wt}=-1};
 \endxy\;\;\;\;
 \xy
 (0,0)*{\includegraphics[height=.08\textheight]{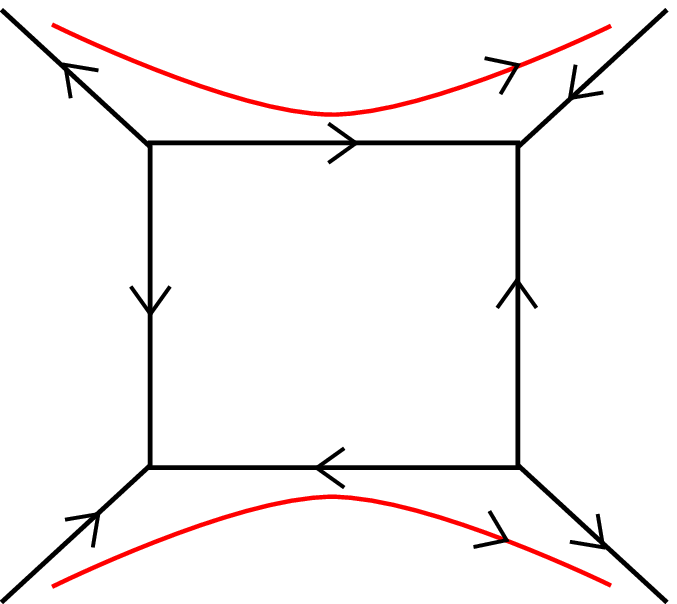}};
 (0,-10)*{\scriptstyle \text{wt}=0};
 (17,0)*{\longmapsto};
 (17,-7.5)*{\includegraphics[height=.055\textheight]{res/figs/basis/squarehorz.eps}};
 (33,0)*{\reflectbox{\includegraphics[height=.08\textheight,angle=90]{res/figs/basis/vert8.eps}}};
 (33,-10)*{\scriptstyle \text{wt}=0};
 (0,-25)*{\includegraphics[height=.08\textheight]{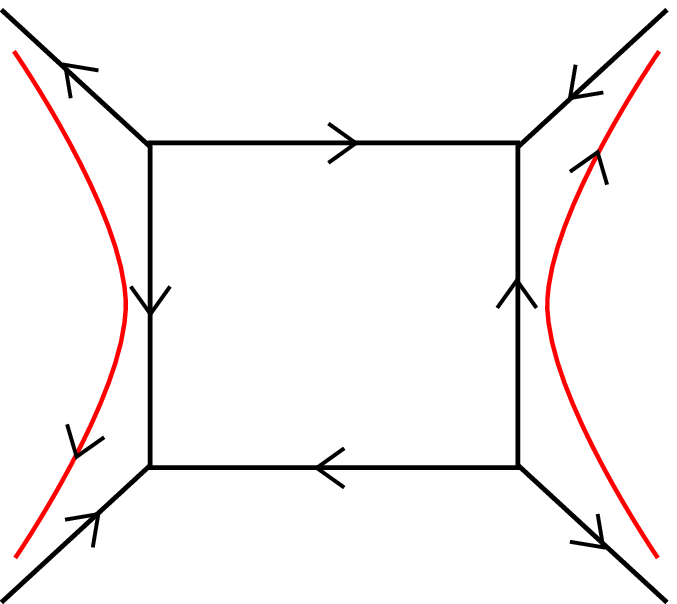}};
 (0,-35)*{\scriptstyle \text{wt}=2};
 (17,-25)*{\longmapsto};
 (17,-32.5)*{\includegraphics[height=.055\textheight]{res/figs/basis/squarehorz.eps}};
 (33,-25)*{\reflectbox{\includegraphics[height=.08\textheight,angle=90]{res/figs/basis/vert7.eps}}};
 (33,-35)*{\scriptstyle \text{wt}=2};
 (0,-50)*{\includegraphics[height=.08\textheight]{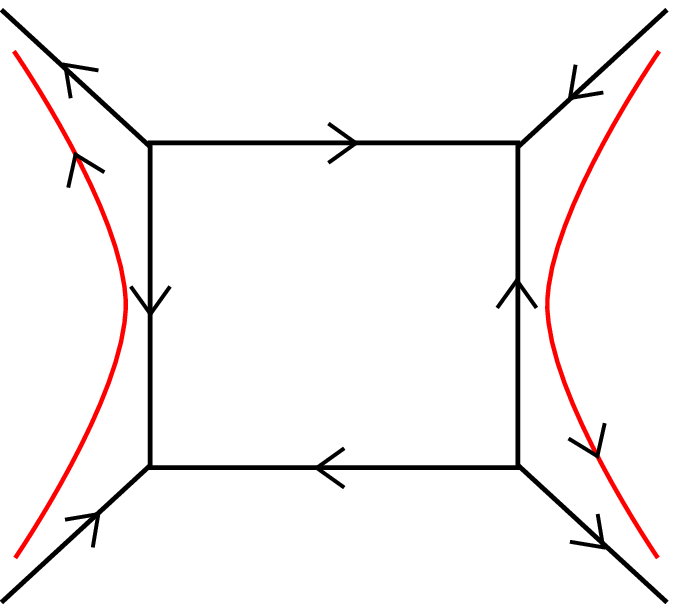}};
 (0,-60)*{\scriptstyle \text{wt}=-2};
 (17,-50)*{\longmapsto};
 (17,-57.5)*{\includegraphics[height=.055\textheight]{res/figs/basis/squarehorz.eps}};
 (33,-50)*{\reflectbox{\includegraphics[height=.08\textheight,angle=90]{res/figs/basis/vert9.eps}}};
 (33,-60)*{\scriptstyle \text{wt}=-2};
 \endxy
\]
It should be noted that none of the faces pictured above has to intersect the cut line, but we still call the two different square removals usually vertical and horizontal.

Furthermore, we call any fixed way in which this procedure is applied a \textit{removal (of the faces)} of the web $w$.
\end{defn}
\begin{prop}\label{prop-resolution}
Let $w=u^*v$ be a closed web. Let $f$ be a flow on $w$ of weight $k$. Moreover, fix an order in which to remove the faces of the web $w$.
\begin{itemize}
\item[(a)] The foam $f_{w_{f}}\colon \emptyset \to w\in {}_uK_v$ obtained from the face removing algorithm~\ref{defn-procedure} has a $q$-degree of $-k+n$. 
\item[(b)] If $u=v$ and $f=c$ is the canonical flow, then $f_{w_{f}}\colon \emptyset \to w\in {}_uK_u$ is the identity foam.
\end{itemize}
\end{prop}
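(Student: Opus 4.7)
For part (a), my strategy will be to exploit the additivity of the $q$-grading under foam composition. The grading formula $q(U) = \chi(\partial U) - 2\chi(U) + 2d + b$ satisfies $q(U \circ V) = q(U) + q(V)$ whenever the composition is along a matching web boundary, because the relevant Euler characteristics and boundary counts add correctly. So the total $q$-degree of $f_{w_f}$ in $\F^0(u^*v)$ equals the sum of the degrees of the elementary foams used. I will first verify by a short case check that every elementary foam listed in Definition~\ref{defn-procedure} has $q$-degree equal to $-\mathrm{wt}$ of its assigned flow face. For instance, a cap with $i$ dots has $\chi(\partial)=0$, $\chi=1$, $b=0$, hence degree $2i-2$, matching the weights $2,0,-2$ for $i=0,1,2$; the digon and square foams are checked similarly.

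Having established that each elementary foam contributes $-\mathrm{wt}$, I will observe that the weights~\eqref{weights} and~\eqref{weights2} are defined locally at trivalent vertices and arcs of $w$, and that the face removing algorithm processes every trivalent vertex and every arc of $w$ exactly once over the course of the reduction. Summing these local contributions across all removed faces recovers the total weight $k$ of the flow $f$. Therefore $f_{w_f}$ has $q$-degree $-k$ in $\F^0(u^*v)$, and after the grading shift $\{n\}$, degree $-k+n$ in ${}_uK_v$.

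For part (b), I will argue by induction on the growth-algorithm expression of $u$. When $u$ is the empty web the claim is trivial. In the inductive step, suppose $u$ arises from $u'$ by an arc, $Y$, or $H$ rule. Closing $u$ with $u^*$ produces precisely one new face in $u^*u$ (a circle, a digon, or a square, respectively) on top of the faces already present in $u'{}^*u'$. The canonical flow on $u^*u$, built by the rules in Definition~\ref{defn-cano}, restricts on this new face to the unique configuration forced by the zero-weight condition of Lemma~\ref{lem:canflowzero}. Removing the faces in an order that respects the growth algorithm, I will verify that the elementary foam selected at each inductive step is exactly the identity slab over the newly-created face. Composed with the inductive foam for $u'{}^*u'$, these slabs assemble into $u \times [0,1]$, which under the cutting isomorphism of Lemma~\ref{lem:webalgaltern} corresponds to the identity element $1_u \in {}_uK_u$.

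The main obstacle will be the case analysis in (b), particularly for the square-removal step. Here the canonical flow selects either a vertical or a horizontal square removal, possibly decorated with dots, depending on which flow lines cross the $H$ in $u$, and one must verify that each such dotted square foam is isotopic to the local identity slab over the corresponding square of $u^*u$. Likewise the single dot placed by the zero-weight circle removal must be reconciled with the dotless nature of $1_u$; I expect this to work out because the cutting isomorphism of Lemma~\ref{lem:webalgaltern} already introduces compensating caps and singular arcs, so that the identity foam $u\times[0,1]$ naturally acquires exactly the dot pattern produced by the canonical weights. Once this identification is verified face by face, the induction closes.
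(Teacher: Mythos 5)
For part (a), you take essentially the same route as the paper: verify that each elementary foam has $q$-degree equal to the weight change it produces, and sum. Your intermediate justification — that the algorithm ``processes every trivalent vertex and every arc of $w$ exactly once'' — is imprecise (removing a face alters the web and creates \emph{new} arcs, so the weight contributions do not partition cleanly over the vertices and arcs of the original $w$), but the paper's cleaner observation, namely that at each step the current weight changes by exactly the $q$-degree of the foam glued on, repairs this without difficulty.

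Part (b) has more serious problems. First, your inductive claim that passing from $u'{}^*u'$ to $u^*u$ by one growth-algorithm rule ``produces precisely one new face on top of the faces already present in $u'{}^*u'$'' is false. Take $u'$ to be two parallel strands and $u = u'$ with a $Y$ attached below: then $u'{}^*u'$ consists of two disjoint circles while $u^*u$ is a theta web, and neither face set contains the other. The faces of $u^*u$ are not built up incrementally from those of $u'{}^*u'$ in the way your induction requires. Second, and independently, the proposition quantifies over an arbitrary fixed removal order, yet your argument only treats ``an order that respects the growth algorithm''. Since Example~\ref{ex-reso1} explicitly shows that different orders can yield non-isotopic foams, establishing the claim for one chosen order does not establish it for the fixed but arbitrary order in the hypothesis. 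Third, your anticipated ``main obstacle'' — reconciling dotted circle/square removals with the dotless identity — points to a misread of the canonical case: the paper's argument shows that for $u^*u$ with the canonical flow, every face that becomes removable lies on the cut line and is removed \emph{dot-free} (circles of weight $2$, digons of weight $1$, and \emph{vertical} squares of weight $0$); no dots appear at any stage, so there is nothing to reconcile. The paper achieves all this with an induction on the \emph{steps of the removal}, showing that each removal on the cut line leaves the remaining web symmetric with canonical flow and all small faces on the cut line — a structure your growth-algorithm induction does not capture.
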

\begin{proof}
(a) The algorithm uses three different elementary foams, i.e. a circle removal, a digon removal and a square removal. It is easy to check that the $q$-degree of these foams are $-2,-1$ and $0$ respectively. Note that the change of weight, as indicated in the figures in Definition~\ref{defn-procedure}, correspond exactly to minus the $q$-degree of the associated foam. For example, the removal of a circle with a counterclockwise flow lowers the weight by $2$ and the associated foam has $q$-degree $-2$. Hence, the resulting foam $f_{w_f}$ will be of the $q$-degree $-k+n$.

(b) First we prove that any removal of $w_{c}$ is a combination of dot-free removal of circle and digon faces and vertical removal of square faces (by which we mean a removal which leaves intact the two sides of the square that lie perpendicular to the cut line) each of which intersects the cut line. That is, faces removed in each step of the removal of $w_{c}$ are as follows (the dashed line represents the cut line in each figure).
\[
\xy
 (0,0)*{\includegraphics[height=.045\textheight]{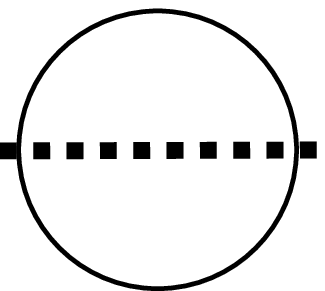}};
\endxy\;\;\;\;
\xy
 (0,0)*{\includegraphics[height=.15\textheight]{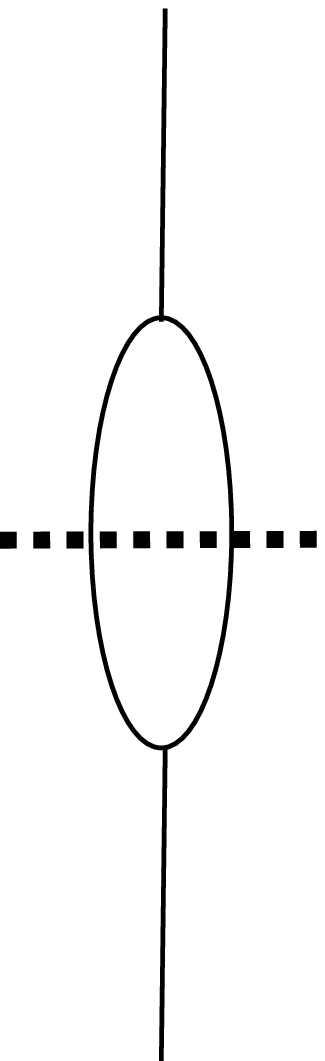}};
\endxy\;\;\;\;
\xy
 (0,0)*{\includegraphics[height=.08\textheight]{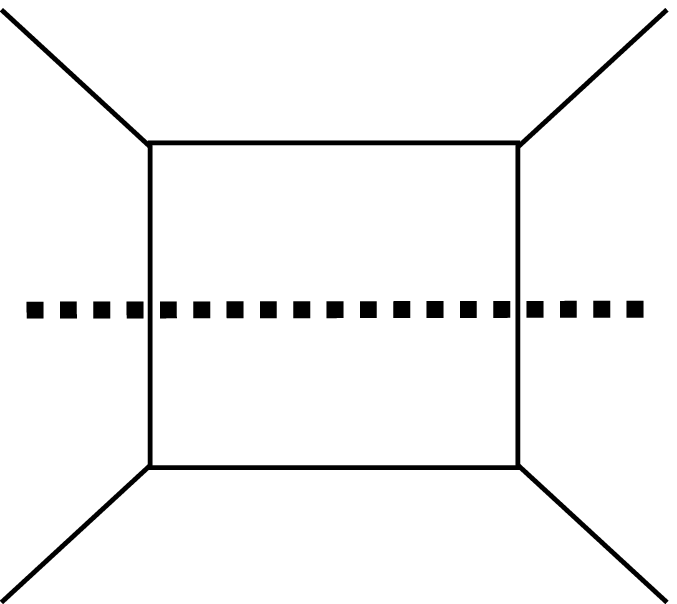}};
\endxy
\]
We proceed by induction on the number of steps in the removal of $w_{c}$, i.e. we show that before each step all faces with four or less edges lie on the cut line and the removal in each step is either a dot-free removal of a circle or digon face or a vertical removal of a square face. 
\vskip0.5cm
Since $u$ is a non-elliptic web with boundary, at the initial step of the removal of $w_{c}$, all faces with four sides or less must lie on the cut line. Hence, the face removed in this step is one which intersects the cut line. In addition, since the flow on $w=u^{*}u$ is canonical, we see that circle faces in $w_{c}$ carry the following flow.
\begin{align}\label{fig-cancir}
   \xy
 (0,0)*{\includegraphics[height=.05\textheight]{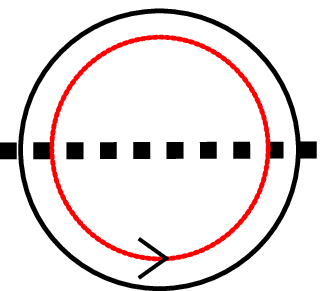}};
\endxy
\end{align}
Moreover, digon faces of $w_{c}$ must carry one of the following three flows.
\begin{align}\label{fig-candigon}
    \xy
 (0,0)*{\includegraphics[height=.15\textheight]{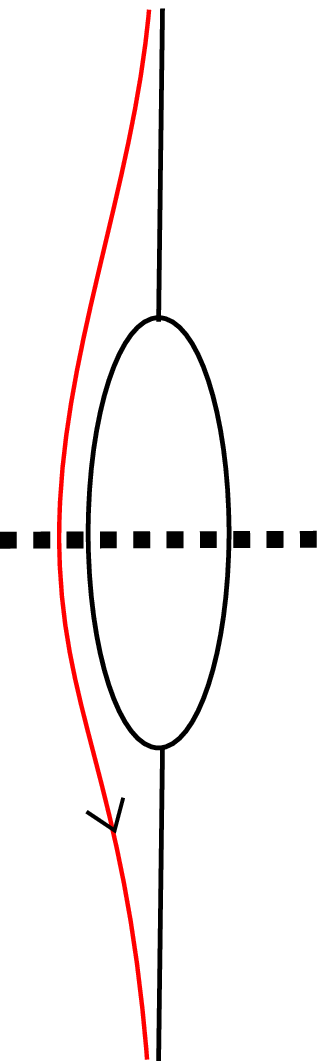}};
\endxy\;\;\;\;\xy
 (0,0)*{\includegraphics[height=.15\textheight]{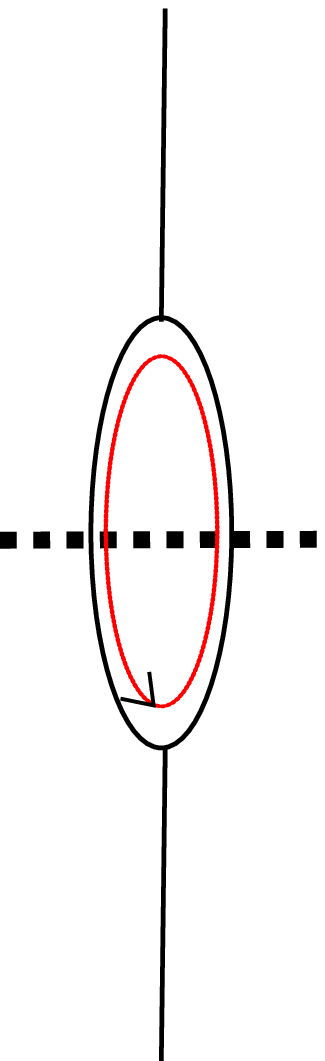}};
\endxy\;\;\;\;\xy
 (0,0)*{\includegraphics[height=.15\textheight]{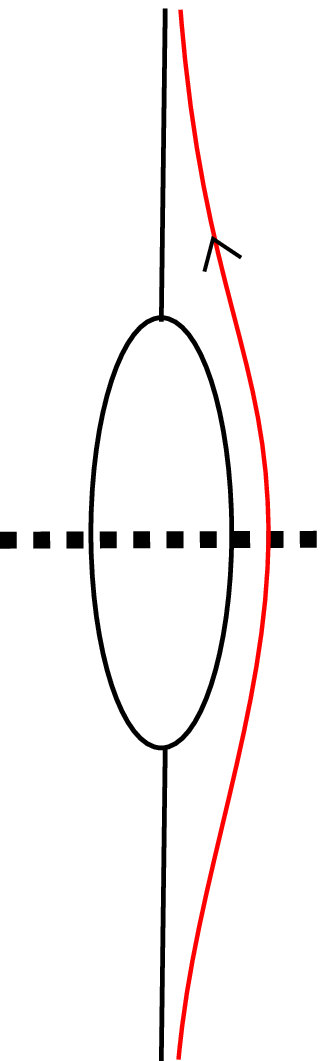}};
\endxy
\end{align}
And finally square faces of $w_{c}$ must carry one of the flows shown below.
\begin{align}\label{fig-cansq}
   \xy
 (0,0)*{\includegraphics[height=.08\textheight]{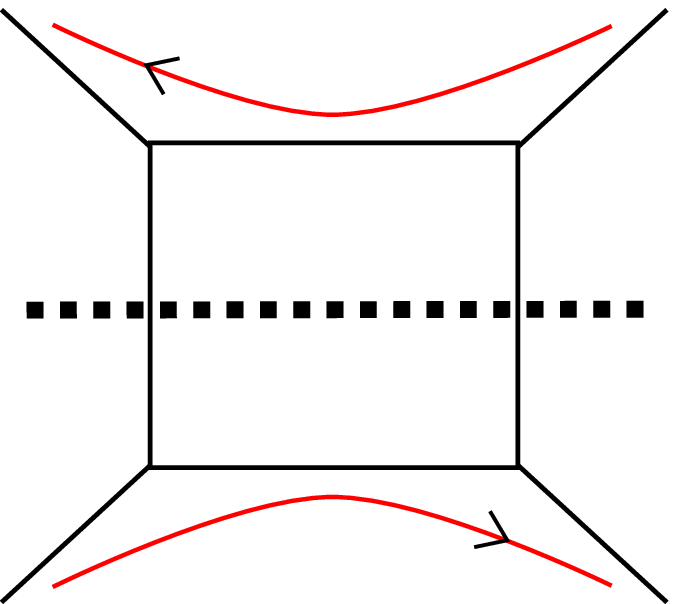}};
\endxy\;\;\;\;\xy
 (0,0)*{\includegraphics[height=.08\textheight]{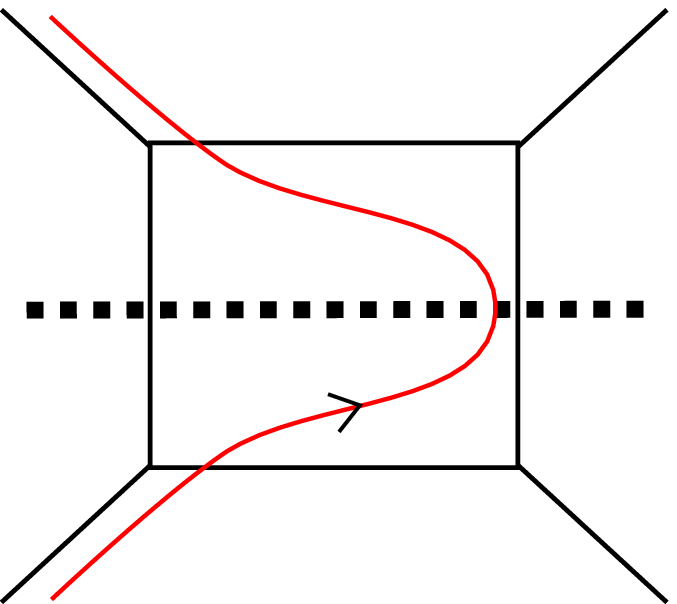}};
\endxy\;\;\;\;\xy
 (0,0)*{\includegraphics[height=.08\textheight]{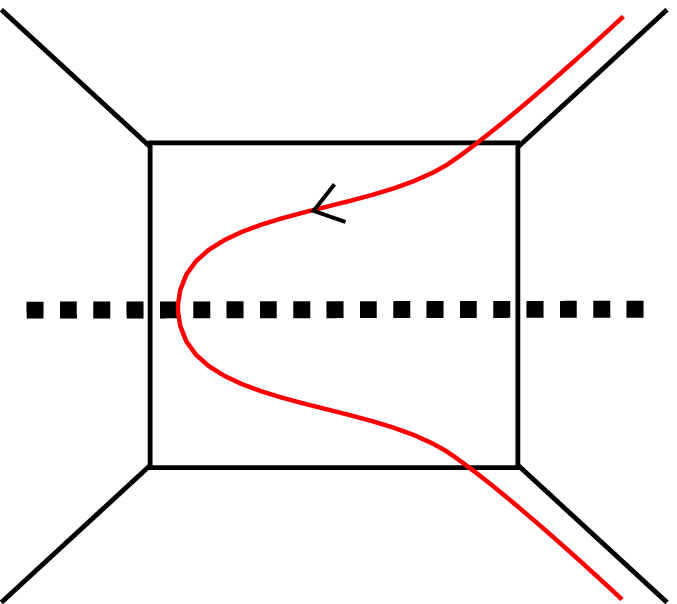}};
\endxy
\end{align}
Thus, at the initial step of the procedure, a removal of a circle and digon face is dot-free and a removal of a square face is vertical (for both orientations of the square).

Assume that in the $n$-th step in the removal, all faces with four sides or less lie on the cut line and the removal is either a dot-free removal of a circle face, a dot-free removal of a digon or vertical removal of a square. In the case that the face removed in the $n$-th step is a circle or a digon, faces with four sides or less in the $(n+1)$-th step must again be on the cut line, since removal of circles or digons on the cut line clearly do not affect the internal faces of $u$ and $u^{*}$. Otherwise, if the face removed in the $n$-th step is a square, we have two possibilities, either the $F_{1} = F_{2}$ (i.e. it is the external face of $w=u^{*}u$ or connects somewhere by crossing the cut line) or $F_{1}$ and $F_{2}$ are different faces of $u$ and $u^{*}$ respectively. The result of removing this square face vertical is pictured below.
\[
\xy
 (0,0)*{\includegraphics[height=.08\textheight]{res/figs/basis/sqcut.eps}};
 (0,8)*{F_1};
 (0,-8)*{F_2};
\endxy
\longmapsto
\xy
 (0,0)*{\includegraphics[height=.08\textheight]{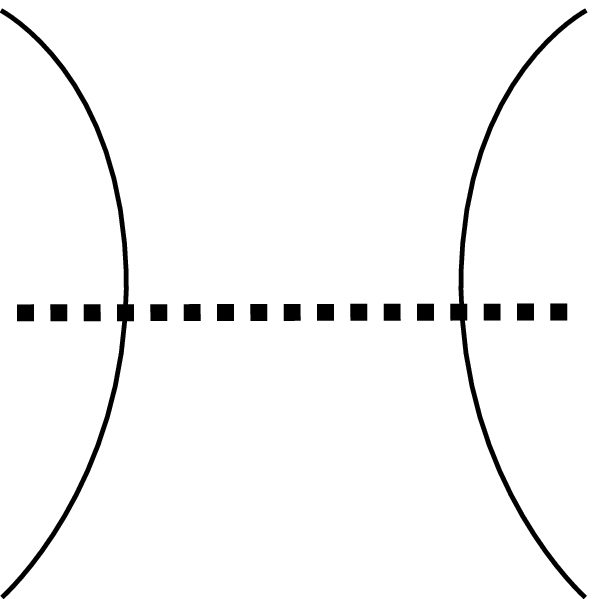}};
 (0,8)*{F};
\endxy
\] 
In the former situation, removal of the square face has no affect on the internal faces of $u$ and $u^{*}$ and thus faces with four sides or less in the $(n+1)$-th step must again be on the cut line. In the latter situation, the removal of the square face results in the creation of a new face on the cut line. Note that no other internal faces of $u$ and $u^{*}$ are affected. Here again, faces with four sides or less in the $(n+1)$-th step must be on the cut line.

Furthermore, beginning with a symmetric web with the canonical flow, then any removal of a circle face, a digon face or the vertical removal of a square face on the cut line results in another web with the canonical flow, due to locality of the removal conventions above. Thus, we see that in the $(n+1)$-th step, the faces on the cut line are again of the form in~\ref{fig-cancir},~\ref{fig-candigon} or~\ref{fig-cansq}. That is, removal of circle and digon faces are dot-free and removal of square faces are vertical.

In the following, we consider the removal of $w_{c}$ as a foam in ${}_uK_u$ and denoted it by $f_{w_{c}}$. Since in the removal of $w_{c}$ consists solely of dot-free removal of digon or circle faces or vertical removal of square faces that intersect the cut line, we have that $f_{w_{c}}$ is composed of the following local foams (corresponding to one part of circle, digon and square ratemoval respectively). Note that we use the alternative description of the foam space (see Lemma~\ref{lem:webalgaltern} for example).
\[
\xy
 (0,0)*{\includegraphics[width=210px]{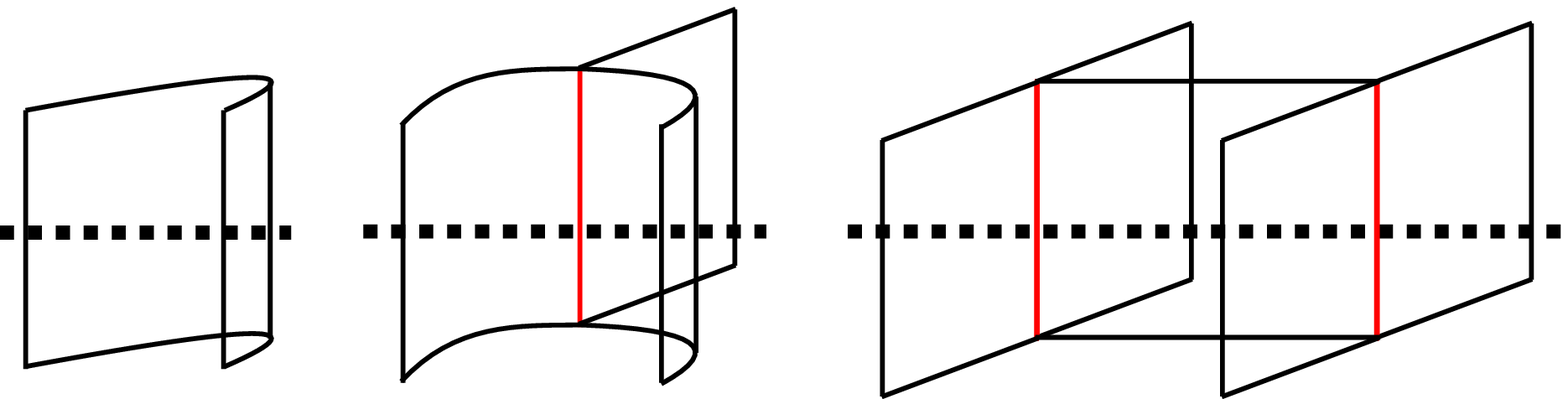}};
\endxy
\]
From this we see that $f_{w_{c}}$ must be the identity foam in ${}_uK_u$.
\end{proof}
\begin{ex}\label{ex-reso1}
We illustrate now that the order of removing faces is important, in contrast to the Kuperberg bracket that is independent of the order of face removals. In particular, if one uses different orders for different flows, then it can not be guaranteed that the resulting foams are linear independent.
\begin{itemize}
\item[(a)] Consider the theta web $w_1$ and the two flows illustrated below.
\[
\xy
 (0,0)*{\includegraphics[width=80px]{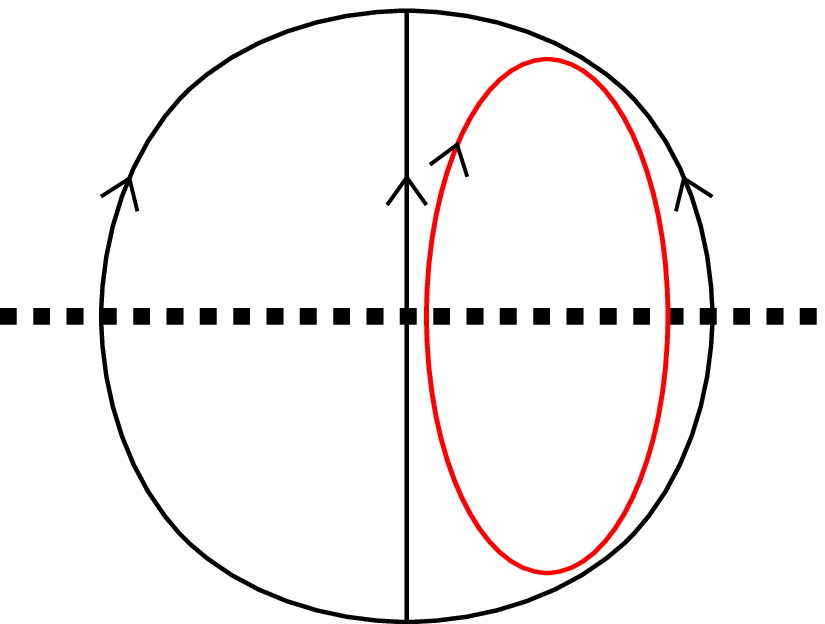}};
\endxy\;\;\;\;
\xy
 (0,0)*{\includegraphics[width=80px]{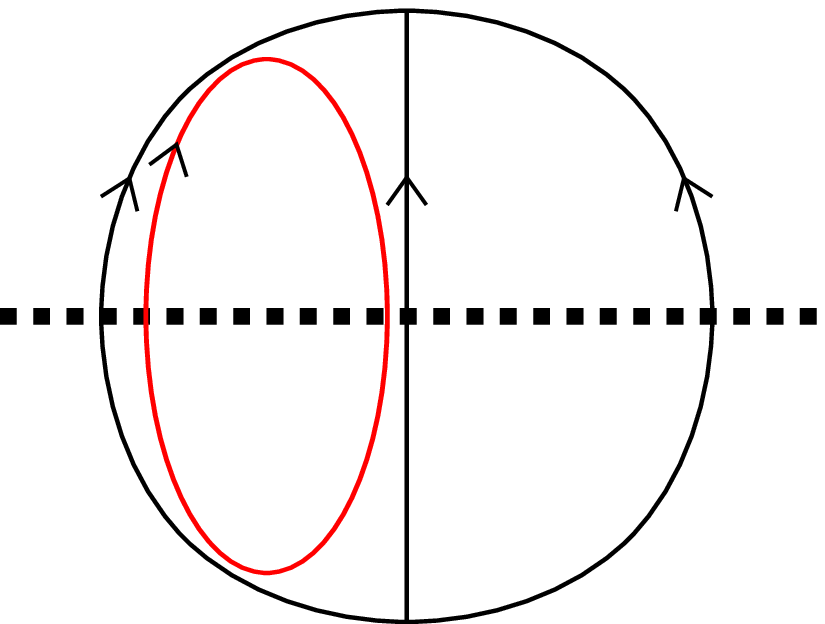}};
\endxy
\]
Then there are two methods to remove the faces, i.e. from right to left or vice versa. Removing the faces from right to left gives the left (right) foam illustrated below for the left (right) flow.
\[
\xy
 (0,0)*{\includegraphics[width=80px]{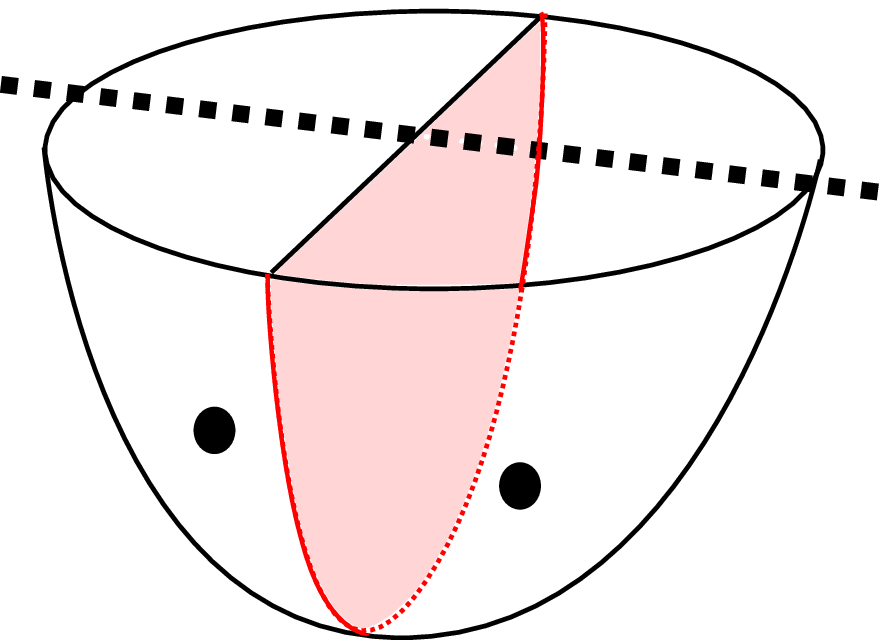}};
\endxy\;\;\;\;
\xy
 (0,0)*{\includegraphics[width=80px]{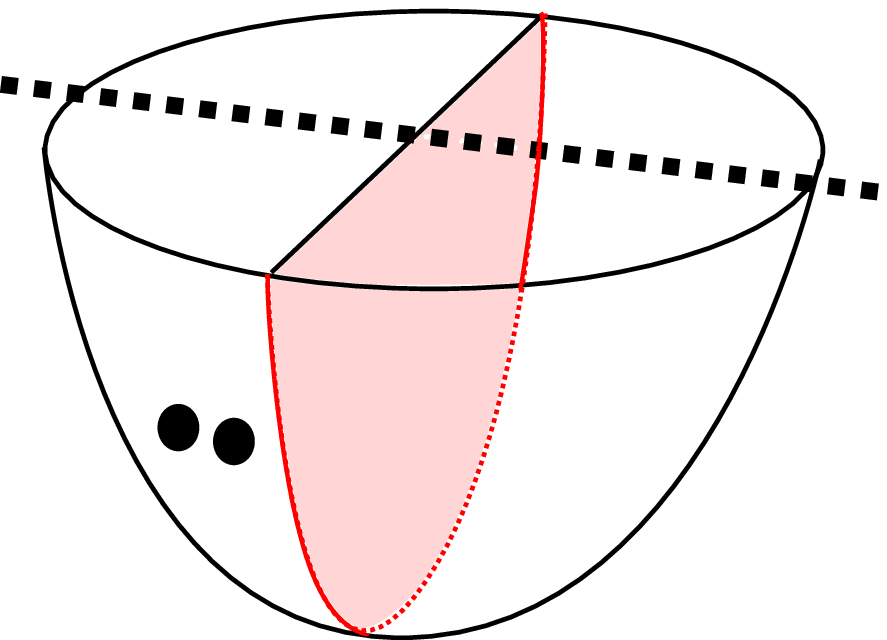}};
\endxy
\]
In contrast, removing faces from left to right gives the following result.
\[
\xy
 (0,0)*{\includegraphics[width=80px]{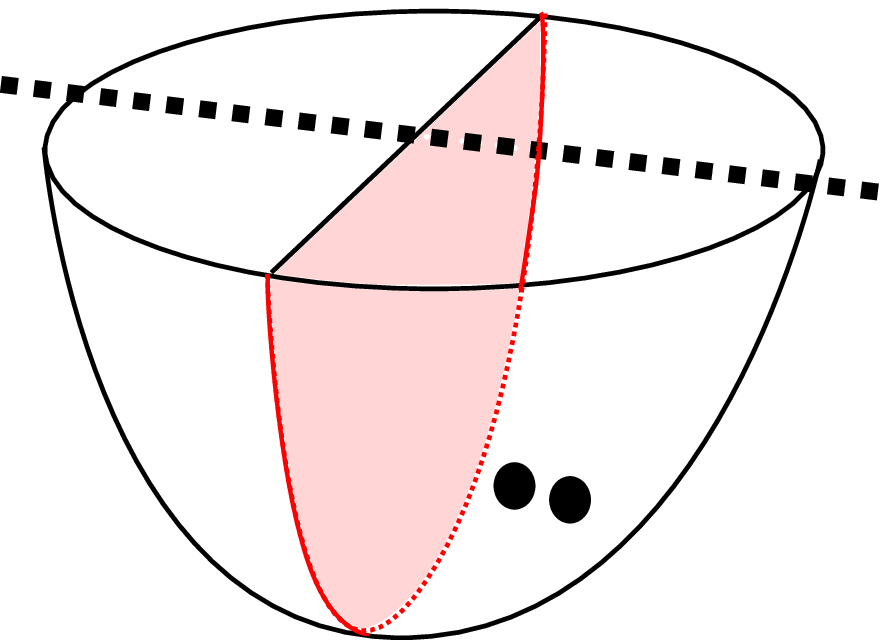}};
\endxy\;\;\;\;
\xy
 (0,0)*{\includegraphics[width=80px]{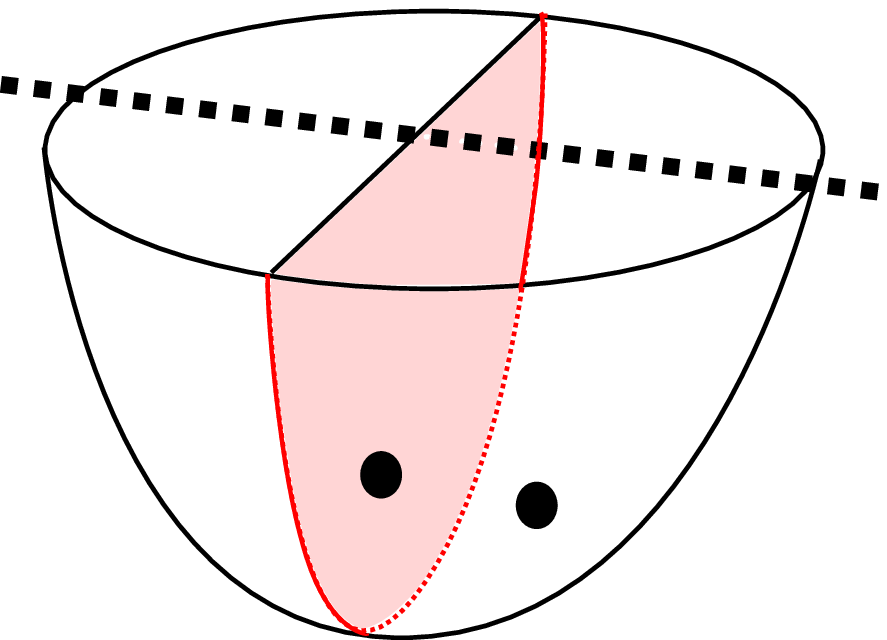}};
\endxy
\]
Note that (b) of Proposition~\ref{prop-resolution} is therefore not trivial because of the possible existence of non-trivial idempotents.
\item[(b)] Another example is the following. Consider the web $w_2$ also called ``square with digon ears'' and the two flows on $w_2$ pictured below.
\[
\xy
 (0,0)*{\includegraphics[width=110px]{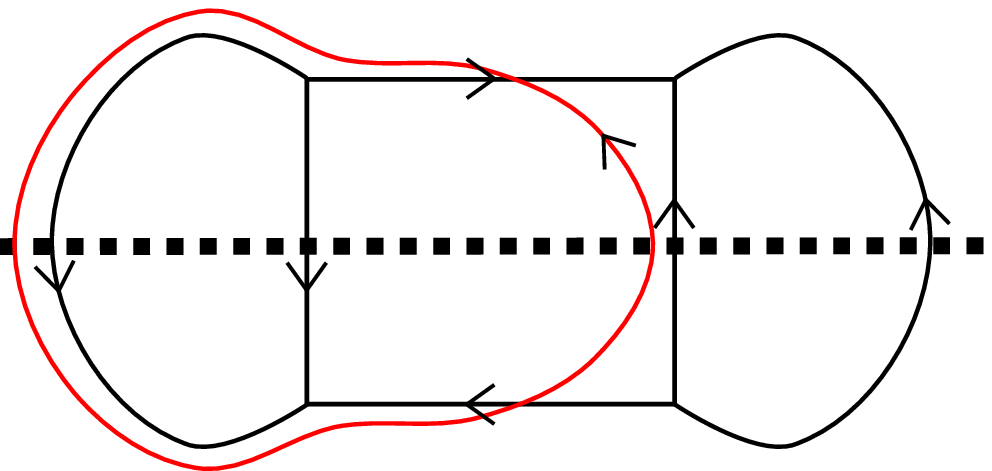}};
\endxy\;\;\;\;
\xy
 (0,0)*{\includegraphics[width=110px]{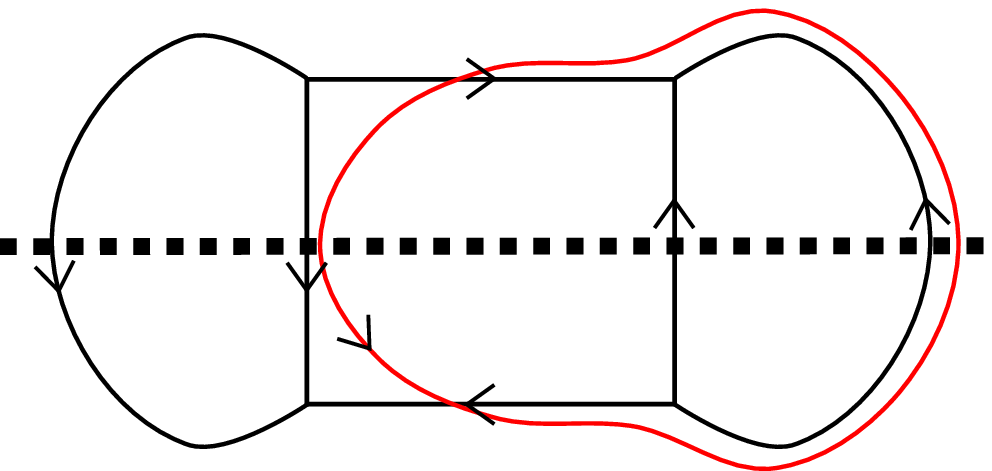}};
\endxy
\]
Removing faces from right to left and from left to right gives the same foam (up to rotation). The resulting foams are pictured below.
\[
\xy
 (0,0)*{\includegraphics[width=110px]{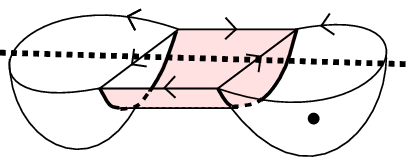}};
\endxy\;\;=\;\;
\xy
 (0,0)*{\includegraphics[width=110px]{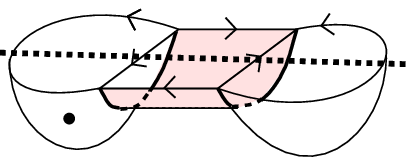}};
\endxy
\]
\end{itemize} 
\end{ex}
For the following theorem assume that one has a \textit{fixed} way how to remove faces in a fixed order for \textit{any} closed web $w$ independent of flow lines. By a slight abuse of notation, we call this a \textit{fixed way to remove webs}. We give one isotopy invariant method to assign to each $w$ such an preferred face removal later in Definition~\ref{defn-resolution}. 
\begin{thm}\label{thm-basis}
The set of flows on a web $w$ parametrises a basis for $\F(w)$, via the face removal algorithm, if one uses the given fixed way to remove webs.
\end{thm}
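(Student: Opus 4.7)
The plan is to prove the theorem by induction on a suitable complexity measure of the closed web $w$ (for example, the number of trivalent vertices of $w$), using the foam relations from Section~\ref{sec-webbasicb} that categorify the Kuperberg relations~\eqref{eq:circle},~\eqref{eq:digon}, and~\eqref{eq:square}. First, by combining Proposition~\ref{prop-dim} with Corollary~\ref{cor-dim}, the number of flows on $w$ of weight $k$ matches the dimension of the graded component of $\F(w)$ in degree $-k+n$; and by Proposition~\ref{prop-resolution}(a) each $f_{w_f}$ lies in exactly this graded component. It therefore suffices to show that $\{f_{w_f}\}_{f}$ is linearly independent, or equivalently that it spans $\F(w)$.

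For the base case, when $w$ is the empty web, the algorithm terminates immediately, there are no flows, and $\F(\emptyset) \cong \mathbb{C}$ is spanned by the empty foam, so the claim holds trivially. For the inductive step, let $F$ be the first face removed by the fixed ordering, which is necessarily a circle, digon, or square face. Denote by $w'$ (or $w_1',w_2'$ in the square case) the web obtained from $w$ by replacing $F$ with its Kuperberg resolution. By the induction hypothesis, the flow-parametrised family is a basis of $\F(w')$ for each such resolution. The key structural fact that I would exploit is that the relevant foam relation from Section~\ref{sec-webbasicb} ((Bamboo) together with (3D), (DR), or (SqR)) gives a direct-sum decomposition of $\F(w)$ whose summands are, up to grading shifts, isomorphic to copies of $\F(w')$, and the isomorphisms are realized precisely by gluing the elementary creation foams at the bottom as prescribed in Definition~\ref{defn-procedure}.

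The three cases would be treated as follows. For a circle face $F$, the relations (Bamboo) and (3D) together with $\F(\bigcirc) \cong \mathbb{C}\{-2\} \oplus \mathbb{C} \oplus \mathbb{C}\{2\}$ (with basis the cup carrying $0$, $1$, and $2$ dots) yield $\F(w) \cong \F(w')\{-2\} \oplus \F(w') \oplus \F(w')\{2\}$, and flows on $w$ restrict bijectively to a flow on $w'$ together with one of the three orientation choices on $F$. For a digon, the relation (DR) gives $\F(w) \cong \F(w')\{-1\} \oplus \F(w')\{1\}$, with the two summands realized by the $0$-dot and $1$-dot digon foams and with flows partitioning according to their restriction to the digon. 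For a square, the relation (SqR) gives $\F(w) \cong \F(w_1') \oplus \F(w_2')$, where $w_1'$ and $w_2'$ are the vertical and horizontal resolutions of $F$. In each case, an inspection of Definition~\ref{defn-procedure} confirms that the elementary foams selected by the algorithm for each flow configuration coincide with the summand inclusions coming from the foam relation, so the inductive bases of the summands lift to the claimed basis of $\F(w)$.

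The main obstacle will lie in the square case. Here one has to verify that for every possible way a flow can meet the boundary of the square face $F$, the algorithm produces exactly one elementary foam, vertical or horizontal with the correct dot placement, that matches the summand from (SqR). This requires a careful case-by-case check of all eighteen configurations listed in Definition~\ref{defn-procedure}, together with the weight bookkeeping that ensures the degrees are compatible with the grading shifts in the decomposition. A related subtlety is that removing $F$ can create new small faces in $w'$ which were not present in $w$, so the induction must be phrased in terms of a complexity measure that strictly decreases; nevertheless, the assignment ``flow on $w$ $\mapsto$ (restricted flow on $w'$, local choice at $F$)'' is a bijection because each Kuperberg resolution uniquely determines how the flow on the boundary of $F$ propagates into the interior of the resolution. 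Once this bookkeeping is in place, the induction closes and $\{f_{w_f}\}_f$ is the desired basis of $\F(w)$.
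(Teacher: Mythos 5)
Your proposal takes essentially the same approach as the paper's proof: induction on a complexity measure, using the foam-relation decompositions that categorify the Kuperberg circle/digon/square relations (the paper invokes Propositions~8 and~9 of~\cite{kh3}, which are exactly the (DR)/(SqR) decompositions you cite) to reduce $\F(w)$ to sums of $\F(w')$, and then matching flows on $w$ bijectively to pairs (flow on $w'$, local choice at the removed face). The only cosmetic differences are that the paper uses the number of faces as the induction parameter and dismisses circles and theta webs at the outset (``without loss of generality''), whereas you handle the circle case explicitly and induct on trivalent vertices; both work, though note that a circle has no trivalent vertices, so your complexity measure does not strictly decrease for that case and it must be peeled off separately, exactly as the paper does.
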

\begin{proof}
We proceed by induction on the number of faces of $w$. 
In the case that $w$ is a set of circles, it is easy to see 
that the flows on $w$ parametrise a basis for $\F(w)$ by applying 
the cups from the circle removals in Definition~\ref{defn-procedure}. Moreover, one easily shows that the claim is true for the theta web (see Example~\ref{ex-reso1} or Example~\ref{ex-reso2}). Note that the theta web has exactly two ways to remove faces, i.e. from right to left or vice versa. 

Suppose the claim is true for all closed webs with at most $n$ faces. 
Let $w$ be a closed web with $n+1$ faces. Without loss of 
generality, we can assume that $w$ contains no circles or theta webs. 

Suppose the first step of the face removal algorithm is a digon 
removal. The web without the digon is called $w^{\prime}$ and has $n-1$ faces.  
Recall that the order of the faces in $w^{\prime}$ is the same for all flows (because 
we fixed a removal independent of flow lines). 
As we show in the pictures below, given a flow $f$ on $w^{\prime}$, 
there are exactly two flows on $w$ which give rise to $f$. The two possible flows are pictured in the same column and there are three different cases how $f$ can interact with $w^{\prime}$.
\[
\xy
 (0,0)*{\includegraphics[width=90px]{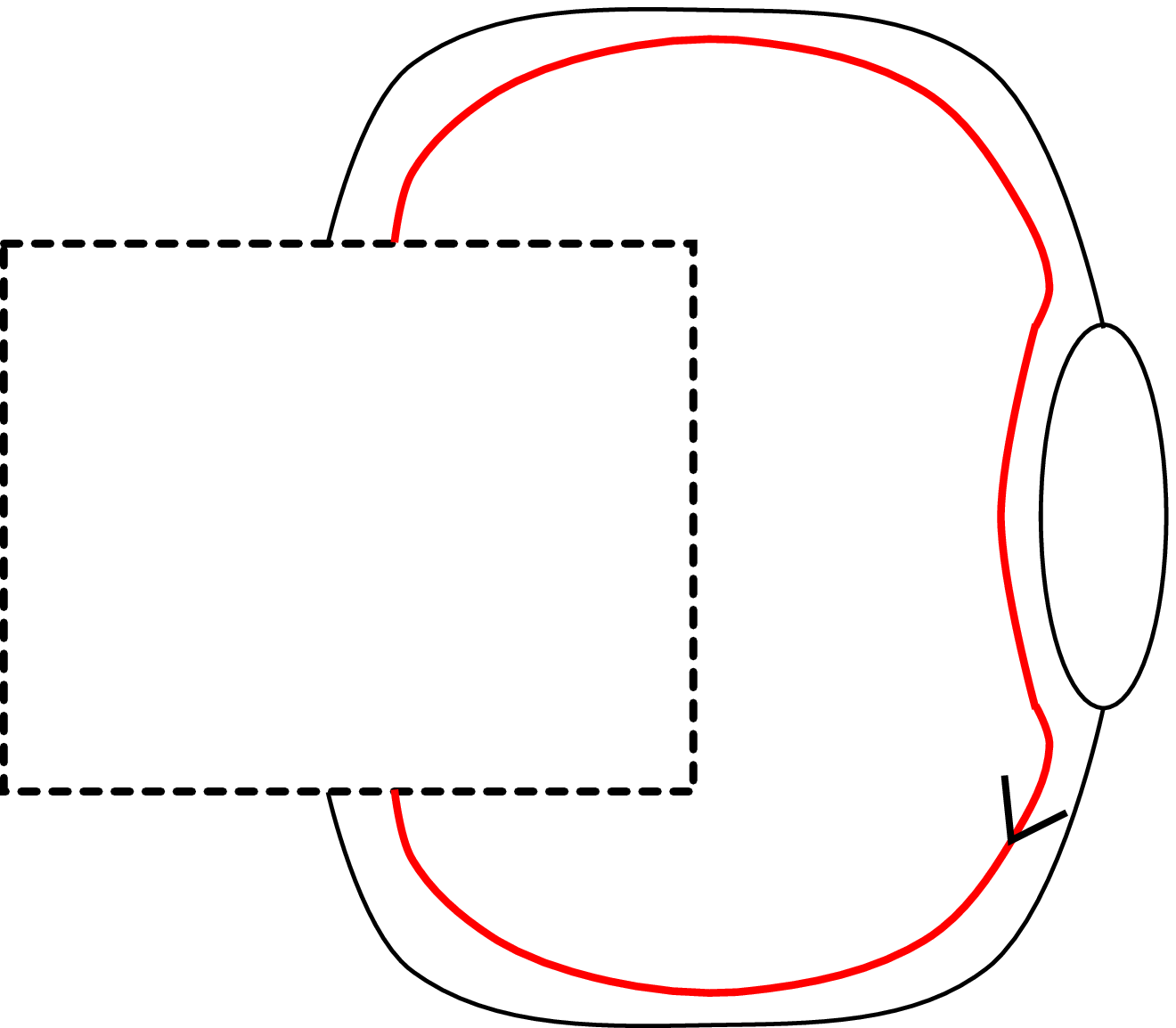}};
 (-6,0)*{w^{\prime}};
 (0,-30)*{\includegraphics[width=90px]{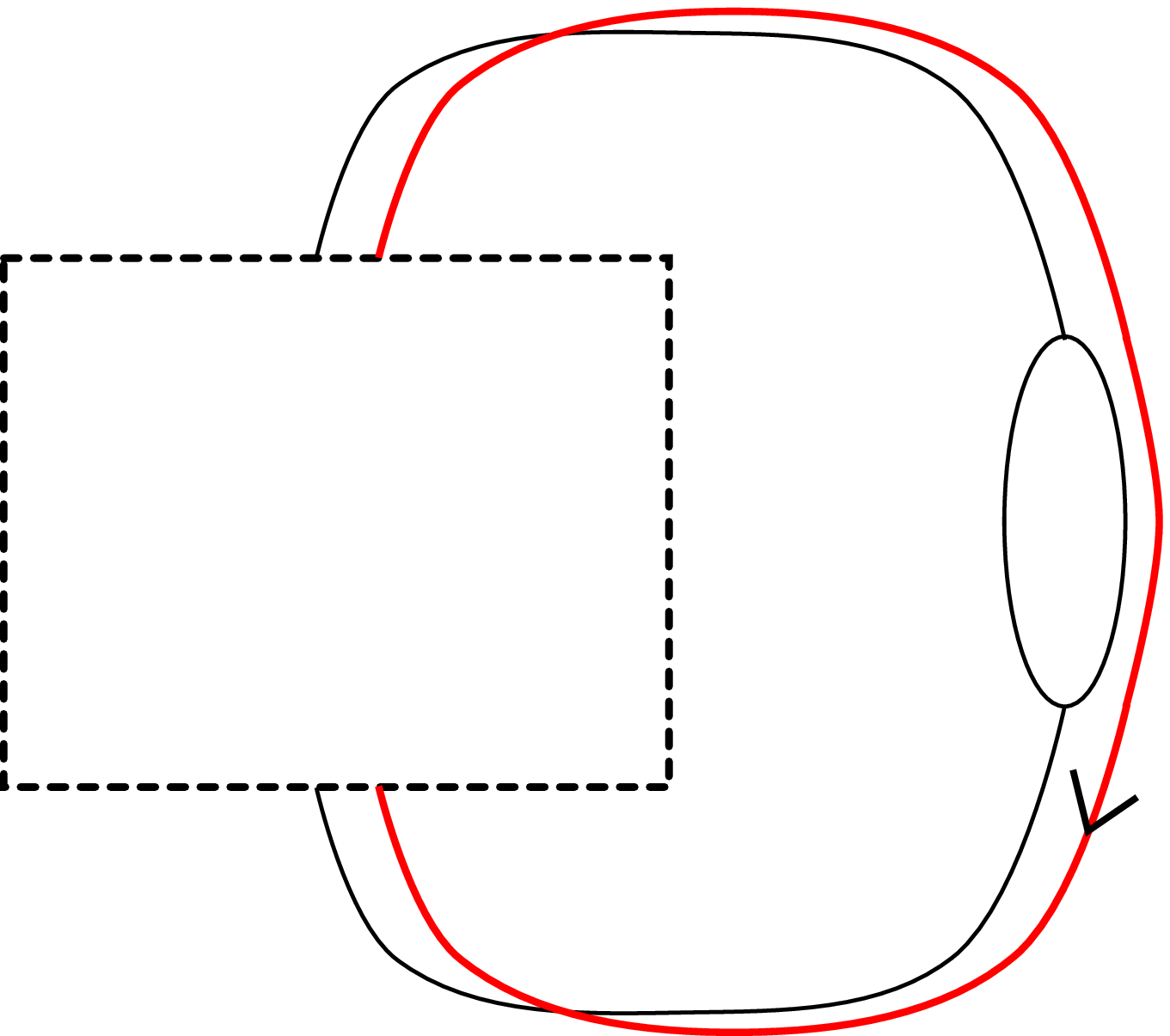}};
 (-6,-30)*{w^{\prime}};
 (-22.5,0)*{\tau_1\colon};
 (-22.5,-30)*{\tau_2\colon};
\endxy\;\;\;\;
\xy
 (0,0)*{\includegraphics[width=90px]{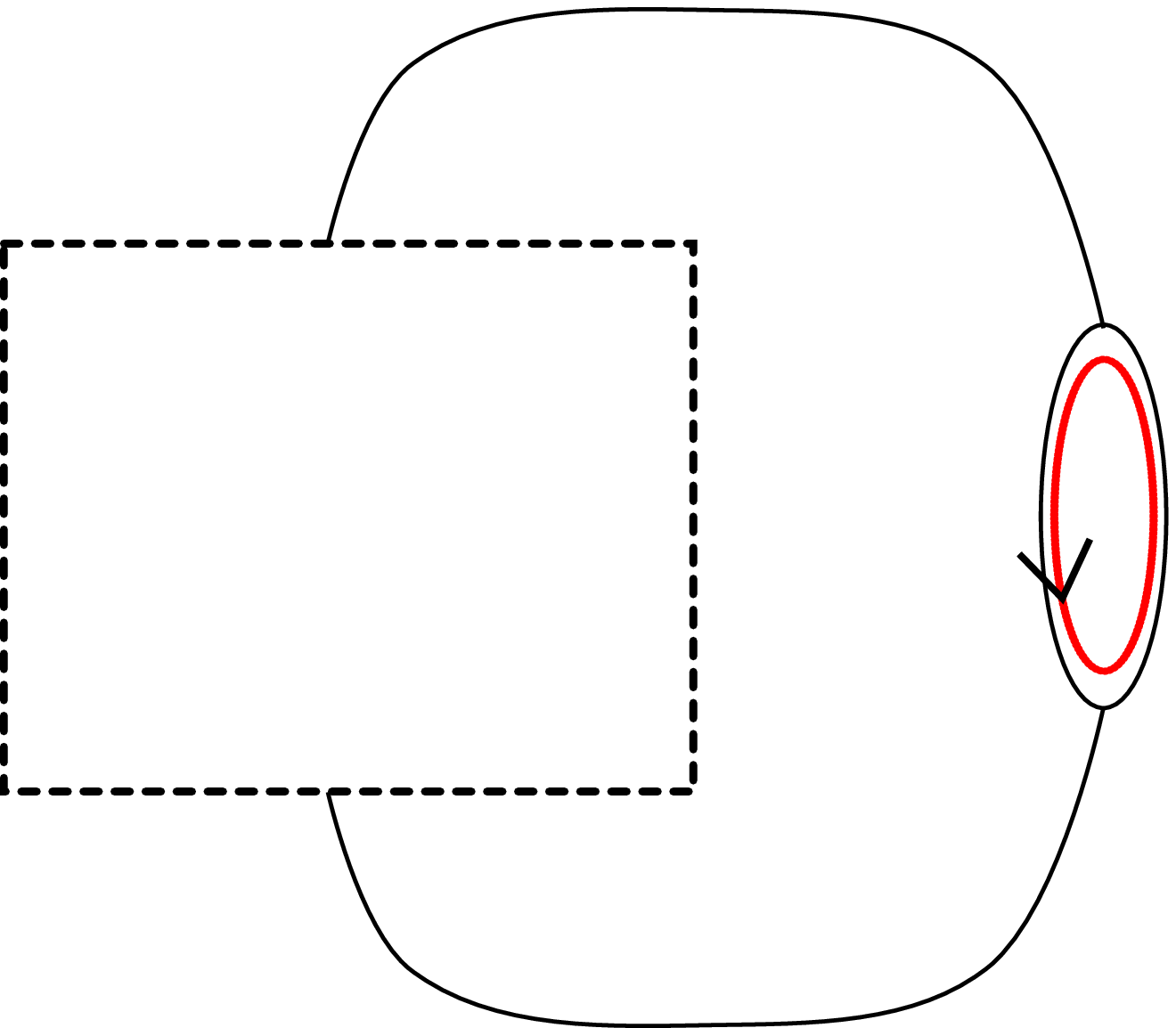}};
 (-6,0)*{w^{\prime}};
 (0,-30)*{\includegraphics[width=90px]{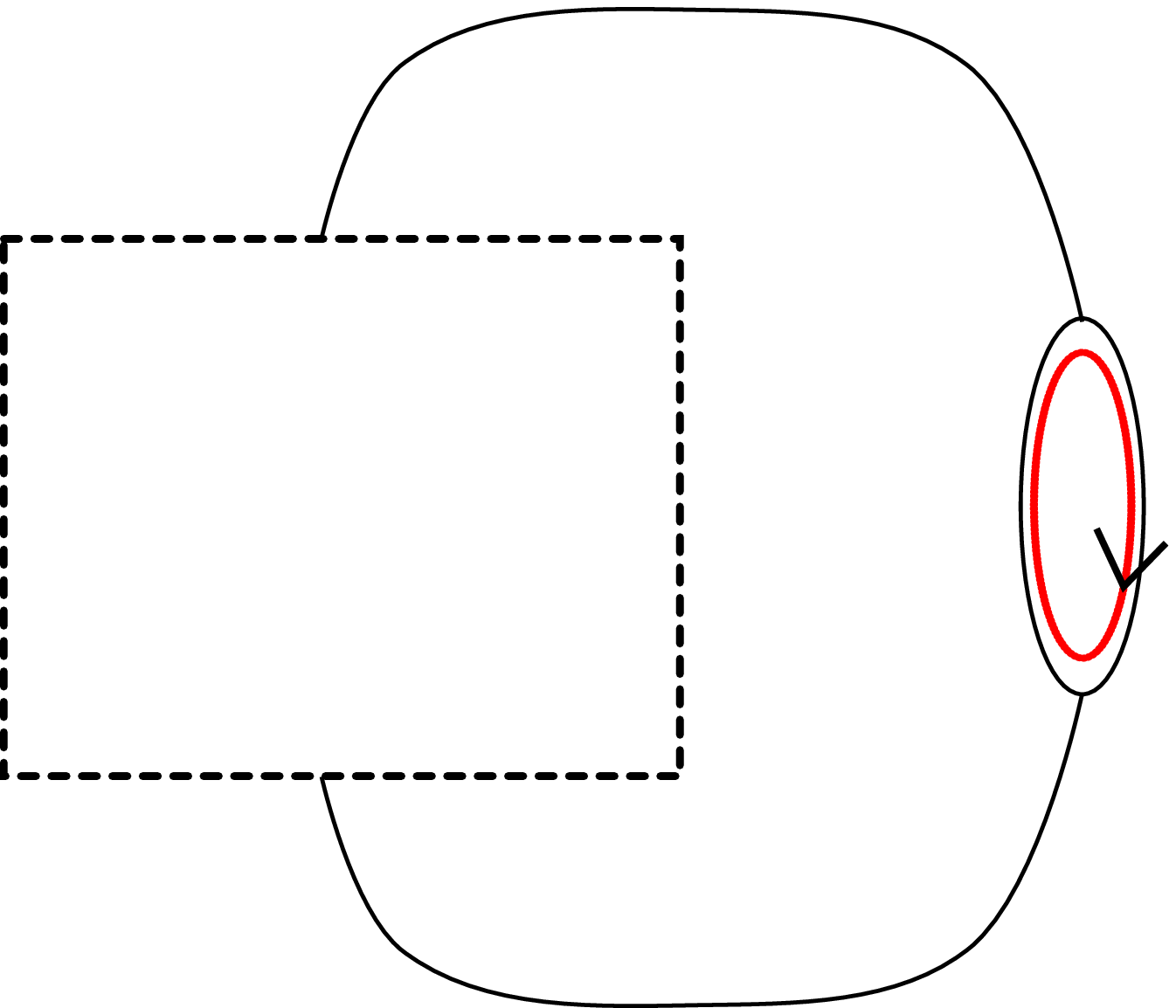}};
 (-6,-30)*{w^{\prime}};
\endxy\;\;\;\;
\xy
 (0,0)*{\includegraphics[width=90px]{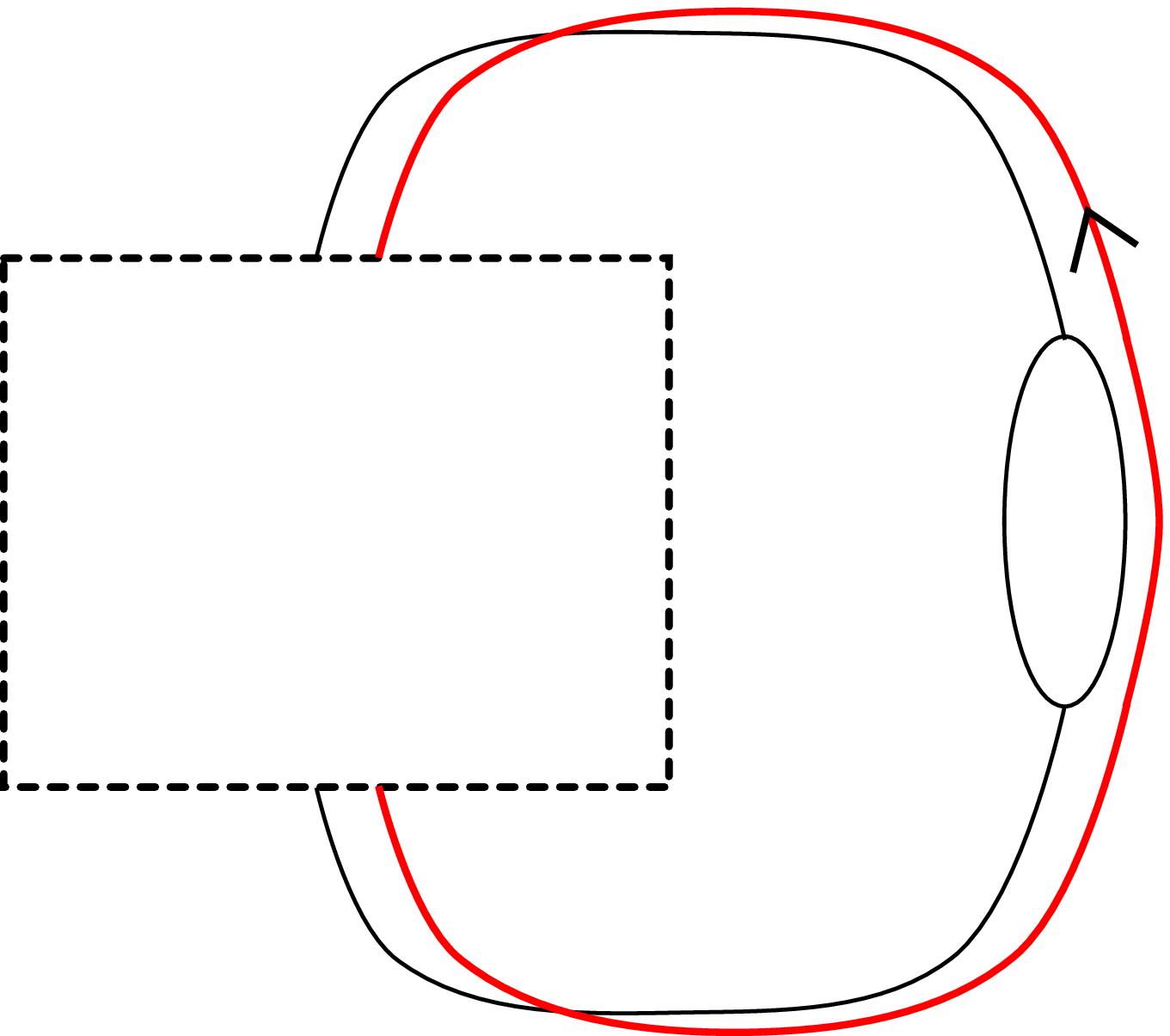}};
 (-6,0)*{w^{\prime}};
 (0,-30)*{\includegraphics[width=90px]{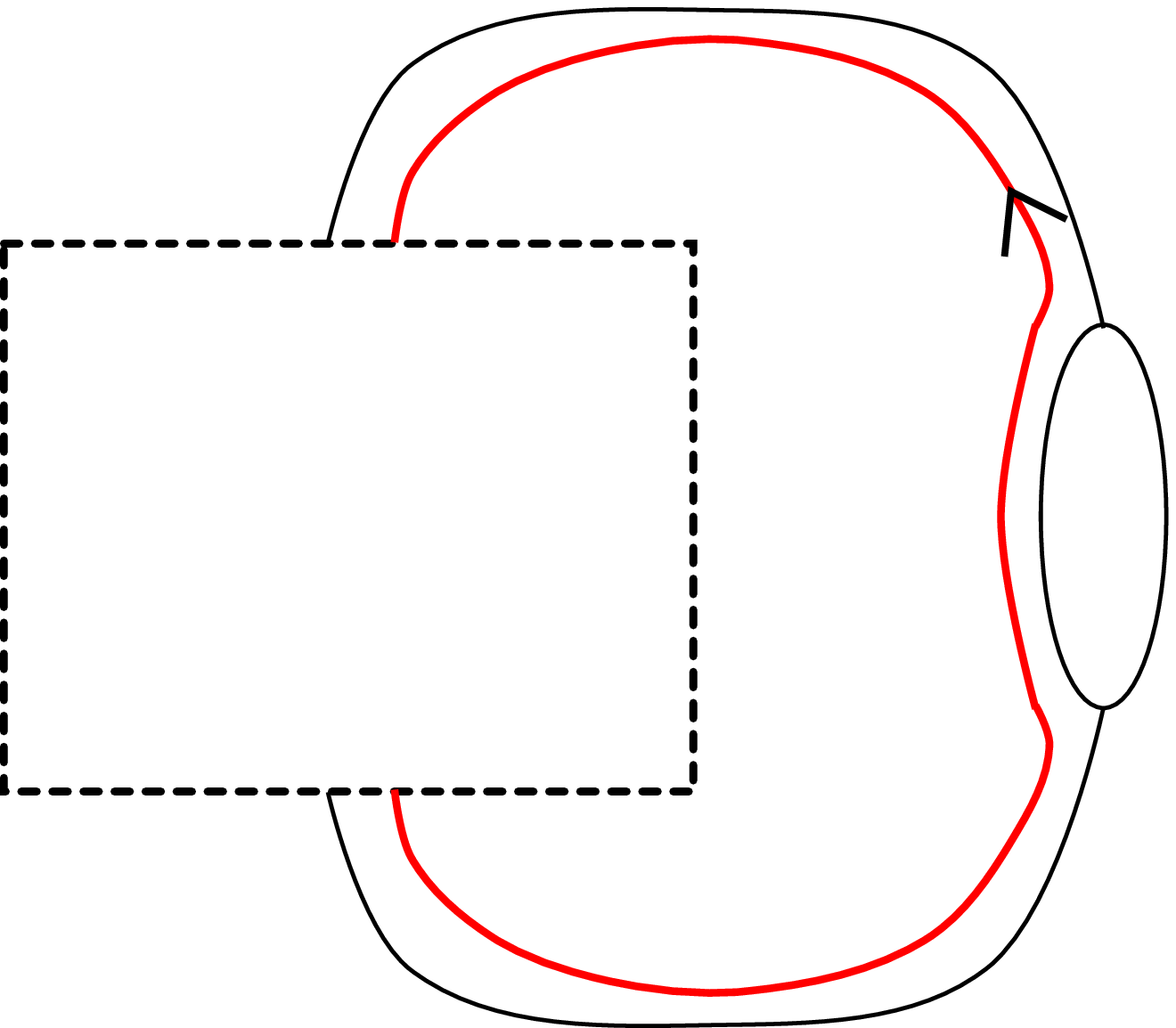}};
 (-6,-30)*{w^{\prime}};
\endxy
\]
\noindent By induction, the flows on $w^{\prime}$ parametrize a basis of $\F(w^{\prime})$ via 
the face removal algorithm. Given a flow on $w^{\prime}$, the 
two compatible flows on $w$ always give rise to two different digon removals shown below, i.e. either without or with a dot.
\[
\tau_{1}=\xy
 (0,2)*{\includegraphics[width=70px]{res/figs/basis/digonem}};
 (14,6.5)*{w};
 (14.5,-6)*{w^{\prime}};
\endxy\;\;\;\;\tau_{2}=\xy
 (0,2)*{\includegraphics[width=70px]{res/figs/basis/digondot}};
 (14,6.5)*{w};
 (14.5,-6)*{w^{\prime}};
\endxy
\]
From Proposition 8 in \cite{kh3} we have 
$\F(w)\cong_{\phi} \F(w^{\prime}) \{1\} \oplus \F(w^{\prime}) \{-1\}$, 
where the isomorphism $\phi$ is given by
\begin{align*}
&\F(w^{\prime})\{-1\} \xrightarrow{\F(\tau_{1})} \F(w)\;\;\text{ and }\;\;\F(w^{\prime})\{1\} \xrightarrow{\F(\tau_{2})} \F(w),
\end{align*}
where $\F(\tau_{1})$ and $\F(\tau_{2})$ are glued on the top of $\F(w^{\prime})$. Hence, we conclude that flows on $w$ parametrise a basis for $\F(w)$ as 
desired.
\vskip0.5cm
Now suppose that the first step of the face removal algorithm is a 
square removal, resulting in two new webs $w^{v},w^{h}$ with possible $n-2$ faces (depending on the position of the external face relatively to the square face).
 
There are two different square 
removals, each corresponding to either a vertical or horizontal square removal in Definition~\ref{defn-procedure}. One checks that every flow on $w^{v}$ corresponds to exactly one on $w$ that is removed vertical and every flow on $w^{h}$ corresponds to exactly one on $w$ that is removed horizontal. As can be seen, all eighteen possible 
flows on $w$ appear exactly once this way. By induction and 
Proposition 9 in \cite{kh3}, i.e. given the two removals
\[
\tau_{1}=\xy
 (0,2)*{\includegraphics[width=70px]{res/figs/basis/squarevert}};
 (14,6.5)*{w};
 (14.5,-6)*{w^{v}};
\endxy\;\;\;\;\tau_{2}=\xy
 (0,2)*{\includegraphics[width=70px]{res/figs/basis/squarehorz}};
 (14,6.5)*{w};
 (14.5,-6)*{w^{h}};
\endxy
\]
one has an isomorphism $\F(w)\cong_{\phi} \F(w^{v}) \oplus \F(w^{h})$, 
where the isomorphism $\phi$ is given by
\begin{align*}
&\F(w^{v}) \xrightarrow{\F(\tau_{1})} \F(w)\;\;\text{ and }\;\;\F(w^{h}) \xrightarrow{\F(\tau_{2})} \F(w),
\end{align*}
where $\F(\tau_{1})$ and $\F(\tau_{2})$ are glued on the top of $\F(w^{v})$ or $\F(w^{h})$. This implies that the flows on $w$ parametrise a basis for $\F(w)$.
\end{proof}
\subsubsection*{Colouring of webs}
We will proceed by giving a particular isotopy invariant method to remove any closed web. We will call this procedure \textit{preferred}. We note that ``isotopy invariance'' is a rather strong requirement. In order to obtain such a basis we need some technical definitions and lemmata, but the main idea is rather simple. We shortly explain it here and the reader may skip the more technical points below on the first reading.
\vskip0.5cm
It should be noted that the face removing algorithm~\ref{defn-procedure}, as explained in Example~\ref{ex-reso1}, \textit{depends} on the order how to remove faces, but only for \textit{neighbouring} faces, as explained in Lemma~\ref{lem-welldefreso} below. Therefore, it suffices to give a face colouring of webs such that neighbouring faces get different colours. Moreover, since a web is a special type of a trivalent graph, it is possible to give a \textit{3-colouring} (we call the colours $1,2,3$) of faces. Hence, we can summarise the rest of the section in three sentences.
\begin{itemize}
\item We give in~\ref{defn-colouring} an isotopy invariant method how to 2-colour edges of a closed web $w$.
\item This 2-colouring of the edges can be used to give a 3-colouring of the faces, as explained in~\ref{defn-resolution}.
\item Remove all available faces with the \textit{lowest} number (colour) in any order using the face removal convention~\ref{defn-procedure}.
\end{itemize}
\vskip0.5cm
We will use the 2-colouring below to fix a way to remove the faces of webs. It is worth noting that this colouring has a very special property, i.e. it \textit{only} depends on the sign string $S$. Therefore, it gives rise to a 2-colouring that extends to \textit{any} possible web with the boundary string $S$. It should be noted, as we explain in Remark~\ref{rem-prefchoice}, that this 2-colouring therefore has somehow ``anti-properties'' of the canonical 2-colouring (the one induced by the canonical flow). See Lemma~\ref{lem:tech}.  

Using Theorem~\ref{thm-basis}, we note that this implies the existence of an isotopy invariant basis, since the face removing algorithm~\ref{defn-procedure} and the procedure~\ref{defn-resolution} below are both isotopy invariant.
\vskip0.5cm
We need some terminology before we can start. Given a web $w=u^*v$ we can split it into its connected components $w^{c_j}$, i.e. we can split a web with $m$ such components as
\[
w=w^{c_1}\amalg\cdots\amalg w^{c_m}.
\]
We call the connected components \textit{nested of type $k\geq 0$} and denote them by $w^{c_j}_k$, using the following inductive definition.
\begin{itemize}
\item The $0$-nested components are the webs incident to the external face $F_{\mathrm{ext}}$.
\item For $k>0$, we call a component $k$-nested, if it is incident to the external face after removing all $k^{\prime}$-nested components for $k^{\prime}\leq k-1$, but not after removing all $k^{\prime}$-nested components for $k^{\prime}< k-1$.
\end{itemize}
The picture below illustrates the definition.
\[
\xy
 (0,0)*{\includegraphics[width=200px]{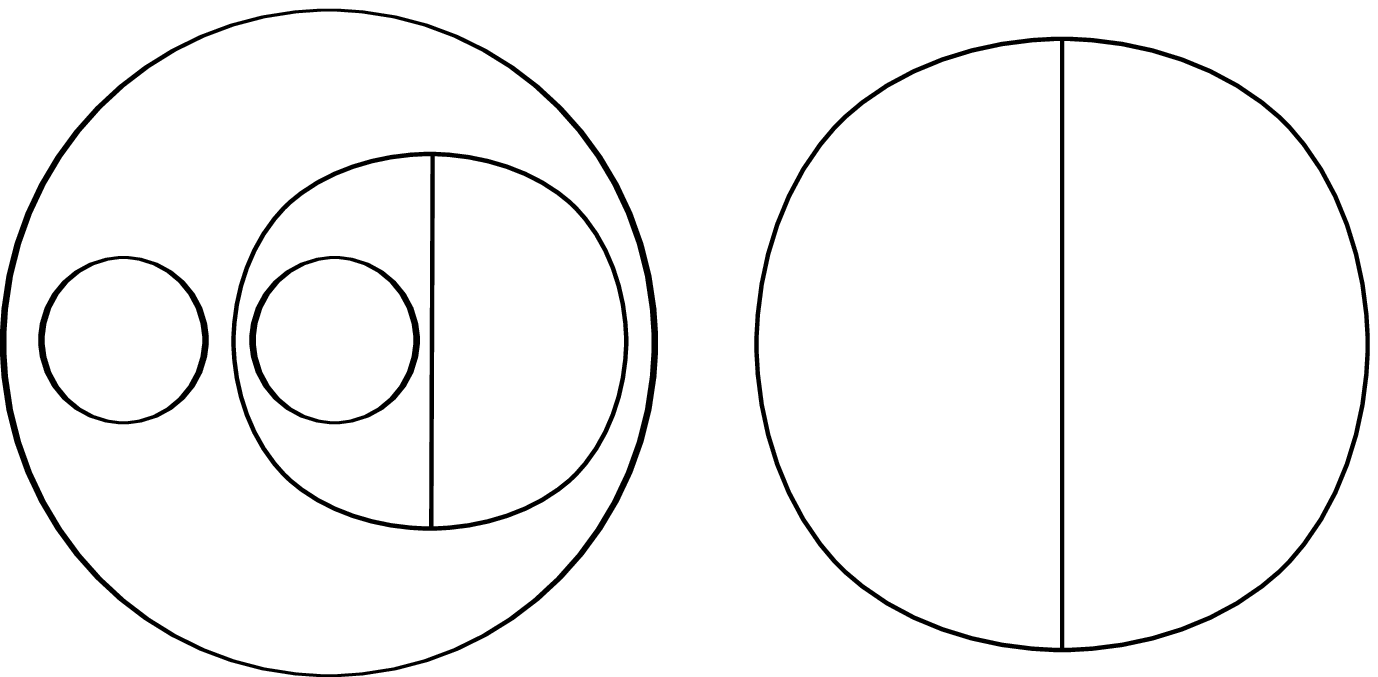}};
 (1,10)*{F_{\mathrm{ext}}};
 (-35,10)*{w_0^{c_1}};
 (-20.5,10)*{w_1^{c_4}};
 (-17.5,-1.5)*{w_2^{c_5}};
 (-28.5,-1.5)*{w_1^{c_3}};
 (36,10)*{w_0^{c_2}};
\endxy
\]
Moreover, given a 2-colouring (say with colours red and green) of the edges of a closed web $w$, we call a face $F$ of $w$ of type $r$, if all the edges of the face are red, and of type $g$ if at least one edge is green. We denote them by $F_r$ or $F_g$ respectively. The following pictures illustrate the notion.
\[
\xy
 (0,0)*{\includegraphics[width=80px]{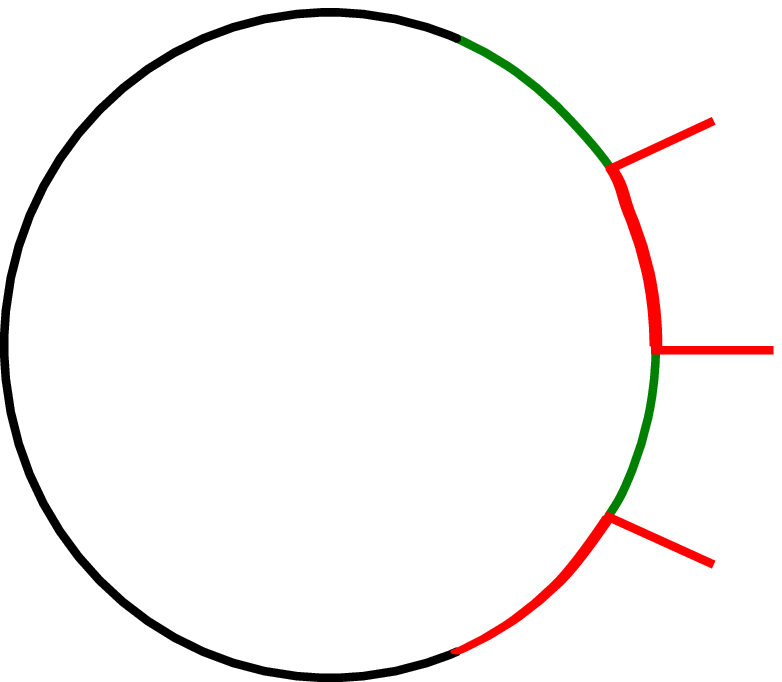}};
 (-1,0)*{F_{g}};
\endxy\;\;\;\;
\xy
 (0,0)*{\includegraphics[width=80px]{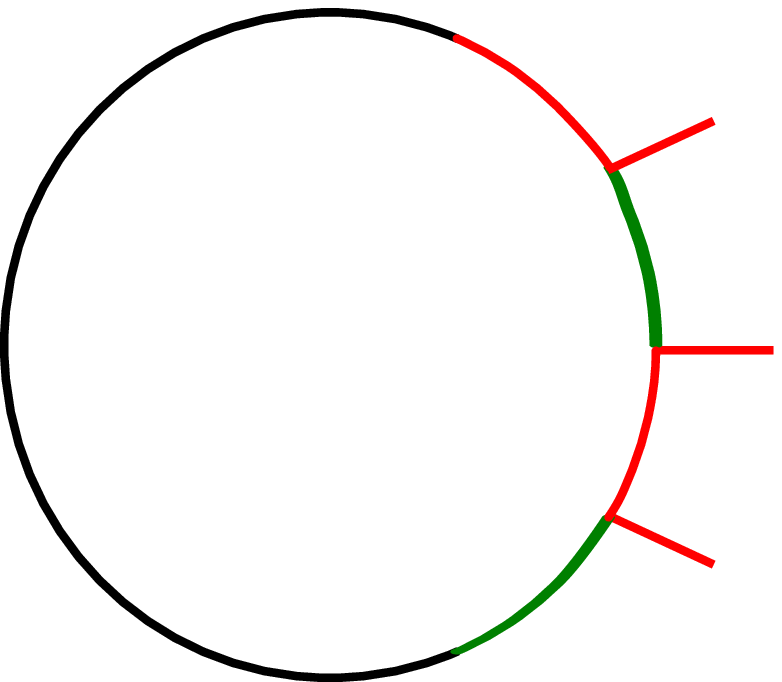}};
 (-1,0)*{F_{g}};
\endxy\;\;\;\;
\xy
 (0,0)*{\includegraphics[width=80px]{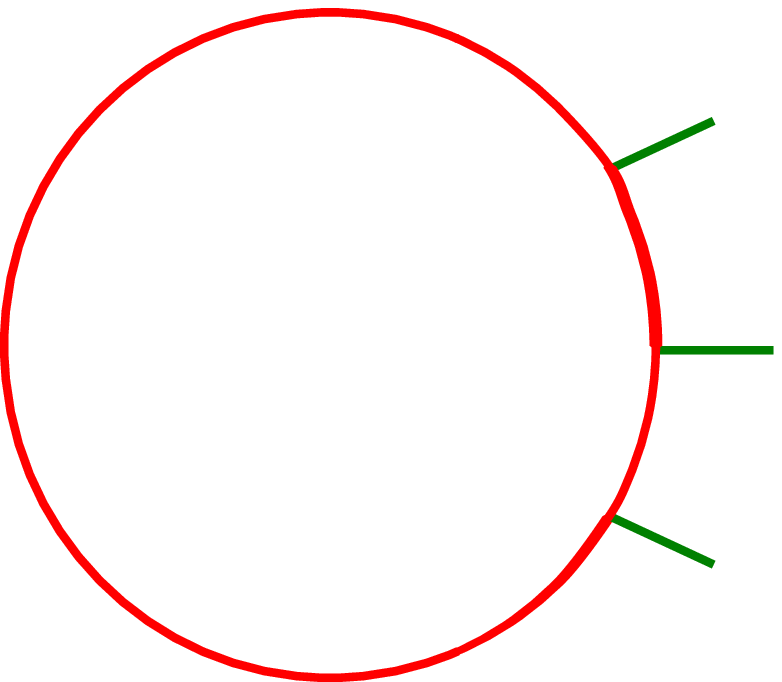}};
 (-1,0)*{F_{r}};
\endxy
\]
It should be noted that this technical distinction between nested or non-nested is necessary as we see later in Lemma~\ref{lem-welldefchoice}. Moreover, the same lemma ensures that the algorithm from Definition~\ref{defn-colouring} does not run into ambiguities. 
\begin{defn}\label{defn-colouring}
\textbf{(Colouring of closed webs)} We give an algorithm to produce a 
2-colouring (say with colours red and green) of the edges of a closed web $w$, 
such that at each vertex two red edges and one green edge meet. We call this colouring \textit{preferred}. It should be noted that it is not a priori clear why this procedure does not run into ambiguities. But this will never happen, see Lemma~\ref{lem-welldefchoice}. The same lemma ensures that the result at the end does not depend on the choices involved.
\vskip0.5cm 
The algorithm works as follows. First colour \textit{all} $0$-nested components $w^{c_j}_0$ by the following procedure.
\begin{enumerate}
\item At the initial stage, colour all edges of the external face of $w^{c_j}_0$ red.
\item At the $i$-th stage we complete the colouring of the edges incident to 
the vertices of $w^{c_j}_0$ with at least one edge already coloured from the 
$(i-1)$-th stage.
\item At a given vertex, if only one edge $e_{i-1}$ had already been coloured 
and it is green, then we colour the two remaining edges $e_i,e^{\prime}_i$ red (if one is already red, then colour the remaining one also red). If two edges $e_{i-1},e^{\prime}_{i-1}$ 
had already been coloured and they are red, then we colour the remaining edge $e_i$ 
green.
\[
\xy
 (0,0)*{\includegraphics[width=95px]{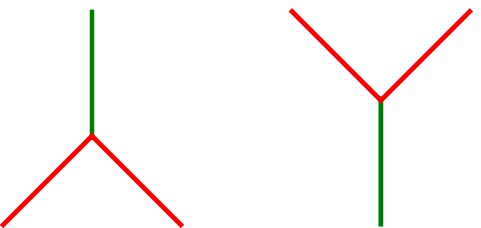}};
 (-5.7,3)*{e^{\phantom{\prime}}_{i-1}};
 (-15.7,-3.1)*{e^{\phantom{\prime}}_{i}};
 (-5,-3.1)*{e^{\prime}_{i}};
 (17,3)*{e^{\prime}_{i-1}};
 (5.3,3)*{e^{\phantom{\prime}}_{i-1}};
 (12.5,-3.1)*{e^{\phantom{\prime}}_{i}};
\endxy
\]
\item At a given vertex, if only one edge $e_{i-1}$ had already been coloured 
and it is is red, then we colour one of the other incident edges in the following fashion.
\begin{itemize}
\item Green for $e_i$, if the type of the corresponding face of $e_i$ is already green.
\item The remaining edges $e^{\prime}_i$ incident to this vertex should be red.
\end{itemize}
Below we have summarised the convention in a picture.
\[
\xy
 (0,0)*{\includegraphics[width=90px]{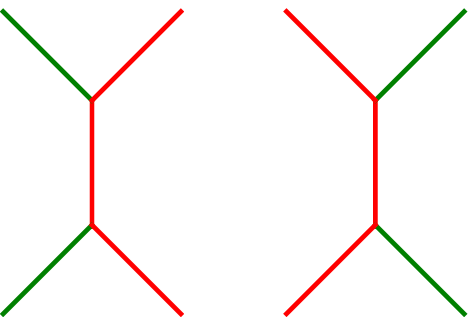}};
 (-5,0)*{e^{\phantom{\prime}}_{i-1}};
 (-4,-6)*{e^{\prime}_{i}};
 (-15.5,-6)*{e^{\phantom{\prime}}_{i}};
 (-15,6)*{e^{\phantom{\prime}}_{i-2}};
 (17,6)*{e^{\phantom{\prime}}_{i-2}};
 (14,0)*{e^{\phantom{\prime}}_{i-1}};
 (4.4,-6)*{e^{\prime}_{i}};
 (15,-6)*{e^{\phantom{\prime}}_{i}};
\endxy
\]
\end{enumerate}
Now, if all $k$-nested components are coloured, then colour exactly one edge of each $k+1$-nested by first choosing a line that cuts though all the components at least once (e.g. the cut line). Then at least one edge of all $k+1$-nested component is neighbouring an already coloured edge. Choose one such edge for all $k+1$-nested component and colour it in the following way and then repeat the procedure from before.
\[
\xy
 (0,0)*{\includegraphics[width=30px]{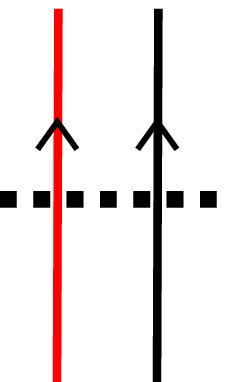}};
\endxy\to\xy
 (0,0)*{\includegraphics[width=30px]{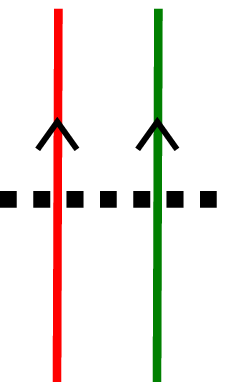}};
\endxy\;\;\;\;
\xy
 (0,0)*{\includegraphics[width=30px]{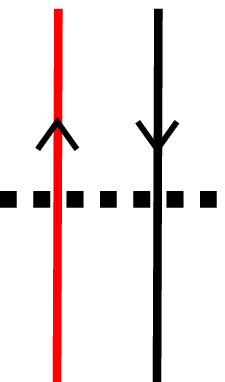}};
\endxy\to\xy
 (0,0)*{\includegraphics[width=30px]{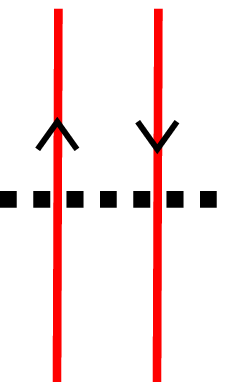}};
\endxy\;\;\;\;
\xy
 (0,0)*{\includegraphics[width=30px]{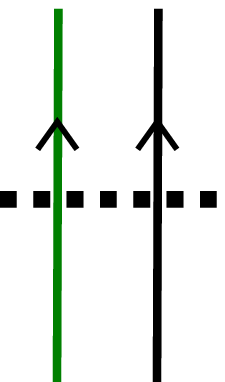}};
\endxy\to\xy
 (0,0)*{\includegraphics[width=30px]{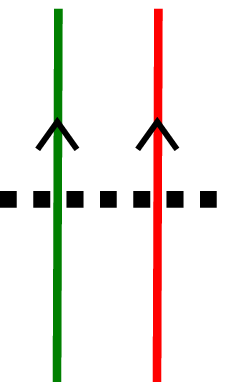}};
\endxy\;\;\;\;
\xy
 (0,0)*{\includegraphics[width=30px]{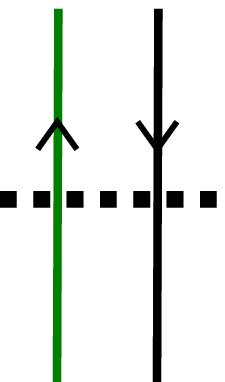}};
\endxy\to\xy
 (0,0)*{\includegraphics[width=30px]{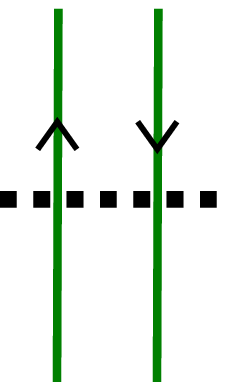}};
\endxy
\]
Or in words, use the same colour, iff the orientation of the line is reversed. The algorithm stops if every edge is coloured.
\end{defn}
\begin{lem}\label{lem-welldefchoice}
The 2-colouring of $w$ above is well-defined and it does not depend on the choices involved.
\end{lem}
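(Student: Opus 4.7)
The plan is to proceed by a two-layer induction: an outer induction on the nesting level of the connected components of $w$, and an inner induction, within each component, on the number of already-coloured edges. At each stage I must check two things, namely \textbf{(W)} that the next step of the procedure is well-defined (no contradiction with previous colourings), and \textbf{(I)} that the resulting edge-colouring is independent of the auxiliary choices that were made. The base case of the outer induction is that of a $0$-nested component, where the starting datum is canonical: all edges of $F_{\mathrm{ext}}$ are red, and this choice involves no ambiguity. For each higher nesting level, the starting datum is an edge chosen via the cut-line rule, which I will have to show is canonical up to the subsequent propagation.

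For the inner induction, the key point is that each trivalent vertex $v$ gets visited finitely many times as edges incident to $v$ are progressively coloured; the algorithm only produces colour assignments when at least one edge at $v$ is already fixed. First I would observe that rules (3) and (4) of Definition~\ref{defn-colouring} are \emph{locally deterministic}: the colour of the remaining edges at $v$ is uniquely determined by the colours of the edges already present at $v$ together with the types ($r$ or $g$) of the adjacent faces, both of which are intrinsic to $w$ and do not depend on the order of visits. A routine case check then shows \textbf{(W)} whenever propagation reaches $v$ for the second or third time: the edges already coloured at $v$ must satisfy the ``two red, one green'' pattern because the intermediate propagation around any face $F$ flips the colour to green at most once (namely, on the unique ``far'' edge of $F_g$), so the two red boundary fronts meeting at $v$ are forced to be compatible.

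The main obstacle lies in showing that closing a cycle around a face $F$ produces no contradiction; this is where the distinction between type-$r$ and type-$g$ faces is essential. Concretely, I would analyse the propagation around a single face $F$ (with $F_{\mathrm{ext}}$ outside), and argue by induction on the number of edges of $F$ using the Kuperberg relations available: every face of $w=u^{*}v$ with $u,v\in B_S$ arises, up to the reduction performed by the face-removal algorithm, from digon or square patterns whose boundary colourings are explicitly tabulated in Definition~\ref{defn-procedure}. A face-by-face check against that table shows that the colouring propagated from $F_{\mathrm{ext}}$ closes up consistently. The inductive step for nested components is handled by observing that once all $k$-nested components are coloured, the cut-line rule assigns the single ``seed'' edge of each $(k{+}1)$-nested component based only on the local orientation of the line relative to the edge it crosses, which by the inner induction then determines the rest.

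Finally, for \textbf{(I)}, I would show that different permissible choices of order of edges to visit, of seed edge for each $(k{+}1)$-nested component, and of cut line, all lead to the same colouring. The first is immediate from the local determinism above. For the other two, any two admissible choices differ by a finite sequence of elementary deformations of the cut line: sliding it across a single vertex or across a single edge. Each elementary deformation either does not change any seed edge (in which case the colouring is unchanged trivially), or swaps a seed edge for one of its neighbours, in which case I would verify directly that the two cases produce the same colouring after one further step of propagation, because the cut-line rule is designed so that the resulting colour on the swapped edges matches. Taken together, these three tasks establish both statements of the lemma.
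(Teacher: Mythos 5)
Your argument for the independence of choices (showing that any two admissible cut lines differ by elementary slides, each of which either leaves the seed edge fixed or exchanges it for a neighbour with the same resulting colour) is essentially the paper's argument, which verifies the same local picture of alternating the cut line across a single face. That part is fine.

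The well-definedness part has a genuine gap. Your plan is propagation-based: an inner induction on the number of already-coloured edges, with the crucial claim that ``closing a cycle around a face $F$ produces no contradiction.'' You acknowledge this is the main obstacle, but the resolution you sketch does not actually close it. You propose an ``induction on the number of edges of $F$'' that is then supposed to reduce to digon and square cases ``via the face-removal algorithm.'' This does not work as stated: removing an elliptic face of $w$ changes the whole web, not just $F$, and in general does not decrease the number of edges of the particular face $F$ you are tracking, so the claimed induction parameter does not decrease under the reduction you invoke. Moreover, you never establish that the ``two red boundary fronts meeting at $v$ are forced to be compatible'' — this is precisely the content that needs proving, and calling it a ``routine case check'' hides the fact that you have not controlled what happens when the two fronts arrive at $v$ from propagation histories of different lengths.

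The paper avoids this cycle-closure problem entirely by running the induction on the total number of faces of the closed web $w$: every closed $\mathfrak{sl}_3$ web contains an elliptic (circle, digon, or square) face $F$; remove it to obtain a smaller web $w'$; colour $w'$ unambiguously by induction; and then extend the colouring of $w'$ back to $w$ by one of the explicitly tabulated local rules. Consistency is then automatic because the colouring of $w$ is defined \emph{from} the colouring of $w'$ rather than by a potentially self-contradictory propagation. Your proposal also misses the one genuinely subtle point in that extension step, which the paper addresses explicitly: after a square removal the two new strands in $w'$ could in principle carry mismatched colours at the glueing site, but this cannot happen because the two strands have opposite orientations, and colours and orientations alternate coherently at each vertex (or, if the strands lie in different connected components, because the cut-line rule enforces the same colour). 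Without this observation your extension step is not clearly well-defined even if the rest of your induction were repaired.
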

\begin{proof}
First we prove the second statement, i.e. that the choices do not matter. In fact this can be easily seen by locally alternating the line that cuts though the components as illustrated below.
\[
\xy
 (0,0)*{\includegraphics[width=80px]{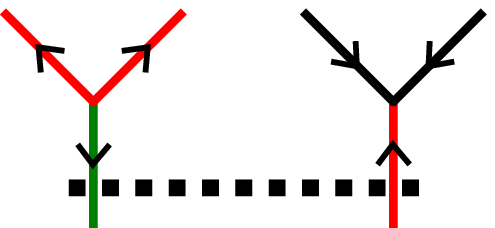}};
 (0,5)*{F_g};
\endxy\;\;\;\;
\xy
 (0,0)*{\includegraphics[width=80px]{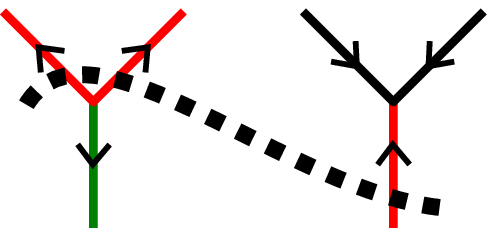}};
 (0,5)*{F_g};
\endxy\;\;\;\;
\xy
 (0,0)*{\includegraphics[width=80px]{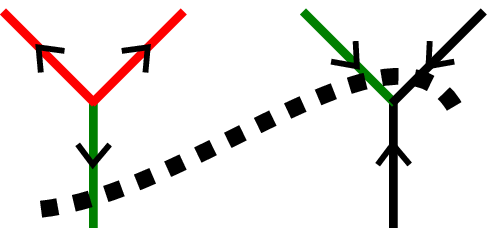}};
 (0,5)*{F_g};
\endxy
\]
Note that all three choices above give the same result since the corresponding face is of green type. We leave it to the reader to check the other possible colourings. Hence, since choosing a different edge can also be seen as rearranging the line, we get the second statement.
\vskip0.5cm
To see that the colouring is well-defined, we proceed by induction on the number $m$ of faces of $w$. It can be easily checked that it is well-defined for a circle or a theta web.

Moreover, one easily checks that every closed $\mathfrak{sl}_3$ web $w$ has at least one circle, digon or square face. Fix one such face $F$ and remove it from the web as explained in the subsection before. The resulting web $w^{\prime}$ is, by induction, colourable without ambiguities. If the face $F$ is a circle, then one easily sees that the colouring of $w^{\prime}$ can be extended without ambiguities to $w$ by applying the procedure in Definition~\ref{defn-colouring}.

If $F$ is either a digon or a square, then one can extend the colouring in the following way. 
\[
\xy
 (-5,0)*{\includegraphics[height=.125\textheight]{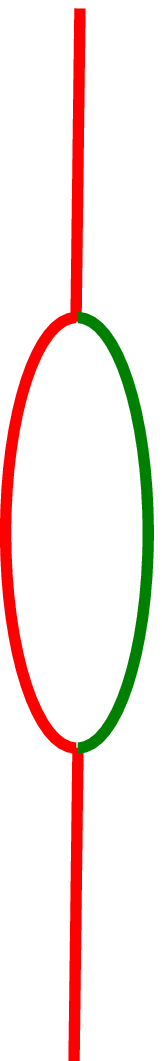}};
 (0,-2.1)*{\mapsto};
 (0,2)*{\mapsfrom};
 (-3.2,7)*{w};
 (6,7.3)*{w^{\prime}};
 (4,0)*{\includegraphics[height=.125\textheight]{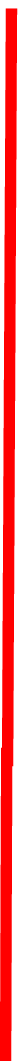}};
 \endxy\;\;\;\;\;\;\;\;
 \xy
 (-5,0)*{\includegraphics[height=.125\textheight,angle=180]{res/figs/basis/dgcolour1a}};
 (0,-2.1)*{\mapsto};
 (0,2)*{\mapsfrom};
 (-3.2,7)*{w};
 (6,7.3)*{w^{\prime}};
 (4,0)*{\includegraphics[height=.125\textheight]{res/figs/basis/dgcolour1b}};
 \endxy\;\;\;\;\;\;\;\;
 \xy
 (-5,0)*{\includegraphics[height=.125\textheight]{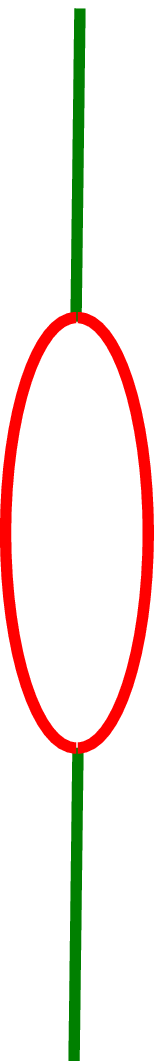}};
 (0,-2.1)*{\mapsto};
 (0,2)*{\mapsfrom};
 (-3.2,7)*{w};
 (6,7.3)*{w^{\prime}};
 (4,0)*{\includegraphics[height=.125\textheight]{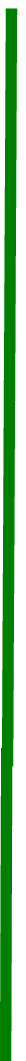}};
 \endxy
\]
in the case that $F$ is a digon and if $F$ is a square, then we use the following.
\[
 \xy
 (-12.5,0)*{\includegraphics[height=.08\textheight]{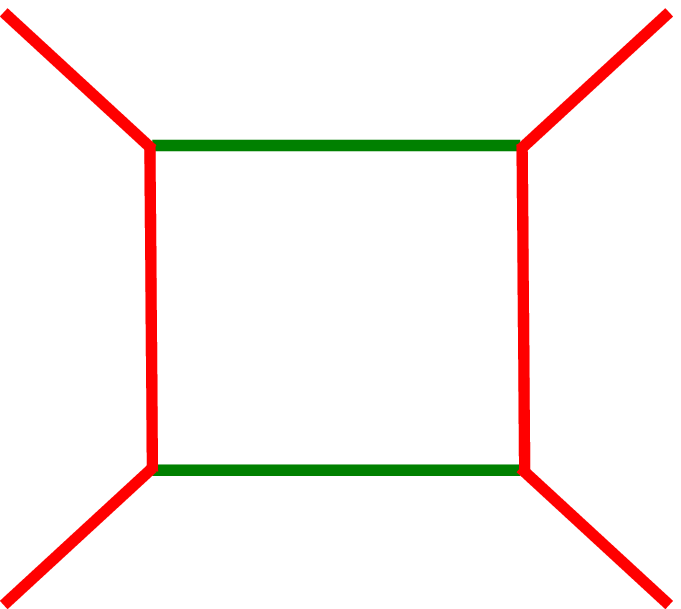}};
 (0,-2.1)*{\mapsto};
 (0,2)*{\mapsfrom};
 (-12.5,7)*{w};
 (12.5,7.3)*{w^{\prime}};
 (12.5,0)*{\includegraphics[height=.08\textheight]{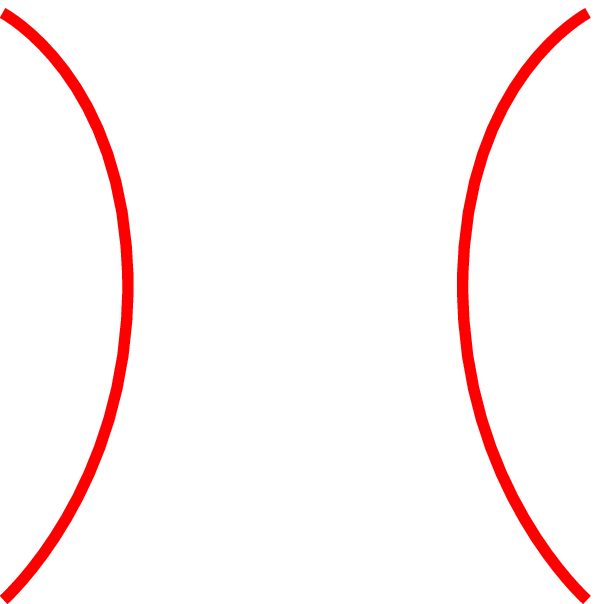}};
 \endxy\;\;\;\;\;\;
 \xy
 (-12.5,0)*{\includegraphics[height=.08\textheight]{res/figs/basis/sqcolour3}};
 (0,-2.1)*{\mapsto};
 (0,2)*{\mapsfrom};
 (-12.5,7)*{w};
 (12.5,7.3)*{w^{\prime}};
 (12.5,0)*{\includegraphics[height=.08\textheight]{res/figs/basis/vertcolour2}};
 \endxy\;\;\;\;\;\;
 \xy
 (-12.5,0)*{\includegraphics[height=.08\textheight]{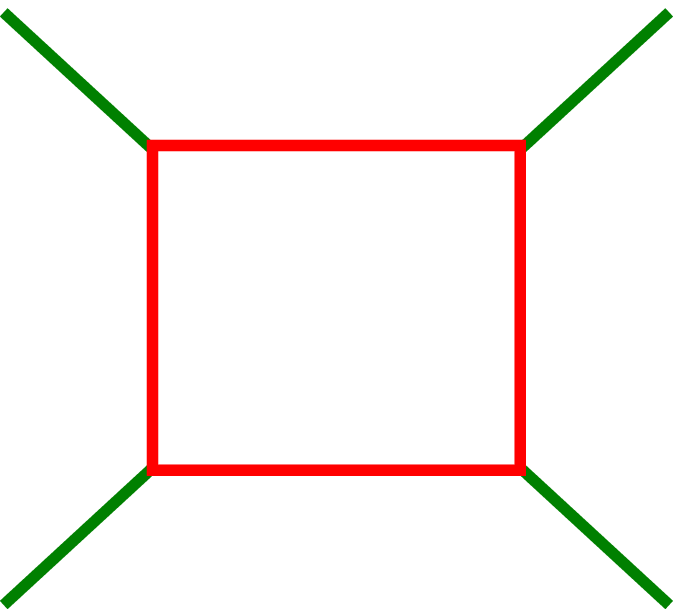}};
 (0,-2.1)*{\mapsto};
 (0,2)*{\mapsfrom};
 (-12.5,7)*{w};
 (12.5,7.3)*{w^{\prime}};
 (12.5,0)*{\includegraphics[height=.08\textheight]{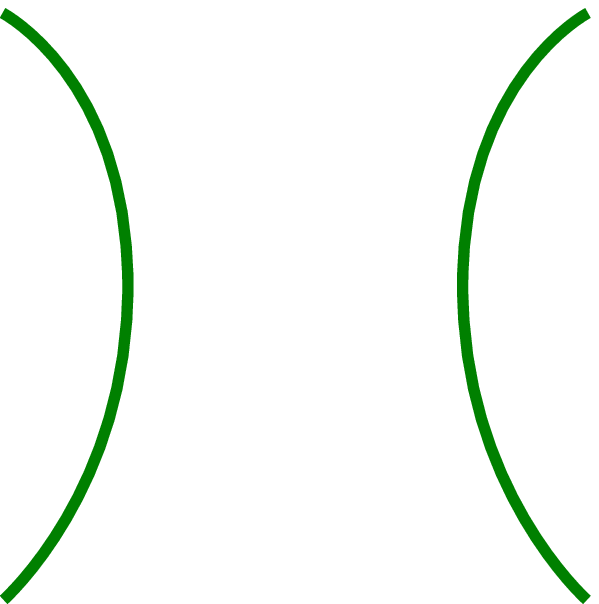}};
 \endxy
\]
It should be noted that the only ambiguous seeming case, i.e. the case where the result of the square removal of $w$ has two alternating coloured edges in $w^{\prime}$, does not occur. This is because the two strings in $w^{\prime}$ have a different orientation.

Hence, if they are part of the same connected component, then they have the same colour, because the colours and the orientations always alternate at vertices. If they are not part of the same connected component, then we have to use the cut procedure explained before to see that they have the same colour. This proves the first statement.
\end{proof}
\begin{rem}\label{rem-prefchoice}
Note that the Definition~\ref{defn-colouring} can also be made for half webs, i.e. webs $u$ with a given sign string $S$ at the boundary. An analogue of Lemma~\ref{lem-welldefchoice} can be shown as above.

Moreover, it is easy to show that, given a fixed sign string $S$ and two webs $u,v$ such that $S=\partial u=\partial v$, then the preferred colourings of $u$ and $v$ match at the boundary and the preferred colourings of $u^*v$ and $v^*u$ are given by glueing the preferred colourings of $u,v$ together. This is already implicit in Lemma~\ref{lem-welldefchoice} because the way how to cut a web does not affect the result, i.e. we can take the cut line for non-elliptic webs.   
\end{rem}
\begin{ex}\label{ex-colour}
An easy example of how the colouring works is shown below.
\[
\xy
 (0,0)*{\includegraphics[width=200px]{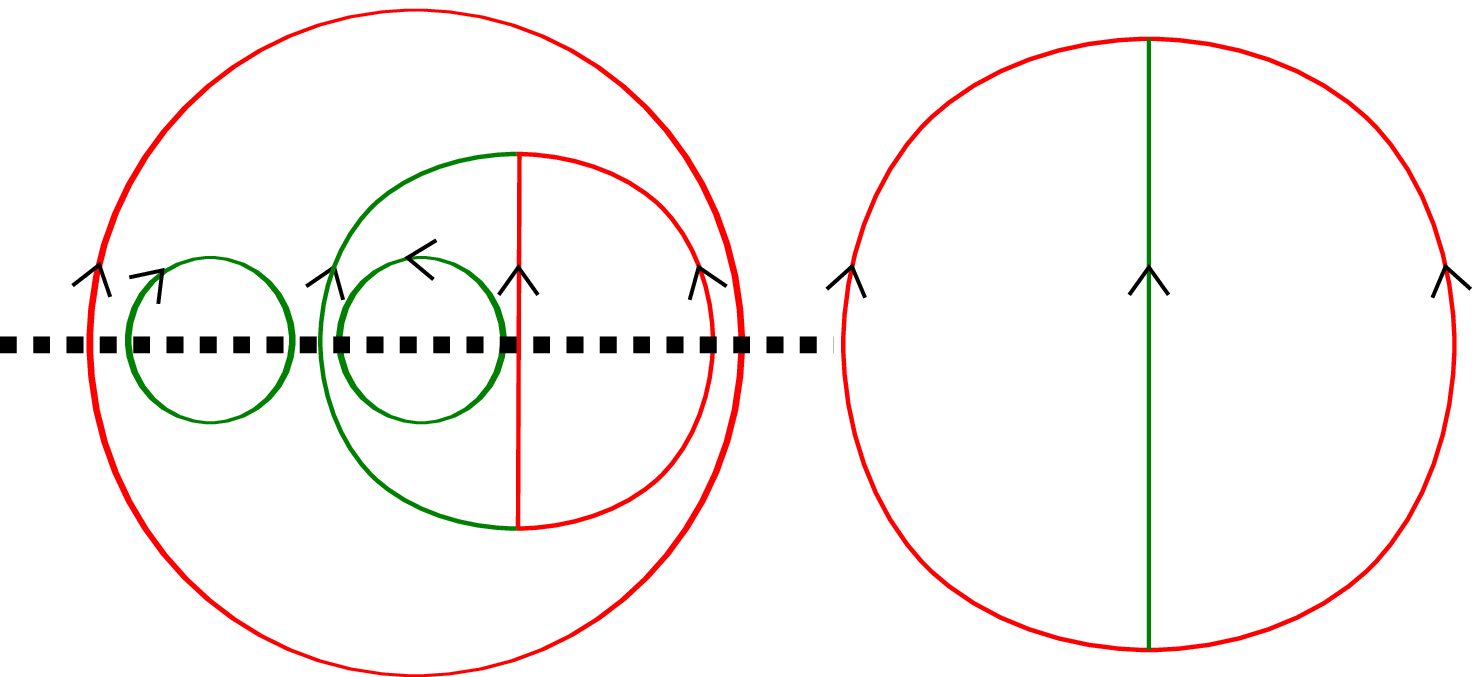}};
\endxy
\]
\end{ex}
\begin{defn}\label{defn-pflow}
\textbf{(Preferred flow on closed webs)} Consider the subgraph of $w$ given by 
the red edges. It is easy to see that this subgraph consists of a disjoint 
set of closed cycles. By orienting these edges so that the flows around the parts of the web are oriented counterclockwise and all other cycles clockwise we obtain a flow on $w$. We call this flow the \textit{preferred flow} and write $w_p$ for $w$ with the preferred flow.
\end{defn}
Note that, by construction, the preferred flow $w_p$ always consists of flows \textit{inside} faces or flows around the web. That is, they will never cross through edges, i.e. the case below will never occur. A good example is the flow pictured in Example~\ref{ex-reso3}. 
\[
\xy
 (0,0)*{\includegraphics[width=36px]{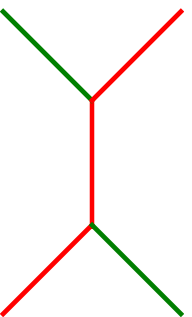}};
\endxy
\]
The following definition of the \textit{preferred face removal} is given for a connected web $w$. For an arbitrary web, the whole process should be repeated for any connected component.
\begin{defn}\label{defn-resolution}
\textbf{(The preferred face removing)} Given a closed web with a flow denoted as $w_{f}$. At each stage of the face removing algorithm let the order of the faces be determined in the following way.

It should be noted that the preferred flow from Definition~\ref{defn-pflow} implies that any face $F$ of $w_p$ is of the following type because every face has an even number of edges and the closed circles of the preferred flow $p$ are always oriented clockwise except the flows around the web. We note that, by abuse of notation, we denote a face around a connected component of the web as $F_{\mathrm{ext}}$, although this face is not the external face for nested parts.
\begin{itemize}
\item Faces $F_1$, such that the preferred flow has a component of $p$ inside $F_1$.
\item Faces $F_2$, such that the preferred flow $p$ has the same orientation as the edges of $F_2$.
\item Faces $F_3$, such that the preferred flow $p$ is against the orientation of the edges of $F_3$. 
\end{itemize}
To summarise see the figure below. A face of type $k$ should be labelled $k$.
\[
\xy
 (0,0)*{\includegraphics[width=100px]{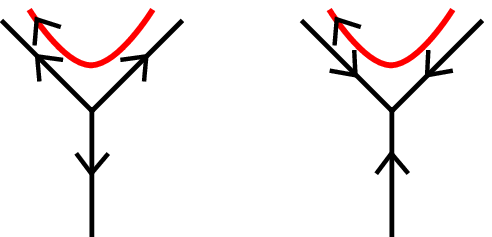}};
 (-10.5,8.5)*{F_1};
 (-7,-1)*{F_3};
 (-14.5,-1)*{F_2};
 (11,8.5)*{F_1};
 (7,-1)*{F_3};
 (14.5,-1)*{F_2};
\endxy\;\;\;\;\xy
 (0,0)*{\includegraphics[width=100px]{res/figs/basis/numbering}};
 (-10.5,8.5)*{F_{\mathrm{ext}}};
 (-7,-1)*{F_3};
 (-14.5,-1)*{F_2};
 (11,8.5)*{F_{\mathrm{ext}}};
 (7,-1)*{F_3};
 (14.5,-1)*{F_2};
\endxy
\]
At each stage of the algorithm we continue to call the web with flow $w_f$. 
We remove $w_{f}$ by the following algorithm.
\begin{enumerate}
\item Remove all circles in $w_{f}$ using the local circle rules of Definition~\ref{defn-resolution}, 
in any order. If there are no faces remaining, the algorithm stops. 
If there are faces remaining and some of them are digons, then proceed to step 2. If no remaining faces are digons, then proceed to step 3.
\item Remove all digons with the smallest label using the local digon rules of Definition~\ref{defn-resolution}, in any order. Go back to step 1. 
\item Remove all square faces with the smallest label using the local square rules of Definition~\ref{defn-resolution}, in any order. Go back to step 1.
\end{enumerate}
We call the above process of obtaining a foam $F_{w_{f}}\in\F(w)$ the 
\textit{preferred face removal} of $w_{f}$ or short \textit{preferred resolution} of $w_{f}$.
\end{defn}
\begin{rem}\label{rem-recursion}
Notice that it is relatively easy to calculate the order of face removing since the preferred colouring of the web $w^{\prime}$ (and therefore the preferred flow) after removing a particular face of $w$ can be computed directly as indicated before in the proof of Lemma~\ref{lem-welldefchoice}.

It is straightforward to check that this recursive procedure gives the same answer as if one calculates the preferred colouring of $w^{\prime}$ as in Definition~\ref{defn-colouring}. 
\end{rem}
\begin{lem}\label{lem-welldefreso}
The preferred resolution of $w_f$ is well-defined. 
\end{lem}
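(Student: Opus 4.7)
My plan is to proceed by induction on the number of faces of $w$, with the base case (when $w$ is a disjoint union of circles) being immediate from Definition~\ref{defn-procedure}, since all circle removals are already independent of ordering. For the inductive step, the only freedom left in the preferred resolution algorithm is the choice of order among faces that carry the smallest remaining label at a given stage. Hence it suffices to show that if $F$ and $F'$ are two faces of $w_f$ with the same minimal label (both of type circle, or both digons of the same type, or both squares of the same type, and nothing of lower priority remains), then removing $F$ first and then $F'$ produces the same foam as removing $F'$ first and then $F$. Once this is established, a standard argument in the spirit of Newman's diamond lemma converts this local confluence into global independence, and Remark~\ref{rem-recursion} together with Lemma~\ref{lem-welldefchoice} ensures that the preferred flow and labelling of the resulting smaller web $w'$ are the same regardless of which of $F$ or $F'$ was removed first, so the induction hypothesis applies to the remainder of the algorithm.

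The first (easy) case is when $F$ and $F'$ share no vertex. Then the two elementary foams prescribed by Definition~\ref{defn-procedure} act on disjoint parts of the identity foam of $w$, so they commute by the locality of horizontal composition of foams; this uses nothing more than the fact that $\foamt^c$ is a $2$-category whose horizontal composition is by disjoint juxtaposition outside a common neighbourhood. The same remark handles the case where $F$ and $F'$ lie in distinct connected components, and more generally, the case where the neighbourhoods used to perform the two removals can be chosen disjoint.

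The main obstacle is the case where $F$ and $F'$ are \emph{adjacent}, i.e., share at least one edge. Here I expect to do a finite case check, using the restriction imposed by the preferred flow from Definition~\ref{defn-pflow}: because the preferred flow never crosses edges, two adjacent faces with the same label have very restricted joint flow patterns, and Example~\ref{ex-reso1}(a), which exhibits non-commuting orderings, is in fact ruled out by the labelling convention, because the two theta-faces there receive \emph{different} labels ($F_2$ vs $F_3$) under the preferred flow. The residual cases to check are (i) two adjacent digons sharing an edge and both carrying the smallest label in $\{1\}$ (digon with internal component of $p$) or both carrying label $2$; and (ii) two adjacent squares sharing an edge and both carrying the smallest label. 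In each such configuration, the relevant local equality reduces, after applying the relations (3D), (NC), (S), ($\Theta$), (Bamboo), (RD), (Bubble), (DR) and (SqR) from Section~\ref{sec-webbasicb}, to one of the basic commutativity identities in $\foamt^c$; the point is that the preferred labelling forces the two removals to attach the \emph{same} local elementary foams (zips/unzips or singular square foams) on regions whose boundary behaviour matches, so that the two compositions are isotopic rel.\ boundary. A short case-by-case check of the pictures in Definition~\ref{defn-procedure}, combined with Example~\ref{ex-reso1}(b) as a prototype of the ``both orders yield the same foam'' phenomenon, completes the verification.

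Finally, I would remark that the induction hypothesis is applied not to an arbitrary order of removals on $w'$ but to the preferred order determined by the preferred colouring of $w'$; by Remark~\ref{rem-recursion} this preferred colouring can be obtained either directly from $w'$ or recursively from the colouring of $w$ after deleting $F$ (equivalently $F'$), so the inductive step is consistent and yields a well-defined foam $F_{w_f}$ independent of all choices. The main obstacle will therefore be the bookkeeping in the adjacency case above, but no genuinely new relation beyond those already in $\foamt^c$ is needed.
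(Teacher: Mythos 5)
Your proposal has the right overall shape (reduce to commuting two removals, handle disjoint removals by locality) but misses the key observation that makes the paper's proof short, and as a result you set yourself an incomplete and in fact unnecessary case analysis.

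The paper's argument hinges on one fact which you almost state but do not exploit: the labels $F_1/F_{\mathrm{ext}}, F_2, F_3$ assigned in Definition~\ref{defn-resolution} form a proper $3$-colouring of the faces of $w$. Look at the summarising figure in that definition: at every trivalent vertex the three incident faces receive three distinct labels. Since any two faces that share an edge also share a vertex on that edge, two adjacent faces can never carry the same label. Consequently, at any stage of the preferred resolution the faces currently eligible for removal (all those carrying the minimal label) are pairwise non-adjacent, and your ``adjacent'' case simply does not arise. The proof then reduces to exactly your first, easy case: two removals supported on disjoint neighbourhoods commute up to isotopy of foams, which the paper verifies with a single picture.

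You do notice that Example~\ref{ex-reso1}(a) is ruled out because the two theta-faces get labels $F_2$ and $F_3$, but you treat this as a special coincidence rather than as an instance of the general proper-$3$-colouring fact, and you then announce ``residual cases'' of two adjacent digons or two adjacent squares with the same smallest label and promise ``a short case-by-case check'' that you never actually carry out. As written, your proof is therefore incomplete in precisely the place you yourself identify as the ``main obstacle.'' Once you replace the promised case analysis with the observation that adjacent faces always get different labels, the obstacle disappears and your argument matches the paper's.
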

\begin{proof}
We note that the labelling of the faces is in such a way that neighbouring faces never have the same label, i.e. they define a colouring of the faces of $w$ with three colours. Hence, we only need to show that the consecutive resolution of two non-neighbouring faces in two different orders results in isotopic foams.

A simple illustration shows that the consecutive removing of the two faces 
in two different orders yields isotopic foams.
\[
\xy
 (0,0)*{\includegraphics[width=150px]{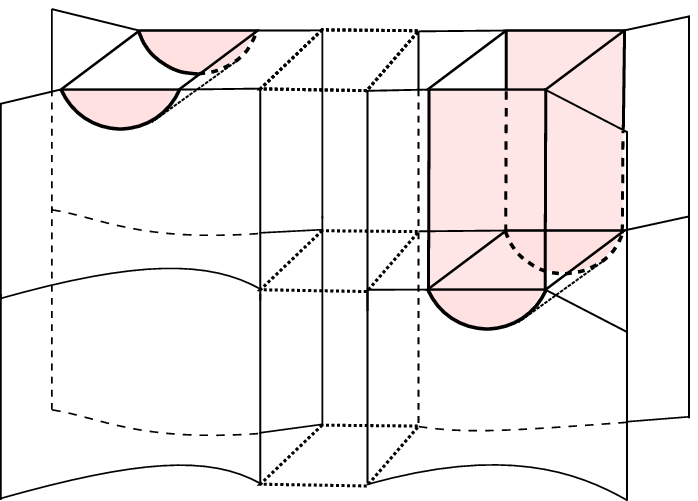}};
\endxy\;=\;
\xy
 (0,0)*{\includegraphics[width=150px]{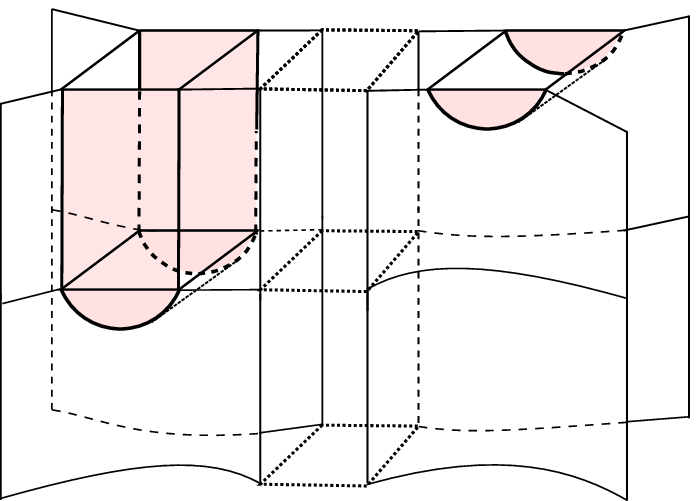}};
\endxy
\]
The above only illustrates the effect of removing both square faces 
in one particular way, but similar arguments demonstrate the claim 
for different removing of the square faces as well as removing of 
digon faces.
\end{proof}
\begin{thm}\label{thm-basis2}
Given a web $w=u^*v$, the face removing algorithm~\ref{defn-procedure} together with the preferred face removal~\ref{defn-resolution} gives an isotopy invariant, homogeneous basis of $\F(w)$ parametrised by flows on $w$. If $w$ is symmetric, then the basis contains the identity given by the canonical flow. 
\end{thm}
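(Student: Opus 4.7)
The plan is to assemble the theorem from the previously established pieces rather than doing anything substantially new. By Theorem~\ref{thm-basis}, once we fix a way to remove the faces of any closed web (independent of flow), the face removing algorithm of Definition~\ref{defn-procedure} produces a basis of $\F(w)$ parametrised by flows on $w$, and by Proposition~\ref{prop-resolution}(a) each such basis element is homogeneous of $q$-degree $-k+n$, where $k$ is the weight of the corresponding flow. Thus the work reduces to verifying that the preferred face removal of Definition~\ref{defn-resolution} really is such a ``fixed way to remove webs'' in the sense required by Theorem~\ref{thm-basis}, and that the resulting basis transforms correctly under isotopy.

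First I would check that the preferred removal assigns to every closed web $w$ a well-defined order in which to peel off its circle, digon and square faces. The order is determined by the preferred flow $w_p$ of Definition~\ref{defn-pflow}, which in turn is determined by the preferred $2$-colouring of Definition~\ref{defn-colouring}; the latter is well-defined and independent of auxiliary choices by Lemma~\ref{lem-welldefchoice}, and the resulting three-type labeling of the faces gives neighbouring faces distinct labels, so Lemma~\ref{lem-welldefreso} ensures that the preferred resolution itself does not depend on the order in which equally-labeled faces are removed. Together with Remark~\ref{rem-recursion}, which says that the preferred colouring is compatible with the recursive structure of face removals, this confirms that the preferred procedure produces a well-defined fixed removal on every closed web. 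Applying Theorem~\ref{thm-basis} then gives a homogeneous basis of $\F(w)$ indexed by the flows on $w$.

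The main point to check is the isotopy invariance. The preferred colouring of an open web $u$ depends, by Remark~\ref{rem-prefchoice}, only on its boundary sign string, and for a closed web $w=u^*v$ the preferred colouring glues together the preferred colourings of $u$ and $v^*$; both the nesting data used to start the algorithm and the $2$-colour propagation rule are manifestly invariant under planar isotopy of $w$. Consequently the induced preferred flow, the labeling of faces into types $1,2,3$, and hence the order of face removals, all transform under an ambient isotopy in the obvious way. Since the elementary circle, digon and square cobordisms of Definition~\ref{defn-procedure} are purely local and depend only on the combinatorial type of the face being removed and its flow, isotopic webs produce isotopic foams under the preferred procedure, so the basis is isotopy invariant as claimed.

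Finally, for a symmetric web $w=u^*u$ equipped with the canonical flow $c$, Proposition~\ref{prop-resolution}(b) shows that any removal consistent with the face-removing algorithm produces the identity foam $1_u$; in particular the preferred removal does so, since the preferred flow restricted to $u^*u$ agrees with the canonical flow along the cut line and the preferred order is a legitimate instance of the general algorithm. Hence the canonical flow contributes the identity to the basis, completing the proof. The only real obstacle in this plan is bookkeeping the isotopy invariance of the preferred colouring across nested components, but this is already contained in the proof of Lemma~\ref{lem-welldefchoice}, so no new combinatorial input is required.
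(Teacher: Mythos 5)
Your proof is correct and takes essentially the same route as the paper: the paper's own proof of the theorem is a one-sentence citation of Theorem~\ref{thm-basis}, Proposition~\ref{prop-resolution}, and the isotopy invariance of Definitions~\ref{defn-procedure} and~\ref{defn-resolution}, which is exactly the decomposition you use. You merely unpack the verification that the preferred removal is a ``fixed way to remove webs'' (via Lemma~\ref{lem-welldefchoice} and Lemma~\ref{lem-welldefreso}) and spell out the isotopy-invariance argument via Remark~\ref{rem-prefchoice}; the sentence in your last paragraph appealing to the preferred flow agreeing with the canonical flow along the cut line is unnecessary, since Proposition~\ref{prop-resolution}(b) already holds for any admissible removal order, but it is not incorrect.
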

\begin{proof}
This is a direct consequence of Theorem~\ref{thm-basis}, Proposition~\ref{prop-resolution} and the fact that the procedures explained in the Definitions~\ref{defn-procedure} and~\ref{defn-resolution} work in an isotopy invariant way.
\end{proof}
From Theorem~\ref{thm-basis2} we get the following corollary since for any fixed sign string $S$ the algebra $K_S$ was defined as
\[
K_S=\bigoplus_{u,v\in B_S}{}_{u}K_{v}.
\]
\begin{cor}\label{cor-basis}
Let $S$ be a fixed sign string. The set of flows on all webs $w$ with $S=\partial w$ parametrises an isotopy invariant, homogeneous basis for $K_S$, via the preferred face removing algorithm.
\end{cor}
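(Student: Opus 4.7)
The plan is to derive the corollary directly from Theorem~\ref{thm-basis2} via the block decomposition of $K_S$. By Definition~\ref{defn:webalg} we have
\[
K_S=\bigoplus_{u,v\in B_S}{}_uK_v,\qquad {}_uK_v=\F^0(u^*v)\{n\},
\]
so it suffices to assemble homogeneous isotopy invariant bases of each summand $\F^0(u^*v)$, uniformly shifted by $\{n\}$, and to observe that the indexing data -- flows on the closed webs $u^*v$ for $u,v\in B_S$ -- are precisely what the statement refers to when it mentions ``flows on all webs $w$ with $S=\partial w$''.

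First I would apply Theorem~\ref{thm-basis2} to each closed web $w=u^*v$ with $u,v\in B_S$. That theorem produces, for every such $w$, a homogeneous basis $\{F_{w_f}\}$ of $\F(w)$ indexed by the flows $f$ on $w$, obtained by running the face removing algorithm of Definition~\ref{defn-procedure} in the order dictated by the preferred face removal of Definition~\ref{defn-resolution}. Both ingredients are manifestly isotopy invariant: the local rules in Definition~\ref{defn-procedure} depend only on the local edge orientations and flow pattern, while the preferred 2-colouring of edges (and hence the induced 3-labelling of faces) is isotopy invariant by Lemma~\ref{lem-welldefchoice}, and the resulting resolution is well defined by Lemma~\ref{lem-welldefreso}. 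Consequently each basis of $\F^0(u^*v)$ produced in this way is itself isotopy invariant.

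Next I would take the union of these bases over all pairs $(u,v)\in B_S\times B_S$. Because the decomposition of $K_S$ above is a direct sum, this union is automatically a homogeneous basis of $K_S$, with the degree of each basis vector shifted by $n$ as prescribed by the definition of ${}_uK_v$; in particular, if a flow $f$ on $u^*v$ has weight $k$, then by Proposition~\ref{prop-resolution}(a) the corresponding basis element has $q$-degree $-k+n$. The parametrising set is the disjoint union of flows on all $u^*v$ with $u,v\in B_S$, exactly the set appearing in the statement of the corollary.

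Finally, a brief compatibility remark is in order, though no further content is needed: by Remark~\ref{rem-prefchoice}, for any $u,v\in B_S$ the preferred colouring of the closed web $u^*v$ agrees with the colourings of $u$ and $v$ glued along the common boundary $S$, so the construction depends only on $S$ and on the closed web, not on any auxiliary choice of cut line. There is essentially no obstacle to address -- the substantive work has already been absorbed into Theorem~\ref{thm-basis2} -- so the proof amounts to reindexing the conclusion of that theorem across the block decomposition of $K_S$.
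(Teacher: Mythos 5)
Your proposal is correct and follows exactly the paper's own reasoning: the corollary is obtained by applying Theorem~\ref{thm-basis2} block-by-block to the decomposition $K_S=\bigoplus_{u,v\in B_S}{}_uK_v$ and collecting the resulting bases, with the flows on each $u^*v$ providing the index set.
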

\begin{ex}\label{ex-reso2}
Consider the theta web $w$ from Example~\ref{ex-reso1} again. We know that the graded dimension of $\F(w)$ is given by the Kuperberg bracket and is therefore $[2][3]=q^{-3}+2q^{-1}+2q+q^3$. The six possible flows (ordered by weight) on $w$ are illustrated below.
\[
\xy
 (0,0)*{\includegraphics[width=80px]{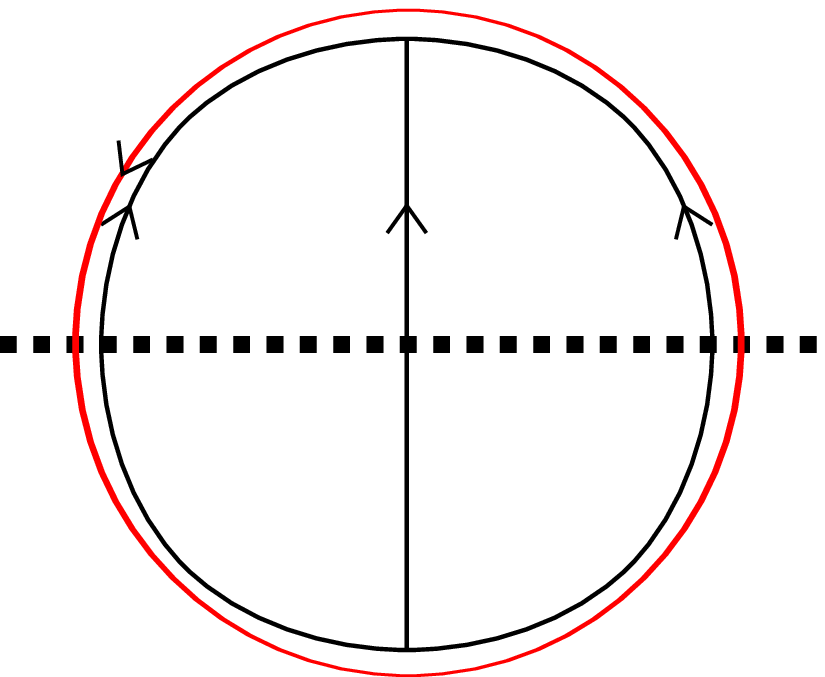}};
 (0,-12.5)*{\scriptstyle \mathrm{wt} = 3};
 (0,-30)*{\includegraphics[width=80px]{res/figs/basis/thetaexd}};
 (0,-42.5)*{\scriptstyle \mathrm{wt} = -1};
\endxy\;\;\;\;
\xy
 (0,0)*{\includegraphics[width=80px]{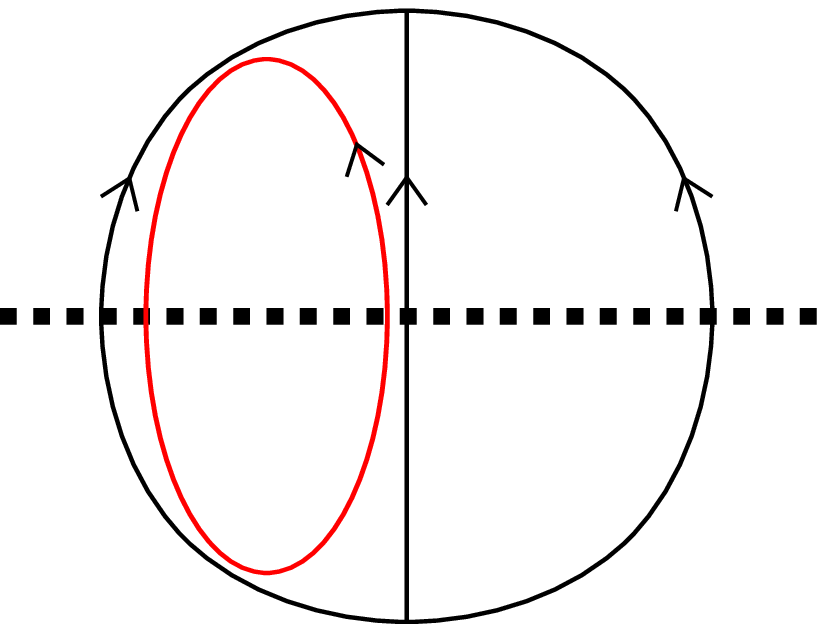}};
 (0,-12.5)*{\scriptstyle \mathrm{wt} = 1};
 (0,-30)*{\includegraphics[width=80px]{res/figs/basis/thetaexe}};
 (0,-42.5)*{\scriptstyle \mathrm{wt} = -1};
\endxy
\;\;\;\;
\xy
 (0,0)*{\includegraphics[width=80px]{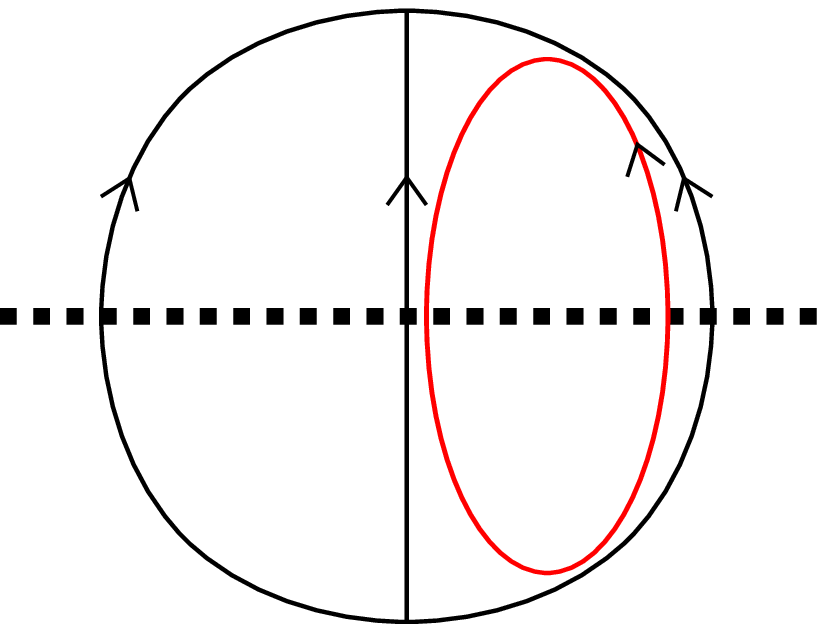}};
 (0,-12.5)*{\scriptstyle \mathrm{wt} = 1};
 (0,-30)*{\includegraphics[width=80px]{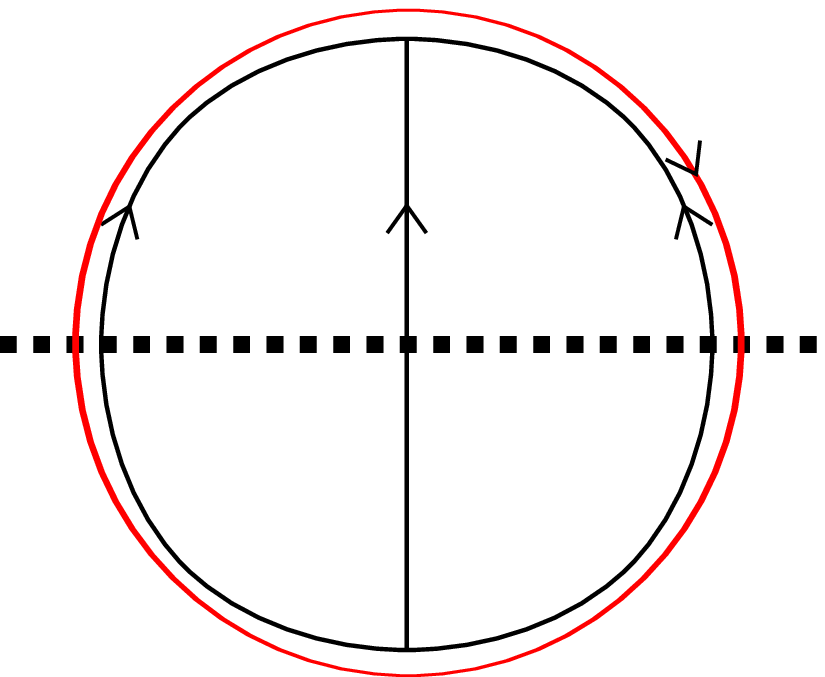}};
 (0,-42.5)*{\scriptstyle \mathrm{wt} = -3};
\endxy
\]
Notice that in this case the canonical flow is also the preferred flow. Hence, the order of removal of the faces is from right to left. This gives the following basis elements (ordered by $q$-degree) for the corresponding flows.
\[
\xy
 (0,0)*{\includegraphics[width=80px]{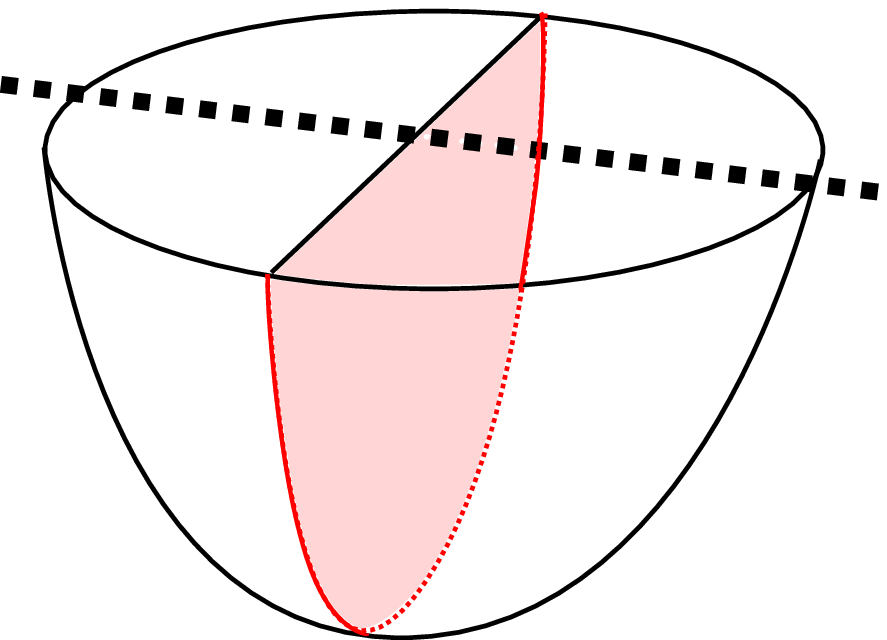}};
 (0,-12.5)*{\scriptstyle q-\deg = 0};
 (0,-30)*{\includegraphics[width=80px]{res/figs/basis/thetad}};
 (0,-42.5)*{\scriptstyle q-\deg = 4};
\endxy\;\;\;\;
\xy
 (0,0)*{\includegraphics[width=80px]{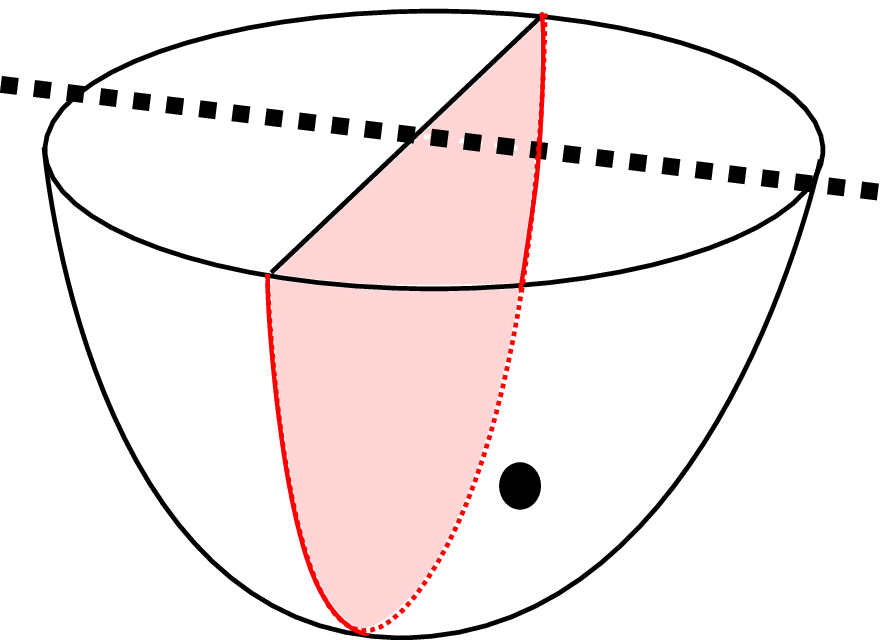}};
 (0,-12.5)*{\scriptstyle q-\deg = 2};
 (0,-30)*{\includegraphics[width=80px]{res/figs/basis/thetae}};
 (0,-42.5)*{\scriptstyle q-\deg = 4};
\endxy
\;\;\;\;
\xy
 (0,0)*{\includegraphics[width=80px]{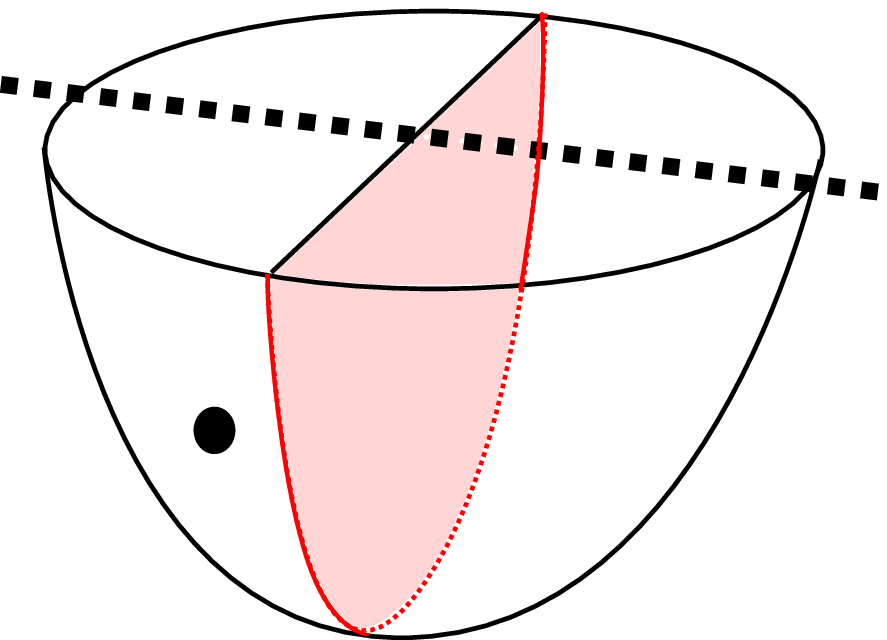}};
 (0,-12.5)*{\scriptstyle q-\deg = 2};
 (0,-30)*{\includegraphics[width=80px]{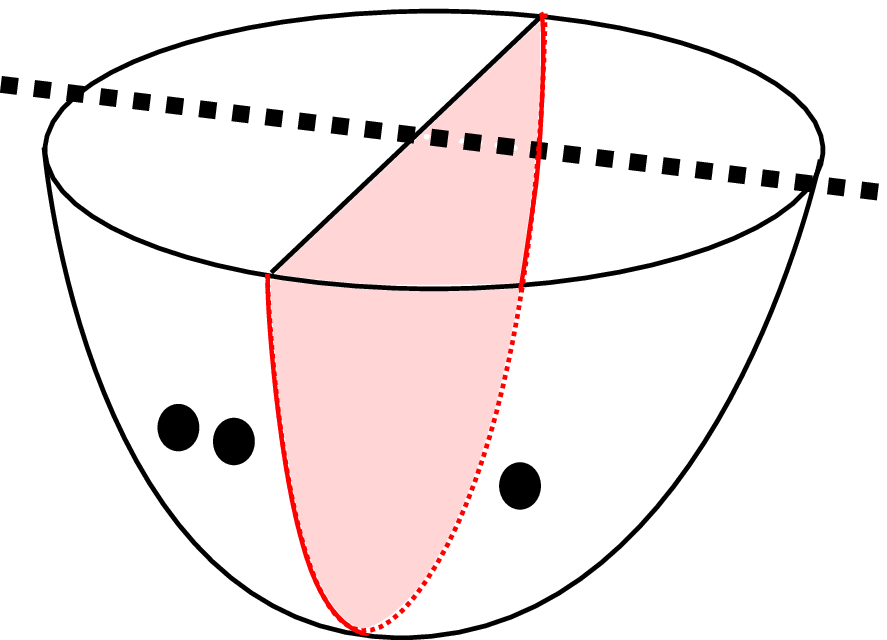}};
 (0,-42.5)*{\scriptstyle q-\deg = 6};
\endxy
\]
\end{ex}
\begin{ex}\label{ex-reso3}
The counterexample of Khovanov and Kuperberg~\cite{kk} for the negative exponent property (compare to Theorem~\ref{thm:upptriang} and the Remark~\ref{rem:counter}) gives rise to another  interesting example, i.e. the corresponding foam will be (up to a scalar factor) a non-trivial idempotent of the web algebra. The preferred flow $p$ from Definition~\ref{defn-pflow} on $w$ pictured below (for both orientations of $w$) has weight $\mathrm{wt}(p)=12$. Thus, the corresponding foam has $q$-degree $0$.
\[
\xy
 (0,0)*{\includegraphics[width=188px]{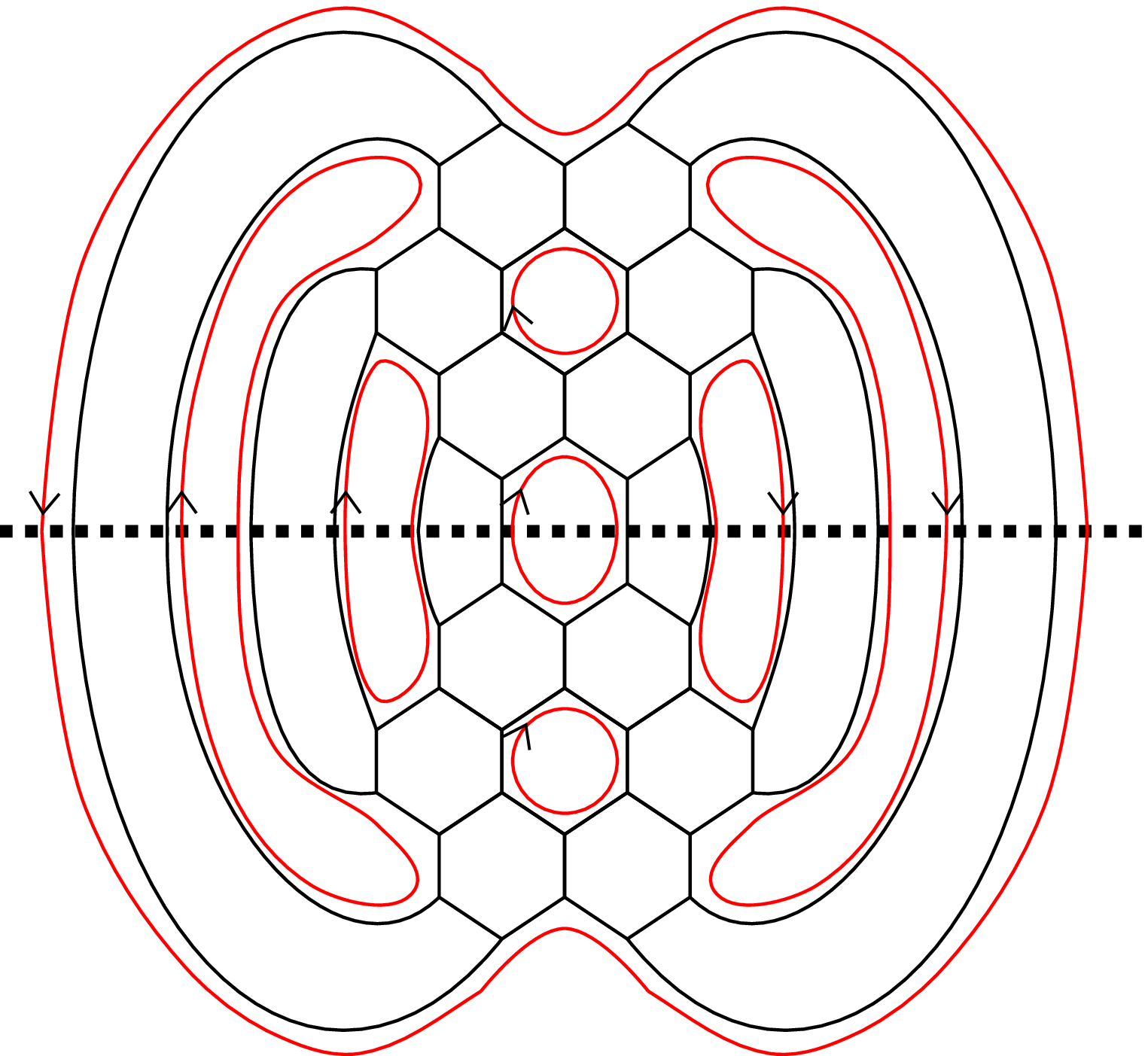}};
 (0,-30)*{\mathrm{wt} = 12};
\endxy
\]
One easily checks that the canonical flow $c$ on this web also has $\mathrm{wt}(c)=12$. Thus, $\F(w)$ has two linear independent foams in $q$-degree zero. Note that (b) of Proposition~\ref{prop-resolution} ensures that the foam obtained from the canonical flow is the identity.
\end{ex}
\subsection{Open issues}\label{sec-webend}
Let us mention some open questions that are hopefully answered in future work. Note that some other open questions were already discussed in Section~\ref{sec-websum}. We will focus here on four questions the author is currently working on, namely the ones listed below.
\begin{itemize}
\item[(a)] We conjecture that there is an ordering how to resolve webs as explained in Section~\ref{sec-webbase} such that the resulting basis is a graded cellular basis. For a lot of constructions involving graded cellular bases one needs a particular basis in an explicit form (compare to the discussion in Section~\ref{sec-techcell}). Hence, it is a future goal to give such a basis for $K_S$. 
\item[(b)] A generalisation of the results on Howe duality from Section~\ref{sec-webhowea}. That is a pictorial version similar to the results in Section~\ref{sec-webhowea}, but for arbitrary representations of $U_q(\mathfrak{sl}_n)$. In order to do so, one would for example consider clasps and clasped web spaces as explained by Kuperberg in~\cite{kup}. Note that this is not known at the moment, even for $n=2$. This would correspond to the coloured versions of the $\mathfrak{sl}_n$ polynomials instead of the uncoloured case.
\item[(c)] Instead of a categorification of the \textit{invariant} tensors, as we have done, one could also try to give a categorification of the \textit{full} tensor product. Note that in the $n=2$ case a categorification is known, e.g. see Chen and Khovanov~\cite{chkh}. It is worth noting that this is related to the question how to construct the quasi-hereditary cover of $K_S$. Such a cover for Khovanov's arc algebra, i.e. the $\mathfrak{sl}_2$ case, was studied by Brundan and Stroppel~\cite{bs1}, Chen and Khovanov~\cite{chkh} and Stroppel~\cite{st2}.
\item[(d)] The Question~\ref{question} asked by Kamnitzer, i.e. how our work is related to the approach from algebraic geometry by Fontaine, Kamnitzer and Kuperberg~\cite{fkk}.
\end{itemize}
\newpage
\section{Technical points}\label{sec-tech}
\subsection{Higher categories}\label{sec-techhigher}
In the present section we recall some definitions and theorems from higher category theory.

Note that we always talk about \textit{strict $(n,n)$-categories} if we say $n$-category. Here we allow $n\in\{0,1,2,\dots,\omega\}$. One can think of a $n$-category as a \textit{$n$-dimensional category}.

We usually drop the ``strict'', that is all categories are assumed to be strict unless otherwise mentioned. Informally, the strict refers to the fact that composition of higher morphism is ``on the nose'' associative and satisfies some identity laws.
\vskip0.5cm
Moreover, a $(n,r)$-category (with $r\leq n$) is a category with only non-trivial $k$-cells for $k\leq n$ of which all $k^{\prime}$-cells for $r<k^{\prime}\leq n$ are invertible. For example a strict $(1,0)$-category is a groupoid, while a strict $(1,1)$-category is a usual category.

Note that $(n,1)$-categories arise in a lot of examples motivated from topology and are sometimes called $n$-categories. But we do not need them in this thesis, so we only refer to Leinster's~\cite{tl} book for a more detailed discussion.

Moreover, to \textit{avoid} all set-theoretical question, which are interesting for their own sake, but not for our purposes, all categories should be \textit{essentially small}, i.e. the skeleton should be small. By a slight abuse of notation, we always take about sets of objects, morphisms etc.
\vskip0.5cm
We start by recalling some ``classical'' notions that we need in the following.
\begin{defn}\label{defn-monoidal}
Let $\mathcal C$ be a category. The category is called \textit{(strict) monoidal} if it is a monoid in the category $\CAT_1$, i.e. the category of categories. That is the category $\mathcal C$ is equipped with a bifunctor $\otimes\colon\mathcal C\times\mathcal C\to\mathcal C$ such that the following is satisfied.
\begin{itemize}
\item[(a)] There exists a unit $1\in\Ob(\mathcal C)$ such that for all $C\in\Ob(\mathcal C)$
\[
1\otimes C=1\otimes C=C.
\]
\item[(b)] For all $C_1,C_2,C_3\in\Ob(\mathcal C)$ the associativity holds, that is
\[
(C_1\otimes C_2)\otimes C_3=C_1\otimes(C_2\otimes C_3).
\]
\end{itemize}
Note that the functoriality implies
\[
(f^{\prime}\circ g^{\prime})\otimes (f\circ g)=(f^{\prime}\otimes f)\circ(g^{\prime}\otimes g)
\]
for suitable morphisms $f,f^{\prime},g,g^{\prime}$.

A \textit{weak monoidal} category is the same as above, but (a) and (b) hold only up to natural isomorphism, denoted $r_{C},\ell_{C}$ and $\alpha_{C_1,C_2,C_3}$, such that the following two diagrams commute.
\[
\xymatrix{
(C_1 \otimes 1) \otimes C_2
\ar[rr]^{\alpha_{C_1,1,C_2}}
\ar[dr]_{r_{C_1} \otimes 1}
&& C_1 \otimes(1 \otimes C_2)
\ar[dl]^{1 \otimes\ell_{C_2} } \\
& C_1 \otimes C_2}
\]
and the so-called pentagram identity
\[
\xy
(0,30)*{(C_1 \otimes C_2) \otimes(C_3 \otimes C_4)}="1";
(50,5)*{C_1 \otimes(C_2 \otimes(C_3 \otimes C_4))}="2";
(30,-20)*{ \quad C_1 \otimes((C_2 \otimes C_3) \otimes C_4)}="3";
(-30,-20)*{(C_1 \otimes(C_2 \otimes C_3)) \otimes C_4}="4";
(-50,5)*{((C_1 \otimes C_2) \otimes C_3) \otimes C_4}="5";
{\ar^{\alpha_{C_1,C_2,C_3 \otimes C_4}} "1";"2"}
{\ar_{1 \otimes\alpha_{C_2,C_3,C_4}} "3";"2"}
{\ar^{\alpha_{C_1,C_2 \otimes C_3,C_4}} "4";"3"}
{\ar_{\alpha_{C_1,C_2,C_3} \otimes 1} "5";"4"}
{\ar^{\alpha_{C_1 \otimes C_2,C_3,C_4}\mbox{\phantom{$x$}}} "5";"1"}
\endxy
\]
A (weak or strict) monoidal category $(\mathcal C,\otimes)$ is called \textit{symmetric} if for all $C_1,C_2\in\Ob(\mathcal C)$ there exists natural isomorphisms $\gamma_{C_1,C_2}\colon C_1\otimes C_2\to C_2\otimes C_1$ and $\gamma_{C_2,C_1}\colon C_2\otimes C_1\to C_1\otimes C_2$ such that $\gamma_{C_2,C_1}\circ \gamma_{C_1,C_2}=\mathrm{id}_{C_1\otimes C_2}$ and $\gamma_{C_1,C_2}\circ \gamma_{C_2,C_1}=\mathrm{id}_{C_2\otimes C_1}$.
\end{defn}
\begin{ex}\label{ex-monoidal}
For a given field $K$ the category $\mathcal C=$\textbf{Vec}${}_K$ together with the usual tensor product is a weak, symmetric monoidal category.
\end{ex}
In general, since one is mostly interested in more than equivalence classes of objects, strict monoidal categories are rare. But a well-known fact, also known as \textit{Mac Lane's coherence theorem}, allows us to ``ignore'' the difference between strict and weak monoidal categories, i.e. by Mac Lane's coherence theorem we have the following.
\begin{thm}\label{thm-maclane}
Every weak monoidal category is equivalent to a strict monoidal category.
\end{thm}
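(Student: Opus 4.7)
The plan is to prove Mac Lane's coherence theorem in two stages: first, establish the so-called \emph{coherence theorem for morphisms}, which states that every diagram in a weak monoidal category built from associators $\alpha$, unitors $r,\ell$, their inverses, identities, and tensor products of such, necessarily commutes; second, use this to construct an explicit strict monoidal category $\mathcal{C}^{\mathrm{str}}$ equivalent to $(\mathcal{C},\otimes,1,\alpha,r,\ell)$. I will outline each stage in turn.

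For the first stage, I would work with formal bracketings (binary trees with leaves labeled by objects of $\mathcal{C}$, allowing some leaves to be the formal unit) and define, for any two such bracketings $b_1, b_2$ on the same ordered list of non-unit leaves, a \emph{canonical morphism} between their evaluations in $\mathcal{C}$ by composing instances of $\alpha^{\pm 1}$, $r^{\pm 1}$, $\ell^{\pm 1}$ and their tensor products. The claim is that any two such canonical morphisms $b_1 \to b_2$ are equal. The proof proceeds by reduction to a normal form (say, the fully left-associated bracketing with all units removed), showing that the canonical morphism to the normal form is unique. The reduction is by induction on the total number of internal nodes, with the pentagon axiom handling the case of re-associating a triple and the triangle axiom handling the interaction with units. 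This is the main obstacle: the bookkeeping in the inductive step requires carefully tracking how one reduction step can be exchanged with another, and the pentagon identity is precisely what makes this possible.

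Once coherence for morphisms is in hand, the second stage is largely formal. I would define $\mathcal{C}^{\mathrm{str}}$ as follows. Its objects are finite (possibly empty) sequences $(C_1,\ldots,C_n)$ of objects of $\mathcal{C}$. For each sequence pick, once and for all, the left-associated bracketing $L(C_1,\ldots,C_n) = ((\cdots(C_1 \otimes C_2)\otimes \cdots)\otimes C_n)$, with $L() := 1$. Define
\[
\mathrm{Mor}_{\mathcal{C}^{\mathrm{str}}}\bigl((C_1,\ldots,C_n),(D_1,\ldots,D_m)\bigr) := \mathrm{Mor}_{\mathcal{C}}\bigl(L(C_1,\ldots,C_n),L(D_1,\ldots,D_m)\bigr),
\]
with composition inherited from $\mathcal{C}$. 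The tensor product on objects is concatenation, which is strictly associative and strictly unital with unit the empty sequence. The tensor product of morphisms $f\colon L(\underline{C}) \to L(\underline{D})$ and $g\colon L(\underline{C}') \to L(\underline{D}')$ is defined as the composite
\[
L(\underline{C},\underline{C}') \xrightarrow{\;\kappa\;} L(\underline{C}) \otimes L(\underline{C}') \xrightarrow{f \otimes g} L(\underline{D}) \otimes L(\underline{D}') \xrightarrow{\;\kappa^{-1}\;} L(\underline{D},\underline{D}'),
\]
where $\kappa$ is the canonical isomorphism supplied by the first stage. Functoriality of $\otimes$, strict associativity, and strict unit laws on morphisms then all follow from the coherence theorem applied to the evident diagrams.

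Finally, I would exhibit the monoidal equivalence. Define $F\colon\mathcal{C}^{\mathrm{str}}\to \mathcal{C}$ on objects by $F(C_1,\ldots,C_n) := L(C_1,\ldots,C_n)$ and as the identity on hom-sets, and define $G\colon \mathcal{C}\to \mathcal{C}^{\mathrm{str}}$ by $G(C):=(C)$ and $G(f):=f$. Then $F\circ G = \mathrm{id}_{\mathcal{C}}$ on the nose, while $G\circ F$ sends $(C_1,\ldots,C_n)$ to the one-term sequence $\bigl(L(C_1,\ldots,C_n)\bigr)$; the canonical isomorphisms $\kappa$ from the first stage assemble into a natural isomorphism $G\circ F \cong \mathrm{id}_{\mathcal{C}^{\mathrm{str}}}$, with naturality and compatibility with $\otimes$ verified again by invoking coherence. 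The functor $F$ is equipped with the structure morphisms $\kappa$ making it a weak monoidal functor and in fact a monoidal equivalence. The main obstacle throughout this second stage is purely notational, and every nontrivial verification reduces to the coherence theorem for morphisms established in the first stage.
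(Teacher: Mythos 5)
The paper does not actually prove this theorem. It is stated as a well-known fact (Mac Lane's coherence theorem) with no argument given, and a few lines later the paper observes (via Example~\ref{ex-2cat}~(e), viewing a monoidal category as a one-object $2$-category) that it is a special case of the bicategorical coherence theorem, Theorem~\ref{thm-maclane2}, which is itself cited without proof from Leinster~\cite{tl}. So there is no proof in the paper for you to be measured against.

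Your proposal is a correct and essentially self-contained sketch of the standard direct proof, and it takes a genuinely different route from the paper's one-line reduction. You first establish coherence for morphisms (unique canonical isomorphism between any two formal bracketings with the same underlying word), then build the strictification $\mathcal{C}^{\mathrm{str}}$ on finite sequences of objects with hom-sets transported along a fixed left-associated bracketing, tensor on objects by concatenation, tensor on morphisms conjugated by the canonical isomorphisms $\kappa$, and finally exhibit the monoidal equivalence via $F$ and $G$. This is the classical Mac Lane/Joyal--Street strictification and it is the right construction; the only caveats worth naming are (i) the precise form of the coherence statement should be about the \emph{free} monoidal category on a set, i.e.\ formal diagrams, to avoid the well-known counterexamples that arise once objects get identified in $\mathcal{C}$, which you gesture at by working with formal bracketings, and (ii) the inductive confluence argument you flag as the ``main obstacle'' is indeed where all the content lives, so in a complete write-up that step (the diamond/rewriting argument powered by the pentagon and triangle axioms) would need to be carried out in full. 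Compared to the paper's approach: yours is more elementary and does not presuppose the harder bicategorical result; the paper's is shorter but only as a reduction to a theorem it also leaves unproved, so as an actual proof your route is the substantive one.
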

\vskip0.2cm
Note that a monoidal category has already a $2$-dimensional structure, see Example~\ref{ex-2cat}. We are going to recall the notion of a ``$2$-dimensional category'' now.
\begin{defn}\label{defn-2cat}
A $2$-category $\mathfrak C$ is a $1$-category enriched over $\CAT_1$.
\end{defn}
Informally, a $2$-category is a category consisting of the following.
\begin{itemize}
\item Objects, also called $0$-cells. One can imagine them as $0$-dimensional. They are often pictured as $\bullet$. We denote the class of objects usually by $\Ob(\mathfrak C)$.
\item $1$-morphisms, also called $1$-cells. One can imagine them as $1$-dimensional and they have therefore one way of composition. They are often pictured in the following way.
\[
\xy
(-8,0)*+{\bullet}="4";
(8,0)*+{\bullet}="6";
{\ar^{f} "4";"6"};
\endxy
\]
We denote the class of $1$-cells between $C,D\in\Ob(\mathfrak C)$ usually by $\Mor_1(C,D)$ or without the subscript $1$ if the context is clear. Composition is drawn in the following way.
\[
\xy
(-16,0)*+{\bullet}="4";
(0,0)*+{\bullet}="6";
(16,0)*+{\bullet}="8";
{\ar^{f} "4";"6"};
{\ar^{g} "6";"8"};
\endxy
\] 
\item $2$-morphisms, also called $2$-cells. One can imagine them as $2$-dimensional and they have therefore two ways of composition, i.e. a \textit{vertical} $\circ_v$ and a \textit{horizontal} $\circ_h$. The $2$-cells are often pictured in the following way.
\[
\xy
(-8,0)*+{\bullet}="4";
(8,0)*+{\bullet}="6";
{\ar@/^1.65pc/^{f} "4";"6"};
{\ar@/_1.65pc/_{g} "4";"6"};
{\ar@{=>}^<<<{\scriptstyle \alpha} (0,3)*{};(0,-3)*{}} ;
\endxy
\]
The two compositions are usually drawn in the following way. Here the left is the vertical.
\[
\alpha\circ_v\beta=\xy
(-8,0)*+{\bullet}="4";
(8,0)*+{\bullet}="6";
{\ar "4";"6"};
{\ar@/^1.75pc/^{f} "4";"6"};
{\ar@/_1.75pc/_{g} "4";"6"};
{\ar@{=>}^<<{\scriptstyle \alpha} (0,6)*{};(0,1)*{}} ;
{\ar@{=>}^<<{\scriptstyle \beta} (0,-1)*{};(0,-6)*{}} ;
\endxy\;\;\alpha\circ_h\beta=\xy
(-16,0)*+{\bullet}="4";
(0,0)*+{\bullet}="6";
{\ar@/^1.65pc/^{f} "4";"6"};
{\ar@/_1.65pc/_{g} "4";"6"};
{\ar@{=>}^<<<{\scriptstyle \alpha} (-8,3)*{};(-8,-3)*{}} ;
(0,0)*+{\bullet}="4";
(16,0)*+{\bullet}="6";
{\ar@/^1.65pc/^{f^{\prime}} "4";"6"};
{\ar@/_1.65pc/_{g^{\prime}} "4";"6"};
{\ar@{=>}^<<<{\scriptstyle \beta} (8,3)*{};(8,-3)*{}} ;
\endxy 
\]
\item The interchange law, that is
\[
(\alpha\circ_h\gamma)\circ_v(\beta\circ_h\delta)=(\alpha\circ_v\beta)\circ_h(\gamma\circ_v\delta)
\]
or in pictures
\[
\xy
(-16,4)*+{\bullet}="4";
(0,4)*+{\bullet}="6";
{\ar "4";"6"};
{\ar@/^1.75pc/^{f} "4";"6"};
{\ar@{=>}^<<<{\scriptstyle \alpha} (-8,10)*{};(-8,5)*{}};
(0,4)*+{\bullet}="4";
(16,4)*+{\bullet}="6";
{\ar "4";"6"};
{\ar@/^1.75pc/^{f^{\prime}} "4";"6"};
{\ar@{=>}^<<<{\scriptstyle \gamma} (8,10)*{};(8,5)*{}};
(1,0)*{\circ_v};
(-16,-4)*+{\bullet}="4";
(0,-4)*+{\bullet}="6";
{\ar "4";"6"};
{\ar@/_1.75pc/_{g} "4";"6"};
{\ar@{=>}^<<{\scriptstyle \beta} (-8,-5)*{};(-8,-10)*{}};
(0,-4)*+{\bullet}="4";
(16,-4)*+{\bullet}="6";
{\ar "4";"6"};
{\ar@/_1.75pc/_{g^{\prime}} "4";"6"};
{\ar@{=>}^<<{\scriptstyle \delta} (8,-5)*{};(8,-10)*{}};
\endxy=\xy
(-16,0)*+{\bullet}="4";
(0,0)*+{\bullet}="6";
{\ar "4";"6"};
{\ar@/^1.75pc/^{f} "4";"6"};
{\ar@/_1.75pc/_{g} "4";"6"};
{\ar@{=>}^<<<{\scriptstyle \alpha} (-8,6)*{};(-8,1)*{}};
{\ar@{=>}^<<{\scriptstyle \beta} (-8,-1)*{};(-8,-6)*{}};
(0,0)*+{\bullet}="4";
(16,0)*+{\bullet}="6";
{\ar "4";"6"};
{\ar@/^1.75pc/^{f^{\prime}} "4";"6"};
{\ar@/_1.75pc/_{g^{\prime}} "4";"6"};
{\ar@{=>}^<<<{\scriptstyle \gamma} (8,6)*{};(8,1)*{}};
{\ar@{=>}^<<{\scriptstyle \delta} (8,-1)*{};(8,-6)*{}};
\endxy=\xy
(-8,0)*+{\bullet}="4";
(8,0)*+{\bullet}="6";
{\ar "4";"6"};
{\ar@/^1.75pc/^{f} "4";"6"};
{\ar@/_1.75pc/_{g} "4";"6"};
{\ar@{=>}^<<{\scriptstyle \alpha} (0,6)*{};(0,1)*{}} ;
{\ar@{=>}^<<{\scriptstyle \beta} (0,-1)*{};(0,-6)*{}} ;
\endxy\circ_h \xy
(-8,0)*+{\bullet}="4";
(8,0)*+{\bullet}="6";
{\ar "4";"6"};
{\ar@/^1.75pc/^{f} "4";"6"};
{\ar@/_1.75pc/_{g} "4";"6"};
{\ar@{=>}^<<{\scriptstyle \gamma} (0,6)*{};(0,1)*{}} ;
{\ar@{=>}^<<{\scriptstyle \delta} (0,-1)*{};(0,-6)*{}} ;
\endxy.
\]   
\end{itemize}
Again, the notion of a \textit{weak} $2$-category, introduced as a so-called \textit{bicategory} by B\'{e}nabou, is more common. Informally, this is like a strict $2$-category, but some equations should only hold up to natural $2$-isomorphisms. We do not recall the formal definition here and refer e.g. to Leinster~\cite{tl}, since the formal definition is rather complicated and Theorem~\ref{thm-maclane2} below shows that we can ``ignore'' the difference again. A proof of the theorem is also well-known (in the sense that B\'{e}nabou already proved it in the 1960s). For a proof see for example~\cite{tl}. An interesting fact is that if $2<n$, then there is \textit{no} corresponding coherence theorem for $n$-categories!
\begin{thm}\label{thm-maclane2}
Every weak $2$-category is equivalent to a strict $2$-category.
\end{thm}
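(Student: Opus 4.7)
The plan is to prove the theorem by the classical Yoneda-style strictification procedure, producing from any weak 2-category $\mathfrak C$ a strict 2-category $\mathrm{st}(\mathfrak C)$ together with a biequivalence $\mathfrak C\to\mathrm{st}(\mathfrak C)$. The guiding idea is the same as in the monoidal (i.e.\ 1-dimensional) case: replace objects by their representables, and exploit the fact that categories of functors and natural transformations compose strictly associatively.

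First I would introduce the strict 2-category $\mathrm{Psd}(\mathfrak C^{\mathrm{op}},\CAT_1)$ whose objects are pseudofunctors $\mathfrak C^{\mathrm{op}}\to\CAT_1$, whose 1-morphisms are pseudonatural transformations, and whose 2-morphisms are modifications. Strictness here is essentially free: horizontal composition of pseudonatural transformations is defined componentwise by ordinary functor composition, and vertical composition of modifications is defined componentwise by vertical composition of natural transformations, both of which are strictly associative and unital on the nose. Next I would define the \emph{bicategorical Yoneda embedding}
\[
y\colon \mathfrak C\longrightarrow \mathrm{Psd}(\mathfrak C^{\mathrm{op}},\CAT_1),\qquad C\longmapsto \mathfrak C(-,C),
\]
sending a 1-cell $f\colon C\to D$ to the pseudonatural transformation given by post-composition with $f$, and a 2-cell $\alpha$ to the corresponding modification. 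The coherence data for $y$ as a pseudofunctor is built from the associators and unitors of $\mathfrak C$.

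The central technical step is to prove the \emph{bicategorical Yoneda lemma}: for any $C,D\in\Ob(\mathfrak C)$, the functor
\[
y_{C,D}\colon \mathfrak C(C,D)\longrightarrow \mathrm{Psd}(\mathfrak C^{\mathrm{op}},\CAT_1)\bigl(yC,yD\bigr)
\]
is an equivalence of categories. The quasi-inverse sends a pseudonatural transformation $\sigma\colon yC\Rightarrow yD$ to the 1-cell $\sigma_C(\mathrm{id}_C)\in\mathfrak C(C,D)$, and the reconstruction of $\sigma$ from this value uses the invertible coherence 2-cells of $\mathfrak C$ in an essential way. Once this is established, I would define $\mathrm{st}(\mathfrak C)$ to be the full sub-2-category of $\mathrm{Psd}(\mathfrak C^{\mathrm{op}},\CAT_1)$ on the objects $yC$ for $C\in\Ob(\mathfrak C)$. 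This is strict (as a sub-2-category of a strict one) and $y$ is, by construction, essentially surjective onto it and a local equivalence, hence a biequivalence $\mathfrak C\simeq\mathrm{st}(\mathfrak C)$.

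The hard part will be the bookkeeping of coherence data. Each of the three layers (pseudofunctor, pseudonatural transformation, modification) carries its own pentagon and hexagon-type axioms, and one must check carefully that these interact correctly so that (i) $\mathrm{Psd}(\mathfrak C^{\mathrm{op}},\CAT_1)$ really is strict and not merely weak, and (ii) the two coherence isomorphisms exhibited in the proof of the Yoneda lemma satisfy the modification axioms. This is precisely the place where the weak structure of $\mathfrak C$ is absorbed into the strict structure of $\mathrm{st}(\mathfrak C)$, so it cannot be avoided; however, once the coherence diagrams are set up, their commutativity reduces to repeated use of the coherence theorem for bicategories (which for free data follows, as in Mac Lane, from the pentagon and triangle axioms of $\mathfrak C$).
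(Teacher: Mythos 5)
The paper does not give a proof of this theorem; it simply states that it is classical, due to B\'enabou, and refers the reader to Leinster~\cite{tl}. Your sketch is the standard strictification argument via the bicategorical Yoneda embedding into $\mathrm{Psd}(\mathfrak C^{\mathrm{op}},\CAT_1)$, and the overall architecture is correct: the codomain is strict because pseudonatural transformations and modifications compose componentwise in $\CAT_1$, the bicategorical Yoneda lemma shows $y$ is locally an equivalence, and passing to the full sub-2-category on the representables gives a strict 2-category biequivalent to $\mathfrak C$. This is precisely the argument one finds in the standard references, so there is no divergence from the cited source to report.

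One caution about your final sentence: you say that the commutativity of the required coherence diagrams ``reduces to repeated use of the coherence theorem for bicategories.'' Taken literally that would be circular, since the theorem you are proving \emph{is} the coherence theorem. What actually happens in the Yoneda proof is that verifying the pseudofunctor axioms for $y$ and the pseudonaturality/modification axioms in the Yoneda lemma uses only the pentagon and triangle identities of $\mathfrak C$ directly, together with Mac Lane's coherence theorem for (ordinary, one-object) monoidal categories applied to the free monoidal category on the relevant structural data; no appeal to bicategorical coherence is needed. With that reading, which your parenthetical ``for free data'' gestures at, the argument is sound, but it is worth stating unambiguously that the verification proceeds from the axioms of $\mathfrak C$ and the already-known one-dimensional coherence theorem, not from the two-dimensional one you are in the middle of proving.
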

\vskip0.2cm
\begin{ex}\label{ex-2cat}
There are a lot of examples of (weak and strict) $2$-categories.
\begin{itemize} 
\item[(a)] The category of all categories $\CAT_2$ is a strict $2$-category with categories as $0$-cells, functors as $1$-cells and natural transformations as $2$-cells.
\item[(b)] The category of topological spaces $\TOSP$ can be seen as a weak $2$-category, i.e. the $2$-cells are homotopies of continuous maps. We note that it is a weak $(2,1)$-category, e.g. composition of homotopies is not associative, but all homotopies are invertible. Note that this can be generalised to a weak $(n,1)$-category for $n\in\{1,2,\dots,\omega\}$.
\item[(c)] The category $\BMOD$ is a weak $2$-category. To be more precise.
\begin{itemize}
\item The $0$-cells are unital rings $R,S,T,\dots$.
\item The $1$-cells are bimodules ${}_RM_S, {}_SN_T$ and composition of bimodules is defined by
\[
{}_RM_S\circ_1 {}_SN_T={}_RM_S\otimes_S {}_SN_T.
\]
\item The $2$-cells are bimodule homomorphisms. Vertical and horizontal composition are
\[
f\circ_v g= f\circ g\;\;\text{ and }\;\;f\circ_h g=f\otimes_S g,
\]
where $\circ$ denotes the standard composition of bimodule maps.
\end{itemize}
Note that the higher morphisms are not invertible, i.e. it is a weak $(2,2)$-category.
\item[(d)] An example that is related to our constructions in Section~\ref{sec-vkh} and Section~\ref{sec-web} is the category $\COB_2$, i.e. the category of two dimensional cobordisms. This category is a good example why $2$-categories can be seen as two dimensional. To be more precise.
\begin{itemize}
\item The $0$-cells are disjoint unions of points $x,y,\dots$ in a fixed plane.
\item The $1$-cells $f\colon x\to y$ are one dimensional manifolds with boundary embedded in the plane whose boundary is exactly $x$ at one side and $y$ on the other. Composition is given by glueing along the common boundary.
\item The $2$-cells $\alpha\colon (f\colon x\to y)\Rightarrow (g\colon x\to y)$ are two dimensional manifolds with boundary whose boundary is exactly $f$ at the top and $g$ at the bottom. Such a $2$-cell could look like
\[
 \xy 0;/r.18pc/:
(20,2)*{\bullet}="RU"+(1,3)*{};
(-23,-18)*{x};
(-23,-1)*{x};
(23,-18)*{y};
(23,-1)*{y};
(0,0)*{f};
(0,-21.5)*{g};
(0,-12.5)*{\alpha};
(16,-3)*{\bullet}="RD"+(2.5,2)*{};
(-16,2)*{\bullet}="LU"+(-1,2)*{};
(-20,-3)*{\bullet}="LD"+(-1,2)*{};
"RU";"RD" **\crv{(4,2) & (4,-1)}; ?(.1); ?(.85);
"LD";"LU" **\crv{(-4,-2) & (-4,1)};
?(.08); ?(.85);
(7.5,0)*{}="x1"; (-7.5,0)*{}="x2";
"x1"; "x2" **\crv{(7,-10) & (-7,-10)};
(16,-20)*{\bullet}="RDD"+(2.5,-1)*{};
(-20,-20)*{\bullet}="LDD"+(-1,-3)*{};
(20,-12.5)*{\bullet}="RUD"+(3.5,1)*{};
(-16,-15)*{\bullet}="LUD";
(-16,-2.5)*{}="A"; (16.1,-14.9)*{}="B";
"RD"; "RDD" **\dir{-};
"LD"; "LDD" **\dir{-};
"A"; "LUD" **\dir{.};
"RDD"; "LDD" **\crv{(0,-17)}; ?(.57);
"RU"; "RUD" **\dir{-};
"LU"; "A" **\dir{-};
"B"; "RUD" **\crv{(18,-14.15)};
"B"; "LUD" **\crv{~*=<4pt>{.}(0,-18)}; ?(.4);
\endxy
\]
Composition is given by glueing along the common boundary.
\end{itemize}
Note that this is a $(2,2)$-category, since cobordisms are almost never invertible.
\item[(e)] Another important example we need is that a given (weak or strict) monoidal category $(\mathcal C,\otimes)$ can be seen as a (weak or strict) $2$-category $\mathfrak C$ by ``pushing up the cells''. To be more precise.
\begin{itemize}
\item We add exactly one $0$-cell called $*$.
\item We see the $0$-cells of the category $\mathcal C$ as $1$-cells of $\mathfrak C$. Composition is given by the monoidal product.
\item We see the $1$-cells of the category $\mathcal C$ as $2$-cells of $\mathfrak C$. The vertical composition is the composition in $\mathcal C$, while the horizontal composition is given by the monoidal product.
\end{itemize}
\item[(f)] Another example we need is the following. Recall that a ring $R$ is called \textit{idempoteted}, if there exists a set $E$ of idempotents $E=\{e_i\in R\mid i\in I\}$ such that
\[
R=\bigoplus_{e_i,e_j\in E}e_iRe_j.
\]
Such rings correspond to categories $\mathcal R$ whose $0$-cells are the idempotents $e_i$ and whose $1$-cells between $e_i,e_j$ are the elements of $e_iRe_j$. Note that $e_iRe_j$ is an abelian group, i.e. $\mathcal R$ is pre-additive. An example of such a ring is Beilinson-Lusztig-MacPherson~\cite{blm} ring $\U$ from Section~\ref{sec-webbasicc}. In the same vein, an idempotented category $\mathcal C$ can be seen as a pre-additive $2$-category $\mathfrak C$.
\end{itemize}
\end{ex}
Note that Example~\ref{ex-2cat} (e) shows that Theorem~\ref{thm-maclane2} includes the Theorem~\ref{thm-maclane}.
\subsection{Grothendieck groups of categories}\label{sec-techgrgr}
In this section we recall some well-known facts about the Grothendieck group of an \textit{abelian, triangulated} or \textit{additive} category $\mathcal C$. Moreover, we explain how this notion can be categorified such that it makes sense for $2$-categories. We closely follow Mazorchuk~\cite{maz}. Note that our convention is to write $K_0(\mathcal C), K_0^{\oplus}(\mathcal C)$ or $K_0^{\Delta}(\mathcal C)$ for the \textit{(usual)} Grothendieck group, the \textit{split} Grothendieck group or the \textit{triangulated} Grothendieck group of a corresponding suitable (details below) category $\mathcal C$ respectively.
\vskip0.5cm
The origin of the Grothendieck group lies in abstract algebra, i.e. it is the most natural way to extend a commutative monoid $(M,+,0)$ to an abelian group $(A,+,0)$, e.g. the well-known construction of the integers $(\bZ,+,0)$ from the natural numbers $(\bN,+,0)$. Let us be more precise.
\begin{defn}\label{defn-grgr}
The \textit{Grothendieck group} of a commutative monoid $(M,+,0)$ is a pair $(A,\phi)$ of an abelian group $A$ and a monoid homomorphism $\phi\colon M\to A$ such that the following universal property is true.

Given a monoid homomorphism $\psi\colon M\to A^{\prime}$ to any abelian group $A^{\prime}$, there exists a unique monoid homomorphism $\Phi\colon A\to A^{\prime}$ such that the following diagram commutes.
\[
\begin{xy}
  \xymatrix{
      M \ar[rr]^{\phi} \ar[rd]_{\psi}  &     &  A \ar[dl]^{\exists!\Phi}  \\
                             &  A^{\prime}  &
  }
\end{xy}
\]
Alternatively, if there exists a functor $[\cdot]\colon\MON\to\AGR$ from the category of monoids $\MON$ to the category of abelian groups $\AGR$ that is a left adjoint to the forgetful functor in the other direction, then the \textit{Grothendieck group} of a commutative monoid $(M,+,0)$ is $[M]$. 
\end{defn}
To make sense of the definition we only need to construct the Grothendieck group, since uniqueness up to isomorphisms follows from standard arguments. That the following definition works can be easily checked, as in the construction $[\bN]=\bZ$. For a given monoid $(M,+,0)$ set
\[
A=M\times M/\sim,\;(m_1,n_1)\sim (m_2,n_2)\Leftrightarrow\exists s\in M\text{ such that }m_1+n_2+s=n_1+m_2+s
\]
for the abelian group and
\[
\phi\colon M\to A,\;\phi(m)=(m,0)
\]
as the monoid homomorphism.
\vskip0.5cm
The definition of the Grothendieck group via a functor $[\cdot]$ suggest that the Definition~\ref{defn-grgr} can be generalised to categories with small skeleton and some extra properties. To be more precise, we recall the following classical definition for \textit{abelian} categories.
\begin{defn}\label{defn-grgrcat}
Let $\mathcal A$ be an abelian category with a small and fixed skeleton $\SK(\mathcal A)$. The \textit{Grothendieck group} $K_0(\mathcal A)$ of $\mathcal A$ is defined as the quotient of the free abelian group generated by all $A\in\SK(\mathcal A)$ modulo the relation
\[
A_2=A_1+A_3\Leftrightarrow\exists\text{ an exact sequence }0\to A_1\to A_2\to A_3\to 0.
\]
The elements of $K_0(\mathcal A)$ are denoted by $[A]$ for $A\in\SK(\mathcal A)$.
\end{defn}
It is easy to check that for this construction, given an additive function $\phi\colon\mathcal A\to A^{\prime}$ for an abelian group $A^{\prime}$, there exists a unique group homomorphism $\Phi\colon K_0(\mathcal A)\to A^{\prime}$ such that the following diagram commutes.
\[
\begin{xy}
  \xymatrix{
      \text{\phantom{K-}}\mathcal A \text{\phantom{-}}\ar[rr]^{[\cdot]} \ar[rd]_{\phi}  &     &  K_0(\mathcal A) \ar[dl]^{\exists!\Phi}  \\
                             &  A^{\prime}  &
  }
\end{xy}
\]
Hence, one can say that this construction is the ``most natural'' way to make the category $\mathcal A$ into an abelian group $K_0(\mathcal A)$.
\begin{ex}\label{ex-grothen1}
Let $K$ be an arbitrary field.
\begin{itemize}
\item[(a)] We explain the example $\mathcal C=$\textbf{FinVec}${}_K$ from Section~\ref{sec-intro} in detail now. Since two finite dimensional $K$-vector spaces $V,W$ are isomorphic iff $\dim V=\dim W$, we see that
\[
[V]=[K^{\dim V}]\in K_0(\mathcal C)\;\text{ for all }\;V\in\Ob(\mathcal C).
\]
For $m=m_1+m_2$ we observe that one can construct an injection $\iota\colon K^{m_1}\to K^{m}$ and a projection $p\colon K^{m}\to K^{m_2}$ with $\mathrm{im}(\iota)=\ker(p)$. Therefore, we see that
\[
0\to K^{m_1}\stackrel{\iota}{\rightarrow}K^m\stackrel{p}{\rightarrow}K^{m_2}\to 0\;\Rightarrow\; [K^m]=[K^{m_1}]+[K^{m_2}]\in K_0(\mathcal C).
\]
Hence, the Grothendieck group $K_0(\mathcal C)$ is isomorphic to $\bZ$ and generated by $[K]\in K_0(\mathcal C)$.
\item[(b)] Given a finite dimensional $K$-algebra $A$, we can consider the category of its finite dimensional (left) $A$-modules, denoted by $\AMOD$. Assume that $S_1,\dots, S_k$ form a complete set of pairwise non-isomorphic, simple $A$-modules. Then $K_0(\AMOD)$ is isomorphic to the free abelian group with the set $\{[S_i]\mid i=1,\dots,k\}$ as a basis. 
\end{itemize}
\end{ex}
\vskip0.2cm
Note that, if we only consider an additive category $\mathcal A^{\oplus}$, then it makes no sense to speak about exact sequences in general. Hence, one considers the notion of the \textit{split} Grothendieck group $K_0^{\oplus}(\mathcal A^{\oplus})$ as explained below. Similarly, given a triangulated category $\mathcal T$, it is more convenient to use another notion known as \textit{triangulated} Grothendieck group $K_0^{\Delta}(\mathcal T)$. Notice that every abelian or triangulated category is additive, but the split Grothendieck group can be bigger than $K_0$ or $K_0^{\Delta}$. We recall the following definition.
\begin{defn}\label{defn-grgradd}
Let $\mathcal A_{\oplus}$ be an additive category with a small and fixed skeleton $\SK(\mathcal A_{\oplus})$. The \textit{split Grothendieck group} $K^{\oplus}_0(\mathcal A_{\oplus})$ of $\mathcal A_{\oplus}$ is defined as the quotient of the free abelian group generated by all $A\in\SK(\mathcal A_{\oplus})$ modulo the relation
\[
A_2=A_1+A_3\Leftrightarrow A_2\simeq A_1\oplus A_2.
\]
Let $\mathcal T$ be a triangulated category with a small and fixed skeleton $\SK(\mathcal T)$. Then the \textit{triangulated Grothendieck group} $K^{\Delta}_0(\mathcal T)$ of $\mathcal T$ is defined as the quotient of the free abelian group generated by all $T\in\SK(\mathcal T)$ modulo the relation
\[
T_2=T_1+T_3\Leftrightarrow\exists\text{ a triangle }T_1\to T_2\to T_3\to T_1[1].
\]
The \textit{split Grothendieck group} $K^{\oplus}_0(\mathcal A), K^{\oplus}_0(\mathcal T)$ is defined as before, since every abelian or triangulated category $\mathcal A$ or $\mathcal T$ is additive.
\end{defn}
An interesting example is the following.
\begin{ex}\label{ex-grothen2}
Given any field $K$ and any finite dimensional $K$-algebra $A$, then we can consider $\mathcal C=\AMOD$ or the category of finite dimensional, projective (left) $A$-modules $\mathcal D=\ApMOD$. Notice that $\mathcal C$ is abelian and $\mathcal D$ is additive. Moreover, assume that $P_1,\dots, P_n$ form a complete set of pairwise non-isomorphic indecomposable, projective $A$-modules. Hence, we have the following set of pairwise non-isomorphic simple $A$-modules.
\[
S=\{S_1=P_1/\mathrm{rad}(P_1),\dots,S_n=P_n/\mathrm{rad}(P_n)\}.
\]
As explained above in Example~\ref{ex-grothen1}, $K_0(\AMOD)$ is isomorphic to the free abelian group with the set of all simple $A$-modules as a basis. In the same vein one can see that $K^{\oplus}_0(\ApMOD)$ is isomorphic to the free abelian group with the set of all indecomposable, projective $A$-modules as a basis. Now the obvious embedding functor $\iota\colon \ApMOD\to\AMOD$ gives rise to an injective group homomorphism
\[
[\iota]\colon K^{\oplus}_0(\ApMOD)\to K_0(\AMOD),\;[P]\mapsto [P].
\]
If $A$ has finite global dimension, i.e. the supremum of all projective dimensions of $A$-modules is finite, then $[\iota]$ is an isomorphism giving another basis of $K_0(\AMOD)$. Note that the converse does not apply, e.g. the map $[\iota]$ is also surjective for the $\mathfrak{sl}_2$ analogue of the algebra we define in Section~\ref{sec-webalg}, but the algebra itself has infinite global dimension (see Brundan and Stroppel~\cite{bs1}).
\end{ex}
\begin{rem}\label{rem-grothen}
The Grothendieck group of an abelian, triangulated or additive and \textit{monoidal} category $\mathcal C$ is in fact a ring, i.e. the multiplication is induced by the monoidal product. In this case one calls the corresponding Grothendieck group the (usual, triangulated or split) \textit{Grothendieck ring}.
\end{rem}
\vskip0.2cm
It is worth noting that one motivation to introduce and study Grothendieck groups in the mid 1950s was to give a definition of \textit{generalised Euler characteristic}. To be more precise.
\begin{defn}\label{defn-euler}
Let $\mathcal C$ be an abelian, triangulated or additive category. Denote by $\Kom_b(\mathcal C)$ the category of bounded complexes consisting of
\begin{itemize}
\item The $0$-cells are bounded complexes of the form
\[
\begin{xy}
  \xymatrix{
      0\ar[r] & C_{-k} \ar[r]^/-.3em/{c_{-k}}    &   C_{-k+1} \ar[r] & \cdots\ar[r] &   C_{l-1} \ar[r]^/.3em/{c_{l-1}} &   C_{l} \ar[r] & 0,}
\end{xy}
\]
for some $k,l\in\bN$ and $C_i\in\Ob(\mathcal C)$ and suitable $c_i\in\Arr_1(C_i,C_j)$ such that $c_{i+1}\circ c_i=0$. Note that this makes sense in any abelian, triangulated or additive category.
\item The $1$-cells are maps of complexes, consisting of $1$-cells of $\mathcal C$, together with the standard requirement of commuting squares.
\end{itemize}
Denote with $(C_*,c_*)\in\Ob(\Kom_b(\mathcal C))$ such a bounded complex. Then the \textit{Euler characteristic of $(C_*,c_*)$} is defined by
\[
\chi(C_*)=\sum_{i\in\bZ}(-1)^i[C_i],
\]
with $[C_i]\in K_0(\mathcal C)$, if $\mathcal C$ is abelian, $[C_i]\in K^{\Delta}_0(\mathcal C)$, if $\mathcal C$ is triangulated, or $[C_i]\in K^{\oplus}_0(\mathcal C)$, if $\mathcal C$ is additive. This is well-defined, since the complex is bounded.
\end{defn}
The question that arise is if the generalised Euler characteristic is an invariant under homotopy. Or equivalent, given an additive category $\mathcal C$, then $\Kom_b(\mathcal C)$ is triangulated. Therefore, one can ask is $K_0^{\oplus}(\mathcal C)$ isomorphic to $K_0^{\Delta}(\Kom_b(\mathcal C))$. The answer is yes. We refer to Rose~\cite{rose} for a proof.
\begin{thm}\label{thm-rose}
Let $\mathcal C$ be an additive category and denote the category of bounded complexes by $\mathcal D=\Kom_b(\mathcal C)$. Let $(C_*,c_*),(D_*,d_*)\in\Ob(\mathcal D)$ be homotopy equivalent.
\begin{itemize}
\item[(1)] There exists an isomorphism of groups $K_0^{\oplus}(\mathcal C)\simeq K_0^{\Delta}(\mathcal D)$.
\item[(2)] We have $\chi(C_*)=\chi(D_*)$. 
\end{itemize}
\end{thm}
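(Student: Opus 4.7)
The plan is to construct explicit inverse maps between the two groups and then deduce (2) as a corollary. Let $\iota\colon\mathcal{C}\to\Kom_b(\mathcal{C})$ be the functor sending $C$ to the complex concentrated in degree $0$. On Grothendieck groups this induces
\[
[\iota]\colon K_0^{\oplus}(\mathcal{C})\to K_0^{\Delta}(\Kom_b(\mathcal{C})),\qquad [C]\mapsto[\iota(C)].
\]
For well-definedness I would use that a direct sum decomposition $C\cong C_1\oplus C_2$ gives a split short exact sequence of concentrated complexes, which lifts to a distinguished triangle in $\Kom_b(\mathcal{C})$ (interpreted, as is standard, in the homotopy category where triangulated structure lives), thus yielding the relation $[\iota(C)]=[\iota(C_1)]+[\iota(C_2)]$.

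For the inverse I would define
\[
[\chi]\colon K_0^{\Delta}(\Kom_b(\mathcal{C}))\to K_0^{\oplus}(\mathcal{C}),\qquad [C_*]\mapsto\chi(C_*)=\sum_{i\in\bZ}(-1)^i[C_i].
\]
The nontrivial point is checking additivity on distinguished triangles. Since every distinguished triangle in $\Kom_b(\mathcal{C})$ is isomorphic to one of the form $X_*\xrightarrow{f_*}Y_*\to\Cone(f_*)\to X_*[1]$, it suffices to compute that the mapping cone satisfies $\Cone(f_*)_i=X_{i+1}\oplus Y_i$, so that
\[
\chi(\Cone(f_*))=\sum_i(-1)^i\bigl([X_{i+1}]+[Y_i]\bigr)=-\chi(X_*)+\chi(Y_*),
\]
which gives exactly $\chi(Y_*)=\chi(X_*)+\chi(\Cone(f_*))$, as required.

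To see that $[\iota]$ and $[\chi]$ are mutual inverses: the composition $[\chi]\circ[\iota]$ is the identity since $\chi(\iota(C))=[C]$. For $[\iota]\circ[\chi]$, the key step is to show that for any bounded complex $C_*$, the class $[C_*]\in K_0^{\Delta}$ equals $\sum_i(-1)^i[\iota(C_i)]$. I would prove this by induction on the length of $C_*$ using the stupid truncation triangle
\[
\sigma_{\geq k+1}C_*\to \sigma_{\geq k}C_*\to \iota(C_k)[-k]\to\sigma_{\geq k+1}C_*[1],
\]
together with the identity $[X[n]]=(-1)^n[X]$ in any triangulated Grothendieck group (which follows from the triangle $X\to 0\to X[1]\to X[1]$). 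Part (2) is then immediate: homotopy equivalent bounded complexes are isomorphic in the homotopy category, hence represent the same class in $K_0^{\Delta}(\Kom_b(\mathcal{C}))$, and applying the isomorphism $[\chi]$ yields $\chi(C_*)=\chi(D_*)$ in $K_0^{\oplus}(\mathcal{C})$.

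The main obstacle will be the verification of additivity of $\chi$ on distinguished triangles, since one must invoke that every triangle is isomorphic in the homotopy category to a cone triangle; a secondary subtlety is that $\Kom_b(\mathcal{C})$ as written is not literally triangulated until one passes to the homotopy category $K^b(\mathcal{C})$, so I would either make that explicit at the start or remark that the triangulated Grothendieck group intended here is that of $K^b(\mathcal{C})$. Once these points are settled, everything else reduces to bookkeeping with signs and stupid truncations.
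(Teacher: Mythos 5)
Your strategy---define $[\iota]$ by inclusion in degree zero and $[\chi]$ by the Euler characteristic, and show they are inverse to each other via stupid truncations and the relation $[X[1]]=-[X]$---is the standard one and matches the approach of the reference~\cite{rose}, which the paper cites instead of giving its own argument; your remark that the triangulated structure lives on the homotopy category $K^b(\mathcal{C})$ is exactly the right caveat, which the paper glosses over.

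There is, however, a genuine gap in the well-definedness of $[\chi]$. To pass from the cone computation to additivity on an arbitrary distinguished triangle you invoke that every distinguished triangle in $K^b(\mathcal{C})$ is isomorphic to a cone triangle. But that isomorphism only gives homotopy equivalences between the respective terms, so transferring $\chi(Y_*)=\chi(X_*)+\chi(\mathrm{Cone}(f_*))$ to the general triangle already requires $\chi$ to be a homotopy invariant---which is precisely the content of part (2) that you later claim as a corollary, making the argument circular as written. A concrete symptom: an isomorphism $f\colon X_*\to Y_*$ of $K^b(\mathcal{C})$ sits in the distinguished triangle $X_*\to Y_*\to 0\to X_*[1]$, imposing $[Y_*]=[X_*]$, yet this triangle is not a literal cone triangle; your computation yields only $\chi(Y_*)=\chi(X_*)+\chi(\mathrm{Cone}(f_*))$, and closing the loop needs $\chi(\mathrm{Cone}(f_*))=0$. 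The missing lemma is therefore: \emph{for a bounded contractible complex $C_*$ one has $\chi(C_*)=0$ in $K_0^{\oplus}(\mathcal{C})$}. This requires a short but nontrivial argument, e.g.\ given a contraction $h$ with $dh+hd=\mathrm{id}$, replace $h$ by $h'=hdh$; one checks that $dh'+h'd=\mathrm{id}$ still holds and that $(h')^2=h(dhhd)h=0$ (since $dhhd=(1-hd)hd=hd-hdhd=0$, using $dhd=d$), so $(d+h')^2=\mathrm{id}$ and $d+h'$ restricts to an isomorphism $\bigoplus_i C_{2i}\xrightarrow{\;\sim\;}\bigoplus_i C_{2i+1}$, whence $\chi(C_*)=0$. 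With this lemma in hand the cone of a homotopy equivalence is contractible, $\chi$ becomes homotopy invariant, additivity extends to all distinguished triangles, and the rest of your argument---including the derivation of part (2)---goes through.
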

\vskip0.2cm
Now we are able to \textit{``categorify''} the definitions above. Let $\mathfrak C$ be an abelian, triangulated or additive $2$-category. Then taking some ``Grothendieck group like construction'' should lead to a $1$-category by \textit{identifying structures} on the level of $2$-cells. By a slight abuse of notation, we write $K^{*}_0$ as a short hand notation for the three different cases $K_0,K_0^{\Delta}$ and $K_0^{\oplus}$, since it should be clear which definition should be used. Moreover, we write simply Grothendieck group instead of usual, triangulated or split Grothendieck group. We recall the following definition.
\begin{defn}\label{defn-grgr2cata}
Let $\mathfrak C$ be an abelian, triangulated or additive $2$-category, then the \textit{Grothendieck group} (or Grothendieck \textit{category}) of $\mathfrak C$, denoted $K^{*}_0(\mathfrak C)$, is defined as follows.
\begin{itemize}
\item The $0$-cells of $K_0^{*}(\mathfrak C)$ are exactly the $0$-cells of $\mathfrak C$.
\item Note that for fixed $C,D\in\Ob(\mathfrak C)$ the collection of $1$-cells $\Arr^{\mathfrak C}_1(C,D)$ and the corresponding $2$-cells forms a category under vertical composition $\circ_v$ of $2$-cells. We can take the Grothendieck group of this category!
\item The $1$-cells between two $0$-cells $C,D\in\Ob(\mathfrak C)=\Ob(K_0^{*}(\mathfrak C))$ are precisely the elements of the Grothendieck group mentioned before. For a given $2$-cell $\alpha\colon f\Rightarrow g$, we denote the corresponding $1$-cell by $[\alpha]$, where $f,g\colon C\to D$ are $1$-cells of $\mathfrak C$.
\item Composition of $1$-cells $[\alpha],[\beta]$ should be given by
\[
[\alpha]\circ [\beta]=[\alpha\circ_h \beta].
\] 
\end{itemize}
\end{defn}
Note that the Grothendieck group $K_0^{*}(\mathfrak C)$ of $\mathfrak C$ is enriched over $\AGR$, i.e. it is \textit{pre-additive}. The following example illustrates the notion from Definition~\ref{defn-grgr2cata}.
\begin{ex}\label{ex-grothen3}
The $\bC$-algebra of \textit{dual numbers} is defined by $D=\bC[X]/(X^2)$. We note that Khovanov homology as explained in Section~\ref{sec-intro} uses a vector space $V$ of dimension two which can be realised as the dual numbers (more about this later in Example~\ref{ex-grothen5}). Consider the category $\mathcal C=D$-\textbf{Mod} and denote by $M$ the $D$-bimodule $M=D\otimes_{\bC} D$. Define a $2$-category $\mathfrak C$ with the following data.
\begin{itemize}
\item The $2$-category has only $\mathcal C$ as a $0$-cell.
\item Denote the category of all functors $F\colon\mathcal C\to\mathcal C$ by $\mathcal D$. The collection of $1$-cells and $2$-cells should be the full additive subcategory of $\mathcal D$ of functors isomorphic to direct sums of copies of $\mathrm{id}\colon\mathcal C\to\mathcal C$ or $M\otimes_D-\colon\mathcal C\to\mathcal C$. Since one easily verifies that
\[
M\otimes_D M\simeq M\oplus M,
\]
this collection is closed under composition.
\end{itemize}
Using the isomorphism above, we see that $[\mathrm{id}]$ and $[M\otimes_D-]$ form a basis of the $1$-cells of $K^{\oplus}_0(\mathfrak C)$.
\end{ex}
Note that, given a monoidal category $\mathcal C$, then one can see this as a $2$-category $\mathfrak C$, as explained in Example~\ref{ex-2cat}, then $K_0^{*}(\mathfrak C)$ can be seen as a ring that is isomorphic to the ``classical'' Grothendieck ring of Remark~\ref{rem-grothen}.
\subsection{Grothendieck groups and categorification}\label{sec-techgrgrcat}
In this section we define what we mean by a \textit{Grothendieck group categorification} or \textit{decategorification}. We follow Mazorchuk~\cite{maz}. In the following, as always, all categories are assumed to have a small skeleton. Note that a \textit{Grothendieck decategorification} of a category $\mathcal C$ is either $K_0$, if the category $\mathcal C$ is abelian, $K_0^{\Delta}$, if the category is triangulated, or $K_0^{\oplus}$, if $\mathcal C$ is additive, as defined in Section~\ref{sec-techgrgr}. As before, we write $K^{*}_0$ as a short hand notation for the three different cases $K_0,K_0^{\Delta}$ and $K_0^{\oplus}$ and skip the words usual, triangulated and split.
\vskip0.5cm
If $R$ is a commutative ring with $1\in R$, then we would like to speak about a \textit{$R$-decategorification}. The reader may think of a \textit{categorification} of some algebra over $R$.
\begin{defn}\label{defn-grgrdecat}
Let $\mathcal C$ be an abelian, triangulated or additive category. Then a \textit{(Grothendieck) decategorification} of $\mathcal C$ is the abelian group $K^{*}_0(\mathcal C)$ from Definition~\ref{defn-grgrcat} or Definition~\ref{defn-grgradd}. A \textit{$R$-decategorification} for some commutative, unital ring $R$, is defined by $K^{*}_0(\mathcal C)\otimes_{\bZ} R$.
\end{defn}
One wants to define a \textit{$R$-categorification} of an $R$-module $M$. If a $R$-decategorification is an abelian group, then a $R$-categorification should be an abelian, triangulated or additive category. To be more precise.
\begin{defn}\label{defn-grgrRcat}
A \textit{$R$-categorification} of an $R$-module $M$ is a pair $(\mathcal C,\phi)$ of an abelian, triangulated, or additive category $\mathcal C$ together with a isomorphism
\[
\phi\colon M\to K^{*}_0(\mathcal C)\otimes_{\bZ} R.
\]
We say that the pair $(\mathcal C,\phi)$ is a $R$-\textit{pre}categorification, if we only assume that $\phi$ is a monomorphism.
\end{defn}
\begin{ex}\label{ex-grothen4}
\begin{itemize}
\item[(a)] As always, there is no reason to speak about THE categorification. Only the decategorification is uniquely defined. For example, take $R=\bZ$ and set $M=\bZ^n$. If $A$ is any $K$-algebra, for any field $K$, such that the category $\AMOD$ has exactly $n$ simple modules, then this category is a categorification of $M$ as explained in Example~\ref{ex-grothen1}.
\item[(b)] Consider the $\bC$-algebra of dual number $D$ from Example~\ref{ex-grothen3} again and set $\mathcal C=D-$\textbf{Mod} as before. Then the monomorphism
\[
\phi\colon\bZ\to K_0(\mathcal C),\;1\mapsto [D]
\]
is not surjective since there is a \textit{unique} (up to isomorphisms) simple $D$-module $\bC$ (the $X$ annihilates it) and $D(\not\cong\bC)$ is its projective cover. But tensoring over $\bZ$ with $\bQ$ induces an isomorphism
\[
\phi^{\prime}\colon\bQ\to K_0(\mathcal C)\otimes_{\bZ}\bQ.
\]
Hence, the pair $(\mathcal C,\phi^{\prime})$ is a $\bQ$-categorification of $\bQ=\bQ\otimes_{\bZ}\bZ$.

It is worth noting that in fact in ``most interesting'' cases the ``natural'' basis of the Grothendieck group is given by indecomposable, projective modules, but the example illustrates that the set of indecomposable, projective modules is not a basis in general.
\end{itemize} 
\end{ex}
\vskip0.2cm
One also wants to speak of a \textit{category of categorifications} of a module. Hence, what we need is a $1$-cell between $R$-categorifications. Note that the same also works for \textit{pre}categorifications.
\begin{defn}\label{defn-grgrmor}
Let $M$ be an $R$-module and let $(\mathcal C,\phi)$ and $(\mathcal D,\psi)$ be two $R$-categorifications as in Definition~\ref{defn-grgrRcat}. A morphism of such categorifications is an exact, triangular or additive (for the three cases abelian, triangular or additive) functor $F\colon\mathcal C\to\mathcal D$ such that the following diagram commutes.
\[
\begin{xy}
  \xymatrix{
      K^{*}_0(\mathcal C)\ar[rr]^{[F]}  &     &  K^{*}_0(\mathcal D)  \\
                             &  M \ar[ul]^{\phi}\ar[ur]_{\psi}  &
  }
\end{xy}
\]
\end{defn}
\vskip0.2cm
Now we have (almost) enough terminology to \textit{categorify} $R$-modules using module categories of finite dimensional $K$-algebras. The last technical point we need is to extend the constructions above to the $\bZ$-graded setting, since we need a quantum degree $q$ for our purposes. We give the needed terminology now. Graded always means $\bZ$-graded and the categories in Definition~\ref{defn-grgrgr} should be graded (which implies that its Grothendieck group is a $\bZ[q,q^{-1}]$-module).
\begin{defn}\label{defn-grgrgr}
Let $\iota\colon\bZ[q,q^{-1}]\to R$ a homomorphism with $\iota(1)=1$, where $R$ is a graded, commutative, unital ring. Hence, $R$ can be seen as a (right) $\bZ[q,q^{-1}]$-module. As before, let $\mathcal C$ be an abelian, triangulated or additive category. A \textit{$\iota$-decategorification} of $\mathcal C$ is defined by
\[
K^R_{\iota}(\mathcal C)=K^{*}_0(\mathcal C)\otimes_{\bZ[q,q^{-1}]}R.
\]
In the same vein, given a graded $R$-module $M$, then a \textit{$R$-categorification} is a pair $(\mathcal C,\phi)$ of an abelian, triangulated or additive category $\mathcal C$ with a fixed, free $\bZ$-action on it and an isomorphism $\phi\colon M\to K^R_{\iota}(\mathcal C)$.
\end{defn}
Note that in most examples the homomorphism $\iota$ is the canonical inclusion. In this case we write for simplicity $K_0^{*}$ instead of $K^R_{\iota}$.
\begin{ex}\label{ex-grothen5}
An interesting example is the following. The $\bC$-algebra of dual numbers from Example~\ref{ex-grothen3} has a natural (in the sense that the dual numbers can be obtained as a cohomology ring of certain flag varieties) grading, that is $X$ should be of degree $2$. The reader should also compare this to Khovanov homology explained in Section~\ref{sec-intro}, where the vector space $V$ has a basis $v_+$ and $v_-$ of degrees $1$ and $-1$ respectively. Now set $\mathcal C=D-$\textbf{Mod}${}_{\mathrm{gr}}$, i.e. finite dimensional, graded $D$-modules. Hence, $\bZ[q,q^{-1}]\simeq K_0(\mathcal C)$ as $\bZ[q,q^{-1}]$-modules. Therefore, we see that the graded category $\mathcal C$ is a $(\bZ[q,q^{-1}],\mathrm{id})$-categorification of $\bZ[q,q^{-1}]$.
\end{ex}
\vskip0.2cm
Given a graded, commutative and unital ring $R$, one can speak of \textit{$R$-(de)categorifications} of a suitable $2$-category as in Definition~\ref{defn-grgr2cata}.
\begin{defn}\label{defn-grgrcatb}
Let $R,\mathfrak C$ be as before. The \textit{$R$-decategorification} $K_0^R(\mathfrak C)$ of $\mathfrak C$ is the category
\[
K_0^R(\mathfrak C)=K^{*}_0(\mathfrak C)\otimes_{\bZ}R\;\text{ or }\;K_0^R(\mathfrak C)=K^{*}_0(\mathfrak C)\otimes_{\bZ[q,q^{-1}]}R
\]
(ungraded or graded) and, given a category $\mathcal C$ enriched over $R-\MOD$ (or the graded version), also called \textit{$R$-linear} or \textit{graded $R$-linear}, a \textit{$R$-categorification} of $\mathcal C$ is a pair $(\mathfrak C,\Phi)$, where $\mathfrak C$ denotes a $2$-category as before and $\Phi\colon \mathcal C\to K_0^R(\mathfrak C)$ is an isomorphism.
\end{defn}
\subsection{Higher representation theory}\label{sec-techhigherrep}
In the present section we are going to recall some definitions from \textit{higher representation theory}. Here ``higher'' means \textit{$2$-representation theory}. To be more precise, after recalling some basic notions from $2$-representation theory, we specify from the general case to the case of a \textit{$2$-representation of $\mathfrak{g}$}. Roughly speaking, while a ``classical'' representation of $\mathfrak{g}$ is given by an action on a $K$-vector space, a ``higher'' representation of $\mathfrak{g}$ is given by an action on a $K$-linear category. That is, instead of studying linear maps, one studies linear functors \textit{and} natural transformations between these functors. The latter are \textit{``invisible''} in classical representation theory. We denote by $K$ an arbitrary field in the whole section. Recall that we assume that every $n$-category and every $n$-functor is strict.
\vskip0.5cm
We follow Cautis and Lauda~\cite{cala} in this section. Note that the first systematic study of these higher representations is due to Chuang and Rouquier~\cite{chro}. It is worth noting that Rouquier~\cite{rou} formalises the notion of a $2$-Kac-Moody representation, i.e. he defines $2$-categories $\mathfrak C$ associated to Kac-Moody algebras and then he defines such a $2$-representation as a $2$-functor $F\colon\mathfrak C\to\mathfrak D$ to an appropriated $2$-category $\mathfrak D$.

We start by recalling the basic ideas.
\begin{defn}\label{defn-higherrep}
Let $\mathfrak C,\mathfrak D$ be two $2$-categories. A $2$-representation of $\mathfrak C$ on $\mathfrak D$ is a $2$-functor $F\colon\mathfrak C\to\mathfrak D$.
\end{defn}
The basic question is, given for example a group $G$, what are the \textit{symmetries} the group acts on, i.e. its representation theory. In this spirit, \textit{higher representation theory} should be the study of the \textit{``higher'' symmetries} (e.g. natural transformations). Usually one has to refine Definition~\ref{defn-higherrep}. For instance, the following definition is more suitable for our purposes. First recall the notion of a Cartan datum.
\begin{defn}\label{defn-higherqg1}
A \textit{Cartan datum} for an fixed index set $I$ consists of the following.
\begin{itemize}
\item A \textit{weight lattice} $X$ and two subsets $\alpha,\Lambda\subset X$ called the set of \textit{simple roots} and \textit{fundamental weights} respectively. Here $\alpha,\Lambda$ are indexed by $I$, i.e. we have $\alpha=\{\alpha_i\mid i\in I\}$ and $\Lambda=\{\Lambda_i\mid i\in I\}$.
\item A set $X^{\vee}=\mathrm{hom}_{\bZ}(X,\bZ)$ of \textit{simple co-roots}. Again, this set is indexed by $I$, i.e. we have $X^{\vee}=\{h_i\mid i\in I\}$.
\item A bilinear form $(-,-)\colon X\times X\to\bZ$ and a canonical pairing $\langle-,-\rangle\colon X^{\vee}\times X\to\bZ$ that satisfies the following.
\begin{itemize}
\item For all $i\in I$ and all $\lambda\in X$ we have $0\neq(\alpha_i,\alpha_i)\in 2\bZ$ and
\[
\langle h_i,\lambda\rangle=2\frac{(\alpha_i,\lambda)}{(\alpha_i,\alpha_i)}.
\]
\item For all $i,j\in I$ with $i\neq j$ we have $(\alpha_i,\alpha_j)\leq 0$.
\item For all $i,j\in I$ we have $\langle h_i,\Lambda_j\rangle=\delta_{ij}$.
\end{itemize}
\end{itemize}
We use the two short hand notations $\alpha^{ij}=(\alpha_i,\alpha_j)$ and $\alpha^{\lambda i}=\langle\lambda,\alpha_j\rangle$.
\end{defn}
\begin{defn}\label{defn-strep}
A \textit{strong $2$-representation} of $\mathfrak g$ is a graded, additive, $K$-linear $2$-category $\mathfrak D$ with the following data.
\begin{itemize}
\item The $0$-cells are indexed by the weights $\lambda\in X$.
\item The $2$-category has identity $1$-cells $\mathbbm{1}_{\lambda}$ for all weights. Moreover, for all weights and all $i\in I$ there are $1$-cells $E_i\mathbbm{1}_{\lambda}\colon\lambda\to\lambda+\alpha_i$ and $E_{-i}\mathbbm{1}_{\lambda}\colon\lambda+\alpha_i\to\lambda$ such that the following holds.
\begin{itemize}
\item All $1$-cells $E_i\mathbbm{1}_{\lambda}$ have left and right adjoints. The right adjoints are the $E_{-i}\mathbbm{1}_{\lambda}$.
\item We have $E_{-i}\mathbbm{1}_{\lambda}=E_{i}\mathbbm{1}_{\lambda}\{-\alpha^{ii}-\frac{1}{2}\alpha^{\lambda i}\}$.
\end{itemize}
Other $1$-cells are obtained by composition, sums, grading shifts or adding images of idempotent $2$-cells.
\end{itemize}
These data should satisfy the conditions (1)-(5) below.
\begin{itemize}
\item[(1)] The $0$-cells $\lambda+k\alpha_i$ are isomorphic to the zero object for all $i\in I$ and $k\ll 0$ or $k\gg 0$. This condition is known as \textit{integrability}.
\item[(2)] The space of $2$-cells between any two $1$-cells is finite dimensional. Moreover, for all weights we assume that $\Arr_1(\mathbbm{1}_{\lambda},\mathbbm{1}_{\lambda})$ is one dimensional and $\Arr_1(\mathbbm{1}_{\lambda},\mathbbm{1}_{\lambda}\{k\})$ is zero for negative shifts $k<0$.
\item[(3)] In $\mathfrak{D}$ there exists isomorphisms
\[
E_{i}E_{-i}\mathbbm{1}_{\lambda}\simeq E_{-i}E_{i}\mathbbm{1}_{\lambda}\oplus \mathbbm{1}_{\lambda}\;\text{if}\;\langle h_i,\lambda\rangle>0\;\text{ and }\;E_{-i}E_{i}\mathbbm{1}_{\lambda}\simeq E_{i}E_{-i}\mathbbm{1}_{\lambda}\oplus \mathbbm{1}_{\lambda}\;\text{if}\;\langle h_i,\lambda\rangle<0.
\]
\item[(4)] In $\mathfrak{D}$ there exists isomorphisms for $i,j\in I$ and $i\neq j$
\[
E_{i}E_{-j}\mathbbm{1}_{\lambda}\simeq E_{-j}E_{i}\mathbbm{1}_{\lambda}.
\]
\item[(5)] The $E_{+i}$'s carry an action of the Khovanov-Lauda-Rouquier algebra (see Section~\ref{sec-webbasicc}). Note that this gives an action on the $E_{-i}$'s, too. More details can be found in~\cite{cala}.
\end{itemize}
\end{defn}
We are going to recall parts of the definition of the $2$-categories $\mathcal U(\mathfrak g)$ and its Karoubi envelope $\dot{\mathcal U}(\mathfrak g)$ also called idempotent completion. We are not going to recall the relations in general since we do not need them for general $\mathfrak g$. We are giving the full definition in the cases $\mathfrak g=\mathfrak{gl}_n$ and $\mathfrak g=\mathfrak{sl}_n$ in Section~\ref{sec-webbasicc}. For the full list of relations we refer to~\cite{cala}. Note that, for simplicity, we chose the scalars considered in~\cite{cala} to be $t_{ij}=1=t_{ji}$ and $s^{pq}_{ij}=0$ for all $i,j\in I$ and suppress this notion in the following.
\begin{defn}\label{defn-higherqg2}
Let us fix a Cartan datum as in Definition~\ref{defn-higherqg1}. The $2$-category $\mathcal U(\mathfrak g)$ for this datum is the graded, additive, $K$-linear $2$-category with the following cells.
\begin{itemize}
\item The $0$-cells are the $\lambda\in X$.
\item The $1$-cells (defined for all $i\in I$ and $\lambda\in X$) are formal direct sums of compositions of
\[
\onel,\;\;\mathcal{E}_{+i}\onel={\mathbf 1}_{\lambda+\alpha_i}\mathcal{E}_{+i}={\mathbf 1}_{\lambda+\alpha_i}\mathcal{E}_{+i}\onel\;\text{ and }\;\mathcal{E}_{-i}\onel={\mathbf 1}_{\lambda-\alpha_i}\mathcal{E}_{-i}={\mathbf 1}_{\lambda-\alpha_i}\mathcal{E}_{-i}\onel.
\]
\item The $2$-cells are graded, $K$-vector spaces generated by compositions of diagrams shown below. Here $\{k\}$ denotes a degree shift by $k$. Moreover, we use the two short hand notations $\alpha^{ij}=(\alpha_i,\alpha_j)$ and $\alpha^{\lambda i}=\langle\lambda,\alpha_j\rangle$ again.
\[
\phi_1=\xy
 (0,1)*{\includegraphics[width=09px]{res/figs/section23/upsimple}};
 (1.5,-4)*{\scriptstyle i};
 (3,1)*{\scriptstyle\lambda};
 (-5,1)*{\scriptstyle\lambda+\alpha_i};
 \endxy\,\hspace{8mm}
\,
\phi_2=\xy
 (0,1)*{\includegraphics[width=09px]{res/figs/section23/upsimpledot}};
 (1.5,-4)*{\scriptstyle i};
 (3,1)*{\scriptstyle\lambda};
 (-5,1)*{\scriptstyle\lambda+\alpha_i};
 \endxy\,\hspace{8mm}
\,\phi_3=\xy
 (0,1)*{\includegraphics[width=25px]{res/figs/section23/upcrosscurved}};
 (-5,-3)*{\scriptstyle i};
 (5,-3)*{\scriptstyle j};
 (5.5,0)*{\scriptstyle\lambda};
 \endxy\,\hspace{8mm}\,
 \phi_4=\xy
 (0,0)*{\includegraphics[width=25px]{res/figs/section23/leftcup}};
 (0,-3)*{\scriptstyle i};
 (5,0)*{\scriptstyle\lambda};
 \endxy\hspace{8mm}\,
 \phi_5=\xy
 (0,0)*{\includegraphics[width=25px]{res/figs/section23/rightcup}};
 (0,-3)*{\scriptstyle i};
 (5,0)*{\scriptstyle\lambda};
 \endxy
\]
with $\phi_1=\mathrm{id}_{\mathcal E_{i}\onel}$, $\phi_2\colon\mathcal E_{i}\onel\Rightarrow\mathcal E_{i}\onel\{\alpha^{ii}\}$, $\phi_3\colon\mathcal E_{i}\mathcal E_{j}\onel\Rightarrow\mathcal E_{j}\mathcal E_{i}\onel\{\alpha^{ij}\}$ and cups and caps $\phi_4\colon\mathcal E_{i}\mathcal E_{i}\onel\Rightarrow\onel\{\frac{1}{2}\alpha^{ii}+\alpha^{\lambda i}\}$ and $\phi_5\colon\mathcal E_{i}\mathcal E_{i}\onel\Rightarrow\onel\{\frac{1}{2}\alpha^{ii}-\alpha^{\lambda i}\}$. Moreover, we have diagrams of the form
\[
\psi_1=\xy
 (0,1)*{\includegraphics[width=09px]{res/figs/section23/downsimple}};
 (1.5,-4)*{\scriptstyle i};
 (3,1)*{\scriptstyle\lambda};
 (-5,1)*{\scriptstyle\lambda-\alpha_i};
 \endxy\,\hspace{8mm}
\,
\psi_2=\xy
 (0,1)*{\includegraphics[width=09px]{res/figs/section23/downsimpledot}};
 (1.5,-4)*{\scriptstyle i};
 (3,1)*{\scriptstyle\lambda};
 (-5,1)*{\scriptstyle\lambda-\alpha_i};
 \endxy\,\hspace{8mm}
\,\psi_3=\xy
 (0,1)*{\includegraphics[width=25px]{res/figs/section23/downcrosscurved}};
 (-5,-3)*{\scriptstyle i};
 (5,-3)*{\scriptstyle j};
 (5.5,0)*{\scriptstyle\lambda};
 \endxy\,\hspace{8mm}\,
 \psi_4=\xy
 (0,0)*{\includegraphics[width=25px]{res/figs/section23/leftcap}};
 (0,-3)*{\scriptstyle i};
 (5,0)*{\scriptstyle\lambda};
 \endxy\hspace{8mm}\,
 \psi_5=\xy
 (0,0)*{\includegraphics[width=25px]{res/figs/section23/rightcap}};
 (0,-3)*{\scriptstyle i};
 (5,0)*{\scriptstyle\lambda};
 \endxy
\]
with $\psi_1=\mathrm{id}_{\mathcal E_{-i}\onel}$, $\psi_2\colon\mathcal E_{-i}\onel\Rightarrow\mathcal E_{-i}\onel\{\alpha^{ii}\}$, $\psi_3\colon\mathcal E_{-i}\mathcal E_{-j}\onel\Rightarrow\mathcal E_{-j}\mathcal E_{-i}\onel\{\alpha^{ij}\}$ and cups and caps $\psi_4\colon\mathcal E_{-i}\mathcal E_{i}\onel\Rightarrow\onel\{\frac{1}{2}\alpha^{ii}+\alpha^{\lambda i}\}$ and $\psi_5\colon\mathcal E_{i}\mathcal E_{-i}\onel\Rightarrow\onel\{\frac{1}{2}\alpha^{ii}-\alpha^{\lambda i}\}$.
\item The convention for reading these diagrams is from right to left and bottom to top. The $2$-cells should satisfy several relation which we will not recall here. Details are either in Cautis and Lauda's paper~\cite{cala} or in the special cases $\mathfrak g=\mathfrak{gl}_n$ and $\mathfrak g=\mathfrak{sl}_n$ in Section~\ref{sec-webbasicc}.
\end{itemize}
We denote the \textit{Karoubi envelope} of $\mathcal U(\mathfrak g)$ by $\dot{\mathcal U}(\mathfrak g)$ and of a general category $\mathcal C$ by a slight abuse of notation by $\KAR(\mathcal C)$.
\end{defn}
With these definitions we are able to refine Definition~\ref{defn-higherrep} again. To be more precise.
\begin{defn}\label{defn-higherrep2}
A $2$-representation of $\mathcal U(\mathfrak g)$ in a graded, additive $2$-category $\mathfrak{D}$ is a graded, additive, $K$-linear $2$-functor $F\colon\mathcal U(\mathfrak g)\to\mathfrak{D}$. A $2$-representation of $\dot{\mathcal U}(\mathfrak g)$ in a graded, additive $2$-category $\mathfrak{D}$ is a graded, additive, $K$-linear $2$-functor $F\colon\dot{\mathcal U}(\mathfrak g)\to\mathfrak{D}$.
\end{defn}
It is worth noting that, if the Karoubi envelope $\KAR(\mathfrak{D})$ of $\mathfrak D$ is equivalent to $\mathfrak{D}$, then any $2$-representation of $\mathcal U(\mathfrak g)$ extents uniquely to a $2$-representation of $\dot{\mathcal U}(\mathfrak g)$. Note that Definition~\ref{defn-strep} of $\mathfrak g$ is the same as the definition of an \textit{integrable} $2$-Kac-Moody representation by Rouquier~\cite{rou} except that Rouquier does not require (2) from Definition~\ref{defn-strep}. But we need the notions above so we mention that Rouquier's universality theorem, which we recall in Section~\ref{sec-webbasicc}, is true with or without (2) as Webster remarks in~\cite{web1}. For completeness, Cautis and Lauda proved the following.
\begin{thm}\label{thm-cala}
Any strong $2$-representation of $\mathfrak g$ on $\mathfrak{D}$ in the sense of Definition~\ref{defn-strep} extents to a $2$-representation in the sense of Definition~\ref{defn-higherrep2}.
\end{thm}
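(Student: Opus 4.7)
The plan is to construct an explicit graded, additive, $K$-linear $2$-functor $F\colon\mathcal U(\mathfrak g)\to\mathfrak D$ by specifying its value on all generating cells and then verifying every defining relation of $\mathcal U(\mathfrak g)$. On $0$-cells I would set $F(\lambda)=\lambda$, using the weight indexing of the $0$-cells of $\mathfrak D$ in Definition~\ref{defn-strep}. On the generating $1$-cells I would send $\mathbbm 1_\lambda$, $\mathcal E_{+i}\mathbbm 1_\lambda$ and $\mathcal E_{-i}\mathbbm 1_\lambda$ to the like-named objects of $\mathfrak D$; this is legitimate because $\mathfrak D$ already contains a distinguished biadjoint pair $(E_i\mathbbm 1_\lambda,E_{-i}\mathbbm 1_{\lambda+\alpha_i})$ with the correct grading shift by Definition~\ref{defn-strep}.

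The assignment on $2$-cells would proceed in three stages. First, the upward KLR generators $\phi_1,\phi_2,\phi_3$ (dots and same-coloured or mixed upward crossings) are sent to the $2$-morphisms provided by condition~(5) of Definition~\ref{defn-strep}; this takes care of the entire downward half of the diagrams too, because $\phi_3$ for $i=j$ and the dots generate a KLR action on $E_{+i}$'s and hence, by biadjointness, on the $E_{-i}$'s. Second, the cups and caps $\phi_4,\phi_5,\psi_4,\psi_5$ are chosen to be the unit and counit $2$-morphisms of the biadjunctions from Definition~\ref{defn-strep}(1); the grading conventions $\{\tfrac12\alpha^{ii}\pm\alpha^{\lambda i}\}$ agree with the shift $E_{-i}\mathbbm 1_\lambda=E_i\mathbbm 1_\lambda\{-\alpha^{ii}-\tfrac12\alpha^{\lambda i}\}$ forced in the definition. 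Third, mixed (downward/upward) crossings and sideways crossings are \emph{defined} via the cyclicity equations~\eqref{eq_cyclic_cross-gen},~\eqref{eq_crossl-gen},~\eqref{eq_crossr-gen}, as composites of the upward KLR crossings with cups and caps; this ensures that cyclicity is preserved automatically.

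The bulk of the work is then the verification of the defining relations of $\mathcal U(\mathfrak g)$ in $\mathfrak D$. The KLR relations on a single colour and between different colours hold by construction, because they are precisely the relations of the categorical $\Ucat$-action assumed in condition~(5). The zig-zag (biadjunction) relations~\eqref{eq_biadjoint1},~\eqref{eq_biadjoint2} hold by the choice of cups and caps as units and counits. The delicate part will be to recover the identities \eqref{eq:redtobubbles} and \eqref{eq:EF} that express a composite $E_{+i}E_{-i}\mathbbm 1_\lambda$ (resp.\ $E_{-i}E_{+i}\mathbbm 1_\lambda$) as the other order plus a sum of ``bubble'' contributions; the existence of such a splitting is exactly the content of condition~(3), while its precise form — involving real and fake dotted bubbles controlled by the infinite Grassmannian relation~\eqref{eq_infinite_Grass} — needs to be extracted from conditions~(2) and~(3) together. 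The input from~(2) is critical: finite-dimensionality of $\mathrm{End}(\mathbbm 1_\lambda)$ together with the one-dimensionality of its degree-zero part and the vanishing of strictly negative degrees forces the endomorphism algebra generated by bubbles on $\mathbbm 1_\lambda$ to have the polynomial-ring shape in which the Grassmannian relation can be realised. Finally, the commutation~\eqref{eq_downup_ij-gen} between mixed strands of different colours follows from condition~(4) together with biadjointness; here one must also fix the ambiguous signs, which is handled as in the proof of Proposition~\ref{prop:cataction} by modifying the sign of a single crossing.

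I expect the main obstacle to be precisely step three: deriving \eqref{eq:redtobubbles},~\eqref{eq:EF} and the positivity of bubbles~\eqref{eq_positivity_bubbles}, \eqref{eq:bubb_deg0}, \eqref{eq_infinite_Grass} from the decomposition isomorphisms in Definition~\ref{defn-strep}(3) rather than from a pre-existing diagrammatic calculus. The abstract isomorphism $E_iE_{-i}\mathbbm 1_\lambda\cong E_{-i}E_i\mathbbm 1_\lambda\oplus\mathbbm 1_\lambda^{\oplus\langle h_i,\lambda\rangle}$ (or its mirror) is given, but one must refine it to a canonical such decomposition whose projection and inclusion components, expressed in the diagrammatic language of cups, caps and dots, recover exactly the formulas appearing in the definition of $\mathcal U(\mathfrak g)$. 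This is done by an inductive argument on the weight using the integrability condition~(1) to anchor the recursion at highest/lowest weights, combined with the degree and finite-dimensionality constraints of~(2) to pin down the structure constants; the argument is essentially the same as in Khovanov--Lauda~\cite{kl5} and Rouquier~\cite{rou}, and we would refer the reader to those sources after checking that the hypotheses of Definition~\ref{defn-strep} suffice to run their proofs. Once these relations are in hand, well-definedness of $F$ on all of $\mathcal U(\mathfrak g)$ follows, and extension to $\dot{\mathcal U}(\mathfrak g)$ when $\mathfrak D$ is Karoubian is automatic by the universal property of the Karoubi envelope.
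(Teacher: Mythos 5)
The paper does not prove Theorem~\ref{thm-cala}; it is attributed to Cautis and Lauda~\cite{cala} without proof, and Section~\ref{sec-techhigherrep} is purely expository background. So there is no internal proof to compare against, only the cited source. With that caveat, your outline is a faithful sketch of the Cautis--Lauda strategy and you correctly locate where the real work lies: not in defining $F$ on generators (that assignment is essentially forced by the data of a strong $2$-representation), but in deriving the exact diagrammatic form of~\eqref{eq:redtobubbles},~\eqref{eq:EF},~\eqref{eq_positivity_bubbles},~\eqref{eq:bubb_deg0} and the infinite Grassmannian relation~\eqref{eq_infinite_Grass} from the abstract isomorphisms in Definition~\ref{defn-strep}(3)--(4) together with the dimension and positivity constraints of~(2) and integrability~(1).

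Two corrections worth making. First, the biadjunctions furnishing the cups and caps are posited in the unnumbered clause of Definition~\ref{defn-strep} describing the $1$-cells (``all $1$-cells $E_i\mathbbm{1}_\lambda$ have left and right adjoints\ldots''), not in condition~(1), which is integrability; fix the reference. Second, and more substantively, what you describe as ``fixing the ambiguous signs\ldots as in the proof of Proposition~\ref{prop:cataction}'' is a deeper normalisation problem here than there. In Proposition~\ref{prop:cataction} the cups and caps are explicit foams whose signs one adjusts by a finite check; in the present setting the (co)units of the biadjunctions are only determined up to invertible scalars, and a genuine portion of Cautis--Lauda's argument is to show that a single consistent choice of these scalars can be made so that the bubble, zig-zag, and mixed $EF$/$FE$ relations hold simultaneously for all weights. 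Your sketch names this obstacle but delegates it to~\cite{kl5} and~\cite{rou}; that is acceptable for an outline, but without carrying out the normalisation (or at least verifying that the hypotheses of Definition~\ref{defn-strep} match those used in the cited proofs) the argument is not self-contained.
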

\subsection{Cones, strong deformation retracts and homotopy equivalence}\label{sec-techhomalg}
In this section we have collected some well-known facts from homological algebra about (mapping) cones, strong deformation retracts and homotopy equivalences. We need these facts in Section~\ref{sec-vkh}. In particular, we need them in the proof of Theorem~\ref{thm-geoinvarianz}.

In this section let $\mathcal C$ denote any pre-additive category. It should be noted that this includes that the notion ``chain complex'', i.e. $d\circ d=0$, makes sense. We denote the \textit{category of chain complexes of $\mathcal C$} by $\kom(\mathcal C)$ (in contrast to the category of \textit{bounded} chain complexes $\kom_b(\mathcal C)$), i.e. the objects are chain complexes with chain groups in $\Ob(\mathcal C)$ and differentials in $\Mor(\mathcal C)$ and the morphisms are chain maps, i.e. sequences of elements of $\Mor(\mathcal C)$ with the standard requirements. 

We denote chain complexes by $C=(C_i,c_i), D=(D_i,d_i)\in\kom(\mathcal C)$. With a slight abuse of notation, we call elements of $\Mor(\mathcal C)$ simply ``maps''. All appearing indices should be elements of $\bZ$. Moreover, recall the following three definitions.
\begin{defn}\label{defn-chainhomotopy}
Let $C,D$ be two chain complexes with chain groups $C_i,D_i$ and differentials $c_i,d_i$. Let $\varphi,\varphi^{\prime}\colon C\to D$ be two chain maps. Let $h_i\colon C_i\to D_{i-1}$ be a collection of maps as illustrated below.
\begin{equation*}
\xymatrix@+3em{
{\dots} \ar[r]^{c_{i - 2}}
        & C_{i - 1}
                \ar[r]^{c_{i - 1}}
                \ar@<0.5ex>[d]^{\varphi^{\prime}_{i - 1}}
                \ar@<-0.5ex>[d]_{\varphi_{i - 1}}
                \ar[dl]|*+<1ex,1ex>{\scriptstyle h_{i - 1}}
        & C_i
                \ar[r]^{c_i}
                \ar@<0.5ex>[d]^{\varphi^{\prime}_{i}}
                \ar@<-0.5ex>[d]_{\varphi_{i}}
                \ar[dl]|*+<1ex,1ex>{\scriptstyle h_i}
        & C_{i + 1}
                \ar[r]^{c_{i + 1}}
                \ar@<0.5ex>[d]^{\varphi^{\prime}_{i + 1}}
                \ar@<-0.5ex>[d]_{\varphi_{i + 1}}
                \ar[dl]|*+<1ex,1ex>{\scriptstyle h_{i + 1}}
        & {\dots}
                \ar[dl]|*+<1ex,1ex>{\scriptstyle h_{i + 2}}\\
{\dots} \ar[r]^{d_{i - 2}}
        & D_{i - 1} \ar[r]^{d_{i - 1}}
        & D_i \ar[r]^{d_i}
        & D_{i + 1} \ar[r]^{d_{i + 1}}
        & {\dots}
}
\end{equation*}
The two chain maps $\varphi,\varphi^{\prime}\colon C\to D$ are called \textit{chain homotopic}, denoted by $\varphi\sim_h\varphi^{\prime}$, if
\[
\varphi_i-\varphi^{\prime}_i=h_{i+1}\circ c_i+d_{i-1}\circ h_i\text{ for all }i\in\bZ.
\]
Two chain complexes $C,D$ are called \textit{chain homotopic} if there exists two chain maps $\varphi\colon C\to D$ and $\psi\colon D\to C$ such that
\[
\psi_i\circ\varphi_i\sim_h\mathrm{id}_{C}\;\;\text{ and }\;\;\varphi_i\circ\psi_i\sim_h\mathrm{id}_{D}\text{ for all }i\in\bZ.
\]
Such chain maps $\varphi\colon C\to D$ and $\psi\colon C\to D$ are called \textit{homotopy equivalences}. We denote chain homotopic complexes by $C\simeq_h D$.
\end{defn}
\begin{defn}\label{defn-sdr}Let $\psi\colon D\to C$ be a homotopy equivalence. Assume that $\psi$ has a ``homotopy inverse'' $\varphi\colon C\to D$, that is
\[
\psi_i\circ\varphi_i=\mathrm{id}_{C}\;\;\text{ and }\;\;\varphi_i\circ\psi_i\sim_h\mathrm{id}_{D}\text{ for all }i\in\bZ,
\]
then $\psi$ is called a \textit{deformation retraction}. Moreover, if there exists a homotopy $h$ with $h\circ \varphi=0$, then $\psi$ is called a \textit{strong deformation retraction} and $\varphi$ is called an \textit{inclusion into a strong deformation retract}.
\end{defn}
A (mapping) cone of two chain complexes is defined below. Be careful: Some authors use a different sign convention.
\begin{defn}\label{defn-cone}(\textbf{The cone}) Let $C,D$ be two chain complexes with chain groups $C_i,D_i$ and differentials $c_i,d_i$. Let $\varphi\colon C\to D$ be a chain map. The \textit{cone} of $C,D$ \textit{along $\varphi$} is the chain complex $\Gamma(\varphi\colon C\to D)$ with the chain groups and differentials
\[
\Gamma_i=C_i\oplus D_{i-1}\text{ and } \gamma_i=\begin{pmatrix}-c_i & 0\\ \varphi_{i} & d_{i-1}\end{pmatrix},
\]
i.e. if the two chain complexes $C,D$ look like
\[
C,D\colon\cdots\xrightarrow{c_{i-1},d_{i-1}} C_i,D_i\xrightarrow{c_i,d_i}C_{i+1},D_{i+1}\xrightarrow{c_{i+1},d_{i+1}}C_{i+2},D_{i+2}\xrightarrow{c_{i+2},d_{i+2}}\cdots
\]
then the cone along $\varphi$ is generated by direct sums over the diagonal as shown below.
\begin{equation*}
\xymatrix@+3em{
{\dots} \ar[r]^{c_{i - 2}}
        & C_{i - 1}
                \ar[r]^{-c_{i - 1}}
                \ar[d]|{\varphi_{i - 1}}
                \ar@{.}[dl]|*+<1ex,1ex>{\scriptstyle \oplus}
        & C_i
                \ar[r]^{-c_i}
                \ar[d]|{\varphi_{i}}
                \ar@{.}[dl]|*+<1ex,1ex>{\scriptstyle \oplus}
        & C_{i + 1}
                \ar[r]^{-c_{i + 1}}
                \ar[d]|{\varphi_{i + 1}}
                \ar@{.}[dl]|*+<1ex,1ex>{\scriptstyle \oplus}
        & {\dots}
                \ar@{.}[dl]|*+<1ex,1ex>{\scriptstyle \oplus}\\
{\dots} \ar[r]^{d_{i - 2}}
        & D_{i - 1} \ar[r]^{d_{i - 1}}
        & D_i \ar[r]^{d_i}
        & D_{i + 1} \ar[r]^{d_{i + 1}}
        & {\dots}
}
\end{equation*}
\end{defn}
It is easy to check that the cone gives again a chain complex (here one has to use the signs above). The following well-known proposition concludes this section. We need it in order to prove Theorem~\ref{thm-geoinvarianz}.
\begin{prop}\label{prop-sdr}
Let $C,D,E,F$ be four chain complexes and let
\[ 
\begin{xy}
  \xymatrix{
      C  \ar@<2pt>[d]^{f^{\phantom{\prime}}}    &   D\ar@<2pt>[d]^{g^{\prime}}   \\
      E \ar[r]_{\varphi}\ar@<2pt>[u]^{f^{\prime}}             &   F \ar@<2pt>[u]^{g} 
  }
\end{xy}
\]
be a diagram in the category $\kom(\mathcal C)$. Assume that $f$ is an inclusion into a strong deformation retract $f^{\prime}$ and $g$ is a strong deformation retraction with inclusion $g^{\prime}$. Then
\[
\Gamma(\varphi\circ f)\simeq_h\Gamma(\varphi)\simeq_h\Gamma(g\circ \varphi).
\]
\end{prop}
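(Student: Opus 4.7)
The plan is to prove the two equivalences
\[
\Gamma(\varphi\circ f)\simeq_h \Gamma(\varphi)\quad\text{and}\quad \Gamma(\varphi)\simeq_h \Gamma(g\circ\varphi)
\]
separately, by constructing explicit block-matrix chain maps between the cones and explicit homotopies between their compositions and the identity. The overall principle is that the ``small'' cones (built from $C$ or $D$) and the ``large'' cones (built from $E$ or $F$) should be homotopy equivalent precisely because the sides are connected by strong deformation retracts, and the \emph{strong} part of the retract condition (i.e., $h_E\circ f = 0$, resp.\ $h_F\circ g' = 0$) will be exactly what makes one of the two compositions equal the identity on the nose rather than only up to homotopy.

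For the first equivalence, the chain groups are $C_i\oplus F_{i-1}$ on the small side and $E_i\oplus F_{i-1}$ on the large side, with cone differentials as in Definition~\ref{defn-cone}. I would take
\[
\mathcal F_i = \begin{pmatrix} f_i & 0 \\ 0 & \mathrm{id}_{F_{i-1}} \end{pmatrix},\qquad \mathcal G_i = \begin{pmatrix} f'_i & 0 \\ -\varphi_{i-1}\circ h_E^i & \mathrm{id}_{F_{i-1}} \end{pmatrix},
\]
where $h_E$ is the (signed) homotopy witnessing $f\circ f'\sim_h \mathrm{id}_E$. A direct $2\times 2$ matrix calculation shows that $\mathcal F$ is a chain map because $f$ is, and that $\mathcal G$ is a chain map because $f'$ is and the off-diagonal correction absorbs, via the homotopy relation for $h_E$, the failure of $f\circ f'$ to equal $\mathrm{id}_E$. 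The strong retract identity $h_E\circ f = 0$ then kills the lower-left entry of $\mathcal G\circ\mathcal F$ and gives $\mathcal G\circ\mathcal F = \mathrm{id}_{\Gamma(\varphi\circ f)}$ on the nose. Finally, the homotopy $H_i = \begin{pmatrix} -h_E^i & 0 \\ 0 & 0 \end{pmatrix}$, extended by zero on the $F$-summand, realises $\mathcal F\circ\mathcal G\sim_h \mathrm{id}_{\Gamma(\varphi)}$: the lower-left spurious term $-\varphi\circ h_E$ produced by feeding $H$ into the cone differential matches exactly the correction already built into $\mathcal G$, so everything cancels in precisely the desired way.

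The second equivalence is dual in a precise sense: the deformation retract now sits on the target of $\varphi$ rather than on the source, and the inclusion/retraction roles of the two halves are swapped. Concretely, I would take
\[
\mathcal G'_i = \begin{pmatrix} \mathrm{id}_{E_i} & 0 \\ 0 & g_{i-1} \end{pmatrix},\qquad \mathcal F'_i = \begin{pmatrix} \mathrm{id}_{E_i} & 0 \\ h_F^i\circ\varphi_i & g'_{i-1} \end{pmatrix},
\]
with the correction $h_F^i\circ\varphi_i$ chosen so that the chain-map condition for $\mathcal F'$ reduces to the homotopy relation for $h_F$ (composed with $\varphi$, which is a chain map). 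The identity $g\circ g' = \mathrm{id}_D$ gives $\mathcal G'\circ\mathcal F' = \mathrm{id}_{\Gamma(g\circ\varphi)}$ on the nose once one observes that the strong retract condition $h_F\circ g' = 0$ controls the lower-left entry, and a homotopy of the shape $H'_i = \begin{pmatrix} 0 & 0 \\ \ast & h_F^{i-1} \end{pmatrix}$ (possibly with a small correction in the $\ast$ entry) yields $\mathcal F'\circ\mathcal G'\sim_h \mathrm{id}_{\Gamma(\varphi)}$. The main obstacle in both parts is the bookkeeping: the sign in the cone differential of Definition~\ref{defn-cone} interacts with the sign in the definition of chain homotopy in Definition~\ref{defn-chainhomotopy}, and one must place the correction terms in exactly the right entries so that the strong retract identity is invoked in exactly the right place; once those signs are pinned down, each of the four verifications (chain-map, chain-map, composition equals identity, homotopy) reduces to a short, essentially mechanical block-matrix identity.
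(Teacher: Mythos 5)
Your construction for $\Gamma(\varphi\circ f)\simeq_h\Gamma(\varphi)$ is correct and is essentially the paper's own argument: the same block-matrix shapes, with the retract-side map carrying the correction term $\pm\varphi\circ h_E$ and the homotopy supported on the $E$-diagonal. The sign choices differ from the paper's (which writes $+\varphi\circ h'$ and $+h$), but both are internally consistent depending on which side of the homotopy relation one fixes; the paper states its formulas without verification, so it does not pin the sign down either.

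The second half, however, contains a genuine error. You claim that the strong retract identity $h_F\circ g'=0$ forces the lower-left entry of $\mathcal G'\circ\mathcal F'$ to vanish, giving $\mathcal G'\circ\mathcal F'=\mathrm{id}_{\Gamma(g\circ\varphi)}$. But compute the product: with
\[
\mathcal G'_i=\begin{pmatrix}\mathrm{id}&0\\0&g_{i-1}\end{pmatrix},\qquad
\mathcal F'_i=\begin{pmatrix}\mathrm{id}&0\\h_F^i\circ\varphi_i&g'_{i-1}\end{pmatrix},
\]
the lower-left entry of $\mathcal G'_i\circ\mathcal F'_i$ is $g_{i-1}\circ h_F^i\circ\varphi_i$, which involves the composition $g\circ h_F$, not $h_F\circ g'$. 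Definition~\ref{defn-sdr} only posits $h_F\circ g'=0$ (the homotopy followed by the inclusion), and gives no control over $g\circ h_F$. The two equivalences are not the perfect mirror images you suggest: in the first one the correction term lands on the \emph{projection} $\psi^{f'}$, so the troublesome entry $-\varphi\circ h_E\circ f$ of $\mathcal G\circ\mathcal F$ has $h_E$ \emph{preceding} $f$ and the hypothesis $h_E\circ f=0$ applies directly; in the second one the correction term must sit on the \emph{inclusion} $\mathcal F'$ (since that is the map whose na\"{\i}ve diagonal form fails to be a chain map), so the troublesome entry of $\mathcal G'\circ\mathcal F'$ has $h_F$ \emph{followed} by $g$, which the hypothesis does not touch. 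With your maps $\mathcal G'\circ\mathcal F'$ is a unipotent automorphism, not the identity, and your asserted reason for it being the identity fails. The second equivalence can still be proved — for instance by adding a compensating lower-left term $-g_{i-1}\circ h_F^i\circ\varphi_i$ to $\mathcal G'$ (one checks this still yields a chain map), which makes $\mathcal G'\circ\mathcal F'=\mathrm{id}$ exactly without invoking any side condition — but as written your argument for it does not go through.
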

\begin{proof}
In order to maintain readability, we suppress some subscripts in the following.

To show that $\Gamma(\varphi)\simeq_h\Gamma(\varphi\circ f)$ we denote their differentials by $d^{\Gamma(\varphi\circ f)}$ and $d^{\Gamma(\varphi)}$ respectively. Consider the following diagram
\begin{equation*}
\xymatrix@+3em{
{\Gamma(\varphi\circ f):\;\;\dots} \ar[r]^{}
        & C_{i + 1}\oplus F_i
                \ar[r]^{d^{\Gamma(\varphi\circ f)}}
                \ar@<2pt>[d]_{\psi^{f^{\prime}}}
        & C_{i+2}\oplus F_{i+1}
                \ar[r]^{}
                \ar@<2pt>[d]_{\psi^{f^{\prime}}}
        & {\dots}\\
{\Gamma(\varphi):\;\;\dots} \ar[r]^{}
        & E_{i + 1}\oplus F_i \ar@<2pt>[r]^{d^{\Gamma(\varphi)}}\ar@<2pt>[u]_{\phantom{.}\psi^f}
        & E_{i+2}\oplus F_{i+1} \ar[r]^{}\ar@<2pt>[u]_{\phantom{.}\psi^f}\ar@<2pt>[l]^{\psi^h}
        & {\dots},
}
\end{equation*}
where the three maps $\psi^a,\psi^b$ and $h$ are given by
\[
\psi^f=\begin{pmatrix}f & 0\\ 0& \mathrm{id}\end{pmatrix}\;\text{ and }\;\psi^{f^{\prime}}=\begin{pmatrix}f^{\prime} & 0\\ \varphi\circ h^{\prime}& \mathrm{id}\end{pmatrix}\;\text{ and }\;\psi^h=\begin{pmatrix}h & 0\\ 0& 0\end{pmatrix}.
\]
Here the map $h_i\colon E_{i}\to E_{i-1}$ should be the homotopy from the inclusion of $f$ into $f^{\prime}$. One easily checks that the diagram above is commutative and that this setting gives rise to $\Gamma(\varphi)\simeq_h\Gamma(\varphi\circ f)$. The statement $\Gamma(g\circ \varphi)\simeq_h\Gamma(\varphi)$ can be verified analogously.
\end{proof}
\subsection{Cubes and projective complexes}\label{sec-techcube}
In this section we define/recall some facts from homological algebra about cubes and projective complexes. We need them in the Section~\ref{sec-vkhca}, since we can only ensure that our assignment will be a ``projective chain complex''. That means lousily speaking that face are commutative ``up to a sign''.

Let $\mathrm{Cu}_n$ be a standard unit $n$-cube. We can consider this cube as a directed graph by labelling neighbouring vertices $\gamma$ by words $a_{\gamma}$ in $\{0,1\}$ of length $n$ as follows. Choose one vertex and give it the label $0\dots 0$ with $n$-entries. Any of its $n$ neighbours get a different word of length $n$ with exactly one $1$. Continue by changing exactly one entry until every vertex has a label.

For two vertices $\gamma_a,\gamma_b$ that differ by only one entry $k$ one assigns a label for the edges between them by replacing $k$ with a $*$. The edges is oriented from $\gamma_a$ to $\gamma_b$ iff $k=0$ for $\gamma_a$. We denote such an edge by $S\colon\gamma_a\to\gamma_b$.  Recall that $R$ denotes a commutative, unital ring of arbitrary characteristic.
\begin{defn}\label{defn-cube}(\textbf{Cube})
An $n$-cube in a $R$-pre-additive category $\mathcal C$ is a mapping
\[
\mathrm{Cu}_n^{\mathcal C}\colon\mathrm{Cu}_n\to\mathcal C
\]
that associates each vertex $\gamma_a$ with an element $\gamma^{\mathcal C}_a\in\Ob(\mathcal C)$ and each edge $S\colon\gamma_a\to\gamma_b$ with an element $S_{a,b}^{\mathcal C}\in\Mor(\gamma^{\mathcal C}_a,\gamma^{\mathcal C}_b)$.

A morphisms of cubes $\phi\colon\mathrm{Cu}_n^{\mathcal C}\to\mathrm{Cu'}_n^{\mathcal C}$ is a collection of morphisms for all vertices that is
\[
\{S_{a,{a^{\prime}}}^{\mathcal C}\mid \gamma_a,\gamma_{a^{\prime}}\text{ vertices of }\mathrm{Cu}_n^{\mathcal C},\mathrm{Cu'}_n^{\mathcal C}\}.
\]
We denote the category of $n$-cubes in $\mathcal C$ by $\Cb_n(\mathcal C)$ and the category of all cubes in $\mathcal C$ by $\Cb(\mathcal C)$.

It should be noted that a morphisms between two $n$-cubes can be seen as a $n+1$-cube. Moreover, from a $n+1$ cube one can define a morphism of $n$-cubes by fixing a letter $k$ of the words associated to the vertices $\gamma_a$ and fixing the $n$-subcubes $\mathrm{Cu^0}_n^{\mathcal C}$ and $\mathrm{Cu^1}_n^{\mathcal C}$ of $\mathrm{Cu}_{n+1}^{\mathcal C}$ such that the vertices of $\mathrm{Cu^0}_n^{\mathcal C}$ have $k=0$ and the ones of $\mathrm{Cu^1}_n^{\mathcal C}$ have $k=1$. The morphism is the given by all edges of $\mathrm{Cu}_{n+1}^{\mathcal C}$ that change $k=0$ to $k=1$.
\end{defn}
\begin{defn}\label{defn-cubetype}(\textbf{Cube types})
Denote by $R^*$ all units in $R$. The category $\mathcal C_P=\mathcal C/R^*$ is called the \textit{projectivisation} of $\mathcal C$, i.e. morphisms are identified iff they differ only by a unit. A \textit{projectivisation} of a cube $\mathrm{Cu}_n^{\mathcal C_P}$ is given by the composition with the obvious projection.

A face of a cube, denoted by $F$, is given by (we hope that the notation is clear)
\[
\xymatrix{
 & \gamma_{a_{01}}\ar[rd]^{S_{*1}} &\\
 \gamma_{a_{00}}\ar[rd]_{S_{*0}}\ar[ru]^{S_{0*}} &  & \gamma_{a_{11}},\\
 & \gamma_{a_{10}}\ar[ru]_{S_{1*}} &}
\]
or with an extra superscript $\mathcal C$ for a cube in $\mathcal C$. Such a face is said to be of \textit{type $a$, $c$ or $p$} if the following is satisfied.
\begin{itemize}
\item[(Type a)] We have $S^{\mathcal C}_{1*}\circ S^{\mathcal C}_{*0}=-S^{\mathcal C}_{*1}\circ S^{\mathcal C}_{0*}$ (\textit{anticommutative}).
\item[(Type c)] We have $S^{\mathcal C}_{1*}\circ S^{\mathcal C}_{*0}=\phantom{-}S^{\mathcal C}_{*1}\circ S^{\mathcal C}_{0*}$ (\textit{commutative}).
\item[(Type p)] We have $S^{\mathcal C}_{1*}\circ S^{\mathcal C}_{*0}=uS^{\mathcal C}_{*1}\circ S^{\mathcal C}_{0*}$ for $u\in R^*$ (\textit{projective}).
\end{itemize}
Furthermore, a cube $\mathrm{Cu}_n^{\mathcal C}$ is called of \textit{type a}, \textit{type c} or \textit{type p}, if all of its faces are of the corresponding types.

A morphisms between $n$-cubes $\mathrm{Cu}_n^{\mathcal C},\mathrm{Cu'}_n^{\mathcal C}$ is called of \textit{type a}, \textit{type c} or \textit{type p} if the corresponding $n+1$-cube is type a, type c or type p respectively.

Two cubes $\mathrm{Cu}_n^{\mathcal C}$ and $\mathrm{Cu'}_n^{\mathcal C}$ are called \textit{p-equal} if
\[
\mathrm{Cu}_n^{\mathcal C_P}=\mathrm{Cu'}_n^{\mathcal C_P}.
\]
We call two morphisms between $\mathrm{Cu}_n^{\mathcal C}$ and $\mathrm{Cu'}_n^{\mathcal C}$ \textit{p-equal} if the corresponding $n+1$-cubes are p-equal.

Note that morphisms of type c and type p are closed under compositions. Hence, the category $\Cb(\mathcal C)$ has three subcategories, namely the following.
\begin{itemize}
\item[(Type a)] The subcategory $\Cb^a(\mathcal C)$ with cubes of type a and morphisms of type c.
\item[(Type c)] The subcategory $\Cb^c(\mathcal C)$ with cubes of type c and morphisms of type c.
\item[(Type p)] The subcategory $\Cb^p(\mathcal C)$ with cubes of type p and morphisms of type p.
\end{itemize}
\end{defn}
It is worth noting that we can see any cube in $\mathcal C$ as a complex $(C_*,c_*)$ (the reader should be careful that we do not say \textit{chain} complex here) by taking direct sums of vertices with the same number of $1$ in their labels and matrices of the morphisms associated to the edges between neighbouring vertices.
\begin{defn}\label{defn-chainhomotopy2}
Let $(C_*,c_*)$ and $(D_*,d_*)$ be two cubes of type p. We call two morphisms $\varphi,\varphi^{\prime}\colon C\to D$ of type p \textit{p-homotopic}, denoted by $\varphi\sim^P_h\varphi^{\prime}$, if
\[
\varphi_i-\varphi^{\prime}_i=h_{i+1}\circ c_i+u_id_{i-1}\circ h_i\text{ for all }i\in\bZ,
\]
for a backward diagonal $h$ as in Definition~\ref{defn-chainhomotopy} and units $u_i\in R^*$.

Two such cube complexes are called \textit{p-homotopic} if there exists two morphisms of type p $\varphi\colon C\to D$ and $\psi\colon D\to C$ such that
\[
\psi_i\circ\varphi_i\sim^P_h\mathrm{id}_{C}\;\;\text{ and }\;\;\varphi_i\circ\psi_i\sim^P_h\mathrm{id}_{D}\text{ for all }i\in\bZ.
\]
Such morphisms $\varphi\colon C\to D$ and $\psi\colon C\to D$ are called \textit{p-homotopy equivalences}. We denote p-homotopic complexes of type p by $C\simeq^p_h D$.
\end{defn}
\begin{defn}\label{defn-edge}
Let $\mathrm{Cu}_n$ denote an $n$-cube and let us denote the set of edges of $\mathrm{Cu}_n$ by $\mathrm{E}(\mathrm{Cu}_n)$. An \textit{edge assignment} $\epsilon$ of the cube is a map
\[
\epsilon\colon\mathrm{E}(\mathrm{Cu}_n)\to\{+1,-1\}.
\]
Let $\mathcal C$ be a $R$-pre-additive category. Then an edge assignment $\epsilon$ of the cube   $\mathrm{Cu}_n$ is called \textit{negative} (or \textit{positive}), if $\mathrm{Cu}_n^{\mathcal C}$ is a cube of type a (or of type c) after multiplying the morphism $f_e$ of the edge $e\in\mathrm{E}(\mathrm{Cu}_n)$ of the cube $\Cb(\mathcal C)$ with $\epsilon(e)$.
\end{defn}
The following lemma follows immediately from the definition.
\begin{lem}\label{lem-edge}
If $\mathrm{Cu}_n^{\mathcal C}$ is a cube of type a (or of type c), then there is a negative (or positive) edge assignment of $\mathrm{Cu}_n^{\mathcal C_P}$.\qed
\end{lem}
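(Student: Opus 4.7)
The plan is to observe that this lemma is, at bottom, a direct unpacking of Definition~\ref{defn-edge}, and that the trivial sign assignment already witnesses the claim. First I would recall that an edge assignment of the projective cube $\mathrm{Cu}_n^{\mathcal C_P}$ is, by definition, nothing more than a map $\epsilon \colon \mathrm{E}(\mathrm{Cu}_n) \to \{+1,-1\}$ on the edges of the underlying combinatorial $n$-cube; it carries no categorical data and can be tested against any lift of the projective cube to a cube in $\mathcal C$. The lift that is relevant here is provided by the hypothesis itself, namely $\mathrm{Cu}_n^{\mathcal C}$.

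Next I would take $\epsilon \equiv +1$ as the candidate assignment. Multiplying each edge morphism $S^{\mathcal C}_{a,b}$ by $\epsilon(e)=+1$ leaves the cube $\mathrm{Cu}_n^{\mathcal C}$ unchanged; since by hypothesis this cube is already of type~a, every face still satisfies the anticommutativity relation of Definition~\ref{defn-cubetype}. Invoking Definition~\ref{defn-edge}, this shows that $\epsilon \equiv +1$ is a negative edge assignment of $\mathrm{Cu}_n^{\mathcal C_P}$. The type~c case is identical, with the same constant assignment certified as positive because the faces of $\mathrm{Cu}_n^{\mathcal C}$ commute on the nose.

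The main ``obstacle''---really a matter of bookkeeping rather than difficulty---is simply to parse what an edge assignment of the projective cube is, and to recognise that an assignment is tested against a chosen lift rather than against the projective cube directly. Once this is clarified, the statement becomes a tautology. The value of the lemma is not in the proof itself but in the role it plays when combined with Theorem~\ref{thm-pcomplex}: that theorem supplies a type~a lift (the topological complex $\bn{T^k_D}$) of the projective cube $\mathcal{CD}_{T^k_D}(cr_1,\dots,cr_m)$, and the present lemma then certifies the existence of an $\epsilon$ for which $\mathcal{CD}^{\epsilon}_{T^k_D}(cr_1,\dots,cr_m)$ is a genuine chain complex, as used in Corollary~\ref{cor-pcomplex}.
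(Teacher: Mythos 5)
Your proof of the lemma itself is formally correct and matches the paper's own treatment, which marks the statement with \qed{} and no written argument; the trivial witness $\epsilon\equiv+1$, tested against the given type-a lift $\mathrm{Cu}_n^{\mathcal C}$, indeed satisfies Definition~\ref{defn-edge}.

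However, your closing paragraph slightly misstates how the lemma feeds into Corollary~\ref{cor-pcomplex}. Applying $\epsilon\equiv+1$ to the signless tensored complex $\mathcal{CD}_{T^k_D}(cr_1,\dots,cr_m)$ leaves it unchanged, and by Theorem~\ref{thm-pcomplex} that cube is only of type p; so this particular $\epsilon$ does \emph{not} make $\mathcal{CD}^{\epsilon}_{T^k_D}(cr_1,\dots,cr_m)$ into a chain complex. What the corollary actually uses is the sharper observation that $\mathcal{CD}_{T^k_D}(cr_1,\dots,cr_m)$ and $\bn{T^k_D}$ are p-equal lifts of the same projective cube whose edge morphisms differ only by factors of $\pm 1$ (the only units the construction ever produces are signs). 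Taking $\epsilon(e)$ to be that sign discrepancy on each edge converts $\mathcal{CD}_{T^k_D}$ into $\bn{T^k_D}$, which is type a by Theorem~\ref{theo-facescommute}. That choice is equally ``immediate from the definition,'' and it is the witness the paper implicitly deploys; settling on $\epsilon\equiv+1$ proves the literal statement but discards the content that makes the lemma useful downstream.
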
 
\subsection{Graded cellular algebras}\label{sec-techcell}
We recall the definition of a \textit{cellular algebra} due to Graham and Lehrer~\cite{grle} in the present section. Note that we need the definition of a \textit{graded cellular algebra} and some facts about these algebras and their representation theory. We follow the paper of Hu and Mathas~\cite{hm}. Moreover, we also need some more facts about cellular algebras, e.g. the notion is (almost) categorical in the sense that it is Morita invariant iff $\mathrm{char}(K)\neq 2$. We follow K\"onig and Xi in~\cite{kx1} in order to recall these facts.
\vskip0.5cm
Note that in the whole section $K$ denotes a field of arbitrary characteristic unless otherwise specified. Moreover, $R$ denotes a commutative, unital, integral domain and our convention for the \textit{weight poset} is that $\rhd$ denotes \textit{strictly bigger}. Furthermore, \textit{graded} always means \textit{$\bZ$-graded}\footnote{Usually if we stress that something is graded we mean with a non-trivial grading.}. A \textit{graded} (left) $R$-module is a module with a direct decomposition $M=\bigoplus_{d\in\bZ}M_d$, where the elements of $M_d$ are \textit{homogeneous of degree $d$} and $M\{i\}$ denotes a degree shift by $i\in\bZ$.

An \textit{graded} (left) $R$-algebra is an algebra that is a graded (left) $R$-module with $A_dA_{d^{\prime}}\subset A_{d+d^{\prime}}$ for all $d,d^{\prime}\in\bZ$. A \textit{graded} (left) $A$-module is a module with $A_dM_{d^{\prime}}\subset M_{d+d^{\prime}}$ for all $d,d^{\prime}\in\bZ$, while the rest is defined in the obvious way. We denote the category of (left) $A$-modules by $\AMOD$ and the category of graded (left) $A$-modules by $\AMOD_{\mathrm{gr}}$.

\begin{defn}\label{defn-cellular}
Suppose $A$ is a graded free algebra over $R$ of finite rank. A \textit{graded cell datum} is an ordered quintuple $(\mathfrak{P},\mathcal T,C,\mathrm{i},\mathrm{deg})$, where $(\mathfrak P,\rhd)$ is the \textit{weight poset}, $\mathcal T(\lambda)$ is a finite set for all $\lambda \in\mathfrak P$, $\mathrm{i}$ is an \textit{involution} of $A$ and $C$ is an injection
\[
C\colon\coprod_{\lambda\in\mathfrak P}\mathcal T(\lambda)\times \mathcal T(\lambda)\to A,\;(s,t)\mapsto c^{\lambda}_{st}.
\]
Moreover, the \textit{degree function} deg is given by
\[
\mathrm{deg}\colon\coprod_{\lambda\in\mathfrak P}\mathcal T(\lambda)\to\bZ.
\] 
The whole data should be such that the $c^{\lambda}_{st}$ form a homogeneous $R$-basis of $A$ with $\mathrm{i}(c^{\lambda}_{st})=c^{\lambda}_{ts}$ and $\mathrm{deg}(c^{\lambda}_{st})=\mathrm{deg}(s)+\mathrm{deg}(t)$ for all $\lambda\in\mathfrak{P}$ and $s,t\in\mathcal T(\lambda)$. Moreover, for all $a\in A$
\[
ac^{\lambda}_{st}=\sum_{u\in \mathcal T(\lambda)}r_{a}(s,u)c^{\lambda}_{ut}\;(\mathrm{mod}\;A^{\rhd\lambda}).
\]
Here $A^{\rhd\lambda}$ is the $R$-submodule of $A$ spanned by the set $\{c^{\mu}_{st}\mid \mu\rhd\lambda\text{ and }s,t\in\mathcal T(\mu)\}$.

An algebra $A$ with such a quintuple is called a \textit{graded cellular algebra} and the $c^{\lambda}_{st}$ are called a \textit{graded cellular basis} of $A$ (with respect to the involution $\mathrm{i}$).
\end{defn}
\begin{ex}\label{ex-cellular1}
Let $A=R[x]/(x^n)$ and $\mathrm{i}=\mathrm{id}$. And let $\mathfrak P=\{0,\dots,n-1\}$ and $\mathcal T(k)=\{1\}$. Then the standard basis $c_{11}^{k}=x^k$ has a very special property, namely that the coefficients for multiplication  only depend on higher powers of $x$ (modulo $x^n$).

Let $A=\mathrm{M}_{n\times n}(R)$, i.e. the set of $n\times n$-matrices over $R$. Set $\mathfrak P=\{*\}$ and $\mathcal T(*)=\{1,\dots,n\}$. The standard basis of $A$, i.e. the $e_{ij}$-matrices, has a very special property, namely that the coefficients for multiplication with a matrix from the right only depend on the $i$-th row and vice versa for multiplication from the left only on the $j$-th column. Moreover, for the involution defined by $\mathrm{i}(M)=M^{t}$, we have $\mathrm{i}(e_{ij})=e_{ji}$.

For an example of a graded cellular algebra, we can take $A=\mathrm{M}_{2\times 2}(R)$ with the same data as above. The degree of the two elements in $\mathcal T(*)=\{1,2\}$ should be $\mathrm{deg}(1)=1$ and $\mathrm{deg}(2)=-1$. One can easily check that this a graded cell datum as in Definition~\ref{defn-cellular}.

In the same spirit, one can see (using the Artin-Wedderburn Theorem) that every semisimple algebra over a algebraically closed field is a cellular algebra. It is worth noting that Gornik's deformation $G_S$ of our web algebra $K_S$ as defined in~\ref{sec-webalg} is by Proposition~\ref{prop:Gsemisimple} semisimple. Hence, we see directly that it is a cellular algebra. Moreover, the example $A=\mathrm{M}_{2\times 2}(R)$ can easily extended to $A=\mathrm{M}_{n\times n}(R)$, showing that it is a \textit{graded} cellular algebra. This implies that every semisimple algebra is in fact a graded cellular algebra, although the grading for $G_S$ is artificial and does not have connections to the $q$-filtration explained in~\ref{sec-webalg}. 
\end{ex}
The idea of Graham and Lehrer was to ``interpolate'' between the two extremes from Example~\ref{ex-cellular1}. One main example for our purposes is the following.
\begin{ex}\label{ex-cellular2}
As Hu and Mathas~\cite{hm} point out, the algebras studied by Brundan and Stroppel~\cite{bs1}, and their quasi-hereditary covers are \textit{graded} cellular algebras in the sense of Definition~\ref{defn-cellular}. Note that these algebras are the $\mathfrak{sl}_2$ analogue of the algebras we defined in Section~\ref{sec-web}. 
\end{ex}
Another main example for us is also given by Hu and Mathas in~\cite{hm}, i.e. they showed the following Theorem.
\begin{thm}\label{thm-cellular}
Suppose that $\mathcal O$ is a commutative integral domain such that $e$ is invertible in $\mathcal O$, $e=0$ or $e$ is a non-zero prime number, and let $R^n_{\Lambda}$ be the cyclotomic Khovanov-Lauda-Rouquier algebra $R^n_{\Lambda}$ over $\mathcal O$. Then $R^n_{\Lambda}$ is a graded cellular algebra with respect to the dominance order and with homogeneous cellular basis explicitly given in~\cite{hm}.

Using Brundan and Kleshchev~\cite{bk1} graded isomorphism, this includes that the corresponding cyclotomic Hecke algebra is a graded cellular algebra if $\mathcal O$ is a field.
\end{thm}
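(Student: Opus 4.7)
The plan is to follow the strategy of Hu and Mathas, exploiting the tight interplay between the cyclotomic KLR algebra $R^n_{\Lambda}$ and the cyclotomic Hecke algebra $H^n_{\Lambda}$ via Brundan and Kleshchev's graded isomorphism. At the ungraded level, Dipper, James and Mathas already constructed a cellular basis $\{m^{\lambda}_{\mathfrak{s}\mathfrak{t}}\}$ of $H^n_{\Lambda}$ indexed by pairs $(\mathfrak{s},\mathfrak{t})$ of standard $\lambda$-tableaux, with $\lambda$ ranging over multipartitions ordered by dominance $\trianglerighteq$. So at its core the problem is to produce a \emph{homogeneous} lift of this basis inside $R^n_{\Lambda}$.

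First, I would introduce the combinatorial degree function $\mathrm{deg}\colon\mathrm{Std}(\lambda)\to\bZ$, defined inductively on a standard tableau $\mathfrak{t}$ by summing, entry by entry, a contribution that records how many addable versus removable boxes of the correct residue appear to the right of the current box. This is precisely the degree function that matches the $q$-grading on the KLR generators: the dots carry degree $2$ and the crossings carry the appropriate degree according to the residue sequence. Next, for each pair $(\mathfrak{s},\mathfrak{t})\in\mathrm{Std}(\lambda)^{2}$ I would write down an explicit element $\psi^{\lambda}_{\mathfrak{s}\mathfrak{t}}\in R^n_{\Lambda}$ as a product of an idempotent $e(\mathbf{i}^{\lambda})$ (for a chosen residue sequence depending on $\lambda$), a ``Garnir-type'' element built from $y$-generators, and two reduced expressions in $\psi$-generators transporting $\mathbf{i}^{\lambda}$ to the residue sequences of $\mathfrak{s}$ and $\mathfrak{t}$. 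The involution $\mathrm{i}$ is the KLR diagrammatic flip, which manifestly sends $\psi^{\lambda}_{\mathfrak{s}\mathfrak{t}}$ to $\psi^{\lambda}_{\mathfrak{t}\mathfrak{s}}$.

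The bulk of the work is then to verify the three nontrivial axioms of a graded cell datum. Homogeneity of $\psi^{\lambda}_{\mathfrak{s}\mathfrak{t}}$ of degree $\mathrm{deg}(\mathfrak{s})+\mathrm{deg}(\mathfrak{t})$ follows by a direct calculation using the KLR relations, once the degrees of the building blocks are added. The basis property is obtained by mapping $\psi^{\lambda}_{\mathfrak{s}\mathfrak{t}}$ under the Brundan–Kleshchev isomorphism to an element of $H^n_{\Lambda}$ and showing that, modulo a suitable filtration by $\trianglerighteq$, it equals the DJM basis element $m^{\lambda}_{\mathfrak{s}\mathfrak{t}}$ up to an invertible scalar; since the $m^{\lambda}_{\mathfrak{s}\mathfrak{t}}$ already form a basis, a triangularity argument promotes the $\psi^{\lambda}_{\mathfrak{s}\mathfrak{t}}$ to a basis as well. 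Finally, the cellular multiplication rule $a\,\psi^{\lambda}_{\mathfrak{s}\mathfrak{t}}\equiv \sum_{\mathfrak{u}} r_{a}(\mathfrak{s},\mathfrak{u})\,\psi^{\lambda}_{\mathfrak{u}\mathfrak{t}}\pmod{(R^n_{\Lambda})^{\rhd\lambda}}$ is checked by transporting the DJM cellular multiplication across the isomorphism, using that the filtration by dominance is preserved.

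The step I expect to be the main obstacle is precisely the comparison with the DJM basis: while Brundan–Kleshchev's isomorphism is given by closed formulas, tracking how products of $\psi$- and $y$-generators correspond to products of Murphy generators requires a careful reduced-expression analysis (including the ``Garnir relations'' inside $R^n_{\Lambda}$) and a nontrivial induction on the dominance order to control the error terms. Once this is established, the last assertion is essentially automatic: the property of being a graded cellular algebra is transported across any graded isomorphism, so Brundan and Kleshchev's result immediately upgrades $H^n_{\Lambda}$ over a field to a graded cellular algebra, with cellular basis the image of $\{\psi^{\lambda}_{\mathfrak{s}\mathfrak{t}}\}$.
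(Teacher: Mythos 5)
The paper offers no proof of this theorem: the statement, the dominance order, the degree function and the explicit homogeneous basis are all recalled verbatim from the main result of Hu and Mathas \cite{hm}. Your outline is a faithful high-level reconstruction of their strategy, and the closing observation that graded cellularity transports across a graded isomorphism correctly yields the second assertion.

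The one genuine gap in your plan is the base ring. The theorem is asserted over any commutative integral domain $\mathcal{O}$ satisfying the hypotheses on $e$, but the Brundan--Kleshchev graded isomorphism between $R^n_{\Lambda}$ and the cyclotomic Hecke algebra, on which your entire comparison with the Dipper--James--Mathas basis rests, exists only when $\mathcal{O}$ is a field. Your argument as written therefore proves the theorem only over a field. Hu and Mathas avoid this by constructing the elements $\psi^{\lambda}_{\mathfrak{s}\mathfrak{t}}$ and verifying the cellular multiplication axiom directly inside $R^n_{\Lambda}$ over $\mathcal{O}$ from the graded KLR relations; the comparison with the Murphy basis, which supplies linear independence, is made only after base change to a suitable field and is then fed back to the integral setting by a spanning-plus-rank argument. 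To recover the stated generality you need to separate the integral construction of the $\psi$-elements and the verification of the cellularity axiom from the field-level dimension check, rather than routing the whole verification through Brundan--Kleshchev.
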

\vskip0.2cm
Let us recall some facts about (graded) cellular algebras.
\begin{defn}\label{defn-cellular2}
Let $A$ be a graded cellular algebra over $R$ with a given graded cell datum $(\mathfrak{P},\mathcal T,C,\mathrm{i},\mathrm{deg})$. Moreover, fix a weight $\lambda\in\mathfrak P$. The graded (left) \textit{$A$-cell module for $\lambda$} denoted $C^{\lambda}$ is
\[
C^{\lambda}=\bigoplus_{d\in\bZ}C^{\lambda}_d,
\]
where $C^{\lambda}_d$ is the free $R$-module with basis
\[
\{c^{\lambda}_s\mid\,s\in\mathcal T(\lambda)\,\text{ and }\,\deg s=d\}.
\]
The action is given by
\[
ac^{\lambda}_{s}=\sum_{u\in \mathcal T(\lambda)}r_{a}(s,u)c^{\lambda}_{u}.
\]
The coefficients should be the ones from Definition~\ref{defn-cellular}. The action of the involution $\mathrm{i}$ is defined in the obvious way. Note that these modules can be seen as a generalisation of the (graded) Specht modules for the symmetric group and the Hecke algebras of type A.

For fixed $\lambda\in\mathfrak P$ we can define
\[
D^{\lambda}=C^{\lambda}/\mathrm{rad}(C^{\lambda}).
\]
\end{defn}
One main point why (graded) cellular algebras are interesting is that they give a lot of information about the categories $\AMOD_{\mathrm{gr}}$ and $\AMOD$, if one knows a particular (graded) cell datum for it, i.e. although the existence of a (graded) cell datum is \textit{not} trivial, as we argue below, an \textit{explicit} (graded) cell datum is preferable. To be more precise, we recall a theorem of Hu and Mathas. The proof can be found in their paper~\cite{hm} in the Section 2.
\begin{thm}\label{thm-cellular2}
Suppose $K$ is a field and that $A$ is a graded cellular algebra over $K$ with graded cell datum $(\mathfrak{P},\mathcal T,C,\mathrm{i},\mathrm{deg})$. Let $\mathfrak P_0=\{\lambda\in\mathfrak P\mid D^{\lambda}\neq 0\}$. Then we have the following.
\begin{itemize}
\item[(a)] The graded $A$-module $D^{\lambda}$ is absolutely irreducible for all $\lambda\in\mathfrak{P}_0$.
\item[(b)] For all $\lambda,\mu\in\mathfrak{P}_0$ we have $D^{\lambda}\simeq D^{\mu}\{i\}$ for some $i\in\bZ$ iff $i=0$ and $\lambda=\mu$.
\item[(c)] A complete set of pairwise non-isomorphic, graded, simple $A$-modules is given by
\[
D=\{D^{\lambda}\{i\}\mid \lambda\in\mathfrak P_0\,\text{ and }\,i\in\bZ\}.
\]
\item[(d)] Let $\overline{\cdot}\colon\AMOD_{\mathrm{gr}}\to\AMOD$ denote the functor that forgets the grading. Then a complete set of pairwise non-isomorphic simple $A$-modules is given by
\[
\overline{D}=\{\overline{D}^{\lambda}\mid \lambda\in\mathfrak P_0\}.
\] 
\end{itemize} 
\end{thm}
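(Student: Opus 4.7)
The plan is to follow the classical Graham--Lehrer strategy for cellular algebras, carefully tracking degrees throughout so that the graded refinement of part (b) comes out automatically. First I would define, for each $\lambda \in \mathfrak{P}$, a symmetric bilinear form $\phi_\lambda$ on the cell module $C^\lambda$ by the rule
\[
c^\lambda_{st}\, c^\lambda_{uv} \equiv \phi_\lambda(t,u)\, c^\lambda_{sv} \pmod{A^{\rhd \lambda}},
\]
which makes sense because the cellular axiom forces the left-hand side to be a scalar multiple of $c^\lambda_{sv}$ modulo higher weights, with that scalar depending only on $(t,u)$. The involution $\mathrm{i}$ together with the property $\mathrm{i}(c^\lambda_{st}) = c^\lambda_{ts}$ forces $\phi_\lambda$ to be symmetric. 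The degree axiom $\deg(c^\lambda_{st}) = \deg(s) + \deg(t)$ implies that $\phi_\lambda(t,u) = 0$ unless $\deg(t) + \deg(u)$ equals a fixed $\lambda$-dependent constant, so $\phi_\lambda$ is homogeneous as a form on the graded module $C^\lambda$.

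Next I would verify the standard identification $\mathrm{rad}(C^\lambda) = \mathrm{rad}(\phi_\lambda)$, where the right-hand side is $\{x \in C^\lambda \mid \phi_\lambda(x,y)=0 \text{ for all } y\}$. Homogeneity of $\phi_\lambda$ ensures that $\mathrm{rad}(\phi_\lambda)$ is a graded $A$-submodule, so $D^\lambda$ inherits a grading. For part (a), I would show that if $D^\lambda \neq 0$ then any graded $A$-endomorphism $f$ of $D^\lambda$ must send each $c^\lambda_s$ to a scalar multiple of itself up to $\mathrm{rad}(\phi_\lambda)$, by using that $c^\lambda_{st}$ acts on $C^\lambda$ essentially as $\phi_\lambda(t,-)$ times a projection onto $c^\lambda_s$. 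This pins down $f$ as a scalar, giving $\mathrm{End}_A(D^\lambda) = K$; since $K$ is arbitrary here we get absolute irreducibility after base change.

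For part (b), suppose $D^\lambda \cong D^\mu\{i\}$ as graded $A$-modules with $\lambda, \mu \in \mathfrak{P}_0$. I would exploit the two-sided ideal filtration by the $A^{\rhd\nu}$: the element $c^\lambda_{st}$ acts non-trivially on $D^\lambda$ but any $c^\mu_{uv}$ with $\mu \rhd \lambda$ annihilates $C^\lambda$ and hence $D^\lambda$. Running this argument in both directions forces $\lambda = \mu$. Given $\lambda = \mu$, the graded shift $i$ is then determined by comparing the unique minimal (or maximal) degree in which $D^\lambda$ and $D^\lambda\{i\}$ are supported, using the fixed degree function on $\mathcal{T}(\lambda)$; this forces $i = 0$. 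For (c), I would show every graded simple $A$-module $L$ is a quotient of some $C^\lambda$ by choosing $\lambda$ maximal such that $A^{\rhd\lambda}$ does not annihilate $L$ (so that some $c^\lambda_{st}$ acts nonzero), and then showing the induced surjection factors through $D^\lambda$; combined with (a) and (b) this yields the list. Part (d) follows from (c) by the standard fact that forgetting the grading sends the family $\{D^\lambda\{i\}\}$ to $\{\overline{D}^\lambda\}$ and that these remain pairwise non-isomorphic because the $\lambda$-distinguishing argument in (b) used only the ungraded ideal structure.

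The main obstacle is the graded shift statement in (b): one must rule out a hidden symmetry of $D^\lambda$ under a nonzero internal shift. This is handled by the observation that the form $\phi_\lambda$ has a single homogeneous degree, so $D^\lambda$ sits in a bounded range of degrees determined intrinsically by $\lambda$ and its minimum and maximum nonzero graded pieces are uniquely determined, leaving no room for a nontrivial shift isomorphism. Everything else is essentially the ungraded Graham--Lehrer argument read with degree glasses on.
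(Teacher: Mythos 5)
Your proposal is correct and follows essentially the same route as the proof the paper defers to (Hu--Mathas, Section~2 of~\cite{hm}): introduce the Graham--Lehrer bilinear form $\phi_\lambda$ via $c^\lambda_{st}c^\lambda_{uv}\equiv\phi_\lambda(t,u)c^\lambda_{sv}\pmod{A^{\rhd\lambda}}$, observe that the degree axiom makes it homogeneous, identify $\operatorname{rad}(C^\lambda)$ with $\operatorname{rad}(\phi_\lambda)$, and then run the usual cellular arguments with degree bookkeeping. Two small imprecisions worth flagging, neither of which affects correctness: first, the ``fixed $\lambda$-dependent constant'' you invoke is actually just $0$ with the degree convention $\deg(c^\lambda_{st})=\deg s+\deg t$ of Definition~\ref{defn-cellular} (comparing degrees in the graded quotient $A/A^{\rhd\lambda}$ forces $\deg t+\deg u=0$ whenever $\phi_\lambda(t,u)\neq 0$); and second, for the $i=0$ part of (b) the homogeneity of $\phi_\lambda$ is more than you need --- finite-dimensionality of $D^\lambda$ alone forces $i=0$ by comparing the minimal (or maximal) supported degree of $D^\lambda$ with that of $D^\lambda\{i\}$. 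Your sketch of (c) should be phrased more carefully (one typically picks $\lambda$ so that $c^\lambda_{st}L\neq 0$ for some $s,t$ while $A^{\rhd\lambda}L=0$), but this is the standard argument and presents no obstacle.
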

\vskip0.2cm
We recall a theorem due to K\"onig and Xi~\cite{kx1}, i.e. that the property ``being cellular'' is Morita invariant over fields of characteristic not $2$. To be more precise.
\begin{thm}\label{thm-cellular3}
Let $K$ be a field with $\mathrm{char}(K)\neq 2$. Moreover, let $A$ be an $K$-algebra that is cellular with respect to an involution $\mathrm{i}$. Let $B$ be an $K$-algebra that is Morita equivalent to $A$. Then $B$ is a cellular algebra with respect to a \textit{suitable} involution $\mathrm{i}^{\prime}$.
\end{thm}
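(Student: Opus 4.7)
The plan is to reduce the statement to the two classical ``elementary operations'' that preserve cellularity (passage to matrix algebras and to corners by involution-fixed idempotents), and to insert the characteristic hypothesis at the single step where it is genuinely needed. Throughout, recall that Morita equivalence of $K$-algebras $A$ and $B$ is equivalent to the existence of an integer $n$ and a full idempotent $e \in M_n(A)$ (i.e.\ $M_n(A)\,e\,M_n(A) = M_n(A)$) such that $B \cong e M_n(A) e$. So it is enough to verify cellularity separately for $M_n(A)$ and for full corners $f M_n(A) f$ along involution-stable idempotents $f$.

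First I would extend the cellular datum $(\mathfrak P, \mathcal T, C, \mathrm i, \mathrm{deg})$ of $A$ to one on $M_n(A)$. Define $\mathrm i_n$ on $M_n(A)$ by $\mathrm i_n((a_{ij})) = (\mathrm i(a_{ji}))$; this is an involutive anti-automorphism. Keep the same weight poset $\mathfrak P$, enlarge the labels to $\mathcal T'(\lambda) = \mathcal T(\lambda) \times \{1,\dots,n\}$, and set
\[
c^{\lambda}_{(s,k),(t,l)} \;=\; E_{kl}\,c^{\lambda}_{st},
\]
where $E_{kl}$ are the standard matrix units. A direct computation using the multiplication of matrix units and the cell relations in $A$ shows that this is a cellular basis of $M_n(A)$ with respect to $\mathrm i_n$; the submodule spanned by the terms of weight strictly above $\lambda$ in $M_n(A)$ is precisely $M_n(A^{\rhd \lambda})$.

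Next, and this is where $\mathrm{char}(K)\neq 2$ enters in an essential way, I would replace the full idempotent $e \in M_n(A)$ implementing the Morita equivalence by an idempotent $e'$ with $\mathrm i_n(e') = e'$ which is equivalent to $e$ (meaning $M_n(A) e \cong M_n(A) e'$ as left modules, equivalently $e' M_n(A) e' \cong e M_n(A) e$ as algebras). The construction goes as follows. Both $e$ and $\mathrm i_n(e)$ are idempotents, and one shows that they are equivalent via a standard argument using the cell structure (indecomposable projective summands match under $\mathrm i_n$ because the involution permutes isomorphism classes of indecomposables). Then one finds an invertible $u \in M_n(A)$ with $u e u^{-1} = \mathrm i_n(e)$. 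Using $u$ and $\mathrm i_n(u)$, the element $v = u + \mathrm i_n(u)$ satisfies $\mathrm i_n(v) = v$, and provided $\mathrm{char}(K)\neq 2$ one can perturb $v$ (or work inside the local subalgebra $(e + \mathrm i_n(e)) M_n(A) (e + \mathrm i_n(e))$ and invert $2$) to produce the desired $\mathrm i_n$-fixed idempotent $e'$. This is the main obstacle, and it is where the hypothesis $\mathrm{char}(K)\neq 2$ is unavoidable; characteristic $2$ genuinely fails to permit such a symmetrisation in general.

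Finally, assuming $f = e'$ is $\mathrm i_n$-fixed and full, I would show that $f M_n(A) f$ inherits a cellular structure with involution the restriction $\mathrm i_n|_{fM_n(A)f}$. Take as putative cellular basis the non-zero elements among $\{f c^{\lambda}_{(s,k),(t,l)} f\}$, and pick a maximal $K$-linearly independent subset; the weight poset restricts to the set of $\lambda$ for which some such element is non-zero (with the induced order), and the involution acts by $f c f \mapsto f\,\mathrm i_n(c)\,f$, which is compatible by $\mathrm i_n(f)=f$. The cellularity axiom follows from the cellularity of $M_n(A)$ because $fM_n(A^{\rhd\lambda}) f$ is precisely the span of the higher-weight terms inside $fM_n(A)f$. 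Stringing the three steps together, $B \cong f M_n(A) f$ is cellular with respect to $\mathrm i' = \mathrm i_n|_{fM_n(A)f}$, proving the theorem.
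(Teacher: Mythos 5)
The paper does not give its own proof of this theorem; it simply recalls it as a result of K\"onig and Xi~\cite{kx1}. So the right comparison is with the proof in that reference. Your architecture is the same as theirs: reduce Morita equivalence to (a) passage to $M_n(A)$ and (b) passage to a full corner $eM_n(A)e$, and use $\mathrm{char}(K)\neq 2$ to replace $e$ with an equivalent idempotent fixed by the involution. Where you diverge is that you try to run the whole argument at the level of the Graham--Lehrer basis (the paper's Definition~\ref{defn-cellular}), whereas K\"onig and Xi work with the basis-free cell-ideal filtration (the paper's Definition~\ref{defn-cellular3}), precisely to sidestep the combinatorial bookkeeping that causes trouble in your last step. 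The paper explicitly flags this: ``their proof of Theorem~\ref{thm-cellular3} relies on a ring theoretical and basis free definition of the notion cellular algebra using a cell filtration of cell ideals.''

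Two genuine gaps remain in your version. First, you assert that $e$ and $\mathrm{i}_n(e)$ are equivalent idempotents because ``the involution permutes isomorphism classes of indecomposables.'' But $\mathrm{i}_n$ is an anti-automorphism and turns left modules into right modules; what you actually need is that the contravariant duality $M\mapsto \mathrm{Hom}_K(M,K)$ (with action twisted through $\mathrm{i}_n$) fixes the isomorphism classes of simple, hence of indecomposable projective, $M_n(A)$-modules. This is a nontrivial structural fact about cellular algebras (it comes from the self-duality of the simple heads of cell modules in Graham--Lehrer), not a generic statement about algebras with involution, and it deserves to be stated and proved, not waved at. Related, the symmetrisation $v=u+\mathrm{i}_n(u)$ need not be invertible, and ``one can perturb $v$'' is exactly where K\"onig and Xi spend their effort; that step is the entire content of the $\mathrm{char}\neq 2$ hypothesis and cannot be left as an aside.

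Second, and more seriously, the final step is not a proof of cellularity. Picking a maximal linearly independent subset of $\{f\, c^{\lambda}_{(s,k),(t,l)}\, f\}$ gives a basis of $fM_n(A)f$ but not a \emph{cellular} basis: Definition~\ref{defn-cellular} requires the index set to have the form $\coprod_{\lambda} \mathcal T(\lambda)\times\mathcal T(\lambda)$ with the involution acting by swapping the two coordinates, and a maximal independent subset has no reason to be of this ``square'' shape, nor to be closed under $\mathrm{i}_n$ in the required way. This is the precise reason why K\"onig and Xi pass to the equivalent basis-free characterisation via cell ideals: $fJf$ is naturally an ideal of $fM_n(A)f$ when $J\lhd M_n(A)$, and the identification $fJf\cong(f\Delta)\otimes_K \mathrm{i}(f\Delta)$ can be checked without ever choosing a basis. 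If you want to salvage your approach at the basis level, you would need to produce, for each $\lambda$, a genuine index set $\mathcal T''(\lambda)$ and a map $\mathcal T''(\lambda)\times\mathcal T''(\lambda)\to fM_n(A)f$ satisfying the axioms, which is essentially equivalent to reproving the equivalence in Theorem~\ref{thm-cellular4}.
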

\vskip0.2cm
It should be noted that their proof of Theorem~\ref{thm-cellular3} relies on a ring theoretical and basis free definition of the notion cellular algebra using a \textit{cell filtration of cell ideals}. It is well-known that this definition is equivalent to the original one given by Graham and Lehrer, see~\cite{kx2}. To be more precise, we recall the following.
\begin{defn}\label{defn-cellular3}
Suppose $A$ is a graded, free algebra over $R$ of finite rank and $\mathrm{i}\colon A\to A$ is an involution. A two sided ideal $J\subset A$ that is fixed by the involution $\mathrm{i}$ is called a \textit{cell ideal} iff there exists a left ideal $\Delta\subset J$ that is finitely generated and free as an $R$-module together with an isomorphism of $A$-bimodules $\alpha\colon J\to\Delta\otimes_R\mathrm{i}(\Delta)$ such that the following diagram commutes.
\[
\begin{xy}
  \xymatrix{
      J \ar[r]^/-.95em/{\alpha} \ar[d]_{\mathrm{i}}  &   \Delta\otimes_R\mathrm{i}(\Delta) \ar[d]^{x\otimes y\mapsto \mathrm{i}(y)\otimes \mathrm{i}(x)} \\
      J \ar[r]_/-.95em/{\alpha}         &   \Delta\otimes_R\mathrm{i}(\Delta)   
  }
\end{xy}
\]
The algebra $A$ is called \textit{cellular} with respect to $\mathrm{i}$ if there is a finite chain of two-sided ideals (all fixed by $\mathrm{i}$), i.e. $0=J_0\subset J_1\subset\cdots\subset J_{n-1}\subset J_n=A$, such that $J_k/J_{k-1}$ is a cell ideal of $A/J_{k-1}$ with respect to $\mathrm{i}$ for all $1\leq k\leq n$.
\end{defn}
\begin{thm}\label{thm-cellular4}
An $R$-algebra $A$ is (ungraded) cellular in the sense of Definition~\ref{defn-cellular} iff it is (ungraded) cellular in the sense of Definition~\ref{defn-cellular3}.
\end{thm}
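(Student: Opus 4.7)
The plan is to prove both implications separately, with the main conceptual content being a translation between a concrete basis and an abstract filtration.

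For the forward direction, suppose $A$ carries a cell datum $(\mathfrak{P},\mathcal T,C,\mathrm{i})$ as in Definition~\ref{defn-cellular}. First I would fix a linear extension $\lambda_1\rhd\lambda_2\rhd\cdots\rhd\lambda_n$ of the weight poset, and set
\[
J_k=\mathrm{span}_R\{c^{\lambda_i}_{st}\mid i\leq k,\;s,t\in\mathcal T(\lambda_i)\}.
\]
The multiplication axiom of Definition~\ref{defn-cellular} shows that each $J_k$ is a left ideal, and applying the involution $\mathrm{i}$ (using $\mathrm{i}(c^{\lambda}_{st})=c^{\lambda}_{ts}$) gives the analogous rule for right multiplication, so $J_k$ is a two-sided ideal stable under $\mathrm{i}$. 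To show $J_k/J_{k-1}$ is a cell ideal of $A/J_{k-1}$, fix $t_0\in\mathcal T(\lambda_k)$ and let $\Delta_k$ be the left ideal in $J_k/J_{k-1}$ spanned by $\{\bar c^{\lambda_k}_{s,t_0}\}$; then the map $\alpha_k(\bar c^{\lambda_k}_{st})=\bar c^{\lambda_k}_{s,t_0}\otimes_R\mathrm{i}(\bar c^{\lambda_k}_{t,t_0})$ is a bimodule isomorphism, and the commutative square of Definition~\ref{defn-cellular3} is exactly the relation $\mathrm{i}(c^{\lambda_k}_{st})=c^{\lambda_k}_{ts}$.

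For the converse, suppose $0=J_0\subset J_1\subset\cdots\subset J_n=A$ is a chain realising $A$ as cellular in the sense of Definition~\ref{defn-cellular3}, with associated data $(\Delta_k,\alpha_k)$. I would take $\mathfrak{P}=\{\lambda_1,\ldots,\lambda_n\}$ ordered by $\lambda_i\rhd\lambda_j\Leftrightarrow i<j$, choose an $R$-basis $\{c^k_s\mid s\in\mathcal T(\lambda_k)\}$ of $\Delta_k$, and lift each $\alpha_k^{-1}(c^k_s\otimes \mathrm{i}(c^k_t))\in J_k/J_{k-1}$ to an element $c^{\lambda_k}_{st}\in J_k$. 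These lifts, together with a descending induction on $k$, produce a free $R$-basis of $A$. The condition $\mathrm{i}(c^{\lambda_k}_{st})=c^{\lambda_k}_{ts}$ is read off the commutative square defining a cell ideal. For the multiplication rule, I would use that the bimodule structure on $\Delta_k\otimes_R\mathrm{i}(\Delta_k)$ forces left multiplication by $a\in A$ to act only on the first tensor factor modulo $J_{k-1}$, giving
\[
a\cdot c^{\lambda_k}_{st}\equiv\sum_{u\in\mathcal T(\lambda_k)}r_a(s,u)\,c^{\lambda_k}_{ut}\pmod{J_{k-1}},
\]
with coefficients $r_a(s,u)$ independent of $t$, which is precisely the condition in Definition~\ref{defn-cellular}.

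The main obstacle is the backward direction: verifying that the structure constants $r_a(s,u)$ obtained from the bimodule isomorphism $\alpha_k$ really are independent of $t$, and that the same constants appear regardless of which $t_0$ was used to identify $\Delta_k$ inside $J_k/J_{k-1}$. This independence is forced by the tensor product structure of $\Delta_k\otimes_R\mathrm{i}(\Delta_k)$ as an $(A,A)$-bimodule in which the left factor carries the left action and the right factor carries the right action; the delicate point is combining this with the commutative involution square to check simultaneously that the rule is well-defined and that $\mathrm{i}$ interchanges $s$ and $t$ on the nose. Once this is established, the poset $\mathfrak{P}$ and cell datum are read off, and the equivalence is complete.
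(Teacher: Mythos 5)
The paper itself does not prove this theorem; it simply cites K\"onig and Xi~\cite{kx2}, so there is no argument in the paper to compare against. Your overall strategy (ideals from the cellular basis one way; lifting a basis of $\Delta_k\otimes_R\mathrm{i}(\Delta_k)$ through the filtration the other way) is the standard one and the right one. The forward direction as you sketch it is correct: the span $J_k$ of the $c^{\lambda_i}_{st}$ with $i\leq k$ is a two-sided $\mathrm{i}$-stable ideal because $A^{\rhd\lambda_i}\subseteq J_{i-1}$, and the map $\bar c^{\lambda_k}_{st}\mapsto\bar c^{\lambda_k}_{s,t_0}\otimes\bar c^{\lambda_k}_{t_0,t}$ does give the required bimodule isomorphism commuting with $\mathrm{i}$.

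There is, however, a gap in the backward direction, and it is not the one you flag. The ``main obstacle'' you worry about, that the structure constants $r_a(s,u)$ are independent of $t$, is actually the easy part: the left $A$-action on $\Delta_k\otimes_R\mathrm{i}(\Delta_k)$ is by definition on the first tensor factor only, so $a\cdot(c^k_s\otimes\mathrm{i}(c^k_t))=\sum_u r_a(s,u)\,c^k_u\otimes\mathrm{i}(c^k_t)$ with $r_a(s,u)$ manifestly depending only on $a$ and $s$. The real difficulty is the sentence ``The condition $\mathrm{i}(c^{\lambda_k}_{st})=c^{\lambda_k}_{ts}$ is read off the commutative square defining a cell ideal.'' The commutative square lives in $J_k/J_{k-1}$, so it only tells you $\mathrm{i}(c^{\lambda_k}_{st})\equiv c^{\lambda_k}_{ts}\pmod{J_{k-1}}$, not an equality in $J_k$. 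For $s\neq t$ this is repairable: choose the lift $c^{\lambda_k}_{st}$ arbitrarily for (say) $s<t$ in some total order on $\mathcal T(\lambda_k)$ and set $c^{\lambda_k}_{ts}:=\mathrm{i}(c^{\lambda_k}_{st})$, which is then also a valid lift. For $s=t$ you need a lift of $\bar c^{\lambda_k}_{ss}$ that is genuinely fixed by $\mathrm{i}$, and this does not follow formally: if $x$ is any lift then $\mathrm{i}(x)-x=j_0\in J_{k-1}$ satisfies $\mathrm{i}(j_0)=-j_0$, and you must solve $(1-\mathrm{i})(j)=j_0$ inside $J_{k-1}$. This requires either $2\in R^*$, or an inductive argument using the already-constructed cellular basis of $J_{k-1}$ (expand $j_0$ in it, observe that $\mathrm{i}(j_0)=-j_0$ forces $2a^{\mu}_{ss}=0$ for the diagonal coefficients, and use that $R$ is an integral domain of characteristic $\neq 2$ to conclude $a^{\mu}_{ss}=0$). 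Either way, the step needs to be spelled out; in the form stated, the verification of $\mathrm{i}(c^{\lambda_k}_{st})=c^{\lambda_k}_{ts}$ simply does not follow from the quoted square.
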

\vskip0.2cm
The Definition~\ref{defn-cellular3} can be copied and applied in the graded setting, too. Moreover, a slight change of the proof of the equivalence given in~\cite{kx2} shows that one can give an equivalent basis free definition of Definition~\ref{defn-cellular} using \textit{cell filtration of graded cell ideals}. Hence, using a result, i.e. Theorem 5.4 and Corollary 5.5, of Gordon~\cite{gordon}, one obtains that \textit{graded} Morita equivalence between two algebras over a field of characteristic not $2$, one a cellular algebra with a non-trivial grading, implies the existence of a non-trivially \textit{graded} cellular basis for the other in the sense of Definition~\ref{defn-cellular}, although neither the bases elements nor the involution or grading are \textit{explicit}.

The assumption that $\mathrm{char}(K)\neq 2$ is necessary as K\"onig and Xi showed and the following example (given in Section 7 in~\cite{kx1}) shows that one has to be careful with the involution.
\begin{ex}\label{ex-cellular3}
Let $K$ be field with $\mathrm{char}(K)\neq 2$ and let $A$ be the $K$-algebra $A=\Mat_{2\times 2}(K)$. Define two involutions $\mathrm{i},\mathrm{i}^{\prime}$ by
\[
\mathrm{i}\colon\begin{pmatrix} a & b \\ c & d \end{pmatrix}\mapsto\begin{pmatrix} d & b \\ c & a \end{pmatrix}\;\;\text{ and }\;\;\mathrm{i}^{\prime}\colon\begin{pmatrix} a & b \\ c & d \end{pmatrix}\mapsto\begin{pmatrix} d & -b \\ -c & a \end{pmatrix}.
\]
One can check that $A$ is cellular with respect to $\mathrm{i}$ if one sets the corresponding cell module to be
\[
\Delta=\left\{\begin{pmatrix} a & b \\ c & d \end{pmatrix}\in\Mat_{2\times 2}(K)\mid a=b,c=d\right\}.
\]
But $A$ is not cellular with respect to $\mathrm{i}^{\prime}$, since $A$ is simple and therefore its own cell ideal which would imply that there exists a cell module $\Delta$ such that
\[
A\simeq \Delta\otimes_K\mathrm{i}^{\prime}(\Delta),
\]
but an easy calculation, using the conditions given in Definition~\ref{defn-cellular3}, shows that this is impossible.
\end{ex}
\subsection{Filtered and graded algebras and modules}\label{sec-appendix}
In this section (note that this was the appendix in~\cite{mpt}), we have collected some basic facts about filtered algebras, 
the associated graded algebras and the idempotents in both. Our main sources 
are~\cite{sj} and~\cite{sr}. 
In this section, everything is defined over an arbitrary commutative, 
associative, unital ring $R$.  

Let $A$ be a finite dimensional, 
associative, unital $R$-algebra together with an increasing filtration of $R$-submodules
\[
\{0\}\subset A_{-p}\subset A_{-p+1}\subset\cdots\subset A_0\subset \cdots 
\subset A_{m-1}\subset A_m=A.
\]
Actually, for any $t\in \mathbb{Z}$ we have a subspace $A_t$, where 
we extend the filtration above by 
\[A_t=
\begin{cases}
\{0\},& \text{if}\quad t<-p,\\
A,&\text{if}\quad p\geq m.
\end{cases}
\]
Note that in the language of~\cite{sj}, such a filtration is \textit{discrete, 
separated, exhaustive and complete}. If $1\in A_0$ and the multiplication 
satisfies $A_iA_j\subseteq A_{i+j}$, we say that $A$ is an associative, unital, 
\textit{filtered algebra}. The \textit{associated graded algebra} is defined by 
\[
E(A)=\bigoplus_{i\in\mathbb{Z}} A_i/A_{i-1},
\] 
and is also associative and unital. Although $A$ and $E(A)$ are isomorphic 
$R$-modules, they are not isomorphic as algebras. 

A finite dimensional, \textit{filtered $A$-module} is a finite dimensional,
unitary $A$-module $M$ with an increasing filtration of $R$-submodules 
\[
\{0\}\subset M_{-q}\subset M_{-q+1}\subset \cdots \subset M_t=M,
\]
such that $A_iM_j\subseteq M_{i+j}$, for all $i,j\in\mathbb{Z}$, after 
extending the finite filtration to a $\mathbb{Z}$-filtration as above. 

We define the $t$-fold \textit{suspension} $M\{t\}$ of $M$, which 
has the same underlying $A$-module structure, but a new filtration defined by 
\[
M\{t\}_r=M_{r+t}.
\]
Given a filtered $A$-module $M$, the \textit{associated graded module} 
is defined by  
\[
E(M)=\bigoplus_{i\in\mathbb{Z}} M_i/M_{i-1}.
\]
An $A$-module map 
$f\colon M\to N$ is said to \textit{preserve the filtrations} if 
$f(M_i)\subseteq N_i$, for all $i\in\mathbb{Z}$. Any such map 
$f\colon M\to N$ induces a grading preserving $E(A)$-module map 
$E(f)\colon E(M)\to E(N)$ in the obvious way. 

This way, we get a functor 
\[
E\colon A\text{-}\mathrm{\textbf{Mod}}_{\mathrm{fl}} \to E(A)\text{-}\mathrm{\textbf{Mod}}_{\mathrm{gr}},
\]
where $A\text{-}\mathrm{\textbf{Mod}}_{\mathrm{fl}}$ is the category of finite dimensional, filtered 
$A$-modules and filtration preserving $A$-module maps and $E(A)\text{-}\mathrm{\textbf{Mod}}_{\mathrm{gr}}$ 
is the category of finite dimensional, graded $E(A)$-modules and grading preserving $E(A)$-module maps. 

Recall that $A\text{-}\mathrm{\textbf{Mod}}_{\mathrm{fl}}$ is not an abelian category, e.g. 
the identity map $M\to M\{1\}$ is a filtration preserving bijective $A$-module 
map, but does not have an inverse in $A\text{-}\mathrm{\textbf{Mod}}_{\mathrm{fl}}$. In order to 
avoid such complications, one can consider a subcategory with fewer morphisms. 
An $A$-module map $f\colon M\to N$ is called \textit{strict} if 
\[
f(M_i)=f(M)\cap N_i
\]
holds, for all $i\in\mathbb{Z}$. 
Let $A\text{-}\mathrm{\textbf{Mod}}_{\mathrm{st}}$ be the subcategory of filtered $A$-modules 
and strict $A$-module homomorphisms. 

\begin{lem}
The restriction of $E$ to $A\text{-}\mathrm{\textbf{Mod}}_{\mathrm{st}}$ is exact. 
\end{lem}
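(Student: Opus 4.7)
The plan is to verify directly that $E$ preserves short exact sequences. Given a short exact sequence
\[
0 \to L \xrightarrow{f} M \xrightarrow{g} N \to 0
\]
in $A\text{-}\mathrm{\textbf{Mod}}_{\mathrm{st}}$, I would show that the induced sequence
\[
0 \to E(L) \xrightarrow{E(f)} E(M) \xrightarrow{E(g)} E(N) \to 0
\]
is exact in $E(A)\text{-}\mathrm{\textbf{Mod}}_{\mathrm{gr}}$, degree by degree. The key input throughout is strictness, which says exactly that the filtration on the image of each map agrees with the subspace filtration inherited from the ambient module.

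First I would check injectivity of $E(f)$: if $\overline{x} \in L_i/L_{i-1}$ satisfies $f(x) \in M_{i-1}$, then $f(x) \in f(L)\cap M_{i-1} = f(L_{i-1})$ by strictness of $f$, and since $f$ is injective this forces $x \in L_{i-1}$, so $\overline{x}=0$. Next I would check surjectivity of $E(g)$: for $\overline{z} \in N_i/N_{i-1}$, strictness of $g$ combined with $g(M)=N$ gives $g(M_i) = N\cap N_i = N_i$, so $z$ lifts to some $m \in M_i$ whose class maps to $\overline{z}$. The inclusion $\mathrm{im}\,E(f) \subseteq \ker E(g)$ is immediate from $E(g)\circ E(f)=E(g\circ f)=0$; for the reverse inclusion, given $\overline{m} \in M_i/M_{i-1}$ with $g(m) \in N_{i-1}$, strictness of $g$ yields $m' \in M_{i-1}$ with $g(m')=g(m)$, so $m-m' = f(\ell)$ for some $\ell \in L$, and strictness of $f$ (together with $m-m' \in M_i$) forces $\ell \in L_i$, giving $E(f)(\overline{\ell}) = \overline{m}$.

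Two routine but worthwhile remarks: the argument is local in the filtration degree $i$, so the discreteness/separatedness/exhaustiveness hypotheses on the filtration enter only to ensure that each $M_i/M_{i-1}$ is a well-defined finite-dimensional piece of $E(M)$; and the $A$-action plays essentially no role beyond ensuring that the relevant subobjects are $A$-submodules, so the proof is really a statement about strict morphisms of filtered $R$-modules that happens to respect the $E(A)$-structure automatically.

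The only mildly subtle point is the third step, where one must be careful to produce the preimage $\ell$ inside $L_i$ rather than merely in $L$; this is precisely where strictness of $f$ (not just of $g$) is indispensable. If one drops strictness of either map, easy counterexamples built from the suspension isomorphism $M \to M\{1\}$ show that $E$ fails to be exact, which indicates that no further finesse is needed beyond the direct diagram chase above.
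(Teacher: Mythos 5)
Your proof is correct. The paper states this lemma without proof (it is presumably treated as standard, cf.~Sj\"{o}din~\cite{sj}), so there is no authorial argument to compare against, but your degree-by-degree diagram chase is exactly the expected one: strictness of $f$ gives injectivity of $E(f)$ and produces the preimage $\ell\in L_i$ in the kernel-computation, strictness of $g$ gives surjectivity of $E(g)$ and the lift $m'\in M_{i-1}$, and additivity of $E$ handles the middle inclusion. Your closing remark that the suspension isomorphism $M\to M\{1\}$ is the basic obstruction when strictness is dropped is also the right sanity check, and correctly identifies why the paper restricts to $A\text{-}\mathrm{\textbf{Mod}}_{\mathrm{st}}$ rather than all filtration-preserving maps.
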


We also need to recall a simple result about bases. 
A basis $\{x_1,\ldots,x_n\}$ of a filtered algebra $A$ is called \textit{homogeneous} if, for each $1\leq j\leq n$, there exists an 
$i\in\mathbb{Z}$ such that $x_j\in A_i\backslash A_{i-1}$. In that case, 
$\{\overline{x}_1,\ldots,\overline{x}_n\}$ defines a homogeneous basis 
of $E(A)$, where $\overline{x_j}\in A_i/A_{i-1}$. In order to avoid cluttering 
our notation, we always write $\overline{x}_j$ and then specify in which 
subquotient we take the equivalence class by saying that it belongs to 
$A_i/A_{i-1}$.

Given a homogeneous basis $\{y_1,\ldots,y_n\}$ of the associated graded $E(A)$, we say that 
a homogeneous basis $\{x_1,\ldots,x_n\}$ of $A$ \textit{lifts} 
$\{y_1,\ldots,y_n\}$ if $\overline{x}_j=y_j\in A_i/A_{i-1}$ holds, for each $1\leq j\leq n$ 
and the corresponding $i\in\mathbb{Z}$. 
The result in the following lemma is well-known. However, we could not find 
a reference in the literature, so we provide a short proof here. 
\begin{lem}
\label{lem:homogbasis}
Let $A$ be a finite dimensional, filtered algebra and 
$\{y_1,\ldots,y_n\}$ be a homogeneous basis of $E(A)$. Then 
there is a homogeneous basis $\{x_1,\ldots,x_n\}$ of $A$ 
which lifts $\{y_1,\ldots,y_n\}$. 
\end{lem}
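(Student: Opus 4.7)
The plan is to lift each basis element of $E(A)$ to a representative in the appropriate filtration layer of $A$, and then prove that the resulting collection is a basis by an induction on the filtration degree. For each $j$, let $i_j \in \mathbb{Z}$ be the unique integer such that $y_j \in A_{i_j}/A_{i_j - 1}$. Since the canonical projection $A_{i_j} \to A_{i_j}/A_{i_j - 1}$ is surjective, choose any preimage $x_j \in A_{i_j}$ of $y_j$. By construction $\overline{x}_j = y_j$, and since $y_j \ne 0$ we automatically have $x_j \in A_{i_j} \setminus A_{i_j - 1}$, so $\{x_1, \ldots, x_n\}$ is a homogeneous family that lifts $\{y_1, \ldots, y_n\}$. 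The only real work is verifying that this family is a basis of $A$.

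To that end, I would prove the following stronger statement by induction on $i \in \mathbb{Z}$: for every $i$, the set $B_i = \{x_j : i_j \le i\}$ is an $R$-basis of $A_i$. Since the filtration is discrete and separated, for $i < -p$ we have $A_i = 0$ and $B_i = \emptyset$, which gives the base case. For the inductive step, assume $B_{i-1}$ is a basis of $A_{i-1}$ and consider the short exact sequence of $R$-modules
\[
0 \longrightarrow A_{i-1} \longrightarrow A_i \longrightarrow A_i/A_{i-1} \longrightarrow 0.
\]
By hypothesis, $\{y_j : i_j = i\} = \{\overline{x}_j : i_j = i\}$ is a basis of $A_i/A_{i-1}$. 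Given $a \in A_i$, write $\overline{a} = \sum_{i_j = i} c_j \overline{x}_j$ in $A_i/A_{i-1}$; then $a - \sum_{i_j = i} c_j x_j \in A_{i-1}$ and the inductive hypothesis expresses the difference in terms of $B_{i-1}$, showing $B_i$ spans $A_i$. For independence, any relation $\sum_{i_j \le i} c_j x_j = 0$ projects to $\sum_{i_j = i} c_j \overline{x}_j = 0$ in $A_i/A_{i-1}$, forcing $c_j = 0$ for $i_j = i$ by independence of the $\overline{x}_j$ in the quotient; the remaining relation in $A_{i-1}$ then vanishes by the inductive hypothesis. Since the filtration is exhaustive with $A_m = A$, taking $i = m$ yields that $B_m = \{x_1, \ldots, x_n\}$ is a basis of $A$.

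There is no real obstacle here: the statement is a clean consequence of the fact that a compatible family of bases of successive quotients in a finite filtration always lifts to a basis of the total space, and the discreteness and exhaustiveness of the filtration reduce the induction to finitely many nontrivial steps. The only point that requires any care is keeping track of the indexing by the degree $i_j$ and noting that $x_j \notin A_{i_j - 1}$ follows automatically from $y_j \ne 0$, so that the lift is genuinely homogeneous in the sense defined before the lemma.
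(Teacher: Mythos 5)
Your proof is correct and follows essentially the same strategy as the paper's: lift each $y_j$ to a representative $x_j \in A_{i_j}$, and argue by induction on the filtration degree. One small point in your favour: you explicitly establish that the family spans $A$ via the short exact sequence $0 \to A_{i-1} \to A_i \to A_i/A_{i-1} \to 0$, whereas the paper's proof only verifies linear independence and leaves spanning implicit; since the section is stated over an arbitrary commutative ring $R$, linear independence of $n$ elements in a free rank-$n$ module does not by itself give a basis, so your added step is a genuine (if minor) tightening of the argument.
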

\begin{proof}
We prove the lemma by induction with respect to the filtration degree $q$. 
Suppose $A_q=0$, for all $q<-p$, and $A_q=A$, for all $q\geq m$. 
Then $E(A_{-p})=A_{-p}$. Since $\{y_1,\ldots,y_n\}$ is a homogeneous basis of $E(A)$, 
a subset of this basis forms a basis of $A_{-p}$. 

For each $-p+1\leq q\leq m$, choose elements in $A_q$ which lift the homogeneous subbasis 
of $E(A_q)$. We claim that the union of the sets of these elements, for all $-p\leq q\leq m$, 
form a homogeneous basis of $A$ which lifts $\{y_1,\ldots,y_n\}$. Call it $\{x_1,\ldots,x_n\}$. 
By definition, the $x_j$ lift the $y_j$, for all $1\leq j\leq n$. It remains to show that 
the $x_j$ are all linearly independent. This is true for $q=-p$, as shown above. 

Let $q>-p$ and suppose that the claim holds for $\{x_1,\ldots,x_{m_{q-1}}\}$, the subset of $\{x_1,\ldots,x_n\}$ 
which belongs to $A_{q-1}$. Let 
\[
\{x_1,\ldots, x_{m_q}\}=\{x_1,\ldots,x_{m_{q-1}}\}\cup \{x_{m_{q-1}+1},\ldots,x_{m_q}\}
\]
be the subset belonging to $A_q$. Suppose that  
\begin{equation}
\label{eqn:linind}
\sum_{j=1}^{m_q}\lambda_jx_j=0,
\end{equation}
with $\lambda_j\in R$. 
Then we have
\[
\sum_{j=1}^{m_q}\lambda_j\overline{x}_j=\sum_{j=m_{q-1}+1}^{m_q}\lambda_j\overline{x}_j=\sum_{j=m_{q-1}+1}^{m_q}\lambda_jy_j=0 \in A_q/A_{q-1}.
\]
By the linear independence of the $y_j$, this implies that $\lambda_j=0$, for all $m_{q-1}+1\leq j\leq m_q$. Thus, the linear 
combination in~\eqref{eqn:linind} becomes 
\[
\sum_{j=1}^{m_{q-1}}\lambda_jx_j=0.
\]
By induction, this implies that $\lambda_j=0$, for all $1\leq j\leq m_{q-1}$. 

This shows that $\lambda_j=0$, for all $1\leq j\leq n$, so the $x_j$ 
are linearly independent. 
\end{proof}
For a proof of the following proposition, see for example 
Proposition 1 in the appendix of~\cite{sr}.
\begin{prop}
\label{prop:Srid}
Let $M$ and $N$ be filtered $A$-modules and $f\colon M\to N$ a filtration 
preserving $A$-linear map. If $E(f)\colon E(M)\to E(N)$ is an isomorphism, 
then $f$ is an isomorphism (and therefore strict too). 
\end{prop}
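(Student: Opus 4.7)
The plan is to exploit the fact that the filtrations are discrete, separated, exhaustive and complete (so in particular bounded below on any finite dimensional module), and to transfer information between $f$ and $E(f)$ one filtration degree at a time. I would split the argument into three steps: injectivity of $f$, surjectivity of $f$, and strictness of $f$.

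For injectivity, I would argue by contradiction. Suppose $m\in M$ is nonzero with $f(m)=0$. Because the filtration on $M$ is exhaustive and separated, there is a unique smallest integer $i$ with $m\in M_i$; since $m\ne 0$, its class $\overline{m}\in M_i/M_{i-1}$ is nonzero. Because $f$ preserves filtrations, $f(m)\in N_i$, and $E(f)(\overline{m})$ is the class of $f(m)=0$ in $N_i/N_{i-1}$, hence zero. This contradicts the injectivity of $E(f)$.

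For surjectivity, I would use a descending induction on the filtration degree. Let $n\in N$ lie in $N_i\setminus N_{i-1}$ (or $n=0$, in which case there is nothing to do). Since $E(f)$ is surjective, there exists $\overline{m}_i\in M_i/M_{i-1}$ with $E(f)(\overline{m}_i)=\overline{n}\in N_i/N_{i-1}$; lift this to some $m_i\in M_i$. Then $n-f(m_i)\in N_{i-1}$. Iterating this procedure produces elements $m_i\in M_i$, $m_{i-1}\in M_{i-1}$, $\dots$ with $n-f(m_i+m_{i-1}+\cdots+m_{j})\in N_{j-1}$ at each stage. Because $N$ is finite dimensional and the filtration is discrete and separated, $N_j=0$ for $j$ sufficiently small; hence the procedure terminates with $n=f(m_i+m_{i-1}+\cdots+m_{j_0})$. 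The key observation, which I will highlight, is that this $m=m_i+\cdots+m_{j_0}$ actually lies in $M_i$, and therefore $f(M_i)=N_i$ for every $i$. This simultaneously establishes surjectivity of $f$ and strictness, since $f(M_i)=N_i=f(M)\cap N_i$.

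Combining the three steps, $f$ is a bijective, strict, filtration preserving $A$-linear map, which is the desired conclusion. The only delicate point, and the step I expect to be the main obstacle in a careful write-up, is verifying that the descending induction in the surjectivity argument terminates and produces an element in the correct filtration degree; this is exactly where the hypothesis that the filtrations are discrete, separated, and bounded below on finite dimensional modules is indispensable. Everything else is essentially a routine unpacking of the definitions of $E(M)$, $E(N)$, and $E(f)$.
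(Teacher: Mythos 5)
Your proof is correct. Note that the paper does not actually include its own proof of Proposition~\ref{prop:Srid}: it simply refers the reader to Proposition 1 in the appendix of Sridharan's article~\cite{sr}. What you have written is a clean, self-contained version of the standard argument (which is in spirit what Sridharan's proof does): injectivity of $f$ from injectivity of the leading term of a hypothetical nonzero kernel element, and surjectivity by a descending induction that peels off one filtration layer at a time, using boundedness below of the filtration to guarantee termination. Your observation that the element produced by the induction automatically lies in $M_i$ is exactly what gives $f(M_i)=N_i$ and hence strictness (and thereby a filtration-preserving inverse, so a genuine isomorphism of filtered modules, not merely a bijection). There is no gap; the only thing worth stressing for a polished write-up is the place where finiteness/discreteness of the filtration is invoked, which you have already flagged.
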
 
The most important fact about filtered, projective modules and their 
associated graded, projective modules, that 
we need in Section~\ref{sec-web}, is Theorem 6 in~\cite{sj}. Note that these projective modules are the projective objects in the category $A\text{-}\mathrm{\textbf{Mod}}_{\mathrm{st}}$.
\begin{thm}[Sj\"{o}din]
\label{thm:sjodin}
Let $P$ be a finite dimensional, graded, projective $E(A)$-module. Then 
there exists a finite dimensional, filtered, projective $A$-module $P'$, 
such that $E(P')=P$. Moreover, if $M$ is a finite dimensional, filtered, 
$A$-module, then any degree preserving $E(A)$-module map 
$P\to E(M)\{t\}$, for some grading shift $t\in\mathbb{Z}$, lifts to a 
filtration preserving $A$-module map $P'\to M\{t\}$. 
\end{thm}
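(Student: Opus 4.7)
The plan is to break the proof into two parts corresponding to the two statements, and in each case to reduce from projectives to free modules by a standard idempotent-lifting argument.

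For the first statement, I would first handle the free case: if $P = \bigoplus_{i=1}^n E(A)\{t_i\}$ is free and graded, then the free filtered module $F' = \bigoplus_{i=1}^n A\{t_i\}$ manifestly satisfies $E(F') = P$ and is projective in $A\text{-}\mathrm{\textbf{Mod}}_{\mathrm{st}}$. For the general case, choose a finite dimensional free graded $E(A)$-module $F$ together with a degree-preserving idempotent $e \in \mathrm{End}_{E(A)}(F)_0$ such that $P = eF$. Let $F'$ be the free filtered $A$-module lifting $F$ and set $B = \mathrm{End}_A(F')$, filtered in the obvious way by $B_i = \{f : f(F'_j) \subseteq F'_{i+j}\text{ for all } j\}$. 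Because $F$ is finitely generated, one checks that $E(B) \cong \mathrm{End}_{E(A)}(F)$ as graded algebras, so $e$ may be regarded as an element of $B_0/B_{-1}$. I would then lift $e$ to an idempotent $e' \in B_0$ and define $P' = e'F'$, which is a filtered direct summand of $F'$ and hence projective in the filtered sense, with $E(P') = eF = P$.

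The lifting step is the main obstacle and the only genuinely non-formal point. It reduces to showing that $B_{-1}$ is a nil two-sided ideal, after which the classical idempotent-lifting lemma from ring theory applies. To see that $B_{-1}$ is nil, observe that if $f \in B_{-1}$ then $f(F'_j) \subseteq F'_{j-1}$ for every $j$, so iterating gives $f^k(F'_j) \subseteq F'_{j-k}$. Since the filtration on $F'$ is discrete, separated and bounded (say $F'_{-p} = 0$ and $F'_m = F'$), for $k$ large enough $f^k = 0$. Therefore every element of $B_{-1}$ is nilpotent, $B_{-1}$ is a nil ideal of $B_0$, and the standard argument (choose any lift $e_0 \in B_0$ of $e$, note $e_0^2 - e_0 \in B_{-1}$, and correct inductively by polynomials in $e_0$) produces the required $e' \in B_0$.

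For the second statement, I would argue directly using the projectivity of $P'$ in the filtered category or, more concretely, by reduction to the free case again. Present $P' = e'F'$ as above and let $f \colon P \to E(M)\{t\}$ be degree preserving. Composing with the projection $F \twoheadrightarrow P$, it suffices to lift a degree-preserving map $F \to E(M)\{t\}$ to a filtration-preserving map $F' \to M\{t\}$, because composition with $e'$ will then restrict the lift to $P'$ and $E$ of that restriction will recover $f$. On each homogeneous generator of $F'$ of filtration degree $t_i$, the image under the graded map lies in $(E(M)\{t\})_{t_i} = M_{t_i+t}/M_{t_i+t-1}$, which I lift arbitrarily to an element of $M_{t_i+t}$; extending $A$-linearly produces a filtration-preserving map whose associated graded is the original map by construction. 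The desired lift $f' \colon P' \to M\{t\}$ is then the restriction to $P' = e'F'$, and $E(f') = f$ follows from functoriality of $E$ together with $E(e') = e$.
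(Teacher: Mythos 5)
The paper does not give its own proof of this statement --- it is quoted directly from Sj\"{o}din~\cite{sj}, Theorem 6 --- so there is no argument in the text to compare yours against. On its own terms, your proof is correct and is the standard idempotent-lifting argument: all the intermediate claims hold. For $F'=\bigoplus_i A\{t_i\}$ finite free one has $E(\mathrm{End}_A(F'))\cong\mathrm{End}_{E(A)}(F)$ as graded algebras (check each matrix entry $\mathrm{Hom}_A(A\{t_j\},A\{t_i\})\cong A\{t_i-t_j\}$ separately); the filtration ideal $B_{-1}\subset B_0$ is in fact \emph{nilpotent}, not merely nil, because the filtration on $F'$ is bounded and hence any product of $m+p+1$ elements of $B_{-1}$ vanishes, which makes the idempotent lift entirely elementary; $E(e'F')=eF$ because the direct-sum decomposition $F'=e'F'\oplus(1-e')F'$ respects the filtration, so that $e'F'\cap F'_j=e'F'_j$; and lifting $f\colon P\to E(M)\{t\}$ by choosing preimages of the images of the homogeneous generators of $F'$ and then restricting to $e'F'$ does produce a filtration-preserving $A$-module map $P'\to M\{t\}$ with associated graded equal to $f$, since $E$ is compatible with the splitting by $e'$ and $E(e')=e$. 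One trivial slip: with the paper's convention $M\{t\}_r=M_{r+t}$, the canonical generator of the summand $A\{t_i\}$ lies in filtration degree $-t_i$ rather than $t_i$, so that the image of this generator under $f\circ\pi$ lands in $M_{-t_i+t}/M_{-t_i+t-1}$; this is only a sign in the bookkeeping and does not affect the substance of the argument.
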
 

We also recall the following corollary of Sj\"{o}din (Corollary in~\cite{sj} after Lemma 20).

\begin{cor}
\label{cor:sjodin}
Let $M$ be a finite dimensional, filtered, $A$-module, then any finite or countable set of orthogonal idempotents in
\[
\mathrm{im}(\phi)\subset\mathrm{hom}_{E(A)}(E(M),E(M))
\]
can be lifted to $\mathrm{hom}_{A}(M,M)$, where $\phi$ is the natural transformation
\[
\phi\colon E(\mathrm{hom}_{A}(M,M))\to \mathrm{hom}_{E(A)}(E(M),E(M)).
\]
\end{cor}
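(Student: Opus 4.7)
The plan is to proceed by induction on the cardinality of the orthogonal family, with the key technical input being Theorem~\ref{thm:sjodin}. I first treat a single idempotent $e \in \mathrm{im}(\phi) \subseteq \mathrm{hom}_{E(A)}(E(M),E(M))$. Such an $e$ is forced to be homogeneous of degree zero, since $e^2 = e$ implies $2\deg(e) = \deg(e)$, and it produces a graded decomposition $E(M) = \mathrm{im}(e) \oplus \ker(e)$ whose summands are graded projective $E(A)$-modules. By Theorem~\ref{thm:sjodin}, each summand lifts to a filtered projective $A$-module: there exist $N_1, N_2$ with $E(N_1) = \mathrm{im}(e)$ and $E(N_2) = \ker(e)$, and the graded inclusion/projection maps between $E(M)$ and its summands lift to filtration preserving $A$-module maps. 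Applying Proposition~\ref{prop:Srid} to the induced map $N_1 \oplus N_2 \to M$, whose associated graded is the identity of $E(M)$, shows that this map is a filtered isomorphism. The projection of $M$ along this decomposition onto $N_1$ is then a filtration preserving idempotent $\tilde{e} \in \mathrm{hom}_A(M,M)$ with $\phi(E(\tilde{e})) = e$.

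For the inductive step on a finite orthogonal family $\{e_1,\dots,e_n\}$, I would first lift $e_1$ to an idempotent $\tilde{e}_1$ as above, obtaining a strict filtered decomposition $M = M_1 \oplus M_1'$ with $E(M_1) = \mathrm{im}(e_1)$ and $E(M_1') = \ker(e_1)$. The orthogonality relations $e_1 e_j = e_j e_1 = 0$ for $j \geq 2$ force each remaining $e_j$ to annihilate $\mathrm{im}(e_1) = E(M_1)$ and to preserve $\ker(e_1) = E(M_1')$, so $\{e_2,\dots,e_n\}$ becomes an orthogonal family of idempotents in $\mathrm{hom}_{E(A)}(E(M_1'), E(M_1'))$. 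Naturality of $\phi$ with respect to the filtered decomposition $M = M_1 \oplus M_1'$ ensures that these restrictions still lie in the image of the natural transformation $\phi'$ associated to $M_1'$, so the induction hypothesis applied to $M_1'$ produces orthogonal idempotent lifts $\tilde{e}_2,\dots,\tilde{e}_n \in \mathrm{hom}_A(M_1',M_1') \subseteq \mathrm{hom}_A(M,M)$, which are automatically orthogonal to $\tilde{e}_1$. The countable case is subsumed by the finite case, since $M$ is finite dimensional and hence at most $\dim_R M$ of the $e_i$ can be non-zero.

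The main obstacle I anticipate is the bookkeeping of naturality of $\phi$ in the inductive step, i.e. verifying that the restrictions of the $e_j$ to $E(M_1')$ genuinely lie in $\mathrm{im}(\phi')$ rather than merely in $\mathrm{hom}_{E(A)}(E(M_1'),E(M_1'))$. This amounts to checking that the filtered projections and inclusions produced by the decomposition $M = M_1 \oplus M_1'$ intertwine $\phi$ with $\phi'$, which follows from the fact that $E$ is compatible with direct sums of filtered modules, together with the strictness guaranteed by Proposition~\ref{prop:Srid}.
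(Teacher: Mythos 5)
The paper itself gives no proof of this corollary; it simply cites Sj\"odin, so there is no internal argument to compare against and the burden falls entirely on your proposal.

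Your single-idempotent step has a genuine gap at its foundation. You assert that the graded direct summands $\mathrm{im}(e)$ and $\ker(e)$ of $E(M)$ are graded \emph{projective} $E(A)$-modules, but $M$ is only assumed to be a finite dimensional filtered $A$-module with no projectivity hypothesis, and a direct summand of a non-projective module need not be projective. Theorem~\ref{thm:sjodin} takes a projective graded module $P$ as input in both of its halves, so it produces neither the lifts $N_1$, $N_2$ nor the lifted inclusion maps you want. A second, independent difficulty is that even granting projectivity, the second half of Theorem~\ref{thm:sjodin} lifts maps of the form $P\to E(M)\{t\}$, i.e.\ maps \emph{into} $E(M)$; your argument needs to lift the projections $E(M)\twoheadrightarrow\mathrm{im}(e)$ and $E(M)\twoheadrightarrow\ker(e)$, which go the other way and are not covered by the statement. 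In effect the base step tries to prove idempotent lifting by first lifting the idempotent's image and kernel as filtered summands of $M$, which is essentially as hard as the statement being proved. (There is also a small imprecision earlier: ``$e^2=e$ implies $2\deg(e)=\deg(e)$'' only shows that a \emph{homogeneous} idempotent sits in degree zero; it does not by itself show $e$ is homogeneous.)

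The intended, and much shorter, route avoids Theorem~\ref{thm:sjodin} entirely and is the classical nilpotence argument. Set $B=\mathrm{hom}_A(M,M)$ and filter it by $F_kB=\{f\mid f(M_j)\subseteq M_{j+k}\ \text{for all}\ j\}$. Since $M$ carries a finite (discrete, exhaustive) filtration, $F_{-1}B$ is a nilpotent two-sided ideal of the unital algebra $F_0B$: any $f\in F_{-1}B$ strictly lowers filtration degree, so $f^n=0$ for $n$ large. The map $\phi$ is injective and degree preserving, so a degree-zero idempotent $e\in\mathrm{im}(\phi)$ corresponds to an idempotent in $F_0B/F_{-1}B$, and the standard lifting of (finite or countable) orthogonal families of idempotents modulo a nilpotent ideal gives the desired orthogonal lifts in $F_0B\subseteq\mathrm{hom}_A(M,M)$. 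Your inductive bookkeeping for orthogonality and your observation that the countable case reduces to the finite case because $B$ has finite rank are both fine, and would carry over unchanged once the base step is replaced by this nilpotence argument.
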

\newpage

\end{document}